\DeclareMathOperator{\short}{sh}
\DeclareMathOperator{\bad}{bad}
\DeclareMathOperator{\en}{en} 
\DeclareMathOperator{\Cyl}{cyl}
\DeclareMathOperator{\stab}{st} 
\DeclareMathOperator{\rem}{rem} 
\DeclareMathOperator{\cont}{cont} 
\DeclareMathOperator{\parent}{par}
\DeclareMathOperator{\ancestor}{anc}
\DeclareMathOperator{\rt}{root} 
\DeclareMathOperator{\meld}{meld}
\DeclareMathOperator{\err}{err} 
\DeclareMathOperator{\Support}{Supp}
\DeclareMathOperator{\Fred}{Fred}
\DeclareMathOperator{\totalspace}{Tot} 
\DeclareMathOperator{\Sec}{Sec}
\DeclareMathOperator{\pre}{pre} 
\DeclareMathOperator{\cm}{cm} 
\DeclareMathOperator{\hot}{hot} 
\DeclareMathOperator{\Map}{Map} 
\DeclareMathOperator{\spec}{spec} 
\DeclareMathOperator{\im}{im} 
\DeclareMathOperator{\ind}{ind} 
\DeclareMathOperator{\Aut}{Aut}
\DeclareMathOperator{\coker}{coker}
\DeclareMathOperator{\rank}{rank}
\DeclareMathOperator{\dom}{dom}
\DeclareMathOperator{\End}{End}
\DeclareMathOperator{\Cont}{Cont}
\DeclareMathOperator{\Symp}{Symp}
\DeclareMathOperator{\Id}{Id}
\DeclareMathOperator{\CZ}{CZ}
\DeclareMathOperator{\Flow}{Flow}
\DeclareMathOperator{\SL}{SL}
\DeclareMathOperator{\GL}{GL}
\DeclareMathOperator{\sgn}{sgn}
\DeclareMathOperator{\Diag}{Diag}
\DeclareMathOperator{\const}{const}
\DeclareMathOperator{\Ret}{Ret}
\DeclareMathOperator{\SFT}{SFT}
\DeclareMathOperator{\Sobolev}{W}
\DeclareMathOperator{\Ltwo}{L^{2}}
\newcommand{\thicc}[1]{\pmb{#1}}
\newcommand{\thiccC}{\thicc{C}}
\newcommand{\Cglue}{\thicc{C}_{\glue}}
\newcommand{\Cneck}{\thicc{C}_{\neckLength}}
\newcommand{\Cmeld}{\thicc{C}_{\meld}}
\newcommand{\lieSymp}{\mathfrak{symp}}
\newcommand{\Proj}{\mathbb{P}}
\newcommand{\ProjOne}{\Proj^{1}}
\newcommand{\B}{\mathbb{B}}
\newcommand{\C}{\mathbb{C}}
\newcommand{\R}{\mathbb{R}}
\newcommand{\Z}{\mathbb{Z}}
\newcommand{\N}{\mathbb{N}}
\newcommand{\Q}{\mathbb{Q}}
\newcommand{\disk}{\mathbb{D}}
\newcommand{\grad}{\nabla}
\newcommand{\bigO}{\mathcal{O}}
\newcommand{\action}{\mathcal{A}}
\newcommand{\actionBound}{L}
\newcommand{\delbar}{\overline{\partial}}
\newcommand{\delbarJ}{\delbar_{\domainJ, J}}
\newcommand{\delbarTangent}{\delbar_{\domainJ, \JDivSet}}
\newcommand{\delbarEpsilon}{\delbar_{\domainJ, \JEpsilon}}
\newcommand{\Cinfty}{\mathcal{C}^{\infty}}
\newcommand{\COne}{\mathcal{C}^{1}}
\newcommand{\Lie}{\mathcal{L}}
\newcommand{\Sthree}{(\sphere^{3},\xi_{std})}
\newcommand{\Circle}{\R/\Z}
\newcommand{\aCircle}{\R/a\Z}
\newcommand{\SLtwoR}{\SL(2, \R)}
\newcommand{\half}{\frac{1}{2}}
\newcommand{\eigenBound}{\thicc{\lambda}}
\newcommand{\domainJ}{\jmath}
\newcommand{\alphaEpsilon}{\alpha_{\epsilon}}
\newcommand{\ReebEpsilon}{\Reeb_{\epsilon}}
\newcommand{\JEpsilon}{J_{\epsilon}}
\newcommand{\normal}{\perp}
\newcommand{\tangent}{\divSet}
\newcommand{\leafTangent}{T\Lie}
\newcommand{\leafTangentNormal}{(T\Lie)^{\perp}}
\newcommand{\SigmaInfty}{\Sigma_{\infty}}
\newcommand{\SigmaNInfty}{\Sigma_{-\infty}}
\newcommand{\plane}{\C}
\newcommand{\notplane}{\cancel{\C}}
\newcommand{\be}{\begin{enumerate}}
\newcommand{\ee}{\end{enumerate}}
\newcommand{\sphere}{\mathbb{S}}
\newcommand{\Mxi}{(M,\xi)}
\newcommand{\norm}[1]{\left\lVert#1\right\rVert}
\newcommand{\foliationEpsilon}{\mathcal{F}_{\epsilon}}
\newcommand{\orientation}{\mathfrak{o}}
\newcommand{\git}{/\!\!/}
\newcommand{\multisec}{\thicc{\mathfrak{s}}}
\newcommand{\multisecCoeff}{\mathfrak{s}}
\newcommand{\Multisec}{\thicc{\mathfrak{S}}}
\newcommand{\cokcoeff}{\thicc{c}}
\newcommand{\kercoeff}{\thicc{k}}
\newcommand{\Dlinearized}{\thicc{D}}
\newcommand{\AsymptoticOp}{\thicc{A}}
\newcommand{\ModSpace}{\mathcal{M}}
\newcommand{\ModSpaceThick}{\thicc{V}}
\newcommand{\delbarGluing}{\thicc{\nu}}
\newcommand{\delbarGluingTangent}{\delbarGluing^{\tangent}}
\newcommand{\delbarGluingNormal}{\delbarGluing^{\normal}}
\newcommand{\Bump}[2]{B_{#1}^{#2}}
\newcommand{\BumpUp}{\Bump{0}{1}}
\newcommand{\BumpNot}{\Bump{1}{}}
\newcommand{\BumpDown}{\Bump{0}{-1}}
\newcommand{\BumpDownC}{\Bump{C}{\downarrow}}
\newcommand{\BumpUpC}{\Bump{C}{\uparrow}}
\newcommand{\BumpUpDownC}{\Bump{C}{\updownarrow}}
\newcommand{\BumpUpCD}{\frac{\partial \BumpUpC}{\partial p}}
\newcommand{\BumpDownCD}{\frac{\partial \BumpDownC}{\partial p}}
\newcommand{\BumpUpDownCD}{\frac{\partial \BumpUpDownC}{\partial p}}
\newcommand{\ThetaUp}{\Theta^{\uparrow}}
\newcommand{\ThetaDown}{\Theta^{\downarrow}}
\newcommand{\ThetaUpDown}{\Theta^{\updownarrow}}
\newcommand{\NormalDual}{\Dlinearized_{s}^{\normal, \ast}}
\newcommand{\punctureSet}{\thicc{z}}
\newcommand{\orderedPunctureSet}{\vec{\punctureSet}}
\newcommand{\orderedPunctureSetRem}{\orderedPunctureSet^{\rem}}
\newcommand{\markerSet}{\thicc{m}}
\newcommand{\orderedMarkerSet}{\vec{\markerSet}}
\newcommand{\NnegativePunctures}{\underline{N}}
\newcommand{\NnegativePuncturesThick}{\thicc{\NnegativePunctures}}
\newcommand{\orderedOrbitSet}{\vec{\thicc{\orbit}}}
\newcommand{\tree}{\thicc{T}}
\newcommand{\vertex}{\thicc{v}}
\newcommand{\vertexPlane}{\vertex^{\plane}}
\newcommand{\vertexNotPlane}{\vertex^{\notplane}}
\newcommand{\edge}{\thicc{e}}
\newcommand{\edgeGluing}{\edge^{G}}
\newcommand{\edgeGluingPlane}{\edge^{G, \plane}}
\newcommand{\edgeGluingNotPlane}{\edge^{G, \notplane}}
\newcommand{\edgeFree}{\edge^{F}}
\newcommand{\subtree}{\thicc{t}}
\newcommand{\neckLength}{\mathfrak{nl}}
\newcommand{\orbitVS}{V}
\DeclareMathOperator{\orbitAssignment}{Orb}
\DeclareMathOperator{\gluingConfig}{GF}
\newcommand{\partition}{\mathcal{P}}
\newcommand{\partitionThick}{\thicc{\partition}}
\newcommand{\partitionLeast}{\partition_{\vec{0}}}
\newcommand{\Reeb}{R}
\newcommand{\alphaDivSet}{\widecheck{\alpha}}
\newcommand{\ReebDivSet}{\widecheck{\Reeb}}
\newcommand{\JDivSet}{\widecheck{J}}
\newcommand{\orbit}{\gamma}
\newcommand{\orbitDivSet}{\widecheck{\orbit}}
\newcommand{\Norbit}{\mathcal{N}_{\orbit}}
\newcommand{\divSet}{\Gamma}
\newcommand{\xiDivSet}{\xi_{\divSet}}
\newcommand{\MxiDivSet}{(\divSet, \xiDivSet)}
\newcommand{\NdividingSet}{\mathcal{N}_{\divSet}}
\newcommand{\hypersurface}{W}
\newcommand{\posRegion}{W^{+}}
\newcommand{\negRegion}{W^{-}}
\newcommand{\posNegRegion}{W^{\pm}}
\newcommand{\Nhypersurface}{N(\hypersurface)}
\newcommand{\posRegionComplete}{\overline{\posRegion}}
\newcommand{\negRegionComplete}{\overline{\negRegion}}
\newcommand{\posNegRegionComplete}{\overline{\posNegRegion}}
\newcommand{\glue}{\mathcal{G}}
\newcommand{\shorti}{\widecheck{\i}}
\newcommand{\longi}{\widehat{\i}}
\newcommand{\ZeroSet}{\thicc{Z}} 
\newcommand{\simga}{\sigma} 
\newcommand{\Simga}{\Sigma}
\newcommand{\framing}{\mathfrak{f}}
\newcommand{\CompactSubset}{\thicc{K}}
\newcommand{\NbhdCompactSubset}{\thicc{N}}
\newcommand{\transSubbundle}{\thicc{E}}
\newcommand{\preglue}{\glue^{pre}}
\newcommand{\Banach}{\mathcal{B}}
\newcommand{\fancyVB}{\mathcal{E}}
\newcommand{\cohomCokerNormalDual}{\mathcal{H}^{+}_{s}}
\newcommand{\reebJScale}{F_{J}}
\newcommand{\halfcyl}{P}
\newcommand{\annulus}{A}
\newcommand{\Field}{\mathbb{F}}
\newcommand{\Algebra}{\mathcal{A}}
\newcommand{\tensorAlg}{\mathcal{T}}
\newcommand{\tensorAlgGraded}{\mathcal{S}}
\newcommand{\partialCyl}{\partial^{\Cyl}}
\newcommand{\LoopSpace}{\mathcal{L}}
\DeclareMathOperator{\aug}{\thicc{\epsilon}}
\DeclareMathOperator{\Sym}{Sym} 
\newcommand{\CH}{CH}
\newcommand{\chainDivSet}{\widecheck{CC}}
\newcommand{\partialDivSet}{\widecheck{\partial}}
\newcommand{\chainNH}{CC}
\newcommand{\partialNH}{\partial}
\newtheorem{thm}{Theorem}[subsection]
\newtheorem{assump}[thm]{Assumptions}
\newtheorem{observ}[thm]{Observation}
\newtheorem{prop}[thm]{Proposition}
\newtheorem{properties}[thm]{Properties}
\newtheorem{defn}[thm]{Definition}
\newtheorem{lemma}[thm]{Lemma}
\newtheorem{cor}[thm]{Corollary}
\newtheorem{rmk}[thm]{Remark}
\newtheorem{edits}[thm]{Editor notes}
\newtheorem{prob}[thm]{Problem}
\newtheorem{notation}[thm]{Notation}
\newtheorem{conv}[thm]{Convention}
\newtheorem{model}[thm]{Model}
\newtheorem{choices}[thm]{Choices}
\newcommand{\nom}{\nomenclature}
\title{An algebraic generalization of Giroux's criterion}
\author{Russell Avdek}
\date{\today}
\begin{document}

\begin{abstract}
Let $\xi$ be a $\tau$-invariant contact structure on $\Nhypersurface = \R_{\tau} \times \hypersurface$ for a closed, $2n$-dimensional manifold $\hypersurface$, so that each $\{\tau\} \times \hypersurface$ is a \emph{convex hypersurface}. When $n=1$, Giroux's criterion provides a simple means of determining exactly when $\xi$ is tight. It is an open problem to find a generalization applicable for $n>1$. This article solves an algebraic version of the problem, determining exactly when $\Nhypersurface$ has non-vanishing contact homology ($CH$) and computing $CH(\Nhypersurface, \xi)$ when it is non-zero. The result can be expressed in terms of homotopy equivalence of augmentations of the chain level $CH$ algebra of the dividing set or in terms of bilinearized homology theories, which we define for free, commutative DGAs over $\Q$. Our proof relies on the development of obstruction bundle gluing in the Kuranishi setting.
\end{abstract}
\maketitle

\numberwithin{equation}{subsection}
\setcounter{tocdepth}{1}
\tableofcontents
\pagebreak

\section{Introduction}

A \emph{convex hypersurface} in a $(2n+1)$-dimensional contact manifold $\Mxi$ is a $2n$-dimensional submanifold $\hypersurface \subset M$ admitting a neighborhood of the form
\begin{equation*}
\Nhypersurface = \R_{\tau} \times \hypersurface
\end{equation*}
along which $\xi$ is $\tau$-invariant. Within $\Nhypersurface$ we may write
\begin{equation}\label{Eq:TauInvariantAlpha}
\xi = \ker \alpha, \quad \alpha = f d\tau + \beta, \quad f \in \Cinfty(\hypersurface), \quad \beta \in \Omega^{1}(\hypersurface).
\end{equation}
The function $f$ decomposes $\hypersurface$ into a \emph{negative region} $\negRegionComplete$, \emph{dividing set} $\divSet$, and \emph{positive region} $\posRegionComplete$,
\begin{equation*}
\negRegionComplete = \{ f < 0 \}, \quad \divSet = \{ f = 0 \}, \quad \posRegionComplete = \{ f > 0 \}.
\end{equation*}
A contact structures $\xiDivSet = T\divSet \cap \xi$ on $\divSet$ (which is necessarily non-empty) and Liouville forms
\begin{equation*}
\beta_{\pm} = \pm f^{-1}\beta \in \Omega^{1}\left(\posNegRegionComplete\right)
\end{equation*}
are inherited from $\alpha$ and depend only on $\xi$ up to homotopy, making the $\posNegRegionComplete$ ideal Liouville domains \cite{Giroux:IdealLiouville}. In this article we view the $\posNegRegionComplete$ as completions of compact $\posNegRegion$ determining a pair of Liouville fillings of $(\divSet, \xiDivSet)$ with their boundaries identified.

\nom{$\hypersurface$}{Convex hypersurface with dividing set $\divSet$, negative region $\negRegionComplete$, and positive region $\posRegionComplete$}
\nom{$\Nhypersurface$}{Convex hypersurface neighborhood, $\R_{\tau} \times \hypersurface$} 

Convex hypersurfaces have played a central role in the development of $3$-dimensional contact topology, in particular for their utility in classifying contact structures and Legendrian links \cite{Eliash:OTClassification, EtnyreHonda:Knots, Honda:Tight1}. The low-dimensional theory rests on the following foundational result \cite[Theorem 4.5]{Giroux:Criterion}.

\begin{thm}[Giroux's Criterion]\label{Thm:Giroux}
Suppose $n=1$ so that $\dim \hypersurface = 2$.
\be
\item If $\hypersurface \simeq \sphere^{2}$, then $(\Nhypersurface, \xi)$ is tight iff $\divSet$ is connected.
\item Otherwise $(\Nhypersurface, \xi)$ is tight iff none of the $\posNegRegionComplete$ are simply connected.
\ee
\end{thm}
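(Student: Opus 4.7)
The plan is to prove the two directions of the criterion separately: overtwistedness when the criterion fails is shown by an explicit disk construction, and tightness when the criterion holds is established by exhibiting a Stein filling.

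For the overtwistedness direction, I would begin by noting that in case (1), a disconnected $\divSet \subset \sphere^{2}$ forces $\sphere^{2} \setminus \divSet$ to have at least three components, two of which are disks lying in $\posRegion$ and $\negRegion$ respectively; and in case (2), a simply connected component of $\posNegRegion$ is by definition a disk. So in either failure mode I obtain a disk component $P$ of, say, $\posRegion$, with $\partial P = \gamma \subset \divSet$. Because $P$ is a disk, the Liouville vector field dual to $\beta_{+}$ on $P$ has positive index by Poincar\'e-Hopf, so after the standard elimination lemmas the characteristic foliation on $P$ can be arranged to have exactly one singularity (a source) with all non-singular leaves emanating transversely to $\partial P = \gamma$. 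Pushing $P$ slightly in the $\tau$-direction and smoothing against the Legendrian realization of $\gamma$ produces an embedded disk $D \subset \Nhypersurface$ whose characteristic foliation matches Eliashberg's model overtwisted disk. Equivalently, a small Legendrian push-off of $\gamma$ into $P$ bounds an unknot on $\hypersurface$ with $\tb = -\half \#(\,\cdot\, \cap \divSet) = 0$, violating the tight Bennequin bound $\tb \leq -1$.

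For the tightness direction, I would first dispatch the easy subcase: if $\hypersurface \simeq \sphere^{2}$ with $\divSet$ connected, $(\Nhypersurface,\xi)$ is contactomorphic to a standard neighborhood of the equatorial $\sphere^{2}$ in $\Sthree$, which is tight. For the general case, the plan is to embed $\Nhypersurface$ into a closed contact $3$-manifold $(M,\xi_{M})$ supported by an open book whose pages are $\posRegion$ (or $\negRegion$) and whose monodromy factors as a product of right-handed Dehn twists. The hypothesis that no component of $\posNegRegion$ is simply connected guarantees such a positive factorization exists: on every component with nontrivial topology the mapping class group rel boundary is generated by right-handed Dehn twists about boundary-parallel and essential simple closed curves. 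By the Loi--Piergallini / Akbulut--Ozbagci theorem, such a positive-monodromy open book supports a Stein fillable contact structure, and Gromov--Eliashberg fillability then forces $(M,\xi_{M})$ to be tight. Tightness is inherited by the embedded neighborhood $\Nhypersurface$.

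The main obstacle is the tightness direction, specifically constructing the positive monodromy factorization in a way compatible with the Liouville structures $\beta_{\pm}$ on both sides of $\divSet$ and ensuring the resulting abstract open book really contains $(\Nhypersurface,\xi)$ as a $\tau$-invariant contact submanifold. Extra care is needed when components of $\posNegRegion$ are annuli, where the mapping class group is cyclic and the choice of twist must be coordinated between the two sides so that the Liouville forms glue along $\divSet$. In higher dimensions the open-book argument breaks down because most mapping class groups are no longer generated by positive Dehn--Seidel twists, which is precisely why the present article develops an algebraic substitute via contact homology.
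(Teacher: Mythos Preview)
The paper does not prove this theorem. Theorem~\ref{Thm:Giroux} is stated as background and attributed to Giroux \cite[Theorem 4.5]{Giroux:Criterion}; the paper's own contribution in the $2$-dimensional case is Theorem~\ref{Thm:DimTwoComputation}, which computes $CH(\Nhypersurface,\xi)$ using Theorem~\ref{Thm:AlgGiroux} and then observes that $CH \neq 0$ exactly when Giroux's criterion says $(\Nhypersurface,\xi)$ is tight. So there is no ``paper's proof'' to compare against, and your proposal is an independent sketch of the classical result.

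On the merits of your sketch: the overtwistedness direction is essentially correct, though your parenthetical claim that a disconnected $\divSet \subset \sphere^2$ forces disks in \emph{both} $\posRegion$ and $\negRegion$ is false (two parallel circles give two disks of the same sign and an annulus of the other); fortunately a single disk component suffices. The tightness direction has a real gap. Your assertion that the mapping class group rel boundary of a surface with nontrivial topology is generated by \emph{right-handed} Dehn twists is false --- one needs both chiralities to generate. What you actually need is a positive factorization of a specific monodromy (e.g.\ the identity) compatible with the embedding of $\Nhypersurface$, and you have not explained how the abstract open book you build contains $(\Nhypersurface,\xi)$ as a $\tau$-invariant piece. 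Giroux's original argument does not go through open books at all; it uses characteristic foliations and a direct analysis of the $\R_\tau$-invariant model, showing in particular that when no region is a disk the contact structure is universally tight (or hypertight). If you want a filling-based argument, a cleaner route is to observe that when no $\posNegRegionComplete$ is a disk, $(\Nhypersurface,\xi)$ embeds in an $S^1$-invariant contact structure on a circle bundle over $\hypersurface$, which can be shown tight by covering-space methods.
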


What can be said in higher dimensions? We know that convex hypersurfaces exist in abundance within a fixed $\Mxi$ by \cite{HH:Convex} and now have a definitive notion of overtwistedness for all $n$ \cite{BEM:OT} which generalizes the $3$-dimensional case \cite{Eliash:OTClassification}. By \cite{CMP:OT} overtwistedness can be interpreted in the languages of contact surgery \cite{Avdek:Liouville, DG:Surgery}, open book decompositions \cite{BHH:GirouxCorrespondence, Giroux:ContactOB}, loose Legendrians \cite{Murphy:Loose}, and plastikstufe-type objects \cite{MNW13, N:Plastik}. While high-dimensional convex hypersurfaces have received much attention lately \cite{BGM:Bourgeois, Breen:Convex, BHH:GirouxCorrespondence, CN:OT, EliashPanch:Convex, HH:Bypass, HH:Convex, LMN:Bourgeois} the following problem remains unsolved.

\begin{prob}[Generalize Giroux's Criterion]\label{Prob:GeneralizeGiroux}
Provide a geometrical-topological criterion for determining exactly when $(\Nhypersurface, \xi)$ is tight or overtwisted, applicable in all dimensions.
\end{prob}

The purpose of this paper is to answer the following algebraic approximation to Problem \ref{Prob:GeneralizeGiroux}:

\begin{prob}\label{Prob:AlgebraicGiroux}
When is the contact homology algebra $CH(\Nhypersurface, \xi)$ non-zero?
\end{prob}

Here $CH$ is the sutured contact homology of \cite{CGHH:Sutures} which can be applied to contact manifolds $\Mxi$ with convex boundary such as $([-1, 1] \times \hypersurface, \xi)$. It extends the $CH$ of closed contact manifolds defined in \cite{EGH:SFTIntro} whose technical foundations are established in \cite{BH:ContactDefinition, Pardon:Contact}. Combining \cite[Theorem 1.3]{BvK:Stabilization} and \cite[Theorem 1.1(6)]{CMP:OT}, overtwisted contact manifolds have $CH = 0$ so
\begin{equation*}
CH(\Nhypersurface, \xi) \neq 0 \implies (\Nhypersurface, \xi)\ \text{tight}
\end{equation*}
and an answer to Problem \ref{Prob:AlgebraicGiroux} provides non-trivial information about Problem \ref{Prob:GeneralizeGiroux}. Moreover, by the functoriality of $CH$ with respect to codimension-$0$ contact embeddings \cite{CGHH:Sutures},
\begin{equation*}
CH(\Nhypersurface, \xi) \subset\ \text{Liouville fillable}\ \Mxi \implies CH(\Nhypersurface, \xi) \neq 0.
\end{equation*}
This statement can be strengthened by \cite[Theorem 5]{LW:Torsion} or \cite[Corollary 6]{NW:WeakFillings}, allowing ``Liouville fillable'' to be replaced by ``strongly symplectically fillable''.

\subsection{Main results}

Our solution to Problem \ref{Prob:AlgebraicGiroux} is stated in terms of holomorphic curve invariants of the $(\posNegRegion, \beta^{\pm})$. The basic ingredients are augmentations of chain-level contact homology algebras. For $(\divSet, \xiDivSet)$ we denote this algebra by $(\chainDivSet, \partialDivSet)$ so that
\begin{equation*}
CH\MxiDivSet = H(\chainDivSet, \partialDivSet).
\end{equation*}
Each chain-level algebra is a free commutative differential graded algebra (free cDGA) over $\Q$ which depends on choices (contact forms, almost complex structures, orientation schemes, and perturbations) which we'll suppress from notation for simplicity. Write $\orbitVS$ for the $\Q$ vector space generated by the good closed orbits $\orbit$ of a nondegenerate Reeb field with grading $|\orbit| = \CZ(\orbit) + n -3$ when $\dim \divSet = 2n-1$,
\begin{equation*}
\chainDivSet = \tensorAlgGraded(\orbitVS)
\end{equation*}
where $\tensorAlgGraded(\orbitVS)$ is the graded-commutative tensor algebra and $\partialDivSet$ counts holomorphic curves in $\R_{s} \times \divSet$.

\nom{$(\chainDivSet, \partialDivSet)$}{Chain-level $CH$ algebra of $\MxiDivSet$}

An \emph{augmentation} $\aug$ is a morphism of cDGAs
\begin{equation*}
\aug: (\chainDivSet, \partialDivSet) \rightarrow (\Q, \partial_{\Q} = 0).
\end{equation*}
Every Liouville filling of $\MxiDivSet$ induces an augmentation defined by counting holomorphic planes in the completion the filling. The existence of any $\aug$ allows us to define \emph{linearized contact homology} $\CH^{\aug}$, a graded abelian group. It may be defined as the homology of a chain complex $(\orbitVS[1], \partialDivSet^{\aug})$.\footnote{For $\aug$ determined by a filling, $\CH^{\aug}$ coincides with the $\Circle$-equivariant, positive symplectic cohomology $SH^{\ast}_{+, \Circle}$ originally due to Viterbo \cite{Viterbo:SH}, up to a affine transformation of the grading \cite{BO:ExactSequence}, depending on choice of convention. Our grading convention -- using $\orbitVS[1]$ rather than $\orbitVS$ -- is non-standard, but natural from the algebraic point of view of \S \ref{Sec:Algebra}.}

\nom{$\tensorAlgGraded(\orbitVS)$}{Graded-commutative tensor algebra of a graded vector space $\orbitVS$}
\nom{$\aug$}{An augmentation of a DGA}

In this article we define \emph{bilinearized contact homology} as a generalization of $CH^{\aug}$, which is determined by a pair of augmentations $\vec{\aug} = (\aug^{+}, \aug^{-})$ and denoted $CH^{\vec{\aug}}\MxiDivSet$.  It is the closed-string version of Bourgeois and Chantraine's likewise-named invariant of Legendrian links \cite{BC:Bilinearized} and agrees with $CH^{\aug}$ when $\aug$ is homotopy equivalent to both the $\aug^{\pm}$. Bilinearized $CH$ is the homology of a chain complex $(\orbitVS[1], \partialDivSet^{\vec{\aug}}_{1})$ and comes equipped with a canonical $\Q$ linear morphism of $\deg = -1$ called the \emph{fundamental class},
\begin{equation*}
\vec{\aug}_{\ast}: CH^{\vec{\aug}}\MxiDivSet \rightarrow \Q.
\end{equation*}
Details of augmentations, their homotopy theory, and their associated bilinearized invariants appear in \S \ref{Sec:Algebra}. We are now ready to state our main theorem.

\begin{thm}[Algebraic Giroux Criterion]\label{Thm:AlgGiroux}
Let $\vec{\aug} = (\aug^{+}, \aug^{-})$ be the augmentations of $(\chainDivSet, \partialDivSet)$ induced by the $\posNegRegionComplete$, viewed as completed fillings of $(\divSet, \xiDivSet)$. Then the following are equivalent:
\be
\item $CH(\Nhypersurface, \xi) \neq 0$.
\item The $\aug^{\pm}$ are DG homotopic in sense of Definition \ref{Def:DGHomotopy}.
\item The fundamental class $\vec{\aug}_{\ast}$ on $CH^{\vec{\aug}}(\divSet, \xiDivSet)$ is zero.
\item The contact homology algebra $(\chainNH, \partialNH)$ for $(\Nhypersurface, \xi)$ admits a $\Q$-valued augmentation.
\ee
If any of these equivalent conditions are satisfied, the $CH^{\vec{\aug}}$ and $CH^{\aug^{\pm}}(\divSet, \xiDivSet)$ are all isomorphic and
\begin{equation*}
CH(\Nhypersurface, \xi) \simeq \tensorAlgGraded\left(CH^{\aug^{+}}(\divSet, \xiDivSet)\right) \simeq \tensorAlgGraded\left(CH^{\aug^{-}}(\divSet, \xiDivSet)\right).
\end{equation*}
\end{thm}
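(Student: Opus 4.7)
The plan is to reduce the four-way equivalence to a single identification of the chain-level data $(\chainNH, \partialNH)$ with an explicit bilinearized cDGA built from $(\chainDivSet, \partialDivSet)$ and the pair of augmentations $\vec{\aug} = (\aug^{+}, \aug^{-})$ induced by the two completed Liouville fillings $\posRegionComplete, \negRegionComplete$. Once this identification is in place, the four equivalent conditions reduce to statements about free cDGAs over $\Q$ that should be handled by the algebraic formalism developed in \S \ref{Sec:Algebra}.

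First I would choose $\tau$-invariant data---contact form, almost complex structure, and Morse--Bott perturbations---so that every closed Reeb orbit of $\Nhypersurface$ lies on $\{0\}\times \divSet$ and coincides with the orbit data used to compute $(\chainDivSet, \partialDivSet)$. This ensures $\chainNH = \tensorAlgGraded(\orbitVS)$ is a free cDGA on the same generating space as $\chainDivSet$. I would then perform an SFT-style neck stretch along $\{0\}\times \hypersurface$: a sequence of holomorphic curves in $\Nhypersurface$ counted by $\partialNH$ degenerates to a building whose main component lives in $\R \times \divSet$ and is counted by $\partialDivSet$, while its negative ends are either capped by a plane in $\posRegionComplete$ (contributing a factor $\aug^{+}$), capped by a plane in $\negRegionComplete$ (contributing $\aug^{-}$), or continue through $\Nhypersurface$ as cylinders and become the output of $\partialNH$. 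The main obstacle is the converse direction: I would invoke the Kuranishi-style obstruction bundle gluing developed earlier in the paper to show that every such building can be glued, with the signed count of glued curves equal to the product of the piece counts weighted by $\aug^{\pm}$ on capped ends. This identifies $\partialNH$ on generators with the bilinearized chain-level differential $\partialDivSet^{\vec{\aug}} : \orbitVS \to \tensorAlgGraded(\orbitVS)$, extended as a derivation to all of $\chainNH$.

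The four equivalences then follow from the algebra of \S \ref{Sec:Algebra}. The constant-term piece of $\partialDivSet^{\vec{\aug}}$ is a chain-level representative $c : \orbitVS \to \Q$ of the fundamental class $\vec{\aug}_{\ast}$. Filtering $\chainNH$ by word length, one sees that $[1]\in CH(\Nhypersurface, \xi)$ is nonzero precisely when $c$ is not a coboundary for the dual of $\partialDivSet^{\vec{\aug}}_{1}$, i.e.\ exactly when $[\vec{\aug}_{\ast}]=0$ on $CH^{\vec{\aug}}(\divSet, \xiDivSet)$, giving (1)$\Leftrightarrow$(3). The equivalence (1)$\Leftrightarrow$(4) is the standard fact that a free cDGA over $\Q$ admits a $\Q$-augmentation iff its unit survives in homology. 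The equivalence (2)$\Leftrightarrow$(3) is then purely algebraic: a DG homotopy between $\aug^{\pm}$ in the sense of Definition \ref{Def:DGHomotopy} is exactly a chain-level primitive of $c$, which is the obstruction measured by $\vec{\aug}_{\ast}$. Finally, when any of these conditions hold, a change of generators supported in $\orbitVS$ kills the constant term of $\partialNH$, reducing $(\chainNH, \partialNH)$ to the free cDGA on the linear bilinearized complex $(\orbitVS, \partialDivSet^{\vec{\aug}}_{1})$; a K\"unneth argument in characteristic $0$ yields $CH(\Nhypersurface, \xi) \simeq \tensorAlgGraded(CH^{\vec{\aug}}(\divSet, \xiDivSet))$, and since DG-homotopic augmentations produce isomorphic (bi)linearized homologies this agrees with $\tensorAlgGraded(CH^{\aug^{\pm}}(\divSet, \xiDivSet))$, completing the computation.
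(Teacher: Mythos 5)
Your overall architecture matches the paper's: reduce everything to the chain-level identification of $(\chainNH,\partialNH)$ with the bilinearized cDGA $\Algebra^{\vec{\aug}}$ built from $(\chainDivSet,\partialDivSet)$ (the paper's Theorem \ref{Thm:MainComputation}), then settle the four-way equivalence by pure algebra (\S\ref{Sec:Algebra}). However, there is a genuine gap in your algebraic step and a glossed-over point in your geometric step.

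The gap: you assert that ``(1)$\Leftrightarrow$(4) is the standard fact that a free cDGA over $\Q$ admits a $\Q$-augmentation iff its unit survives in homology.'' This is false, and in fact the paper explicitly presents a counterexample right after stating the theorem: the free cDGA on $x,y$ with $|x|=0$, $|y|=1$, $\partial x = 0$, $\partial y = 1 + x^2$ has nonzero homology (it has $\C$-valued augmentations $\aug(x)=\pm i$) but no $\Q$-valued augmentation. So the equivalence $CH \neq 0 \Leftrightarrow \exists\,\Q\text{-augmentation}$ is \emph{not} a generic property of free cDGAs over $\Q$; it depends essentially on the special form of the bilinearized differential $\partial^{\vec{\aug}} = \partial^{\vec{\aug}}_0 + \partial^{\vec{\aug}}_1$, which has no quadratic or higher word-length terms. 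Because of this, once one has a null-homotopy $K$ of the chain-level fundamental class $\partial^{\vec{\aug}}_0$, one can set $\widehat{\aug} = -K$ on generators and the relation $\widehat{\aug}\,\partial^{\vec{\aug}} = \partial^{\vec{\aug}}_0 - K\,\partial^{\vec{\aug}}_1 = 0$ closes up \emph{because} there are no higher terms in $\partial^{\vec{\aug}}$ for $\widehat{\aug}$ to see. This is precisely the content of Lemma \ref{Lemma:BilinCompDirectLimit} (together with Lemmas \ref{Lemma:AugmentationHomotopy} and \ref{Lemma:FunClassZero}), using the finite action filtration to reduce to the finitely-generated case. Without invoking the special structure of $\partial^{\vec{\aug}}$, your argument for (1)$\Leftrightarrow$(4) does not go through.

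A secondary concern: in the geometric part you describe the degenerate building as having negative ends that are ``capped by a plane in $\posRegionComplete$, capped by a plane in $\negRegionComplete$, or continue through $\Nhypersurface$ as cylinders and become the output.'' You do not address why buildings with two or more uncapped negative ends do not contribute to $\partialNH$ --- a priori they would produce quadratic and higher word-length terms in the differential, which would break both the chain-level identification with $\Algebra^{\vec{\aug}}$ and the algebraic (1)$\Leftrightarrow$(4) argument above. The paper's Theorem \ref{Thm:MainComputation}(3) asserts that $\partialNH \orbit = (\partialDivSet^{\vec{\aug}}_0 + \partialDivSet^{\vec{\aug}}_1)\orbit$ with no $\NnegativePunctures\geq 2$ terms, and establishing this requires the algebraic cancellation argument of \S\ref{Sec:MultipleNegPuncture} and \S\ref{Sec:OrientationNgeq2} using the specific $\Sym_{\NnegativePunctures}$-equivariant multisections. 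A one-clause appeal to obstruction bundle gluing does not replace that argument, and, as noted, the truth of (1)$\Leftrightarrow$(4) hinges on it.

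Also, a small slip: in your (1)$\Leftrightarrow$(3) discussion you write ``$[1]$ is nonzero precisely when $c$ is not a coboundary... i.e.\ exactly when $[\vec{\aug}_{\ast}]=0$.'' Those two conditions have opposite signs; $[1]\neq 0$ iff $c$ \emph{is} a coboundary for the dual of $\partial^{\vec{\aug}}_1$, equivalently iff $\vec{\aug}_\ast = 0$ on homology.
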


\nom{$(\chainNH, \partialNH)$}{Chain-level contact homology algebra for $(\Nhypersurface, \xi)$}

Regarding the non-triviality of (4), it's easy to define free cDGAs with non-zero homology but without $\Q$-valued augmentations: The cDGA generated by $x, y$ with $|x| = 0, |y| = 1$, and $\partial x = 0, \partial y = 1 + x^{2}$ has $\C$-valued augmentations ($\aug(x) = \pm i$) and so non-vanishing homology, but no $\Q$-valued augmentation. The following is an immediate corollary of Theorem \ref{Thm:AlgGiroux} and the invariance of linearized contact homology under homotopy of augmentations (Lemma \ref{Lemma:BilinHomotopyInvariance}):

\begin{cor}\label{Cor:NonIsoLinearized}
Let $d \in 2\Z$ be the divisibility of $2c_{1}(\xi) \in H^{2}(\hypersurface)$. If the linearized homologies $CH^{\aug^{\pm}}(\divSet, \xiDivSet)$ are non-isomorphic with $\Z/d\Z$ grading, then $CH(\Nhypersurface, \xi) = 0$.
\end{cor}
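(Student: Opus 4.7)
The plan is to argue the contrapositive: assume $CH(\Nhypersurface, \xi) \neq 0$ and produce an isomorphism $CH^{\aug^{+}}(\divSet, \xiDivSet) \simeq CH^{\aug^{-}}(\divSet, \xiDivSet)$ of $\Z/d\Z$-graded abelian groups, contradicting the hypothesis. This is purely an assembly job: the substantive content lives in Theorem \ref{Thm:AlgGiroux} and in the homotopy invariance of linearized $CH$ recorded in Lemma \ref{Lemma:BilinHomotopyInvariance}.

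First I invoke Theorem \ref{Thm:AlgGiroux}. By assumption condition (1) holds, so in particular condition (2) holds -- the augmentations $\aug^{+}$ and $\aug^{-}$ are DG homotopic -- and the theorem's concluding clause supplies the chain of isomorphisms
\begin{equation*}
CH^{\aug^{+}}(\divSet, \xiDivSet) \simeq CH^{\vec{\aug}}(\divSet, \xiDivSet) \simeq CH^{\aug^{-}}(\divSet, \xiDivSet)
\end{equation*}
of graded $\Q$-vector spaces. Equivalently, one can bypass $CH^{\vec{\aug}}$ entirely: starting from the DG homotopy given by (2), Lemma \ref{Lemma:BilinHomotopyInvariance} directly asserts $CH^{\aug^{+}} \simeq CH^{\aug^{-}}$. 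Either approach yields the required isomorphism of graded groups.

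It remains to check that this isomorphism respects the $\Z/d\Z$-grading specified in the statement. Each $CH^{\aug^{\pm}}(\divSet, \xiDivSet)$ is naturally $\Z/d'\Z$-graded where $d'$ is the divisibility of $2c_{1}(\xiDivSet) \in H^{2}(\divSet)$; from the splitting $\xi|_{\divSet} = \xiDivSet \oplus L$ for the $\xi$-normal line bundle $L$ to $T\divSet$, one sees that the restriction $H^{2}(\hypersurface) \to H^{2}(\divSet)$ compares $c_{1}(\xi)$ and $c_{1}(\xiDivSet)$ up to $c_{1}(L)$, from which $d \mid d'$. Thus the $\Z/d'\Z$-grading coarsens to the $\Z/d\Z$-grading used in the corollary, and the isomorphisms above, being induced by Conley--Zehnder-preserving maps on generators, descend accordingly. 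No step here is an obstacle -- the only care needed is this grading bookkeeping -- and the contrapositive follows.
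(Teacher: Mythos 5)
Your argument is correct and is exactly what the paper intends: the corollary is stated as an immediate consequence of Theorem \ref{Thm:AlgGiroux} and Lemma \ref{Lemma:BilinHomotopyInvariance}, and your contrapositive assembles those two inputs in the right order. One caveat on the grading step: saying the restriction $H^{2}(\hypersurface) \to H^{2}(\divSet)$ compares $c_{1}(\xi)$ and $c_{1}(\xiDivSet)$ ``up to $c_{1}(L)$'' does not by itself give $d \mid d'$ — if $c_{1}(L)$ were nonzero the divisibilities need not be comparable. The fact you actually need, and which the paper uses in \S\ref{Sec:AlgGiroux}, is that the normal bundle of $\divSet$ in $\xi$ is a trivial complex line (spanned near $\divSet$ by $\partial_{\tau}, \partial_{\sigma}$, a $\JEpsilon$-invariant $2$-plane), so $\xi|_{\divSet} \simeq \C \oplus \xiDivSet$ and the restriction of $c_{1}(\xi)$ equals $c_{1}(\xiDivSet)$ exactly; from that identity the divisibility claim $d \mid d'$ is immediate, and the rest of your bookkeeping goes through unchanged.
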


In many interesting cases Theorem \ref{Thm:AlgGiroux} can be practically applied to compute contact homology. An easy calculation shows that for
\begin{equation*}
\dim \hypersurface = 2, \quad CH(\Nhypersurface, \xi) \neq 0 \iff (\Nhypersurface, \xi)\ \text{tight},
\end{equation*}
hence the title of the article. While this can be easily established by looking at homotopy classes of the components of $\divSet$ and using the contact forms of \cite{Vaugon:Bypass}, we give a full computation of $CH(\Nhypersurface, \xi)$ using Theorem \ref{Thm:AlgGiroux} in Theorem \ref{Thm:DimTwoComputation}. We also provide computations of $CH(\Nhypersurface, \xi)$ when $\hypersurface$ is ``symmetric'', which includes some cases relevant to string topology. Further computations will appear in follow-up articles.

\subsection{Some details and an outline of the article}

Here we provide a little more information about bilinearized $CH$ and its fundamental class. This will help to understand the statement of Theorem \ref{Thm:AlgGiroux} and outline our proof.

Writing $\orbitDivSet$ for good Reeb orbits on $\divSet$ -- that is, generators of $\orbitVS$ -- denote by $\orbit$ the corresponding generators of $\orbitVS[1]$. We can express $\partialDivSet$ as
\begin{equation}\label{Eq:DivSetPartial}
\begin{gathered}
\partialDivSet = \sum_{0}^{\infty} \partialDivSet_{\NnegativePunctures}, \quad \partialDivSet_{\NnegativePunctures}: \orbitVS \rightarrow \orbitVS^{\otimes \NnegativePunctures}/\sim, \quad \orbitDivSet_{1}\orbitDivSet_{2} \sim (-1)^{|\orbitDivSet_{1}|\cdot | \orbitDivSet_{2}|}\orbitDivSet_{2}\orbitDivSet_{1}\\
\partialDivSet_{\NnegativePunctures} \orbitDivSet = \sum_{I} c_{I}\orbitDivSet_{I, 1}\cdots \orbitDivSet_{I, \NnegativePunctures} = (\NnegativePunctures!)^{-1}\sum_{I}c_{I}\sum_{g \in \Sym_{\NnegativePunctures}} \pm \orbitDivSet_{I, g(1)}\cdots \orbitDivSet_{I, g(\NnegativePunctures)}.
\end{gathered}
\end{equation}
Here $I$ is some finite indexing set, $c_{I} \in \Q$, $\Sym_{\NnegativePunctures}$ is the symmetric group on $\NnegativePunctures$ letters, and the $\pm$ depend on the $g$ and gradings of the $\orbitDivSet_{I, i}$.

\begin{figure}[h]
\begin{overpic}[scale=.235]{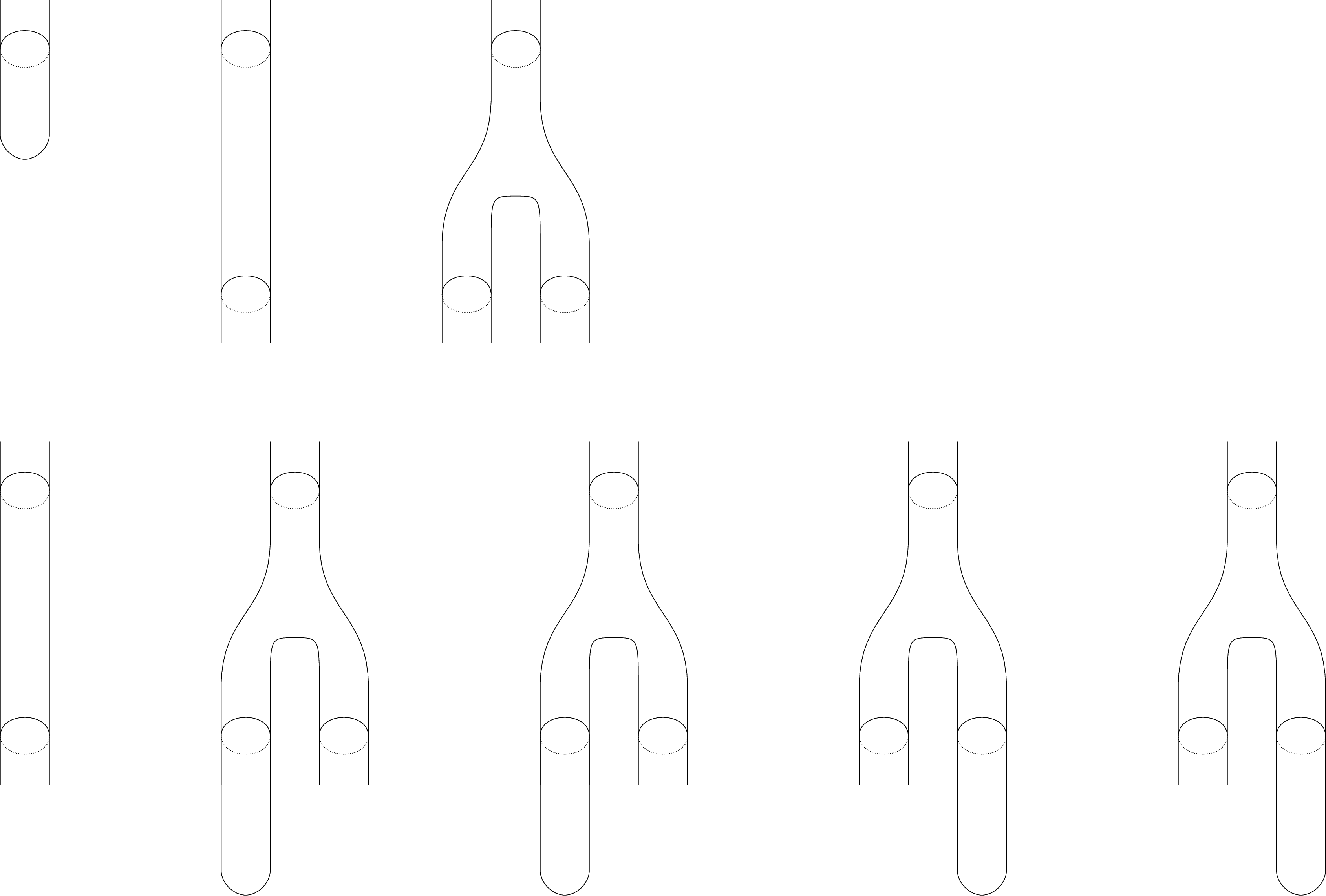}
\put(-10, 60){$\partialDivSet \orbitDivSet =$}
\put(-10, 26){$\partialDivSet^{\vec{\aug}}_{1} \orbit =$}
\put(9, 60){$+$}
\put(25, 60){$+$}
\put(47, 60){$+\ \cdots$}
\put(11, 26){$+$}
\put(33, 26){$+$}
\put(55, 26){$+$}
\put(80, 26){$+$}
\put(105, 26){$+\ \cdots$}
\put(17, 5){$\aug^{-}$}
\put(41.5, 5){$\aug^{+}$}
\put(73, 5){$\aug^{-}$}
\put(97, 5){$\aug^{+}$}
\end{overpic}
\caption{Visual representations of the first few terms in $\partialDivSet$ and $\partialDivSet^{\vec{\aug}}_{1}$.}
\label{Fig:BilinDiff}
\end{figure} 

On $\orbitVS[1]$ the chain level fundamental class and bilinearized contact homology differentials are
\begin{equation}\label{Eq:BilinDef}
\begin{gathered}
\partialDivSet^{\vec{\aug}}_{0}\orbit = \left(\aug^{+} - \aug^{-}\right)\orbitDivSet,\\
\partialDivSet^{\vec{\aug}}_{1}\orbit = \sum_{\NnegativePunctures \geq 1} (\partialDivSet_{\NnegativePunctures})^{\vec{\aug}}_{1}\orbit,\\
(\partialDivSet_{\NnegativePunctures})_{1}^{\vec{\aug}}\orbit = (\NnegativePunctures!)^{-1}\sum_{I}c_{I}\sum_{g \in \Sym_{\NnegativePunctures}}\sum_{i=1}^{\NnegativePunctures} \aug^{-}\left(\orbitDivSet_{I, g(1)}\cdots \orbitDivSet_{I, g(i-1)}\right)\orbit_{I, g(i)}\aug^{+}\left(\orbitDivSet_{I, g(i+1)}\cdots \orbitDivSet_{I, g(\NnegativePunctures)}\right).\\
\end{gathered}
\end{equation}
So $\partialDivSet^{\vec{\aug}}_{0}$ is simply the difference of the augmentations and $\vec{\aug}_{\ast}$ is the induced map on homology. The geometric interpretation of $(\partialDivSet_{\NnegativePunctures})_{1}^{\vec{\aug}}\orbit$ is that each (perturbed) holomorphic curve contributing to $\partialDivSet$ with $\NnegativePunctures \geq 1$ negative punctures gets all but one of its negative ends capped off with augmentation planes in the $\posNegRegionComplete$ to obtain a contribution to $(\partialDivSet_{\NnegativePunctures})_{1}^{\vec{\aug}}\orbit$. The sum over $g \in \Sym_{\NnegativePunctures}$ tells us that we consider all orderings of the negative punctures, which contrasts with the bilinearized differentials for Legendrian links \cite{BC:Bilinearized}. See Figure \ref{Fig:BilinDiff}.

As is typical in Floer homology and SFT, our strategy for proving Theorem \ref{Thm:AlgGiroux} is to equip $\Nhypersurface$ with model geometry facilitating explicit description of holomorphic curves. The strategy produces the following result, within which our bilinearized objects play a starring role:

\begin{thm}\label{Thm:MainComputation}
Provided the data of
\be
\item an action bound $L \gg 0$,
\item a contact form $\alphaDivSet$ for $(\divSet, \xiDivSet)$ with Reeb vector field $\ReebDivSet$ whose orbits of action $\leq L$ are non-degenerate,
\item adapted almost complex structures $\JDivSet$, $\JDivSet_{+}$, and $\JDivSet_{-}$ on $\R_{s}\times \divSet$, $\posRegionComplete$, and $\negRegionComplete$ respectively for which the $\JDivSet_{\pm}$ agree with $\JDivSet$ along the cylindrical ends of the $\posNegRegionComplete$,
\item choices of orientations and perturbations to compute $CH(\divSet, \xiDivSet)$ and for orbits of action $\leq L$
\ee
there exists a contact form $\alphaEpsilon$ with Reeb vector field $\ReebEpsilon$, an $\alphaEpsilon$-adapted almost complex structure $\JEpsilon$ on $\R \times \Nhypersurface$, orientations, and virtual perturbations which compute the contact homology $CH(\Nhypersurface, \xi)$ for orbits of action $\leq L$ so that the following properties are satisfied:
\be
\item The closed orbits $\orbit$ of $\ReebEpsilon$ are in one-to-one correspondence with orbits $\orbitDivSet$ of $\ReebDivSet$.
\item The correspondence yields equivalent Conley-Zehnder indices, $\CZ(\orbit) = \CZ(\orbitDivSet)$, partitions of orbits into ``good'' and ``bad'' subsets, and actions.
\item The contact homology differential $\partialNH$ for $\Nhypersurface$ takes the form
\begin{equation}\label{Eq:CHDiff}
\partialNH \orbit = (\partialDivSet^{\vec{\aug}}_{0} + \partialDivSet^{\vec{\aug}}_{1})\orbit
\end{equation}
on individual good orbits $\orbit$.
\ee
\end{thm}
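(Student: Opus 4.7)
The plan is to build $\alphaEpsilon$ as a model modification of $\alpha = f\,d\tau + \beta$ whose Reeb orbits of action $\leq \actionBound$ are concentrated in a thin neighborhood of $\{0\}_\tau \times \divSet \subset \Nhypersurface$ and are in explicit bijection with orbits of $\ReebDivSet$ on $\divSet$. In an ideal Liouville collar of $\divSet$ one uses $\alphaEpsilon$ of the form $h_1(t)\,d\tau + h_2(t)\,\alphaDivSet$ for appropriately chosen $\epsilon$-dependent profile functions $h_1, h_2$, then perturbs to break the $\tau$-invariance and any $t$-Morse-Bott family of orbits so that each $\orbitDivSet$ lifts to a unique nondegenerate $\orbit$ in a small tubular neighborhood; outside the collar the Reeb field acquires a nonzero $\partial_\tau$-component and admits no closed orbits of action $\leq \actionBound$. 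The resulting bijection $\orbit \leftrightarrow \orbitDivSet$ directly gives property (1), and property (2) follows since the tubular neighborhood and linearized return map of each $\orbit$ are modeled on those of $\orbitDivSet$.

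I would then equip $\R_s \times \Nhypersurface$ with an $\alphaEpsilon$-adapted almost complex structure $\JEpsilon$ that along the collar is pulled back from $\JDivSet$ under the projection $(s, t, p, \tau) \mapsto (s, p)$, and on $\{f > 0\}$ and $\{f < 0\}$ interpolates to structures conjugate to $\JDivSet_+$ and $\JDivSet_-$ on the cylindrical ends of $\posRegionComplete$ and $\negRegionComplete$. As $\epsilon \to 0$ the pair $(\alphaEpsilon, \JEpsilon)$ realizes a neck stretch along $\R_s \times \{0\}_\tau \times \divSet$ together with contact-type stretches of the $\pm$ sides. Applying SFT compactness to any sequence of $\JEpsilon$-holomorphic curves in $\R_s \times \Nhypersurface$ with positive end $\orbit$ and bounded energy yields a holomorphic building consisting of (a) a main level, which by projection to $\R_s \times \divSet$ is a curve counted by $\partialDivSet_\NnegativePunctures$ for some $\NnegativePunctures \geq 0$, with positive end $\orbitDivSet$ and negative ends $\orbitDivSet_1, \ldots, \orbitDivSet_\NnegativePunctures$; (b) augmentation planes in $\posNegRegionComplete$ capping off some subset of these negative ends; and (c) at most one remaining negative end, which becomes the output of $\partialNH \orbit$.

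The enumeration of broken configurations reproduces (\ref{Eq:BilinDef}) term by term. Planes in $\posNegRegionComplete$ contribute the coefficients $\aug^\pm$, the choice of which negative end of the main level is left uncapped produces the sum $\sum_{i=1}^\NnegativePunctures$, and the sum over $g \in \Sym_\NnegativePunctures$ records how ordered assemblies of caps and one free end can be attached to the unordered negative punctures of the $\R_s \times \divSet$ curve, compatibly with the graded-commutativity convention of (\ref{Eq:DivSetPartial}). Broken holomorphic planes in $\R_s \times \Nhypersurface$ (the $\NnegativePunctures = 0$ contribution) consist of single augmentation planes in $\posRegionComplete$ or $\negRegionComplete$, with the outward orientation convention on $\negRegionComplete$ producing the sign difference $\partialDivSet^{\vec{\aug}}_0 \orbit = (\aug^+ - \aug^-)\orbitDivSet$.

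The principal technical obstacle is that naive dimension counting predicts a rigid curve in $\R_s \times \divSet$ with $\NnegativePunctures$ negative ends, glued to $k = \NnegativePunctures - 1$ augmentation planes, lives in a moduli space of expected dimension $-k$: each cap attachment destroys one $\R_s$-translation parameter that is no longer compensated. Such configurations contribute to $\partialNH$ only through the Euler number of an obstruction bundle over the gluing parameter space, and the virtual perturbation frameworks of \cite{Pardon:Contact, BH:ContactDefinition} must be extended to incorporate obstruction bundle gluing in the Kuranishi setting --- precisely the technical development promised in the abstract. I would show that for the model geometry above the relevant obstruction bundle is trivial of rank $k$ and admits a canonical nonvanishing section, so that each broken configuration contributes $\pm 1$ with signs matching those implicit in (\ref{Eq:BilinDef}) after careful bookkeeping of asymptotic markers and capping disk orientations. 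Granting this, equation (\ref{Eq:CHDiff}) follows by summing contributions over all $\NnegativePunctures \geq 0$.
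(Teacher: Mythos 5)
The outline of the construction of $\alphaEpsilon$, the bijection of orbits, and the $\CZ$ computation (properties (1) and (2)) is consistent with what the paper does, although the paper fixes $\epsilon > 0$ and uses a $\JEpsilon$-holomorphic foliation $\foliationEpsilon$ together with intersection positivity (Corollary~\ref{Cor:CurvesStuckInLeaves}) to trap curves inside leaves, rather than a degeneration argument $\epsilon \to 0$ with SFT compactness. The two strategies are close in spirit, but the fixed-$\epsilon$ foliation is what makes the Fredholm and obstruction-bundle analysis in \S\ref{Sec:CurvesNearGamma} explicit: the cokernel of the normal linearized operator $\NormalDual$ is identified with $H^{1}(\SigmaInfty)$ (Theorem~\ref{Thm:NormalDualCoker}), which is the geometric source of the rank $\NnegativePunctures - 1$ obstruction bundle you mention.

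The genuine gap is in the last step. You assert that after capping, the building has ``at most one remaining negative end,'' and that the obstruction bundle over the gluing parameter space is trivial of rank $k$ with a canonical nonvanishing section so each configuration contributes $\pm 1$. Neither claim is automatic, and the second is not what happens. There is nothing in SFT compactness or index positivity that prevents perturbed buildings with $\NnegativePunctures \geq 2$ uncapped negative ends (nor multi-level buildings with several curves in $\R_{s}\times\divSet$): such configurations exist in the thickened moduli spaces, have expected dimension one, and a priori contribute monomials of length $\geq 2$ to $\partialNH\orbit$, which would break Equation~\eqref{Eq:CHDiff}. The paper's proof that these vanish occupies \S\ref{Sec:MultipleNegPuncture}--\S\ref{Sec:Orientations} and is explicitly flagged in the introduction as ``much more difficult than the $\NnegativePunctures = 0,1$ cases.'' The mechanism is not a nonvanishing section of the obstruction bundle, but an \emph{algebraic cancellation}: one shows the normal multisections $\Multisec^{\normal}$ can be chosen invariant under a nontrivial subgroup of $\Sym_{\NnegativePunctures}$ permuting negative ends (Lemma~\ref{Lemma:EnlargedSymmetries}), that the zero set over the reduced gluing space is likewise invariant (Lemma~\ref{Lemma:ZeroSetInRedModSpace}), and that a transposition acts on the linearized multisection by swapping two rows of its Jacobian, reversing the sign (Lemma~\ref{Lemma:NGeqTwoZeros}). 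Without this symmetry-plus-orientation argument your proposal cannot conclude property (3), since it silently drops the $\NnegativePunctures\geq 2$ terms.

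A secondary issue is the inductive/coherence problem the paper calls ``buildings of buildings'': to even define the count in \eqref{Eq:CHDiff} one must count holomorphic buildings in $\R_{s}\times\divSet$ and in $\posNegRegionComplete$ that are themselves not transversely cut out, so the virtual perturbations for $\Nhypersurface$ must be melded compatibly with those already fixed for $\divSet$ and the fillings. This is handled in \S\ref{Sec:MultisecPrelim} via the modified melding construction with two constants $\Cmeld^{\normal} \gg \Cmeld^{\tangent}$, and you would need an analogue of Lemma~\ref{Lemma:ZeroSetNeckLength} to guarantee that the contributions are indeed gluings of perturbed buildings in the right spaces. Simply asserting the obstruction bundle gluing framework ``must be extended'' is not a substitute for setting this up, because the specific form of the gluing coefficients (Lemma~\ref{Lemma:GluingCoeffsClean}) is what makes the $\Sym_{\NnegativePunctures}$-argument work.
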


The last item tells us that $CH(\Nhypersurface, \xi)$ can be computed as the homology of a \emph{bilinearized algebra}, $\Algebra^{\vec{\aug}}$ as described in \S \ref{Sec:Algebra}. In light of Equation \eqref{Eq:BilinDef}, Theorem \ref{Thm:MainComputation} is the closed-string version of \cite[Lemma 2.3]{Ekholm:Nonloose} which computes Chekanov-Eliashberg algebras associated to $(n+1)$-dimensional Legendrians constructed from a pair of exact Lagrangian fillings of an $n$-dimensional Legendrian. The majority of this article is dedicated to proving Theorem \ref{Thm:MainComputation}. We proceed with an outline of what is to come.

After establishing notation in \S \ref{Sec:Prelim}, we describe contact forms $\alphaEpsilon$ on $\Nhypersurface$ in \S \ref{Sec:AlphaConstruction}. The $\alphaEpsilon$ facilitate computation of sutured contact homology as it is defined in \cite{CGHH:Sutures}. The construction here is similar to the $\dim \hypersurface = 2$ case described by Vaugon in \cite{Vaugon:Bypass}. All closed Reeb orbits are contained in the codimension $2$ contact submanifold $\divSet \subset \{0\} \times \hypersurface \subset \Nhypersurface$ which in the language of \cite{CFC:RelativeContactHomology} has an ``adapted hyperbolic normal bundle''.

Almost complex structures $\JEpsilon$ on the symplectization $\R_{s} \times \Nhypersurface$ are defined in \S \ref{Sec:JConstruction}. The $\JEpsilon$ have the property that $\R_{s} \times \Nhypersurface$ is foliated by codimension $2$, $\JEpsilon$-holomorphic submanifolds as described in \S \ref{Sec:HoloFoliations}. Each leaf of the foliation $\foliationEpsilon$ is either a copy of the symplectization $\R_{s} \times \divSet$ or one of the $\posNegRegionComplete$.

Every $\JEpsilon$ holomorphic curve must lie in some leaf of $\foliationEpsilon$, making $CH$ computation seem feasible. However, index calculations show that $\JEpsilon$-holomorphic curves having $\NnegativePunctures \geq 2$ negative puncture can never be transversely cut out. To compute contact homology using the pair $(\alphaEpsilon, \JEpsilon)$ we will count \emph{virtually perturbed} curves following the Kuranishi perturbation scheme of \cite{BH:ContactDefinition}.\footnote{Of course, we expect that the perturbation scheme of \cite{Pardon:Contact} would produce the same results.} This necessitates an analysis of cokernels of linearized $\delbarEpsilon$ operators carried out in \S \ref{Sec:CurvesNearGamma}. As is the case in \cite{Fabert:Pants, HT:GluingII}, our cokernel bundles have locally constant rank, providing hope that computation is feasible.

Using the aforementioned analysis, computing contact homology differentials $\partialNH$ for $\Nhypersurface$ amounts to counting zeros of multisections of Kuranishi orbibundles over thickened moduli spaces of multi-level SFT buildings whose levels consist of curves in $\R_{s} \times \divSet$ and the $\posNegRegionComplete$. Our ability to count SFT buildings relies on the obstruction bundle gluing techniques of Hutchings and Taubes \cite{HT:GluingII} which are adapted to the Kuranishi setting by Bao-Honda in \cite{BH:ContactDefinition}. In Theorem \ref{Thm:Gluing} we enhance this analysis to obtained refined estimates on the $\delbar$ of gluings of perturbed holomorphic maps. Details appear in \S \ref{Sec:GluingDetails} and are applicable to general contact manifolds $\Mxi$ equipped with $L$-simple contact forms \cite{BH:ContactDefinition}.

Combining the enhanced obstruction bundle gluing analysis with our understanding of the cokernels of linearized $\delbarEpsilon$ operators, we obtain an explicit description of the Kuranishi data associated to our $\partialNH$ computation in Lemma \ref{Lemma:GluingCoeffsClean}. A modification of the inductive construction of multisections of Kuranishi orbibundles from \cite{BH:ContactDefinition} in \S \ref{Sec:MultisecPrelim} allows us to describe contributions to $\partialNH$ associated to perturbed holomorphic curves with $0$ or $1$ negative punctures in \S \ref{Sec:PlanesAndCylinders} with relative ease. To complete the proof of Equation \eqref{Eq:CHDiff}, we show that all $\partialNH$ contributions with $\NnegativePunctures \geq 2$ negative punctures algebraically cancel via the arguments of \S \ref{Sec:MultipleNegPuncture} together with an analysis of orientations in \S \ref{Sec:Orientations}. This is much more difficult than the $\NnegativePunctures= 0, 1$ cases.

Provided Theorem \ref{Thm:MainComputation}, Theorems \ref{Thm:AlgGiroux} follows from a little bit of algebra carried out in \S \ref{Sec:Algebra}. There we define bilinearized invariants associated to pairs of augmentations of free cDGAs and work out basic foundational results. Adapting some tools from \cite{BL:MinimalModel}, these invariants are easy to define and their homotopical properties follow from standard homological algebra. Readers primarily interested in algebra can skip straight to \S \ref{Sec:Algebra} as the content there is free of differential geometry.

Finally, some basic computations are carried out in \S \ref{Sec:Computations}.

\subsection{Acknowledgments}

\begin{wrapfigure}{r}{0.07\textwidth}
\centering
\includegraphics[width=.07\textwidth]{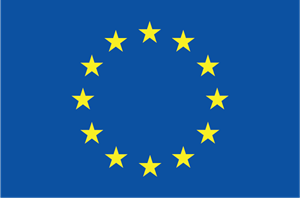}
\end{wrapfigure}

We thank Cofund MathInGreaterParis and the Fondation Mathématique Jacques Hadamard for supporting us as a member of the Laboratoire de Math\'{e}matiques d'Orsay at Universit\'{e} Paris-Saclay. This project has received funding from the European Union’s Horizon 2020 research and innovation programme under the Marie Skłodowska-Curie grant agreement No 101034255. Much of this project was completed at Uppsala University where we were partly supported by the Knut and Alice Wallenberg Foundation's grant KAW 2016.0198.

This project has benefited from conversations with Fr\'{e}d\'{e}ric Bourgeois, Ko Honda, Sam Lisi, Patrick Massot, Klaus Niederkruger-Eid, Takahiro Oba, and Zhengyi Zhou, many of which started at the CNRS conferences ``Advances in symplectic topology'' and ``Convexity in contact and symplectic topology'' held at Institut Henri Poincar\'{e}. We also thank the participants of Jo Nelson and Jacob Rooney's 2020 Obstruction Bundle Gluing seminar, which was the impetus for this project. Finally, we're especially grateful for the generosity of Erkao Bao and Georgios Dimitroglou Rizell, who have provided us unending guidance around Kuranishi structures and homological algebra.

\section{Preliminary notions}\label{Sec:Prelim}

In this section we provide enough background information to describe the geometric objects discussed in the introduction and get us to the end of \S \ref{Sec:CurvesNearGamma}.

\subsection{General remarks}

The singular homology and cohomology ($H_{\ast}(M)$ and $H^{\ast}(M)$) of a topological space $M$ will be assumed to have $\R$-coefficients unless otherwise specified. Compactly supported cohomology will be denoted $H^{\ast}_{C}(M)$.

When taking products of a manifold $M$ with the real line, we always put the line first in the product, $\R_{t} \times M$. It follows that if $\mathfrak{o}_{M} \in \wedge^{\max}(TM) \setminus \{ 0 \}$ is a preferred orientation, then $\partial_{t} \wedge \mathfrak{o}_{M}$ is our preferred orientation for $\R \times M$.

\subsection{Basic geometric objects}

\subsubsection{Symplectic manifolds}

Given a symplectic vector space $(V, \omega)$, write $\Symp(V, \omega)$ for the group of linear symplectic automorphisms. We reserve $\beta_{0}$ and $\omega_{0}$ for the standard Liouville and symplectic forms on $\R^{2n}_{x, y}$, with $J_{0}$ being the standard complex structure
\begin{equation*}
\beta_{0} = \half\sum x_{k}dy_{k} - y_{k}dx_{k}, \quad \omega_{0} = d\beta_{0} = \sum dx_{k}\wedge dy_{k}, \quad J_{0}\partial_{x_{k}} = \partial_{y_{k}}.
\end{equation*}
We use the abbreviation $\Symp_{n} = \Symp(\R^{2n}, \omega_{0}) \subset \GL(\R^{2n})$ and $\lieSymp_{n} = T_{\Id}\Symp_{n}$ for its Lie algebra, consisting of $2n \times 2n$ matrices $X$ for which $J_{0}X$ is symmetric.

An \emph{exact symplectic manifold} is a pair $(W, \beta)$ for which $(W, d\beta)$ is a symplectic manifold and we say that $\beta$ is a \emph{symplectic potential}. The \emph{Hamiltonian vector field}, $X_{H}$, associated to a function $H \in \Cinfty(W)$ and \emph{Liouville vector field}, $X_{\beta}$, are determined by the equations
\begin{equation}\label{Eq:HamLiouvilleBasics}
dH = \omega(\ast, X_{H}), \quad d\beta(X_{\beta}, \ast) = \beta \quad \implies \beta(X_{H}) = dH(X_{\beta}).
\end{equation}

A \emph{Liouville domain} is a compact, exact symplectic manifold for which $X_{\beta}$ points outward along $\partial W$. Provided a Liouville domain $(W, \beta)$ we can identify a collar neighborhood of its boundary as $(-1, 0]_{s} \times \partial W$ within which $\beta = e^{s}\alpha, \alpha = \beta|_{T\partial W}$ and then extend $W$ to a non-compact exact symplectic manifold $(\overline{W}, \overline{\beta})$ defined
\begin{equation}\label{Eq:CompleteLDomain}
\overline{W} = W \cup \left((-1, \infty)\times \partial W \right), \quad \overline{\beta}|_{W} = \beta, \quad \overline{\beta}|_{(-1, \infty)\times \partial W} = e^{s}\alpha.
\end{equation}
We say that $(\overline{W}, \overline{\beta})$ is a \emph{completed Liouville domain} or the \emph{completion of $(W, \beta)$}. See \cite{Giroux:IdealLiouville} for an alternate point of view on completed Liouville manifolds.

\subsubsection{Contact manifolds and Reeb dynamics}

Let $\alpha$ be a contact form for some $\Mxi$. The \emph{Reeb vector field} $R \in \Gamma(TM)$ is determined by the equations $\alpha(R) = 1$, $d\alpha(R, \ast) = 0$.
The \emph{action} of a closed orbit $\orbit$ of $R$ is defined to be its period:
\begin{equation*}
\action_{\alpha}(\orbit) = \int_{\orbit}\alpha.
\end{equation*}

For each closed Reeb orbit $\orbit$ we pick a point $m_{\orbit} \in \orbit$ which we call a \emph{marker}, uniquely determining a parameterization, $\gamma(q)$, defined by
\begin{equation*}
\orbit = \orbit(q): \aCircle \rightarrow M, \quad \frac{\partial \orbit}{\partial t} = R(\orbit(q)), \quad \orbit(0) = m_{\orbit}.
\end{equation*}

For each $\orbit$ with $\action_{\alpha}(\orbit) = a$, the time $a$ linearized flow $T\Flow^{a}_{\Reeb}$ sends $\xi|_{m_{\orbit}}$ to itself, determining a linear symplectic transformation
\begin{equation*}
\Ret_{\orbit} = T\Flow^{a}_{R}|_{\xi_{m_{\orbit}} } \in \Symp(\xi|_{m_{\orbit}}, d\alpha).
\end{equation*}
We say that $\orbit$ is \emph{non-degenerate} if $1 \notin \spec(A(1))$. For $L \in (0, \infty]$, we say that a contact form $\alpha$ is \emph{$L$-non-degenerate} if all Reeb orbits of action less that $L$ are non-degenerate. For $L = \infty$, we simple say that $\alpha$ is \emph{non-degenerate}.

If $\orbit$ is a $k$-fold iterate of some embedded Reeb orbit $\orbit_{1}$, we write $\cm(\orbit) = k$ to denote the \emph{covering multiplicity} and use the notation $\orbit = \orbit_{1}^{k}$ for iteration. Markers and framings for multiply-covered orbits will be assumed to be determined  by markers and framings on their underlying embedded orbits.

A \emph{framing}, $\framing$ of $\orbit$ is a trivialization of $\orbit^{\ast}\xi$ identifying it with the trivial bundle so that $d\alpha$ agrees with $\omega_{0}$. A framing allows us to write the $T\Flow^{t}_{R}|_{\xi|_{m_{\orbit}}}, t \in [0, a]$ as a path $A_{\framing} = A_{\framing}(t): [0, a]\rightarrow \Symp_{n}$ for which $\Ret = A_{\framing}(a)$. Assuming $\orbit$ is non-degenerate, we write $\CZ_{\framing}(\orbit) = \CZ(A_{\framing})$ for the Conley-Zehnder index of the path $A_{\framing}$. For the purposes of this paper it suffices to have the following facts on hand:

\begin{lemma}\label{Lemma:CZSummary}
The Conley-Zehnder index satisfies the following properties:
\be
\item For $0< \epsilon < \frac{2\pi}{a}$ the path $A(t) = e^{J_{0}\epsilon}: [0, a] \rightarrow \Symp_{2}$ has $\CZ(A) = 1$ and $\CZ(A^{-1}) = -1$.
\item For a constant $b \neq 0$, the path $A(t) = \Diag(e^{bt}, e^{-bt}): [0, a] \rightarrow \Symp_{2}$ has $\CZ(A) = 0$.
\item The index is additive with respect to direct sums: $\CZ(A \oplus B) = \CZ(A) + \CZ(B)$.
\item The parity of $\CZ$ is determined by the formula $(-1)^{\CZ(A) + n} = \sgn\circ \det(\Id_{2n} - A(a))$.
\ee
\end{lemma}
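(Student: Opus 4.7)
The lemma collects standard properties of the Conley-Zehnder index, so the plan is to reduce each item to either an axiom in the Robbin-Salamon characterization of $\CZ$ or an elementary two-dimensional computation. Property (3) I would take as part of the axiomatic definition: defining $\CZ$ via the Maslov index of the path $t \mapsto \mathrm{graph}(A(t)) \subset (V \oplus V, (-\omega_0) \oplus \omega_0)$ in the Lagrangian Grassmannian, additivity under direct sums is immediate from the analogous property of the Maslov index of Lagrangian paths.

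For (1) and (2), I would apply the Robbin-Salamon crossing-form formula $\CZ(A) = \tfrac{1}{2}\sgn\,\Gamma(A,0) + \sum_{t \in (0, a]} \sgn\,\Gamma(A, t)$, summed over the crossings where $\ker(\Id - A(t)) \neq 0$, with $\Gamma(A,t)(v) = \omega_0(v, \dot{A}(t)A(t)^{-1} v)$ restricted to that kernel. For the elliptic path $A(t) = e^{J_0 \epsilon t}$ on $\R^2$, one computes $\Gamma(A,0)(v) = \epsilon|v|^2$, which is positive definite of signature $+2$; the bound $\epsilon a < 2\pi$ forbids any further crossings, so $\CZ = 1$. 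The inverse path has $\dot{A}(0) = -J_0\epsilon$, flipping the crossing form to negative definite and giving $\CZ = -1$. For the hyperbolic path $\Diag(e^{bt}, e^{-bt})$ no eigenvalue equals $1$ when $t > 0$, while $\Gamma(A,0)(v) = -2b\, xy$ is hyperbolic of signature zero, yielding $\CZ = 0$.

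For (4) I would use (3) to reduce to the two-dimensional blocks of (1) and (2). Since $\Ret$ is nondegenerate, its eigenvalues decompose into elliptic, positive-hyperbolic, and negative-hyperbolic pairs, and after a homotopy rel endpoints within the space of paths with nondegenerate terminal value — along which $\CZ$ is constant and $\sgn\det(\Id - A(a))$ is locally constant — one may put $A$ into the form of a direct sum of standard two-dimensional blocks. The parity identity is then checked directly on each block: $\det(\Id_2 - e^{J_0\epsilon}) = 2 - 2\cos\epsilon > 0$ agrees with the odd value $\CZ = 1$ from (1), while $\det(\Id_2 - \Diag(e^{b}, e^{-b})) = 2 - e^{b} - e^{-b} < 0$ agrees with the even value $\CZ = 0$ from (2); the negative-hyperbolic block is handled analogously. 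Multiplicativity of $\det(\Id-\cdot)$ together with additivity from (3) propagate the identity to arbitrary direct sums, finishing the argument.

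The main obstacle is really the reduction step in (4): justifying that any nondegenerate symplectic path can be homotoped through such paths to a direct sum of two-dimensional standard blocks requires a path-connectedness statement in the space of symplectic paths with fixed nondegenerate endpoint in a given conjugacy class. This is classical linear-algebraic bookkeeping but easy to misstate, so the cleanest execution is to cite Robbin-Salamon (and/or Salamon-Zehnder) outright rather than redevelop their framework from scratch.
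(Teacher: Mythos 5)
The paper does not actually prove this lemma: it is stated as a summary of background facts about the Conley--Zehnder index (hence the label \texttt{CZSummary}) and no argument is given, the reader being expected to know or look up these properties. So there is no ``paper's proof'' to compare against, and your proposal should be judged on its own terms.

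On its own terms, your argument is correct and is the standard way to establish these facts. Your crossing-form computations for (1) and (2) check out: for $A(t) = e^{J_0\epsilon t}$ (the paper's $e^{J_0\epsilon}$ is evidently a typo omitting the $t$) the only crossing in $[0,a]$ is at $t=0$ since $\epsilon a < 2\pi$, and the crossing form there is $\epsilon|v|^2$, giving $\CZ = \tfrac12\cdot 2 = 1$; reversing the sign of the generator flips the signature and gives $-1$. For the hyperbolic path the crossing form at $t=0$ is $-2bxy$, which has signature zero on $\R^2$, giving $\CZ = 0$; there are no further crossings since $e^{bt}\neq 1$ for $t>0$. Property (3) is indeed built into the Robbin--Salamon axioms (direct-sum additivity of the Maslov/CZ index). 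Your treatment of (4) --- reduce to two-dimensional blocks via a homotopy rel endpoints through nondegenerate paths, then check $\det(\Id_2 - e^{J_0\theta}) = 2-2\cos\theta > 0$ against odd $\CZ$ and $\det(\Id_2 - \Diag(e^b,e^{-b})) = 2 - e^b - e^{-b} < 0$ against even $\CZ$, with multiplicativity of determinant and additivity of $\CZ$ propagating the identity --- is the right idea, and you are correct that the connectedness/normal-form step is the place where it is easiest to be imprecise. Citing Robbin--Salamon or Salamon--Zehnder for that reduction, as you propose, is exactly what the paper does implicitly by not proving the lemma at all.
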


We say that an orbit $\orbit^{k}$ is \emph{bad} if it is an $k$-fold cover of a Reeb orbit $\orbit$ for which
\begin{equation*}
\CZ(\orbit^{k}) - \CZ(\orbit) = 1 \bmod_{2}.
\end{equation*}
An orbit which is not bad is \emph{good}. This language  is justified by the orientation scheme for SFT \cite{BM:Orientations}.

\subsection{Simple neighborhoods of Reeb orbits}\label{Sec:SimpleNorbit}

We describe simple models for contact forms and Reeb dynamics near closed orbits, continuing to use the notation ($\orbit$, $a = \action_{\alpha}(\orbit)$, $\Ret$) laid out above.

\begin{model}\label{Model:SimpleNbhd}
Let $n_{0}, n_{-} \in \Z_{\geq 0}$ and choose matrices $X_{-}\in \lieSymp_{n_{-}}$, $X_{0} \in \lieSymp_{n_{0}}$. The matrices determine quadratic forms
\begin{equation*}
Q_{0}(z_{0}) = \half\omega_{0}(X_{0}z_{0}, z_{0}), \quad Q_{-}(z_{-}) = \half\omega_{0}(X_{-}z_{-}, z_{-})
\end{equation*}
on $\R^{2n_{0}}$ and $\R^{2n_{-}}$, respectively. This data determines a contact form $\alpha$ with Reeb vector field $R$ given by
\begin{equation*}
\begin{gathered}
\alpha = (1 + Q_{0} + Q_{-})dt + \beta_{0} \in \Omega^{1}(\R_{t} \times \R^{2n_{-}} \times \R^{2n_{0}})\\
R|_{(t, z_{-}, z_{0})} = \partial_{t} + X_{0}z_{0} + X_{-}z_{-}.
\end{gathered}
\end{equation*}
Then $\alpha$ descends to the quotient
\begin{equation*}
\Norbit = \R_{t} \times \R^{2n_{0}} \times \R^{2n_{-}} / \sim, \quad (t + a, z_{0}, z_{-}) \sim (t, z_{0}, -z_{-}).
\end{equation*}
so that $\orbit = \{ z_{-} = z_{0} = 0 \} \subset \Norbit$ is a closed Reeb orbit of action $a$. Using the standard symplectic bases for $\xi|_{(0, 0, 0)}$, the linear return map for $\orbit$ is then expressed as a block matrix
\begin{equation*}
\Ret = \begin{pmatrix}
e^{aX_{0}} & 0 \\ 0 & -e^{aX_{-}}
\end{pmatrix} \in \Symp_{n_{0} + n_{-}}.
\end{equation*}
\end{model}

\begin{assump}
When using the above model, we always assume that $Q_{-}$ is positive definite.
\end{assump}

For $N$ as above, we write $N_{r} = \{ |z_{0} + z_{-}| \leq r \} \subset N$ and $\pi$ for the projection
\begin{equation*}
\disk_{r}^{2n} \rightarrow N_{r} \xrightarrow{\pi} \aCircle
\end{equation*} 
Provided a model as above with $n_{-} \neq 0$, there are two natural choices of framings $\framing_{\pm}$ determined by the paths
\begin{equation*}
\Id_{\R^{2n_{0}}} \oplus e^{\pm J_{0}\frac{\pi}{a}t}: \R^{2n_{0}} \oplus \R^{2n_{-}} \rightarrow \pi^{-1}(t) \subset \Norbit.
\end{equation*}
Assuming non-degeneracy of $\orbit$, it is not difficult to check that
\begin{equation*}
\CZ_{\framing^{\pm}}(\orbit) =  \CZ(t\mapsto e^{atX_{0}}) \mp n_{-}.
\end{equation*}

The following lemma can be established using Darboux-Moser-Weinstein techniques. See \cite{BH:ContactDefinition}.

\begin{lemma}\label{Lemma:ApproxSimpleNbhd}
Let $\orbit$ be a closed Reeb orbit of action $a$ associated to a contact form $\alpha$ on some $M^{2n + 1}$. Then there exists $r > 0, n_{0}, n_{-}, X_{0}, X_{-}$ and a local embedding
\begin{equation*}
\phi: \Norbit \rightarrow M
\end{equation*}
for $\Norbit$ as in the model with $n = n_{0} + n_{-}$ and such that using the coordinates on $t, z_{0}, z_{-}$ on $N_{\epsilon}$,
\begin{equation*}
\begin{gathered}
\alpha = (1 + Q_{0} + Q_{-})dt + \beta_{0} + \alpha_{\hot}, \quad \alpha_{\hot} \in \bigO(|z_{0} + z_{-}|^{3})\\
R|_{(t, z_{0},  z_{-})} = \partial_{t} + X_{0}z_{0} + X_{-}z_{-} + R_{\hot}, \quad R_{\hot} \in \bigO(|z_{0} + z_{-}|^{2})
\end{gathered}
\end{equation*}
Here the $Q$ are determined by the $X$ as in Model \ref{Model:SimpleNbhd}. If $\orbit$ has covering multiplicity $\cm \in \Z_{> 0}$, then we may assume that $\phi$ is a $\cm$-fold covering of its image.
\end{lemma}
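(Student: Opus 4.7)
The plan is to build the normal form in three stages: a linear algebra decomposition of the Poincaré return map, a first-order model along $\orbit$ using that decomposition, and a Moser-type correction to absorb everything else into the $\hot$ remainders.

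First I would work in a tubular neighborhood of $\orbit$ using the splitting $TM|_{\orbit} = \R\langle R\rangle \oplus \xi|_{\orbit}$ together with the flow of $R$, so that a collar of $\orbit$ is identified with $\aCircle_{t} \times D$ where $D \subset \xi|_{m_{\orbit}}$ is a small symplectic ball and $t$ is the flow parameter starting at $m_{\orbit}$. Under this identification $\alpha = dt + \beta_{0} + O(|z|^{2})$ and the return to the $\{t = 0\}$ slice is given by $\Ret$ on the transverse disk.

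Next I would perform the key linear-algebra step: decompose $(\xi|_{m_{\orbit}}, d\alpha)$ as a $\Ret$-invariant symplectic direct sum $V_{0} \oplus V_{-}$, where $V_{-}$ is the sum of the generalized eigenspaces of $\Ret$ associated to negative real eigenvalues (which pair up with their reciprocals by symplecticity) and $V_{0}$ is everything else. Because $\Ret|_{V_{0}}$ has no negative real eigenvalue it admits a symplectic logarithm, so we can choose $X_{0} \in \lieSymp_{n_{0}}$ with $e^{aX_{0}} = \Ret|_{V_{0}}$ in a Darboux basis of $V_{0}$. Similarly $-\Ret|_{V_{-}}$ has only positive real or complex eigenvalues and admits a symplectic logarithm $aX_{-} \in \lieSymp_{n_{-}}$ with $-e^{aX_{-}} = \Ret|_{V_{-}}$; requiring that $Q_{-}$ be positive definite can be arranged by further rotating within $V_{-}$ (this matches the paper's standing assumption). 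The sign swap on $V_{-}$ is exactly what forces the identification $(t+a,z_{0},z_{-}) \sim (t,z_{0},-z_{-})$ in $\Norbit$.

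Having fixed $X_{0}$ and $X_{-}$, I would use the symplectic paths $e^{tX_{0}}$ on $V_{0}$ and $e^{tX_{-}}$ on $V_{-}$ to propagate the chosen Darboux bases around $\orbit$. This produces a symplectic trivialization of $\xi|_{\orbit}$ over $\Norbit$, well-defined in view of the $\Z/2$-twist on $V_{-}$, and hence a smooth embedding $\phi_{0}: \Norbit \to M$ with $\phi_{0}(\{z_{0}=z_{-}=0\}) = \orbit$ whose derivative along $\orbit$ realizes the linearized flow. By construction $\phi_{0}^{\ast}\alpha$ and the model form $\alpha_{\mathrm{mod}} = (1 + Q_{0} + Q_{-})dt + \beta_{0}$ agree to first order in $(z_{0},z_{-})$ along $\orbit$, so their difference and the difference of their Reeb fields are $O(|z_{0}+z_{-}|^{2})$ in $\alpha$ and $O(|z_{0}+z_{-}|)$ in $R$.

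The final step is a Moser-type correction. Setting $\alpha_{s} = (1-s)\alpha_{\mathrm{mod}} + s\phi_{0}^{\ast}\alpha$ for $s\in[0,1]$, each $\alpha_{s}$ is contact on a sufficiently small $N_{r}$ because the difference is quadratically small in $(z_{0},z_{-})$. Solving the usual homological equation $\Lie_{Y_{s}}\alpha_{s} + \partial_{s}\alpha_{s} = \mu_{s}\alpha_{s}$ for a time-dependent vector field $Y_{s}$ vanishing to second order on $\orbit$ and integrating yields a diffeomorphism $\psi$, fixing $\orbit$ pointwise and tangent to the identity there to second order, with $\psi^{\ast}\phi_{0}^{\ast}\alpha - \alpha_{\mathrm{mod}} \in O(|z_{0}+z_{-}|^{3})$. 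Setting $\phi = \phi_{0} \circ \psi$ gives the statement for $\alpha$; the corresponding statement for $R$ with remainder $O(|z_{0}+z_{-}|^{2})$ follows by differentiating the defining equations $\alpha(R)=1$, $d\alpha(R,\cdot)=0$. For a $\cm$-fold cover, I would simply carry out the construction near the underlying embedded orbit and precompose with the $\cm$-fold covering $\aCircle \to \R/(a/\cm)\Z$; the $\Z/2$-twist factor behaves correctly because it only depends on the parity of the number of traversals of the twist monodromy in each iterate.

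The main obstacle is the linear algebra of the second stage: carefully isolating the $V_{-}$ block so that the resulting symplectic logarithm $X_{-}$ exists with $Q_{-}$ positive definite, and checking that the twist on $V_{-}$ is genuinely the $\Z/2$ identification used in Model \ref{Model:SimpleNbhd}. Once this linear model is in place, the Moser argument and the covering-multiplicity statement are standard.
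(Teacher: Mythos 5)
The three-stage Darboux--Moser--Weinstein outline is indeed the standard route to this lemma, and it is the one the paper implicitly refers to by citing \cite{BH:ContactDefinition}; the symplectic-logarithm existence claims and the covering-multiplicity reduction are all sound. There is, however, one genuine error and one point that needs more care.

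The error is the sentence asserting that ``requiring that $Q_{-}$ be positive definite can be arranged by further rotating within $V_{-}$.'' Definiteness of a quadratic form is invariant under any invertible linear change of basis, so a symplectic rotation of $V_{-}$ cannot convert an indefinite $Q_{-}$ into a positive definite one. Worse, positive definiteness of $Q_{-}=\tfrac12\omega_{0}(X_{-}\cdot,\cdot)$ is equivalent to $J_{0}X_{-}$ being positive definite symmetric, which forces $X_{-}$ to be conjugate to a skew-symmetric matrix and hence to have purely imaginary spectrum; consequently $-e^{aX_{-}}$ has all its eigenvalues on the unit circle. This is incompatible with $\Ret|_{V_{-}}$ having a negative-hyperbolic pair $\{\lambda,1/\lambda\}$ with $\lambda<0$, $\lambda\neq -1$, which is exactly the case that makes $V_{-}$ necessary in the first place (the block $\operatorname{diag}(\lambda,1/\lambda)$ with $\lambda<0$, $\lambda\neq-1$ is not in the image of $\exp:\lieSymp_{1}\to\Symp_{1}$, so it cannot be absorbed into $V_{0}$). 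In short: your construction of $X_{-}=\tfrac1a\log(-\Ret|_{V_{-}})$ is correct and suffices for the lemma as stated, but the claim that one can also force $Q_{-}$ positive definite is false for negative hyperbolic orbits; you should simply drop that claim rather than try to repair it by rotating.

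On the Moser step: as written, Gray stability gives $\psi^{*}\phi_{0}^{*}\alpha=f\,\alpha_{\mathrm{mod}}$ with $f=1+O(|z|^{2})$ in general, which would only yield $\alpha_{\hot}\in O(|z|^{2})$, not $O(|z|^{3})$. Your conclusion does hold, but it relies on a point you leave implicit: because $\phi_{0}$ is built from the symplectic trivialization $t\mapsto \Phi(t)e^{-tX}$ (linearized Reeb flow composed with $e^{-tX}$), the Reeb vector field of $\phi_{0}^{*}\alpha$ linearizes to exactly $\partial_{t}+Xz$, and hence the quadratic part of the $dt$-coefficient of $\phi_{0}^{*}\alpha$ is exactly $Q_{0}+Q_{-}$. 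This means $\partial_{s}\alpha_{s}(R_{s})=O(|z|^{3})$ rather than merely $O(|z|^{2})$, so the conformal factor satisfies $f=1+O(|z|^{3})$ and the $O(|z|^{3})$ estimate on $\alpha_{\hot}$ follows. You should make this $2$-jet matching explicit; otherwise a reader cannot see why the conformal factor does not spoil the stated order of vanishing.
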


Throughout this article, we'll use ``$\hot$'' as a decorator for ``higher-order terms'' or, more generally, terms in equations of comparatively small magnitude.

\nom{$\hot$}{``Higher-order terms'' decoration}

\begin{defn}\label{Def:SimpleNeighborhood}
Given a closed Reeb orbit $\orbit$ associated to some $(M, \alpha)$, we say that $\orbit$ is \emph{simple} if there exists $\phi$ as above so that $\alpha_{\hot} = 0$. In such a situation, we say that $\Norbit$ is a \emph{simple neighborhood of $\orbit$}.
\end{defn}

\begin{notation}
Always assume that $r > 0$ is as small as is required by the context of the conversation. When working on a simple neighborhood, we always combine $z_{0}$ and $z_{-}$ into a single variable $z$ and write $Q = Q_{0} + Q_{-}$ as an $\R$-valued function of $z$ and $X = X_{0}\oplus X_{-}$. We write $\Sec_{\Norbit}$ for the space of sections $\aCircle \rightarrow N_{\infty}$ which can be identified as a space of twisted periodic functions,
\begin{equation*}
\Sec_{\Norbit} = \{ z(t) = (z_{0}(t), z_{-}(t)) \in \Cinfty([0, a], \R^{2n}) \quad :\quad (z_{0}(a), z_{-}(a)) = (z_{0}(0), -z_{-}(0)) \}.
\end{equation*}
\end{notation}

In this paper, we will deal will contact forms $\alpha$ which are not exactly simple.

\begin{defn}\label{Def:SimpleEnough}
A neighborhood $\Norbit$ of a Reeb orbit $\orbit$ as described above is \emph{simple enough} if within the neighborhood, there exist a smooth function $H: \Norbit \rightarrow \R_{>0}$ for which
\begin{equation}\label{Eq:RSimpleEnough}
R|_{(t, z)} = H\Big(\partial_{t} + Xz\Big)
\end{equation}
where $X \in \lieSymp_{n}$ is independent of $t$ and $d\alpha$ is symplectic on the fibers of $N$.
\end{defn}

\subsubsection{Asymptotic operators and eigendecomposition}\label{Sec:Eigendecomp}

Given a non-degenerate $\orbit$ with a simple enough neighborhood $\Norbit$ we have an \emph{asymptotic operator}
\begin{equation*}
\AsymptoticOp_{\orbit}: \Sobolev^{1, 2}(\Sec_{\Norbit}) \rightarrow \Ltwo(\Sec_{\Norbit}), \quad \AsymptoticOp_{\orbit} = -J_{0}\frac{\partial}{\partial q} + J_{0}X
\end{equation*}
The operator $\AsymptoticOp_{\orbit}$ admits an eigendecomposition and non-degeneracy of $\orbit$ implies that $0 \notin \spec(\AsymptoticOp_{\orbit})$.

\begin{notation}
We always organize the eigendecomposition of $\AsymptoticOp_{\orbit}$ into $1$-dimensional eigenspaces spanned by eigenfunctions $\zeta_{k}, k\in \Z_{\neq 0}$ with associated eigenvalues $\lambda_{k} \in \R$
\begin{equation*}
\lambda_{k} \leq \lambda_{k + 1}, \quad \lambda_{-1} < 0 < \lambda_{1}, \quad \AsymptoticOp_{\orbit}\eta_{k} = \lambda_{k}\zeta_{k}.
\end{equation*}
\end{notation}

\subsection{Almost complex structures}\label{Sec:TameJOverview}

An almost complex structure $J$ on a symplectic manifold $(W, \omega)$ is \emph{$\omega$-tame} if $\omega(V, JV) > 0$ for all non-zero $V \in TW$ and is \emph{$\omega$-compatible} if $g_{\omega, J} = \omega(\ast, J\ast)$ is a $J$-invariant Riemannian metric. By Equation \eqref{Eq:HamLiouvilleBasics},
\begin{equation*}
X_{H} = J \grad H
\end{equation*}
for an $\omega$-compatible $J$ and $H \in \Cinfty(W)$, where the gradient $\grad H$ is computed with respect to $g_{\omega, J}$.

\begin{defn}\label{Def:TameJ}
Let $\alpha$ be a contact form for some $\Mxi$ with Reeb field $R$ and let $s$ be a coordinate on $\R$. An almost complex structure $J$ on an $I$-symplectization $I \times M$ is \emph{$\alpha$-tame} if
\be
\item $J$ is invariant under translation in the $s$-coordinate,
\item $J(\partial_{s}) = \reebJScale R$ for some $\reebJScale \in \Cinfty(M, (0, \infty))$, and
\item there is a $2n$-plane field $\xi_{J} \subset TM$ satisfying $J\xi_{J} = \xi_{J}$ and $d\alpha(V, JV)$ for all non-zero $V \in \xi_{J}$.
\ee
Using the notation of Equation \eqref{Eq:CompleteLDomain}, an almost complex structure $J$ on a completed Liouville domain $(\overline{W}, \overline{\beta})$ is \emph{$\alpha$-tame} if it is $d\beta$-compatible and there exists $C < 0$ such that the restriction of $J$ to the set $\{ s  > C \} \subset (-1, \infty)_{s}\times \partial W$ is $\alpha$-tame.
\end{defn}

According to \cite[Section 3.4]{BH:Cylindrical}, an $\alpha$-tame $J$ satisfies the $\SFT$-compactness results of \cite{SFTCompactness} and may be used to compute contact homology as defined in \cite{BH:ContactDefinition}.

\subsubsection{Simple almost complex structures}\label{Sec:SimpleJ}

Let $\Norbit$ be a simple enough neighborhood of an embedded Reeb orbit $\orbit$ of action $a > 0$.

\begin{defn}\label{Def:SimpleJ}
An $\alpha$-tame $J$ is \emph{simple on $\Norbit$} if
\be
\item $(J \partial_{s})|_{(s, t, z)} = \partial_{t} + Xz$ where $X$ is as described in Equation \eqref{Eq:RSimpleEnough}, and
\item $J$ preserves the fibers of $\Norbit$. On each fiber it agrees with $J_{0}$.\footnote{Note that even when $n_{-} \neq 0$, the $J_{0}$ as above is well defined on $N$ because it is preserved by the projection $\R_{t} \times \disk^{2n} \rightarrow N$.}
\ee
Then $\xi_{J}$ is given by the fibers of $\Norbit$ locally. For $L > 0$, we say that $J$ is \emph{$L$-simple} if for each Reeb orbit $\orbit$ with $\action_{\alpha}(\orbit) \leq L$, there is a simple neighborhood $\Norbit \subset M$ about $\orbit$ within which $J$ is simple.
\end{defn}

\begin{assump}
We always assume that $J$ is simple on $\R_{s} \times \Norbit$ for some simple enough neighborhood $\Norbit$ of a $\orbit$. So within $\R_{s} \times \Norbit$, $J$ depends only on the matrix $X$ of Equation \eqref{Eq:RSimpleEnough}.
\end{assump}


If $J$ is an $\alpha$-tame almost complex structure on $\R \times M$ satisfying $J\partial_{s} = \reebJScale R$ and preserving a hyperplane field $\xi_{J} \subset TM$, then there exists $\alpha_{J} \in \Omega^{1}(M)$ and a projection operator $\pi_{J}: TM \rightarrow \xi_{J}$ given by
\begin{equation}\label{Eq:AlphaJDef}
\alpha_{J}(\reebJScale R) = 1,\quad \ker(\alpha_{J}) = \xi_{J}, \quad \pi_{J}(V) = V - \alpha_{J}(V)\reebJScale R, \pi_{J}|_{\xi_{J}} = \Id_{\xi_{J}}.
\end{equation}

\subsection{Riemann surfaces}\label{Sec:RiemannSurfaces}

Here we review facts about Riemann surfaces required for this article, considering only rational ($g=0$) curves with a distinguished positive puncture to simplify our setup.

An \emph{ordered puncture set} is a tuple $\orderedPunctureSet = (z_{1}, \dots, z_{\NnegativePunctures})$ consisting of $\NnegativePunctures \in \Z_{\geq 0}$ distinct points $z_{i} \in \C$. Write
\begin{equation*}
\Sigma = \Sigma_{\orderedPunctureSet} = \C \setminus \orderedPunctureSet.
\end{equation*}
Each $\Sigma \subset \C$ has a distinguished puncture $\infty$ by viewing $\C \subset \ProjOne$.

\nom{$\NnegativePunctures$}{Number of negative punctures of a $\Sigma$}

An \emph{ordered, marked puncture set} is a pair $(\orderedPunctureSet, \orderedMarkerSet)$ where $\orderedPunctureSet$ is an puncture set and $\orderedMarkerSet = (m_{1}, \dots, m_{\NnegativePunctures})$ is an ordered collection of \emph{markers}, $m_{k} \in T_{z_{k}}\C$. At the distinguished puncture $\infty$, we always use the asymptotic marker in $T_{\infty}\ProjOne$ determined by the real line $\R \subset \C$. We'll write $\Sigma_{\orderedPunctureSet, \orderedMarkerSet}$ when we intend to keep track of markers.

Let $M$ be a contact manifold with contact form $\alpha$ and Reeb vector field $R$. An \emph{ordered, marked, orbit-labeled puncture set} is a tuple $(\orderedPunctureSet, \orderedMarkerSet, \orbit, \orderedOrbitSet)$ where
\be
\item $(\orderedPunctureSet, \orderedMarkerSet)$ is a marked puncture set.
\item $\orbit$ is a closed $R$ orbit, which we view as assigned to the distinguished puncture, $\infty$, and
\item $\orderedOrbitSet = (\orbit_{1}, \dots, \orbit_{\NnegativePunctures})$ is an assignment of a closed Reeb orbit $\orbit_{k}$ to each $z_{k} \in \orderedPunctureSet$.
\ee

\subsection{Asymptotics and simple domains}\label{Sec:AsymptoticIntro}

Suppose that we are working within a simple enough neighborhood $\Norbit$ of some $\orbit$ on which $J$ is simple. Then $\alpha_{J} = dt$ so that $\delbarJ u$ can be locally calculated
\begin{equation}\label{Eq:SimpleDelBar}
\begin{gathered}
u = (s, t, z): \Sigma \rightarrow \R_{s} \times \Norbit \\
2\delbarJ u = \Big( ds - dt \circ \domainJ, dt + ds \circ \domainJ, dz + J_{0}dz\circ \domainJ - Xz \otimes ds \circ \domainJ - J_{0}Xz \otimes dt \circ \domainJ \Big).
\end{gathered}
\end{equation}

We write $\halfcyl_{a, \pm}$ for the positive and negative half-infinite cylinders and $\annulus_{C}$ for the annulus,
\begin{equation*}
\halfcyl_{a, +} = [0, \infty)_{p} \times (\aCircle)_{q},\quad \halfcyl_{a, -} = (-\infty, 0]_{p}\times (\aCircle)_{q}, \quad \annulus_{C, a} = [-C, C]_{p}\times (\aCircle)_{q}.
\end{equation*}
Half-cylinders and annuli are always assumed to be equipped with the complex structure $\domainJ \partial_{p} = \partial_{q}$. A half-cylindrical neighborhood of a puncture $z$ of a $\Sigma$ is the image of a conformal map $\phi: \halfcyl_{\pm} \rightarrow$ such that $\lim_{p \rightarrow \pm \infty}\phi = z \in \ProjOne$.

\nom{$\halfcyl_{a, \pm}, \annulus_{C, a}$}{Half cylinders and annuli}

\begin{defn}\label{Def:SimpleHalfCyl}
A map $u = (s, t, z)$ from a half-cylinder or annulus into a simple enough neighborhood $\Norbit$ of a Reeb orbit $\orbit$ is \emph{simple} if there is a $\cm \in \Z_{> 0}$ for which there is a $\cm$-fold covering $\pi_{\cm}: \widetilde{\Norbit} \rightarrow \Norbit$, a constant $s_{0} \in \R$, and a lift $\widetilde{u}$ with target $\widetilde{\Norbit}$,
\begin{equation*}
\widetilde{u} = (\widetilde{s}, \widetilde{t}, \widetilde{z}), \quad u= \pi_{\cm}\widetilde{u}, \quad (\widetilde{s}, \widetilde{t}) = (p + s_{0}, q).
\end{equation*}
\end{defn}

\begin{conv}
When performing local calculations along half-cylinders and annuli, we always implicitly replace maps $u$ of half-cylinders with their lifts $\widetilde{u}$ so that we can assume that $\cm = 1$.
\end{conv}

The $\delbar$ of Equation \eqref{Eq:SimpleDelBar} applied to a simple annulus or cylinder is concentrated in the $z$ coordinate, yielding the Floer-type equation
\begin{equation*}
\delbarJ u = \delbar_{\domainJ, J_{0}} z - (J_{0}Xz\otimes dp)^{0, 1} = \Big(\frac{\partial z}{\partial p} - \AsymptoticOp_{\orbit}z\Big)\otimes_{\C} dp^{0, 1}
\end{equation*}
Hence if $u = (p + s_{0}, q, z)$ is holomorphic, we can write its restriction to $\halfcyl_{a, \pm} \subset \Sigma$ as
\begin{equation}\label{Eq:HWZSummary}
u|_{\halfcyl_{a, \pm}}(p, q) = \Big( p + s_{0}, q, \sum_{\pm \lambda < 0} \kercoeff_{\lambda}e^{\lambda p}\zeta_{\lambda}(q) \Big), \quad s_{0}, \kercoeff_{\lambda} \in \R
\end{equation}

\begin{defn}\label{Def:Convergence}
A simple map $u = (s, t, z): \halfcyl_{a, \pm} \rightarrow \R \times \Norbit$ with domain a half-cylinder \emph{converges to $(\pm \infty, \orbit)$ in $\Sobolev^{k, p, w}$} if $e^{\pm w p}z \in \Sobolev^{k, p}(\halfcyl_{\pm})$ where $\Sobolev^{k, p}(\halfcyl_{\pm})$ is defined using the standard metric $dp^{2} + dq^{2}$.
\end{defn}

\begin{assump}
Whenever we use a weight $w$ to define $\Sobolev^{k, p, w}$ convergence, we assume that $\pm (\lambda_{\min} + w) < 0$ where $\lambda_{\min}$ is the eigenvalue of smallest absolute value associated to $\thicc{A}_{\orbit}$.
\end{assump}

\subsection{Manifolds of simple maps}\label{Sec:ManifoldsOfMaps}

Provided a pair $(\orbit, \orderedOrbitSet)$, and constants $k, p \in \Z_{\geq 1}, w \in \R$ write
\begin{equation*}
\Map^{k, p, w}(\orbit, \orderedOrbitSet)
\end{equation*}
for the \emph{manifold of parameterized simple maps asymptotic to $(\orbit, \orderedOrbitSet)$}. This is defined as the space of triples $(\orderedPunctureSet, \orderedMarkerSet, u)$ for which
\be
\item $(\orderedPunctureSet, \orderedMarkerSet, \orbit, \orderedOrbitSet)$ is an ordered, marked, orbit-labeled puncture set and
\item $\Sigma_{\orderedPunctureSet, \orderedMarkerSet} \rightarrow \R_{s} \times M$ is a map
\ee
such that the following conditions hold:
\be
\item $u$ has local regularity in the Sobolev space $\Sobolev^{k, p}$,
\item Each puncture in $\Sigma_{\orderedPunctureSet, \orderedMarkerSet}$ has a half-cylindrical neighborhood along which $u$ is simple,
\item Along each such half-cylinder we identify the marker with the half-infinite ray $\{ q = 0 \}$ and require $u$ restricted to this ray is asymptotic to the marker for the corresponding orbit as $p \rightarrow \pm \infty$.
\item At $\infty$ the half-cylinder $\Sobolev^{k, p, w}$-converges to $\orbit$.
\item At the $z_{j} \in \orderedPunctureSet$, the half-cylinder $\Sobolev^{k, p, w}$-convergence to $\orbit_{j} \in \orderedOrbitSet$.
\ee

For the following, we assume familiarity with the basics of hyperbolic geometry on stable Riemann surfaces, cf. \cite{Hummel}. When $\NnegativePunctures \leq 1$, the domains $\Sigma_{\orderedPunctureSet, \orderedMarkerSet}$ will have automorphism groups of positive dimension. In \cite{BH:ContactDefinition} it is shown that for a given $(\orderedPunctureSet, \orderedMarkerSet, u) \in \Map^{k, p, w}(\orbit, \orderedOrbitSet)$, removable punctures $\orderedPunctureSetRem$ disjoint from $\orderedPunctureSet$ may be added to the domain so that $\Sigma_{\orderedPunctureSet \cup \orderedPunctureSetRem} = \C \setminus (\orderedPunctureSet \cup \orderedPunctureSetRem)$ has $\chi < 0$. The automorphism group of the domain is then reduced to a $0$-dimensional space by requiring that the $\orderedPunctureSetRem$ are sent to specified submanifolds of the target via $u$.\footnote{See \cite{CM:GW} and \cite[Section 2.3]{Ekholm:SurgeryCurves} for similar constructions.} Therefore $\Sigma_{\orderedPunctureSet \cup \orderedPunctureSetRem}$ has a uniquely determined hyperbolic metric with associated $\delta$-thick-thin decomposition for each $\delta \in \R_{> 0}$. With $\orderedPunctureSet \cup \orderedPunctureSetRem$ fixed and $\delta$ sufficiently small, all connected components of the $\delta$-thin portion of $\Sigma_{\orderedPunctureSet \cup \orderedPunctureSetRem}$ will be cusps centered about the punctures. Moreover for $\delta$ sufficiently small, we can require that within each simple half-cylinder $P_{a, \pm} \subset \Sigma$, the point $(p, q) = (0, 0)$ lies on the boundary of a $\delta$-thin hyperbolic cusp. So for $\delta$ sufficiently small, the $P_{a, \pm} \subset \Sigma$ are uniquely determined and we say that they are \emph{$\delta$-half cylinders}. With a small $\delta$ specified write
\begin{equation*}
\Map^{k, p, w}_{\delta}(\orbit, \orderedOrbitSet) \subset \Map^{k, p, w}(\orbit, \orderedOrbitSet)
\end{equation*}
for the subspace of maps which have $\delta$-half cylinders.

\begin{properties}\label{Properties:PreferedTranslate}
By working with $\Map^{k, p, w}_{\delta}$ we get a space of maps for which groups of domain automorphisms are discreet and with uniquely determined simple half cylindrical ends such that $s$ is constant when restricted to the boundary of each such half cylinder. With $\Map_{\delta}^{k, p, w}(\orbit, \orderedOrbitSet)/\R_{s}$ denoting the quotient with respect to the action by shifting maps in the $\R_{s}$ direction of $\R_{s}\times M$, we also have a preferred lifting
\begin{equation*}
\Map_{\delta}^{k, p, w}(\orbit, \orderedOrbitSet)/\R_{s} \rightarrow \Map_{\delta}^{k, p, w}(\orbit, \orderedOrbitSet).
\end{equation*}
The \emph{preferred translate of an element of $\Map_{\delta}^{k, p, w}(\orbit, \orderedOrbitSet)/\R_{s}$} with map $[u] = [(s, v)]$ has a map $u$ such that
\begin{equation*}
s|_{\partial \halfcyl_{+}} = 0
\end{equation*}
where $\halfcyl_{+} \subset \Sigma$ is the $\delta$ half-cylinder about the puncture $\infty$.
\end{properties}

We will need the above properties so that we may work with parameterized maps with specified half-cylinders and preferred translates. In practice, we'll be working with (thickened) moduli spaces of (perturbed) holomorphic maps and these properties will guarantee that these spaces are smooth manifolds of the expected dimension (when transversality is assumed) without having to quotient by domain automorphism. The maps in these moduli spaces will be $\Cinfty$ so the $k, p, w$ can be ignored. We won't need the $\orderedPunctureSetRem$ either. We'll therefore abuse notation by abbreviating
\begin{equation*}
\Map_{\delta}(\orbit, \orderedOrbitSet) = \Map_{\delta}^{k, p, w}(\orbit, \orderedOrbitSet)
\end{equation*}
whose elements will be written $u: \Sigma \rightarrow \R_{s}\times M$ -- just for the map and ignoring the data of the $\orderedPunctureSet, \orderedMarkerSet, \orderedPunctureSetRem$. We'll simply say that $\Map_{\delta}(\orbit, \orderedOrbitSet)$ is the \emph{manifold of maps}.

\section{Contact forms $\alphaEpsilon$}\label{Sec:AlphaConstruction}

In this section we describe a family of contact forms
\begin{equation*}
\alphaEpsilon \in \Omega^{1}(\Nhypersurface), \quad \epsilon = (\epsilon_{\tau}, \epsilon_{\sigma}) \in \R_{\geq 0} \times \R_{> 0}, \quad \Nhypersurface = \R_{\tau} \times \hypersurface,
\end{equation*}
with Reeb vector fields $\ReebEpsilon$. The $\alphaEpsilon$ are designed to satisfy the requirements for setting up sutured contact homology \cite{CGHH:Sutures} when $\epsilon_{\tau} > 0$. Fix a contact form $\alphaDivSet$ on $\divSet = \{ 0 \}\times \divSet$ with Reeb vector field $\ReebDivSet$. When $\epsilon_{\tau} = 0$, $\ReebEpsilon$ is a Morse-Bott contact form, determining a real line of closed orbits for each closed orbit $\orbitDivSet$ of $\ReebDivSet$. For $\epsilon_{\tau} > 0$, the Morse-Bott symmetry is broken yielding a single closed Reeb orbit $\orbit$ in $M$ for each $\orbitDivSet$. Projections of $R_{\epsilon}$ to the normal bundle of $\divSet$ for $\epsilon_{\tau} = 0$ and $\epsilon_{\tau} > 0$ are shown in Figure \ref{Fig:HamNearGamma}.

\nom{$\alphaEpsilon$}{Contact forms on $\Nhypersurface$ with Reeb field $\ReebEpsilon$ determined by $\epsilon \in \R_{\geq 0} \times \R_{> 0}$}

\begin{figure}[h]
\vspace{3mm}
\begin{overpic}[scale=.5]{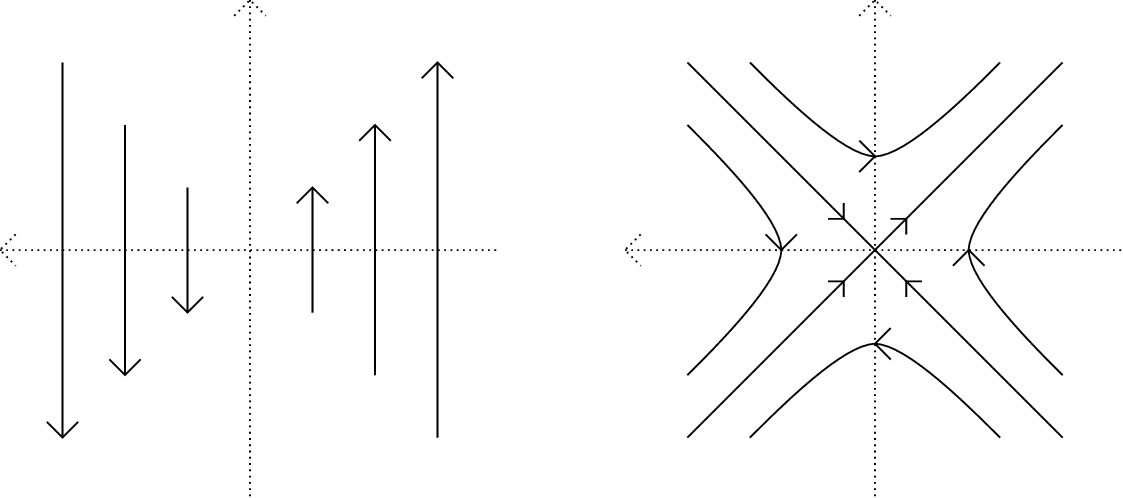}
\put(77, 47){$\tau$}
\put(-5, 21){$\sigma$}
\end{overpic}
\caption{The projection of $R_{\epsilon}$ to $\R_{\tau}\times [-C_{\sigma}, C_{\sigma}]_{\sigma}$ when $\epsilon_{\tau} = 0$ (left) and $\epsilon_{\tau} > 0$ (right).}
\label{Fig:HamNearGamma}
\end{figure}

While the $\alphaEpsilon$ are described constructively, they describe all convex hypersurfaces.

\begin{prop}\label{Prop:AllHypersurfacesAreConstructible}
Let $\alpha$ be a $\tau$-invariant contact form on $\Nhypersurface = \R_{\tau} \times \hypersurface$ for a closed $2n$-dimensional manifold $\hypersurface$. Then there is a family of contact forms $\alphaEpsilon$ as constructed in this section and diffeomorphisms $\Phi_{\epsilon}: \Nhypersurface \rightarrow \Nhypersurface$ such that $T\Phi_{\epsilon}\ker \alpha = \ker \alphaEpsilon$.
\end{prop}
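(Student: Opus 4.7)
The strategy is to identify the intrinsic data that determines $(\Nhypersurface, \xi)$ up to $\tau$-invariant contactomorphism, verify that the construction of this section is flexible enough to realize any such data, and then invoke Gray stability to upgrade a single match to the whole family.

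From $\alpha = f\,d\tau + \beta$ extract (i) the contact manifold $(\divSet, \xiDivSet)$ together with the contact form $\alphaDivSet = \beta|_{T\divSet}$, and (ii) the germs along $\divSet$ of the ideal Liouville forms $\beta_\pm = \pm\beta/f$ on $\posNegRegion$. I claim this data determines $\ker\alpha$ up to $\tau$-invariant diffeomorphism, with two local arguments covering complementary regions of $\hypersurface$. Near $\divSet$, choose a collar $(-\sigma_0, \sigma_0)_\sigma \times \divSet \subset \hypersurface$ in which $f = \sigma g$ for some positive $g$; a Moser-type argument on $\hypersurface$ (exploiting $\tau$-invariance to reduce from $\Nhypersurface$ to $\hypersurface$) yields a diffeomorphism putting $\alpha$ into a canonical collar form depending only on $\alphaDivSet$ and the germs of $\beta_\pm$. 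Away from $\divSet$ we rewrite $\alpha = \pm f(d\tau + \beta_\pm)$, so $\xi$ restricts to the contactization of the completed ideal Liouville domain $(\posNegRegionComplete, \beta_\pm)$ of \cite{Giroux:IdealLiouville}; two such contactizations with diffeomorphic Liouville inputs are $\tau$-invariantly contactomorphic by a Liouville-Moser argument.

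Next, inspect the construction of $\alphaEpsilon$ to confirm that its input data consists precisely of a contact form $\alphaDivSet$ on $\divSet$ together with completed Liouville fillings of $(\divSet, \xiDivSet)$ realizing the $(\posNegRegion, \beta_\pm)$, where $\epsilon = (\epsilon_\tau, \epsilon_\sigma)$ only controls the Morse--Bott-breaking perturbation and the collar scale near $\divSet$. Feeding the data extracted from $\alpha$ into the construction produces $\alphaEpsilon$ whose dividing set, induced contact form, and ideal Liouville germs all match those of $\alpha$. By the uniqueness in the previous step there exists a $\tau$-invariant diffeomorphism $\Phi_\epsilon$ with $T\Phi_\epsilon\ker\alpha = \ker\alphaEpsilon$ for some specific $\epsilon$. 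To handle the whole family, continuity of $\alphaEpsilon$ in $\epsilon$ together with Gray stability extends $\Phi_\epsilon$ to all parameter values, noting that the contact structure varies smoothly even though the Reeb dynamics degenerate across $\epsilon_\tau = 0$.

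The main obstacle is the gluing at the interface between the collar of $\divSet$ and the exterior: one must verify that the collar normalization terminates in exactly the boundary germ assumed by the Liouville-Moser step on $\posNegRegion$. The ideal Liouville perspective of \cite{Giroux:IdealLiouville}, in which $\beta_\pm$ blows up like $1/\sigma$ in a canonical way near $\divSet$, provides this matching once $\alphaDivSet$ is fixed. A secondary technical point is to carry out the two Moser steps $\tau$-equivariantly, which is automatic because the obstructing $1$-forms produced by the Moser trick can be chosen $\tau$-invariant by averaging and the resulting time-$1$ flows commute with translations in $\tau$.
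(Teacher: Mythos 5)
Your overall two-step structure matches the paper's: first show that any $\tau$-invariant contact structure with the extracted dividing-set data is diffeomorphic to one $\alpha_{\epsilon}$, and then move among all parameter values via Gray stability. Where you differ is in how the first step is discharged. The paper simply cites \cite[Proposition 6.4 \& Theorem 6.5]{DG:CircleBundles} (Ding--Geiges's classification of invariant contact structures on circle bundles, adapted in a footnote from $\sphere^1$-bundles to the trivial $\R$-bundle), which says the $\tau$-invariant contact diffeomorphism type is determined by $(\divSet,\xiDivSet)$ together with the homotopy classes of Liouville structures on the ideal Liouville domains $\posNegRegionComplete$. You instead sketch a direct Moser argument split between a collar of $\divSet$ and the contactizations of the ideal Liouville domains on the complement, and handle the interface with Giroux's ideal-Liouville normalization. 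Your route is more self-contained, while the paper's is shorter but relies on an external theorem whose statement (homotopy classes of Liouville structures, not just abstract Liouville diffeomorphism type) is a bit sharper than what your sketch explicitly matches against; you should be sure the ``Liouville-Moser'' step really only requires homotopy, not isomorphism, of the Liouville forms, since that is what is naturally supplied.

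One point the paper raises that your proposal passes over is completeness of the Moser flow. The manifold $\Nhypersurface=\R_\tau\times\hypersurface$ is open, and both Moser-type steps (the $\epsilon$-family Gray argument, and the collar normalization) produce time-dependent vector fields whose flows must be shown to exist for all $T\in[0,1]$. The paper flags this and resolves it by noting the generating vector fields are bounded on compact subsets and cannot escape $\R\times\hypersurface$ in finite time because $\hypersurface$ is closed. Your argument should say something to the same effect; as stated, ``Gray stability extends $\Phi_\epsilon$ to all parameter values'' is not automatic without a completeness check on the noncompact target.
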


We use $\epsilon$ varying within a $2$-parameter family to simplify analysis of perturbed holomorphic curves in a Kuranishi setup. Unfortunately, having $\epsilon$ be $2$-dimensional complicates the vector calculus of next few sections. We believe this is a worthwhile barter. The constant $\epsilon$ will be specified in \S \ref{Sec:TransverseSubbundleConstruction}.

Contact forms similar to our $\alphaEpsilon$ are constructed by Vaugon \cite{Vaugon:Bypass} in the $\dim \hypersurface = 2$ case. In the case $\dim \hypersurface > 2$, the restriction of our $\alphaEpsilon$ to a neighborhood of the dividing set $\divSet$ is an instance of the adapted hyperbolic forms of C\^{o}t\'{e} and Fauteux-Chapleau \cite{CFC:RelativeContactHomology}.

\subsection{Bump functions}\label{Sec:BumpFunctions}

Our construction will require a variety of cutoff functions which will be used throughout this article. They'll be denoted $\Bump{a}{b}$ for $a\neq b \in \R$ and $\Bump{c}{}$ for $c > 0$. See Figure \ref{Fig:Bumps}.

\nom{$\Bump{a}{b}, \Bump{c}{}$}{Cutoff functions}

\begin{figure}[h]
\begin{overpic}[scale=.5]{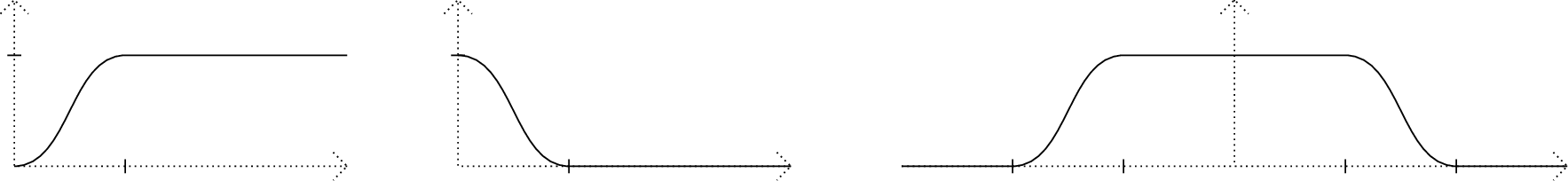}
\end{overpic}
\caption{Graphs of the functions $\Bump{0}{1}$ (left), $\Bump{1}{0}$ (center), and $\Bump{1}{}$ (right).}
\label{Fig:Bumps}
\end{figure}

Let $\Bump{0}{1}$ be a function satisfying
\begin{equation*}
\Bump{0}{1}(p) = \begin{cases}
0 & p \leq 0 \\
1 & p \geq 1
\end{cases}, \quad \frac{\partial \Bump{0}{1}}{\partial p} \in [0, 3].
\end{equation*}
Then for $a \neq b \in \R$ we define $\Bump{0}{1}\circ \phi_{a}^{b}$ where $\phi_{a}^{b}$ is the unique linear transformation of $\R$ such that $\phi_{a}^{b}(a) = 0, \phi_{a}^{b}(b) = 1$. This implies that
\begin{equation*}
\sup_{p} \left| \frac{\partial \Bump{a}{b}}{\partial p}\right| \leq \frac{3}{|a - b|}.
\end{equation*}
With a single subscript and $a > 0$, $\Bump{a}{}$ will denote the function
\begin{equation*}
\Bump{c}{} = \Bump{-2c}{-c}\Bump{2c}{c}.
\end{equation*}

\subsection{Smooth construction of $\hypersurface$}

We construct our smooth manifold $M = \R_{\tau} \times \hypersurface$ as follows. Let $(\negRegion, \beta^{-})$ and $(\posRegion, \beta^{+})$ be a pair of $2n$-dimensional Liouville domains for which we have identifications
\begin{equation*}
\partial \posNegRegion = \divSet, \quad \beta^{\pm}|_{T(\partial \posNegRegion)} = e^{-3}\alphaDivSet.
\end{equation*}
Using the flows for the Liouville vector fields $X_{\beta^{\pm}}$ on the $\posNegRegion$, we obtain collar neighborhoods
\begin{equation*}
(-5, -3]_{\sigma} \times \partial \posNegRegion, \quad X_{\beta^{\pm}} = \partial_{\check{s}}.
\end{equation*}

We define a neighborhood of our dividing set to be
\begin{equation*}
\NdividingSet = \R_{\tau} \times [-4, 4]_{\sigma} \times \divSet.
\end{equation*}
Using the collar neighborhoods described above, define
\begin{equation}\label{Eq:SmoothMconstruction}
\begin{gathered}
M = \left( \R_{\tau} \times \negRegion\right) \cup \NdividingSet \cup \left( \R_{\tau} \times \posRegion\right) / \sim\\
(\tau, \sigma, x^{\pm}) \sim (\tau, \sigma, x^{\pm}), \quad (\tau, \pm\sigma, x^{\pm}) \in \R_{\tau} \times [-4, -3]_{\sigma} \times \partial \posNegRegion.
\end{gathered}
\end{equation}

\nom{$\NdividingSet$}{Neighborhood of $\divSet$ in $\Nhypersurface$}

\subsection{Construction of $\alphaEpsilon$ near the dividing set}\label{Sec:AlphaNearGamma}

\begin{figure}[h]
\begin{overpic}[scale=.5]{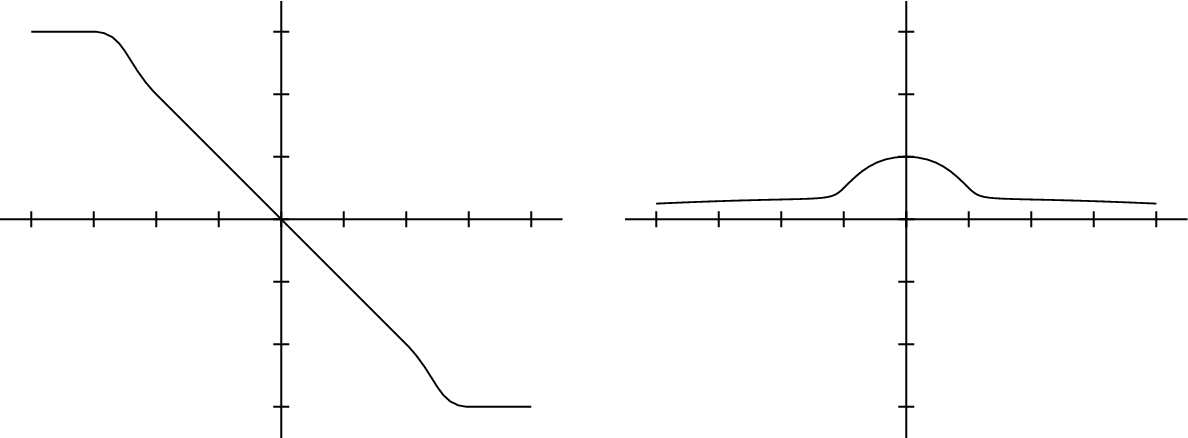}
\end{overpic}
\caption{Graphs of the functions $f$ (left) and $g$ (right).}
\label{Fig:fg}
\end{figure}

We will need some auxiliary functions to define the $\alphaEpsilon$. First choose a function $f= f(\sigma)$ satisfying the following properties:
\be
\item $f = 3$ along $[-4, -3]$ and $f = -3$ along $[3, 4]$,
\item $f = -\sigma$ along $[-2, 2]$,
\item $f$ is an odd function with $\frac{\partial f}{\partial \sigma} < 0$ along $(-3, 3)$.
\ee
Also choose a function $g=g(\sigma)$ satisfying the conditions:
\be
\item $g(\sigma) = 1 - \half\epsilon_{\sigma} \sigma^{2}$ along $[-C_{\sigma}, C_{\sigma}]$ with \begin{equation}\label{Eq:Csigma}
C_{\sigma} = \min\{1, \frac{1}{2\epsilon_{\sigma}}\},
\end{equation}
\item $g(\sigma) = e^{\sigma}$ along $[-4, -3]$ and $g(\sigma) = e^{-\sigma}$ along $[3, 4]$, and
\item $g$ is an even function with $\frac{\partial g}{\partial \sigma} > 0$ along $[-4, 0)$.
\ee
See Figure \ref{Fig:fg}. Finally, let $\BumpNot$ be the function specified in \S \ref{Sec:BumpFunctions} and shown in Figure \ref{Fig:Bumps}.

\nom{$C_{\sigma}$}{Constant for which $\alphaEpsilon$ takes a model form along $\R_{\tau}\times [-C_{\sigma}, C_{\sigma}] \times \hypersurface$}

\begin{defn}\label{Def:AlphaNearMcheck}
The $1$-form $\alphaEpsilon$ is defined over $\NdividingSet$ as
\begin{equation*}
\alphaEpsilon = f(\sigma)d\tau + h_{\epsilon}(\tau, \sigma)\alphaDivSet, \quad h_{\epsilon}(\tau, \sigma) = g(\sigma) + \half\epsilon_{\tau}\BumpNot(\sigma)\tau^{2}.
\end{equation*}
\end{defn}

\subsection{Extension over $\R \times \posNegRegion$}

To complete the definition of the $\alphaEpsilon$, we must extend it over the positive and negative regions of $S$ away from the dividing set. The definition of $\alphaEpsilon|_{\NdividingSet}$ has
\begin{equation*}
\alphaEpsilon = \begin{cases}
3d\tau + e^{\sigma}\alphaDivSet & \text{along}\ \R_{\tau} \times [-4, -3]_{\sigma} \times \divSet \\
-3d\tau + e^{-\sigma}\alphaDivSet & \text{along}\ \R_{\tau} \times [4, -3]_{\sigma} \times \divSet.
\end{cases}
\end{equation*}
Applying the identification of Equation \eqref{Eq:SmoothMconstruction}, we then extend $\alphaEpsilon$ over the $\R_{s} \times \posNegRegion$ as
\begin{equation}\label{Eq:ContactizationForm}
\alphaEpsilon = \pm 3d\tau + \beta^{\pm}.
\end{equation}
Thus the $(\R_{\tau} \times \posNegRegion, \alphaEpsilon)$ are contactizations of the Liouville domains $(\posNegRegion, \beta^{\pm})$.

\subsection{Dynamics of $R_{\epsilon}$}\label{Sec:Rdynamics}

We now confirm that the $\alphaEpsilon$ are contact forms and describe their Reeb vector fields.

\begin{lemma}\label{Lemma:ReebPert}
Suppose that $\beta$ is a $1$-form on a $2$-dimensional manifold $V$, $\alphaDivSet$ is a contact form on some $(2n-1)$-dimensional manifold $\divSet$, and $h \in \Cinfty(V, \R_{>0})$. Then
\begin{equation*}
\alpha_{h} = \beta + h\alphaDivSet \in \Omega^{1}(V \times \Gamma)
\end{equation*}
is contact if and only if
\begin{equation*}
hd\beta + \beta\wedge dh \in \Omega^{2}(V)
\end{equation*}
is symplectic. If this condition holds, the Reeb vector field $R_{h}$ of $\alpha_{h}$ is computed
\begin{equation*}
R_{h} = (h - \beta(X_{h}))^{-1}\Big( \ReebDivSet - X_{h} \Big).
\end{equation*}
\end{lemma}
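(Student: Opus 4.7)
The plan is to verify the contact condition by direct expansion of $\alpha_h \wedge (d\alpha_h)^n$ on the product $V \times \divSet$, and then to determine $R_h$ by solving $\alpha_h(R_h) = 1, \iota_{R_h} d\alpha_h = 0$ via an ansatz adapted to the product structure.

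First I would compute $d\alpha_h = d\beta + dh \wedge \alphaDivSet + h\, d\alphaDivSet$ and expand $(d\alpha_h)^n$. The dimensional constraints eliminate most multi-index terms: $(d\beta)^2 = 0$ because $V$ is $2$-dimensional, $(dh \wedge \alphaDivSet)^2 = 0$ because $dh \wedge dh = 0$, $(d\alphaDivSet)^n = 0$ because $\divSet$ is $(2n-1)$-dimensional, and $d\beta \wedge dh$ is a $3$-form on the $2$-dimensional $V$. Wedging the surviving pieces of $(d\alpha_h)^n$ with $\alpha_h = \beta + h\alphaDivSet$ and cancelling further terms using $\beta \wedge d\beta = 0$ and $\alphaDivSet \wedge \alphaDivSet = 0$ collapses the full expression to
\begin{equation*}
\alpha_h \wedge (d\alpha_h)^n \;=\; n\, h^{n-1}\bigl(h\, d\beta + \beta \wedge dh\bigr) \wedge \alphaDivSet \wedge (d\alphaDivSet)^{n-1}.
\end{equation*}
Since $\alphaDivSet \wedge (d\alphaDivSet)^{n-1}$ is a volume form on $\divSet$ by contactness and $h > 0$, the right-hand side is a volume form on $V \times \divSet$ if and only if the $2$-form $h\, d\beta + \beta \wedge dh$ is non-vanishing on the $2$-dimensional $V$, equivalently, is symplectic.

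For the Reeb field I would make the ansatz $R_h = Y + a\,\ReebDivSet$ with $Y$ tangent to $V$ and $a$ a scalar function. The normalization $\alpha_h(R_h) = 1$ immediately produces $\beta(Y) + h a = 1$. Testing the kernel condition $\iota_{R_h} d\alpha_h = 0$ against vectors tangent to $\divSet$ and decomposing $T\divSet = \langle \ReebDivSet\rangle \oplus \xi_\divSet$ forces the $\xi_\divSet$-component of the $\divSet$-part of $R_h$ to vanish (via non-degeneracy of $d\alphaDivSet$ on $\xi_\divSet$), justifying the ansatz, and yields the auxiliary constraint $dh(Y) = 0$. Testing against vectors tangent to $V$ yields $\iota_Y d\beta = a\, dh$. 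Writing $Y = -a X_h$ where $X_h$ is the Hamiltonian vector field for $h$ with respect to $d\beta$ in the sense of Equation \eqref{Eq:HamLiouvilleBasics} (so $dh = d\beta(\cdot, X_h)$), the condition $\iota_Y d\beta = a\, dh$ becomes a tautology and $dh(X_h) = 0$ follows automatically from the skew-symmetry of $d\beta$. Substituting $Y = -aX_h$ into the normalization gives $a(h - \beta(X_h)) = 1$, hence $a = (h - \beta(X_h))^{-1}$, and the ansatz reproduces the stated formula for $R_h$.

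The only real obstacle is careful bookkeeping: tracking precisely which multi-index terms survive in the $n$-th power expansion given the product structure of $V \times \divSet$ and the low dimension of $V$, and cleanly splitting the Reeb equation into its $V$- and $\divSet$-tangent components. Once the product decomposition is respected, both claims are essentially forced by linear algebra fibrewise over $V \times \divSet$.
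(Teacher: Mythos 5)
Your computation is correct and supplies exactly the verification the paper leaves to the reader with the remark ``The proof is a computation.'' The dimensional elimination in the expansion of $\alpha_h\wedge(d\alpha_h)^n$ and the ansatz $R_h = Y + a\ReebDivSet$ with $Y$ tangent to $V$, packaged via Equation \eqref{Eq:HamLiouvilleBasics} as $Y = -aX_h$ (which, as the lemma's own formula tacitly requires, presupposes $d\beta$ is non-degenerate on the $2$-dimensional $V$), is the natural route and matches the paper's conventions.
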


The proof is a computation. Similarly defined contact forms appear in \cite[Section 4]{CGHH:Sutures}.

\begin{lemma}\label{Lemma:ReebComputation}
The $1$-form $\alphaEpsilon$ is contact for all $\epsilon \in \R_{\geq 0 } \times \R_{> 0}$. Within $\{ |\sigma| \leq 3 \}$,
\begin{equation*}
\begin{aligned}
R_{\epsilon} = H_{\epsilon}^{-1}\left(-\frac{\partial f}{\partial \sigma}\ReebDivSet+ \left(\frac{\partial g}{\partial \sigma} + \frac{\epsilon_{\tau}}{2}\tau^{2}\frac{\partial \BumpNot}{\partial \sigma}\right)\partial_{\tau} - \epsilon_{\tau} \BumpNot \tau \partial_{\sigma} \right),\\
H_{\epsilon} = \left( f\frac{\partial g}{\partial \sigma} - \frac{\partial f}{\partial \sigma}g + \frac{\epsilon_{\tau}}{2}\tau^{2}\left(f\frac{\partial \BumpNot}{\partial \sigma} - \BumpNot\frac{\partial f}{\partial \sigma}\right)\right).
\end{aligned}
\end{equation*}
\end{lemma}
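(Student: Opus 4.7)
The plan is to verify both claims by direct calculation, treating the neighborhood $\NdividingSet$ and the exterior $\R_\tau \times \posNegRegion$ separately and then matching them via the gluing of Equation \eqref{Eq:SmoothMconstruction}. Lemma \ref{Lemma:ReebPert} does the heavy lifting on $\NdividingSet$, while contactness on $\R_\tau \times \posNegRegion$ is the standard contactization of a Liouville domain.

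For $\NdividingSet$, I would apply Lemma \ref{Lemma:ReebPert} with $V = \R_\tau \times [-4,4]_\sigma$, $\beta = f(\sigma)\,d\tau$, and $h = h_\epsilon$. A short computation gives
\begin{equation*}
h\,d\beta + \beta \wedge dh = (h\,\partial_\sigma f - f\,\partial_\sigma h)\, d\sigma\wedge d\tau = H_\epsilon\, d\tau\wedge d\sigma
\end{equation*}
after substituting $h = g + \tfrac{1}{2}\epsilon_\tau \BumpNot\,\tau^{2}$, so contactness reduces to $H_\epsilon \neq 0$ on $\{|\sigma|\le 3\}$ for every $\tau\in\R$. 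The $\tau$-independent part $f\,\partial_\sigma g - g\,\partial_\sigma f$ is strictly positive: $-g\,\partial_\sigma f > 0$ because $g > 0$ and $\partial_\sigma f < 0$, while $f\,\partial_\sigma g \ge 0$ holds by parity ($f$ is odd and decreasing, $g$ is even with maximum at $0$, so $f$ and $\partial_\sigma g$ carry matching signs). The correction $\tfrac{1}{2}\epsilon_\tau\tau^{2}(f\,\partial_\sigma\BumpNot - \BumpNot\,\partial_\sigma f)$ is likewise nonnegative: $-\BumpNot\,\partial_\sigma f \ge 0$ is immediate, and a short case analysis on $|\sigma|\le 1$ (where $\partial_\sigma\BumpNot = 0$), $1\le |\sigma|\le 2$ (where $f$ and $\partial_\sigma\BumpNot$ again share signs by parity), and $|\sigma|\ge 2$ (where $\BumpNot = 0$) shows $f\,\partial_\sigma\BumpNot \ge 0$. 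Hence $H_\epsilon > 0$ uniformly in $\tau$. Meanwhile on $\R_\tau\times\posNegRegion$ the form $\alphaEpsilon = \pm 3\,d\tau + \beta^\pm$ is contact because $(d\beta^\pm)^{n}$ is a symplectic volume form on $\posNegRegion$, and matching across the gluing is automatic: on the collars $|\sigma|\in[3,4]\subset\NdividingSet$ one has $\BumpNot \equiv 0$, so $h_\epsilon = g$ and $\alphaEpsilon$ reduces to $\pm 3\,d\tau + e^{\mp\sigma}\alphaDivSet$, which aligns with $\pm 3\,d\tau + \beta^\pm$ via $\beta^\pm|_{\partial\posNegRegion} = e^{-3}\alphaDivSet$.

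For the Reeb formula inside $\{|\sigma|\le 3\}$ I would parameterize $R = a\partial_\tau + b\partial_\sigma + c\,\ReebDivSet$ with unknowns $a,b,c\in \Cinfty$ and exploit the decomposition $d\alphaEpsilon = d\beta + dh\wedge\alphaDivSet + h\,d\alphaDivSet$ together with $\alphaDivSet(\ReebDivSet) = 1$ and $\iota_{\ReebDivSet}d\alphaDivSet = 0$. The defining equations split into three algebraic conditions: (i) $fa + hc = 1$ from $\alphaEpsilon(R)=1$; (ii) $\iota_{a\partial_\tau + b\partial_\sigma}d\beta = c\,dh|_{V}$ from the $V$-component of $\iota_R d\alphaEpsilon$, solvable for $(a,b)$ in terms of $c$ because $d\beta = \partial_\sigma f\,d\sigma\wedge d\tau$ is non-degenerate where $\partial_\sigma f < 0$; and (iii) $a\,\partial_\tau h + b\,\partial_\sigma h = 0$ from the $T\divSet$-component. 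Solving (ii), substituting into (i), and checking (iii) reads off $c = -\partial_\sigma f/H_\epsilon$, $a = H_\epsilon^{-1}(\partial_\sigma g + \tfrac{1}{2}\epsilon_\tau\tau^{2}\partial_\sigma\BumpNot)$, and $b = -H_\epsilon^{-1}\epsilon_\tau\BumpNot\tau$, matching the stated expression.

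The only subtlety is uniform positivity of $H_\epsilon$ as $|\tau|\to\infty$: it is essential that the $\tau^{2}$ correction be \emph{nonnegative} rather than merely small, which is why the parity check on $\BumpNot$ and $f$ is the key geometric ingredient. Everything else is routine bookkeeping once Lemma \ref{Lemma:ReebPert} is invoked.
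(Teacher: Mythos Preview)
Your proposal is correct and follows essentially the same route as the paper: both apply Lemma~\ref{Lemma:ReebPert} with $\beta = f\,d\tau$ and $h = h_\epsilon$ to reduce contactness on $\{|\sigma|<3\}$ to the positivity of $H_\epsilon$, verify that positivity by the same sign analysis of the four terms, and handle the complement as a contactization. The only cosmetic difference is that for the Reeb field the paper plugs the computed $X_{h_\epsilon}$ and $h_\epsilon - \beta(X_{h_\epsilon})$ directly into the formula of Lemma~\ref{Lemma:ReebPert}, whereas you propose the equivalent ansatz $R = a\partial_\tau + b\partial_\sigma + c\,\ReebDivSet$ and solve the defining equations by hand.
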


\begin{proof}
We first analyze $\alphaEpsilon$ over the set $\{ |\sigma| < 3 \}$ where $\frac{\partial f}{\partial \sigma} < 0$. In this subset of $M$, we apply Lemma \ref{Lemma:ReebPert} with $\beta = f d\tau$, $h = h_{\epsilon}$. We compute
\begin{equation*}
\begin{gathered}
d\beta = - \frac{\partial f}{\partial \sigma} d\tau \wedge d\sigma, \quad X_{\beta} = \left(\frac{\partial f}{\partial \sigma}\right)^{-1}f \partial_{\sigma},\quad dh_{\epsilon} = \left(\frac{\partial g}{\partial \sigma} + \frac{\epsilon_{\tau}}{2}\tau^{2}\frac{\partial \BumpNot}{\partial \sigma}\right)d\sigma + \epsilon_{\tau} \BumpNot \tau d\tau, \\
h_{\epsilon}d\beta + \beta\wedge h_{\epsilon} = \left(-\frac{\partial f}{\partial \sigma}\left(g + \frac{\epsilon_{\tau}}{2}\BumpNot \tau^{2}\right) + f\left(\frac{\partial g}{\partial \sigma} + \frac{\epsilon_{\tau}}{2}\tau^{2}\frac{\partial \BumpNot}{\partial \sigma}\right)\right) d\tau \wedge d\sigma.
\end{gathered}
\end{equation*}
By the conditions characterizing the functions $\BumpNot$, $f$, and $g$, the functions $-\frac{\partial f}{\partial \sigma}g, -\frac{\partial f}{\partial \sigma}\BumpNot, f\frac{\partial g}{\partial \sigma}$, and $f \frac{\partial \BumpNot}{\partial \sigma}$ are non-negative with $f\frac{\partial g}{\partial \sigma} - \frac{\partial f}{\partial \sigma} g$ strictly positive. Hence $\alphaEpsilon$ is contact over $\{ |\sigma| \leq 3 \}$ for all $\epsilon$ and $\tau \in \R$ by Lemma \ref{Lemma:ReebPert}. We already know that $\alphaEpsilon$ is contact on $M \setminus \{ |\sigma| \leq 3 \}$.

To compute the Reeb vector field $R_{\epsilon}$, we plug the following computations in to Lemma \ref{Lemma:ReebPert}:
\begin{equation*}
\begin{aligned}
X_{h_{\epsilon}} &= -\left(\frac{\partial f}{\partial \sigma}\right)^{-1}\left( \left(\frac{\partial g}{\partial \sigma} + \frac{\epsilon_{\tau}}{2}\tau^{2}\frac{\partial \BumpNot}{\partial \sigma}\right)\partial_{\tau} - \epsilon_{\tau} \tau \BumpNot \right)\\
h_{\epsilon} - \beta(X_{h}) &= -\left(\frac{\partial f}{\partial \sigma}\right)\left( f\frac{\partial g}{\partial \sigma} - \frac{\partial f}{\partial \sigma}g + \frac{\epsilon_{\tau}}{2}\left(f\frac{\partial \BumpNot}{\partial \sigma} - \frac{\partial f}{\partial \sigma}\BumpNot\right) \right).
\end{aligned}
\end{equation*}
Over the complement of $\{ |\sigma| < 3 \}$ Equation \eqref{Eq:ContactizationForm} shows that $\alphaEpsilon$ is contact with $R_{\epsilon} = \pm \frac{1}{3} \partial_{\tau}$.
\end{proof}

\subsubsection{Simple enough neighborhoods of $\ReebEpsilon$ orbits}\label{Sec:SimpleNbhdsForR}

Our contact forms $\alphaEpsilon$ are designed so that over $\{\sigma \in [-C_{\sigma}, C_{\sigma}]\} \subset \Nhypersurface$, they adhere to simple enough model of Definition \ref{Def:SimpleEnough}. Along this region we have the simplified expression
\begin{equation}\label{Eq:ReebNearMcheck}
R_{\epsilon} = H_{\epsilon}^{-1}\Big(\ReebDivSet - \epsilon_{\sigma}\sigma\partial_{\tau} - \epsilon_{\tau} \tau\partial_{\sigma}\Big), \quad H_{\epsilon} = 1 + \frac{\epsilon_{\sigma}}{2}\sigma^{2} + \frac{\epsilon_{\tau}}{2}\tau^{2}.
\end{equation}

The projection of $R_{\epsilon}$ to $\R_{\tau}\times [-1, 1]_{\sigma}$ along our neighborhood $\{|\sigma| \leq C_{\sigma}\}$ of $\divSet$ is shown in Figure \ref{Fig:HamNearGamma}. We see that flow lines of $R_{\epsilon}$ project to flow lines of the Hamiltonian vector field of the function $\half(-\epsilon_{\sigma}\sigma^{2} + \epsilon_{\tau} \tau^{2})$ in the $\tau, \sigma$ plane.

Let $\orbitDivSet: \aCircle \rightarrow \divSet$ be a closed $\ReebDivSet$ orbit with simple neighborhood $\check{\Norbit}$ and framing $\check{\framing}$. Let
\begin{equation*}
\orbit = \{ \tau = 0, \sigma = 0\} \times \orbitDivSet
\end{equation*}
be the associated closed $R_{\epsilon}$ orbit for $\epsilon \geq 0$. Then $\orbit$ has a framing $\framing$ determined by extending $\check{\framing}$ by $\partial_{\tau}$.

From $\check{\Norbit}$ and $\check{\framing}$ we obtain a neighborhood $\Norbit$ and framing $\framing$ of $\orbit$
\begin{equation*}
\Norbit = \R_{\tau} \times [-1, 1]_{\sigma} \times \check{\Norbit},\quad  \framing = (\partial_{\tau}, \check{\framing}).
\end{equation*}

\subsubsection{Dynamics summary and $\CZ$ calculations}

\begin{lemma}\label{Lemma:CZComputation}
When $\epsilon_{\tau} = 0$ every closed orbit of $R_{0}$ is of the form $\{ \tau = \tau_{0}, \sigma = 0\} \times \orbitDivSet$. Therefore each $\orbitDivSet$ determines a $\R_{\tau}$-family of closed orbits in $\R_{\tau} \times S$. For $\epsilon_{\tau} > 0$ there is exactly one closed $R_{\epsilon}$ orbit $\orbit$ in $M$ for each $\orbitDivSet$ and each $\orbit$ is non-degenerate if and only if the corresponding $\orbitDivSet$ is non-degenerate. 

Let $\check{\framing}$ and $\framing$ be framings of $\orbitDivSet$ and $\orbit$ as described above. For $\epsilon > 0$, the Conley-Zehnder indices and contact homology gradings are related by the formulas
\begin{equation*}
\CZ_{\framing}(\orbit) = \CZ_{\check{\framing}}(\orbitDivSet), \quad |\orbit|_{\framing} = |\orbitDivSet|_{\check{\framing}} + 1.
\end{equation*}
and the orbit $\orbit$ is good if and only if the orbit $\orbitDivSet$ is good.
\end{lemma}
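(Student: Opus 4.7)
The plan is to analyze $R_\epsilon$ region by region. Outside $\NdividingSet$, Equation \eqref{Eq:ContactizationForm} gives $\alphaEpsilon = \pm 3\,d\tau + \beta^{\pm}$, so $R_\epsilon = \pm\tfrac{1}{3}\partial_\tau$ and no closed orbits exist in $\R_\tau \times \posNegRegion$. Within $\{|\sigma|\leq 3\}\subset\NdividingSet$ I would first verify, directly from Lemma \ref{Lemma:ReebComputation}, that the projection of the rescaled vector field $H_\epsilon R_\epsilon$ onto the $(\tau,\sigma)$ plane is the Hamiltonian vector field of
\[
H(\tau,\sigma) = g(\sigma) + \tfrac{\epsilon_\tau}{2}\BumpNot(\sigma)\tau^{2}.
\]
Any closed orbit of $R_\epsilon$ must therefore project to a fixed point or closed trajectory of this planar Hamiltonian flow, while the component along $\divSet$ follows $\ReebDivSet$ rescaled by the positive factor $-\partial_\sigma f$.

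Next I classify: the critical-point condition $\partial_\tau H = \epsilon_\tau\BumpNot(\sigma)\tau = 0$ forces either $\tau=0$ or $|\sigma|\geq 2$, and in either case $\partial_\sigma H$ reduces to $\partial_\sigma g$, whose unique zero on $[-3,3]$ is $\sigma=0$. So when $\epsilon_\tau > 0$, the origin is the unique critical point of $H$. To rule out nontrivial periodic orbits of $X_H$ I examine level sets $\{H=c\}$: on $\{|\sigma|<2\}$ one has $\tau^2 = 2(c-g(\sigma))/(\epsilon_\tau\BumpNot(\sigma))$, forcing $|\tau|\to\infty$ as $|\sigma|\uparrow 2$; on $\{2\leq|\sigma|\leq 3\}$, $H=g(\sigma)$ is $\tau$-independent, so each level set is a union of non-compact lines. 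Hence every closed orbit lies over the origin, giving $\orbit = \{\tau=\sigma=0\}\times\orbitDivSet$. When $\epsilon_\tau = 0$, $H=g(\sigma)$ is trivially $\tau$-independent and $\sigma$ is conserved; $\tau$ is monotone unless $\partial_\sigma g = 0$, producing the asserted $\R_\tau$-family of closed orbits over each $\orbitDivSet$.

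For the remaining statements in the case $\epsilon>0$, I use the framing $\framing=(\partial_\tau,\partial_\sigma,\check{\framing})$ adapted to the $d\alphaEpsilon$-symplectic splitting $\xi|_{\orbit} = \langle\partial_\tau,\partial_\sigma\rangle\oplus\xiDivSet|_{\orbitDivSet}$. Since $H_\epsilon(0,0)=1$, $R_\epsilon$ restricts to $\ReebDivSet$ along $\orbit$, so the periods agree. The flow preserves the splitting, and the linearization of $R_\epsilon$ in the $(\tau,\sigma)$ factor is the constant matrix
\[
M = \begin{pmatrix} 0 & -\epsilon_\sigma \\ -\epsilon_\tau & 0 \end{pmatrix},
\]
giving $\Ret_{\orbit} = e^{aM}\oplus \Ret_{\orbitDivSet}$. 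A quick computation shows $J_0 M$ is symmetric, so $M\in\lieSymp_1$; its eigenvalues $\pm\sqrt{\epsilon_\tau\epsilon_\sigma}$ are real and nonzero, yielding $1\notin\spec(e^{aM})$ and hence non-degeneracy of $\orbit$ iff non-degeneracy of $\orbitDivSet$. A symplectic change of basis conjugates $e^{tM}$ to $\Diag(e^{bt},e^{-bt})$ with $b=\sqrt{\epsilon_\tau\epsilon_\sigma}$, and Lemma \ref{Lemma:CZSummary}(2),(3) then gives $\CZ(e^{aM})=0$ and $\CZ_\framing(\orbit)=\CZ_{\check{\framing}}(\orbitDivSet)$. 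The grading identity $|\orbit|_\framing = |\orbitDivSet|_{\check{\framing}}+1$ is an immediate consequence of the general formula $|\cdot| = \CZ + k - 2$ on a contact $(2k+1)$-manifold applied to the dimension shift $\dim\Nhypersurface - \dim\divSet = 2$.

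Finally, iterating replaces $e^{aM}$ by $e^{kaM}$, which still has $\CZ = 0$ by the same argument, so $\CZ(\orbit^k)-\CZ(\orbit) \equiv \CZ(\orbitDivSet^k)-\CZ(\orbitDivSet)\pmod 2$ and the good/bad dichotomy transfers under the correspondence. I expect the main technical obstacle to be ruling out closed level sets of $H$ across the transition annulus $\{1\leq|\sigma|\leq 2\}$ where $\BumpNot$ is non-constant and $H$ is not a simple quadratic; once this planar analysis is complete, the CZ, grading, and parity assertions reduce to linear algebra together with the properties of $\CZ$ recorded in Lemma \ref{Lemma:CZSummary}.
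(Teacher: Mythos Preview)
Your proposal is correct and follows essentially the same approach as the paper. The paper's proof is terser: it identifies the projection of $R_\epsilon$ to the $(\tau,\sigma)$ plane as a rescaling of the Hamiltonian vector field of $h_\epsilon$ and then simply says the $\orbitDivSet\leftrightarrow\orbit$ correspondence ``follows by inspection'' of the level-set picture in Figure~\ref{Fig:HamNearGamma}, whereas you spell out explicitly why the level sets are non-compact away from the saddle at the origin. Your linearization matrix $M$, its diagonalization, and the appeal to Lemma~\ref{Lemma:CZSummary} match the paper's computation line for line (the paper calls the matrix $A$), and you additionally make explicit the good/bad parity argument, which the paper's proof leaves implicit.
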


\begin{proof}
When $\epsilon_{\tau} = 0$, both $\alphaEpsilon$ and $\ReebEpsilon$ are $\tau$-invariant with $\pm \tau(\ReebEpsilon) > 0$ along the $S^{\pm}$ and $\ReebEpsilon$ tangent to each $\{ \tau = \tau_{0}\}$. Clearly each $\orbitDivSet$ defines a real line of $\orbit$ orbits.

When $\epsilon_{\tau} > 0$, the projection of $R_{\epsilon}$ onto the $\tau, \sigma$ plane inside of $\NdividingSet$ is a rescaling of the Hamiltonian vector field $Z$ for the function $h_{\epsilon}$ computed with respect to $d\tau \wedge d\sigma$. The flow of $Z$ preserves the level sets of $h_{\epsilon}$ which are as described in the right-hand side of Figure \ref{Fig:HamNearGamma}. The $\orbitDivSet, \orbit$ correspondence then follows by inspection together with the fact that $\pm \tau(R_{0}) > 0$ outside of $\NdividingSet$.

When $\epsilon > 0$, the projection of $R_{\epsilon}$ to the $\tau, \sigma$ plane is linear near $\divSet$, taking the form
\begin{equation*}
\begin{pmatrix}
\tau \\ \sigma
\end{pmatrix} \mapsto A\begin{pmatrix}
\tau \\ \sigma
\end{pmatrix}, \quad A = \begin{pmatrix}
0 & -\epsilon_{\sigma} \\ -\epsilon_{\tau} & 0
\end{pmatrix}, \quad A\begin{pmatrix}
\sqrt{\epsilon_{\sigma}} \\ \mp \sqrt{\epsilon_{\tau}}
\end{pmatrix} = \pm \delta\begin{pmatrix}
\sqrt{\epsilon_{\sigma}} \\ \mp \sqrt{\epsilon_{\tau}}
\end{pmatrix}, \quad \delta = \sqrt{\epsilon_{\tau}\epsilon_{\sigma}}.
\end{equation*}
Then $e^{qA}\in \SLtwoR$ describes the time $R_{\epsilon}$ flow-lines near $\divSet$ and is conjugate to a path of matrices of the form $\Diag(e^{\delta q}, e^{-\delta q})$ using the provided eigendecomposition of $A$. Hence the $\CZ_{\framing}$ and contact homology grading computations follow from Lemma \ref{Lemma:CZSummary}.
\end{proof}

\subsection{Generality of our construction}

Here we establish Proposition \ref{Prop:AllHypersurfacesAreConstructible}. First we show that for varying $\epsilon$, the $\alphaEpsilon$ determine isotopic contact structures.

\begin{prop}
For all $\epsilon$ there is a diffeomorphism $\Phi$ of $\R_{\tau} \times \hypersurface$ for which $T\Phi \ker\alpha_{(0, \epsilon_{\sigma})} = \ker\alphaEpsilon$.
\end{prop}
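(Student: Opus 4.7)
The plan is to apply Gray's stability theorem to the smooth path of contact forms $\alpha_t := \alpha_{(t\epsilon_\tau,\,\epsilon_\sigma)}$ for $t \in [0,1]$. Each $\alpha_t$ is a contact form by the proof of Lemma \ref{Lemma:ReebComputation} carried out with $t\epsilon_\tau$ in place of $\epsilon_\tau$. Because the only $t$-dependence of $\alpha_t$ lives in the term $\tfrac{t\epsilon_\tau}{2}\BumpNot(\sigma)\tau^{2}\alphaDivSet$ inside $\NdividingSet$,
\begin{equation*}
\dot\alpha_t \;=\; \tfrac{\epsilon_\tau}{2}\BumpNot(\sigma)\tau^{2}\,\alphaDivSet
\end{equation*}
is supported in the $\sigma$-slab $\{|\sigma| \leq 2\}$ and vanishes on all of $\R_{\tau}\times\posNegRegion$. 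The diffeomorphism $\Phi$ will be the time-$1$ flow of the Moser vector field $X_t \in \ker\alpha_t$ uniquely determined by $\iota_{X_t} d\alpha_t = \mu_t\alpha_t - \dot\alpha_t$, with $\mu_t := \dot\alpha_t(R_t)$; this will satisfy $\Phi^{\ast}\alpha_1 = \lambda\alpha_0$ for some positive $\lambda$, which gives $T\Phi\,\ker\alpha_{(0,\epsilon_\sigma)} = \ker\alphaEpsilon$.

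The main technical step is completeness of $X_t$ on the non-compact manifold $\R_\tau\times\hypersurface$; a priori the quadratic growth of $\dot\alpha_t$ in $\tau$ threatens finite-time blow-up of the Moser flow. I would resolve this by showing $X_t$ has vanishing $\partial_\tau$ component, so its trajectories preserve $\tau$ and remain in the compact slices $\{\tau = \tau_{0}\} \times [-4,4]_\sigma \times \divSet$. Inside the slab, decompose $X_t = a\partial_\tau + b\partial_\sigma + Y$ with $Y$ tangent to $\divSet$. The constraint $\alpha_t(X_t) = 0$ reads $fa + h_t\alphaDivSet(Y) = 0$, while extracting the $d\sigma$-coefficient of the Moser equation gives $-f'a - h_{t,\sigma}\alphaDivSet(Y) = 0$. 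Eliminating $\alphaDivSet(Y)$ yields
\begin{equation*}
a\cdot\bigl(fh_{t,\sigma} - f'h_t\bigr) \;=\; a\cdot H_t \;=\; 0,
\end{equation*}
and since $H_t > 0$ by the proof of Lemma \ref{Lemma:ReebComputation}, we conclude $a = 0$. Then $\alphaDivSet(Y) = 0$ forces $Y \in \xiDivSet$, and the remaining $d\tau$- and $\alphaDivSet$-components of the Moser equation determine $b = -\dot h_t f / H_t$ and $Y$ uniquely.

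The main obstacle, therefore, is precisely the algebraic identity forcing $a = 0$: it is what saves Moser's trick on a non-compact domain by confining the flow to compact $\tau$-slices. With completeness in hand, integrating $X_t$ and rescaling via $\mu_t$ in the standard manner produces the time-$1$ diffeomorphism $\Phi$ satisfying $\Phi^{\ast}\alpha_1 = \lambda\alpha_0$, which proves the proposition.
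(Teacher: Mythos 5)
Your proof takes the same route as the paper -- Gray stability via Moser's trick on the path $\alpha_t = \alpha_{(t\epsilon_\tau,\,\epsilon_\sigma)}$ -- but your justification of completeness is sharper. The paper asserts only that bounded coefficients on compacta prevent finite-time escape, whereas you identify the actual mechanism: $\dot\alpha_t$ has only an $\alphaDivSet$-component, and the algebraic identity $a\,(fh_{t,\sigma}-f'h_t)=a\,H_t=0$ with $H_t>0$ forces the $\partial_\tau$-component of $X_t$ to vanish, so trajectories are confined to the compact slices $\{\tau=\tau_0\}\times[-2,2]_\sigma\times\divSet$. (In fact one can push further: matching the $\xiDivSet$-part of the Moser equation gives $h_t\,\iota_Y d\alphaDivSet=0$, hence $Y=0$ and $X_t=b\,\partial_\sigma$ outright, supported where $\BumpNot\neq 0$.) This is the genuinely correct reason completeness holds, and it is a worthwhile refinement of the paper's argument.
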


\begin{proof}[Sketch of the proof]
We provide assurance that the usual Morser argument applies, even though $\Nhypersurface = \R_{\tau} \times \hypersurface$ is an open manifold. Consider a $T \in [0, 1]$ family of contact forms $\alpha_{\epsilon_{T}}$ on $\Nhypersurface$. Following \cite[p.112]{MS:SymplecticIntro} we consider vector fields $X_{\epsilon} \in \ker\alphaEpsilon$ uniquely defined as solutions to the equation $d\alpha(X_{T}, \ast) = h_{T}\alpha_{T} - \partial_{T}\alpha_{T}$ with $h_{T} = (\partial_{T}\alpha_{T})\Reeb_{T}$. We can then apply the flow of $X_{T}$ for $T \in [0, 1]$ to obtain a diffeomorphism $\Phi$ as desired assuming that such a flow is defined for $T \in [0, 1]$.

Due to our assumption that $\divSet$ is closed and the $d\tau$ and $d\sigma$ coefficients of the $\alpha_{T}$ are bounded on compact subsets of $\Nhypersurface$, trajectories of $X_{T}$ cannot exit $\R \times \hypersurface$ in finite time. Therefore such a flow is defined so that the Moser argument works in this context.
\end{proof}

The following is a consequence of \cite[Proposition 6.4 \& Theorem 6.5]{DG:CircleBundles}, which states that the contact diffeomorphism type of a $\tau$-invariant contact structure on a $\R_{\tau} \times \hypersurface$ depends only on the induced contact structure on $\divSet$ and the homotopy classes of Liouville structures on the $\posNegRegionComplete$.\footnote{For the present context, the proof of \cite[Theorem 6.5]{DG:CircleBundles} must be modified sligthly: Assume that the cohomology class $c$ there is zero and work with the $\R$-bundle over the base covering the associated trivial $\Circle$-bundle.}

\begin{lemma}
For any $\tau$-invariant contact form $\alpha$ on $\R_{\tau} \times \hypersurface$ there is a $\alpha_{\epsilon}$ with $\epsilon_{\tau} = 0$ as constructed in this section and a diffeomorphism $\Phi$ of $\R_{\tau} \times \hypersurface$ for which $T\Phi \ker \alpha = \ker \alpha_{\epsilon}$.
\end{lemma}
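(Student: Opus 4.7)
The strategy is to reduce the lemma to the cited result of Ding--Geiges, which classifies $\tau$-invariant contact structures on $\R_{\tau} \times \hypersurface$ by the induced contact structure $\xiDivSet$ on the dividing set together with the homotopy classes of Liouville structures on $\posNegRegion$. Both $\alpha$ and a well-chosen $\alphaEpsilon$ realize the same pair of invariants, whence the desired $\Phi$ follows from the cited classification.

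First, from the $\tau$-invariant $\alpha$ I would extract, as in Equation \eqref{Eq:TauInvariantAlpha} and the discussion following it, the induced contact structure $\xiDivSet$ on $\divSet$ and the ideal Liouville forms $\beta_{\pm} \in \Omega^{1}(\posNegRegionComplete)$. I would then pick a contact form $\alphaDivSet$ representing $\xiDivSet$, and compact Liouville domains $(\posNegRegion, \beta^{\pm})$ with $\partial \posNegRegion = \divSet$ and $\beta^{\pm}|_{T\divSet} = e^{-3}\alphaDivSet$, arranged to lie in the same Liouville-homotopy classes on $\posNegRegion$ as the structures induced by $\alpha$. This is always possible: an ideal Liouville structure is Liouville-homotopic to (the completion of) a compact Liouville domain, and the prescribed boundary normalization is achieved by rescaling $\alphaDivSet$ and pushing in slightly along the Liouville flow.

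Second, I would feed these choices into the construction of this section with $\epsilon_{\tau} = 0$. By Equation \eqref{Eq:ContactizationForm}, on each $\R_{\tau} \times \posNegRegion$ the resulting $\alphaEpsilon$ is exactly the contactization $\pm 3\, d\tau + \beta^{\pm}$, so its induced Liouville structures on $\posNegRegion$ are the chosen $\beta^{\pm}$, and Definition \ref{Def:AlphaNearMcheck} together with Lemma \ref{Lemma:ReebComputation} shows that $\alphaEpsilon$ induces the original $\xiDivSet$ on $\divSet$. Thus $\alpha$ and $\alphaEpsilon$ determine the same contact structure on $\divSet$ and the same homotopy classes of Liouville structures on $\posNegRegion$.

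Finally, I would invoke \cite[Proposition 6.4 \& Theorem 6.5]{DG:CircleBundles}, modified as in the footnote by replacing the $\Circle$-bundle with the trivial $\R$-bundle over $\hypersurface$: the coincidence of invariants produces an ambient contact diffeomorphism $\Phi$ of $\R_{\tau} \times \hypersurface$ with $T\Phi \ker \alpha = \ker \alphaEpsilon$. The main point requiring attention is completeness of the Moser-type flow used in the cited argument on the non-compact manifold $\R_{\tau} \times \hypersurface$; this is settled exactly as in the preceding proposition, using the $\tau$-invariance of both contact forms together with compactness of $\divSet$ to rule out finite-time escape.
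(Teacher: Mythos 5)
Your proposal is correct and follows essentially the same route as the paper: the paper states this lemma without a formal proof, deducing it directly from \cite[Proposition 6.4 \& Theorem 6.5]{DG:CircleBundles} with the footnoted adjustment to the $\R$-bundle setting, and your write-up is a faithful expansion of that argument, filling in the extraction of invariants from $\alpha$, the matching choice of data for $\alphaEpsilon$, and the completeness remark for the Moser flow.
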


The diffeomorphism of Proposition \ref{Prop:AllHypersurfacesAreConstructible} is then obtained by compositing the diffeomorphisms in the two preceding lemmas.

\section{Almost complex structures $\JEpsilon$}\label{Sec:JConstruction}

In this section we define a family of almost complex structures $\JEpsilon$ on the symplectization $\R_{s} \times \Nhypersurface$ which are $\alphaEpsilon$-tame. Near the symplectization of the dividing set, $\R_{s} \times \divSet \subset \R_{s} \times \Nhypersurface$, the $\JEpsilon$ assume the model form
\begin{equation}\label{Eq:JepsilonModel}
\JEpsilon\partial_{s} = \ReebDivSet - \epsilon_{\sigma}\sigma\partial_{\tau} - \epsilon_{\tau}\tau \partial_{\sigma},\quad \JEpsilon\partial_{\sigma} = -\partial_{\tau}, \quad \JEpsilon\check{\xi}_{\JDivSet} = \JDivSet.
\end{equation}
We recall that $\check{\xi}_{\JDivSet} \subset T\divSet$ is the hyperplane field associated to a $\alphaDivSet$-tame almost complex structure $\JDivSet$ on $\divSet$.

The $\JEpsilon$ are constructed so that for each $\epsilon$, the symplectization $\R_{s} \times \Nhypersurface$ is foliated by codimension $2$, $\JEpsilon$ holomorphic submanifolds which will be described in \S \ref{Sec:HoloFoliations}.

\subsection{Selection of inputs $\JDivSet$, $\JDivSet^{\pm}$}

Associated to the contact form $\alphaDivSet$ on $\divSet$ we select an $\alphaDivSet$-tame almost complex structure $\JDivSet$ for which $\JDivSet \partial_{s} = \ReebDivSet$. We write $\alphaDivSet_{\JDivSet} \in \Omega^{1}(\divSet)$ for the form determined by Equation \eqref{Eq:AlphaJDef} and $\check{\xi}_{\JDivSet} \subset T\divSet$ for the hyperplane which is preserved by $\JDivSet$. We also choose almost complex structures $\JDivSet^{\pm}$ on the $\posNegRegionComplete$ which are symplectization type contact structures agreeing with $\JDivSet$ on positive half-cylindrical ends $[0, \infty) \times \divSet \subset \posNegRegionComplete$.

\subsection{Specification of $\JEpsilon$ along  $\R_{s}\times \R_{\tau} \times \posNegRegion$}\label{Sec:JDefOnSpm}

Along each $\R_{\tau} \times \posNegRegion \subset \Nhypersurface$ we have
\begin{equation*}
\alpha = \pm 3d\tau + \beta^{\pm}, \quad R_{\epsilon} = \pm \frac{1}{3} \partial_{\tau}, \quad \xi_{\epsilon} = \left\{ V \pm \beta^{\pm}(V)\partial_{\tau}\ :\ V\in T\posNegRegion \right\}.
\end{equation*}
Pick a cylindrical end $\R \times \divSet \subset \posNegRegionComplete$ and use $\check{s}$ to denote a variable parameterizing the $\R$ factor. Choose almost complex structures $\JDivSet_{\pm}$ on $\posNegRegionComplete$ so that
\be
\item $\JDivSet_{\pm}$ is $d\beta$-tame
\item on the subset $[-4, \infty) \times \divSet$ of the cylindrical end, $\JDivSet_{\pm}$ coincides with $\JDivSet$ so that $\JDivSet_{\pm}\partial_{\check{s}} = \ReebDivSet$.
\ee

Using the $\JDivSet_{\pm}$ we define $\JEpsilon$ on $\R_{s} \times \R_{\tau} \times \posNegRegion$ as
\begin{equation*}
\JEpsilon \partial_{s} = \pm \frac{1}{3}\partial_{\tau},\quad \JEpsilon|_{T\posNegRegion} = \JDivSet_{\pm}.
\end{equation*}
It follows that along this subset, we have
\begin{equation*}
\alpha_{\JEpsilon} = \pm 3 d\tau,\quad \xi_{\JEpsilon} = T\posNegRegion.
\end{equation*}

\subsection{Specification of $\JEpsilon$ along $\NdividingSet$}

To complete the definition of $\JEpsilon$ we must specify its values along $\NdividingSet$ so that it is $\alphaEpsilon$-tame using Definition \ref{Def:AlphaNearMcheck}.

In order to avoid working with the complicated expressions of partial derivatives appearing \S \ref{Sec:Rdynamics}, it will be notationally simpler to introduce new functions characterized by their most important properties. Following Lemma \ref{Lemma:ReebComputation}, $R_{\epsilon}$ may be written
\begin{equation*}
H_{\epsilon}^{-1} R_{\epsilon} = F\partial_{\tau} + G\ReebDivSet - \epsilon_{\tau}X
\end{equation*}
where $H_{\epsilon}, F, G$, and $X$ are described as follows:
\be
\item $H_{\epsilon}$ is a nowhere vanishing function which coincides with the $H_{\epsilon}$ of Lemma \ref{Lemma:ReebComputation} along $\{ |\sigma| \leq 3 \}$ and equal to $1$ on the set $\{ |\sigma| \geq 4 \}$\footnote{As defined in Lemma \ref{Lemma:ReebComputation}, $H_{\epsilon}$ extends smoothly over $\{ |\sigma| \in [3, 4]\}$ as $3e^{\sigma}$, which is inconvenient for extension over the $\posNegRegion$.}. We require that $H_{\epsilon}(\tau, \sigma)$ is symmetric in $\sigma$ everywhere and independent of $\epsilon$ outside of $\{ |\sigma| < C_{\sigma} \}$.
\item $F$ and $G$ are functions of $\sigma$.
\item $F = \mp3$ on $\{ \pm \sigma > 3\}$, $F = -\epsilon_{\sigma}\sigma$ along $[-C_{\sigma}, C_{\sigma}]$, and $\frac{\partial F}{\partial \sigma} \leq 0$ everywhere.
\item $G = 0$ for $|\sigma| > 3$ and $G = 1$ along $|\sigma| \leq 2$. $G$ is symmetric with $\frac{\partial G}{\partial \sigma} \geq 0$ on $[-4, 0]$.
\item  The vector field $X$ is defined
\begin{equation}\label{Eq:Xdef}
X = -\half\frac{\partial \BumpNot}{\partial \sigma}(\sigma)\tau^{2}\partial_{\tau} + \BumpNot(\sigma) \tau \partial_{\sigma} \implies X = \tau\partial_{\sigma}\ \text{along}\ \{ |\sigma| \leq 1 \}.
\end{equation}
\ee
We recall that the constant $C_{\sigma} \leq 1$ is as defined in \S \ref{Sec:AlphaNearGamma}, although it is not very important for the upcoming analysis.

\begin{figure}[h]
	\begin{overpic}[scale=.5]{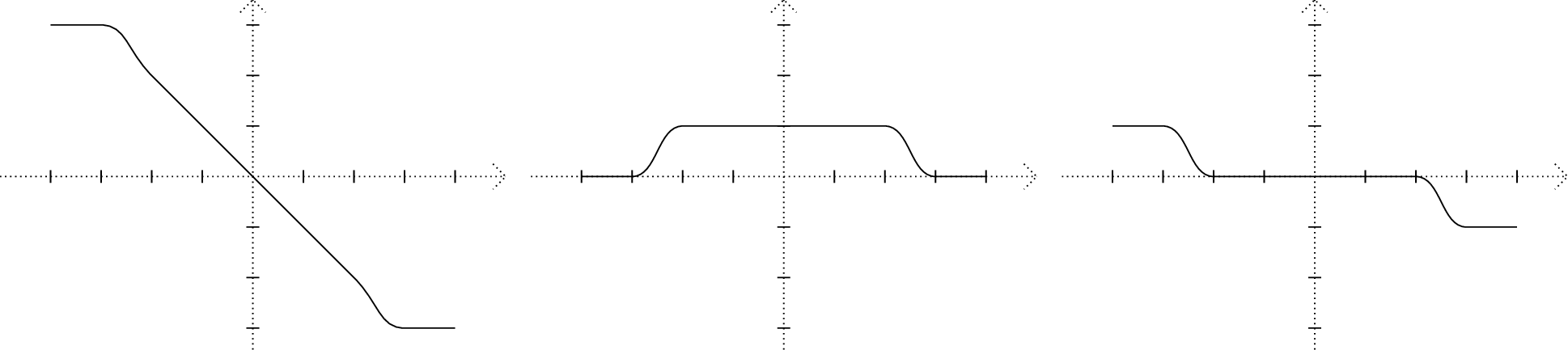}
		\put(20, 17){$F$}
		\put(54, 17){$G$}
		\put(88, 17){$\widetilde{G}$}
		\put(101, 11){$\sigma$}
	\end{overpic}
	\caption{The functions $F, G$, and $\widetilde{G}$ along $\{ \sigma \in [-4, 4]\}$.}
	\label{Fig:FGbarg}
\end{figure}

\begin{defn}\label{Def:JNearGamma}
We set $\widetilde{G}(\sigma) = -\sgn(\sigma)(1 - G(\sigma))$ and define $\JEpsilon$ by
\begin{equation*}
\begin{gathered}
\JEpsilon\partial_{s} = H_{\epsilon}R_{\epsilon} = F\partial_{\tau} + \epsilon_{\sigma}G\ReebDivSet - \epsilon_{\tau}X, \quad \partial_{\simga} = -G\partial_{\tau} + \widetilde{G}\ReebDivSet,\quad \JEpsilon|_{\check{\xi}|_{\JDivSet}} = \JDivSet \\
\implies \begin{cases}
\JEpsilon\partial_{\tau} = Q\Big( -\widetilde{G}\partial_{s} + G\partial_{\sigma} + \epsilon \widetilde{G} J X\Big),\\ 
J\ReebDivSet = Q\Big( -G\partial_{s} - F\partial_{\sigma} + \epsilon G J X\Big),
\end{cases}\\
Q = (F\widetilde{G} + G^{2})^{-1}
\end{gathered}
\end{equation*}
\end{defn}

\nom{$\JEpsilon$}{$\alphaEpsilon$-compatible almost complex structure on $\R_{s} \times \Nhypersurface$}

The above implication follows from the requirement that $\JEpsilon^{2} = -\Id$. Observe that $\JEpsilon$ is independent of $\epsilon$ along our convex hypersurface $S = \{\tau = 0 \}$ and that along $\{ |\simga| \leq 1\}$ we have $Q = 1$ and that $\JEpsilon$ agrees with the simplified expression in Equation \eqref{Eq:JepsilonModel}.

We will take our $\JEpsilon$-invariant subbundle to be
\begin{equation*}
\xi_{\JEpsilon} = \begin{cases}
\R \partial_{\sigma} \oplus \R \JEpsilon \partial_{\sigma} \oplus \check{\xi}_{\JDivSet} & \text{along}\ \R_{s} \times \NdividingSet \\
\pm T\posNegRegion & \text{along}\ \R_{s} \times \R_{\tau} \times \posNegRegion.
\end{cases}
\end{equation*}

\begin{lemma}
The $2n$-plane field $\xi_{\JEpsilon}$ is $\JEpsilon$ invariant and $d\alpha|_{\xi_{\JEpsilon}}$ is symplectic. Hence $\JEpsilon$ is $\alpha$-tame.
\end{lemma}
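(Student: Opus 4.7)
The plan is to verify $\JEpsilon$-invariance of $\xi_{\JEpsilon}$ and nondegeneracy of $d\alphaEpsilon|_{\xi_{\JEpsilon}}$ separately on the two regions $\R_{s}\times\R_{\tau}\times\posNegRegion$ and $\R_{s}\times\NdividingSet$, then to check that the two regional definitions agree on the overlap $\{|\sigma|\in[3,4]\}$; $\alphaEpsilon$-tameness will follow by appealing to Definition \ref{Def:TameJ}, using that translation invariance and $\JEpsilon\partial_{s} = H_{\epsilon}\ReebEpsilon$ are built directly into Definition \ref{Def:JNearGamma}. Over the contactization region, both conditions are immediate: $\xi_{\JEpsilon} = \pm T\posNegRegion$ is tautologically preserved by $\JEpsilon|_{T\posNegRegion} = \JDivSet^{\pm}$, and $d\alphaEpsilon|_{T\posNegRegion} = d\beta^{\pm}$ is symplectic with $\JDivSet^{\pm}$ tame by the selections made in \S\ref{Sec:JDefOnSpm}.

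On $\R_{s}\times\NdividingSet$, $\JEpsilon$-invariance of $\xi_{\JEpsilon} = \R\partial_{\sigma}\oplus\R\JEpsilon\partial_{\sigma}\oplus\check{\xi}_{\JDivSet}$ is built into the definitions: the first summand is tautologically invariant, and the third is preserved because $\JEpsilon|_{\check{\xi}_{\JDivSet}} = \JDivSet$ by Definition \ref{Def:JNearGamma}. For nondegeneracy my preliminary step is to verify that the decomposition is $d\alphaEpsilon$-orthogonal using the expansion
\begin{equation*}
d\alphaEpsilon = \tfrac{\partial f}{\partial \sigma}\, d\sigma\wedge d\tau + \tfrac{\partial h_{\epsilon}}{\partial \sigma}\, d\sigma\wedge\alphaDivSet + \tfrac{\partial h_{\epsilon}}{\partial \tau}\, d\tau\wedge\alphaDivSet + h_{\epsilon}\, d\alphaDivSet.
\end{equation*}
Any $V_{2}\in\check{\xi}_{\JDivSet}\subset T\divSet$ is annihilated by $d\sigma$, $d\tau$, and $\alphaDivSet$, while any $V_{1}$ in the 2-plane lies in $\Span(\partial_{\sigma},\partial_{\tau},\ReebDivSet)$ and contracts trivially against $d\alphaDivSet$ (the $\partial_{\sigma}, \partial_{\tau}$ contributions vanish because $d\alphaDivSet$ is pulled back from $T\divSet$; the $\ReebDivSet$ contribution by the Reeb property), so every term kills the pair $(V_{1},V_{2})$.

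It then remains to check nondegeneracy on each summand. On $\check{\xi}_{\JDivSet}$, $d\alphaEpsilon$ restricts to $h_{\epsilon}\,d\alphaDivSet$, which is symplectic and $\JDivSet$-tame since $h_{\epsilon}>0$ and $\JDivSet$ is $\alphaDivSet$-tame. On the 2-plane, substituting $\JEpsilon\partial_{\sigma} = -G\partial_{\tau}+\widetilde{G}\ReebDivSet$ and using $d\alphaEpsilon(\partial_\sigma,\partial_\tau)=\partial_\sigma f$ and $d\alphaEpsilon(\partial_\sigma,\ReebDivSet)=\partial_\sigma h_\epsilon$ gives
\begin{equation*}
d\alphaEpsilon(\partial_{\sigma},\JEpsilon\partial_{\sigma}) = -G\,\tfrac{\partial f}{\partial \sigma} + \widetilde{G}\,\tfrac{\partial h_{\epsilon}}{\partial \sigma},
\end{equation*}
which under the structural identifications baked into Definition \ref{Def:JNearGamma} regroups as $Q^{-1} + \tfrac{\epsilon_{\tau}}{2}\tau^{2}\,\widetilde{G}\,\partial_{\sigma}\BumpNot$. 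The first term is positive by construction of $Q = (F\widetilde{G}+G^{2})^{-1}$, and the residual is nonnegative because $\widetilde{G} = -\sgn(\sigma)(1-G)$ with $G\in[0,1]$ and $\sgn(\partial_{\sigma}\BumpNot) = -\sgn(\sigma)$ share a common sign on the support of $\partial_{\sigma}\BumpNot$. Finally, for the overlap $\{|\sigma|\in[3,4]\}$ the identification \eqref{Eq:SmoothMconstruction} aligns $\partial_{\sigma}$ with the Liouville collar coordinate $\partial_{\check{s}}$; since $G\equiv 0$ and $\widetilde{G} = -\sgn(\sigma)$ there, $\JEpsilon\partial_{\sigma} = \mp\ReebDivSet$ matches $\JDivSet^{\pm}\partial_{\check{s}} = \ReebDivSet$ up to the sign convention of $\pm T\posNegRegion$, so the two regional descriptions of $\xi_{\JEpsilon}$ coincide. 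The main obstacle is the sign bookkeeping on the 2-plane in the transition zones $|\sigma|\in[2,3]$, where $G$, $\widetilde{G}$, $F$, and $\partial_{\sigma}\BumpNot$ are all simultaneously nonzero and $\epsilon_{\tau}$ is active; however, the monotonicity constraints baked into \S\ref{Sec:AlphaNearGamma}--\S\ref{Sec:BumpFunctions} together with the defining positivity of $Q^{-1} = F\widetilde{G}+G^{2}$ keep the pairing strictly positive throughout.
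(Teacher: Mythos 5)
Your overall strategy matches the paper's: reduce the tameness verification to showing $d\alphaEpsilon(\partial_{\sigma}, \JEpsilon\partial_{\sigma}) > 0$ on $\R_{s}\times\NdividingSet$, and your first-line computation $d\alphaEpsilon(\partial_{\sigma},\JEpsilon\partial_{\sigma}) = -G\,\partial_{\sigma}f + \widetilde{G}\,\partial_{\sigma}h_{\epsilon}$ is exactly the one in the paper.

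However, the regrouping you invoke is false outside the middle region. You claim $-G\,\partial_{\sigma}f + \widetilde{G}\,\partial_{\sigma}g = Q^{-1} = F\widetilde{G} + G^{2}$, but this identity only holds on $\{|\sigma|\leq C_{\sigma}\}$, where both sides equal $1$. On $\{|\sigma|\in[3,4]\}$ we have $G=0$, $F=\mp 3$, $\widetilde{G}=\mp 1$, $\partial_{\sigma}f = 0$, $\partial_{\sigma}g = \mp e^{\mp\sigma}$, so the left side is $e^{-|\sigma|}$ while $F\widetilde{G}+G^{2} = 3$; they disagree. The quantity $Q^{-1}$ is simply not what the $d\alphaEpsilon$-pairing computes to. Relatedly, your transition-zone remark asserts that $\partial_{\sigma}\BumpNot$ is nonzero on $|\sigma|\in[2,3]$, but $\BumpNot = \Bump{1}{}$ is supported in $(-2,2)$, so $\partial_{\sigma}\BumpNot$ vanishes identically there; the $\epsilon_{\tau}$ term causes no trouble in that zone at all. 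The correct (and simpler) route — the one the paper's terse ``by the definitions of the functions'' is pointing at — is to observe that the two summands are \emph{separately} non-negative because of the built-in sign correlations: $-G\,\partial_{\sigma}f \geq 0$ since $G\geq 0$ and $\partial_{\sigma}f \leq 0$, and $\widetilde{G}\,\partial_{\sigma}h_{\epsilon} = (1-G)|\partial_{\sigma}g| + \tfrac{\epsilon_{\tau}}{2}\tau^{2}\widetilde{G}\,\partial_{\sigma}\BumpNot \geq 0$ since $\sgn\widetilde{G} = -\sgn\sigma = \sgn(\partial_{\sigma}g)$ and likewise for $\partial_{\sigma}\BumpNot$; and at every $\sigma$ at least one summand is strictly positive (the first when $G>0$ and $|\sigma|<3$, the second when $G<1$, which forces $|\sigma|>2$ so $\partial_{\sigma}g\neq 0$). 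No appeal to $Q$ is needed or correct.

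One further caution: your orthogonality claim that $V_{2}\in\check{\xi}_{\JDivSet}$ is annihilated by $\alphaDivSet$ presumes $\check{\xi}_{\JDivSet} = \ker\alphaDivSet$, whereas Definition \ref{Def:TameJ} permits $\xi_{J}\neq\ker\alpha$ and the paper explicitly distinguishes $\alphaDivSet$ from $\alphaDivSet_{\JDivSet}$ elsewhere (e.g.\ in Equation \eqref{Eq:DelbarNFour}). If $\JDivSet$ is chosen with $\check{\xi}_{\JDivSet} = \ker\alphaDivSet$ the cross terms do vanish as you say, but this hypothesis should be made explicit rather than assumed.
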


\begin{proof}
Invariance of $\xi_{\JEpsilon}$ under complex rotation is clear from its definition. For the symplectic condition, the only non-trivial verification required is for $d\alphaEpsilon|_{\R \partial_{\sigma} \oplus \R \JEpsilon\partial_{\sigma}}$ to be symplectic along $\R_{s} \times \NdividingSet$. 

To see this we compute, using the functions $f, g$, and $h_{\epsilon}$ of \S \ref{Sec:AlphaNearGamma}
\begin{equation*}
d\alphaEpsilon(\partial_{\sigma}, J\partial_{\sigma}) = -\frac{\partial f}{\partial \sigma}G + dh_{\epsilon}(\partial_{\sigma})\widetilde{G} = -\frac{\partial f}{\partial \sigma}G + \left(\frac{\partial g}{\partial \sigma} + \frac{\epsilon}{2}\tau^{2}\frac{\partial \BumpNot}{\partial \sigma}\right)\widetilde{G}.
\end{equation*}
This expression is strictly positive by the definitions of the functions $f, g, h_{\epsilon}, F, G$, and $\widetilde{G}$.
\end{proof}

\subsection{$\delbar$ equations for the $\JEpsilon$}

Let $(\Sigma, \domainJ)$ be an almost complex manifold and let
\begin{equation*}
(s, \tau, u): \Sigma \rightarrow \R_{s} \times \R_{\tau} \times \posNegRegion
\end{equation*}
be a differentiable map. We compute
\begin{equation*}
2\delbarEpsilon(s, \tau, u) = \partial_{s}\otimes(ds \mp \frac{1}{3} d\tau\circ \domainJ) + \partial_{\tau}\otimes (d\tau \pm 3 ds \circ \domainJ) + \Big( Tu + \JDivSet \circ Tu \circ \domainJ\Big).
\end{equation*}
Transforming $\tau \mapsto \pm\frac{1}{3} \tau$ yields an ordinary $\delbar$ equation on $\R_{s} \times \R_{\tau}$ so that $(s, \tau, u)$ is holomorphic iff both $(s, \pm\frac{1}{3}\tau)$ is an ordinary holomorphic function and $u$ is $(\domainJ, \JDivSet)$-holomorphic.

Now we consider maps of the form $(s, \tau, \sigma, v): \Sigma \rightarrow \NdividingSet$. We use Definition \ref{Def:JNearGamma} to compute
\begin{equation}\label{Eq:DelbarNFour}
\begin{aligned}
2\delbarEpsilon(s, \tau, \sigma, v) &= \partial_{s} \otimes \left( ds -\widetilde{G}Qd\tau\circ \domainJ - GQ\alphaDivSet_{\JDivSet}\circ Tv \circ \domainJ \right) \\
&+ \ReebDivSet\otimes \left( \alphaDivSet\circ Tv + G ds\circ \domainJ + \widetilde{G}d\sigma\circ \domainJ\right)\\
&+ \partial_{\tau}\otimes \left( d\tau + Fds\circ \domainJ - Gd\sigma\circ \domainJ \right) \\
&+ \partial_{\sigma}\otimes \left( d\sigma + QGd\tau\circ \domainJ - FQ\alphaDivSet_{\JDivSet}\circ Tv\circ \domainJ \right) \\
&+ \epsilon_{\tau}\left(-X\otimes ds\circ \domainJ + Q\widetilde{G}JX\otimes d\tau \circ \domainJ + GQJX\otimes \alphaDivSet_{\JDivSet}\circ Tv \circ \domainJ \right) \\
&+ \left( \pi_{\JDivSet} \circ Tv + \JDivSet\circ \pi_{\JDivSet}\circ Tv \circ \domainJ \right).
\end{aligned}
\end{equation}

\section{Holomorphic foliations}\label{Sec:HoloFoliations}

In this section we describe codimension $2$ foliations of $\R_{s} \times \Nhypersurface$ by $\JEpsilon$-holomorphic submanifolds for each $\epsilon$. As summarized in Figure \ref{Fig:HamNearGamma}, Reeb trajectories in our neighborhood $\R_{\tau} \times [-C_{\sigma}, C_{\sigma}]_{\sigma} \times \divSet$ of $\divSet$ in $\Nhypersurface$ look like Hamiltonian flow lines of a quadratic function in the $\tau, \sigma$ plane. By contrast, the leaves of our holomorphic foliation will project to gradient flow lines in the $\tau, \sigma$ plane in the same neighborhood of $\divSet$. See Figure \ref{Fig:GradNearGamma}.

\begin{prop}\label{Prop:Foliation}
For each $\epsilon \in \R_{\geq 0} \times \R_{>0}$, there is a foliation $\foliationEpsilon$ of $\R_{s} \times M$ by $\JEpsilon$ holomorphic submanifolds such that every leaf $\Lie \subset \foliationEpsilon$ is the image of
\be
\item a $(\JDivSet, \JEpsilon)$-holomorphic embedding $\R \times \divSet \rightarrow \R \times \Nhypersurface$,
\item a $(\JDivSet_{-}, \JEpsilon)$-holomorphic embedding $\negRegionComplete \rightarrow \R \times \Nhypersurface$, or
\item a $(\JDivSet_{+}, \JEpsilon)$-holomorphic embedding $\posRegionComplete \rightarrow \R \times \Nhypersurface$.
\ee
Moreover, the symplectization $\R_{s} \times \divSet \subset \R_{s} \times \{ \tau = 0\}$ is a leaf of $\foliationEpsilon$ for all $\epsilon$.
\end{prop}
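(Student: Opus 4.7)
The plan is to construct $\foliationEpsilon$ explicitly on each of the three pieces of $\R_{s}\times \Nhypersurface$ where $\JEpsilon$ has a prescribed form -- the simple-model neighborhood $\{|\sigma|\le C_{\sigma}\}$, the intermediate collar $\{C_{\sigma}\le |\sigma|\le 4\}$, and the product region $\R_{s}\times \R_{\tau}\times \posNegRegion$ -- then glue the descriptions together and check holomorphicity leaf by leaf using Equation \eqref{Eq:DelbarNFour}. Throughout, each leaf will be parameterized by a map $\phi(\check s, v) = (s(\check s),\tau(\check s),\sigma(\check s),v)$ whose projection onto the $(\tau,\sigma)$-plane traces an integral curve of the negative Euclidean gradient of $H(\tau,\sigma) = \tfrac12(\epsilon_{\sigma}\sigma^{2}-\epsilon_{\tau}\tau^{2})$; this is the orthogonal family to the Reeb projection depicted in Figure \ref{Fig:HamNearGamma}.

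On the simple neighborhood I propose the explicit parametrization
\begin{equation*}
\phi(\check s, v) = \left(s_{0}+\check s,\ \tau_{0}e^{-\epsilon_{\tau}\check s},\ \sigma_{0}e^{\epsilon_{\sigma}\check s},\ v\right),\qquad (s_{0},\tau_{0},\sigma_{0})\in \R^{3},
\end{equation*}
so the $(\tau_{0},\sigma_{0})=(0,0)$ leaf is the distinguished $\R_{s}\times \divSet$. Holomorphicity decomposes into two checks: on $\check\xi_{\JDivSet}$ it is immediate from $\JEpsilon|_{\check\xi_{\JDivSet}}=\JDivSet$, while on $\partial_{\check s}$ one computes $T\phi(\partial_{\check s}) = \partial_{s}-\epsilon_{\tau}\tau\partial_{\tau}+\epsilon_{\sigma}\sigma\partial_{\sigma}$ and, using Equation \eqref{Eq:JepsilonModel}, verifies $\JEpsilon T\phi(\partial_{\check s}) = \ReebDivSet = T\phi(\JDivSet\partial_{\check s})$. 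On the product region $\R_{s}\times \R_{\tau}\times \posNegRegion$, where $\JEpsilon\partial_{s} = \pm\tfrac13 \partial_{\tau}$ and $\JEpsilon|_{T\posNegRegion}=\JDivSet_{\pm}$ by \S\ref{Sec:JDefOnSpm}, I take the tautological foliation by slices $\{(s_{0},\tau_{0})\}\times \posNegRegion$; the inclusion $x\mapsto(s_{0},\tau_{0},x)$ is manifestly $(\JDivSet_{\pm},\JEpsilon)$-holomorphic.

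For the intermediate annulus I plug the ansatz $\phi(\check s, v) = (s(\check s),\tau(\check s),\sigma(\check s),v)$ into Equation \eqref{Eq:DelbarNFour}. The $\check\xi_{\JDivSet}$ and $\epsilon_{\tau}$-dependent terms vanish (the first because the restriction of $T\phi$ to $\check\xi_{\JDivSet}$ is the identity; the second because $X$ vanishes along $\sigma=0$ and more generally since our ansatz has $v$ constant), and setting the $\partial_{s},\partial_{\tau},\partial_{\sigma},\ReebDivSet$ components to zero yields a first-order ODE schematically of the form
\begin{equation*}
\frac{ds}{d\check s}=1,\qquad \frac{d\tau}{d\check s}=-\tau\cdot a_{\epsilon}(\sigma),\qquad \frac{d\sigma}{d\check s}=\sigma\cdot b_{\epsilon}(\sigma),
\end{equation*}
with $a_{\epsilon},b_{\epsilon}>0$ smooth functions of $\sigma$ manufactured from $F,G,\widetilde G$ and their derivatives in Definition \ref{Def:JNearGamma}. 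One verifies $a_{\epsilon}=\epsilon_{\tau}$, $b_{\epsilon}=\epsilon_{\sigma}$ on $[-C_{\sigma},C_{\sigma}]$ (recovering Step 1) and that, as $|\sigma|$ approaches $4$, $a_{\epsilon}\to 0$ while $b_{\epsilon}\to 1$, so that $\tau(\check s)$ stabilizes to a limit $\tau_{1}$ and $\check s$ matches the Liouville collar coordinate on $\posNegRegion$. This is what allows each such leaf to be continued as a slice $\{(s_{1},\tau_{1})\}\times \posNegRegionComplete$ of Step 2 via the identification in Equation \eqref{Eq:SmoothMconstruction}, so that $\phi$ extends to a $(\JDivSet_{\pm},\JEpsilon)$-holomorphic embedding from all of $\posNegRegionComplete$. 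Leaves with $\sigma_{0}=0$ never leave the simple neighborhood and remain copies of $\R\times\divSet$; leaves with $\sigma_{0}\ne 0$ become copies of $\posRegionComplete$ (if $\sigma_{0}>0$) or $\negRegionComplete$ (if $\sigma_{0}<0$). Standard ODE existence/uniqueness then gives smooth dependence on the parameters $(s_{0},\tau_{0},\sigma_{0})$, and a direct count of parameters confirms the leaves foliate $\R_{s}\times \Nhypersurface$.

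The main obstacle is the gluing analysis in the collar $\{C_{\sigma}\le |\sigma|\le 4\}$: I must verify that the functions $a_{\epsilon}(\sigma),b_{\epsilon}(\sigma)$ extracted from $\JEpsilon$ really do force the asymptotic behavior $\tau(\check s)\to\mathrm{const}$ as $|\sigma|\to 4$, so that the ODE solution can be reparametrized as the Liouville flow on $\posNegRegion$ and the pieces of each leaf glue to a smooth embedding of $\posNegRegionComplete$. This is a calculation with the specific profiles of $F, G, \widetilde G$ in Definition \ref{Def:JNearGamma}, together with the matching of collar coordinates in \eqref{Eq:SmoothMconstruction}; everything else is either an immediate verification (Steps 1, 2) or a routine consequence of smooth dependence of ODE flows on initial data.
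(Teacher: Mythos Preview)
Your strategy is exactly the paper's: construct the leaves as maps $(\check s,v)\mapsto(s(\check s),\tau(\check s),\sigma(\check s),v)$, identify the $(\tau,\sigma)$-projection with gradient flow lines of $\tfrac12(\epsilon_\tau\tau^2-\epsilon_\sigma\sigma^2)$ in the model region, and glue through the collar into the product slices $\{(s_0,\tau_0)\}\times\posNegRegion$. The paper runs the construction in the opposite direction (starting from the slices and extending the cylindrical end of $\posNegRegionComplete$ inward toward $\divSet$), but this is immaterial.

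There are, however, several concrete computational errors in your execution. First, your explicit parametrization on $\{|\sigma|\le C_\sigma\}$ has the wrong signs: with $\JEpsilon\partial_s=\ReebDivSet-\epsilon_\sigma\sigma\partial_\tau-\epsilon_\tau\tau\partial_\sigma$ and $\JEpsilon\partial_\sigma=-\partial_\tau$ one finds that $\JEpsilon\bigl(\partial_s-\epsilon_\tau\tau\partial_\tau+\epsilon_\sigma\sigma\partial_\sigma\bigr)=\ReebDivSet-2\epsilon_\sigma\sigma\partial_\tau-2\epsilon_\tau\tau\partial_\sigma\neq\ReebDivSet$. The correct ODE (consistent with your own description as the \emph{negative} gradient of $\tfrac12(\epsilon_\sigma\sigma^2-\epsilon_\tau\tau^2)$) is $\tau'=+\epsilon_\tau\tau$, $\sigma'=-\epsilon_\sigma\sigma$, so the leaf has $\tau=\tau_0 e^{+\epsilon_\tau\check s}$, $\sigma=\sigma_0 e^{-\epsilon_\sigma\check s}$. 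Second, your claim that the $\epsilon_\tau$-dependent terms in Equation~\eqref{Eq:DelbarNFour} vanish is false: $X=-\tfrac12\tfrac{\partial B_1}{\partial\sigma}\tau^2\partial_\tau+B_1\tau\partial_\sigma$ equals $\tau\partial_\sigma\neq0$ on $\{|\sigma|\le 1\}$, and ``$v$ constant'' does nothing since $X$ lives in the $(\tau,\sigma)$-directions. Consequently your collar ODE cannot take the decoupled form $\tau'=-\tau\,a_\epsilon(\sigma)$, $\sigma'=\sigma\,b_\epsilon(\sigma)$; the paper splits the collar into $|\sigma|\in[2,4]$ (where $X=0$ and $\tau$ is genuinely constant) and $|\sigma|\in[C_\sigma,2]$ (where the $X$-term couples $\tau$ and $\sigma$ as in Equation~\eqref{Eq:FoliationLeavesODE}). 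Once the signs are corrected, the matching into the product region happens as $\check s\to-\infty$ (where $\tau$ stabilizes and $|\sigma|\to\infty$), not as $\check s\to+\infty$.
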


\begin{figure}[h]
\vspace{3mm}
\begin{overpic}[scale=.5]{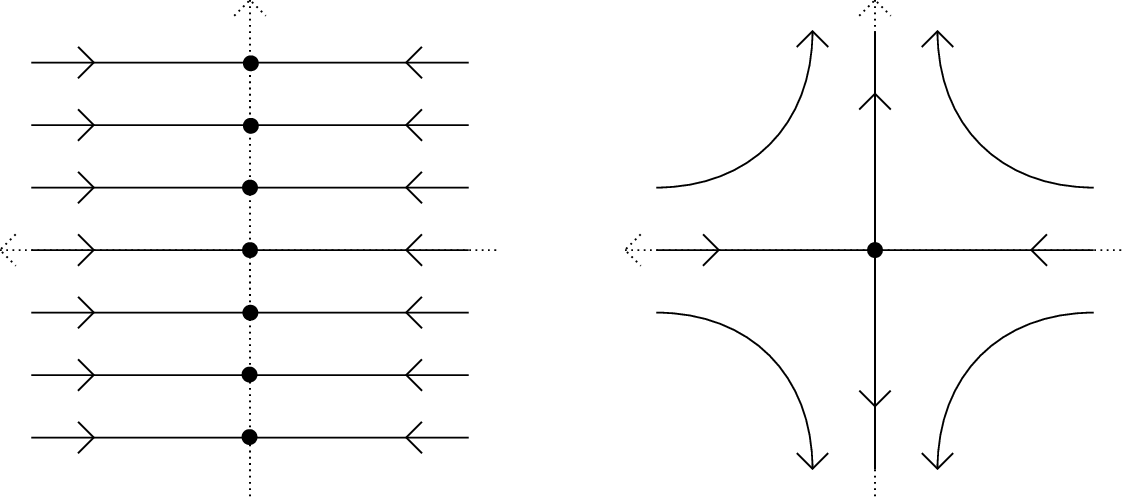}
\put(77, 47){$\tau$}
\put(-5, 21){$\sigma$}
\end{overpic}
\caption{Leaves of the holomorphic foliation projected to the $\tau, \sigma$ coordinates in $\R_{\tau} \times [-C_{\sigma}, C_{\sigma}]_{\sigma} \times \divSet$. The case $\epsilon_{\tau} = 0$ ($\epsilon_{\tau} > 0$) is shown on the left (right). The projections of these leaves coincide with gradient flow lines of the function $\half(\epsilon_{\tau} \tau^{2}-\epsilon_{\sigma}\sigma^{2})$ defined using the standard metric on $\R_{\tau} \times [-C_{\sigma}, C_{\sigma}]_{\sigma}$.}
\label{Fig:GradNearGamma}
\end{figure}

The proof of the existence of $\foliationEpsilon$ (which is modeled on \cite[\S 3]{Wendl:OB}) occupies the majority of this section. As an immediate consequence of the existence of $\foliationEpsilon$ and positivity of intersections, we have the following result concerning holomorphic curves.

\begin{cor}\label{Cor:CurvesStuckInLeaves}
Let $u: \Sigma \rightarrow \R_{s} \times M$ be a finite energy $(\domainJ, \JEpsilon)$-holomorphic curve asymptotic to closed $R_{\epsilon}$ orbits with $\epsilon_{\tau} > 0$. Then $\im(u)$ is contained in a leaf of $\foliationEpsilon$.
\end{cor}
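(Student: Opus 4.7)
The plan is to use positivity of intersections for $\JEpsilon$-holomorphic objects, combined with a homotopy argument through the family of leaves of $\foliationEpsilon$. The essential geometric input is Lemma \ref{Lemma:CZComputation}: because $\epsilon_{\tau} > 0$, every closed $\ReebEpsilon$-orbit lies on $\{ \tau = 0, \sigma = 0 \} \times \divSet$, so all asymptotic orbits of $u$ sit on the single leaf $\R_{s} \times \divSet \in \foliationEpsilon$.

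Suppose for contradiction that $\im(u)$ is not contained in any leaf. Since the leaves are codimension-$2$ $\JEpsilon$-holomorphic submanifolds and $u$ is connected, there is a non-central leaf $\Lie$ — say a positive-region leaf — meeting $\im(u)$ at an interior point $p$ with $\im(u) \not\subset \Lie$. Applying Micallef--White's local positivity of intersections to the $\JEpsilon$-holomorphic pair $(u, \Lie)$ gives strictly positive local intersection multiplicities at every point of $u^{-1}(\Lie)$. Moreover, since the asymptotic ends of $u$ lie on the leaf $\R_{s} \times \divSet \neq \Lie$, no intersection points escape to the cylindrical ends of $u$, and Siefring's asymptotic intersection number satisfies $u \cdot \Lie \geq 1$.

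The contradiction will come from homotoping $\Lie$ through the family of positive-region leaves to a distant leaf $\Lie'$ disjoint from $\im(u)$. By Proposition \ref{Prop:Foliation} and Figure \ref{Fig:GradNearGamma}, the positive-region leaves are parametrized by non-central gradient flow lines of $\tfrac{1}{2}(\epsilon_{\tau}\tau^{2} - \epsilon_{\sigma}\sigma^{2})$ in the $(\tau, \sigma)$-plane, a connected parameter space along which a leaf can be translated to arbitrarily large $|\tau|$. Since $\im(u)$ is contained in a bounded $|\tau|$ window (by a maximum principle argument on the $\tau$-coordinate, using the contactization form of $\alphaEpsilon$ on $\R_{\tau} \times \posNegRegion$), a sufficiently translated leaf $\Lie'$ is disjoint from $\im(u)$, so $u \cdot \Lie' = 0$. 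Siefring's homotopy invariance of the asymptotic intersection number then forces $u \cdot \Lie = u \cdot \Lie' = 0$, contradicting the lower bound above.

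The main obstacle is verifying that no spurious asymptotic intersections are created during the homotopy $\{\Lie_{t}\}$. This reduces to a local eigenvalue analysis at each asymptotic Reeb orbit $\orbit$ of $u$, using the explicit model of $\JEpsilon$ near $\R_{s} \times \divSet$ in Equation \eqref{Eq:JepsilonModel}: one must check that the eigenfunctions of the asymptotic operator $\AsymptoticOp_{\orbit}$ governing the exponential convergence of $u$ correspond to eigenspaces distinct from, and with appropriate winding relative to, those associated with the family $\Lie_{t}$, so that the extra asymptotic contribution in Siefring's formula vanishes uniformly in $t$.
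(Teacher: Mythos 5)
Your overall strategy is the same as the paper's -- positivity of intersections with a leaf of $\foliationEpsilon$, followed by moving the leaf to a position where the intersection count is manifestly zero, yielding a contradiction.  However, you translate the leaf in the $\tau$-direction (by varying the parameter $\tau_0$ of gradient-flow lines), whereas the paper translates the leaf in the $\R_{s}$-direction of the symplectization.  This is not a cosmetic difference: the $\R_{s}$-translation is built into the equivariance of $\foliationEpsilon$, and for $s_{0}$ large the leaf $\Lie_{s_{0}} = \Flow^{s_{0}}_{\partial_{s}}\Lie$ becomes \emph{genuinely disjoint} from $\im(u)$ (since $\pi_{s}(\Lie)$ is bounded below while $u$ converges to cylinders over orbits in $\R_{s} \times \divSet$ on its ends).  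Disjointness means you only need the ordinary homotopy-invariant count of isolated intersections, not any asymptotic intersection theory; no leaf in the $\R_{s}$-translated family shares its asymptotics with $u$, so there is nothing to control at infinity.

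Your $\tau$-based homotopy does not enjoy this luxury, and this is where the proof has a real gap.  You correctly identify the issue yourself -- ``the main obstacle is verifying that no spurious asymptotic intersections are created during the homotopy $\{\Lie_{t}\}$'' -- and then defer it to ``a local eigenvalue analysis'' that is described but never carried out.  The worry is not hypothetical: a central leaf $\Lie^{\pm}$ (at $\tau_{0}=0$) has a cylindrical end asymptotic to $\R_{s} \times \divSet$, the very same set of Reeb orbits to which $u$ converges, so Siefring's asymptotic winding comparison at $\tau_{0}=0$ is exactly the content that needs to be verified and contributes non-trivially to the intersection count.  A plan for a proof is not a proof; as written, this step is missing.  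Separately, your invocation of a ``maximum principle argument on the $\tau$-coordinate'' is not quite right: $\tau$ is harmonic in the region $\R_{\tau}\times\posNegRegion$, so it has no interior maximum principle; boundedness of $\tau$ on $\im(u)$ follows instead from the compactness/energy estimates underlying sutured contact homology (cf. \cite{CGHH:Sutures}), which is an available fact, but it should be cited as such rather than attributed to a maximum principle.  Replacing your $\tau_{0}$-homotopy with the paper's $\R_{s}$-translation removes both difficulties at once.
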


At the end of the section we also describe a splitting of $T(\R_{s} \times \Nhypersurface)$ induced by $\foliationEpsilon$ which is used throughout the text.

\subsection{Foliation over the $\R_{s} \times \R_{\tau} \times \posNegRegion$}

For each $\epsilon \geq 0$, $s_{0} \in \R$, and $\tau_{0} \in \R$ the submanifold
\begin{equation*}
\{ s_{0}\} \times \{\tau_{0}\}\times \posNegRegion \subset M
\end{equation*}
is holomorphic by the description of $\JEpsilon$ in the complement of $\NdividingSet$. See \S \ref{Sec:JDefOnSpm}. We will extend these inclusions to $(\JDivSet_{\pm}, \JEpsilon)$-holomorphic embeddings of the cylindrical ends $[-4, \infty) \times \divSet \subset \posNegRegionComplete$ into the symplectization $\R \times \NdividingSet$. 

We will assume that the $\JDivSet_{\pm}$ agree with the cylindrical almost complex structure $\JDivSet$ on the symplectization $\R_{\check{s}} \times \divSet$ along these cylindrical ends. For simplicity, we will work where the positive region of $\hypersurface$ intersects $\NdividingSet$. That is, our targets will be
\begin{equation*}
\R_{s} \times \R_{\tau} \times [-4, 0)_{\sigma} \times \divSet, \quad \R_{s} \times \R_{\tau} \times (0, 4]_{\sigma} \times \divSet.
\end{equation*}

The extensions of the $\{ s_{0} \}\times \{\tau_{0}\} \times \posNegRegion$ to embeddings of the $\posNegRegionComplete$ will foliate all of $\R_{s} \times M$, except for the subset $\{ \sigma = 0 \}$. We describe the foliation over $\{ \sigma = 0 \}$ afterwards.

\subsection{Extension of the $\{ s_{0} \}\times \{\tau_{0}\} \times \posNegRegion$}

We solve for functions $s = s(\check{s})$, $\sigma = \sigma(\check{s})$, and $\tau = \tau(\check{s})$ so that 
\begin{equation*}
(s, \tau, \sigma, \Id_{\divSet}): [-4, \infty) \times \divSet  \rightarrow \R_{s} \times \R_{\tau} \times [-4, 0)_{\sigma}\times \divSet
\end{equation*}
is $(\JDivSet, \JEpsilon)$-holomorphic. We are only working out the details for the $\{ s_{0} \}\times \{\tau_{0}\}\times \posRegion$. The details for $\{ s_{0} \}\times \{\tau_{0}\}\times \negRegion$ are the same, modulo notation and signs.

The almost complex structure $\JDivSet$ on our domain satisfies $\JDivSet\partial_{\check{s}} = \ReebDivSet$ and $\alphaDivSet\circ \JDivSet = d\check{s}$. Applying Equation \eqref{Eq:DelbarNFour} to $(s, \tau, \sigma, \Id_{\divSet})$ yields
\begin{equation}\label{Eq:DelbarNFourFoliation} 
\begin{aligned}
2\delbarTangent (s, \tau, \sigma, v) &= \partial_{s} \otimes \Big( ds -\widetilde{G}Qd\tau\circ \JDivSet - GQd\check{s} \Big) \\
&+ \partial_{\tau}\otimes \Big( d\tau + Fds\circ \JDivSet - Gd\sigma\circ \JDivSet \Big) \\
&+ \partial_{\sigma}\otimes \Big( d\sigma + QGd\tau\circ \JDivSet - FQd\check{s} \Big) \\
&+ \epsilon\Big(-X\otimes ds\circ \JDivSet + Q\widetilde{G}JX\otimes d\tau \circ \JDivSet + GQJX\otimes d\check{s} \Big) \\
&+ \ReebDivSet\otimes \Big( -d\check{s} + G ds + \widetilde{G}d\sigma \Big)\circ \JDivSet.
\end{aligned}
\end{equation}

Supposing that $\tau = \tau_{0}$ is constant and $\sigma \in [-4, -2]$ (along which $X = 0$), Equation \eqref{Eq:DelbarNFourFoliation} simplifies as
\begin{equation*}
\begin{aligned}
2\delbarTangent (s, \tau, \sigma, \Id_{\divSet}) &= \partial_{s} \otimes \Big( ds - GQd\check{s} \Big) + \ReebDivSet\otimes \Big( -d\check{s} + G ds + \widetilde{G}d\sigma \Big)\circ \JDivSet\\
&+ \partial_{\tau}\otimes \Big( Fds - Gd\sigma \Big)\circ \JDivSet + \partial_{\sigma}\otimes \Big( d\sigma - FQd\check{s} \Big).
\end{aligned}
\end{equation*}
All functions in the above expression depend only on $\sigma$. We can solve for $\sigma$ and then $s$ as the solutions to the differential equations
\begin{equation*}
\frac{\partial \sigma}{\partial \check{s}} = F(\sigma(\check{s}))Q(\sigma(\check{s})), \quad \sigma(-4) = -4, \quad \frac{\partial s}{\partial \check{s}} = G(\sigma(\check{s}))Q(\check{s}), \quad s(0) = s_{0}.
\end{equation*}
Provided such solutions, our $\delbarTangent$ equation is
\begin{equation*}
\begin{aligned}
2\delbarTangent (s, \tau, \sigma, \Id_{\divSet}) &= \partial_{\tau}\otimes \Big( Fds - Gd\sigma \Big)\circ \JDivSet + \ReebDivSet\otimes \Big( -d\check{s} + G ds + \widetilde{G}d\sigma \Big)\circ \JDivSet \\
&= \partial_{\tau}\otimes (GFQ - GFQ)d\check{s} + \ReebDivSet\otimes (-1 + G^{2}Q + \widetilde{G}FQ)d\check{s} = 0
\end{aligned}
\end{equation*}
from the definition of $Q$. Thus our map extends the foliation over $\{ \sigma \in [-4, -2] \} \subset \R_{s} \times M$, along which the leaves are the images of maps
\begin{equation*}
(\check{s}, v) \mapsto (s(\check{s}), \tau_{0}, \sigma(\check{s}), v), \quad \tau_{0} \in \R
\end{equation*}
with $s(\check{s})$ and $\sigma(\check{s})$ as above. 

We seek to extend this foliation over the region $\{ \sigma \in [-2, 0)\}$, where $F = -\epsilon_{\sigma}\sigma$, $G = Q = 1$, and $\widetilde{G} = 0$ so that Equation \eqref{Eq:DelbarNFourFoliation} becomes
\begin{equation*} 
\begin{aligned}
2\delbarEpsilon(s, \tau, \sigma, v) &= \partial_{s} \otimes \left( ds  - d\check{s} \right) + \ReebDivSet\otimes \left( -d\check{s} + ds  \right)\circ \JDivSet_{+} \\
&+ \partial_{\tau}\otimes \left( d\tau - d\sigma\circ \domainJ \right) + \partial_{\sigma}\otimes \left( d\sigma + d\tau\circ \domainJ \right) \\ 
&+ \epsilon_{\sigma}\sigma\left( -ds\circ \domainJ + \sigma d\check{s} \right)\\
&+ \epsilon_{\tau}\left(-X\otimes ds\circ \domainJ + JX\otimes d\check{s} \right)
\end{aligned}
\end{equation*}
Clearly we must have $\frac{\partial s}{\partial \check{s}} = 1$ in which case
\begin{equation*} 
\begin{aligned}
2\delbarTangent (s, \tau, \sigma, v) &= \partial_{\tau}\otimes \left( d\tau - d\sigma\circ \domainJ -\epsilon_{\sigma}\sigma ds\circ \domainJ - \epsilon_{\tau}\half\frac{\partial \BumpNot}{\partial \sigma}(\sigma)\tau^{2}ds\circ \domainJ - \epsilon_{\tau}\BumpNot\tau d\check{s} \right) \\
&+ \partial_{\sigma}\otimes \left( d\sigma + d\tau\circ \domainJ + \epsilon_{\sigma}\sigma d\check{s} - \epsilon_{\tau}\BumpNot \tau ds \circ \domainJ - \epsilon_{\tau}\half \frac{\partial \BumpNot}{\partial \sigma}\tau^{2} d\check{s} \right) \\
&= \Big( \partial_{\tau} \otimes (d\tau - \epsilon_{\tau} \BumpNot \tau d\check{s}) + \partial_{\sigma}\otimes (d\sigma + \epsilon_{\sigma}\sigma d\check{s} - \epsilon_{\tau}\half \frac{\partial \BumpNot}{\partial \sigma}\tau^{2} d\check{s})\Big)^{0, 1}.
\end{aligned}
\end{equation*}
We can get $\delbarEpsilon(s, \tau, \sigma, v) = 0$ by defining $\tau$ and $\sigma$ to be the solutions to the differential equations
\begin{equation}\label{Eq:FoliationLeavesODE}
\frac{\partial \tau}{\partial \check{s}} = \epsilon_{\tau}\tau \BumpNot(\sigma), \quad \tau(-2) = \tau_{0}, \quad \quad \frac{\partial \sigma}{\partial \check{s}} = -\epsilon_{\sigma}\sigma + \epsilon_{\tau}\half t^{2}\frac{\partial \BumpNot}{\partial \sigma}.
\end{equation}
The initial conditions for $\sigma(\check{s}), \check{s} \in [-2, \infty)$ will be given by the terminal condition for $\sigma(\check{s}), \check{s} \in [-4, -2]$.

From the properties defining the functions $F, G, \widetilde{G}, Q$, the differential equations defining the leaves of our foliation coincide near the set $\{ \sigma = -2\}$. Hence all of the leaves are smooth. By construction, every leaf of the foliation which we have constructed so far is a copy of $\posNegRegionComplete$.

\subsection{Foliation leaves near $\divSet$}\label{Sec:FoliationNearDividingSet}

Let's understand what the already-described leaves of our foliation look where $| \sigma | < C_{\sigma}$. On this set, the differential equation of Equation \eqref{Eq:FoliationLeavesODE} is
\begin{equation*}
\frac{\partial \tau}{\partial \check{s}} = \epsilon_{\tau}\tau , \quad \frac{\partial \sigma}{\partial \check{s}} = -\epsilon_{\sigma}\sigma.
\end{equation*}
Thus the leaves of the foliation are flow lines of $\half\grad(\epsilon_{\tau}\tau^{2} - \epsilon_{\sigma}\sigma^{2})$ in the $\R^{2}_{\tau, \sigma}$ as shown in Figure \ref{Fig:GradNearGamma}.

By inspection, the leaves that we have so far constructed foliate all of $\R_{s} \times \Nhypersurface$ except for the subset $\{ \sigma = 0 \}$. This subset is also foliated by maps
\begin{equation*}
(s, \tau, \sigma, \Id_{\divSet}) = (\check{s}, \tau(\check{s}), 0, \Id_{\divSet}): \R_{\check{s}} \times \divSet \rightarrow \R_{s} \times \Nhypersurface, \quad \frac{\partial \tau}{\partial \check{s}} = \epsilon_{\tau} \tau(\check{s}), \quad \tau(0) = \tau_{0} \in \R.
\end{equation*}
There is one leaf for each $\tau_{0} \in \R$ and they all project to gradient flow lines. The addition of these leaves provides a $\JEpsilon$-holomorphic foliation of all of $\R_{s} \times M$ for each $\epsilon > 0$, completing our construction.

\nom{$\foliationEpsilon$}{$\JEpsilon$-holomorphic foliation with leaves denoted $\Lie$}

\subsection{Proof of Corollary \ref{Cor:CurvesStuckInLeaves}}

We now complete a proof of Corollary \ref{Cor:CurvesStuckInLeaves}. Let $u: \Sigma \rightarrow \R_{s} \times \Nhypersurface$ be a $(\domainJ, \JEpsilon)$-holomorphic map asymptotic to some collection of Reeb orbits in $\divSet$ with $\epsilon > 0$. If $\Lie$ is a leaf of $\foliationEpsilon$, then $u$ must either be contained in $\Lie$ or have a non-negative number of algebraic intersections with $\Lie$ by intersection positivity, which is applicable since $\Sigma$ and $\Lie$ are holomorphic and have complimentary dimension. If the intersection number is zero, then $\im u$ is disjoint from $\Lie$.

Let's specifically consider when $\Lie$ is a copy of $\posNegRegionComplete$ with $\pi_{M}(\Lie)$ not contained within the set $\{\tau = 0\}$. The positive end of $\pi_{M}\Lie$ which is contained in $\NdividingSet$ tends to $\sigma = \pm \infty$ as can be seen by looking at the right-hand side of Figure \ref{Fig:HamNearGamma}. Hence $\im u$ cannot be contained in $\Lie$ as the ends of $\pi_{M}(u)$ must tend to $\divSet$. Note also that we can translate $\Lie$ upwards in $\R_{s}\times M$ to obtain another leave $\Lie_{s_{0}} = \Flow^{s_{0}}_{\partial s}\Lie \in \foliationEpsilon$. For large $s_{0}$ we can ensure that $\Lie_{s_{0}}$ is disjoint from $\im u$, but $\#(u \cap \Lie) = \#(u \cap \Lie_{s_{0}})$. Hence $u$ must be disjoint from every such $\Lie$. We conclude that $\pi_{M}(u)$ lies within $\{ \tau = 0 \} \subset \Nhypersurface$.

Let $\Lie^{\pm}$ be leaves of $\foliationEpsilon$ for which $\pi_{M}\Lie^{\pm} = \posNegRegionComplete \subset \hypersurface = \{ \tau = 0\} \subset M$. If $\pi_{M}(u)$ touches both $\posNegRegionComplete$, then we can again push the $\Lie^{\pm}$ upwards in $\R_{s} \times M$ until they are disjoint from the $\Lie^{\pm}_{s_{0}}$ for $s_{0} \gg 0$. Again using invariance of the algebraic intersection numbers, we conclude that $\pi_{M}u$ cannot touch both of the $\posNegRegionComplete$. Hence $\pi_{M}u$ is contained in either $\divSet$ or one of the $\posNegRegionComplete$.

If $\pi_{M}\im u \subset \divSet$, then $u$ clearly maps into $\R_{s} \times \divSet$. If $\pi_{M}\im u$ is contained in one of the $\posNegRegionComplete$ then we again use intersection positivity and translations $\Lie_{s_{0}}^{\pm}$ of leaves $\Lie^{\pm}$ which are copies of $\posNegRegionComplete$ to conclude that $\pi_{M}\im u$ is contained in one of the $\Lie_{s_{0}}^{\pm}$. The proof of Corollary \ref{Cor:CurvesStuckInLeaves} is complete.

\subsection{Global splitting the tangent bundle}\label{Sec:TangentSplitting}

Observe that the vector field $\partial_{\tau}$ is nowhere tangent to any leaf $\Lie$ of $\foliationEpsilon$. Therefore we have a globally-defined $\JEpsilon$-invariant splitting of the tangent bundle
\begin{equation}\label{Eq:GlobalTangentSplitting}
T(\R_{s} \times \Nhypersurface) = \leafTangentNormal \oplus \leafTangent, \quad \leafTangentNormal = \R \partial_{\tau}\oplus \R \JEpsilon \partial_{\tau}
\end{equation}
with $\leafTangent$ at a point $(s, x) \in \R_{s} \times \Nhypersurface$ in being the tangent space of a leaf $\Lie_{x, s}$ containing the point. The distribution $\leafTangent$ is clearly integrable, while $\leafTangentNormal$ is not necessarily integrable. By the definition of $\JEpsilon$,
\begin{equation*}
\leafTangentNormal|_{\{\sigma \in [-C_{\sigma}, C_{\sigma}]\}} = \R \partial_{\tau} \oplus \R\partial_{\sigma}.
\end{equation*}
\nom{$\leafTangentNormal, \leafTangent$}{$\JEpsilon$-invariant subbundles of $T(\R_{s} \times \Nhypersurface)$}

\section{Fredholm theory for $\JEpsilon$-curves}\label{Sec:CurvesNearGamma}

Here we study the Fredholm theory of $J_{\epsilon}$-holomorphic curves, seeking to understand the kernels and cokernels of their linearized $\delbar$ operators. We will attempt to keep the material of this section self-contained as it applicable to the more general case of a codimension $2$ contact submanifold $(\divSet, \xi_{\divSet})$ of a contact manifold $(M, \xi)$ whose normal bundle is hyperbolic in the sense of \cite{CFC:RelativeContactHomology}.

Curves mapping into $\R_{s} \times \divSet$ will occupy most of our attention. Curves which map into the $\posNegRegionComplete$ leaves of $\foliationEpsilon$ will be easily dealt with in \S \ref{Sec:PlanesAutoTransverse}. We begin with a brief overview of the forthcoming analysis.

\subsection{Analytical overview}

Let $(\Sigma, \domainJ)$ be a Riemann surface. Consider spaces $\Omega^{0}(\Sigma), \Omega^{0, 1}(\Sigma),\Omega^{2}(\Sigma)$ of $\C$-values forms on $\Sigma$ and identify the space of $\R$-valued $1$-forms $\Omega^{1}_{\R}(\Sigma)$ with the space $\Omega^{0,1}(\Sigma)$ of $\C$-antilinear $1$-forms via the isomorphism
\begin{equation*}
\eta \mapsto \eta^{0, 1} = \half(\eta + i\eta \circ \domainJ).
\end{equation*}
Throughout this section we use $\Ltwo$ pairings $\langle, \rangle_{1, 1}$ on $\Omega^{0, 1}(\Sigma) \otimes \Omega^{0, 1}(\Sigma)$ and $\langle, \rangle_{0, 2}$ on $\Omega^{0}(\Sigma) \otimes \Omega^{2}(\Sigma)$, defined
\begin{equation}\label{Eq:LtwoProductDef}
\langle \eta^{0, 1}, \zeta^{0, 1} \rangle_{1, 1} = \int_{\Sigma} \eta \circ \domainJ \wedge \zeta, \quad \langle f, \omega \rangle_{0, 2} = \Re \int_{\Sigma} f\overline{\omega}
\end{equation}
with the over-bars indicating complex conjugation and $\Re$ standing for ``real part''.

For a $(\domainJ, \JEpsilon)$-holomorphic curve $u$ in $\R_{s} \times \NdividingSet$, the linearized operator $\Dlinearized$ splits as $\Dlinearized = \Dlinearized^{\normal}_{s} \oplus \Dlinearized^{\tangent}$ where $\Dlinearized^{\normal}_{s}$ can be viewed as an operator associated to a trivial $\C \simeq \R^{2}_{\partial_{\tau}, \partial_{\sigma}}$ bundle,
\begin{equation}\label{Eq:TwistedNormalOperator}
\Dlinearized_{s}^{\normal}: \Omega^{0} \rightarrow \Omega^{0, 1}, \quad \Dlinearized_{s}^{\normal}(\dot{\tau}, \dot{\sigma}) = \Big( e^{\epsilon_{\tau} s}d(e^{-\epsilon_{\tau} s}\dot{\tau}) , e^{-\epsilon_{\sigma}s}d(e^{\epsilon_{\sigma}s}\dot{\sigma}) \Big)^{0, 1}
\end{equation}
The associated dual operator is calculated in \S \ref{Sec:CokDInitialAnalysis},\footnote{Our dual mapping to $\Omega^{2}$ rather than to $\Omega^{0}$ is slightly non-standard, but is more natural as it don't depend on a volume form.}
\begin{equation}\label{Eq:CoordFreeCokDescription}
\NormalDual: \Omega^{0, 1} \rightarrow \Omega^{2}, \quad \NormalDual\zeta^{0, 1} = \Big( e^{-\epsilon_{\tau} s}d(e^{\epsilon_{\tau} s}\zeta \circ \domainJ), e^{\epsilon_{\sigma}s}d(e^{-\epsilon_{\sigma}s}\zeta ) \Big)
\end{equation}
where $\zeta^{0, 1} = \half(\zeta + J_{0}\zeta\circ \domainJ)$ for a $\R$-valued $1$-form $\zeta$ for constants $\epsilon_{\tau}, \epsilon_{\sigma} > 0$.

To get a coarse understanding of the kernels of $\Dlinearized_{s}^{\normal}$ and $\NormalDual$, we apply the following special case of \cite[Proposition 2.2]{Wendl:Automatic}.

\begin{lemma}\label{Lemma:AutoTransverse}
Let $\Dlinearized = \delbar + \AsymptoticOp$ be a real-linear Cauchy-Riemann operator on a trivial $\C$ bundle over a punctured Riemann surface $\Sigma$. We suppose that $\AsymptoticOp$ is constant on half-cylindrical ends about each the punctures of $\Sigma$, over which it determines a non-degenerate asymptotic operator with $\CZ(\AsymptoticOp) = 0$. Then $\ind(\Dlinearized) = \chi(\Sigma)$. When $\Sigma \simeq \C$, the operator $\Dlinearized$ is surjective. Otherwise $\dim \ker \Dlinearized \leq 1$.
\end{lemma}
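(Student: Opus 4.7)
The plan is to establish the index formula via a standard Riemann--Roch computation for asymptotically cylindrical Cauchy--Riemann operators, and then to derive the kernel-dimension and surjectivity statements from Wendl's automatic transversality criterion \cite{Wendl:Automatic}.

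First I would compute the index using the Riemann--Roch formula for a real-linear CR operator on a complex line bundle over a punctured Riemann surface,
\begin{equation*}
\ind(\Dlinearized) = \chi(\Sigma) + 2c_{1}(E) + \sum_{z \in \Gamma_{+}} \CZ(\AsymptoticOp_{z}) - \sum_{z \in \Gamma_{-}} \CZ(\AsymptoticOp_{z}).
\end{equation*}
Since $E$ is trivial with the constant trivialization we have $c_{1}(E) = 0$, and by hypothesis each $\CZ(\AsymptoticOp_{z}) = 0$. This immediately yields $\ind(\Dlinearized) = \chi(\Sigma)$.

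For the surjectivity and the kernel bound, I would invoke Wendl's automatic transversality criterion. The hypothesis $\CZ(\AsymptoticOp_{z}) = 0$ forces each asymptotic operator to have even parity with extremal windings $\alpha_{\pm}(\AsymptoticOp_{z}) = 0$, so every puncture lies in the even-parity set in Wendl's bookkeeping. Feeding this into the zero-counting inequality for a nontrivial $\eta \in \ker \Dlinearized$ shows that $\eta$ must be nowhere vanishing and must attain the minimal winding at every puncture. When $\Sigma \simeq \C$ the index equals $1$; applying the same counting estimate to the formal adjoint shows $\coker \Dlinearized = 0$, so $\Dlinearized$ is surjective with a one-dimensional kernel. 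When $\Sigma \not\simeq \C$ there are at least two punctures, and I would finish by ruling out two $\R$-linearly independent nowhere-vanishing kernel elements $\eta_{1}, \eta_{2}$: the similarity-principle ratio $\eta_{2}/\eta_{1}$ defines a map $\Sigma \to \C^{\ast}$ with prescribed asymptotic behavior at each puncture, and a winding/degree argument on this ratio produces a contradiction, giving $\dim_{\R} \ker \Dlinearized \leq 1$.

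The main obstacle will be translating Wendl's bookkeeping, which is customarily stated for $\C$-linear CR operators on bundles of arbitrary rank, to our $\R$-linear line-bundle setting: one must verify the similarity principle so that zeros of kernel elements are isolated of positive order, and that their asymptotic windings at punctures are controlled by the Conley--Zehnder index in the Hofer--Wysocki--Zehnder style. Once these standard but slightly technical ingredients are in hand, the combinatorics reduce to the scalar winding/degree accounting sketched above and the three claims drop out.
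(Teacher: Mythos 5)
Your proposal is essentially the paper's approach: the paper gives no argument of its own and simply cites \cite[Proposition 2.2]{Wendl:Automatic}, and your sketch amounts to reconstructing the content of that proposition in the special case at hand (index via the punctured Riemann--Roch formula, then zero-counting bounds on kernel elements forcing them to be nowhere vanishing and bounding $\dim\ker$).

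One correction, though: the ``main obstacle'' you flag is not there. Wendl's automatic-transversality paper is formulated precisely for \emph{real}-linear Cauchy--Riemann operators on complex line bundles over punctured Riemann surfaces, not for $\C$-linear operators of arbitrary rank -- the similarity principle, positivity of zeros, and the Hofer--Wysocki--Zehnder winding bookkeeping at even/odd punctures are all built into Proposition~2.2 as stated. So no translation step is needed; the lemma really is a direct specialization obtained by setting $c_1(E)=0$, $g=0$, all punctures even with $\CZ=0$, and plugging into Wendl's inequality $0 \leq Z(\eta) \leq \tfrac{1}{2}\bigl(\ind(\Dlinearized) - 2 + 2g + \#\Gamma_{0}\bigr)$ (and its dual for the cokernel). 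That inequality already yields the dimension bound on $\ker\Dlinearized$ without a separate ratio/degree argument, though the ratio argument you sketch is a reasonable alternative way to finish.
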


When $\Sigma \neq \C$, Lemma \ref{Lemma:AutoTransverse} fails to calculate the exact dimensions of the kernels of $\Dlinearized^{\normal}_{s}$ and $\NormalDual$. \nom{$\Dlinearized^{\normal}_{s}, \NormalDual$}{Normal linearized operator and its $\Ltwo$ dual} The following theorem relates these kernels to the topology of $\Sigma$. For the statement of the following lemma, define $\SigmaInfty$ to be the Riemann surface obtained by filling in the positive puncture of $\Sigma$ as in Figure \ref{Fig:SigmaPartialCompletion}. 

\begin{thm}\label{Thm:NormalDualCoker}
Suppose that $s \in \Cinfty(\Sigma)$ has the form $s = p$ over the cylindrical ends of $\Sigma$.
\be
\item If $\Sigma \simeq \C$, then $\dim\ker \Dlinearized^{\normal}_{s} = 1$ and $\Dlinearized^{\normal}_{s}$ is surjective.
\item If $\Sigma \simeq \C^{\ast}$, then $\dim\ker \Dlinearized^{\normal}_{s} = 0$ and both $\Dlinearized^{\normal}_{s}$ and $\NormalDual$ are isomorphisms.
\item Otherwise, $\dim\ker \Dlinearized^{\normal}_{s} = 0$ and there is an isomorphism 
\begin{equation*}
\cohomCokerNormalDual: \ker \NormalDual \rightarrow H^{1}(\SigmaInfty).
\end{equation*}
\ee
\end{thm}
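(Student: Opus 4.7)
The plan is to analyze the kernel explicitly, apply Lemma \ref{Lemma:AutoTransverse} for the index, and then identify the cokernel in case (3) with de Rham cohomology. On each cylindrical end of $\Sigma$ the kernel equation $\Dlinearized^\normal_s(\dot\tau, \dot\sigma) = 0$, restricted to the Fourier-constant mode, decouples into $\partial_p\dot\tau = \epsilon_\tau\dot\tau$ and $\partial_p\dot\sigma = -\epsilon_\sigma\dot\sigma$, and the only global solutions of this form are $\dot\tau = c\,e^{\epsilon_\tau s}$, $\dot\sigma = c'\,e^{-\epsilon_\sigma s}$ for real constants $c, c'$. The weighted Sobolev conditions then force $c = 0$ at the positive puncture (where $e^{\epsilon_\tau s}$ blows up) and $c' = 0$ at any negative puncture (where $e^{-\epsilon_\sigma s}$ blows up). Lemma \ref{Lemma:AutoTransverse} supplies both the index $\ind\Dlinearized^\normal_s = \chi(\Sigma) = 1 - N$, since the hyperbolic normal bundle structure gives $\CZ = 0$ at every puncture, and the upper bound $\dim\ker\Dlinearized^\normal_s \leq 1$ whenever $\Sigma \not\simeq \C$, ruling out contributions from higher Fourier modes.

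These two ingredients immediately settle cases (1) and (2): for $\Sigma \simeq \C$ the surviving $\dot\sigma$-solution yields $\dim\ker = 1$, and $\ind = 1$ forces surjectivity; for $\Sigma \simeq \C^\ast$ both constants vanish so $\dim\ker = 0$, and $\ind = 0$ forces $\Dlinearized^\normal_s$ (and dually $\NormalDual$) to be an isomorphism. In case (3) the same argument yields $\dim\ker = 0$ and hence $\dim\ker\NormalDual = N - 1 = \dim H^1(\SigmaInfty)$, so the theorem reduces to constructing the explicit isomorphism $\cohomCokerNormalDual$.

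For this last step, an element of $\ker\NormalDual$ corresponds to a real $1$-form $\zeta$ on $\Sigma$ for which both $\eta_\tau := e^{\epsilon_\tau s}\zeta\circ\domainJ$ and $\eta_\sigma := e^{-\epsilon_\sigma s}\zeta$ are closed, and the dual weight conventions make $\eta_\sigma$ decay at the positive puncture so that it extends smoothly to a closed form on $\SigmaInfty$. I then set $\cohomCokerNormalDual(\zeta) := [\eta_\sigma] \in H^1(\SigmaInfty)$. As both sides have dimension $N - 1$, it suffices to verify injectivity: if $[\eta_\sigma] = 0$ then $\eta_\sigma = dF$ on $\SigmaInfty$ with $F \to 0$ at $\infty$, and the substitution $\dot\sigma := e^{-\epsilon_\sigma s}F$ together with an analogous $\dot\tau$ built from a primitive of $\eta_\tau$ on the simply-connected $\Sigma \cup \{z_j\} \simeq \C$ (which exists since $\eta_\tau$ decays at every negative puncture) exhibits $\zeta$ in the image of $\Dlinearized^\normal_s$. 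The main obstacle is checking that $F$ and its $\tau$-side counterpart carry enough decay at the remaining punctures to sit in the primal weighted Sobolev space; I expect this to follow from elliptic regularity for the twisted de Rham operators $e^{\pm\epsilon s}d(e^{\mp\epsilon s}\cdot\,)$ in the weighted cylindrical-end setting, but the weight bookkeeping is the step that requires the most care.
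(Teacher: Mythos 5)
Your kernel computation has a genuine gap in the case $\Sigma \not\simeq \C$. You correctly identify the constant-Fourier-mode solutions $(c\,e^{\epsilon_\tau s}, c'e^{-\epsilon_\sigma s})$ and note that the weighted conditions kill them when $\NnegativePunctures \geq 1$, but you then dismiss higher Fourier modes by invoking Lemma \ref{Lemma:AutoTransverse}. That lemma only says $\dim\ker \Dlinearized^{\normal}_s \leq 1$ when $\Sigma \not\simeq \C$; it does not say the kernel is confined to the constant mode, so it cannot exclude a $1$-dimensional kernel coming from a higher mode. The paper closes this gap differently: from Equation \eqref{Eq:TwistedNormalOperator}, the operator $\Dlinearized^{\normal}_s$ is globally conjugate, via multiplication by the nowhere-vanishing weights $e^{\mp\epsilon s}$, to the standard $\delbar$, so kernel elements correspond to holomorphic functions on $\Sigma$. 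A non-constant holomorphic function on a punctured $\ProjOne$ is unbounded in a way that the exponential reweighting cannot control at every puncture simultaneously, so only the constants survive; this is what forces $\dim\ker = 0$ when $\NnegativePunctures \geq 1$. You need this (or an equivalent growth argument), not just the index bound.

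For the cokernel in case (3), you take a genuinely different route from the paper, and it runs into trouble beyond the weight bookkeeping you flag. You want to show $\cohomCokerNormalDual(\zeta^{0,1}) = 0$ implies $\zeta^{0,1} \in \im\Dlinearized^{\normal}_s$ by integrating $\eta_\sigma = e^{-\epsilon_\sigma s}\zeta$ to a primitive $F$. But $F$ is only determined up to one global additive constant: if you normalize $F \to 0$ at $\infty$, then $F$ generically tends to nonzero constants $C_j$ at the negative punctures, and then $\dot\sigma = e^{-\epsilon_\sigma s}F \sim C_j e^{-\epsilon_\sigma p}$ blows up there, so $(\dot\tau, \dot\sigma)$ does not lie in the weighted domain. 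With more than one negative puncture you cannot kill all the $C_j$ by a single constant, so the argument appears to fail rather than merely require care. The paper's Lemma \ref{Lemma:CokerCohomological} avoids this entirely: if both $[e^{-\epsilon_\sigma s}\zeta] = 0$ in $H^1(\SigmaInfty)$ and $[e^{\epsilon_\tau s}\zeta\circ\domainJ] = 0$ in $H^1(\SigmaNInfty)$ (the latter automatic when there is one positive puncture, as $\SigmaNInfty \simeq \C$), then both extend as exact forms over $\ProjOne$, and Stokes gives $\norm{e^{\half(\epsilon_\tau - \epsilon_\sigma)s}\zeta}^2_{\Ltwo} = \int_{\ProjOne} dh_- \wedge dh_+ = 0$, so $\zeta = 0$ pointwise. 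That Stokes-on-$\ProjOne$ argument is not just cleaner — it is the step that makes injectivity go through.
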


\nom{$\cohomCokerNormalDual$}{Isomorphism $\ker \NormalDual \rightarrow H^{1}(\SigmaInfty)$}
\nom{$\SigmaInfty$}{Riemann surface obtained from a $\Sigma$ by filling in its positive punctures}

The map $\cohomCokerNormalDual$ is described in \S \ref{Sec:DefCohomologyMaps}. The proof of Theorem \ref{Thm:NormalDualCoker} occupies most of this section.

\begin{figure}[h]
\begin{overpic}[scale=.5]{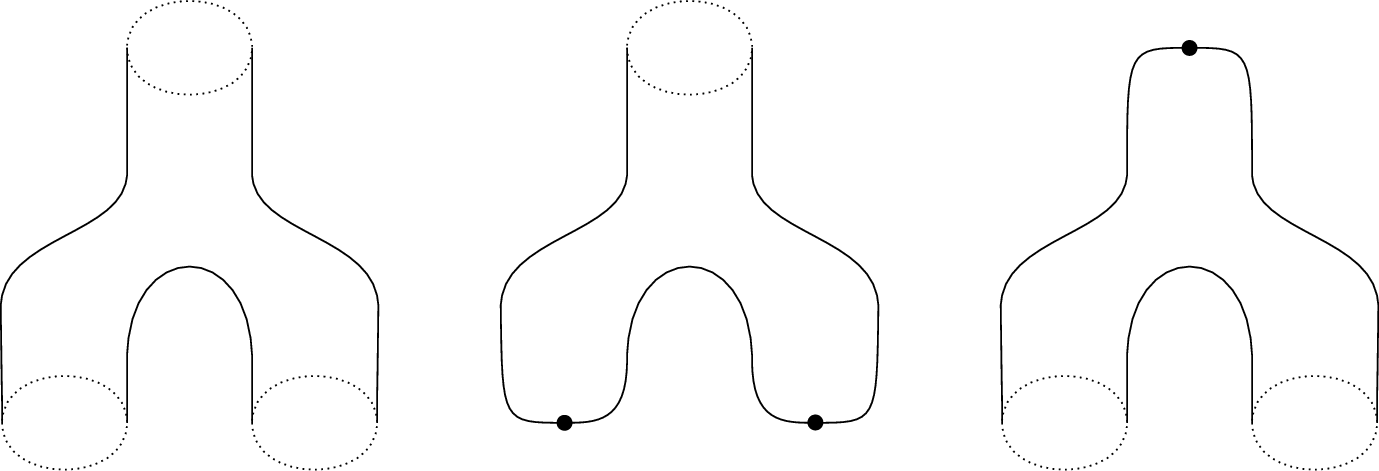}
\put(12, 20){$\Sigma$}
\put(47, 20){$\SigmaNInfty$}
\put(84, 20){$\SigmaInfty$}
\end{overpic}
\caption{The punctured Riemann surfaces $\Sigma$, $\SigmaNInfty$, and $\SigmaInfty$. Marked points corresponding to filled-in punctures are indicated with dots.}
\label{Fig:SigmaPartialCompletion}
\end{figure}

The left-most component, $e^{\epsilon_{\sigma}s}d(e^{-\epsilon_{\sigma}s}\zeta)$, of Equation \eqref{Eq:CoordFreeCokDescription} is a twisting of the usual $d: \Omega^{1}(\Sigma) \rightarrow \Omega^{2}(\Sigma)$ by a function on $\Sigma$. Likewise, the component $e^{-\epsilon_{\tau} s}d(e^{\epsilon_{\tau} s}\zeta \circ \domainJ)$ is a twisting of the $\Ltwo$-dual to $d$ determined by the conformal structure $\domainJ$, cf. \cite[Chapter 6]{Warner}. Such twisted operators are studied in \cite{Witten:Morse} to relate Morse theory to cohomology by way of Hodge-de Rham theory.\footnote{On a Riemann surface $\zeta \mapsto -\zeta \circ \domainJ$ coincides with the Hodge star operator acting on $\R$-valued $1$-forms. A form $\zeta$ defined on a closed manifold is harmonic if and only if $\zeta \in \ker d \cap \ker d^{\ast}$.}

Because we are considering twistings of both $d$ and its dual, $d^{\ast}$, simultaneously we might expect that $\ker \NormalDual$ recovers the first cohomology of $\Sigma$. However, in contrast with  \cite{Witten:Morse}, our spaces $\Sigma$ are not closed and we are using unbounded functions to twist the operators $d$ and $d^{\star}$. Consequently, $\ker \NormalDual$ will not recover $H^{1}(\Sigma)$ exactly.

\subsection{$\delbar$-equations near $\R_{s} \times \divSet$}

Along the subset $\{ |\sigma| \leq C_{\sigma}\}$ our $\delbar$-equation for a map
\begin{equation*}
(s, \tau, \sigma, v): \Sigma \rightarrow \R_{s} \times \R_{\tau} \times [-1, 1]_{\sigma} \times \divSet
\end{equation*}
drastically simplifies to
\begin{equation}\label{Eq:DelbarNOne}
\begin{aligned}
2\delbarEpsilon(s, \tau, \sigma, v) &= \partial_{s} \otimes \left( ds - \alphaDivSet_{\JDivSet}\circ Tv \circ \domainJ \right) + \ReebDivSet\otimes \left( \alphaDivSet_{\JDivSet}\circ Tv + ds\circ \domainJ \right)\\
&+ \partial_{\tau}\otimes \left( d\tau - d\sigma\circ \domainJ \right) + \partial_{\sigma}\otimes \left( d\sigma + d\tau \circ \domainJ \right) \\
& + \epsilon_{\sigma}\sigma\left(-\partial_{\tau}\otimes ds \circ \domainJ + \partial_{\sigma} \otimes \check{\alpha}_{\check{J}}\circ Tv \circ \domainJ\right)\\
&- \epsilon_{\tau}\tau\left(\partial_{\sigma}\otimes ds\circ \domainJ + \partial_{\tau}\otimes \alphaDivSet_{\JDivSet}\circ Tv \circ \domainJ \right) \\
&+ \left( \pi_{\JDivSet} \circ Tv + \JDivSet\circ \pi_{\JDivSet}\circ Tv \circ \domainJ \right).
\end{aligned}
\end{equation}
This is the specialization of Equation \eqref{Eq:DelbarNFour} we get when $G = Q = 1, \widetilde{G} = 0$, and $F = -\sigma$. An inspection of the $\partial_{s}$ and $\ReebDivSet$ components of Equation \eqref{Eq:DelbarNOne} then tells us the following:

\begin{lemma}
A map $(s, \tau, \sigma, v)$ is $(\domainJ, \JEpsilon)$ holomorphic only if $(s, v)$ is $(\domainJ, \JDivSet)$-holomorphic.
\end{lemma}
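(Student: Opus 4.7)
The plan is to read the conclusion directly off of Equation \eqref{Eq:DelbarNOne} by exploiting the pointwise direct-sum decomposition of the target tangent bundle. Along $\{|\sigma| \leq C_{\sigma}\}$ we have the splitting
\[
T(\R_{s} \times \Nhypersurface) = \R\partial_{s} \oplus \R\ReebDivSet \oplus \R\partial_{\tau} \oplus \R\partial_{\sigma} \oplus \check{\xi}_{\JDivSet},
\]
whose summands are pointwise linearly independent, so the $T(\R_{s} \times \Nhypersurface)$-valued $(0,1)$-form $\delbarEpsilon(s, \tau, \sigma, v)$ vanishes if and only if each of its five components vanishes separately.

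First I would isolate the $\partial_{s}$, $\ReebDivSet$, and $\check{\xi}_{\JDivSet}$ components of $\delbarEpsilon(s, \tau, \sigma, v)$ from Equation \eqref{Eq:DelbarNOne}. Inspection of the formula shows that the $\partial_{s}$-component is $ds - \alphaDivSet_{\JDivSet}\circ Tv \circ \domainJ$, the $\ReebDivSet$-component is $\alphaDivSet_{\JDivSet}\circ Tv + ds\circ \domainJ$, and the $\check{\xi}_{\JDivSet}$-component is $\pi_{\JDivSet}\circ Tv + \JDivSet\circ \pi_{\JDivSet}\circ Tv \circ \domainJ$. Crucially, none of these three expressions involves the coordinates $\tau, \sigma$ or the parameter $\epsilon$; all such dependence is confined to the $\partial_{\tau}$ and $\partial_{\sigma}$ components.

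Next I would recognize these three expressions as precisely the three components of $\delbar_{(\domainJ, \JDivSet)}(s, v)$ for the map $(s, v): \Sigma \rightarrow \R_{s} \times \divSet$, decomposed against the splitting $T(\R_{s} \times \divSet) = \R\partial_{s} \oplus \R\ReebDivSet \oplus \check{\xi}_{\JDivSet}$ adapted to the cylindrical almost complex structure $\JDivSet$ satisfying $\JDivSet \partial_{s} = \ReebDivSet$ and $\JDivSet \check{\xi}_{\JDivSet} = \check{\xi}_{\JDivSet}$. Consequently $\delbarEpsilon(s, \tau, \sigma, v) = 0$ forces $\delbar_{(\domainJ, \JDivSet)}(s, v) = 0$, which is the conclusion. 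There is no essential obstacle here: the lemma is a component-wise inspection, and it reflects a structural property engineered into $\JEpsilon$ via the holomorphic foliation $\foliationEpsilon$, namely that $\JEpsilon$ restricts to $\JDivSet$ on the leaf $\R_{s} \times \divSet$ and the normal directions $\R\partial_{\tau} \oplus \R\partial_{\sigma}$ decouple from the leafwise ones.
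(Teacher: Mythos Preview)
Your proof is correct and takes essentially the same approach as the paper: the lemma is stated as an immediate consequence of inspecting the components of Equation \eqref{Eq:DelbarNOne}. In fact you are slightly more careful than the paper, which mentions only the $\partial_{s}$ and $\ReebDivSet$ components, whereas you correctly observe that the $\check{\xi}_{\JDivSet}$ component is also needed for the full conclusion that $(s,v)$ is $(\domainJ, \JDivSet)$-holomorphic.
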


For the purposes of studying perturbed holomorphic curves, we will more generally be interested in maps for which $(s, v)$ satisfies
\begin{equation}\label{Eq:AlphaPartDbar}
ds\circ \domainJ = \check{\alpha}_{\check{J}} \circ Tv.
\end{equation}
If $(s, v)$ satisfies this equation, then Equation \eqref{Eq:DelbarNOne} further simplifies as
\begin{equation}\label{Eq:DelbarWhenJcheckHolo}
\begin{aligned}
\delbarEpsilon(s, \tau, \sigma, v) &= \pi_{\sigma, \tau}\circ \delbarEpsilon(s, \tau, \sigma, v) + \pi_{\xi_{\check{J}}}\circ \delbarEpsilon(s, \tau, \sigma, v)\\
\pi_{\sigma, \tau}\circ \delbarEpsilon(s, \tau, \sigma, v) &= \delbar_{\domainJ, J_{0}}(\tau, \sigma) + \Big( \epsilon_{\sigma}\sigma \partial_{\sigma} - \epsilon_{\tau}\tau \partial_{\tau}\Big)\otimes_{\C} ds^{0, 1}\\
\pi_{\xi_{\check{J}}}\circ \delbarEpsilon(s, \tau, \sigma, v) &= \half\left(\pi_{\JDivSet} \circ Tv + \JDivSet\circ \pi_{\JDivSet}\circ Tv \circ \domainJ\right)
\end{aligned}
\end{equation}
yielding a Floer-type equation in the $(\tau,\sigma)$-coordinates.

\subsection{Basics of Fredholm theory for $\epsilon_{\tau} > 0$}

The Fredholm indices of the $(s, \tau, \sigma, v)$ maps are determined by the SFT index formula together with the Conley-Zehnder index computation of Lemma \ref{Lemma:CZComputation}:

\begin{lemma}\label{Lemma:IndexJumpMcheck}
$\ind(s, \tau, \sigma, v) = \ind(s, v) + \chi(\Sigma)$.
\end{lemma}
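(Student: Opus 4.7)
The plan is to exploit the $\JEpsilon$-invariant splitting $T(\R_{s}\times \Nhypersurface) = \leafTangentNormal \oplus \leafTangent$ from \S\ref{Sec:TangentSplitting} and deduce the index jump from additivity of Fredholm indices. Restricted to the leaf $\R_{s}\times \divSet$ of $\foliationEpsilon$, $\leafTangentNormal$ is a trivial rank-$2$ bundle framed by $\partial_{\tau}, \partial_{\sigma}$ and $\leafTangent$ is canonically $T(\R_{s}\times \divSet)$. By Corollary \ref{Cor:CurvesStuckInLeaves}, a holomorphic $(s,\tau,\sigma,v)$ asymptotic to orbits of $\ReebEpsilon$ lying over $\divSet$ is forced to have $\tau \equiv 0$ and $\sigma \equiv 0$, so the pullback of this splitting to the domain makes sense at the reference solution.

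The first task is to verify that the linearized operator respects the splitting, i.e.\ $\Dlinearized = \Dlinearized^{\tangent}\oplus \Dlinearized_{s}^{\normal}$, with $\Dlinearized^{\tangent}$ the linearization for the $(s,v)$-curve in $\R_{s}\times \divSet$ and $\Dlinearized_{s}^{\normal}$ the twisted operator of Equation \eqref{Eq:TwistedNormalOperator}. Inspecting Equation \eqref{Eq:DelbarNOne}, every term coupling the $(\tau,\sigma)$-block to the $(s,v)$-block carries an explicit factor of $\tau$ or $\sigma$; these vanish at the solution $(s,0,0,v)$, and so do their first derivatives in the $v$ and $s$ directions. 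What remains on the block-diagonal is precisely the pair claimed.

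By additivity of Fredholm indices we obtain $\ind(s,\tau,\sigma,v) = \ind(\Dlinearized^{\tangent}) + \ind(\Dlinearized_{s}^{\normal}) = \ind(s,v) + \ind(\Dlinearized_{s}^{\normal})$, reducing the statement to $\ind(\Dlinearized_{s}^{\normal}) = \chi(\Sigma)$. For this I invoke Lemma \ref{Lemma:AutoTransverse}: the asymptotic operator governing the normal bundle at each end is built from the matrix $A = \left(\begin{smallmatrix} 0 & -\epsilon_{\sigma} \\ -\epsilon_{\tau} & 0 \end{smallmatrix}\right)$ of Lemma \ref{Lemma:CZComputation}, whose linearized-flow path diagonalizes as $\Diag(e^{\delta q},e^{-\delta q})$ with $\delta = \sqrt{\epsilon_{\tau}\epsilon_{\sigma}}$. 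Lemma \ref{Lemma:CZSummary}(2) records that $\CZ = 0$ for this path, so Lemma \ref{Lemma:AutoTransverse} applies and yields $\ind(\Dlinearized_{s}^{\normal}) = \chi(\Sigma)$.

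As a cross-check and alternate route, the same equality drops out of the SFT index formula. With $\dim \Nhypersurface = 2n+1$ and $\dim \divSet = 2n-1$, the Euler-characteristic coefficients $(n-3)$ and $((n-1)-3)$ differ by exactly $1$. Lemma \ref{Lemma:CZComputation} gives $\CZ_{\framing}(\orbit) = \CZ_{\check{\framing}}(\orbitDivSet)$ at every puncture, and because $\framing$ extends $\check{\framing}$ by the constant section $\partial_{\tau}$ the relative first Chern contributions on $u^{\ast}\xi$ and $v^{\ast}\xiDivSet$ agree. The only real obstacle is therefore the clean block-decomposition of $\Dlinearized$, which is what the factor-of-$\tau$-or-$\sigma$ observation above is designed to handle.
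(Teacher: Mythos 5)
Your argument is correct, but you have inverted the paper's emphasis: the route you present as a ``cross-check'' (SFT index formula plus the Conley--Zehnder equality $\CZ_{\framing}(\orbit) = \CZ_{\check{\framing}}(\orbitDivSet)$ from Lemma \ref{Lemma:CZComputation}, together with the fact that $\framing$ extends $\check\framing$ trivially so the relative Chern contributions coincide) is exactly the paper's intended proof, announced in the sentence preceding the lemma. Your primary route --- linearize at $(s,0,0,v)$, observe the block-diagonal splitting $\Dlinearized = \Dlinearized^{\tangent}\oplus\Dlinearized^{\normal}_{s}$ because the coupling terms carry explicit factors of $\tau$ or $\sigma$, then compute $\ind(\Dlinearized^{\normal}_{s}) = \chi(\Sigma)$ via Lemma \ref{Lemma:AutoTransverse} and Lemma \ref{Lemma:CZSummary}(2) --- is the content that the paper develops \emph{after} the lemma, in Equations \eqref{Eq:SplittingDelbar}--\eqref{Eq:SplitOperatorSummary} and Lemma \ref{Lemma:NormalIndex}, so you have in effect supplied a second, analytic proof where the paper gives a purely index-theoretic one. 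Both are valid and both rest on the same hyperbolicity of the normal asymptotic operator; what the splitting route buys additionally is an explicit handle on $\ker$ and $\coker$, which the paper needs later anyway, whereas the SFT-index route is shorter if one only wants the number.

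One small correction of emphasis: invoking Corollary \ref{Cor:CurvesStuckInLeaves} to justify linearizing at a representative of the form $(s,0,0,v)$ is unnecessary and slightly misleading. The Fredholm index is a homotopy invariant of the linearized problem, so one only needs that the relative homotopy class contains such a representative; since $\tau$ and $\sigma$ are $\R$-valued and tend to $0$ at every puncture, any admissible map can be homotoped rel asymptotics to have $\tau \equiv \sigma \equiv 0$. No intersection-positivity input is required for the index statement (that corollary becomes relevant only when one wants actual holomorphic curves to sit in leaves). Also be a little careful with the phrase ``so do their first derivatives in the $v$ and $s$ directions'': the point is that the coupling blocks of $\Dlinearized$, namely variations in $(\dot s,\dot v)$ producing output in $\R\partial_{\tau}\oplus\R\partial_{\sigma}$ and vice versa, all inherit a surviving factor of $\tau$ or $\sigma$ and hence vanish at $(s,0,0,v)$; the $\dot\tau,\dot\sigma$-derivatives of the $\epsilon_{\sigma}\sigma(\cdots)$ and $\epsilon_{\tau}\tau(\cdots)$ terms do \emph{not} vanish but land in the normal block, reproducing the $(\epsilon_{\sigma}\dot\sigma\partial_{\sigma}-\epsilon_{\tau}\dot\tau\partial_{\tau})\otimes_{\C} ds^{0,1}$ term of $\Dlinearized^{\normal}_{s}$, which is what you need.
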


Now we linearize Equation \eqref{Eq:DelbarNOne} near a curve of the form $(s, 0, 0, v)$. Suppose that $(s_{T}, \tau_{T}, \sigma_{T}, v_{T})$ is a $1$-parameter family of maps and that $\domainJ_{T}$ is a $1$-parameter family of complex structures on $\Sigma$ of the form
\begin{equation*}
(s_{T}, \tau_{T}, \sigma_{T}, v_{T}) = (s, 0, 0, v) + T(\dot{s}, \dot{\tau}, \dot{\sigma}, \dot{v}),  \quad \domainJ_{T} = \domainJ + T\dot{\domainJ} \quad \delbar_{\JDivSet, \domainJ}(s, v) = 0.
\end{equation*}
In the case $\chi(\Sigma) \geq -1$, the $\domainJ_{T}$ may be ignored. We compute the linearized operator, $\Dlinearized|_{(s, 0, 0, v)}$,
\begin{equation}\label{Eq:SplittingDelbar}
\begin{aligned}
\Dlinearized|_{(s, 0, 0, v)}(\dot{s}, \dot{\tau}, \dot{\sigma}, \dot{v}, \dot{\domainJ}) &= \Dlinearized_{s, v}(\dot{s}, \dot{v}, \dot{\domainJ}) + \delbar_{\domainJ, J_{0}}(\dot{\tau}, \dot{\sigma}) + \Big( \epsilon_{\sigma}\dot{\sigma} \partial_{\sigma} - \epsilon_{\tau}\dot{\tau} \partial_{\tau}\Big)\otimes_{\C} ds^{0, 1}
\end{aligned}
\end{equation}
where $\Dlinearized_{s, v}$ is the linearized operator for $(s, v)$, taking values in $\Omega^{0, 1}(\Sigma, T(\R_{s}\times \divSet))$. 

Identifying the bundle $\R \partial_{\tau} \oplus \R \partial_{\sigma}$ with a trivial $\C$ bundle over $\Sigma$, we may view $(\dot{\tau}, \dot{\sigma})$ as a map with values in $\C$-valued $1$-forms on $\Sigma$ so that
\begin{equation}\label{Eq:SplitOperatorSummary}
\begin{gathered}
\Dlinearized_{(s, 0, 0, v)} =  \Dlinearized^{\tangent} \oplus \Dlinearized^{\normal}_{s}\\
\Dlinearized^{\tangent}: \Omega^{0}(\R_{s} \oplus v^{\ast}(T\divSet))\oplus T_{\domainJ}\mathcal{T} \rightarrow \Omega^{0, 1}((s, v)^{\ast}(T(\R_{s}\times \divSet))) \\
\Dlinearized^{\normal}_{s}: \Omega^{0} \rightarrow \Omega^{0, 1},\quad 
\Dlinearized^{\normal}_{s}(\dot{\tau}, \dot{\sigma})= \delbar(\dot{\tau}, \dot{\sigma}) + (-\epsilon_{\tau}\dot{\tau}, \epsilon_{\sigma}\dot{\sigma})\otimes_{\C} ds^{0, 1}
\end{gathered}
\end{equation}
Here $T\mathcal{T}$ is the tangent space of the Teichm\"{u}ller space. We observe that $\Dlinearized_{s}^{\normal}$
\be
\item is linear in $\dot{\tau}$ and $\dot{\sigma}$,
\item independent of $v$, and
\item can be easily reorganized to obtain Equation \eqref{Eq:DelbarWhenJcheckHolo}.
\ee

\begin{defn}
$\Dlinearized^{\tangent}$ and $\Dlinearized^{\normal}_{s}$ are the \emph{tangent linearized operator} and \emph{normal linearized operator of} $(s, v)$.
\end{defn}

Let $\orbit$ be a closed orbit of $R$ of action $a = \action_{\alpha}(\orbit)$ to which a puncture of $\Sigma$ is positively or negatively asymptotic. Along a simple half-cylindrical end of $\Sigma$, we can write
\begin{equation}\label{Eq:NormalDnearMcheck}
\Dlinearized^{\normal}_{s}(\dot{\tau}, \dot{\sigma}) = \delbar(\dot{\tau}, \dot{\sigma}) + \begin{pmatrix}
- \epsilon_{\tau} & 0 \\ 0 & \epsilon_{\sigma} \end{pmatrix}\begin{pmatrix} \dot{\tau} \\ \dot{\sigma} \end{pmatrix}\otimes dp^{0, 1}.
\end{equation}
Hence we are in the same situation as is described by Lemma \ref{Lemma:AutoTransverse}, for which the index is already computed.

\begin{lemma}\label{Lemma:NormalIndex}
$\ind(\Dlinearized^{\normal}_{s}) = \chi(\Sigma)$.
\end{lemma}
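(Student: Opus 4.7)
The plan is to verify that the operator $\Dlinearized^{\normal}_{s}$ meets the hypotheses of Lemma \ref{Lemma:AutoTransverse} at each puncture of $\Sigma$ and then simply read off the index. The expression in Equation \eqref{Eq:NormalDnearMcheck} already shows that along every simple half-cylindrical end, $\Dlinearized^{\normal}_{s}$ is a real-linear Cauchy-Riemann operator on the trivial $\C$-bundle $\R\partial_{\tau}\oplus \R\partial_{\sigma}$ whose zeroth-order term is the constant diagonal matrix $\Diag(-\epsilon_{\tau},\epsilon_{\sigma})$. So the associated asymptotic operator $\AsymptoticOp$ takes the constant form $\AsymptoticOp = -J_{0}\partial_{q} + J_{0}\Diag(-\epsilon_{\tau},\epsilon_{\sigma})$ required by Lemma \ref{Lemma:AutoTransverse}.

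The remaining step is to check that $\AsymptoticOp$ is non-degenerate with Conley-Zehnder index zero. The linearized time-$q$ flow associated to $\AsymptoticOp$ is $e^{qA}$ for
\begin{equation*}
A = \begin{pmatrix} 0 & -\epsilon_{\sigma} \\ -\epsilon_{\tau} & 0 \end{pmatrix},
\end{equation*}
which is exactly the matrix whose eigendecomposition was worked out in the proof of Lemma \ref{Lemma:CZComputation}. There it was shown that $A$ has real eigenvalues $\pm\delta$ with $\delta = \sqrt{\epsilon_{\tau}\epsilon_{\sigma}} > 0$ and that $e^{qA}$ is conjugate in $\SLtwoR$ to $\Diag(e^{\delta q}, e^{-\delta q})$. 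Since $\delta \neq 0$, the period-$a$ time map has no eigenvalue equal to $1$, so $\AsymptoticOp$ is non-degenerate. Lemma \ref{Lemma:CZSummary}(2) applied to the conjugate path then gives $\CZ(\AsymptoticOp) = 0$.

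With both hypotheses of Lemma \ref{Lemma:AutoTransverse} verified at every puncture, the index formula $\ind(\Dlinearized^{\normal}_{s}) = \chi(\Sigma)$ is immediate. I do not expect any serious obstacle here: the construction of $\alphaEpsilon$ was arranged precisely so that the normal dynamics about each closed orbit in $\divSet$ are hyperbolic with Conley-Zehnder contribution zero, which is exactly the content of Lemma \ref{Lemma:CZComputation}, and Lemma \ref{Lemma:AutoTransverse} is then a direct import. The only care required is matching the sign conventions in Equation \eqref{Eq:NormalDnearMcheck} with those used to define $\AsymptoticOp$ in \S \ref{Sec:Eigendecomp}, but once done, the computation of the spectrum of $A$ is already in hand.
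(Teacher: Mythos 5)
Your proof takes the same route as the paper, which simply observes after Equation \eqref{Eq:NormalDnearMcheck} that $\Dlinearized^{\normal}_{s}$ fits the hypotheses of Lemma \ref{Lemma:AutoTransverse} and reads off the index; you are just filling in the verification that the asymptotic operator is non-degenerate hyperbolic with $\CZ=0$, which is correct and matches the computation in Lemma \ref{Lemma:CZComputation}. One small notational slip: with the paper's convention (see Lemma \ref{Lemma:KernelAsymptoticSummary}) the asymptotic operator is $-J_{0}\partial_{q} + \Diag(\epsilon_{\tau},-\epsilon_{\sigma})$, i.e.\ with a \emph{symmetric} zeroth-order term equal to $-\Diag(-\epsilon_{\tau},\epsilon_{\sigma})$ rather than $J_{0}\Diag(-\epsilon_{\tau},\epsilon_{\sigma})$, but since both give a symplectic path conjugate to $\Diag(e^{\pm\delta t})$ with $\delta=\sqrt{\epsilon_{\tau}\epsilon_{\sigma}}$, your conclusion $\CZ=0$ and hence $\ind(\Dlinearized^{\normal}_{s})=\chi(\Sigma)$ is unaffected.
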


\subsection{Analysis of $\ker\Dlinearized^{\normal}_{s}$}

The following lemma tells us everything we will need to know about the behavior of elements of $\ker \Dlinearized^{\normal}_{s}$ over simple half-cylinders.

\begin{lemma}\label{Lemma:KernelAsymptoticSummary}
Fix some $a \in \R_{> 0}$ and let $s = p$ over a half-infinite cylinder $\halfcyl_{a, \pm}$, and consider finite-energy solutions $(\dot{\tau}, \dot{\sigma})$ to Equation \eqref{Eq:NormalDnearMcheck} over $\halfcyl_{a, \pm}$. Then $(\dot{\tau}, \dot{\sigma})$ can be written
\begin{equation*}
(\dot{\tau}, \dot{\sigma}) = \sum_{\pm \lambda_{k} < 0} \kercoeff_{k} e^{\lambda_{k} p} \zeta_{k}
\end{equation*}
for eigenvalue-eigenfunction pairs $(\lambda_{k}, \zeta_{k})$ associated to the asymptotic operator
\begin{equation*}
\thicc{A}: \Sobolev^{1, 2}(\aCircle, \R^{2}) \rightarrow \Ltwo(\aCircle, \R^{2}), \quad
\thicc{A} = -J_{0} \frac{\partial}{\partial q} + \begin{pmatrix}
\epsilon_{\tau} & 0 \\ 0 & -\epsilon_{\sigma}
\end{pmatrix}.
\end{equation*}
The $(\lambda_{k}, \zeta_{k})$ for which $\lambda_{k}$ have the smallest absolute values are
\begin{equation*}
(\lambda_{-1}, \zeta_{-1}) = (-\epsilon_{\sigma}, (0, 1)), \quad (\lambda_{1}, \zeta_{1}) = (\epsilon_{\tau}, (1, 0))
\end{equation*}
with all remaining eigenvalues having the form
\begin{equation*}
\begin{gathered}
2\lambda = (\epsilon_{\tau} - \epsilon_{\sigma}) \pm \sqrt{4 \left(\frac{2\pi m}{a}\right)^{2} + (\epsilon_{\tau} + \epsilon_{\sigma})^{2}}, \quad m \in \Z_{\geq 0},\quad |\lambda| > \epsilon_{\tau}, \epsilon_{\sigma}.
\end{gathered}
\end{equation*}
Hence at a positive puncture we may write
\begin{equation*}
(\dot{\tau}, \dot{\sigma}) = \kercoeff_{+} e^{-\epsilon_{\sigma}p}(0, 1) + (\dot{\tau}_{\hot}, \dot{\sigma}_{\hot}),\quad \kercoeff_{+} \in \R,\quad e^{\epsilon_{\sigma}p}(\dot{\tau}_{\hot}, \dot{\sigma}_{\hot}) \in \Sobolev^{k, p}(\halfcyl_{a, +}, \R^{2})
\end{equation*}
and at a negative puncture we may write
\begin{equation*}
(\dot{\tau}, \dot{\sigma}) = \kercoeff_{-} e^{\epsilon_{\tau} p}(1, 0) + (\dot{\tau}_{\hot}, \dot{\sigma}_{\hot}),\quad \kercoeff_{-} \in \R,\quad e^{-\epsilon_{\tau} p}(\dot{\tau}_{\hot}, \dot{\sigma}_{\hot}) \in \Sobolev^{k, p}(\halfcyl_{a, -}, \R^{2}).
\end{equation*}
\end{lemma}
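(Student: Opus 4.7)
The plan is to rewrite the PDE $\Dlinearized^{\normal}_{s}(\dot{\tau},\dot{\sigma}) = 0$ on the half-cylinder as a first-order $p$-flow on $\aCircle$-valued sections, diagonalize the flow generator explicitly by complex Fourier analysis, and then invoke the Hofer--Wysocki--Zehnder-type asymptotic expansion already quoted in \S\ref{Sec:AsymptoticIntro} to conclude. Concretely, I would first extract the $dp$-component of the $(0,1)$-form Equation \eqref{Eq:NormalDnearMcheck}: since a $(0,1)$-form on $\halfcyl_{a,\pm}$ is determined by its $dp$-coefficient, the equation reduces to $\partial_{p}\phi = \AsymptoticOp\phi$ where $\phi=(\dot\tau,\dot\sigma)$ and $\AsymptoticOp = -J_{0}\partial_{q} + \Diag(\epsilon_{\tau},-\epsilon_{\sigma})$ is the claimed self-adjoint operator on $\Ltwo(\aCircle, \R^{2})$.

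With this ODE form in hand, the standard theory recalled around Equation \eqref{Eq:HWZSummary} supplies the rest of the structural statement: $\AsymptoticOp$ has compact resolvent, hence a discrete real spectrum and an $\Ltwo$-orthonormal eigenbasis $\{\zeta_{k}\}$ with eigenvalues $\lambda_{k}$, and every $\Sobolev^{1,2}$ solution on $\halfcyl_{a,\pm}$ expands as $\phi(p,q) = \sum_{k} \kercoeff_{k}e^{\lambda_{k}p}\zeta_{k}(q)$, with the finite-energy assumption forcing $\kercoeff_{k}=0$ whenever $\pm\lambda_{k} \geq 0$ at $\halfcyl_{a,\pm}$. Thus everything beyond this reduction is the explicit computation of $\spec(\AsymptoticOp)$ together with identification of the eigenfunctions of smallest absolute eigenvalue.

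To diagonalize $\AsymptoticOp$ I would identify $\R^{2} \simeq \C$ via $(x,y)\mapsto x+iy$, so that $J_{0}$ becomes multiplication by $i$ and the real-linear map $\Diag(\epsilon_{\tau},-\epsilon_{\sigma})$ becomes $z \mapsto \mu z + \nu \bar{z}$ with $\mu = \tfrac{1}{2}(\epsilon_{\tau}-\epsilon_{\sigma})$ and $\nu = \tfrac{1}{2}(\epsilon_{\tau}+\epsilon_{\sigma})$, whence $\AsymptoticOp z = -i\partial_{q} z + \mu z + \nu \bar{z}$. Expanding $z(q) = \sum_{n\in\Z} c_{n} e^{2\pi i n q/a}$ shows that $\AsymptoticOp z = \lambda z$ couples only the pair $(c_{n},\bar{c}_{-n})$, producing for each $n$ the system
\begin{equation*}
\begin{pmatrix} 2\pi n/a + \mu - \lambda & \nu \\ \nu & -2\pi n/a + \mu - \lambda \end{pmatrix}\begin{pmatrix} c_{n} \\ \bar{c}_{-n}\end{pmatrix} = 0,
\end{equation*}
whose determinant vanishes iff $(\lambda-\mu)^{2} = (2\pi n/a)^{2} + \nu^{2}$; doubling and substituting $\mu,\nu$ reproduces the claimed formula for $2\lambda$. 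For $n=0$ the system decouples and direct substitution shows that $\lambda = \epsilon_{\tau}$ forces $c_{0}\in\R$ (eigenvector $(1,0)$) while $\lambda = -\epsilon_{\sigma}$ forces $c_{0}\in i\R$ (eigenvector $(0,1)$).

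The bound $|\lambda| > \epsilon_{\tau},\epsilon_{\sigma}$ for $n\geq 1$ then comes from the elementary inequality $\sqrt{4(2\pi n/a)^{2} + (\epsilon_{\tau}+\epsilon_{\sigma})^{2}} > \epsilon_{\tau}+\epsilon_{\sigma}$, valid in the small-$\epsilon$ regime of \S\ref{Sec:AlphaConstruction}: this gives $2\lambda_{+} > 2\epsilon_{\tau}$ and $2\lambda_{-} < -2\epsilon_{\sigma}$ for every nonzero Fourier mode, so $(\lambda_{1},\zeta_{1})=(\epsilon_{\tau},(1,0))$ and $(\lambda_{-1},\zeta_{-1})=(-\epsilon_{\sigma},(0,1))$ are indeed the eigenpairs of smallest absolute value. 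Isolating these modes in the finite-energy expansion yields the claimed leading-order asymptotics at $\halfcyl_{a,\pm}$, with the remainders $(\dot{\tau}_{\hot},\dot{\sigma}_{\hot})$ inheriting Sobolev decay at the strictly better weight dictated by the next eigenvalue. The only piece of genuine content is the $2\times 2$ Fourier diagonalization; the potentially delicate step is verifying that this small-$\epsilon$ regime is consistently the one in which the paper operates, so that the strict inequalities separating the $n=0$ pair from the higher modes really do hold for every relevant action $a$.
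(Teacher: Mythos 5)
Your proposal is correct and arrives at the same characteristic polynomial as the paper, but by a genuinely different computational route. The paper works directly in real coordinates $(x,y)$: it writes out $\AsymptoticOp z = \lambda z$ as the coupled first-order system $-\partial_{q}x = (\lambda + \epsilon_{\sigma})y$, $\partial_{q}y = (\lambda - \epsilon_{\tau})x$, observes that the constant solutions give $\lambda = \epsilon_{\tau}$ or $\lambda = -\epsilon_{\sigma}$, and for nonconstant solutions differentiates once more to get the decoupled second-order ODE $-\partial_{q}^{2}(x,y) = (\lambda - \epsilon_{\tau})(\lambda + \epsilon_{\sigma})(x,y)$, so that $(\lambda - \epsilon_{\tau})(\lambda + \epsilon_{\sigma}) = (2\pi m/a)^{2}$ with $m \geq 1$. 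You instead pass to the complex identification $\R^{2} \simeq \C$ and the normal form $\AsymptoticOp z = -i\partial_{q}z + \mu z + \nu\bar z$, and diagonalize directly via the coupling of Fourier modes $(c_{n}, \bar c_{-n})$, obtaining the determinant condition $(\lambda - \mu)^{2} = (2\pi n/a)^{2} + \nu^{2}$. Expanding either quadratic gives the same formula $2\lambda = (\epsilon_{\tau} - \epsilon_{\sigma}) \pm \sqrt{4(2\pi m/a)^{2} + (\epsilon_{\tau}+\epsilon_{\sigma})^{2}}$, so the two methods are interchangeable here; your route has the minor advantage of handing you the eigenvectors $(1,0)$ and $(0,1)$ at $n=0$ from a uniform $2\times 2$ computation rather than by direct inspection, while the paper's is perhaps closer in spirit to the cited computation from \cite{BH:Cylindrical}.

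Your closing worry about a small-$\epsilon$ regime is unnecessary. The elementary inequality $\sqrt{4(2\pi m/a)^{2}+(\epsilon_{\tau}+\epsilon_{\sigma})^{2}} > \epsilon_{\tau}+\epsilon_{\sigma}$ holds for every $m \geq 1$ and every $a$ with no smallness assumption at all, and it immediately yields $\lambda_{+} > \epsilon_{\tau}$ and $\lambda_{-} < -\epsilon_{\sigma}$ for all nonconstant modes. These two strict inequalities are precisely what identify $(\lambda_{1},\zeta_{1}) = (\epsilon_{\tau},(1,0))$ as the smallest positive eigenpair and $(\lambda_{-1},\zeta_{-1}) = (-\epsilon_{\sigma},(0,1))$ as the largest negative one (with the paper's ordering convention $\lambda_{-1} < 0 < \lambda_{1}$), and they are exactly what is needed for the leading-order asymptotics at the positive and negative punctures. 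The displayed assertion $|\lambda| > \epsilon_{\tau},\epsilon_{\sigma}$ is phrased a bit more strongly than either your argument or the paper's establishes (for instance $|\lambda_{-}| > \epsilon_{\tau}$ would require $(2\pi m/a)^{2} > 2\epsilon_{\tau}(\epsilon_{\tau}-\epsilon_{\sigma})$), but this surplus strength is not used anywhere downstream, and the paper's own one-line discussion of the nonconstant modes does not address it either. So there is no gap to worry about; the consistency you were unsure of is automatic.
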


Observe that while the $(\lambda_{\pm1}, \zeta_{\pm 1})$ are independent of $a = \action(\gamma)$, the remaining eigenvalues vary with $a$.

\begin{proof}
The eigenvalue computation is a slight modification of \cite[\S 4.1.2]{BH:Cylindrical}, covering the case $\epsilon_{\tau} = \epsilon_{\sigma}$ with an ambient rotation applied. Attempting to solve $\AsymptoticOp z = \lambda z$ for some $z = (x, y) \in \Sobolev^{1, 2}$ and $\lambda \in \R$, we compute
\begin{equation*}
\begin{gathered}
\AsymptoticOp z = \begin{pmatrix}
\frac{\partial y}{\partial q} + \epsilon_{\tau}x\\
-\frac{\partial x}{\partial q} - \epsilon_{\sigma}y
\end{pmatrix} = \lambda\begin{pmatrix}
x \\ y
\end{pmatrix} \iff -\frac{\partial x}{\partial q} = (\lambda + \epsilon_{\sigma})y, \quad \frac{\partial y}{\partial q} = (\lambda - \epsilon_{\tau})x\\
\implies -\frac{\partial^{2}}{\partial q^{2}}(x, y) = (\lambda - \epsilon_{\tau})(\lambda + \epsilon_{\sigma})(x, y).
\end{gathered}
\end{equation*}
The first line yields the constant solutions.

The remaining solutions are $\Ltwo$ orthogonal to the constant solutions and can be found by solving 
\begin{equation*}
-\frac{\partial^{2}}{\partial q^{2}}x = (\lambda - \epsilon_{\tau})(\lambda + \epsilon_{\sigma})x, \quad y = -(\lambda + \epsilon_{\sigma})^{-1}\frac{\partial x}{\partial x}.
\end{equation*}
Since we are working over the circle of radius $a$, $x$ must be a $\R$-linear combination of functions $\cos(\frac{2\pi m}{a}q)$ and $\sin(\frac{2\pi m}{a}q)$ with $m \neq 0$. Therefore there is some $m$ for which $(\lambda - \epsilon_{\tau})(\lambda + \epsilon_{\sigma}) = \left(\frac{2\pi m}{a}\right)^{2}$, in which case $\lambda$ is computed using the quadratic formula. The asymptotic estimates for higher order terms follow from Equation \ref{Eq:HWZSummary}.
\end{proof}

\begin{cor}\label{Cor:KernelConvergenceRates}
Let $(s, v)$ be a finite energy holomorphic curve in $\R_{s} \times \divSet$ with $(\dot{\tau}, \dot{\sigma}) \in \ker\Dlinearized^{\normal}_{s}$. Then
\be
\item For $w < \epsilon_{\sigma}$ and $\halfcyl_{a, +} \subset \Sigma$, a positive end of $\Sigma$, $e^{ws}(\dot{\tau}, \dot{\sigma})|_{\halfcyl_{a, +}} \in \Sobolev^{k, p}(\halfcyl_{a, +})$.
\item For $w > -\epsilon_{\tau}$ and $\halfcyl_{a, -} \subset \Sigma$, a negative end of $\Sigma$, $e^{ws}(\dot{\tau}, \dot{\sigma})|_{\halfcyl_{a, -}} \in \Sobolev^{k, p}(\halfcyl_{a, -})$.
\ee
\end{cor}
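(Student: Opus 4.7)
The plan is to deduce both decay estimates directly from the asymptotic expansion supplied by Lemma \ref{Lemma:KernelAsymptoticSummary}. On each simple half-cylinder $\halfcyl_{a,\pm} \subset \Sigma$ the target coordinate $s$ of the map $(s,v)$ differs from the cylindrical coordinate $p$ only by an additive constant (this is just Properties \ref{Properties:PreferedTranslate} applied end-by-end), so replacing $e^{ws}$ by $e^{wp}$ changes all norms by at most a harmless positive multiplicative factor. It therefore suffices to verify that $e^{wp}(\dot{\tau},\dot{\sigma}) \in \Sobolev^{k,p}(\halfcyl_{a,\pm})$ in the claimed ranges of $w$.

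At a positive end I would substitute the expansion
\[
(\dot{\tau},\dot{\sigma}) = \kercoeff_{+}e^{-\epsilon_{\sigma}p}(0,1) + (\dot{\tau}_{\hot},\dot{\sigma}_{\hot}), \qquad e^{\epsilon_{\sigma}p}(\dot{\tau}_{\hot},\dot{\sigma}_{\hot}) \in \Sobolev^{k,p}(\halfcyl_{a,+}),
\]
and check the two terms separately. The leading contribution $\kercoeff_{+}e^{(w-\epsilon_{\sigma})p}(0,1)$ and all of its $p,q$-derivatives belong to $\Sobolev^{k,p}(\halfcyl_{a,+})$ precisely when $w<\epsilon_{\sigma}$. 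For the remainder I factor
\[
e^{wp}(\dot{\tau}_{\hot},\dot{\sigma}_{\hot}) = e^{(w-\epsilon_{\sigma})p}\bigl(e^{\epsilon_{\sigma}p}(\dot{\tau}_{\hot},\dot{\sigma}_{\hot})\bigr).
\]
Under $w<\epsilon_{\sigma}$ the first factor is bounded on $[0,\infty)_p$ with exponentially decaying $p$-derivatives, so a short Leibniz computation -- combined with the fact (also from Lemma \ref{Lemma:KernelAsymptoticSummary}) that every remaining eigenvalue of $\AsymptoticOp$ has absolute value strictly greater than $\epsilon_{\sigma}$, and hence every other exponential weight appearing in $(\dot{\tau}_{\hot},\dot{\sigma}_{\hot})$ has magnitude strictly exceeding $\epsilon_{\sigma}$ -- shows the product and its derivatives up to order $k$ lie in $\Sobolev^{k,p}$.

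The negative-end argument is the mirror image: using
\[
(\dot{\tau},\dot{\sigma}) = \kercoeff_{-}e^{\epsilon_{\tau}p}(1,0) + (\dot{\tau}_{\hot},\dot{\sigma}_{\hot}), \qquad e^{-\epsilon_{\tau}p}(\dot{\tau}_{\hot},\dot{\sigma}_{\hot}) \in \Sobolev^{k,p}(\halfcyl_{a,-}),
\]
the leading term $\kercoeff_{-}e^{(w+\epsilon_{\tau})p}(1,0)$ sits in $\Sobolev^{k,p}(\halfcyl_{a,-})$ exactly when $w>-\epsilon_{\tau}$, and the remainder is handled by the same bounded-factor trick, with the role of $\epsilon_{\sigma}$ now played by $\epsilon_{\tau}$. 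Since the whole argument amounts to little more than reading off the exponential weights catalogued in Lemma \ref{Lemma:KernelAsymptoticSummary}, I do not anticipate a real obstacle here beyond this bookkeeping; the content of the corollary is genuinely supplied by the eigenvalue analysis already carried out.
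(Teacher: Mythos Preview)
Your proposal is correct and matches the paper's (implicit) approach: the paper states this corollary without proof, treating it as an immediate consequence of the eigenvalue bounds and asymptotic expansions in Lemma~\ref{Lemma:KernelAsymptoticSummary}, which is exactly what you have unpacked. The only minor remark is that you do not need the extra clause about the remaining eigenvalues having absolute value strictly greater than $\epsilon_{\sigma}$ --- the statement $e^{\epsilon_{\sigma}p}(\dot{\tau}_{\hot},\dot{\sigma}_{\hot})\in\Sobolev^{k,p}$ from the lemma already suffices for the bounded-multiplier argument --- but including it does no harm.
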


Let $\eta = (\eta_{\tau}, \eta_{\sigma})$ be a pair of $\R$-valued $1$-forms on $\Sigma$ which we view as a section of $\R^{2}_{\partial_{\tau}, \partial_{\sigma}}$. If we disregard the requirement that $(\dot{\tau}, \dot{\sigma})$ must converge to $0$ along punctures of $\Sigma$, then by Equation \eqref{Eq:NormalDnearMcheck},
\begin{equation*}
\Dlinearized^{\normal}_{s}(\dot{\tau}, \dot{\sigma}) = (\eta_{\tau}, \eta_{\sigma}) \iff \delbar_{\domainJ, J_{0}}(e^{\epsilon_{\tau}}\dot{\tau}, e^{-\epsilon_{\sigma}}\dot{\sigma}) = (e^{\epsilon_{\tau}}\eta_{\tau}, e^{-\epsilon_{\sigma}}\eta_{\sigma})^{0, 1}
\end{equation*}
So $\Dlinearized^{\normal}(\dot{\tau}, \dot{\sigma})=0$ if and only if $(e^{\epsilon_{\tau}}\dot{\tau}, e^{-\epsilon_{\sigma}}\dot{\sigma})$ is holomorphic. In particular for $\tau_{0}, \sigma_{0} \in \R$,
\begin{equation}\label{Eq:KerExpSolution}
(\dot{\tau}, \dot{\sigma}) = (\tau_{0} e^{\epsilon_{\tau} s}, \sigma_{0} e^{-\epsilon_{\sigma}s}) \in \ker \Dlinearized^{\normal}_{s}.
\end{equation}

Now let's reintroduce the requirement that $(\dot{\tau}, \dot{\sigma}) \in \ker \Dlinearized^{\normal}_{s}$ converges to $0$ along the cylindrical ends of $\Sigma$. If $(\dot{\tau}, \dot{\sigma})$ is as in Equation \eqref{Eq:KerExpSolution}, then as $\Sigma$ has at least one positive puncture we must have $\tau_{0} = 0$ as otherwise $\tau_{0} e^{\epsilon_{\tau} s}$ will diverge near a positive puncture. In this case, there are $\sigma_{0} \neq 0$ solutions if $\Sigma$ has no negative punctures, else $ \sigma_{0} e^{-\epsilon_{\sigma}s}$ will diverge at negative punctures. If $(\dot{\tau}, \dot{\sigma}) \in \ker \Dlinearized^{\normal}_{s}$ and the holomorphic function $(e^{\epsilon_{\tau}}\dot{\tau}, e^{-\epsilon_{\sigma}}\dot{\sigma})$ is non-constant then it cannot be either purely real or purely imaginary, so at least one of $\dot{\tau}$ or $\dot{\sigma}$ will diverge at some puncture. We have proved the following lemma.

\begin{lemma}\label{Lemma:KerDN}
Any element of $\ker \Dlinearized^{\normal}_{s}$ must be as in Equation \eqref{Eq:KerExpSolution}. Therefore,
\begin{equation*}
\dim \ker \Dlinearized^{\normal}_{s} = \begin{cases}
1 & \NnegativePunctures = 0, \\
0 & \NnegativePunctures > 0
\end{cases}
\end{equation*}
where $\NnegativePunctures$ is the number of negative punctures of $\Sigma$. If $\NnegativePunctures = 0$, then
\begin{equation*}
\ker \Dlinearized^{\normal}_{s} = \R(0, e^{-\epsilon_{\sigma}s}).
\end{equation*}
\end{lemma}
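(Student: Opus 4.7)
The plan is to use the twisted factored form \eqref{Eq:TwistedNormalOperator} of $\Dlinearized^{\normal}_{s}$ to reduce the kernel equation to a pair of decoupled $\R$-valued holomorphicity problems, solve each one explicitly, and then cut down the resulting finite-dimensional parameter space using the puncture decay rates from Corollary \ref{Cor:KernelConvergenceRates}. The hard part is not the algebra but making sure the chosen Sobolev weights are compatible with the borderline eigenmodes $(-\epsilon_\sigma,(0,1))$ and $(\epsilon_\tau,(1,0))$ singled out in Lemma \ref{Lemma:KernelAsymptoticSummary}, so that the slow mode $(0,e^{-\epsilon_\sigma p})$ is neither ruled out nor spuriously retained.

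First I would read off from \eqref{Eq:TwistedNormalOperator} that, because the zeroth-order term $\mathrm{diag}(-\epsilon_\tau,\epsilon_\sigma)$ is diagonal, the kernel equation decouples into $d(e^{-\epsilon_\tau s}\dot\tau)^{0,1}=0$ and $d(e^{\epsilon_\sigma s}\dot\sigma)^{0,1}=0$. Each factor is $\R$-valued, so $d(f)^{0,1}=\tfrac12(df+i\,df\circ\domainJ)=0$ forces $df=0$ by taking real parts; connectedness of $\Sigma$ then forces $e^{-\epsilon_\tau s}\dot\tau\equiv\tau_0$ and $e^{\epsilon_\sigma s}\dot\sigma\equiv\sigma_0$ for constants $\tau_0,\sigma_0\in\R$. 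This realizes every kernel element as $(\dot\tau,\dot\sigma)=(\tau_0 e^{\epsilon_\tau s},\sigma_0 e^{-\epsilon_\sigma s})$, exactly the family \eqref{Eq:KerExpSolution}, reducing the search to the at most two-parameter space of such pairs.

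Next I would impose the asymptotic decay required of kernel elements. At the distinguished positive puncture, where $s=p\to+\infty$, Corollary \ref{Cor:KernelConvergenceRates} requires $e^{ws}(\dot\tau,\dot\sigma)\in\Sobolev^{k,p}$ for every $w<\epsilon_\sigma$; taking $w$ close to $\epsilon_\sigma$ makes $\epsilon_\tau+w>0$, so $\tau_0 e^{(\epsilon_\tau+w)p}$ can lie in the weighted space only if $\tau_0=0$. At each negative puncture $s=p\to-\infty$, the same corollary gives decay for any $w>-\epsilon_\tau$; for $w$ close to $-\epsilon_\tau$ one has $-\epsilon_\sigma+w<0$, so $\sigma_0 e^{(-\epsilon_\sigma+w)p}$ blows up at $-\infty$ unless $\sigma_0=0$. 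The combined constraint is $\tau_0=0$ always and $\sigma_0=0$ iff $\NnegativePunctures\geq 1$, yielding both the dimension count and the explicit description $\ker\Dlinearized^{\normal}_{s}=\R(0,e^{-\epsilon_\sigma s})$ in the planar case.
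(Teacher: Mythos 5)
The crux of your argument is the claim that the kernel equation decouples into the two scalar equations $d(e^{-\epsilon_\tau s}\dot\tau)^{0,1}=0$ and $d(e^{\epsilon_\sigma s}\dot\sigma)^{0,1}=0$, and that claim is false. Write $A = d\dot\tau - \epsilon_\tau\dot\tau\,ds = e^{\epsilon_\tau s}d(e^{-\epsilon_\tau s}\dot\tau)$ and $B = d\dot\sigma + \epsilon_\sigma\dot\sigma\,ds = e^{-\epsilon_\sigma s}d(e^{\epsilon_\sigma s}\dot\sigma)$. For Equation \eqref{Eq:TwistedNormalOperator} to be consistent with Equation \eqref{Eq:SplitOperatorSummary} (and for the target $\Omega^{0,1}$ to have the same real rank $2$ as the domain $\Omega^0(\Sigma,\R^2_{\partial_\tau,\partial_\sigma})$, which the index formula of Lemma \ref{Lemma:NormalIndex} requires), the expression $(A,B)^{0,1}$ must be read as the $(0,1)$-part of the \emph{single} $\C$-valued form $A+iB$. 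With that reading $\Dlinearized^\normal_s(\dot\tau,\dot\sigma)=0$ is the single coupled condition $A = B\circ\domainJ$, not the pair $A=0$, $B=0$. Equivalently, $\Dlinearized^\normal_s = \delbar + S\otimes ds^{0,1}$ with $S=\diag(-\epsilon_\tau,\epsilon_\sigma)$, and since $S$ does not commute with $J_0$ this is a genuinely $\R$-linear Cauchy--Riemann operator whose real and imaginary components never split. "Diagonal" for a real matrix does not mean "$\C$-diagonal." Your argument shows only that $A=B=0$ forces the exponential form; the actual content of the lemma is that $A = B\circ\domainJ$ already forces $A=B=0$, and you have skipped it. (The paper's own reduction to holomorphicity of $(e^{\epsilon_\tau s}\dot\tau, e^{-\epsilon_\sigma s}\dot\sigma)$ has the same difficulty -- note that the exhibited kernel elements $(\tau_0 e^{\epsilon_\tau s},\sigma_0 e^{-\epsilon_\sigma s})$ are sent by that twist to $(\tau_0 e^{2\epsilon_\tau s},\sigma_0 e^{-2\epsilon_\sigma s})$, which is not holomorphic since $s$ is merely harmonic -- so you are in good company, but the gap is real.)

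One clean way to fill the gap is an energy/Stokes argument parallel to the one used for the cokernel in Lemma \ref{Lemma:CokerCohomological}. Setting $u = e^{-\epsilon_\tau s}\dot\tau$ and $v = e^{\epsilon_\sigma s}\dot\sigma$, one has $A = e^{\epsilon_\tau s}du$ and $B = e^{-\epsilon_\sigma s}dv$, and $A = B\circ\domainJ$ becomes $e^{(\epsilon_\tau+\epsilon_\sigma)s}du = dv\circ\domainJ$. Then
\begin{equation*}
\int_\Sigma e^{(\epsilon_\tau+\epsilon_\sigma)s}\,du\wedge(du\circ\domainJ)
= \int_\Sigma (dv\circ\domainJ)\wedge(du\circ\domainJ)
= \int_\Sigma dv\wedge du
= \int_\Sigma d(v\,du),
\end{equation*}
and once the asymptotics of Lemma \ref{Lemma:KernelAsymptoticSummary} are used to check the boundary contributions at the punctures vanish, Stokes makes the right side zero. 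The left side is the non-negative weighted energy of $u$, so $du=0$ and hence $dv=0$, giving $(\dot\tau,\dot\sigma) = (\tau_0 e^{\epsilon_\tau s}, \sigma_0 e^{-\epsilon_\sigma s})$. Your final paragraph (imposing the decay weights at the positive and negative ends to force $\tau_0=0$ and, when $\NnegativePunctures\geq 1$, $\sigma_0=0$) is correct once the exponential form has been secured.
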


\subsection{Preliminary analysis of $\coker\Dlinearized^{\normal}_{s}$}\label{Sec:CokDInitialAnalysis}

The preceding analysis of $\ker \Dlinearized^{\normal}_{s}$ is sufficient to compute the dimension of $\coker\Dlinearized^{\normal}_{s}$ by way of Lemma \ref{Lemma:NormalIndex}:
\begin{equation*}
\dim \coker \Dlinearized^{\normal}_{s} = \dim \ker \Dlinearized^{\normal}_{s} - \ind \Dlinearized^{\normal}_{s} = \begin{cases}
1 - \chi(\Sigma) & \NnegativePunctures = 0 \\
-\chi(\Sigma) & \NnegativePunctures > 0.
\end{cases}
\end{equation*}

Every element of $\Omega^{0, 1}(\Sigma)$ may be written in the form
\begin{equation*}
\zeta^{0, 1} = \half(\zeta + J_{0}\zeta \circ \domainJ)
\end{equation*}
where $\zeta$ is $\R$-valued. Using the $\Ltwo$ inner product and Stokes' theorem, we compute
\begin{equation*}
\begin{aligned}
\langle \Dlinearized^{\normal}_{s}(\dot{\tau}, \dot{\sigma}), \zeta^{0, 1} \rangle_{1, 1} &= \int_{\Sigma} (d\dot{\tau} - \epsilon_{\tau}\dot{\tau} ds  - d\dot{\sigma} \circ \domainJ - \epsilon_{\sigma}\dot{\sigma} ds \circ \domainJ)\circ \domainJ \wedge \zeta \\
&= 2\int_{\Sigma} - (d\dot{\tau} - \epsilon_{\tau}\dot{\tau} ds)\wedge \zeta\circ \domainJ + (d\dot{\sigma} + \epsilon_{\sigma}\dot{\sigma} ds)\wedge \zeta \\
&= \int_{\Sigma} -d(\dot{\tau} \zeta\circ \domainJ) + \dot{\tau} d(\zeta \circ \domainJ) + \epsilon_{\tau} \dot{\tau} ds \wedge \zeta \circ \domainJ + d(\dot{\sigma} \zeta) - \dot{\sigma} d\zeta + \epsilon_{\sigma}\sigma ds \wedge \zeta \\
&= \int_{\Sigma}  \dot{\tau} d(\zeta \circ \domainJ) + \epsilon_{\tau}\dot{\tau} ds \wedge \zeta \circ \domainJ- \epsilon_{\sigma}\dot{\sigma} d\zeta + \sigma ds \wedge \zeta \\
&= \langle (\dot{\tau}, \dot{\sigma}), (d(\zeta \circ \domainJ) + \epsilon_{\tau} ds \wedge \zeta \circ \domainJ,  d\zeta - \epsilon_{\sigma}ds \wedge \zeta) \rangle_{0, 2}\\
&= \langle (\dot{\tau}, \dot{\sigma}), (e^{-\epsilon_{\tau} s}d(e^{\epsilon_{\tau} s}\zeta \circ \domainJ), e^{\epsilon_{\sigma}s}d(e^{-s\epsilon_{\sigma}}\zeta )) \rangle_{0, 2}\\
&= \langle (\dot{\tau}, \dot{\sigma}), \Dlinearized^{N, \ast}\zeta^{0, 1} \rangle_{1, 1}
\end{aligned}
\end{equation*}
to obtain the \emph{normal dual operator}, $\NormalDual$, as described in Equation \eqref{Eq:TwistedNormalOperator}.

The following result is then analogous to Lemma \ref{Lemma:KernelAsymptoticSummary}.

\begin{lemma}\label{Lemma:CokAsymptoticSummary}
Suppose that $s = p$ over a half-cylinder $\halfcyl_{a\pm}$ and consider finite-energy solutions $\zeta^{0, 1}$ to $\NormalDual \zeta^{0, 1} = 0$. Then $\zeta^{0, 1}$ may be expressed
\begin{equation*}
\zeta^{0, 1} = \sum_{k \in \Z, \pm \lambda_{k} > 0} \cokcoeff_{k} e^{-\lambda_{k} p}\zeta_{k}\otimes_{\C} (dp)^{0, 1}
\end{equation*}
for eigenvalue-eigenfunction pairs $(\lambda_{k}, \eta_{k})$ associated to the asymptotic operator $\thicc{A}$ of Lemma \ref{Lemma:KernelAsymptoticSummary}. Hence at a positive puncture we may write
\begin{equation}\label{Eq:ZetaNearPplus}
\zeta^{0, 1} = \cokcoeff_{+} e^{-\epsilon_{\tau} p}(\partial_{\tau}\otimes dp)^{0, 1} + \zeta^{0, 1}_{\hot},\quad \cokcoeff_{+} \in \R,\quad e^{\epsilon_{\tau} p}\zeta^{0, 1}_{\hot} \in \Sobolev^{k, p}(\halfcyl_{a, +}, \R^{2})
\end{equation}
and at a negative puncture we may write
\begin{equation}\label{Eq:ZetaNearPminus}
\zeta^{0, 1} = \cokcoeff_{-} e^{ \epsilon_{\sigma}p}(\partial_{\sigma} \otimes dp)^{0, 1} + \zeta_{\hot}^{0, 1},\quad \cokcoeff_{-} \in \R, \quad e^{-\epsilon_{\sigma}p}\zeta^{0, 1}_{\hot} \in \Sobolev^{k, p}(\halfcyl_{a, -}, \R^{2}).
\end{equation}
\end{lemma}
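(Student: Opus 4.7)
My plan is to mirror the proof of Lemma \ref{Lemma:KernelAsymptoticSummary} by reducing $\NormalDual \zeta^{0,1} = 0$ to a Floer-type ODE on the half-cylinder and then diagonalizing the \emph{same} asymptotic operator $\thicc{A}$. The key observation is that, at the level of the Floer form of $\Dlinearized^{\normal}_{s}$ on a cylinder, passing to the $L^2$-adjoint merely flips the sign of the $\partial_{p}$-term; self-adjointness of $\thicc{A}$ then makes the spectral analysis identical to the one already carried out.

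First I would parameterize $\zeta^{0,1} = z \otimes_{\C} dp^{0,1}$ for a map $z: \halfcyl_{a,\pm} \to \R^{2}$, and recast $\Dlinearized^{\normal}_{s}$ from Equation \eqref{Eq:NormalDnearMcheck} in the Floer form $(\partial_{p} - \thicc{A}) \otimes_{\C} dp^{0,1}$, where $\thicc{A}$ is exactly the asymptotic operator of Lemma \ref{Lemma:KernelAsymptoticSummary}. Since $\NormalDual$ was defined in the integration-by-parts computation preceding Equation \eqref{Eq:CoordFreeCokDescription} as the formal $L^{2}$-adjoint of $\Dlinearized^{\normal}_{s}$ with respect to the pairings in Equation \eqref{Eq:LtwoProductDef}, and since $\thicc{A}$ is self-adjoint on $\Ltwo(\aCircle, \R^{2})$, the equation $\NormalDual \zeta^{0,1} = 0$ becomes
$$\frac{\partial z}{\partial p} + \thicc{A} z = 0.$$
Separating variables and using the eigendecomposition $\thicc{A}\zeta_{k} = \lambda_{k} \zeta_{k}$ from Lemma \ref{Lemma:KernelAsymptoticSummary}, every solution on the half-cylinder is of the form $z = \sum_{k} \cokcoeff_{k} e^{-\lambda_{k} p}\zeta_{k}(q)$. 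The finite-energy requirement then imposes the sign condition on $\lambda_{k}$: $L^{2}$-decay as $p \to +\infty$ on $\halfcyl_{a,+}$ forces $\lambda_{k} > 0$, while decay as $p \to -\infty$ on $\halfcyl_{a,-}$ forces $\lambda_{k} < 0$. This is precisely the condition $\pm\lambda_{k} > 0$ appearing in the statement, together with the claimed eigenfunction expansion.

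To extract the leading asymptotic term, I read off from Lemma \ref{Lemma:KernelAsymptoticSummary} that the smallest positive eigenvalue is $\epsilon_{\tau}$ with eigenfunction $(1, 0) = \partial_{\tau}$, and the negative eigenvalue of smallest absolute value is $-\epsilon_{\sigma}$ with eigenfunction $(0, 1) = \partial_{\sigma}$; all remaining eigenvalues have absolute value strictly larger than both $\epsilon_{\tau}$ and $\epsilon_{\sigma}$. The corresponding summands therefore decay at strictly faster exponential rates, assembling into the higher-order term $\zeta^{0,1}_{\hot}$ satisfying the stated weighted Sobolev bounds of \eqref{Eq:ZetaNearPplus} and \eqref{Eq:ZetaNearPminus}. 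I expect the main bookkeeping hurdle to be the careful unwinding of the $\R^{2}$-valued $(0,1)$-form conventions, in particular verifying that ``transposing'' the Floer form of $\Dlinearized^{\normal}_{s}$ produces precisely $-\partial_{p} - \thicc{A}$ with no extra sign in the zeroth-order term; this is essentially a re-derivation of Equation \eqref{Eq:CoordFreeCokDescription} restricted to a cylindrical end, and all remaining steps are then parallel to the proof of Lemma \ref{Lemma:KernelAsymptoticSummary}.
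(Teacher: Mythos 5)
Your proposal is correct and matches the paper's intended argument: the paper offers no separate proof of Lemma \ref{Lemma:CokAsymptoticSummary}, stating only that it is ``analogous to Lemma \ref{Lemma:KernelAsymptoticSummary},'' and your proof is exactly that analogue. Reducing $\NormalDual\zeta^{0,1}=0$ over a cylindrical end to the ODE $\partial_{p}z + \thicc{A}z = 0$ (because $\thicc{A}$ is self-adjoint, dualizing flips only the $\partial_{p}$-term), reading off the decay profiles $e^{-\lambda_{k}p}\zeta_{k}(q)$, and imposing the finite-energy sign condition $\pm\lambda_{k}>0$ on $\halfcyl_{a,\pm}$ is precisely what yields the expansion, with the leading terms $\epsilon_{\tau}$-decay toward $\partial_{\tau}$ at a positive end and $\epsilon_{\sigma}$-decay toward $\partial_{\sigma}$ at a negative end, in agreement with the eigendata recorded in Lemma \ref{Lemma:KernelAsymptoticSummary}.
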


\subsection{Cohomological interpretation of $\ker \Dlinearized^{\normal, \ast}$}\label{Sec:DefCohomologyMaps}

Now we compare our coordinate-free and coordinate-dependent descriptions of $\NormalDual\zeta^{0, 1} = 0$ solutions. Equation \eqref{Eq:CoordFreeCokDescription} tells us that if $\NormalDual\zeta^{0, 1} = 0$ then both $e^{-\epsilon_{\sigma}s}\zeta$ and $e^{\epsilon_{\tau} s}\zeta\circ \domainJ$ are $\R$-valued cohomological cycles on $\Sigma$. Furthermore, the convergence estimates in Lemma \ref{Lemma:CokAsymptoticSummary} inform us that:
\be
\item $e^{-\epsilon_{\sigma}s}\zeta$ extends as a closed, smooth $1$-form over every positive puncture and takes the form $\cokcoeff_{-}dq + \zeta_{\hot}$ for some $\cokcoeff_{-} \in \R$ in half-cylinders about each negative puncture.
\item $e^{\epsilon_{\tau} s}\zeta\circ \domainJ$ extends as a closed, smooth $1$-form over every negative puncture and takes the form $\cokcoeff_{+}dq + \zeta_{\hot}$ for $\cokcoeff_{+} \in \R$ in half-cylinders about each positive puncture.
\ee

The above may be restated by saying that we have $\R$-linear morphisms
\begin{equation}
\begin{aligned}
\cohomCokerNormalDual&: \ker(\NormalDual) \rightarrow H^{1}(\SigmaInfty),\quad \cohomCokerNormalDual(\zeta^{0, 1}) = [e^{-\epsilon_{\sigma}s}\zeta], \\
\mathcal{H}^{-}_{s}&: \ker(\NormalDual)\rightarrow H^{1}(\SigmaNInfty), \quad \mathcal{H}^{-}_{s}(\zeta^{0, 1}) = [e^{\epsilon_{\tau} s}\zeta\circ \domainJ]
\end{aligned}
\end{equation}
where $\SigmaInfty$ is obtained by filling in the positive puncture $\infty$ of $\Sigma$ and $\SigmaNInfty$ is obtained by filling in all of the negative punctures of $\Sigma$. See Figure \ref{Fig:SigmaPartialCompletion}. Of course $\SigmaNInfty \simeq \C$ when there is one positive puncture, although the preceding analysis and the following lemma both continue to work if $\Sigma$ has multiple positive punctures.

\begin{lemma}\label{Lemma:CokerCohomological}
$\cohomCokerNormalDual \oplus \mathcal{H}^{-}_{s}$ is injective.
\end{lemma}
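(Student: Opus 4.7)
The plan is to turn the vanishing of the two cohomology classes into an energy identity that forces $\zeta = 0$. Suppose $\zeta^{0,1} \in \ker \NormalDual$ satisfies $\cohomCokerNormalDual(\zeta^{0,1}) = 0 = \mathcal{H}^{-}_{s}(\zeta^{0,1})$, and write $\zeta^{0,1} = \half(\zeta + J_0 \zeta \circ \domainJ)$ for the underlying $\R$-valued $1$-form $\zeta$. The vanishing of the classes supplies primitives
\begin{equation*}
df = e^{-\epsilon_\sigma s}\zeta \text{ on } \SigmaInfty, \qquad dg = e^{\epsilon_\tau s}\zeta \circ \domainJ \text{ on } \SigmaNInfty.
\end{equation*}
By Lemma~\ref{Lemma:CokAsymptoticSummary} the form $\zeta$ decays like $e^{-\epsilon_\tau p}$ at the positive puncture $\infty$, so $df$ decays like $e^{-(\epsilon_\sigma + \epsilon_\tau)p}$ there, and I may normalize $f(\infty) = 0$ by subtracting a constant, producing $|f| = O(e^{-(\epsilon_\sigma + \epsilon_\tau)p})$ near $\infty$.

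Next I would use the elementary identity $\eta \wedge (\eta \circ \domainJ) = -|\eta|^2\, dA_{\domainJ}$, valid for any $\R$-valued $1$-form $\eta$ on a Riemann surface, to compute
\begin{equation*}
df \wedge dg \;=\; e^{(\epsilon_\tau - \epsilon_\sigma)s}\, \zeta \wedge (\zeta \circ \domainJ) \;=\; -e^{(\epsilon_\tau - \epsilon_\sigma)s}\, |\zeta|^2\, dA_{\domainJ}.
\end{equation*}
The integrand is pointwise non-positive, with equality if and only if $\zeta = 0$, so it would be enough to show $\int_\Sigma df \wedge dg = 0$.

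For this, I exhaust $\Sigma$ by the compact subdomains $\Sigma_R$ obtained by truncating every cylindrical end at $|p| = R$. Stokes gives $\int_{\Sigma_R} df \wedge dg = \int_{\partial \Sigma_R} f\, dg$, and the task reduces to showing both boundary contributions tend to $0$ as $R \to \infty$. Near $\infty$ the normalization yields $|f| = O(e^{-(\epsilon_\sigma + \epsilon_\tau)R})$ while $|dg| = O(1)$, because the weight $e^{\epsilon_\tau s}$ cancels the $e^{-\epsilon_\tau p}$ decay of $\zeta$; the contribution there is thus $O(e^{-(\epsilon_\sigma + \epsilon_\tau)R})$. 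Near each negative puncture $z_j$, Lemma~\ref{Lemma:CokAsymptoticSummary} gives $|\zeta| = O(e^{\epsilon_\sigma p})$, so $|df| = O(1)$ (hence $f$ grows at most linearly in $|p|$) while $|dg| = O(e^{(\epsilon_\tau + \epsilon_\sigma)p}) = O(e^{-(\epsilon_\tau + \epsilon_\sigma)R})$ at $p = -R$, so the contribution there is $O(R\, e^{-(\epsilon_\tau + \epsilon_\sigma)R})$. Both bounds go to zero, whence $\int_\Sigma e^{(\epsilon_\tau - \epsilon_\sigma)s}|\zeta|^2\, dA_{\domainJ} = 0$ and therefore $\zeta \equiv 0$, so $\zeta^{0,1} = 0$.

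The hard part will be converting the leading-order asymptotics of Lemma~\ref{Lemma:CokAsymptoticSummary} into uniform $L^\infty$ bounds along the circles $\{|p| = R\}$ that control the $\hot$ remainders, so that the informal $O$-estimates above are rigorous, and carefully tracking the induced orientations on $\partial \Sigma_R$ (which flip sign between the positive and negative ends) so that Stokes is applied consistently. Once this bookkeeping is in place, the energy argument closes the proof.
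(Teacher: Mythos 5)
Your argument is correct, and the core idea — compare $e^{-\epsilon_\sigma s}\zeta$ and $e^{\epsilon_\tau s}\zeta\circ\domainJ$, pair them to an energy density, and kill it by Stokes — is the same as the paper's, but the way you run Stokes is genuinely different. The paper observes that vanishing of the two classes forces the leading Fourier coefficients $\cokcoeff_\pm$ in Equations \eqref{Eq:ZetaNearPplus} and \eqref{Eq:ZetaNearPminus} to vanish, which means both closed forms extend \emph{smoothly over all of} $\ProjOne$; since $H^1(\ProjOne) = 0$ they then admit global primitives $h_\pm \in \Cinfty(\ProjOne)$, and Stokes on the closed surface gives $\int_{\ProjOne} d(h_- dh_+) = 0$ with no boundary to estimate. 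You instead keep the primitives only on $\SigmaInfty$ and $\SigmaNInfty$, truncate to $\Sigma_R$, and show the boundary integrals $\int_{\partial\Sigma_R} f\,dg$ vanish in the limit using the asymptotic decay rates. Both are valid; what the paper's route buys you is exactly the elimination of the "hard part" you flag — there are no circles $\{|p|=R\}$ to control, because once you know $\cokcoeff_\pm = 0$ the forms are smooth on the compact $\ProjOne$ and all estimates disappear. It's worth noting that your worry is well-founded but resolvable: near a negative puncture the vanishing of $\cohomCokerNormalDual(\zeta^{0,1})$ already forces $\cokcoeff_- = 0$ (the period of $df$ around that puncture is $a_i\cokcoeff_-$), so $df$ in fact decays there and $f$ stays bounded rather than growing linearly — your $O(R\,e^{-(\epsilon_\tau+\epsilon_\sigma)R})$ bound is safe but conservative. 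If you had noticed that both $\cokcoeff_+$ and $\cokcoeff_-$ must vanish, you would have been led to the paper's cleaner argument.
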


\begin{proof}
If $\mathcal{H}^{-}_{s}(\zeta^{0, 1}) = 0$ in cohomology, then the constants $\cokcoeff_{+}$ in Equation \eqref{Eq:ZetaNearPplus} must vanish as otherwise we could be able to integrate $\zeta^{0, 1}$ over a loop parallel to a positive puncture to obtain $\cokcoeff_{+} \neq 0$. Therefore $e^{\epsilon_{\tau} s}\zeta \circ \domainJ$ extends to a closed form not only over $\SigmaNInfty$, but over all of $\ProjOne$. Likewise if $\cohomCokerNormalDual(\zeta^{0, 1}) = 0$, then the $\cokcoeff_{-}$ in Equation \eqref{Eq:ZetaNearPminus} must all vanish so that $e^{-\epsilon_{\sigma}s}\zeta$ extends to a closed form over all of $\ProjOne$.

As $H^{1}(\ProjOne) = 0$, we can say that $e^{\epsilon_{\tau}s}\zeta \circ \domainJ = dh_{-}$ and $e^{-\epsilon_{\sigma}s}\zeta = dh_{+}$ for some $h_{\pm} \in \Cinfty(\ProjOne)$. Therefore
\begin{equation*}
\begin{aligned}
\norm{e^{\half(\epsilon_{\tau} - \epsilon_{\sigma})s}\zeta}^{2}_{\Ltwo} &= \int_{\Sigma} e^{(\epsilon_{\tau} - \epsilon_{\sigma})s}\zeta\circ \domainJ \wedge \zeta = \int_{\ProjOne} (e^{\epsilon_{\tau} s}\zeta\circ \domainJ) \wedge (e^{-\epsilon_{\sigma}s}\zeta) \\
&= \int_{\ProjOne} dh_{-}\wedge dh_{+} = \int_{\ProjOne} d(h_{-} dh_{+}) = 0
\end{aligned}
\end{equation*}
so that $\zeta = 0$ and $\zeta\circ \domainJ = 0$ must vanish pointwise. So we have proved that if $\zeta^{0, 1} \in \ker \mathcal{H}_{+} \cap \ker \mathcal{H}_{-}$, then $\zeta$ is zero, completing the proof.
\end{proof}

Again we assume that $\Sigma$ is a rational curve with a single positive puncture:
\be
\item If $\Sigma \simeq \C$, $\SigmaInfty \simeq \ProjOne$, and $\dim H^{1}(\SigmaInfty) = 0$.
\item If $\Sigma \simeq \C^{\ast}$, $\SigmaInfty \simeq \C$, and $\dim H^{1}(\SigmaInfty) = 0$.
\item Otherwise, $\dim H^{1}(\SigmaInfty) = - \chi(\Sigma) = \#(\text{negative punctures of } \Sigma) - 1$.
\ee
The proof of Theorem \ref{Thm:NormalDualCoker} then follows from the above lemma by dimension considerations.

\subsection{Planes in the $\posNegRegionComplete$}\label{Sec:PlanesAutoTransverse}

Our index calculations combined with Lemma \ref{Lemma:AutoTransverse} make it easy to study linearized operators of planes inside the $\posNegRegionComplete$ leaves of $\foliationEpsilon$.

\begin{lemma}\label{Lemma:PlaneTransversality}
Choose $\epsilon_{\tau} > 0$, $\Phi_{\Lie}: \posNegRegionComplete \rightarrow \R_{s} \times M$ a $(\JDivSet_{\pm}, \JEpsilon)$-holomorphic inclusion with $\im \Phi_{\Lie}$ being a leaf $\Lie$ of the foliation $\foliationEpsilon$, and $\check{u}: \C \rightarrow \posNegRegionComplete$ a $(\JDivSet_{\pm}, \domainJ)$ holomorphic map. Writing $u = \Phi_{\Lie} \circ \check{u}$,
\begin{equation*}
\ind(u) = \ind(\check{u}) + 1.
\end{equation*}
Moreover, the linearized operator $\Dlinearized_{u}$ for $u$ is surjective if and only if the linearized operator for $\check{u}$ is surjective.
\end{lemma}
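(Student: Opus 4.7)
The plan is to split the linearized operator $\Dlinearized_u$ using the global $\JEpsilon$-invariant decomposition of $T(\R_{s} \times \Nhypersurface)$ from Section \ref{Sec:TangentSplitting}, and then reduce the statement to Lemma \ref{Lemma:AutoTransverse}. First I would pull back the splitting
\begin{equation*}
T(\R_{s} \times \Nhypersurface) = \leafTangentNormal \oplus \leafTangent
\end{equation*}
along $u = \Phi_{\Lie}\circ \check{u}$ to get a direct sum decomposition of $u^{\ast}T(\R_{s}\times \Nhypersurface)$. Since $\Lie$ is a $\JEpsilon$-holomorphic submanifold, $\leafTangent|_{\Lie} = T\Lie$ is invariant under the linearized $\delbar$, and since $\leafTangentNormal$ is also $\JEpsilon$-invariant, standard arguments (cf.\ \cite{Wendl:Automatic}) show that $\Dlinearized_{u}$ respects this splitting:
\begin{equation*}
\Dlinearized_{u} = \Dlinearized^{\tangent}_{u} \oplus \Dlinearized^{\normal}_{u}.
\end{equation*}
The tangent factor $\Dlinearized^{\tangent}_{u}$ is identified under $\Phi_{\Lie}$ with the linearized $\delbar$ of $\check{u}$ acting on $\check{u}^{\ast}T\posNegRegionComplete$, so $\Dlinearized^{\tangent}_{u} \cong \Dlinearized_{\check{u}}$.

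Next I would analyze $\Dlinearized^{\normal}_{u}$. The bundle $\leafTangentNormal$ is the trivial rank two bundle with frame $\partial_{\tau}, \JEpsilon\partial_{\tau}$, so $u^{\ast}\leafTangentNormal$ is a trivial $\C$-bundle on $\C$. On the positive cylindrical end of $\Sigma \simeq \C$, which $u$ pushes into a simple-enough neighborhood of the asymptotic Reeb orbit, the operator $\Dlinearized^{\normal}_{u}$ takes precisely the form of Equation \eqref{Eq:NormalDnearMcheck}, whose asymptotic operator is $\AsymptoticOp = -J_{0}\partial_{q} + \mathrm{diag}(\epsilon_{\tau}, -\epsilon_{\sigma})$. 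By Lemma \ref{Lemma:KernelAsymptoticSummary} this asymptotic operator is non-degenerate, and by Lemma \ref{Lemma:CZSummary}(2) it has $\CZ(\AsymptoticOp) = 0$ (the path $t\mapsto e^{q\mathrm{diag}(\epsilon_{\tau}, -\epsilon_{\sigma})}$ being of the hyperbolic type considered there after a conjugation). Thus Lemma \ref{Lemma:AutoTransverse} applies with $\Sigma \simeq \C$ and guarantees that $\Dlinearized^{\normal}_{u}$ is automatically surjective of index $\chi(\C) = 1$.

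The index additivity under direct sums then yields
\begin{equation*}
\ind(u) = \ind(\Dlinearized^{\tangent}_{u}) + \ind(\Dlinearized^{\normal}_{u}) = \ind(\check{u}) + 1,
\end{equation*}
and the surjectivity claim follows since a direct sum of operators is surjective iff each summand is, combined with the fact that $\Dlinearized^{\normal}_{u}$ is always surjective in this setting. The main obstacle is verifying that the splitting of $\Dlinearized_{u}$ really is a splitting on the nose (and not just up to a compact perturbation); this comes down to checking that $\leafTangent$ is preserved not only by $\JEpsilon$ but also by the connection used to linearize $\delbar$, which holds because $\Lie$ is an embedded $\JEpsilon$-holomorphic submanifold so any $\JEpsilon$-linear connection on the ambient tangent bundle restricts to one on $T\Lie$ and induces one on the normal bundle, with the off-diagonal terms in $\Dlinearized_{u}$ being zero-order and tangent to $u$ (hence absorbable into $\Dlinearized^{\tangent}_{u}$).
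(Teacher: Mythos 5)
Your proposal follows the paper's proof closely: both use the $\JEpsilon$-invariant splitting $T(\R_{s}\times\Nhypersurface)=\leafTangentNormal\oplus\leafTangent$, identify the leaf-tangent block with $\Dlinearized_{\check{u}}$, and invoke Lemma \ref{Lemma:AutoTransverse} for the normal block. The one place you are imprecise is the claim that $\Dlinearized_{u}$ is a \emph{direct sum} $\Dlinearized^{\tangent}_{u}\oplus\Dlinearized^{\normal}_{u}$. In the paper this is instead a block \emph{upper-triangular} operator
\begin{equation*}
\Dlinearized_{u}=\begin{pmatrix}\Dlinearized^{\tangent}&\Dlinearized^{ur}\\0&\Dlinearized^{\normal}\end{pmatrix},
\end{equation*}
where the lower-left block vanishes because $\leafTangent$ is integrable, but the upper-right $\Dlinearized^{ur}\colon\Omega^{0}(\leafTangentNormal)\rightarrow\Omega^{0,1}(\leafTangent)$ need not vanish (you cannot simply ``absorb'' it into $\Dlinearized^{\tangent}_{u}$ since the domains differ). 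This does not affect your conclusion: for an upper-triangular Fredholm operator the index still adds, and once $\Dlinearized^{\normal}$ is known to be surjective (which is automatic by Lemma \ref{Lemma:AutoTransverse} on $\C$), surjectivity of the whole is equivalent to surjectivity of $\Dlinearized^{\tangent}$ regardless of $\Dlinearized^{ur}$. So your final paragraph should be replaced by this cleaner argument rather than trying to argue the off-diagonal term away.
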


\begin{proof}
Consider the splitting of $T(\R_{s} \times \Nhypersurface)$ described in Equation \eqref{Eq:GlobalTangentSplitting}. By the integrability of $\leafTangent$, the linearized operator $\Dlinearized$ for $u$ may be written as a block matrix,
\begin{equation*}
\Dlinearized = \begin{pmatrix}
\Dlinearized^{\tangent} & \Dlinearized^{ur}\\
0 & \Dlinearized^{\normal}
\end{pmatrix}: \Omega^{0}(\leafTangent)\oplus \Omega^{0}(\leafTangentNormal) \rightarrow \Omega^{0, 1}(\leafTangent)\oplus \Omega^{0, 1}(\leafTangentNormal).
\end{equation*}
Clearly $\Dlinearized^{\tangent}$ is the linearized operator associated to $\check{u}$. 

\nom{$\Dlinearized^{\tangent}, \Dlinearized^{\normal}, \Dlinearized^{ur}$}{Block matrix components of $\Dlinearized$}

Over a cylindrical end of $\Sigma$, $\Dlinearized^{\normal}$ agrees with $\Dlinearized^{\normal}_{s}$ as described in Equation \eqref{Eq:NormalDnearMcheck}. Therefore Lemma \ref{Lemma:AutoTransverse} is applicable to analysis of the operator $\Dlinearized^{\normal}$ and $\ind(\Dlinearized^{\normal}) = \chi(\C) = 1$. 

Because our matrix for $\Dlinearized$ is upper-triangular, $\ind(\Dlinearized) = \ind(\Dlinearized^{\normal}) + \ind(\Dlinearized^{\tangent})$ and $\Dlinearized$ is surjective if and only if both of $\Dlinearized^{\normal}$ and $\Dlinearized^{\tangent}$ are surjective. Because the domain of our map is $\C$, Lemma \ref{Lemma:AutoTransverse} tells us that $\Dlinearized^{\normal}$ is automatically surjective.
\end{proof}

\section{Transverse subbundles, thickened moduli spaces, and gluing in general}\label{Sec:ThickModSpacesGeneral}

In this section we provide a general overview of the Kuranishi data and obstruction bundle gluing needed to complete the proof of Theorem \ref{Thm:MainComputation}. Our goal for this section is to state Theorem \ref{Thm:Gluing} whose proof is detailed in \S \ref{Sec:GluingDetails}.

We incorporate two minor technical changes to the setup of \cite{BH:ContactDefinition} to optimize for simplicity of calculations related to obstruction bundle gluings:
\be
\item The sections spanning our transverse subbundles will be modified in \S \ref{Sec:TransSubbundleGeneralDef}. See Remark \ref{Rmk:BHdiff}.
\item The gluing map is slightly modified in \S \ref{Sec:GluingSummary} and retains the key properties of the gluing in \cite{BH:ContactDefinition} required to establish well-definition of contact homology.
\ee

Throughout this section we work within the general framework of \S \ref{Sec:Prelim}. That is, $\Mxi$ is a general contact manifold with contact form $\alpha$ and an $\alpha$-adapted almost complex structure $J$ on $\R_{s} \times M$.

\subsection{Contact forms, almost complex structures, and constants $(\rho, \delta)$}

Assume that $\alpha$ is $L$-nondegenerate and that each $\orbit$ of action $\leq L$ has a designated simple enough neighborhood. Assuming that $\alpha$ is non-degenerate, simplicity is achieved by choosing $\COne$-small perturbations of $\alpha$ in neighborhoods as described in Lemma \ref{Lemma:ApproxSimpleNbhd}.

In the context of the $\alphaEpsilon$ on $\R_{s} \times \Nhypersurface$, $L$-simplicity for $R_{\epsilon}$ and $\JEpsilon$ are dependent on the forms of the Reeb vector field $\ReebDivSet$ associated to $\alphaDivSet \in \Omega^{1}(\divSet)$ and the almost complex structure $\JDivSet \in \Aut(T(\R_{s} \times \divSet))$ and $\JDivSet_{\pm} \in \Aut(T \posNegRegionComplete)$. With $\alphaDivSet, \JDivSet, \JDivSet_{\pm}$ fixed, $L$-simplicity is independent of $\epsilon > 0$.

\subsection{Transverse subbundles}\label{Sec:TransSubbundleGeneralDef}

Begin by choosing a compact subset $\CompactSubset/\R_{s}$ of the reduced moduli space 
\begin{equation*}
\CompactSubset/\R_{s} \subset \ModSpace/\R_{s} = \ModSpace(\orbit, \orderedOrbitSet)/\R_{s}.
\end{equation*}

\begin{assump}
We require $\CompactSubset/\R_{s}$ to be all of $\ModSpace/\R_{s}$ if the latter space compact.
\end{assump}

Write $\CompactSubset$ for the associated space of holomorphic maps fitting into a fibration
\begin{equation*}
\R_{s} \rightarrow \CompactSubset \rightarrow \CompactSubset / \R_{s}.
\end{equation*}
With this subset $\CompactSubset$ as our input, choose a $\R_{s}$-invariant neighborhood 
\begin{equation*}
\NbhdCompactSubset = \NbhdCompactSubset(\CompactSubset) \subset \Map_{\delta} = \Map_{\delta}(\orbit, \orderedOrbitSet)
\end{equation*}
inside of the manifold of $(\rho, \delta)$-stabilizeable maps.

\nom{$\CompactSubset/\R$}{Compact subset of a moduli space of parameterized holomorphic maps}
\nom{$\NbhdCompactSubset$}{Neighborhood of a $\CompactSubset$ in $\Map_{\rho, \delta}$}

\begin{defn}\label{Def:TransSubbundle}
A transverse subbundle $\transSubbundle$ is a vector bundle $\transSubbundle \rightarrow \NbhdCompactSubset$ satisfying the following:
\be
\item For each $u\in \NbhdCompactSubset$, $\transSubbundle|_{u} \subset \Omega^{0, 1}(u^{\ast}\xi_{J})$ is finite dimensional, spanned by $\Cinfty$ sections having compact support contained in the $(\rho, \delta)$-half-cylindrical ends of stable maps.\footnote{We could require that $\transSubbundle|_{u} \subset \Omega^{0, 1}(u^{\ast}(\R_{s} \times TM))$ though the transverse subbundles of \cite{BH:ContactDefinition} are contained in $\Omega^{0, 1}(u^{\ast}\xi_{J})$ by construction. We will similarly consider $\Omega^{0, 1}(u^{\ast}\xi_{J})$-valued perturbations in this paper.}
\item For each $u\in \NbhdCompactSubset$,
\begin{equation*}
\transSubbundle|_{u} + \im \Dlinearized_{u} = \Omega^{0, 1}(u^{\ast}(\R_{s} \times TM)).
\end{equation*}
\item Each automorphism $\phi$ of the domain $\Sigma$ and $\R_{s}$ translation $u \mapsto u + s_{0}$ lifts to an isomorphism of fibers $\transSubbundle|_{u} \rightarrow \transSubbundle|_{u\circ \phi + s_{0}}$.
\ee
\end{defn}

\nom{$\transSubbundle_{\eigenBound}$}{Transverse subbundle of $\Omega^{0, 1}(u^{\ast}T\R \times M) \rightarrow \NbhdCompactSubset$ for general $M$}

On a $(\rho, \delta)$-cylindrical end $\halfcyl_{\pm, i}$ about the $i$th puncture of a Riemann surface $\Sigma$ we will be using the cutoff function $\BumpUp(p)$ on positive half-cylinders $\halfcyl_{+, i}$ and $\BumpDown(p)$ along negative half-cylinders $\halfcyl_{-, i}$ defined in \S \ref{Sec:BumpFunctions} to construct transverse subbundles $\transSubbundle_{\eigenBound}$.

Let $u : \Sigma \rightarrow \R_{s}\times M$ be a map which is $\pm$ asymptotic to a closed Reeb orbit $\orbit$ of action $a$ along one of its cylindrical ends. Let $\halfcyl_{a, \pm} \subset \Sigma$ be a simple cylindrical end of $\Sigma$ associated to $\orbit$. We recall the notation of \S \ref{Sec:Eigendecomp} which associates to $\orbit$ an asymptotic operator $\AsymptoticOp_{\orbit}$ and an eigenfunction decomposition $(\zeta_{k}, \lambda_{k}), k \in \Z_{\neq 0}$ for $\AsymptoticOp_{\orbit}$ with $\sgn(\lambda_{k}) = \sgn (k)$. Here it will be important to pay attention to signs of these $k$ and the supports of cutoff functions described in \S \ref{Sec:BumpFunctions}.

\begin{figure}[h]
\begin{overpic}[scale=.5]{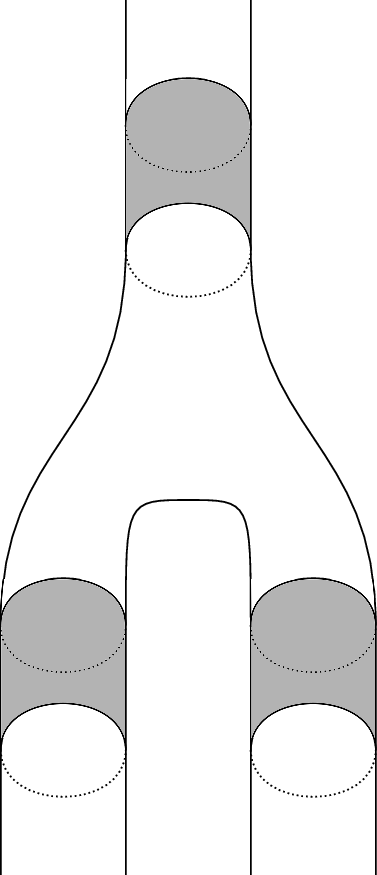}
\put(19, 83){$\mu_{k}$}
\put(4, 26){$\mu_{1, k}$}
\put(32, 26){$\mu_{2, k}$}
\end{overpic}
\caption{Supports of the sections $\mu_{k}$ and $\mu_{i, k}$ are shaded gray.}
\label{Fig:PerturbationSupport}
\end{figure}

Define sections of $\Omega^{1}(u^{\ast}\xi_{J})$ supported on the cylindrical ends $\halfcyl_{+, a}$ about the distinguished puncture $\infty$ which are positively asymptotic to $\orbit$,
\begin{equation}\label{Eq:PerturbationPositiveGeneral}
\mu_{k}^{\halfcyl_{+}}(p, q) = \frac{\partial \Bump{0}{1}}{\partial p} e^{\lambda_{k}p} \Big(\zeta_{k}(q) \otimes dp \Big)^{0, 1}, \quad k > 0
\end{equation}
where $\zeta_{k}$ is the $k$th eigenfunction of $\AsymptoticOp_{\orbit}$. Likewise on negative cylindrical ends $\halfcyl_{-, a}$ which are negatively asymptotic to $\orbit_{i} \in \orderedOrbitSet$ define
\begin{equation}\label{Eq:PerturbationNegativeGeneral}
\mu_{i, k}^{\halfcyl_{-}}(p, q) = \frac{\partial \Bump{-1}{0}}{\partial p} e^{\lambda_{k}p} \Big(\zeta_{i, k}(q) \otimes dp\Big)^{0, 1}, \quad k < 0
\end{equation}
where $\zeta_{i, k}$ is the $k$th eigenfunction of the asymptotic operator $\AsymptoticOp_{\orbit_{i}}$ associated to the orbit $\orbit_{i} \in \orderedOrbitSet$ to which the $i$th negative puncture of $\Sigma$ is asymptotic. Here $i =1 , \dots, \NnegativePunctures$ are indices of the negative ends of our punctured Riemann surface.

\nom{$\mu^{\halfcyl}_{k}$}{Perturbation supported on ends of Riemann surfaces associated to the asymptotic eigenfunction $\zeta_{k}$}

\begin{rmk}
We can formally express $\mu_{k}^{\halfcyl_{+}}(p, q) = \delbar \eta_{k}$ where $\eta_{k} = \Bump{0}{1}e^{\lambda_{k}p}\zeta_{k}(q)$, making these perturbations slightly easier to work with than those of \cite[Equation (5.1.3)]{BH:ContactDefinition}. This is the motivation for our definition. Note however that $\mu_{k}$ is not necessarily in the image of $\delbar: \Sobolev^{1, 2}(u^{\ast}T(\R \times M)) \rightarrow \Ltwo(\Omega^{0,1}(u^{\ast}T(\R \times M)))$. In particular, $\eta_{k} \notin \Sobolev^{1, 2}$ as it is unbounded (point-wise) over a positive half-cylinder. This is evident from the $e^{\lambda_{k}p}$ factor of $\eta_{k}$, for $\lambda_{k} > 0$.
\end{rmk}

For a real constant $\eigenBound > 0$, we write
\begin{equation*}
\transSubbundle_{\eigenBound} = \left( \bigoplus_{-\lambda_{k_{+}} \leq \eigenBound} \R \mu^{\halfcyl_{+}}_{k} \right) \oplus \left( \bigoplus_{\lambda_{i, k} \leq \eigenBound,\ \orbit_{i} \in \orderedOrbitSet} \R\mu^{\halfcyl_{-}}_{i, k} \right).
\end{equation*}
Each $\transSubbundle_{\eigenBound}$ has finite rank. It follows from \cite[Theorem 5.1.2]{BH:ContactDefinition} that for any compact $\CompactSubset \subset \ModSpace$ there exists $\eigenBound > 0$ and an $\NbhdCompactSubset$ containing $\CompactSubset$ over which $\transSubbundle_{\eigenBound}$ is a transverse subbundle.

\begin{rmk}\label{Rmk:BHdiff}
In \cite{BH:ContactDefinition} the ranks of transverse subbundles are controlled by numbers of linearly independent eigenfunctions whereas we are using the magnitudes of eigenvalues. Note also that the sections spanning $\transSubbundle_{\eigenBound}$ differ from those used in \cite{BH:ContactDefinition} by a factors of $e^{\lambda_{k}p}$. It is easy to check that this additional factor does not affect the proof of \cite[Theorem 5.1.2]{BH:ContactDefinition}. These minor changes will make it easier to formulate  Lemma \ref{Lemma:HalfCylPerturbedFourier} and the gluing results in \S \ref{Sec:GluingDetails}.
\end{rmk}

\subsection{Thickened moduli spaces}\label{Sec:ThickeningOverview}

Metrize $\Omega^{0, 1}(u^{\ast}T(\R \times M))$ over $\NbhdCompactSubset$ and define
\begin{equation*}
\ModSpaceThick = \ModSpaceThick(\orbit, \orderedOrbitSet, \NbhdCompactSubset, \eigenBound, r) = \left\{ u \in \NbhdCompactSubset \ :\ \delbarJ u \in \transSubbundle_{u}, \ \norm{\delbar_{J, j}u} \leq r \right\}.
\end{equation*}
This space $\ModSpaceThick$ is our \emph{thickened moduli space}, which (by working with parameterized maps and assuming $\eigenBound$ is sufficiently large to acheive transversality of $\transSubbundle_{\eigenBound}$) forms a smooth manifold containing the set $\CompactSubset \subset \ModSpace(\orbit, \orderedOrbitSet)$ realizable as
\begin{equation*}
\CompactSubset = \ModSpaceThick(r=0) \subset \ModSpaceThick.
\end{equation*}

\nom{$\ModSpaceThick$}{Thickened moduli space determined by parameters $\orbit, \orderedOrbitSet, \NbhdCompactSubset, \eigenBound, r$}

The closure of $\ModSpaceThick/\R_{s}$ has two boundary stratum:
\be
\item The \emph{vertical boundary} consists of the boundaries of fibers,
\begin{equation*}
\partial^{v}\ModSpaceThick/\R_{s} = \left\{ u \in \ModSpaceThick/\R_{s}\ :\ \norm{\delbarJ u} = r\right\}.
\end{equation*}
\item The \emph{horizontal boundary}, $\partial^{h}\ModSpaceThick/\R_{s}$ is the closure of $\partial \ModSpaceThick/\R_{s}$ in the complement of $\partial^{v}\ModSpaceThick/\R_{s}$.
\ee

We may view $\transSubbundle_{\eigenBound}$ as a vector bundle over $\ModSpaceThick$ or $\ModSpaceThick/\R_{s}$. The required properties of transverse subbundles entail that biholomorphic reparameterizations $\phi$ of the domains of maps $u \in \ModSpaceThick$ extend to isomorphisms $\transSubbundle_{\eigenBound}|_{u} \rightarrow \transSubbundle_{\eigenBound}|_{u \circ \phi}$ which are automorphisms when $u\circ \phi = u$. Hence $\transSubbundle_{\eigenBound}$ descends to an orbibundle over $\ModSpaceThick$ modded out by reparameterization.

\subsection{Trees as gluing configurations}\label{Sec:Trees}

Following \cite{BH:ContactDefinition, Pardon:Contact}, we now review trees and spaces associated to trees which will be the domains and targets of our gluing map. We follow \cite[\S 8.1]{BH:ContactDefinition} (but ignoring their sorting of orbits).

\subsubsection{Trees}

In this paper a \emph{tree} will be a directed graph
\begin{equation*}
\tree = (\{\edge_{i}\}, \{ \vertex_{j}\})
\end{equation*}
consisting of \emph{edges} $\edge_{i}$ and \emph{vertices} $\vertex_{j}$ satisfying the following properties:
\be
\item The one-dimensional CW complex determined by the graph is connected and simply connected.
\item Each vertex $\vertex_{i}$ there is exactly one incoming edge $\edge_{i^{in}}$.
\item A vertex $\vertex_{i}$ can have any non-negative number $\NnegativePunctures_{i} \in \Z_{\geq 0}$ of outgoing edges.
\item Edges exiting a vertex $\vertex_{i}$ will be ordered and denoted $\edge_{{i}^{out}_{j}}$ with $j=1, \dots, \NnegativePunctures_{i}$.
\ee

If both ends of an edge $\edge_{i}$ touch vertices, we say that $\edge_{i}$ is a \emph{gluing edge}. Otherwise $\edge_{i}$ is a \emph{free edge}. We write $\{\edge_{i} \} = \{ \edge_{i}^{F} \} \sqcup \{ \edge_{i}^{G}\}$ for the union of free- and gluing edges.

The \emph{root vertex} of $\tree$ is the unique vertex with a free edge ending at the vertex. An \emph{orbit assignment} for $\tree$ is an assignment of a closed $\Reeb$ orbit $\orbitAssignment^{\tree}(\edge_{i})$ to each $\edge_{i}$ of $\tree$. Given a tree with an orbit assignment $(\tree, \orbitAssignment^{\tree})$, we can associate to each $\vertex_{i}$ of $\tree$ a moduli space \begin{equation*}
\ModSpace(\vertex_{i}) = \ModSpace(\orbit, \orderedOrbitSet), \quad \orbit = \orbitAssignment^{\tree}\left(\edge_{{i}^{in}}\right), \quad \orderedOrbitSet =  \left(\orbitAssignment^{\tree}\left(\edge_{i^{out}_{1}}\right), \cdots, \orbitAssignment^{\tree}\left(\edge_{i^{out}_{\NnegativePunctures_{i}}}\right)\right).
\end{equation*}

\nom{$(\tree, \orbitAssignment^{\tree})$}{Tree with orbits assigned to its edges}

\subsubsection{Contraction of trees}\label{Sec:TreeContraction}

Given a tree $\tree$, a \emph{good subtree} $\subtree$ is a tree $\subtree \subset \tree$ which has no free edges and at least one gluing edge. A \emph{good subforest} $\subtree = \sqcup \subtree_{i}$ is a disjoint union of good subtrees in $\tree$.

Given a good forest $\subtree \subset \tree$, the \emph{contraction} $\tree \git \subtree$ is defined by collapsing each $\subtree_{i} \subset \subtree$ to a single vertex. Provided an orbit assignment $\orbitAssignment^{\tree}$, the contraction yields an orbit assignment $\orbitAssignment^{\tree\git\subtree}$ for $\tree\git\subtree$ by the condition that orbits assigned to edges which are not included in $\subtree$ remain unchanged.

Given a subforest $\subtree = \sqcup \subtree_{i} \subset \tree$, we obtain a good subforest be removing all of the free edges of $\subtree$ and then removing any connected components without gluing edges. To simplify notation, we simply write $\tree \git \subtree$ for the contraction of $\tree$ along the good subforest associated to $\tree$. In particular, $\tree \git \tree$ will be a tree with a single vertex, no gluing edges, and free edges in one-to-one correspondence with those of $\tree$ and we abbreviate
\begin{equation*}
\tree \git = \tree \git \tree.
\end{equation*}

\nom{$\tree \git \subtree$}{Contraction of a tree $\subtree$ along a good subforest}
\nom{$\tree \git = \tree \git \tree$}{Single vertex tree obtained by contracting $\tree$ with itself}

\subsubsection{Domains of gluing maps}\label{Sec:GluingDomain}

Choosing constants $r$ and $\eigenBound$, we can assign a compact subset $\CompactSubset(\vertex_{i})/\R_{s} \subset \ModSpace(\vertex_{i})/\R_{s}$ to each vertex $\vertex_{i}$ of a tree $\tree$. Then choose neighborhoods $\NbhdCompactSubset(\vertex_{i}) \subset \CompactSubset(\vertex_{i})$ in the manifold of maps, from which a thickened moduli space $\ModSpaceThick(\vertex_{i})$ is determined for each vertex. The result of all these choices together with a constant $C$ is the space
\begin{equation*}
\begin{aligned}
\dom_{\glue, C}(\tree) &= \dom_{\glue}(\orbitAssignment^{\tree}, \{ \NbhdCompactSubset_{i} \}, r, \eigenBound, C)\\
&= \left(\prod_{\vertex_{j} \in \{\vertex_{i}\}} \ModSpaceThick(\vertex_{i})/\R_{s} \right) \times [C, \infty)^{\#\{\edgeGluing_{i}\}}.
\end{aligned}
\end{equation*}
We identify the $[0, \infty)$ parameters with the lengths of the gluing edges of $\tree$. Elements of $\dom_{\glue}(\tree)$ will be called \emph{gluing configurations}.

\nom{$\dom_{\glue, C}(\tree)$}{Domain of the gluing map}

To each vertex $\vertex_{i}$ of $\tree$ we have a $\transSubbundle_{\eigenBound}(\vertex_{i}) \rightarrow \ModSpaceThick(\vertex_{i})$. Pulling back the $\transSubbundle_{\eigenBound}(\vertex_{i})$ along the projections of $\dom_{\glue, C}(\tree)$ onto its $\transSubbundle_{\eigenBound}(\vertex_{i})$, and taking direct sums, we obtain the vector bundle
\begin{equation*}
\transSubbundle_{\eigenBound, \glue}(\tree) = \oplus_{\vertex_{i}} \transSubbundle(\vertex_{i}) \rightarrow \dom_{\glue, C}(\tree).
\end{equation*}
For each tree we have a subbundle
\begin{equation*}
\transSubbundle_{\eigenBound, \glue}(\tree\git) \subset \transSubbundle_{\eigenBound, \glue}(\tree)
\end{equation*}
spanned by perturbations associated to the free edges of $\tree$. For each subtree $\subtree \subset \tree$ we can view
\begin{equation}\label{Eq:SubtreeSubbundle}
\transSubbundle_{\eigenBound, \glue}(\subtree) \subset \transSubbundle_{\eigenBound, \glue}(\tree)
\end{equation}
as the subbundle whose fiber consists of the $\oplus_{\vertex_{i} \in \subtree} \transSubbundle(\vertex_{i})$ summand.

\subsection{Targets of gluing maps}\label{Sec:GluingTarget}

The targets of our gluing maps will be thickened moduli spaces associated to bundles $\transSubbundle_{\eigenBound}(\tree)$ which contain the $\transSubbundle_{\eigenBound}$. To define these new spaces, we'll need to set up perturbations over annuli, as we had set up perturbations over half-infinite cylinders in \S \ref{Sec:TransSubbundleGeneralDef}.

We start with a local model. Let $\orbit$ be a closed $\Reeb$ orbit with action $a$ having a simple enough neighborhood $\Norbit$. Let $C > 2$ be a positive constant and consider simple cylinders
\begin{equation*}
u^{\annulus} = (s, t, z): \annulus_{C, a} \rightarrow \R_{s} \times \Norbit.
\end{equation*}
For $\eigenBound > 0$, define elements of $\Omega^{0, 1}(u^{\ast}\xi_{J})$
\begin{equation}\label{Eq:PerturbationsOverNeckGeneral}
\mu_{k}^{\annulus} = \begin{cases}
\left(\partial \Bump{-C_{j}/2}{1-C_{j}/2}/\partial p\right) e^{\lambda_{k}(p + C_{j}/2)} \Big(\zeta_{k}(q) \otimes dp\Big)^{0, 1} & -\eigenBound < \lambda_{k} < 0, \\
\left(\partial \Bump{-1+C_{j}/2}{C_{j}/2}/\partial p\right) e^{\lambda_{k}(p - C_{j}/2)} \Big(\zeta_{k}(q) \otimes dp\Big)^{0, 1} & 0 < \lambda_{k} < \eigenBound
\end{cases}
\end{equation}
where the $(\lambda_{k}, \zeta_{k})$ are eigenvalue-eigenfunction pairs for the orbit $\orbit$. We use $e^{\lambda_{k}(p \pm C_{j}/2)}$ in the above formula (rather than $e^{\lambda_{k}p}$) so that the norms of the $\mu^{\annulus}_{k}$ are independent of $C$.

\nom{$\mu_{k}^{\annulus}$}{Perturbations supported on simple annuli}

Now let $(\tree, \orbitAssignment^{\tree})$ be an tree with an orbit assignment. Let $\orbit$ be the orbit assigned to the incoming free edge at the root vertex and $\orderedOrbitSet$ be the ordered collection of orbits assigned to the outgoing free edges. Let
\begin{equation*}
\Map_{\delta}(\tree) \subset \Map_{\delta}(\orbit, \orderedOrbitSet)
\end{equation*}
be the subspace whose maps $u$ into $\R_{s} \times M$ satisfy the following conditions:
\be
\item For each gluing edge $\edgeGluing_{i}$ of $\tree$ there is an annulus $\annulus_{C_{i}, a}$ holomorphically embedded into the domain $\Sigma$ such that the restriction of $u$ to $\annulus_{\neckLength_{i}, a}$ is simple map to $\R_{s}\times \Norbit$ where $\Norbit$ is a simple enough neighborhood of $\orbit = \orbitAssignment(\edgeGluing_{i})$ and $a = \action(\orbitAssignment(\edgeGluing_{i}))$. We require that after deleting additional removable punctures near $u^{-1}(\R_{s} \times \Norbit)$ as in \cite[\S 6.3]{BH:ContactDefinition} that the points $(\pm \neckLength_{i}/2, 0) \in \annulus_{\neckLength_{i}, a}$ lie on the boundary of a $\delta$-thin annulus which is mapped to $\R_{s}\times \Norbit$ via $u$.
\item The annuli in $\Sigma$ associated to distinct gluing edges are disjoint.
\item If we collapse each such annulus in $\Sigma$ to an interval via the projection $\annulus_{\neckLength_{i}, a} = [-\neckLength_{i}/2, \neckLength_{i}/2] \times \R/a\Z \rightarrow [-\neckLength_{i}/2, \neckLength_{i}/2]$, collapse each simple half cylinder in $\Sigma$ to a half line $(-\infty, 0]$ or $[0, \infty)$, collapse each component of the complement of these annuli and half-cylinders to point, and track all edge labelings and orbit assignments we get exactly $(\tree, \orbitAssignment^{\tree})$.
\ee

\begin{defn}\label{Def:NeckLength}
We say that $\neckLength_{i} = \neckLength_{i}(\Sigma, u)$ is the \emph{neck-length} of the gluing edge $\edgeGluing_{i}$. For a positive constant $C$, define
\begin{equation*}
\Map_{\delta, C}(\tree) \subset \Map_{\delta}(\tree)
\end{equation*}
to be the subspace whose neck-lengths are all at least $C$.
\end{defn}

\nom{$\neckLength_{i}$}{Neck length of annuli associated to a gluing edge $\edgeGluing_{i}$}

For each $\edgeGluing_{i}$ and element $u$ of this space, define an element $\mu^{\annulus}_{i, k}$ of $\Omega^{0, 1}(u^{\ast}\xi_{J})$ by taking the $\mu^{\annulus}_{k}$ of Equation \eqref{Eq:PerturbationsOverNeckGeneral} supported on the annulus in the domain $\Sigma$ of $u$ associated to $\edgeGluing_{i}$. These fit together to form sections of the bundle over $\Map_{\delta}(\tree)$ whose fiber at a map $u$ is $\Omega^{0, 1}(u^{\ast}\xi_{J})$. We also have perturbation sections $\mu_{k}$ and $\mu_{i, k}$ supported on simple half-cylinders in $\Sigma$, spanning a subbundle $\transSubbundle_{\eigenBound}$. We then define
\begin{equation}\label{Eq:TransSubbundleTree}
\transSubbundle_{\eigenBound}(\tree) = \transSubbundle_{\eigenBound} \oplus \transSubbundle_{\eigenBound}^{\annulus}(\tree), \quad \transSubbundle_{\eigenBound}^{\annulus} = \left( \bigoplus_{\edgeGluing_{i}, |\lambda_{i, k}| < \eigenBound} \R \mu_{i, k}^{\annulus} \right)
\end{equation}
where the $\lambda_{i, k}$ is the $k$th eigenvalue associated to the asymptotic operator for an orbit $\orbitAssignment(\edgeGluing_{i})$ assigned to a gluing edge of $\tree$. Note that
\begin{equation*}
\transSubbundle_{\eigenBound} = \transSubbundle_{\eigenBound}(\tree \git)
\end{equation*}
by the fact that $\tree\git$ has no gluing vertices.

\nom{$\transSubbundle_{\eigenBound}^{\annulus}$}{Space of perturbations supported on annuli}
\nom{$\transSubbundle_{\eigenBound}(\tree)$}{Transverse subbundle associated to a tree with orbit assignment and constant $\eigenBound$}

\begin{observ}
The spaces $\transSubbundle_{\eigenBound}$ and $\transSubbundle_{\eigenBound}(\tree)$ depend only on $\eigenBound$, not the choices of eigenfunctions $\zeta_{k}$ spanning the $\lambda_{k}$ eigenspaces of asymptotic operators.
\end{observ}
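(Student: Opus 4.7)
The plan is to reduce the observation to a short linear-algebra verification, the key point being that the assignment from an eigenfunction to its associated perturbation is $\R$-linear.

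First I would rewrite the perturbation construction of Equations \eqref{Eq:PerturbationPositiveGeneral} and \eqref{Eq:PerturbationNegativeGeneral} as a single map
\begin{equation*}
\zeta \longmapsto \mu(\zeta) := \frac{\partial \Bump{a}{b}}{\partial p}\, e^{\lambda p}\bigl(\zeta(q)\otimes dp\bigr)^{0,1},
\end{equation*}
sending an eigenfunction $\zeta$ of $\AsymptoticOp_{\orbit}$ with eigenvalue $\lambda$ to a section of $\Omega^{0,1}(u^{\ast}\xi_{J})$ supported on the appropriate $(\rho,\delta)$-half-cylinder, where $(a,b) = (0,1)$ or $(-1,0)$ depending on the sign of the puncture. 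The bump-function and exponential factors depend only on the eigenvalue $\lambda$ and on the cylinder, not on the chosen eigenvector; hence $\mu$ is manifestly $\R$-linear in $\zeta$.

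Next I would group the pairs $(\zeta_{k},\lambda_{k})$ according to equal eigenvalues. For a single eigenvalue $\lambda$ of multiplicity $m$, any two bases of the $\lambda$-eigenspace differ by an element of $\GL(m,\R)$, so by the linearity established above the corresponding perturbations $\mu_k$ and $\tilde\mu_k$ span the same subspace of $\Omega^{0,1}$. Since the selection rule $|\lambda_{k}| \leq \eigenBound$ in the definition of $\transSubbundle_{\eigenBound}$ picks out exactly a union of complete eigenspaces of $\AsymptoticOp_{\orbit}$, the fiber $\transSubbundle_{\eigenBound}|_{u}$ decomposes as a direct sum of such basis-independent subspaces and is therefore itself independent of the choice of $\{\zeta_{k}\}$. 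The identical argument applies verbatim to the annulus sections $\mu^{\annulus}_{i,k}$ of Equation \eqref{Eq:PerturbationsOverNeckGeneral}, whose construction is linear in $\zeta_{k}$ with $p$-factors depending only on $\lambda_{k}$ and the neck-length $\neckLength_{i}$; combined with the splitting of Equation \eqref{Eq:TransSubbundleTree} this gives the claim for $\transSubbundle_{\eigenBound}(\tree)$.

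The one subtlety I would address is the borderline case $\pm\eigenBound \in \spec(\AsymptoticOp_{\orbit})$ for some asymptotic operator at play, in which the span could \emph{a priori} depend on how one completes a partial basis of the borderline eigenspace. I would handle this by requiring $\eigenBound$ to avoid the (discrete) union of spectra of the finitely many asymptotic operators attached to orbits assigned to the vertices of $\tree$, a mild genericity assumption already implicit in the transversality step of \cite[Theorem 5.1.2]{BH:ContactDefinition} used to guarantee that $\transSubbundle_{\eigenBound}|_u + \im \Dlinearized_u = \Omega^{0,1}(u^\ast T(\R\times M))$. No part of this argument is substantive; the observation is really just a reminder that the construction depends on eigen\emph{spaces} rather than on chosen eigenvectors.
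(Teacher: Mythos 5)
Your core argument is correct and is exactly the intended content of the observation: the map $\zeta \mapsto \mu(\zeta)$ from Equations \eqref{Eq:PerturbationPositiveGeneral}, \eqref{Eq:PerturbationNegativeGeneral}, and \eqref{Eq:PerturbationsOverNeckGeneral} is $\R$-linear in the eigenfunction input (the bump-function and exponential factors depend only on the eigenvalue and the end), and the selection rule in the definitions of $\transSubbundle_{\eigenBound}$ and $\transSubbundle_{\eigenBound}^{\annulus}$ is stated purely in terms of eigenvalue magnitude, so it always picks out a union of full eigenspaces. The paper does not spell this out beyond the remark that it is a consequence of using eigenvalue bounds rather than eigenfunction counts (cf. Remark \ref{Rmk:BHdiff}), and your reduction to the $\GL(m,\R)$ change-of-basis argument is the right way to make this precise.

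Your final paragraph, however, is not addressing a real gap. Even when $\pm\eigenBound$ lies in $\spec(\AsymptoticOp_{\orbit})$, the selection criterion is a comparison of $\lambda_{k}$ against $\eigenBound$ and therefore accepts or rejects the entire $\lambda$-eigenspace at once; there is no mechanism by which the construction could ever depend on a "partial basis" of a single eigenspace. (This is precisely the point of using eigenvalue magnitude rather than eigenfunction count: the number-of-eigenfunctions scheme is the one that would have a borderline ambiguity.) So no genericity hypothesis on $\eigenBound$ is needed for the observation, and the paper does not assume one for it; the genericity in \cite[Theorem 5.1.2]{BH:ContactDefinition} is for the separate issue of making $\transSubbundle_{\eigenBound}$ transverse, not for making it well-defined. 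You should simply drop that caveat.
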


Associated to a tree with an orbit assignment $(\tree, \orbitAssignment^{\tree})$ and constants $\delta, \eigenBound > 0, C > 2, r^{\tree} > 0$, define
\begin{equation*}
\ModSpaceThick(\tree) = \ModSpaceThick(\orbitAssignment^{\tree}, \delta, \eigenBound, C, r^{\tree}) = \left\{ u \in \Map_{\delta}(\orbit, \orderedOrbitSet)\ : \ \delbarJ u \in \transSubbundle_{\eigenBound}(\tree), \quad \norm{\delbarJ u} \leq r^{\tree} \right\}.
\end{equation*}
Following along with \cite[\S 6.5]{BH:ContactDefinition}, this is the thickened moduli space of close-to-breaking curves.

\nom{$\ModSpaceThick(\tree)$}{Thickened moduli space associated to a tree $\tree$}

Along with neck-lengths, the following functions will be useful for stating gluing theorems and constructing multisections.

\begin{defn}\label{Def:Svars}
For each edge $\edge_{i}$ of $\tree$ other than the unique edge terminal to the root vertex, define
\begin{equation*}
s_{i}(\tree): \Map_{\delta}(\tree)/\R_{s} \rightarrow \R_{< 0}
\end{equation*}
as follows. Suppose that $u = (s, \pi_{M}u) \in \Map_{\delta}(\tree)$ is a preferred translate (Properties \ref{Properties:PreferedTranslate}) with domain $\Sigma$. If $\edge_{i}$ is a gluing edge, define $s_{i}(\tree)$ to be the restriction of $s$ to the upper boundary component $\{ \neckLength/2\} \times \aCircle$ of the annulus $\annulus_{\neckLength_{i}, a}$ in $\Sigma$ associated to $\edge_{i}$. Otherwise, $\edge_{i}$ is an outgoing free edge, and we define $s_{i}(\tree)$ to be the restriction of $s$ to the boundary $\{ 0 \} \times \aCircle$ of the simple half-cylindrical end of $\Sigma$.

More generally, given a subtree $\subtree \subset \tree$ and an edge $\edge_{i} \in \subtree$, define
\begin{equation*}
s_{i}(\subtree): \Map_{\delta}(\tree)/\R_{s} \rightarrow \R_{< 0}
\end{equation*}
as follows. If $\subtree$ contains the root vertex, then set $s_{i}(\subtree) = s_{i}(\tree)$. Otherwise the incoming edge $\edge_{j}$ of $\subtree$ is a gluing edge and we define
\begin{equation*}
s_{i}(\subtree) = s_{i}(\tree) - s_{j}(\tree) + \neckLength_{j}.
\end{equation*}
\end{defn}

\nom{$s_{i}(\tree), s_{i}(\subtree)$}{Functions on $\Map_{\delta}(\tree)$ measuring differences in the $s$ variable along boundaries of annuli and half-cylinders}

\begin{figure}[h]
\begin{overpic}[scale=.2]{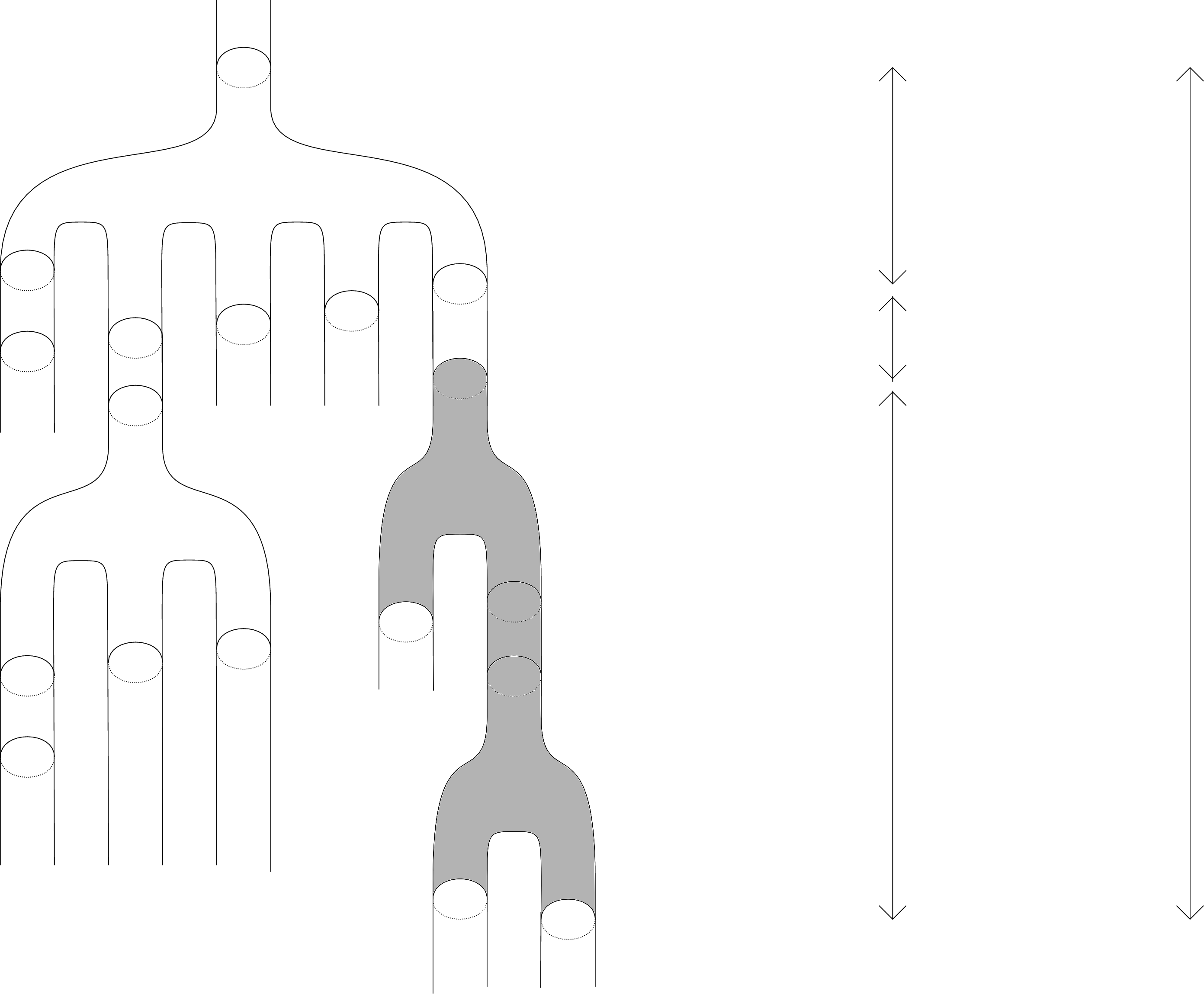}
\put(43, 53){$\edgeGluing_{i}$}
\put(51, 5){$\edgeFree_{j}$}
\put(77, 63){$-s_{i}(\tree)$}
\put(77, 53){$\neckLength_{i}$}
\put(77, 43){$-s_{j}(\subtree)$}
\put(101, 30){$-s_{j}(\tree)$}
\end{overpic}
\caption{Neck length and $s_{i}$ functions associated to edges and a subtree $\subtree$ of a tree $\tree$. The subsurface associated to $\subtree$ is shaded.}
\label{Fig:NeckAndSiFunctions}
\end{figure}

So $s_{i}(\subtree)$ is $s$ restricted to the upper boundary of the annulus of half-cylinder associated to $\edge_{i}$ minus $s$ restricted to the lower boundary of the annulus associated to $\edge_{j}$. This interpretation is clear from Figure \ref{Fig:NeckAndSiFunctions}.

\subsection{Fourier coefficients of free and gluing edges}

The novelty of our gluing theorem is that it provides precise estimates on the behavior of certain Fourier coefficients which are described in the following lemma, generalizing Equation \eqref{Eq:HWZSummary}

\begin{lemma}\label{Lemma:HalfCylPerturbedFourier}
Let $\Norbit$ be a simple enough neighborhood of a Reeb orbit $\orbit$ of action $a$ with associated eigendecomposition $(\lambda_{k}, \zeta_{k}), k \in \Z_{\neq 0}$. Let $u \in \ModSpaceThick(\tree)$ with domain $\Sigma$ where $\ModSpaceThick(\tree)$ is constructed using an eigenvalue bound $\eigenBound > 0$.\\

\textbf{Half-cylindrical model}: Let $\halfcyl_{a, \pm}$ be a half-cylinder in $\Sigma$ for which the restriction of $u$ to $\halfcyl_{a, \pm}$ is simple, taking the form $u = (s, t, z): \halfcyl_{a, \pm} \rightarrow \R_{s} \times \Norbit$. Then $u$ may be uniquely written
\begin{equation*}
\begin{gathered}
u = u_{\C} + u_{\transSubbundle},\\
u_{\C} = \left(p, q, \sum_{\pm \lambda_{k} < 0} \kercoeff_{k}e^{\lambda_{k}p}\zeta_{k}(q)\right),\\
u_{\transSubbundle} = \left(0, 0, (\Bump{}{\pm } - 1)\sum_{0 < \pm \lambda_{k} < \eigenBound} \cokcoeff_{k} e^{\lambda_{k}p}\zeta_{k}(q) \right)\\
\implies \delbarJ u = \sum_{0 < \pm \lambda_{k} < \eigenBound} \cokcoeff_{k}\mu^{\halfcyl}_{k} \in \transSubbundle_{\eigenBound} \subset \transSubbundle_{\eigenBound}(\tree)
\end{gathered}
\end{equation*}
for some $\kercoeff_{k}, \cokcoeff_{k} \in \R$ where $\Bump{}{-} = \Bump{-1}{0}$ and $\Bump{}{+} = \Bump{0}{1}$.\\

\textbf{Annular model}: Let $\annulus_{\neckLength_{i}, a} \subset \Sigma$ be a annulus associated to a gluing edge $\edge^{G}_{i}$ so that $u|_{\annulus_{\neckLength_{i}, a}}$ is simple. Then $u = (s, t, z): \annulus_{\neckLength_{i}, a} \rightarrow \R_{s} \times \Norbit$ can be uniquely written
\begin{equation*}
\begin{gathered}
u = u_{\C} + u_{\transSubbundle^{\downarrow}} + u_{\transSubbundle^{\uparrow}},\\
u_{\C} = \left(p, q, \sum_{k \in \Z \neq 0} \kercoeff_{k}e^{\lambda_{k}p}\zeta_{k}(q)\right),\\
u_{\transSubbundle^{\downarrow}} = \left(0, 0, (\Bump{-\neckLength_{i}/2}{1-\neckLength_{i}/2} -1)\sum_{0 < \lambda_{k} < \eigenBound} \cokcoeff_{k} e^{\lambda_{k}(p + \neckLength_{i}/2)}\zeta_{k}(q)\right)\\
u_{\transSubbundle^{\uparrow}} = \left( 0, 0, (\Bump{\neckLength_{i}/2-1}{\neckLength_{i}/2} -1)\sum_{0 < -\lambda_{k} < \eigenBound} \cokcoeff_{k} e^{\lambda_{k}(p - \neckLength_{i}/2)}\zeta_{k}(q) \right)\\
\implies \delbarJ u = \sum_{-\eigenBound < \lambda_{k} < \eigenBound} \cokcoeff_{k}\mu_{k}^{\annulus} \in \transSubbundle_{\eigenBound}(\tree)
\end{gathered}
\end{equation*}
for some $\kercoeff_{k}, \cokcoeff_{k} \in \R$.
\end{lemma}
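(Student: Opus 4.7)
\medskip
\noindent\textbf{Proof plan.} The proof in both cases proceeds by reducing the $\delbar$ equation to a Floer-type ODE via the eigenfunction decomposition associated to $\AsymptoticOp_\orbit$, then solving the ODE one eigenmode at a time. First I would invoke the hypothesis that $u|_{\halfcyl_{a,\pm}}$ (resp.\ $u|_{\annulus_{\neckLength_i,a}}$) is simple so that, by Equation \eqref{Eq:SimpleDelBar}, the $(s,t)$ components of $u$ are forced to be $(p,q)$ and
\begin{equation*}
\delbarJ u = \Big(\tfrac{\partial z}{\partial p} - \AsymptoticOp_\orbit z\Big)\otimes_\C dp^{0,1}.
\end{equation*}
Expanding $z(p,q)=\sum_{k\neq 0} c_k(p)\zeta_k(q)$ against the eigendecomposition of $\AsymptoticOp_\orbit$ from \S\ref{Sec:Eigendecomp} converts this into the family of scalar ODEs $\dot c_k - \lambda_k c_k = R_k(p)$, with the $R_k$ determined by the $\zeta_k$-components of the expansion of $\delbarJ u$ along the $\R\mu_k^{\halfcyl}$ or $\R\mu_{i,k}^{\annulus}$ basis of $\transSubbundle_\eigenBound(\tree)$.

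\medskip
\noindent\textbf{Half-cylindrical model.} For each $k$, the $R_k$ is either zero (when no perturbation supported on $\halfcyl_{a,\pm}$ hits the $\zeta_k$-slot) or, by definition of $\mu_k^{\halfcyl}$ in \eqref{Eq:PerturbationPositiveGeneral}--\eqref{Eq:PerturbationNegativeGeneral}, proportional to $(\partial_p\Bump{}{\pm})e^{\lambda_k p}$. In the second case the integrating factor $e^{-\lambda_k p}$ reduces the ODE to $\partial_p(e^{-\lambda_k p}c_k)=\cokcoeff_k\,\partial_p\Bump{}{\pm}$, whose general solution is $c_k(p)=\cokcoeff_k\Bump{}{\pm}(p)e^{\lambda_k p}+Ke^{\lambda_k p}$ with $K\in\R$. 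On a positive end I would then enforce convergence $z\to 0$ as $p\to+\infty$: for $\lambda_k>0$ this fixes $K=-\cokcoeff_k$, giving the $(\Bump{}{+}-1)e^{\lambda_k p}$ factor displayed in the lemma, while for $\lambda_k\geq\eigenBound$ the same requirement forces $K=0$ outright, and for $\lambda_k<0$ any $K=\kercoeff_k$ is admissible. The negative end is symmetric. This produces the claimed decomposition $u=u_\C+u_\transSubbundle$, and uniqueness is immediate because once one fixes the $\cokcoeff_k$ from $\delbarJ u\in\transSubbundle_\eigenBound$ and the $\kercoeff_k$ from asymptotic convergence, the ODEs admit no further freedom.

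\medskip
\noindent\textbf{Annular model.} The same ODE analysis applies on $\annulus_{\neckLength_i,a}$, but now without any convergence constraint at infinity. For $|\lambda_k|\geq\eigenBound$ the equation is homogeneous and $c_k=\kercoeff_k e^{\lambda_k p}$. For $0<|\lambda_k|<\eigenBound$ the inhomogeneous term is supplied by the appropriate case of \eqref{Eq:PerturbationsOverNeckGeneral}, and integrating as above yields a particular solution of the exact form appearing inside the bracket of $u_{\transSubbundle^\downarrow}$ or $u_{\transSubbundle^\uparrow}$, plus the free $\kercoeff_k e^{\lambda_k p}$ mode. The choice of particular solution (normalized so that the perturbation piece vanishes on the opposite side of the annulus from the support of the corresponding $\mu_{i,k}^\annulus$) rigidifies the split: the $u_{\transSubbundle^\downarrow}$ and $u_{\transSubbundle^\uparrow}$ summands are vacuous outside the supports of $\Bump{-\neckLength_i/2}{1-\neckLength_i/2}-1$ and $\Bump{\neckLength_i/2-1}{\neckLength_i/2}-1$ respectively, and the homogeneous remainder is absorbed into $u_\C$. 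Uniqueness follows from the observation that the two bump supports are disjoint and that the only solutions of the homogeneous equation are $\R$-multiples of $e^{\lambda_k p}\zeta_k$.

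\medskip
\noindent\textbf{Main obstacle.} The step I expect to require the most care is not the ODE solving but the bookkeeping at the boundaries of the bump supports: verifying that the particular solutions produced by integrating $\partial_p\Bump{}{}$ against $e^{-\lambda_k p}$ recombine, after choice of integration constants, into exactly the $\Bump{}{}-1$ form required by the lemma, with the correct shifts $\pm\neckLength_i/2$ in the exponentials. This is purely computational, but matching the signs and the $\pm\neckLength_i/2$ normalizations in \eqref{Eq:PerturbationsOverNeckGeneral} unambiguously with the formulas for $u_{\transSubbundle^\downarrow},u_{\transSubbundle^\uparrow}$ is the most error-prone piece of the argument.
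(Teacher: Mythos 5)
Your proof is correct and takes essentially the same route as the paper: the paper's proof expands $z(p,q)=\sum_k f_k(p)e^{\lambda_k p}\zeta_k(q)$ (absorbing the integrating factor into the ansatz so that $\delbarJ u=\sum_k \dot f_k\,e^{\lambda_k p}\zeta_k\otimes dp^{0,1}$), then observes that the $f_k$ are constant outside the bump support, while you expand $z=\sum_k c_k(p)\zeta_k(q)$ and apply the integrating factor explicitly to solve $\dot c_k-\lambda_k c_k=R_k$; these are the same computation with the substitution $c_k=f_k\,e^{\lambda_k p}$. The uniqueness claim and the derivation of the bump profile from the boundary/decay conditions match the paper's argument, with the annular case treated the same way minus the convergence constraint.
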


\begin{proof}
We'll work out the details of the positive asymptotic case. The negative asymptotic and annuluar cases are identical up to changes in notation.

Apply the Riemann mapping theorem to obtain a parameterization for which $(s, t) = (p, q)$. Since the $\zeta_{k}$ form an orthonormal basis of $\Ltwo(\Sec(\Norbit))$ and the $e^{\lambda_{k}p}$ are nowhere vanishing we can uniquely write
\begin{equation*}
u = \Big(p, q, \sum_{k \in \Z_{\neq 0}} f_{k}(p)e^{\lambda_{k}p}\zeta_{k}(q)\Big)
\end{equation*}
for functions $f_{k}: [0, \infty) \rightarrow \R$ determining the Fourier coefficients of $z$ restricted to $\{p\}\times \aCircle \subset \halfcyl_{a, +}$.

With the above expression, $\delbarJ$ applied to the $(s, t)$ coordinates is zero and $\delbarJ$ applied to the $z$ part of $u$ is a $\R$-linear Cauchy-Riemann operator over $\halfcyl_{a, +}$. Therefore
\begin{equation*}
\delbarJ u = \Big(0, 0, \sum_{k \in \Z_{\neq 0}} \frac{\partial f_{k}}{\partial p}e^{\lambda_{k}p}\zeta_{k}\Big)\otimes dp^{0, 1}.
\end{equation*}

Since $u|_{\{ p \geq 1\}}$ is holomorphic, each $f_{k}$ is constant over $\{ p \geq 1\}$. So the $f_{k}$ for $k < 0$ must be constant over all of $[0, \infty)$ by definition of $\transSubbundle_{\eigenBound}$. So we can write $f_{k} = \kercoeff_{k}$. 

For $k > 0$, $f_{k}$ must be $0$ over $\{ p \geq 1 \}$ or else $u$ will diverge in the $z$ coordinates as $p \rightarrow \infty$. In order that $\delbarJ u \in \transSubbundle_{\eigenBound}$ we must have that the $f_{k}$ satisfy differential equations
\begin{equation*}
\frac{\partial f_{k}}{\partial p} = \begin{cases}
0 & \lambda_{k} < 0 \ \text{or}\ \lambda_{k} \geq \eigenBound \\
\cokcoeff_{k} \frac{\partial \Bump{0}{1}}{\partial p} & 0 < \lambda_{k} < \eigenBound
\end{cases}
\end{equation*}
for some real constants $\cokcoeff_{k}$. The only possible solutions have $f_{k}$ constant in the first case and $f_{k} = \cokcoeff_{k}(\Bump{0}{1} - 1)$ in the second, due to the terminal condition $f_{k}(1) = 0$ on our O.D.E.
\end{proof}

\begin{defn}
We say that the $\kercoeff_{k}$ and $\cokcoeff_{k}$ are the \emph{holomorphic Fourier coefficients} and \emph{perturbative Fourier coefficients of $u$}, respectively.
\end{defn}

\nom{$\kercoeff_{k}, \cokcoeff_{k}$}{Holomorphic and perturbative Fourier coefficients}

\begin{lemma}
Each $u \in \ModSpaceThick(\tree)$ is uniquely determined by its Fourier coefficients.
\end{lemma}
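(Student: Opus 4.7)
The plan is to combine Lemma \ref{Lemma:HalfCylPerturbedFourier} with unique continuation for pseudoholomorphic maps. Given two elements $u_1, u_2 \in \ModSpaceThick(\tree)$ sharing all holomorphic and perturbative Fourier coefficients, I first invoke Lemma \ref{Lemma:HalfCylPerturbedFourier} to conclude that $u_1 = u_2$ pointwise on every simple half-cylinder $\halfcyl_{a,\pm} \subset \Sigma$ corresponding to a free edge of $\tree$ and on every simple annulus $\annulus_{\neckLength_i, a} \subset \Sigma$ corresponding to a gluing edge. The matching neck lengths and conformal structures on the two domains are reconciled using the preferred translate convention of Properties \ref{Properties:PreferedTranslate} together with the $\delta$-thick-thin decomposition, so that the two domains can be canonically identified.

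Next, I isolate the open subset $\Sigma' \subset \Sigma$ on which both maps are genuinely $(J, \domainJ)$-holomorphic. Each perturbation section $\mu_k^{\halfcyl}$ or $\mu_k^{\annulus}$ is supported only where the derivative of the corresponding cutoff function from \S \ref{Sec:BumpFunctions} is nonzero, i.e.\ in a short strip near one boundary of its host cylinder or annulus. Consequently $\Sigma'$ contains the entire ``body'' of $\Sigma$ (the complement of the half-cylinders and annuli) together with a holomorphic collar inside every half-cylinder and every annulus; in particular $\Sigma'$ is connected, and we already know $u_1 = u_2$ on the nonempty open portion of $\Sigma'$ lying inside the cylindrical and annular regions.

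Finally, applying unique continuation for $(J, \domainJ)$-holomorphic maps---in the similarity-principle form used in McDuff--Salamon, or equivalently Aronszajn's theorem applied to the difference $u_1 - u_2$ written via exponentiation as a section of the pullback tangent bundle---yields $u_1 = u_2$ on all of $\Sigma'$. Combined with the agreement on the cylinders and annuli from the first step, this gives $u_1 = u_2$ on all of $\Sigma$. I do not expect any serious obstacle here: Lemma \ref{Lemma:HalfCylPerturbedFourier} does the substantive work and the only remaining check is that the cutoff supports leave a nonempty holomorphic collar inside each cylindrical/annular region on which to seed unique continuation, which is immediate from the explicit bump function constructions in \S \ref{Sec:BumpFunctions}.
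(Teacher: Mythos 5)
Your overall strategy---use Lemma \ref{Lemma:HalfCylPerturbedFourier} to pin down $u$ on the cylinders and annuli and then invoke unique continuation to extend over $\widehat{\Sigma}$---is exactly the paper's. But the step you wave through as ``immediate from the explicit bump function constructions'' is where the argument can break. The perturbation $\mu_{k}^{\halfcyl_{+}}$ is supported where $\partial\Bump{0}{1}/\partial p$ is nonzero, i.e.\ in a subset of $\{0 \leq p \leq 1\}$ that abuts the boundary circle $\{p=0\}$ along which the half-cylinder meets the body. The paper's definition of $\Bump{0}{1}$ (vanishing for $p\leq 0$, equal to $1$ for $p\geq 1$, derivative in $[0,3]$) does \emph{not} guarantee that $\partial\Bump{0}{1}/\partial p$ has support compactly contained in $(0,1)$; the standard smooth ramp has this derivative positive on all of $(0,1)$, in which case the open holomorphic locus inside the half-cylinder is only $\{p>1\}\times\aCircle$, separated from the body by the perturbation strip. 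The same happens at both boundary circles of each gluing annulus. So ``$\Sigma'$ is connected'' is not justified, and the seed for unique continuation never reaches the body components of $\Sigma'$.

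The paper's proof avoids this by not asking for an open overlap. It observes that the Fourier coefficients determine $u$ on the full closed half-cylinders and annuli, hence on $\partial\widehat{\Sigma}$, and that the boundary values of the (smooth, one-sided holomorphic) map determine the entire $\infty$-jet along $\partial\widehat{\Sigma}$. Equivalently: since $u_1 = u_2$ on $[0,\infty)\times\aCircle$ and both maps are smooth, $u_1 - u_2$ vanishes to infinite order along $\{p=0\}$; Aronszajn-type unique continuation on $\widehat{\Sigma}$, where both maps are honestly holomorphic, then forces $u_1 = u_2$ on the adjacent component. This works for any admissible $\Bump{0}{1}$. You can rescue your version by explicitly choosing bump functions whose derivatives are supported in compact subintervals of $(0,1)$ (and of $(\pm(C/2-1),\pm C/2)$ for annuli), but the $\infty$-jet phrasing is cleaner and is what the paper actually does.
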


\begin{proof}
Let $\widehat{\Sigma}$ be the complement of the simple half-cylinders and simple annuli associated to gluing edges in $\Sigma$. Then $u|_{\widehat{\Sigma}}$ is $(\domainJ, J)$-holomorphic. Each connected component of the surface $\widehat{\Sigma}$ has non-empty boundary and the Fourier coefficients determine $u$ along each component of the boundary. Since a holomorphic map is determined by its $\infty$-jet at a point \cite[\S 2.3]{MS:Curves} and the boundary condition of $u$ along $\partial \widehat{\Sigma}$ determines $\infty$-jets along the boundary, the Fourier coefficients uniquely determine $u|_{\widehat{\Sigma}}$. The Fourier coefficients also determine the restriction of $u$ to the simple half-cylinders and annuli by the preceding lemma, so they determine all of $u$.
\end{proof}

\subsection{Modifying supports of perturbations}\label{Sec:ModifingSupports}

Here we show how the supports of perturbations $\mu^{\halfcyl}, \mu^{\annulus}$ can be shifted up and down along simple annuli and half cylinders. This will be useful for gluing -- \S \ref{Sec:GluingTestSections} -- and give isomorphisms between the $\transSubbundle$ defined using different parameters $\delta$ controlling the thick-thin decomposition of domains of holomorphic maps (after deleting removable punctures).

We will work out the details along positive half cylinders $\halfcyl = \halfcyl_{a, +}$. Suppose that we have a $u: \halfcyl \rightarrow \R_{s} \times \Norbit$ asymptotic to some $\orbit$ with eigendecomposition $(\lambda_{k}, \zeta_{k})$. Let $u$ have Fourier coefficients $\kercoeff_{k}, k<0$ and $\cokcoeff_{k}, 0 < \lambda_{k} < \eigenBound$ as described in the half-cylindrical model of Lemma \ref{Lemma:HalfCylPerturbedFourier}. For each $C > 0$ and each $k$ for which $0 < \lambda_{k} < \eigenBound$, define
\begin{equation*}
\psi_{k, C}(p, q) = \left(0, 0, \left(\Bump{C}{C+1} - \Bump{0}{1}\right)e^{\lambda_{k}p}\zeta_{k}(q)\right) \in \Sec(u^{\ast}(T(\R_{s} \times M))).
\end{equation*}
Note that by the definitions of our cutoff functions that $\psi_{k, C}$ is supported on $[0, C+1] \times \aCircle \subset \halfcyl$. With respect to the metric $ds^{\otimes 2} + dt^{\otimes 2} + g_{\disk^{2n}}$ with $g_{\disk^{2n}}$ the standard metric on the disk, exponentiation is just vector addition and
\begin{equation}\label{Eq:DomainShiftedFourierCoeff}
\begin{gathered}
u_{C} = \exp_{u}\left(\sum \cokcoeff_{k}\psi_{k, C}\right) = u_{\C} + u_{\transSubbundle, C},\\ u_{\transSubbundle, C} = \left(0, 0, (\Bump{C}{C+1} - 1)\sum_{0 < \lambda_{k} < \eigenBound} \cokcoeff_{k} e^{\lambda_{k}p}\zeta_{k}(q) \right).
\end{gathered}
\end{equation}
So modifying $u$ using the $\psi_{k, C}$ pushes the support of $\delbarJ u$ from $[0, 1] \times \aCircle$ to $[C, C+1] \times \aCircle$.

Now we consider the embedding $\phi_{C}: [0, \infty) \times \aCircle \rightarrow \halfcyl_{a, +}$ defined $(p, q) \mapsto (p + C, q)$  so that
\begin{equation*}
\begin{gathered}
u_{\C}\circ \phi_{C} = \left(p, q, \sum_{\lambda_{k} < 0} \left(\kercoeff_{k}e^{\lambda_{k}C}\right)e^{\lambda_{k}p}\zeta_{k}(q)\right),\\
u_{\transSubbundle}\circ\phi_{C} = \left(0, 0, (\Bump{}{\pm } - 1)\sum_{0 < \lambda_{k} < \eigenBound} \left(\cokcoeff_{k}e^{\lambda_{k}C}\right) e^{\lambda_{k}p}\zeta_{k}(q) \right).
\end{gathered}
\end{equation*}
It follows that our new Fourier coefficients are given by the $\kercoeff_{k}e^{\lambda_{k}C}$ and $\cokcoeff_{k}e^{\lambda_{k}C}$.

\begin{lemma}
Suppose that $\transSubbundle = \transSubbundle^{\delta}$ and $\ModSpaceThick = \ModSpaceThick^{\delta}$ are defined using a $\delta$ parameter controlling the thick-thin decompositions of the $\Sigma_{\orderedPunctureSet \cup \orderedPunctureSetRem}$ -- the domains of maps with removable punctures deleted. Then for $\delta' < \delta$ sufficiently close to $\delta$ there is a smooth embedding $\ModSpaceThick^{\delta} \rightarrow \ModSpaceThick^{\delta'}$ covering a bundle isomorphism
\begin{equation*}
\begin{tikzcd}
\transSubbundle^{\delta} \arrow[r]\arrow[d] & \transSubbundle^{\delta'} \arrow[d]\\
\ModSpaceThick^{\delta} \arrow[r] & \ModSpaceThick^{\delta'}
\end{tikzcd}
\end{equation*}
which is given by rescaling Fourier coefficients by positive constants.
\end{lemma}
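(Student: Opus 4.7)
The plan is to realize the embedding by applying the support-shifting construction of Section \ref{Sec:ModifingSupports} at each puncture and at each gluing annulus. Given $u \in \ModSpaceThick^{\delta}$ with domain $\Sigma = \Sigma_{\orderedPunctureSet \cup \orderedPunctureSetRem}$, let $\halfcyl^{\delta}_{i} \subset \Sigma$ denote the simple $\delta$-half-cylinder at the $i$th puncture, parametrized by $(p,q)$ so that $(0,0)$ lies on the boundary of a $\delta$-thin hyperbolic cusp. For $\delta' < \delta$ close to $\delta$, the $\delta'$-half-cylinder $\halfcyl^{\delta'}_{i}$ is a proper sub-half-cylinder of $\halfcyl^{\delta}_{i}$ whose basepoint corresponds to $(C_{i}(u),0)$ for some shift $C_{i}(u) > 0$. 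By continuity of the hyperbolic uniformization of a punctured rational curve in its conformal parameters, the $C_{i}(u)$ are bounded and depend smoothly on $u$ over $\CompactSubset$; the same holds for the analogous shifts at both ends of each gluing annulus.

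With these shifts in hand, I would define the candidate embedding by
\begin{equation*}
\Phi(u) \;=\; \exp_{u}\!\Big( \textstyle\sum_{i,k} \cokcoeff_{i,k}(u)\,\psi_{i,k,C_{i}(u)} \Big),
\end{equation*}
where the $\psi_{i,k,C}$ are the sections of Section \ref{Sec:ModifingSupports} on $\halfcyl^{\delta}_{i}$ (with the obvious annular analogue) and the $\cokcoeff_{i,k}(u)$ are the perturbative Fourier coefficients of $u$. By Equation \eqref{Eq:DomainShiftedFourierCoeff}, $\delbarJ \Phi(u)$ is supported on $[C_{i},C_{i}+1]\times\aCircle\subset \halfcyl^{\delta}_{i}$, which in the $\delta'$-parametrization of $\halfcyl^{\delta'}_{i}$ is exactly the collar $[0,1]\times\aCircle$; hence $\delbarJ \Phi(u) \in \transSubbundle^{\delta'}$ and $\Phi(u) \in \ModSpaceThick^{\delta'}$. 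The computation at the end of Section \ref{Sec:ModifingSupports} shows that the holomorphic and perturbative Fourier coefficients of $\Phi(u)$, read off in the $\delta'$-parametrization, are respectively $\kercoeff_{i,k}\,e^{\lambda_{i,k}C_{i}(u)}$ and $\cokcoeff_{i,k}\,e^{\lambda_{i,k}C_{i}(u)}$, which is the stated rescaling by positive constants. The covering bundle isomorphism $\transSubbundle^{\delta} \to \transSubbundle^{\delta'}$ is fibrewise rescaling of generators by the same factors $e^{\lambda_{i,k}C_{i}(u)}$, so as to intertwine the Fourier decompositions of $\delbarJ u$ and $\delbarJ \Phi(u)$.

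Smoothness of $\Phi$ is inherited from smoothness of the $C_{i}(u)$, of the Fourier-coefficient extraction (Lemma \ref{Lemma:HalfCylPerturbedFourier}), and of $\exp$; injectivity of $\Phi$ is immediate from the lemma preceding Section \ref{Sec:ModifingSupports} (a map is determined by its Fourier coefficients) together with the nonvanishing of the rescaling factors; that $\Phi$ is in fact an open embedding follows because running the same construction with shifts $-C_{i}(u)$ furnishes a local inverse on its image. The main obstacle I anticipate is justifying the smoothness and uniform boundedness of the $C_{i}(u)$: this reduces to smooth dependence of the $\delta$-thick-thin decomposition of $\Sigma$ on its conformal structure, which is classical but must be combined with Properties \ref{Properties:PreferedTranslate} (the basepoint $(0,0)$ of each simple half-cylinder is required to lie on the thin-cusp boundary). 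The hypothesis $\delta-\delta'\ll 1$ is used precisely here, to keep the shifts so small that no additional components of the thin part migrate across thick-thin boundaries and so that the shifted map remains in $\NbhdCompactSubset$.
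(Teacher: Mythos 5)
Your proposal is correct and follows essentially the same route as the paper's proof: both use the support-shifting construction of \S\ref{Sec:ModifingSupports} at each $\delta$-half-cylinder and $\delta$-thin annulus to produce an element of $\ModSpaceThick^{\delta'}$, both locate the rescaling factors $e^{\lambda_k C_i}$ by reading the Fourier coefficients off in the $\delta'$-parametrization via $\phi_{C_i}$, and both invoke compactness of $\ModSpaceThick^{\delta}/\R_s$ to keep $\delta'$ close enough to $\delta$ that no thin annulus degenerates. One minor point worth flagging: the paper's proof states that the conversion $u\mapsto u_{C}$ ``leaves Fourier coefficients unaffected,'' which at first glance sits in tension with the lemma's claim of rescaling; your accounting resolves this cleanly — the coefficients $\cokcoeff_k$ are unchanged in the $\delta$-parametrization and only acquire the factors $e^{\lambda_k C_i(u)}$ upon passing to the $\delta'$-parametrization by $\phi_{C_i(u)}$, which is precisely the content of Equation \eqref{Eq:DomainShiftedFourierCoeff} together with the $u_{\C}\circ\phi_C$, $u_{\transSubbundle}\circ\phi_C$ computation at the end of \S\ref{Sec:ModifingSupports}. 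You also make explicit the smooth, bounded dependence of the shifts $C_i(u)$ on $u$ via smooth dependence of the thick-thin decomposition on conformal parameters (the paper only implicitly invokes this when asserting smoothness), and your local-inverse argument for open embedding is sound since the domain of $\Phi(u)$ coincides with that of $u$, so the shifts are recoverable from $\Phi(u)$.
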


\begin{proof}
On the positive half-cylindrical end $\halfcyl$ of $\Sigma$ determined by $u$ and $\delta$, choose a $C$ such that $[C, \infty) \times \aCircle \subset \halfcyl$ is the positive cylindrical end of $\Sigma$ determined by $u, \delta'$. Along $\halfcyl$, replace $u$ with $u_{C}$ as above. Performing a similar modification along thin annuli and negative half cylinders will convert a map $u \in \ModSpace^{\delta}$ into an element of $\ModSpaceThick^{\delta'}$. It's clear from Equation \eqref{Eq:DomainShiftedFourierCoeff} that this conversion leaves Fourier coefficients unaffected. The map $\ModSpaceThick^{\delta} \rightarrow \ModSpaceThick^{\delta'}$ is smooth as it depends locally on the perturbative Fourier coefficients, which are smooth functions on $\ModSpaceThick^{\delta}$.

If $\delta'$ is much smaller than $\delta$, some annuli components of the thin portion of $\Sigma_{\orderedPunctureSet \cup \orderedPunctureSetRem}$ may disappear when interpolating from $\delta$ to $\delta'$. Specifically, the map $\ModSpaceThick^{\delta} \rightarrow \ModSpaceThick^{\delta'}$ will be ill-defined at $(\Sigma, u)$ for which there is a $\delta$-thin annuli $\annulus$ in $\Sigma_{\orderedPunctureSet \cup \orderedPunctureSetRem}$ corresponding to some gluing edge of $\tree$ which $\annulus$ does not contain a $\delta'$-thin annulus. Appealing to compactness of the $\ModSpaceThick^{\delta}/\R_{s}$, we can guarantee that this issue doesn't arise at any $(\Sigma, u)$ when $\delta'$ is sufficiently close to $\delta$.
\end{proof}

\begin{rmk}
The above lemma gives an explicit refinement of \cite[\S 5.4.2]{BH:ContactDefinition} which does not require stabilization, ie. increasing $\eigenBound$.
\end{rmk}

\subsection{Gluing theorem}\label{Sec:GluingSummary}

The gluing theorem below is an enhancement of \cite[\S 6]{BH:ContactDefinition}, which roughly states the following:
\be
\item We can always glue perturbed holomorphic curves provided that $\transSubbundle_{\eigenBound}$ is transverse.
\item Fourier coefficients of free edges are essentially unaffected by the gluing map.
\item Perturbation coefficients along gluing edges are easily estimated by the asymptotics of the ends of maps which are being glued.
\ee
This last item agrees with the linearized obstruction section analysis of \cite{HT:GluingII}. In particular we'll see that a gluing of holomorphic curves will typically have non-trivial $\delbar$.

\begin{figure}[h]
\hspace{-3.5cm}
\begin{overpic}[scale=.3]{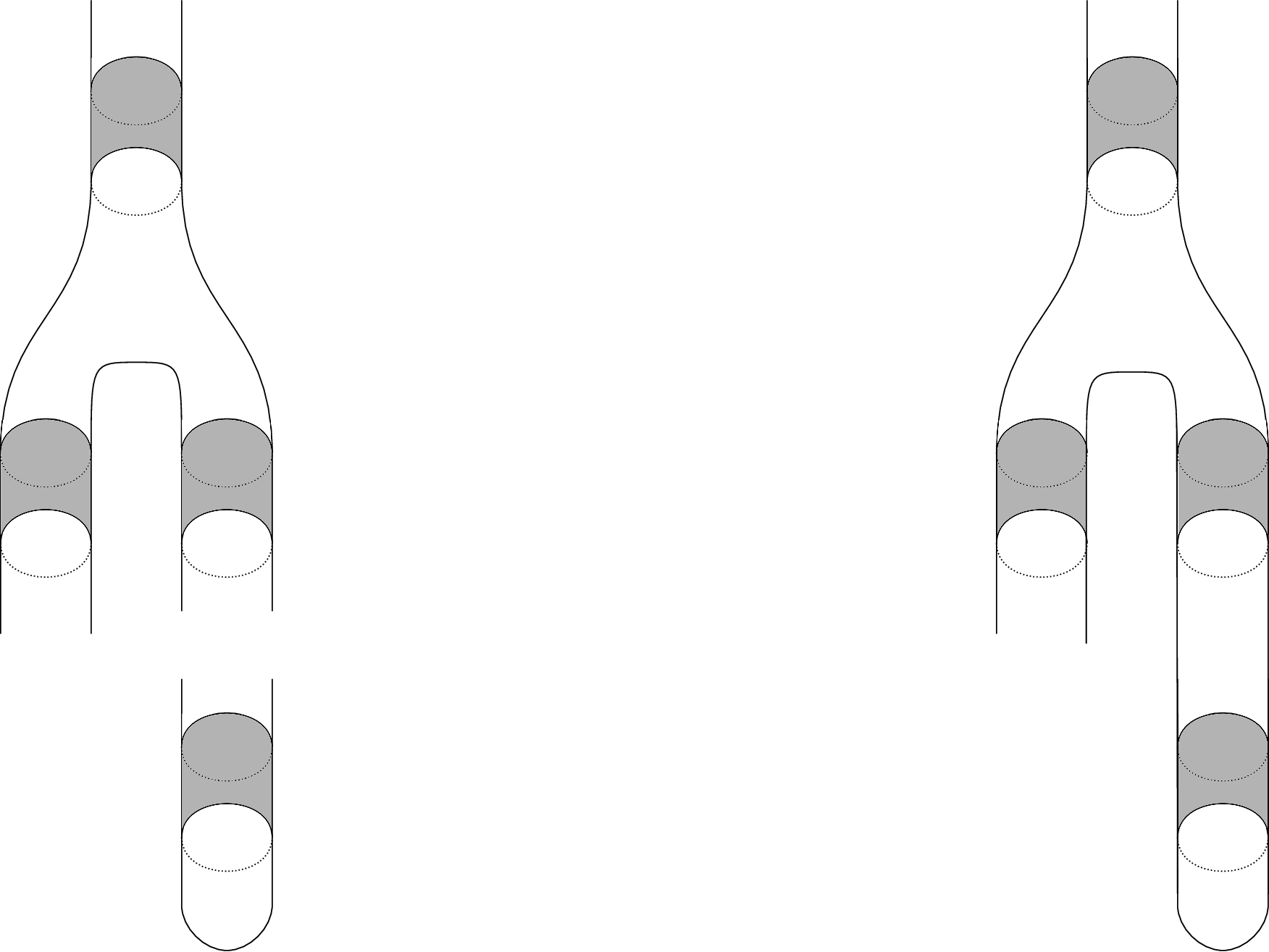}
\put(20, 64){$\kercoeff_{k}, \cokcoeff_{k}$}
\put(99, 64){$\kercoeff_{k}, \cokcoeff_{k}$}
\put(26, 34){$\kercoeff^{\uparrow}_{k}, \cokcoeff^{\uparrow}_{k}$}
\put(26, 13){$\kercoeff^{\downarrow}_{k}, \cokcoeff^{\downarrow}_{k}$}
\put(16, 24){$C_{j}$}
\put(18, 29){\vector(0, 1){12}}
\put(18, 22){\vector(0, -1){9}}
\put(40, 45){\vector(1, 0){30}}
\put(52, 48){$\glue$}
\put(104, 34){$\cokcoeff_{k} \sim \cokcoeff_{k}^{\uparrow} - e^{C_{j}\lambda_{k}}\kercoeff_{k}^{\downarrow}$}
\put(104, 13){$\cokcoeff_{k} \sim \cokcoeff_{k}^{\downarrow} + e^{-C_{j}\lambda_{k}}\kercoeff_{k}^{\uparrow}$}
\end{overpic}
\vspace{5mm}
\caption{A pair of pants and a plane are glued with neck length parameter $C_{j}$ (left) to obtain a cylinder (right) in $\ModSpaceThick(\tree)$. The shaded regions indicate the supports of the sections spanning $\transSubbundle_{\eigenBound}(\tree)$. The Fourier coefficients on the glued curve are approximated by the Fourier coefficients of gluing components as indicated by Equations \eqref{Eq:DelbarPertFreeEdgeBound} and \eqref{Eq:DelbarPertGluingEdgeBound}.}
\label{Fig:PerturbationSupportGluing}
\end{figure}

\begin{thm}\label{Thm:Gluing}
Let $\dom_{\glue, C}(\tree)$ and $\ModSpaceThick(\tree)$ be constructed as above, using the same input data $\eigenBound, C$ with $r^{\tree}$ small and $r \ll r^{\tree}$. Assume that $\eigenBound$ is large enough so that $\transSubbundle_{\eigenBound} \rightarrow \NbhdCompactSubset_{i}$ is a transverse subbundle for each $\vertex_{i}$ of $\tree$. Then for a sufficiently large constant $\Cglue$ depending on the choices of $\NbhdCompactSubset_{i}$ such that for $C > \Cglue$ there exist gluing maps
\begin{equation*}
\glue: \dom_{\glue, C}(\tree) \rightarrow \ModSpaceThick(\tree)/\R
\end{equation*}
satisfying the following conditions:
\be
\item The gluing map $\glue$ is a $\COne$ diffeomorphism onto its image for which we have a bundle isomorphism
\begin{equation}\label{Eq:DelbarGluingDef}
\begin{tikzcd}
\transSubbundle_{\eigenBound, \glue}(\tree) \arrow[r, "\widetilde{\glue}"]\arrow[d] & \transSubbundle_{\eigenBound}(\tree) \arrow[d] \\
\dom_{\glue, C}(\tree) \arrow[r, "\glue"] & \ModSpaceThick(\tree)/\R_{s}
\end{tikzcd}
\end{equation}
\item For $\rho > 0$ sufficiently small, any element of $\left\{ \norm{\delbarJ u} \leq \rho \right\} \subset \ModSpaceThick^{T}$ is in the image of a gluing map by taking the $\CompactSubset_{i}$ to be sufficiently large with $r$ fixed.
\ee
Furthermore, the composition
\begin{equation*}
\delbarGluing = \widetilde{\glue}^{-1}\circ \delbarJ \circ \glue: \dom_{\glue, C}(\tree) \rightarrow \transSubbundle_{\eigenBound, \glue}(\tree)
\end{equation*}
at an input $(\{u_{i}\}_{\vertex_{i} \in \{\vertex_{i}\}}, \{ C_{i}\}_{\edge_{i} \in \{\edge_{i}\}})$  can be approximated as shown in Figure \ref{Fig:PerturbationSupportGluing} and as described below:\\

\nom{$\delbarGluing$}{$\delbarJ$ viewed as a section over the domain of the gluing map}

\noindent\textbf{Free edge estimates}: Let $\halfcyl_{a, \pm} \subset \Sigma_{i}$ is a half-cylinder contained in a some $\Sigma_{i}$ associated to a vertex $\vertex_{i}$ and free edge $\edgeFree_{j}$ of the tree $\tree$ having perturbative Fourier coefficients $\cokcoeff_{k} \in \R$. Writing $\halfcyl$ for the associated half-infinite cylinder in $\Sigma$, there exists a $\const > 0$ depending on the $\left\{ \NbhdCompactSubset_{j}\right\}$ for which
\begin{equation}\label{Eq:DelbarPertFreeEdgeBound}
\norm{\sum \cokcoeff_{k}\mu^{\halfcyl}_{k} - \delbarGluing|_{\halfcyl}}< \const C_{\min}^{-\half}e^{-C_{\min}\eigenBound}, \quad C_{\min} = \min C_{j}.
\end{equation}

\noindent\textbf{Gluing edge estimates}: Let $\edgeGluing_{i}$ be a gluing edge of $\tree$ with edge length $C_{i}$ assigned to an orbit $\orbitAssignment(\edgeGluing_{i})$ of action $a$. Let $u^{\uparrow}$ and $u^{\downarrow}$ be the $u_{i}$ associated to the vertices at which $\edgeGluing_{i}$ start and end, respectively. Associated to $u^{\uparrow}$ there is a negative simple half-cylinder $\halfcyl_{-, a}$ with Fourier coefficients $\kercoeff^{\uparrow}_{k}, \cokcoeff^{\uparrow}_{k}$. Associated to $u^{\downarrow}$ there is a positive simple half-cylinder $\halfcyl_{+, a}$ with Fourier coefficients $\kercoeff^{\downarrow}_{k}, \cokcoeff^{\downarrow}_{k}$.

Let $\annulus_{C, a}$ be the annulus in the domain $\Sigma$ of the glued curve $u$ associated to the $\edgeGluing_{i}$. Then there is a constant $\const$ as above for which
\begin{equation}\label{Eq:DelbarPertGluingEdgeBound}
\begin{gathered}
\norm{  D_{0} - \delbarGluing|_{\annulus_{C, a}}}< \const C_{\min}^{-\half}e^{-C_{\min}\eigenBound}, \quad C_{\min} = \min C_{j}\\
D_{0} = \sum_{0 < -\lambda_{k} < \eigenBound} \left(\cokcoeff^{\uparrow}_{k} - e^{\lambda_{k}C_{i}}\kercoeff^{\downarrow}_{k}\right)\mu^{\annulus}_{i, k} + \sum_{{0 < \lambda_{k} < \eigenBound}} \left(\cokcoeff^{\downarrow}_{k} + e^{-\lambda_{k}C_{i}}\kercoeff^{\uparrow}_{k}\right)\mu^{\annulus}_{i, k}.
\end{gathered}
\end{equation}
\end{thm}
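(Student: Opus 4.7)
The proof follows the obstruction bundle gluing strategy of \cite{HT:GluingII, BH:ContactDefinition}, enhanced with explicit Fourier-coefficient tracking. First, I would construct a pre-gluing $u^{\preglue}$ for each configuration $(\{u_i\}, \{C_j\}) \in \dom_{\glue, C}(\tree)$: assemble the domain $\Sigma$ from the $\Sigma_i$ by replacing each pair of simple half-cylinders matched by a gluing edge $\edgeGluing_j$ with an annulus $\annulus_{C_j, a}$, and define $u^{\preglue}$ to agree with the $u_i$ outside these annuli (after $\R_s$-translations that line up preferred translates), interpolating holomorphic tails across the annuli via the bump functions $\Bump{\pm C_j/2}{1\mp C_j/2}$ appearing in the definition of $\mu^{\annulus}_{i, k}$. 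By construction $u^{\preglue}$ literally coincides with the relevant $u_i$ on every free half-cylinder, so $\delbarJ u^{\preglue}$ there equals $\sum \cokcoeff_k \mu^{\halfcyl}_k$ exactly; the free-edge estimate \eqref{Eq:DelbarPertFreeEdgeBound} thus holds at the pre-gluing stage with zero error.

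Next, on each gluing annulus $\annulus_{C_j, a}$ I would expand $u^{\preglue}$ using Lemma \ref{Lemma:HalfCylPerturbedFourier} and compute $\delbarJ u^{\preglue}$ directly. After the coordinate shifts that place $u^{\downarrow}$'s positive half-cylinder at the lower end of the annulus and $u^{\uparrow}$'s negative half-cylinder at the upper end, the holomorphic tails $\kercoeff^{\downarrow}_k e^{\lambda_k p}\zeta_k$ and $\kercoeff^{\uparrow}_k e^{\lambda_k p}\zeta_k$ acquire prefactors $e^{\lambda_k C_j/2}$ and $e^{-\lambda_k C_j/2}$ respectively. When the interpolating bump functions truncate these tails, the formerly-holomorphic pieces reappear as $\mu^{\annulus}_{i, k}$-type contributions with coefficients $e^{\lambda_k C_j}\kercoeff^{\downarrow}_k$ and $e^{-\lambda_k C_j}\kercoeff^{\uparrow}_k$; combined with the perturbative coefficients $\cokcoeff^{\uparrow}_k$ and $\cokcoeff^{\downarrow}_k$ already present on the truncated sides this produces exactly the expression $D_0$ of \eqref{Eq:DelbarPertGluingEdgeBound}, with residual error of size $O(e^{-C_{\min}\eigenBound})$ coming from eigenmodes with $|\lambda_k| > \eigenBound$ whose tails are not absorbed into $\transSubbundle_{\eigenBound}^{\annulus}$.

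Third, using transversality of each $\transSubbundle_{\eigenBound}|_{\NbhdCompactSubset_i}$ I would patch together a uniform right inverse $Q$ of $\Dlinearized_{u^{\preglue}}$ modulo $\transSubbundle_{\eigenBound}(\tree)$, along the lines of \cite[\S 5]{HT:GluingII} and \cite[\S 7]{BH:ContactDefinition}. The operator norm of $Q$ is bounded independently of $\{C_j\}$ once $C_j \geq \Cglue$, because on each neck $\Dlinearized_{u^{\preglue}}$ is close to $\AsymptoticOp_{\orbit}$, whose spectrum is bounded away from zero on the orthogonal complement of the span of $\zeta_k$ with $|\lambda_k| < \eigenBound$. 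A Newton iteration then corrects $u^{\preglue}$ by a section $\xi$ of $\Ltwo$-norm $O(C_{\min}^{-1/2} e^{-C_{\min}\eigenBound})$ to produce $u = \glue(\{u_i\}, \{C_j\}) \in \ModSpaceThick(\tree)$, and $\widetilde{\glue}$ is defined by sending each $\mu^{\halfcyl}_k$ or $\mu^{\annulus}_{i, k}$ in $\transSubbundle_{\eigenBound, \glue}(\tree)$ to the corresponding section on the glued curve.

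The diffeomorphism, injectivity, and surjectivity-onto-$\{\|\delbarJ u\| \leq \rho\}$ claims follow from the usual implicit-function/compactness arguments of \cite{BH:ContactDefinition}: uniqueness in the fixed-point step yields injectivity and $\COne$-dependence, and any small-$\delbar$ curve not in the image would break under $\SFT$-compactness into a configuration in some $\dom_{\glue, C}(\tree)$, contradiction. The refined estimates \eqref{Eq:DelbarPertFreeEdgeBound} and \eqref{Eq:DelbarPertGluingEdgeBound} descend from the pre-gluing computation because the correction $\xi$ can change any Fourier coefficient of $u^{\preglue}$ by at most $\|\xi\|_{\COne} = O(C_{\min}^{-1/2}e^{-C_{\min}\eigenBound})$. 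The principal obstacle I expect is establishing the $\{C_j\}$-independence of the constant $\const$ and verifying that the nonlinear Newton correction does not produce leading-order contributions to either free-edge or gluing-edge Fourier coefficients; both reduce to careful weighted-Sobolev bookkeeping on the necks, exploiting the spectral gap around $\eigenBound$ to keep both the right inverse bound and the quadratic-error bound uniform as $C_{\min} \to \infty$.
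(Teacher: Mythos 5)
Your overall architecture (preglue, bound the pregluing error, right inverse uniform in $\{C_j\}$, Newton correction, propagate estimates to Fourier coefficients) is the same as the paper's, but there is a concrete gap at the pregluing step that your Newton correction cannot absorb. You preglue with bump functions supported in the width-$1$ collars near $p = \pm C_j/2$ (the ones appearing in $\mu^{\annulus}_{i,k}$) and claim that truncating the holomorphic tails there produces $\mu^{\annulus}_{i,k}$-type contributions directly. This does not happen. Truncating $u^\downarrow$'s holomorphic tail $\kercoeff^\downarrow_k e^{\lambda_k(p+C/2)}\zeta_k$, $\lambda_k < 0$, near the \emph{top} of the annulus produces a term of the form $\partial_p(\text{bump})\, e^{\lambda_k(p+C/2)}\zeta_k \otimes dp^{0,1}$ supported near $p = C/2$ carrying an eigenfunction $\zeta_k$ with $\lambda_k < 0$; but the only $\mu^{\annulus}_{i,k'}$ supported near $p = C/2$ carry eigenfunctions with $\lambda_{k'} > 0$. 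These truncation terms are therefore not in $\transSubbundle_{\eigenBound}(\tree)$ (and not $\Ltwo$-orthogonal to its complement either), and for the small eigenvalues $|\lambda_k| < \eigenBound$ they have size $\sim e^{\lambda_k C_j}$, which is \emph{not} $O(e^{-\eigenBound C_{\min}})$. So the component of $\delbarJ u^{\preglue}$ orthogonal to $\transSubbundle_{\eigenBound}(\tree)$ is of size roughly $e^{-|\lambda_{-1}|C_{\min}}$, your Newton correction $\xi$ must be of that size, and then the claimed estimates \eqref{Eq:DelbarPertFreeEdgeBound} and \eqref{Eq:DelbarPertGluingEdgeBound} fail.

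The paper avoids this by a step you have no analogue of. The pregluing uses bumps $\BumpUpC, \BumpDownC$ supported around $p = \pm\ell_\pm C$ with $0 < \ell_- < \ell_+ < \tfrac12$, chosen in Definition~\ref{Def:ell} so that $\eigenBound < \tfrac12(2\ell_- + 1)\lambda_{\min}$; this forces the cutoff terms from the \emph{high} eigenmodes $|\lambda_k| > \eigenBound$ to be $O(e^{-\eigenBound C})$. But the cutoff terms from the low eigenmodes $|\lambda_k| < \eigenBound$ (the $L_0^{\updownarrow}$ of Equation~\eqref{Eq:DzeroLzero}) are still the wrong shape and not small. Section~\ref{Sec:GluingTestSections} then introduces the support-shifting sections $\psi^{\updownarrow}_k$ and adds $\psi^{\updownarrow}_0$ \emph{before} the Newton iteration: by design $D_1^{\updownarrow}\psi^{\updownarrow}_k$ is a difference of two $\mu^{\annulus}$-shaped quantities, one at the boundary and one at $\pm\ell_\pm C$, so $\psi^{\updownarrow}_0$ cancels the entire low-eigenmode cross-term $L_0^{\updownarrow}$ and simultaneously relocates the perturbation to the boundary collar where it exactly reproduces $D_0$ (Equation~\eqref{Eq:ThetaUpDownAdjusted}). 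After this zeroth step the residual $\Theta^{\updownarrow}_{\err}$ is supported in the middle of the annulus, carries only $|\lambda_k| > \eigenBound$ modes, is $\Ltwo$-orthogonal to $\transSubbundle^{\updownarrow}_{\eigenBound}$ (Equation~\eqref{Eq:ThetaErrOrthog}), and obeys the bound of Lemma~\ref{Lemma:NormBounds}; only then does the contraction of Lemma~\ref{Lemma:FixedPointMethod} produce a correction of the right size. You should reorganize your argument to (i) preglue with mid-neck cutoffs governed by constants satisfying a spectral-gap inequality as in Definition~\ref{Def:ell}, and (ii) insert the support-shifting modification to eliminate the low-eigenmode cross-terms and shape the perturbative $\delbar$ into the span of $\transSubbundle_{\eigenBound}(\tree)$, before invoking any fixed-point argument.
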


A point of weakness in the estimates provided is that when $\tree$ has more than two vertices, the error bounds on our perturbations are only controlled by the minimum length of a gluing edge. We do not believe that this can be amended by a refinement of our gluing construction in full generality. However, we will be able to refine these error estimates in the special case of $\R_{s} \times \Nhypersurface$ in Lemma \ref{Lemma:GluingCoeffs} below.

\subsection{Overlaps}\label{Sec:Overlaps}

We briefly review some technical details regarding overlaps of thickened moduli spaces.

Observe that $\ModSpaceThick(\tree)$ contains a subset of $\ModSpaceThick$ by construction. We'll explain why while capturing a more general observation. Suppose that $\subtree \subset \tree$ is a good subforest. Then the space $\transSubbundle^{\annulus}(\tree\git\subtree)|_{\ModSpaceThick(\tree)}$ is naturally identified with the subspace of $\transSubbundle^{\annulus}(\tree)|_{\ModSpaceThick(\tree)}$ given by removing all perturbations associated to edges of $\subtree$. Thus
\begin{equation*}
\ModSpaceThick(\tree\git\subtree, \tree) = \left\{ u\in \ModSpaceThick(\tree) :\ \delbarJ u \in \transSubbundle_{\eigenBound}(\tree\git\subtree) \right\}
\end{equation*}
is a subset of both
\be
\item $\ModSpaceThick(\tree\git\subtree)$, within which it has codimension $0$ and 
\item $\ModSpaceThick(\tree)$, within which it has codimension $k=\rank \transSubbundle^{\annulus}(\tree) - \rank \transSubbundle^{\annulus}(\tree\git\subtree)$.
\ee
We'll say that $\ModSpaceThick(\tree\git\subtree, \tree)$ is an \emph{overlap space for $(\tree, \subtree)$}, for we have a commutative diagram
\begin{equation}\label{Eq:OverlappingCharts}
\begin{tikzcd}
\transSubbundle_{\eigenBound}(\tree\git\subtree) \arrow[d] & \arrow[l] \transSubbundle_{\eigenBound}(\tree\git\subtree)\arrow[r]\arrow[d] & \transSubbundle_{\eigenBound}(\tree) \arrow[d] \\
\ModSpaceThick(\tree\git\subtree) & \arrow[l] \ModSpaceThick(\tree\git\subtree, \tree) \arrow[r] & \ModSpaceThick(\tree)
\end{tikzcd}
\end{equation}
where the horizontal maps are inclusions and the vertical maps are projections.

Because the $\ModSpaceThick(\tree\git\subtree)$ and $\ModSpaceThick(\tree)$ have different dimensions, their union over the $\ModSpaceThick(\tree\git\subtree, \tree)$ (defined using the quotient topology) will have some non-Hausdorff points along $\ModSpaceThick(\tree\git\subtree, \tree) \cap \partial \ModSpaceThick(\tree)$. See the discussion around \cite[Example 8.3.4]{BH:ContactDefinition}. To obtain a Hausdorff space, \cite{BH:ContactDefinition} uses ``slight enlargement'' and ``trimming'' constructions, by working with subsets of the $\ModSpaceThick(\tree)$ defined by controlling the lengths of gluing edges. These technical points are not relevant for our calculations and so will be ignored. 

Instead we'll work with the following assumption, which will make it easy to coherently patch together (multi)sections of the $\transSubbundle \rightarrow \ModSpaceThick$ along overlaps. It can be arranged by removing some open subsets touching the horizontal boundary of $\ModSpaceThick(\tree\git\subtree)$.

\begin{assump}
Suppose we are given an action bound $\actionBound$ and a constant $\Cglue > 0$ satisfying the hypotheses of Theorem \ref{Thm:Gluing} for all $(\tree, \orbitAssignment)$ whose incoming orbit $\orbit$ has $\action(\orbit) < \actionBound$. Then for each such $\tree$ with a good subforest $\subtree$, the overlap $\ModSpaceThick(\tree\git\subtree, \tree)$ is contained in the subspace of $\ModSpaceThick(\tree)$ defined by the property that for each edge in $\subtree$, the length of the associated gluing edge in $\tree$ has length at most $\Cglue + 1$.
\end{assump}

\section{Obstruction bundle gluing with semi-global Kuranishi data}\label{Sec:GluingDetails}

In this section we prove Theorem \ref{Thm:Gluing}, continuing the work on the symplectization of a general contact manifold as in \S \ref{Sec:ThickModSpacesGeneral}. Our general strategy follows \cite{BH:ContactDefinition} which modifies the gluing construction of \cite{HT:GluingII} so that it is compatible with the additional data of transverse subbundles.

Our gluing construction only slightly deviates from that of \cite{BH:ContactDefinition, HT:GluingII} so that the gluing theorem is easier to state, yielding the estimates of Equations \eqref{Eq:DelbarPertFreeEdgeBound} and \eqref{Eq:DelbarPertGluingEdgeBound}. The technical distinction between our gluing and that of \cite{BH:ContactDefinition} is the content of \S \ref{Sec:GluingTestSections} which allows us to compute (rather than estimate) the $\delbarJ$ of a gluing in some restricted scenarios.

Since the \cite{HT:GluingII} gluing construction is by now well studied, we focus our attention on obtaining the estimates on $\delbarJ$ of a glued curve appearing in Theorem \ref{Thm:Gluing}. To streamline our exposition, we'll suppress unneeded details when possible while maintaining transparency about what is missing.

\subsection{Setup}

To simplify matters, we will only glue a single pair of perturbed holomorphic maps and so use specialized notation throughout this section. In \S \ref{Sec:GluingBiggerTrees} we outline how the construction may be generalized to more complicated gluing configurations.

Let $\orbit^{\uparrow}, \orbit^{\downarrow}$ be a pair of closed $R$ orbits and  $\orderedOrbitSet^{\uparrow}, \orderedOrbitSet^{\downarrow}$ be collections of closed orbits for which $\orbit^{\downarrow} \in \orderedOrbitSet^{\uparrow}$. We pick an $\alpha$ tame almost complex structure $J$ so that $J$ and $R$ are simple along neighborhoods of all of the orbits involved.

Choose compact sets $\CompactSubset^{\updownarrow}/\R \subset \ModSpace(\orbit^{\updownarrow}, \orderedOrbitSet^{\updownarrow}) /\R$ and corresponding neighborhoods $\NbhdCompactSubset^{\updownarrow}$ containing the $\ModSpace(\orbit^{\updownarrow}, \orderedOrbitSet^{\updownarrow})$ within the manifolds of maps $\Map(\orderedOrbitSet^{\updownarrow}, \orbit)$. Choose also $\eigenBound > 0$ so that the subbundles $\transSubbundle_{\eigenBound}^{\updownarrow} \rightarrow \NbhdCompactSubset^{\updownarrow}$ are transverse as described in \S \ref{Sec:TransSubbundleGeneralDef}. We will be gluing maps in the associated thickened moduli spaces $\ModSpaceThick^{\updownarrow}$ along $\orbit^{\downarrow}$ using a neck length parameter $C \gg 0$. To simplify notation, we assume $\action_{\alpha}(\orbit) = 1$.

We fix a simple neighborhood of $\orbit^{\downarrow}$ of the form $\Norbit = \Circle \times \disk^{2n}$ on which we use coordinates $(t, z) = (t, (x, y))$ and assume that $J$ locally take the form
\begin{equation*}
J\partial_{s} = \partial_{t} + X_{Q}, \quad J \partial_{x_{i}} = \partial_{y_{i}}
\end{equation*}
for a quadratic form $Q$ on $\disk^{2n}$.\footnote{In the notation of \S \ref{Sec:SimpleNorbit}, we are forcing $n_{-} = 0$. The general case (with $n_{-}$ not necessarily zero) only requires more notation to deal with sections of $\Norbit$ in our analysis rather that functions to $\R^{2n}$.} The associated asymptotic operator $\AsymptoticOp_{\orbit}$ for $\orbit$ has an eigendecomposition we will denote by $(\lambda_{k}, \zeta_{k})$ for $k \in \Z_{\neq 0}$.

Write $u^{\updownarrow} \in \ModSpaceThick^{\updownarrow}/\R$ for the maps which we will be gluing whose domains will be denoted $\Sigma^{\updownarrow}$. To simplify our exposition, we assume that the $\Sigma^{\updownarrow}$ are cylinders, planes, or pairs of pants so that we do not need to concern ourselves with Teichm\"{u}ller spaces. Because we will work with simple maps, variations in domain complex structures will only contribute terms to $\delbarJ$ supported away from annuli and half-cylinders where the majority of our analysis will take place.

\subsection{Riemannian metrics and Banach spaces}\label{Sec:Metrics}

We assume that $\R_{s} \times M$ is equipped with a $\R_{s}$-invariant Riemannian metric $g$. On $\R_{s} \times \Norbit$, assume that $g$ is of the form $g = ds^{\otimes 2} + g_{\disk^{2n}}$ where $g_{\disk^{2n}}$ is the standard Euclidean metric on $\disk^{2n}$. So the exponential map $\exp$ is simply vector addition in local coordinates on this subset of $\R_{s} \times M$.

The gluing map is defined in two steps: First we assign to each triple $(u^{\downarrow}, u^{\uparrow}, C)$ with $\delbar u^{\updownarrow} \in \transSubbundle^{\updownarrow}$ a pair of sections $\psi^{\updownarrow}$ for which
\begin{equation*}
\delbar u_{\psi} \in \transSubbundle'(\tree), \quad u_{\psi} = \preglue(\exp_{u^{\downarrow}}(\psi^{\downarrow}), \exp_{u^{\uparrow}}(\psi^{\uparrow}), C).
\end{equation*}
where $\transSubbundle'(\tree)$ is obtained by shifting the supports of perturbations along our gluing neck. Second, we get a map with $\delbar \in \transSubbundle(\tree)$ by again shifting supports as in \S \ref{Sec:ModifingSupports}. The domain of $u_{\psi}$ will be a Riemann surface $\Sigma_{C}$ whose topology $\Sigma$ will be independent of $C$ but whose complex structure will in general vary with $C$. The $u_{\psi}$ will live in a manifold of maps $\Map$ with domain $\Sigma$.

The $\psi^{\updownarrow}$ live in $\delta$-weighted $\Sobolev^{1, 2}$ Sobolev completions
\begin{equation*}
\psi^{\downarrow} \in \Banach^{\downarrow}, \quad \psi^{\uparrow} \in \Banach^{\uparrow}
\end{equation*}
of the tangent spaces $\Banach^{\updownarrow} \rightarrow \Map^{\updownarrow}$ to the manifolds of maps $\Map^{\updownarrow}$. Write
\begin{equation*}
\fancyVB^{\downarrow} \rightarrow \Map^{\downarrow}, \quad \fancyVB^{\uparrow} \rightarrow \Map^{\uparrow},\quad \fancyVB \rightarrow \Map
\end{equation*}
respectively for vector bundles whose fibers are $\delta$-weighted $\Ltwo$ completions of the
\begin{equation*}
\Omega^{0, 1}((u^{\downarrow})^{\ast}T(\R_{s}\times M), \Sigma^{\downarrow}), \quad \Omega^{0, 1}((u^{\uparrow})^{\ast}T(\R_{s}\times M), \Sigma^{\uparrow}), \quad \Omega^{0, 1}((u)^{\ast}T(\R_{s}\times M), \Sigma).
\end{equation*}
In \cite{BH:ContactDefinition, HT:GluingII}, Morrey spaces are used for Banach completion although we will only need $\Ltwo$ bounds for the estimates of Theorem \ref{Thm:Gluing}. The Sobolev setup is used in classical gluing constructions, eg. \cite[\S 4]{Floer:Morse}.

When considering variations $u_{\psi} = \exp_{u}(\psi)$ of a map $u$ we implicitly identify $\fancyVB_{u_{\psi}}$ with $\fancyVB_{u}$ using parallel transport. Due the flatness of our metric $g$ along the neck region, disregard of parallel transport will be inconsequential in our analysis.

\subsection{Pregluing}\label{Sec:PregluingRecipe}

Fix elements $u^{\updownarrow} \in \NbhdCompactSubset^{\updownarrow}/\R_{s}$ with domains $\Sigma^{\updownarrow}$ and simple half-cylinders 
\begin{equation*}
\halfcyl^{\downarrow} = [0, \infty) \times \Circle \subset \Sigma^{\downarrow}, \quad \halfcyl^{\uparrow} = (-\infty, 0] \times \Circle \subset \Sigma^{\uparrow}
\end{equation*}
associated to the positive and negative punctures asymptotic to $\orbit^{\downarrow}$. We may write
\begin{equation}\label{Eq:Uupdownends}
u^{\downarrow}|_{\halfcyl^{\downarrow}} = \left(p, q, z^{\downarrow} \right), \quad u^{\uparrow}|_{\halfcyl^{\uparrow}} = \left(p, q, z^{\uparrow}\right)
\end{equation}
using coordinates on $\Norbit$ after an $s$-translation of $u^{\uparrow}$. This is possible by assuming that the $(s, t)$ have no critical points so that $u^{\updownarrow}$ are locally graphs of functions $z^{\updownarrow}$ over a half-cylinder in $\R_{s} \times (\Circle)_{t}$.

\begin{figure}[h]
\begin{overpic}[scale=.6]{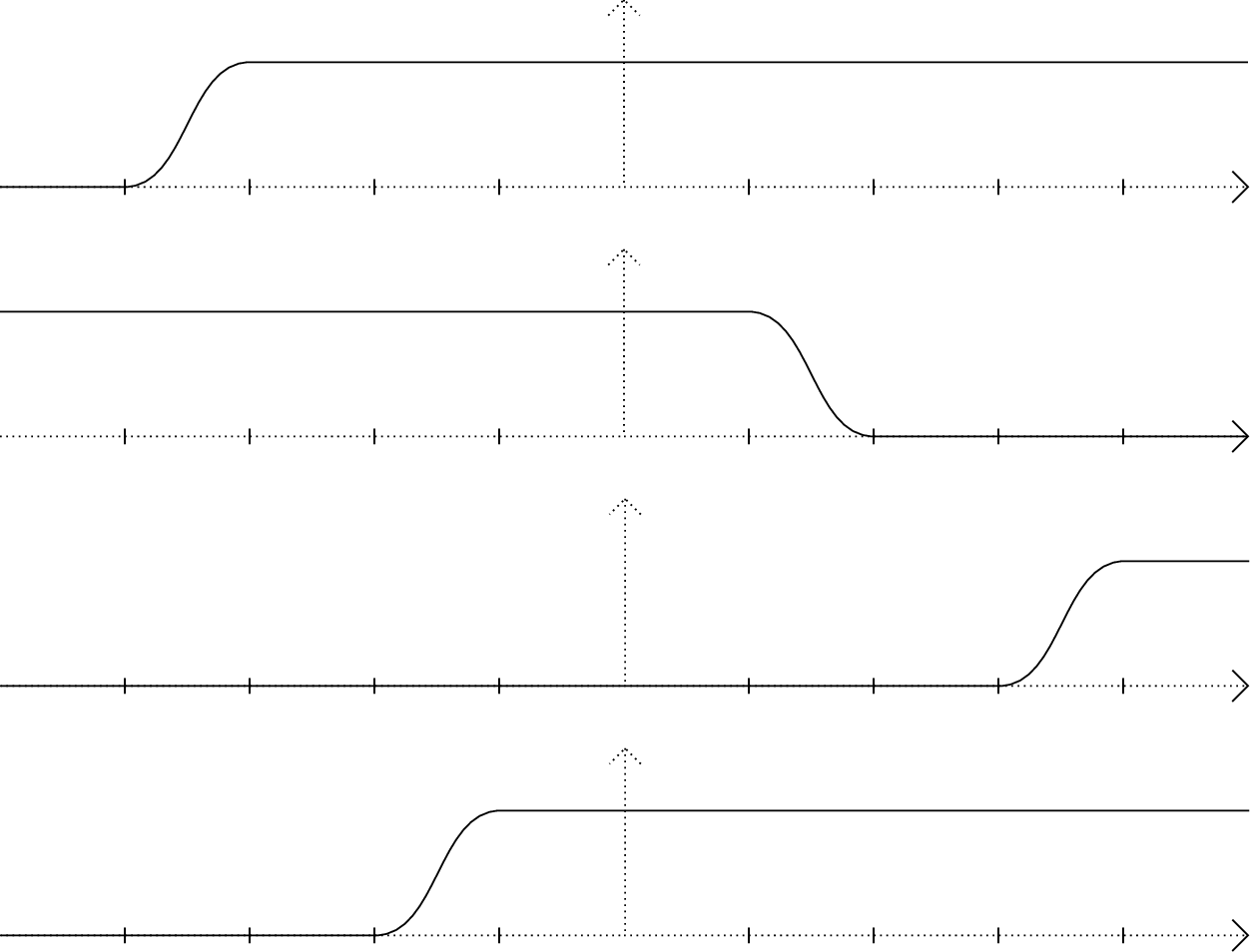}
\put(-8, 64){$\Bump{-C/2}{-C/2+1}$}
\put(-8, 44){$\BumpDownC$}
\put(-8, 24){$\Bump{C/2-1}{C/2}$}
\put(-8, 4){$\BumpUpC$}
\put(89, -5){$\frac{C}{2}$}
\put(76, -5){$\frac{C}{2} - 1$}
\put(68, -5){$\ell_{+}C$}
\put(59, -5){$\ell_{-}C$}
\end{overpic}
\vspace{5mm}
\caption{Bump functions are shown which are used in the gluing construction and definition of the $\mu^{G}_{k}$.}
\label{Fig:GluingBumps}
\end{figure}

Using the above expressions, define a map
\begin{equation*}
\begin{gathered}
\annulus_{C} = \left[-\frac{C}{2}, \frac{C}{2}\right] \times \Circle, \quad u_{C}: \annulus_{C} \rightarrow \Norbit, \\
u_{C}(p, q) = \left(p, q, \Bump{C}{\downarrow}z^{\downarrow}_{C} + \Bump{C}{\downarrow}z^{\downarrow}_{C}\right) \\
\BumpDownC = \Bump{\ell_{-} C}{\ell_{+}C}(p), \quad z^{\downarrow}_{C} = z^{\downarrow}\left(p + \frac{C}{2}, q\right), \\
\BumpUpC = \Bump{-\ell_{+}C}{-\ell_{-}C}(p), \quad z^{\downarrow}_{C} = z^{\downarrow}\left(p - \frac{C}{2}, q\right)\\
\end{gathered}
\end{equation*}
along the neck annulus $\annulus_{C} \subset \Sigma_{C}$. The constants $\ell_{\pm} > 0$ will depend on the eigenvalues of $\AsymptoticOp_{\orbit}$ and will be specified later in Definition \ref{Def:ell}. Controlling the magnitude of $\ell_{\pm}$ will be important for controlling the accuracy of gluing estimates.\footnote{Specifically, modifying the constant $\ell$ will have the same effect as modifying the constants $h, r$ in the gluing constructions of \cite{BH:ContactDefinition, HT:GluingII}.} For now we only need to know that the $\ell_{\pm}$ are as depicted in Figure \ref{Fig:GluingBumps}, with
\begin{equation*}
\ell_{-}C < \ell_{+}C < \frac{C}{2} - 1
\end{equation*}
with $C > C_{0}$ for some large $C_{0}$. The map $u_{C}$ has the property that
\begin{equation*}
\begin{gathered}
p \in [0, 1] \implies u_{C}\left(p - \frac{C}{2}, q\right) = u^{\downarrow}|_{\halfcyl^{\downarrow}}(p, q), \\
p \in [-1, 0] \implies u_{C}\left(p + \frac{C}{2}, q\right) = u^{\uparrow}|_{\halfcyl^{\uparrow}}(p, q).
\end{gathered}
\end{equation*}

Define Riemann surfaces
\begin{equation*}
\Sigma_{C} = \left(\Sigma^{\uparrow} \setminus \halfcyl^{\uparrow}\right) \cup \annulus_{C} \cup \left(\Sigma^{\downarrow} \setminus \halfcyl^{\downarrow}\right)
\end{equation*}
by identifying $\partial \left(\Sigma^{\uparrow} \setminus \halfcyl^{\uparrow}\right)$ with $\{C/2\} \times \Circle$ and $\partial \left(\Sigma^{\downarrow} \setminus \halfcyl^{\downarrow}\right)$ with $\{-C/2\} \times \Circle$. We extend $u_{C}$ to all of $\Sigma_{C}$ by defining
\begin{equation*}
\begin{gathered}
u^{\uparrow}_{C} = (C/2 + \pi_{\R_{s}}u^{\uparrow}, \pi_{M}u^{\uparrow}), \quad u^{\downarrow}_{C} = (-C/2 + \pi_{\R_{s}}u^{\downarrow}, \pi_{M}u^{\downarrow}),\\
u_{C}|_{\Sigma^{\uparrow} \setminus \halfcyl^{\uparrow}} = u^{\updownarrow}_{C}
\end{gathered}
\end{equation*}

\subsection{$\delbar$ equations for pregluings}

We calculate $\delbarJ u_{C}$ using Equation \eqref{Eq:SimpleDelBar} when the $u^{\updownarrow} \in \NbhdCompactSubset^{\updownarrow}/\R$. We split the calculation up into the $(s, t)$ coordinates and the $z$ coordinates:
\begin{equation}\label{Eq:DelbarSplitVerbose}
\begin{aligned}
\delbarJ u &= \BumpDownC \left(\delbar_{\domainJ, J_{0}}z^{\downarrow}_{C} - X(z^{\downarrow}_{C})\otimes dp\circ \domainJ  - J_{0}X(z^{\downarrow}_{C})\otimes dq\circ  \domainJ  + \BumpUpCD z^{\uparrow}_{C}\otimes dp^{0, 1}\right)\\
&+ \BumpUpC\left(\delbar_{\domainJ, J_{0}}z^{\uparrow}_{C} - X(z^{\uparrow}_{C})\otimes dp\circ \domainJ  - J_{0}X(z^{\uparrow}_{C})\otimes dq \circ  \domainJ + \BumpDownCD z^{\downarrow}_{C}\otimes dp^{0, 1} \right).
\end{aligned}
\end{equation}
We have used the fact that $\BumpDownCD = \BumpUpC\BumpDownCD$ and $\BumpUpCD = \BumpDownC\BumpDownCD$ due to the domains of the their supports.

The above calculation may be organized as
\begin{equation}\label{Eq:ThetaUpDown}
\begin{gathered}
\delbarJ u = \BumpDownC\ThetaDown + \BumpUpC\ThetaUp \\
\ThetaDown = \left( D^{\downarrow} + L^{\downarrow} \right), \quad \ThetaUp = \left( D^{\uparrow} + L^{\uparrow} \right)\\
D^{\downarrow} = \delbarJ u^{\downarrow}_{C}, \quad D^{\uparrow} = \delbarJ u^{\downarrow}_{C},\\
L^{\downarrow} = \BumpUpCD \left(0, 0, z^{\uparrow}_{C}\right) \otimes dp^{0, 1}, \quad L^{\uparrow} = \BumpDownCD \left(0, 0, z^{\downarrow}_{C}\right) \otimes dp^{0, 1}.
\end{gathered}
\end{equation}
Note the direction of the arrows in the $L^{\updownarrow}$. We may view the $\ThetaUpDown$ as sections of bundles over the $\Sigma^{\updownarrow}$,
\begin{equation*}
\ThetaUpDown \in \Omega^{0, 1}((u^{\updownarrow})^{\ast}T(\R \times M)).
\end{equation*}

\subsection{Taylor expansion about pregluings}

Let $\psi^{\updownarrow}$ be sections of $u_{C}^{\ast}T(\R \times M)$ over the $\Sigma^{\updownarrow}$ and define
\begin{equation*}
u_{\psi, C} = \glue^{\pre}(\exp_{u^{\downarrow}}(\psi^{\downarrow}), \exp_{u^{\uparrow}}(\psi^{\uparrow}), C), \quad u^{\updownarrow} \in \ModSpaceThick^{\updownarrow}.
\end{equation*}
With this definition in place and the $u^{\updownarrow}$ fixed, we view the $\ThetaUpDown$ as functions of the pair $\psi = (\psi^{\downarrow}, \psi^{\uparrow})$ and study their Taylor expansions. Over the cylindrical ends of the $\Sigma^{\updownarrow}$, the $\psi^{\updownarrow}$ will be written
\begin{equation*}
\psi^{\updownarrow} = (0, 0, z^{\updownarrow}_{\psi})
\end{equation*}
using the coordinates on $\Norbit$. Observe that the $s$ and $t$ coordinates vanish here because the $\psi^{\updownarrow}$ vary within the space of simple maps.

Following Equation \eqref{Eq:PerturbationsOverNeckGeneral}, along the neck annulus $\annulus_{C} \subset \Sigma_{C}$, we define perturbation sections spanning a finite dimensional subspace of $\fancyVB$
\begin{equation}
\mu^{\annulus}_{k} = \begin{cases}
\frac{\partial}{\partial p} \left(\Bump{-C/2}{-C/2 + 1} \right) e^{\lambda_{k}(p + C/2)} \zeta_{k}(q) \otimes dp^{0, 1} & -\eigenBound < \lambda_{k} < 0, \\
\frac{\partial}{\partial p}\left(\Bump{C/2 - 1}{C/2}\right) e^{\lambda_{k}(p - C/2)} \zeta_{k}(q) \otimes dp^{0, 1} & 0 < \lambda_{k} < \eigenBound.
\end{cases}
\end{equation}
These pertubations, together with those supported on the cylindrical ends of $\Sigma_{C}$ (away from $\annulus_{C}$) span a space we'll call $\transSubbundle_{\eigenBound}'(\tree)$. The goal of gluing is to find $\psi$ for which $\delbar u_{\psi, C} \in \transSubbundle_{\eigenBound}'(\tree)$. Then we'll modify $u_{\psi, C}$ the supports of our perturbations as in \S \ref{Sec:ModifingSupports} to get a map whose $\delbar$ is in $\transSubbundle(\tree)$.

We consider first-order Taylor expansions of the $D^{\updownarrow}$ and $L^{\updownarrow}$ writing each such $F$ as $F_{0} + F_{1} + F_{\hot}$:
\begin{equation}\label{Eq:ThetaTaylorExpansion}
\begin{gathered}
\ThetaDown(\psi^{\downarrow}, \psi^{\uparrow}) = \left( D_{0}^{\downarrow} + L_{0}^{\downarrow} \right) + \left(D_{1}^{\downarrow}(\psi^{\downarrow}) + L_{1}^{\downarrow}(\psi^{\uparrow})\right) + \left( D_{\hot}^{\downarrow}(\psi^{\downarrow}) \right),\\
D_{0}^{\downarrow} = \delbarJ u^{\downarrow}_{C}, \quad D_{1}^{\downarrow}\psi^{\downarrow} = \Dlinearized_{u}\psi^{\downarrow},\\
L_{0}^{\downarrow} = \BumpUpCD (0, 0, z^{\uparrow}_{C}) \otimes dp^{0, 1}, \quad L_{1}^{\downarrow}(\psi^{\uparrow}) = \BumpUpCD \psi^{\uparrow} \otimes dp^{0, 1}, \quad L^{\downarrow}_{\hot} = 0.
\end{gathered}
\end{equation}
A few comments and observations on this collection of equations:
\be
\item Because we are using the $L$-simple geometric framework, $\delbar$ and $\Dlinearized$ are linear operators when restricted to $\R_{s} \times \Norbit$.
\item Therefore $D^{\downarrow}_{\hot}$ is supported on the complement of the cylindrical ends of $\Sigma^{\downarrow}$, depending only on $\psi^{\downarrow}$ (and not $\psi^{\uparrow}$).
\item Of course, an expansion for $\ThetaUp$ is described by reversing arrows in the above formulas.
\ee

Since $\delbar u^{\updownarrow} \in \transSubbundle^{\updownarrow}_{\eigenBound}$, we apply Lemma \ref{Lemma:HalfCylPerturbedFourier} to write
\begin{equation}\label{Eq:GluingDetailsEndExpansions}
\begin{aligned}
z^{\downarrow} &= \sum_{\lambda_{k} < 0} \kercoeff_{k}^{\downarrow}e^{\lambda_{k}p}\zeta_{k}(q) + \left(\Bump{0}{1} - 1\right)\sum_{0 < -\lambda_{k} < \eigenBound} \cokcoeff_{k}^{\downarrow} e^{\lambda_{k}p}\zeta_{k}(q)\\
z^{\uparrow} &= \sum_{\lambda_{k} > 0} \kercoeff_{k}^{\uparrow}e^{\lambda_{k}p}\zeta_{k}(q) + \left(\Bump{-1}{0} - 1\right)\sum_{0 < \lambda_{k} < \eigenBound} \cokcoeff_{k}^{\uparrow} e^{\lambda_{k}p}\zeta_{k}(q)
\end{aligned}
\end{equation}
where the $\kercoeff^{\updownarrow}_{k}$ and $\cokcoeff^{\updownarrow}_{k}$ are our holomorphic and perturbative Fourier coefficients. Using the above expression we can explicitly describe the zeroth order terms in our Taylor expansion. We calculate
\begin{equation}\label{Eq:DzeroLzero}
\begin{aligned}
D_{0}^{\downarrow} &= \frac{\partial}{\partial p} \left(\Bump{-C/2}{-C/2+1}\right)\sum_{0 < -\lambda_{k} < \eigenBound} \cokcoeff_{k}^{\downarrow} e^{\lambda_{k}(p + C/2)}\zeta_{k}(q)\otimes dp^{0, 1}\\
D_{0}^{\uparrow} &= \frac{\partial}{\partial p} \left(\Bump{C/2 - 1}{C/2}\right)\sum_{0 < \lambda_{k} < \eigenBound} \cokcoeff_{k}^{\uparrow} e^{\lambda_{k}(p - C/2)}\zeta_{k}(q)\otimes dp^{0, 1}\\
L_{0}^{\downarrow} &= \BumpUpCD \sum_{\lambda_{k} > 0} \kercoeff_{k}^{\uparrow}e^{\lambda_{k}(p - C/2)}\zeta_{k}(q)\otimes dp^{0, 1} = \BumpUpCD \sum_{\lambda_{k} > 0} \left(e^{-\lambda_{k}C}\kercoeff_{k}^{\uparrow}\right)e^{\lambda_{k}(p + C/2)}\zeta_{k}(q)\otimes dp^{0, 1},\\
L_{0}^{\uparrow} &= \BumpDownCD \sum_{\lambda_{k} < 0} \kercoeff_{k}^{\downarrow}e^{\lambda_{k}(p + C/2)}\zeta_{k}(q)\otimes dp^{0, 1} = \BumpDownCD \sum_{\lambda_{k} < 0} \left(e^{\lambda_{k}C}\kercoeff_{k}^{\downarrow}\right)e^{\lambda_{k}(p - C/2)}\zeta_{k}(q)\otimes dp^{0, 1}
\end{aligned}
\end{equation}
Here it is important to track shifts in the $p$-coordinate. A few observations:
\be
\item The $D^{\updownarrow}_{0}$ terms are already in $\transSubbundle'(\tree)$ due to the shifting in $p$.
\item From the right-hand sides of the last two lines, the $e^{\pm \lambda_{k}C}$ factors shrink the $L_{0}^{\updownarrow}$ exponentially in $C$.
\item The $L^{\updownarrow}_{0}$ are not elements of the $\transSubbundle^{\updownarrow}$, because the support of $\BumpUpCD$ does not agree with that of $ \frac{\partial}{\partial p} \left(\Bump{-C/2}{-C/2 + 1}\right)$ and the support of $\BumpUpCD$ does not agree with that of $\frac{\partial}{\partial p} \left(\Bump{C/2 - 1}{C/2}\right)$.
\ee

\subsection{Support shifting sections}\label{Sec:GluingTestSections}

To deal with the last item above, we'll be to add some small $\psi^{\pm}$ terms by hand to eliminate the $L_{0}^{\updownarrow}$. This can be thought of as a ``zeroth step'' in the Newton iteration (Lemma \ref{Lemma:FixedPointMethod}) used to solve the gluing problem. The construction here is only a slight variation on \S \ref{Sec:ModifingSupports}.

For eigenvalue-eigenfunction pair $\lambda_{k}, \zeta_{k}$ for the asymptotic operator $\AsymptoticOp_{\orbit}$, define
\begin{equation*}
\begin{aligned}
\psi_{k}^{\downarrow} &= (0, 0, z_{k}^{\downarrow}), \quad z_{k}^{\downarrow} = \Bump{-C/2}{-C/2+1}\Bump{-\ell_{-}C}{-\ell_{+}C}e^{\lambda_{k}(p + C/2)}\zeta_{k}(q), \quad k > 0, \\
\psi_{k}^{\uparrow} &= (0, 0, z_{k}^{\uparrow}), \quad z_{k}^{\uparrow} = \Bump{\ell_{-}C}{\ell_{+}C}\Bump{C/2}{C/2-1}e^{\lambda_{k}(p - C/2)}\zeta_{k}(q), \quad k < 0.
\end{aligned}
\end{equation*}
See Figure \ref{Fig:TestBumps} for a depiction of the relevant bump functions.

\begin{figure}[h]
\begin{overpic}[scale=.6]{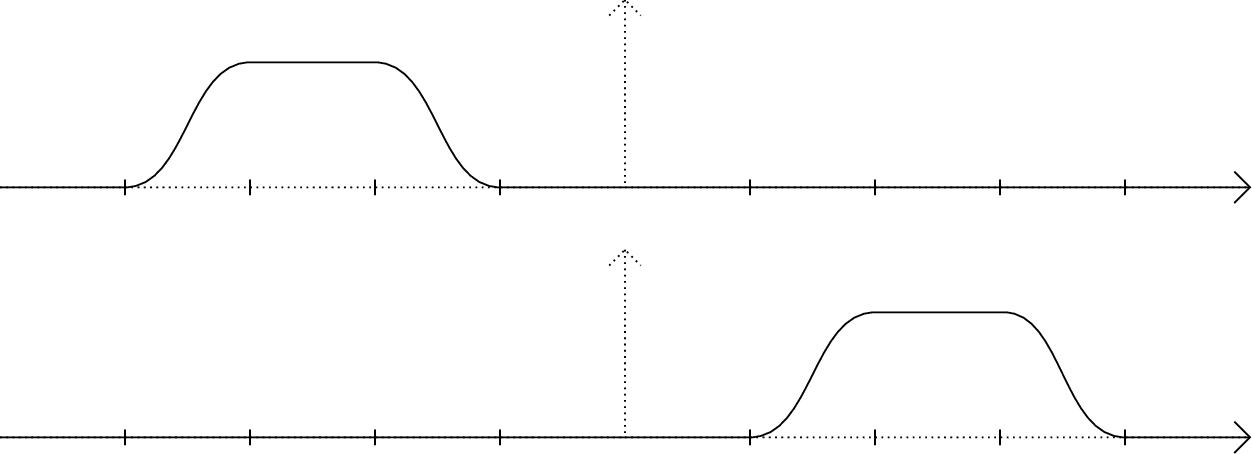}
\put(-8, 24){$\Bump{-C/2}{-C/2+1}\Bump{-\ell_{-}C}{-\ell_{+}C}$}
\put(-8, 4){$\Bump{\ell_{-}C}{\ell_{+}C}\Bump{C/2}{C/2-1}$}
\put(89, -5){$\frac{C}{2}$}
\put(76, -5){$\frac{C}{2} - 1$}
\put(68, -5){$\ell_{+}C$}
\put(59, -5){$\ell_{-}C$}
\end{overpic}
\vspace{5mm}
\caption{Bump functions used in the construction of the $\psi^{\updownarrow}_{k}$.}
\label{Fig:TestBumps}
\end{figure}

The $\psi^{\updownarrow}_{k}$ are such that
\begin{equation*}
\begin{aligned}
D^{\downarrow}_{1}\psi^{\downarrow}_{k} &= \left(\frac{\partial}{\partial p}\left(\Bump{-C/2}{-C/2 + 1}\right) - \BumpUpCD \right)e^{\lambda_{k}(p+C/2)}\zeta_{k}(q)\otimes dp^{0, 1},\\
D^{\uparrow}_{1}\psi^{\downarrow}_{k} &= \left(-\frac{\partial}{\partial p}\left(\Bump{C/2-1}{C/2}\right) - \BumpDownCD \right)e^{\lambda_{k}(p-C/2)}\zeta_{k}(q)\otimes dp^{0, 1}.
\end{aligned}
\end{equation*}
Here it is important to keep track of $\pm$ signs: The minus sign in the second line above comes from the fact that $-\frac{\partial}{\partial p}\left(\Bump{C/2-1}{C/2}\right) = \frac{\partial}{\partial p}\left(\Bump{C/2}{C/2-1}\right)$. Note that $L^{\downarrow}(\psi^{\uparrow}_{k}) = 0$ (and vice versa, reversing arrows) as the $\BumpUpCD$ and $\psi^{\uparrow}$ have disjoint support.

We'll perturb the $u^{\updownarrow}_{C}$ by combinations of the $\psi^{\updownarrow}_{k}$ to eliminate the $L_{0}^{\updownarrow}$ by a $D_{1}^{\updownarrow}$ contribution. Define
\begin{equation}\label{Eq:PsiNotDefinition}
\psi^{\downarrow}_{0} = \sum_{-\eigenBound < \lambda_{k} < 0} e^{-\lambda_{k}C}\kercoeff^{\uparrow}_{k}\psi_{k}^{\downarrow}, \quad \psi^{\uparrow}_{0} = \sum_{0 <  \lambda_{k} < \eigenBound} e^{\lambda_{k}C}\kercoeff^{\uparrow}_{k}\psi_{k}^{\downarrow}.
\end{equation}
So $\psi^{\downarrow}_{0}$ ($\psi^{\uparrow}_{0}$) is determined by the holomorphic Fourier coefficients of $u^{\uparrow}$ ($u^{\downarrow}$, respectively).

The $\psi^{\updownarrow}_{0}$ are designed so that
\begin{equation}\label{Eq:ThetaUpDownAdjusted}
\begin{gathered}
\ThetaDown(\psi^{\downarrow}_{0}, \psi^{\uparrow}_{0}) =  \ThetaDown_{0} + \ThetaDown_{\err}, \quad \ThetaUp(\psi^{\downarrow}_{0}, \psi^{\uparrow}_{0}) =  \ThetaUp_{0} + \ThetaUp_{\err},\\
\ThetaDown_{0} = \sum_{0 < \lambda_{k} < \eigenBound}\left(\cokcoeff^{\downarrow}_{k} + e^{-\lambda_{k}C}\kercoeff^{\uparrow}_{k}\right)\mu^{\annulus}_{k}, \quad \ThetaUp_{0} = \sum_{0 < -\lambda_{k} < \eigenBound}\left(\cokcoeff^{\downarrow}_{k} - e^{\lambda_{k}C}\kercoeff^{\uparrow}_{k}\right)\mu^{\annulus}_{k},\\
\ThetaDown_{\err} = \BumpUpCD \sum_{\eigenBound < \lambda_{k}} e^{-\lambda_{k}C}\kercoeff_{k}^{\uparrow}e^{\lambda_{k}(p + C/2)}\zeta_{k}(q)\otimes dp^{0, 1},\\ \ThetaUp_{\err} = \BumpUpCD \sum_{\lambda_{k} < -\eigenBound} e^{\lambda_{k}C}\kercoeff_{k}^{\downarrow}e^{\lambda_{k}(p - C/2)}\zeta_{k}(q)\otimes dp^{0, 1}
\end{gathered}
\end{equation}

A few observations regarding this equation:
\be
\item The $\ThetaUpDown_{0}$ give us exactly the estimated pertubative Fourier coefficients in Equation \eqref{Eq:DelbarPertGluingEdgeBound}. 
\item For $C$ large, the error terms $\ThetaUpDown_{\err}$ are small for at least two reasons: First, the $\BumpDownCD, \BumpUpCD \rightarrow 0$ as $C \rightarrow 0$. Second, each of the coefficients $e^{\pm \lambda_{k}C}$ in the $\ThetaUpDown_{\err}$ likewise shrinks as $C$ grows.
\item When viewed as elements of $\fancyVB^{\updownarrow}$,
\begin{equation}\label{Eq:ThetaErrOrthog}
\ThetaUpDown_{\err} \perp \transSubbundle^{\updownarrow}
\end{equation}
where $\perp$ indicates $\Ltwo$ orthogonality, by considering supports of the $\ThetaUpDown_{\err}$ and the supports of the sections spanning the $\transSubbundle^{\updownarrow}$. See Figure \ref{Fig:GluingBumps}.
\ee

\begin{observ}\label{Obs:GluingComplete}
When there are no higher asymptotics -- that is the $\kercoeff^{\updownarrow}_{k} = 0$ for $|\lambda_{k} > \eigenBound|$ -- then $\ThetaUpDown_{\err} = 0$. In this case we have already successfully glued the $u^{\updownarrow}$ to obtain a map $\preglue(\exp_{u^{\downarrow}}(\psi^{\downarrow}_{0}), \exp_{u^{\uparrow}}(\psi^{\uparrow}_{0}))$  whose $\delbarJ$ lies in $\transSubbundle'$. This will be especially useful when applied to perturbed holomorphic curves in $\R_{s} \times \Nhypersurface$.
\end{observ}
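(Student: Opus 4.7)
The plan is to verify the observation by direct inspection of Equation \eqref{Eq:ThetaUpDownAdjusted} together with the support analysis from the pregluing construction, with no Newton iteration required. First I would read off from the explicit formulas in \eqref{Eq:ThetaUpDownAdjusted} that each $\ThetaUpDown_{\err}$ is a sum indexed by eigenvalues with $|\lambda_{k}| > \eigenBound$ whose coefficients are scalar multiples of $\kercoeff^{\updownarrow}_{k}$; the hypothesis on the Fourier coefficients then immediately kills every summand, giving $\ThetaUpDown_{\err} = 0$.

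Second I would argue that the Taylor expansion of $\ThetaUpDown$ at $\psi_{0} = (\psi^{\downarrow}_{0}, \psi^{\uparrow}_{0})$ collapses to its linearization. The sections $\psi^{\updownarrow}_{0}$ are supported inside the simple half-cylinders $\halfcyl^{\updownarrow}$, where $J$ is $L$-simple and both $\delbar$ and $\Dlinearized$ are genuinely linear operators, so the higher-order remainders $D_{\hot}^{\updownarrow}(\psi^{\updownarrow}_{0})$ from \eqref{Eq:ThetaTaylorExpansion} vanish identically (as do the $L_{\hot}^{\updownarrow}$). Together with step one, this yields $\ThetaDown(\psi_{0}) = \ThetaDown_{0}$ and $\ThetaUp(\psi_{0}) = \ThetaUp_{0}$.

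Third, I would assemble the global $\delbarJ u_{\psi_{0}}$ from its neck and off-neck pieces: on $\annulus_{C}$ Equation \eqref{Eq:ThetaUpDown} gives $\BumpDownC\ThetaDown_{0} + \BumpUpC\ThetaUp_{0}$, while off the neck $u_{\psi_{0}}$ agrees with the shifted maps $u^{\updownarrow}_{C}$ and the $\delbarJ$ there inherits the half-cylindrical perturbations $\delbarJ u^{\updownarrow} \in \transSubbundle^{\updownarrow}_{\eigenBound}$. I expect the main (mild) obstacle to be the bookkeeping confirming that the cutoffs $\BumpDownC, \BumpUpC$ are identically $1$ on the supports of the neck perturbations $\mu^{\annulus}_{k}$, which are concentrated in the slivers adjacent to $\{p = \pm C/2\}$; this follows from the ordering $\ell_{-}C < \ell_{+}C < C/2 - 1$ built into the pregluing of \S \ref{Sec:PregluingRecipe}. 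Once this support check is in hand, the cutoffs act trivially on $\ThetaUpDown_{0}$ and every surviving term lands in $\transSubbundle'$, completing the argument.
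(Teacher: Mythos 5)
Your proof is correct and matches the paper's approach; the observation is stated without a separate proof, as it is meant to follow immediately from Equation \eqref{Eq:ThetaUpDownAdjusted} and the preceding support analysis, and your three steps (killing $\ThetaUpDown_{\err}$, noting the Taylor expansion is exact because $L^{\updownarrow}_{\hot} = 0$ and $D^{\updownarrow}_{\hot}(\psi^{\updownarrow}_{0}) = 0$, and the support bookkeeping showing $\BumpDownC, \BumpUpC$ act trivially on the $\mu^{\annulus}_{k}$) spell out precisely what the paper leaves implicit. One remark: your step two is essentially redundant given that \eqref{Eq:ThetaUpDownAdjusted} is already stated as an exact equality (the vanishing of the higher-order remainders is already built into the derivation of that equation), though re-deriving it as you do is a harmless sanity check.
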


\subsection{The gluing problem}\label{Sec:GluingProblem}

Write $\transSubbundle_{\eigenBound}^{\updownarrow, \complement} \subset \fancyVB^{\updownarrow}$ for the $\Ltwo$ complement of the $\transSubbundle^{\updownarrow}$ inside of the $\fancyVB^{\updownarrow}$ and
\begin{equation*}
\begin{gathered}
\pi_{\transSubbundle^{\updownarrow}_{\eigenBound}}: \fancyVB^{\updownarrow} \rightarrow \transSubbundle^{\updownarrow}_{\eigenBound}, \quad \pi_{\transSubbundle^{\updownarrow}_{\eigenBound}}^{\complement} = \Id - \pi_{\transSubbundle^{\updownarrow}_{\eigenBound}}: \fancyVB^{\updownarrow} \rightarrow \transSubbundle^{\updownarrow, \complement}_{\eigenBound}
\end{gathered}
\end{equation*}
for the corresponding projection maps. Furthermore, we write
\begin{equation*}
\ker^{\updownarrow} = \ker(\pi^{\complement}_{\transSubbundle^{\updownarrow}_{\eigenBound}} \circ D^{\updownarrow}_{1}), \quad \ker^{\updownarrow, \complement} = (\ker^{\updownarrow})^{\complement} \subset \Banach^{\updownarrow}.
\end{equation*}
From Equations \eqref{Eq:ThetaUpDownAdjusted} and \eqref{Eq:ThetaErrOrthog} it follows that
\begin{equation}\label{Eq:ThetaOrthog}
\pi^{\complement}_{\transSubbundle^{\updownarrow}_{\eigenBound}}\Theta^{\updownarrow} = \ThetaUpDown_{\err}.
\end{equation}

\begin{prob}\label{Prob:Gluing}
Provided $u^{\downarrow}, u^{\uparrow}, C$, which determine $\psi^{\downarrow}_{0}, \psi^{\uparrow}_{0}$ we seek to find $\psi^{\downarrow}_{1}, \psi^{\uparrow}_{1}$ for which
\begin{equation*}
\pi_{\transSubbundle^{\downarrow}_{\eigenBound}}^{\complement} \circ \ThetaDown(\psi^{\downarrow}_{0} + \psi^{\downarrow}_{1}, \psi_{0}^{\uparrow} + \psi^{\uparrow}_{1}) = \pi_{\transSubbundle^{\uparrow}_{\eigenBound}}^{\complement} \circ \ThetaUp(\psi^{\downarrow}_{0} + \psi^{\downarrow}_{1}, \psi^{\uparrow}_{0} + \psi^{\uparrow}_{1}) = 0
\end{equation*}
subject to the constraints
\begin{equation*}
\psi^{\downarrow}_{1} \in \ker^{\downarrow, \complement}, \quad \psi^{\uparrow}_{1} \in \ker^{\uparrow, \complement}.
\end{equation*}
\end{prob}

In light of the above calculations, this amounts to solving
\begin{equation}\label{Eq:ZeroProblem}
\begin{gathered}
\pi^{\complement} \circ \Theta (\psi^{\downarrow}_{0} + \psi^{\downarrow}_{1}, \psi^{\uparrow}_{0} + \psi^{\uparrow}_{1}) = 0, \quad \psi^{\updownarrow}_{1} \in \pi^{\updownarrow, \complement} \\
\Theta (\psi^{\downarrow}, \psi^{\uparrow}) = \left(\ThetaDown(\psi^{\downarrow}_{0} + \psi^{\downarrow}_{1}, \psi_{0}^{\uparrow} + \psi^{\uparrow}_{1}), \ThetaDown(\psi^{\downarrow}_{0} + \psi^{\downarrow}_{1}, \psi_{0}^{\uparrow} + \psi^{\uparrow}_{1}) \right), \quad \pi^{\complement} = (\pi^{\complement}_{\downarrow}, \pi^{\complement}_{\uparrow})\\
\pi^{\complement}\Theta\begin{pmatrix}
\psi^{\downarrow} \\ \psi^{\uparrow}
\end{pmatrix} = \begin{pmatrix}
\ThetaDown_{\err} \\ \ThetaUp_{\err} \end{pmatrix} + \pi^{\perp}\begin{pmatrix}D_{1}^{\downarrow}(\psi^{\downarrow}_{1}) + L_{1}^{\downarrow}(\psi^{\uparrow}_{1}) +  D_{\hot}^{\downarrow}(\psi^{\downarrow}_{1})\\
D_{1}^{\uparrow}(\psi^{\uparrow}_{1}) + L_{1}^{\uparrow}(\psi^{\downarrow}_{1}) +  D_{\hot}^{\uparrow}(\psi^{\uparrow}_{1})
\end{pmatrix}.
\end{gathered}
\end{equation}

By the definition of $\transSubbundle_{\eigenBound}(\tree)$ any such solution to the above problem will produce a map
\begin{equation*}
u^{\glue} = \preglue(\exp_{u^{\downarrow}}(\psi^{\downarrow}_{0} + \psi^{\downarrow}_{1}), \exp_{u^{\uparrow}}(\psi^{\uparrow}_{0} + \psi^{\uparrow}_{1}), C): \Sigma_{C} \rightarrow \R_{s} \times M, \quad \delbarJ u^{\glue} \in \transSubbundle'(\tree).
\end{equation*}

The transversality conditions $\im D_{1}^{\updownarrow} + \transSubbundle^{\updownarrow}_{\eigenBound} = \fancyVB^{\updownarrow}$ is equivalent to the condition that $D_{1}^{\updownarrow}$ maps surjectively onto $\transSubbundle^{\updownarrow, \perp}_{\eigenBound}$. This condition is equivalent to the composition
\begin{equation*}
\ker^{\updownarrow, \complement} \xrightarrow{D_{1}^{\updownarrow}} \fancyVB^{\updownarrow} \xrightarrow{\pi^{\complement}_{\transSubbundle^{\updownarrow}_{\eigenBound}}} \transSubbundle^{\updownarrow, \complement}_{\eigenBound}
\end{equation*}
being an isomorphism. Therefore there exist Banach isomorphisms
\begin{equation}\label{Eq:BanachInvDef}
G^{\updownarrow}: \transSubbundle^{\updownarrow, \complement}_{\eigenBound} \rightarrow \ker^{\updownarrow, \complement}, \quad \pi^{\complement}_{\transSubbundle^{\updownarrow}_{\eigenBound}}D_{1}^{\updownarrow}G^{\updownarrow} = \Id_{\transSubbundle^{\updownarrow, \complement}_{\eigenBound}}, \quad G^{\updownarrow} \pi^{\complement}_{\transSubbundle^{\updownarrow}_{\eigenBound}} D_{1}^{\updownarrow}= \Id_{\ker^{\updownarrow, \complement}}.
\end{equation}
Because we are working with $u^{\updownarrow}$ varying within neighborhoods $\NbhdCompactSubset^{\updownarrow}$ of sets $\CompactSubset^{\updownarrow}$ inside of the $\delbarJ = 0$ moduli spaces for which the $\CompactSubset^{\updownarrow}/\R_{s}$ are compact, the operator norms of the $G^{\updownarrow}$ are bounded over the $\NbhdCompactSubset^{\updownarrow}$.

\subsection{Estimates and fixed point lemma}

Here we gather partial results necessary to solve Problem \ref{Prob:Gluing}. To get the desired estimates, we first need to specify the constants $\ell_{\pm}$.

\begin{defn}\label{Def:ell}
Define a constant $\lambda_{\min}$ satisfying $\eigenBound < \lambda_{\min}$ determined by $\AsymptoticOp_{\orbit}$,
\begin{equation*}
\lambda_{\min} = \min_{\lambda_{k}\ :\ |\lambda_{k}| > \eigenBound} |\lambda_{k}|.
\end{equation*}
Choose constants $\ell_{\pm}$ so that $0 < \ell_{-} < \ell_{+} < \half$ with
\begin{equation*}
\eigenBound < \half(2\ell_{-} + 1)\lambda_{\min}.
\end{equation*}
\end{defn}

The required conditions will be satisfied with $\ell_{-}$ slightly less than $\half$ and $\ell_{+} = \ell_{-} + \half(\half - \ell_{-})$. It follows that there is a $C_{0}$ for which $C > C_{0}$ implies $C\ell_{+} < C/2 - 1$ (equivalently, $1 < C(\half - \ell_{+})$) meaning that the support of the $\BumpUpDownC$ are as depicted in Figure \ref{Fig:GluingBumps}. Moreover, we have the pointwise bound
\begin{equation}\label{Eq:BumpDownCDBound}
\sup_{(p, q) \in \annulus_{C}}\left|\BumpDownCD(p)\right| \leq \frac{3}{(\ell_{+} - \ell_{-})C}
\end{equation}
by the definitions of bump functions in \S \ref{Sec:BumpFunctions}.

\begin{lemma}\label{Lemma:NormBounds}
There exist constants $C_{0}, \const > 0$ such that for $C > C_{0}$ and $u^{\updownarrow} \in \NbhdCompactSubset^{\updownarrow}$,
\begin{equation*}
\norm{\Theta^{\updownarrow}_{\err}}_{\Ltwo} \leq C^{-\half}\const e^{-C\eigenBound}, \quad \norm{L_{1}^{\updownarrow}} \leq \const C^{-1}.
\end{equation*}
\end{lemma}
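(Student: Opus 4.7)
Both inequalities reduce to direct computations starting from the explicit formulas for $\Theta^{\updownarrow}_{\err}$ in Equation \eqref{Eq:ThetaUpDownAdjusted} and $L_{1}^{\updownarrow}$ in Equation \eqref{Eq:ThetaTaylorExpansion}, together with the pointwise bound $|\BumpUpCD| \leq 3/((\ell_{+}-\ell_{-})C)$ from Equation \eqref{Eq:BumpDownCDBound}. The plan is to handle $L_{1}^{\updownarrow}$ first as an immediate corollary of that bound, and then to work through the more delicate sum giving $\Theta^{\updownarrow}_{\err}$, where Definition \ref{Def:ell} will play the decisive role.

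The $L_{1}^{\updownarrow}$ estimate is nearly immediate: since $L_{1}^{\downarrow}(\psi^{\uparrow}) = \BumpUpCD\,\psi^{\uparrow}\otimes dp^{0,1}$ acts as multiplication by the scalar $\BumpUpCD$ composed with projection onto $(0,1)$-forms, one has $\norm{L_{1}^{\downarrow}(\psi^{\uparrow})}_{\Ltwo} \leq \norm{\BumpUpCD}_{L^{\infty}}\norm{\psi^{\uparrow}}_{\Ltwo}$, whence $\norm{L_{1}^{\downarrow}} \leq 3/((\ell_{+}-\ell_{-})C)$. The estimate for $L_{1}^{\uparrow}$ is identical up to reversing arrows.

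For $\Theta^{\updownarrow}_{\err}$ I work with $\Theta^{\downarrow}_{\err}$; the other case is symmetric. Each summand in Equation \eqref{Eq:ThetaUpDownAdjusted} has the form $e^{-\lambda_{k}C}\kercoeff_{k}^{\uparrow}\BumpUpCD\, e^{\lambda_{k}(p+C/2)}\zeta_{k}(q)\otimes dp^{0,1}$ with $\lambda_{k} > \eigenBound$, supported on $[-\ell_{+}C,-\ell_{-}C]\times\aCircle$. On that support $e^{\lambda_{k}(p+C/2)}$ is bounded by its supremum $e^{\lambda_{k}C(1/2-\ell_{-})}$. Combining the $C^{-1}$ pointwise bound on $\BumpUpCD$ with the fact that the support has length of order $C$, and using $\Ltwo$-orthonormality of the $\zeta_{k}$ in the $q$-integral, the $\Ltwo$ norm of the $k$th summand is at most $\const' C^{-1/2}|\kercoeff_{k}^{\uparrow}|e^{-\lambda_{k}(1/2+\ell_{-})C}$; the $C^{-1/2}$ factor arises from $(C^{-1})\cdot(C)^{1/2}$. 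The Fourier coefficients $|\kercoeff_{k}^{\uparrow}|$ are uniformly controlled over the compact set $\CompactSubset^{\uparrow}/\R_{s}$, with decay in $k$ supplied by the $\delta$-weighted Banach structure of \S \ref{Sec:Metrics}.

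Now I invoke Definition \ref{Def:ell}: the inequality $\eigenBound < \half(2\ell_{-}+1)\lambda_{\min}$ is arranged precisely so that $\lambda_{k}(1/2+\ell_{-})\geq \lambda_{\min}(1/2+\ell_{-}) > \eigenBound$ for every $\lambda_{k}$ in the sum. Writing $\lambda_{k}(1/2+\ell_{-}) = \eigenBound + (\lambda_{k}(1/2+\ell_{-}) - \eigenBound)$, with the second summand bounded below by a positive multiple of $\lambda_{k}$, I extract the desired $e^{-\eigenBound C}$ factor; summing the residual $e^{-c\lambda_{k}C}$ over $k$, using the at-least-linear growth of $|\lambda_{k}|$ provided by Lemma \ref{Lemma:KernelAsymptoticSummary}, gives a geometric-type series convergent uniformly for $C > C_{0}$. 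Combining these ingredients produces $\norm{\Theta^{\updownarrow}_{\err}}_{\Ltwo} \leq C^{-1/2}\const\, e^{-\eigenBound C}$. The main technical hurdle is precisely this passage from $e^{-\lambda_{k}(1/2+\ell_{-})C}$ to $e^{-\eigenBound C}$ uniformly in $k$, which is exactly what the careful choice of $\ell_{\pm}$ in Definition \ref{Def:ell} was engineered to enable.
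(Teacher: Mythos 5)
Your proof is correct and relies on the same essential ingredients (the pointwise bound on $\BumpUpDownCD$ from Equation \eqref{Eq:BumpDownCDBound}, the measure of the support $[-\ell_+C, -\ell_-C]$, and Definition \ref{Def:ell} to convert $e^{-\lambda_k(1/2+\ell_-)C}$ into $e^{-\eigenBound C}$), but the estimate for $\Theta^{\updownarrow}_{\err}$ is organized differently from the paper's. You argue term-by-term: compute the $\Ltwo$ norm of each Fourier mode of $\Theta^{\updownarrow}_{\err}$, extract $e^{-\eigenBound C}$ from each, and then sum over $k$ using convergence of $\sum_k |\kercoeff_k^{\uparrow}| e^{-c\lambda_k C}$. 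The paper instead bounds the whole tail $\sum_{\lambda_k > \eigenBound}\kercoeff_k^{\uparrow}e^{\lambda_k(p-C/2)}\zeta_k(q)$ pointwise at once by $\const_0 e^{-\eigenBound C}$ (using smoothness and compactness of $\ModSpaceThick^{\updownarrow}$ to get a uniform constant for the tail function), multiplies by $\BumpUpCD$, and then integrates over the support to pick up the $C^{-1/2}$. Both routes are valid. Your version makes the role of $\lambda_{\min}$ and Definition \ref{Def:ell} more explicit at the cost of needing the summability of $|\kercoeff_k^{\uparrow}|$ with exponential weights; the source of that summability is really the smoothness of the underlying holomorphic curve combined with compactness, rather than the $\delta$-weighted Sobolev structure of \S \ref{Sec:Metrics} as you suggest (the weighted $\Sobolev^{1,2}$ norm alone would not give the rapid decay in $k$ you need), so that attribution should be adjusted. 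Note also that orthonormality of the $\zeta_k$ in $q$ would even let you replace the triangle inequality with Parseval, giving $\norm{\Theta^{\downarrow}_{\err}}^2_{\Ltwo}=\sum_k\norm{T_k}^2_{\Ltwo}$, a marginal sharpening. The $L_1^{\updownarrow}$ estimate is exactly as in the paper.
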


\begin{proof}
We'll analyze the ``lower'' error term $\ThetaDown_{\err}$. By definition,
\begin{equation*}
\ThetaDown_{\err} = \BumpUpCD \sum_{\eigenBound < \lambda_{k}} \kercoeff_{k}^{\uparrow}e^{\lambda_{k}(p - C/2)}\zeta_{k}(q)\otimes dp^{0, 1}.
\end{equation*}
By the positivity of the $\lambda_{k}$ and the fact that $p \in [-C, 0]$, for $p \in \Support(\BumpDownCD)$ and $|\lambda_{k}| > \lambda_{\min}$,
\begin{equation*}
e^{\lambda_{k}(p - C/2)} \leq e^{-\lambda_{min}(p + C/2)} \leq e^{-\lambda_{min}(\ell_{-} C + C/2)} = e^{-\lambda_{min}(2\ell + 1)\half C} \leq e^{-\eigenBound C}.
\end{equation*}

The coefficients $\kercoeff_{k}^{\downarrow}$ are determined by the input map $u^{\downarrow}$. So as the $u^{\updownarrow}$ vary within the thickened moduli spaces $\ModSpaceThick^{\updownarrow}$ which are compact (modulo $\R_{s}$-translation), we have a uniform point-wise bound
\begin{equation*}
\sup_{(p, q) \in \Support(\BumpDownCD),\ u^{\updownarrow} \in \ModSpaceThick^{\updownarrow}} \left| \sum_{\lambda_{k} < -\eigenBound} \kercoeff_{k}^{\downarrow}e^{\lambda_{k}(p + C/2)}\zeta_{k}(q)\otimes dp^{0, 1}\right| \leq \sup_{u^{\updownarrow} \in \NbhdCompactSubset^{\updownarrow}} \left| \sum_{\lambda_{k} < -\eigenBound} \kercoeff_{k}^{\downarrow}\right|e^{-C\eigenBound} \leq \const_{0}e^{-C\eigenBound}
\end{equation*}
for some $\const_{0} > 0$. It follows that
\begin{equation*}
\begin{aligned}
\norm{\ThetaDown_{\err}}_{\Ltwo}^{2} &\leq \int_{\Support(\BumpUpCD)} \left| \ThetaDown_{\err}(p,q)\right|^{2}dp\wedge dq \\
&\leq \int_{\Support(\BumpDownCD)} \left(\frac{3\const_{0}}{(\ell_{+} - \ell_{-})C}\right)^{2}e^{-2C\eigenBound}dp \wedge dq\\
&= \left(\frac{3\const_{0}}{(\ell_{+} - \ell_{-})}\right)^{2}\frac{e^{-2C\eigenBound}}{C},
\end{aligned}
\end{equation*}
so the desired bound on $\norm{\ThetaDown_{\err}}_{\Ltwo}$ is established by taking a square root.

Because $L_{1}^{\updownarrow}$ multiplies a section $\psi$ by $\BumpUpDownCD$ and does not depend on the derivatives of $\psi$, we have
\begin{equation*}
\norm{L_{1}^{\updownarrow}\psi}_{\Ltwo} \leq \frac{3}{(\ell_{+} - \ell_{-})C}\norm{\psi}_{\Ltwo} \leq \frac{3}{(\ell_{+} - \ell_{-})C}\norm{\psi}_{\Sobolev^{1, 2}}
\end{equation*}
from the point-wise bound of Equation \eqref{Eq:BumpDownCDBound}.
\end{proof}

The following lemma is a special case of \cite[Lemma 4.2]{Floer:Morse} for maps whose differentials are isomorphisms. See also \cite[Lemma 2.52]{Schwarz:Morse}.

\begin{lemma}\label{Lemma:FixedPointMethod}
Suppose that $F: X \rightarrow Y$ is a Fredholm map between Banach spaces of the form
\begin{equation*}
F(\eta) = F_{0} + F_{1}(\eta) + F_{\hot}(\eta)
\end{equation*}
where the tangent map $TF(0) = F_{1}$ has inverse $F^{-1}_{1}$ and $F_{\hot}$ satisfies
\begin{equation}\label{Eq:QuadraticEstimate}
\norm{F_{1}^{-1}\circ \left(F_{\hot}(\eta) - F_{\hot}(\eta')\right)} \leq \delta (\norm{\eta} + \norm{\eta'})\norm{\eta - \eta'}
\end{equation}
for $\eta, \eta' \in \B_{\epsilon}$ with $\epsilon \leq (5\delta)^{-1}$. Then the initial condition $\norm{F_{1}^{-1} F_{0}} \leq \frac{\epsilon}{2}$ implies that there exists a unique $\eta_{0} \in \B_{\epsilon}$ for which $F(\eta_{0}) = 0$. Additionally, we have the estimate
\begin{equation*}
\norm{\eta_{0}} \leq 2\norm{F_{1}^{-1} (F_{0})}.
\end{equation*}
\end{lemma}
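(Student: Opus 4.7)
The plan is to recast $F(\eta) = 0$ as a fixed point problem and apply the Banach contraction mapping principle on the closed ball $\B_{\epsilon} \subset X$. Since $F_{1}$ is invertible, the equation $F_{0} + F_{1}(\eta) + F_{\hot}(\eta) = 0$ is equivalent to $\eta = T(\eta)$ with
\begin{equation*}
T(\eta) = -F_{1}^{-1}F_{0} - F_{1}^{-1}F_{\hot}(\eta).
\end{equation*}
The work then splits into (i) verifying $T(\B_{\epsilon}) \subset \B_{\epsilon}$, (ii) showing $T$ is a strict contraction on $\B_{\epsilon}$, and (iii) extracting the norm estimate from the fixed point identity $\eta_{0} = T(\eta_{0})$.

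For (ii) I would apply the quadratic estimate \eqref{Eq:QuadraticEstimate} directly: for $\eta, \eta' \in \B_{\epsilon}$ it yields $\norm{T(\eta) - T(\eta')} \leq 2\delta \epsilon \norm{\eta-\eta'} \leq \tfrac{2}{5}\norm{\eta-\eta'}$, using the standing hypothesis $\delta\epsilon \leq 1/5$. For (i) the key preliminary observation is that $F_{\hot}(0) = 0$, which is forced by $F(0) = F_{0}$ together with $TF(0) = F_{1}$; this lets me invoke the quadratic estimate with $\eta'=0$ to bound $\norm{F_{1}^{-1}F_{\hot}(\eta)}$ by $\delta \norm{\eta}^{2} \leq \epsilon/5$. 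Combining with the initial hypothesis $\norm{F_{1}^{-1}F_{0}} \leq \epsilon/2$ gives $\norm{T(\eta)} \leq \epsilon/2 + \epsilon/5 < \epsilon$. The Banach fixed point theorem then produces a unique $\eta_{0} \in \B_{\epsilon}$ with $T(\eta_{0}) = \eta_{0}$, equivalently $F(\eta_{0}) = 0$.

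For (iii), substituting the fixed point identity into itself and reusing $F_{\hot}(0) = 0$ with the quadratic estimate gives
\begin{equation*}
\norm{\eta_{0}} \leq \norm{F_{1}^{-1}F_{0}} + \delta \norm{\eta_{0}}^{2} \leq \norm{F_{1}^{-1}F_{0}} + \tfrac{1}{5}\norm{\eta_{0}},
\end{equation*}
so $\norm{\eta_{0}} \leq \tfrac{5}{4}\norm{F_{1}^{-1}F_{0}} \leq 2\norm{F_{1}^{-1}F_{0}}$.

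There is no serious analytical obstacle here: the Fredholm hypothesis plays no role beyond the invertibility of $F_{1}$, and the argument is essentially the standard Newton--Picard iteration for Banach-space contractions. The only mildly delicate point is the numerology, where the choice $\epsilon \leq (5\delta)^{-1}$ in the hypotheses is engineered so that the single inequality $\delta\epsilon \leq 1/5$ simultaneously secures the contraction constant $\leq 2/5$, the self-map property $T(\B_{\epsilon}) \subset \B_{\epsilon}$, and the final absorption step yielding the factor of $2$ in the estimate for $\norm{\eta_{0}}$.
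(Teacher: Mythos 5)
Your proof is correct. The paper does not give its own proof of this lemma; it simply cites \cite[Lemma 4.2]{Floer:Morse} and \cite[Lemma 2.52]{Schwarz:Morse}, both of which establish the result by exactly the Newton--Picard contraction argument you spell out (recast $F(\eta)=0$ as the fixed-point problem $\eta = -F_1^{-1}F_0 - F_1^{-1}F_{\hot}(\eta)$ on a small closed ball, check the self-map and contraction conditions from the quadratic estimate, and read off the norm bound from the fixed-point identity). Your numerology checks out: $\delta\epsilon \le 1/5$ gives self-map bound $\epsilon/2 + \epsilon/5 < \epsilon$, contraction constant $2/5$, and absorption constant $5/4 \le 2$, and the observation that $F_{\hot}(0)=0$ is indeed the intended reading of the decomposition (it is $F(0) = F_0$, with $F_1$ linear and $F_{\hot}$ the quadratic-and-higher remainder), and is implicitly what makes the hypothesis $\norm{F_1^{-1}F_0} \le \epsilon/2$ meaningful as a smallness condition on the initial defect.
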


\begin{lemma}\label{Lemma:UniqueZeroSolution}
There is a positive constants $C_{0}, \const$ such that for $C > C_{0}$ and $u^{\updownarrow} \in \NbhdCompactSubset^{\updownarrow}/\R$, Equation \eqref{Eq:ZeroProblem} has a unique solution $(\psi^{\downarrow}_{1}, \psi^{\uparrow}_{1}) \in \fancyVB^{\downarrow} \times \fancyVB^{\uparrow}$ satisfying 
\begin{equation*}
\norm{\psi^{\updownarrow}_{1}}_{\Sobolev^{1, 2}} \leq \const C^{-\half} e^{-C\eigenBound}.
\end{equation*}
\end{lemma}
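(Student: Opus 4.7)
The plan is to solve Problem \ref{Prob:Gluing} via a Newton/contraction-mapping argument, packaging Equation \eqref{Eq:ZeroProblem} into the form required by Lemma \ref{Lemma:FixedPointMethod}. Set
\begin{equation*}
X = \ker^{\downarrow, \complement} \oplus \ker^{\uparrow, \complement}, \quad Y = \transSubbundle^{\downarrow, \complement}_{\eigenBound} \oplus \transSubbundle^{\uparrow, \complement}_{\eigenBound},
\end{equation*}
and let $F: X \to Y$ be $\eta = (\psi^{\downarrow}_{1}, \psi^{\uparrow}_{1}) \mapsto \pi^{\complement}\Theta(\psi^{\downarrow}_{0} + \psi^{\downarrow}_{1}, \psi^{\uparrow}_{0} + \psi^{\uparrow}_{1})$. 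From Equation \eqref{Eq:ZeroProblem} I would read off $F_{0} = (\ThetaDown_{\err}, \ThetaUp_{\err})$, identify $F_{1}$ as the block operator
\begin{equation*}
F_{1} = \begin{pmatrix} \pi^{\complement}_{\transSubbundle^{\downarrow}_{\eigenBound}} D_{1}^{\downarrow} & \pi^{\complement}_{\transSubbundle^{\downarrow}_{\eigenBound}} L_{1}^{\downarrow} \\ \pi^{\complement}_{\transSubbundle^{\uparrow}_{\eigenBound}} L_{1}^{\uparrow} & \pi^{\complement}_{\transSubbundle^{\uparrow}_{\eigenBound}} D_{1}^{\uparrow} \end{pmatrix},
\end{equation*}
and absorb the remaining $D_{\hot}^{\updownarrow}$ terms into $F_{\hot}$, which acts diagonally since $D_{\hot}^{\updownarrow}(\psi^{\updownarrow})$ depends only on $\psi^{\updownarrow}$.

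First I would invert $F_{1}$. Its diagonal part is $\pi^{\complement}_{\transSubbundle^{\downarrow}_{\eigenBound}} D_{1}^{\downarrow} \oplus \pi^{\complement}_{\transSubbundle^{\uparrow}_{\eigenBound}} D_{1}^{\uparrow}$, which is invertible with bounded inverse $G^{\downarrow} \oplus G^{\uparrow}$ by the transversality hypothesis on $\transSubbundle_{\eigenBound}^{\updownarrow}$, cf.\ Equation \eqref{Eq:BanachInvDef}; the norms of the $G^{\updownarrow}$ are controlled uniformly on the precompact $\NbhdCompactSubset^{\updownarrow}$. The off-diagonal entries $\pi^{\complement} L_{1}^{\updownarrow}$ have operator norm $O(C^{-1})$ by Lemma \ref{Lemma:NormBounds}, so for $C > C_{0}$ sufficiently large $F_{1}$ is invertible by a Neumann series with $\|F_{1}^{-1}\|$ bounded uniformly in $C$ and in $u^{\updownarrow} \in \NbhdCompactSubset^{\updownarrow}$.

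Next I would verify the quadratic estimate \eqref{Eq:QuadraticEstimate} for $F_{\hot}$. Because $J$ is $L$-simple and $\orbit$ has a simple enough neighborhood $\Norbit$, the operator $\delbarJ$ is linear on $\R_{s} \times \Norbit$, so $D_{\hot}^{\updownarrow}$ is supported on the precompact region of $\Sigma^{\updownarrow}$ complementary to the simple cylindrical ends. Over this compact region the usual pointwise expansion of $\delbarJ$ about a smooth map gives $|D_{\hot}^{\updownarrow}(\eta) - D_{\hot}^{\updownarrow}(\eta')| \lesssim (|\eta| + |\eta'|)|\eta - \eta'|$ with uniform constants depending on $C^{2}$-bounds of $J$ and of $u^{\updownarrow}$ in $\NbhdCompactSubset^{\updownarrow}$; combining this with the Sobolev embeddings used for the Banach spaces $\Banach^{\updownarrow}$ and the uniform bound on $\|F_{1}^{-1}\|$ produces the required inequality with some $\delta > 0$ independent of $C$ and of $u^{\updownarrow}$.

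Finally I would check the initial condition. By Lemma \ref{Lemma:NormBounds} and the uniform bound on $\|F_{1}^{-1}\|$ from the Neumann step,
\begin{equation*}
\|F_{1}^{-1} F_{0}\| \leq \const\, C^{-\half} e^{-C\eigenBound},
\end{equation*}
which falls below $\epsilon/2$ for any fixed $\epsilon \leq (5\delta)^{-1}$ once $C > C_{0}$ is enlarged if necessary. Lemma \ref{Lemma:FixedPointMethod} then produces a unique $(\psi_{1}^{\downarrow}, \psi_{1}^{\uparrow}) \in \B_{\epsilon}$ solving Equation \eqref{Eq:ZeroProblem}, together with the bound $\|\psi^{\updownarrow}_{1}\|_{\Sobolev^{1,2}} \leq 2\|F_{1}^{-1} F_{0}\| \leq \const\, C^{-\half} e^{-C\eigenBound}$, which is the stated estimate.

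The main obstacle I foresee is arranging that the Neumann inversion of $F_{1}$ and the quadratic estimate both yield constants that are uniform as $u^{\updownarrow}$ varies in $\NbhdCompactSubset^{\updownarrow}$ and as $C$ grows; in the $\delta$-weighted $\Sobolev^{1,2}$ norm this has to be reconciled with the $\Ltwo$ bounds produced by Lemma \ref{Lemma:NormBounds}, which is where the careful choice of weights $\delta$ and the cutoff constants $\ell_{\pm}$ from Definition \ref{Def:ell} enters. Everything else is a standard application of the contraction principle to a Cauchy-Riemann type problem.
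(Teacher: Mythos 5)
Your proof is correct and follows the same route as the paper's: treat $F = \pi^{\complement}\Theta$ as a perturbation of the block-diagonal isomorphism $(D^{\downarrow}_{1}, D^{\uparrow}_{1})$, use Lemma \ref{Lemma:NormBounds} to invert $F_{1}$ (the paper appeals to the $\norm{L^{\updownarrow}_{1}} \leq \const C^{-1}$ decay exactly as you do via a Neumann series), note the quadratic estimate for $F_{\hot} = (D^{\downarrow}_{\hot}, D^{\uparrow}_{\hot})$ from second-order vanishing and precompactness of the $\NbhdCompactSubset^{\updownarrow}$, and then read off the bound from $\norm{\psi^{\updownarrow}_{1}} \leq 2\norm{F_{1}^{-1}F_{0}}$ in Lemma \ref{Lemma:FixedPointMethod}. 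Your version is slightly more explicit about the block structure of $F_{1}$ and about why $F_{\hot}$ is diagonal, but the argument is identical in substance.
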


\begin{proof}
Following the classical references \cite{Floer:Morse, Schwarz:Morse} this is a standard combination of the preceding lemmas with $F = \pi^{\perp} \circ \Theta$. We bound $F_{0} = (\pi^{\perp}\ThetaDown_{\err}, \pi^{\perp}\ThetaUp_{err})$ using Lemma \ref{Lemma:NormBounds} to obtain $\norm{F_{0}} \leq \const C^{-\half}e^{-C \eigenBound}$.

We set $F_{1}$ to be the first order terms in $\pi^{\perp} \circ \Theta$. Note that $\pi^{\perp} \circ (D_{1}^{\downarrow}, D^{\uparrow}_{1})$ has an inverse $(G^{\downarrow}, G^{\uparrow})$ (defined in Equation \eqref{Eq:BanachInvDef}) whose operator norm is uniformly bounded. By the decay estimates on $\norm{L^{\updownarrow}_{1}}$, it follows that $F_{1} = (D^{\downarrow}_{1} + L^{\downarrow}_{1}, D^{\uparrow}_{1} + L^{\uparrow}_{1})$ is invertible for $C$ larger than some $C_{0}$ with $\norm{F_{1}^{-1}}$ uniformly bounded.

Since $\norm{F_{1}^{-1}}$ is uniformly bounded and $F_{\hot} = (D^{\downarrow}_{\hot}, D^{\uparrow}_{\hot})$ vanishes up to second order, there exists an estimate as in Equation \eqref{Eq:QuadraticEstimate} for $\delta$ sufficiently large and $\epsilon$ sufficiently small. Cf. \cite[p. 86]{Schwarz:Morse}.

Finally, the bound on $\norm{\psi^{\updownarrow}_{1}}$ follows again from the $C$-dependent bound on $\norm{F_{0}}$ and the uniform bound on $\norm{F^{-1}_{1}}$ satisfied for $C$ large.
\end{proof}

\subsection{Definition of the gluing map and proof of Theorem \ref{Thm:Gluing} for gluing a pair of curves}

\begin{defn}
For $C > C_{0}$ as defined in Lemma \ref{Lemma:UniqueZeroSolution}, we define the gluing map
\begin{equation*}
\glue: \ModSpaceThick^{\downarrow}/\R \times \ModSpaceThick^{\uparrow}/\R \times [C_{0}, \infty) \rightarrow \ModSpaceThick(\tree)/\R
\end{equation*}
as follows: Define
\begin{equation*}
u^{\glue_{0}} = \preglue\left(\exp_{u^{\downarrow}}(\psi^{\downarrow}_{0} + \psi^{\downarrow}_{1}), \exp_{u^{\uparrow}}(\psi^{\uparrow}_{0} + \psi^{\uparrow}_{1})\right)
\end{equation*}
where $(\psi^{\downarrow}_{1}, \psi^{\uparrow}_{1})$ is the solution to Problem \ref{Prob:Gluing} determined by Lemma \ref{Lemma:UniqueZeroSolution}. Then define
\begin{equation*}
u^{\glue} = \glue(u^{\uparrow}, u^{\downarrow}, C)
\end{equation*}
by modifying $u^{\glue_{0}}$ as in \S \ref{Sec:ModifingSupports} to correct the supports of perturbations along annuli and half-cylinders so that
\begin{equation*}
\delbarJ u^{\glue} \in \transSubbundle(\tree).
\end{equation*}
\end{defn}

We can compute $\delbarJ u^{\glue_{0}}$ of the resulting curve using the above formulas. To obtain estimates for the $\delbarJ u^{\glue}$ we note as in \S \ref{Sec:ModifingSupports} that switching from $u^{\glue_{0}}$ to $u^{\glue}$ rescales each Fourier coefficients by $e^{\lambda_{k}c}$ for some constant $c = c(u^{\uparrow}, u^{\downarrow}, C)$ measuring the distance from the boundary of the annulus or half-cylinder in $\Sigma$ determined by the pregluing construction and the boundary of the associated $\delta$-thin annulus of half-cylinder in $\Sigma$. This $c$ tends to $0$ as $C \rightarrow \infty$. So any discrepancy between the $\transSubbundle'$ and $\transSubbundle$ Fourier coefficients can be attributed to our error terms.

There two contributions coming from the ``lower'' and ``upper'' spaces of perturbations supported along the neck annulus associated to eigenfunctions of $\AsymptoticOp_{\orbit}$ with positive and negative eigenvalues $\lambda_{k}$, respectively, all satisfying $|\lambda_{k}| < \eigenBound$. There are also contributions coming from perturbations along half-cylinders not involved in the initial pregluing using the $\psi^{\updownarrow}_{0}$. These contributions are
\begin{equation*}
\begin{gathered}
\sum_{0 < \lambda_{k} < \eigenBound} \norm{ \mu^{\annulus}_{k}}^{-1}\langle \mu^{\annulus}_{k}, \delbarJ u^{\glue} \rangle  \mu^{\annulus}_{k} \in \transSubbundle_{\eigenBound}(\tree),\\
\sum_{0 < -\lambda_{k} < \eigenBound}  \norm{ \mu^{\annulus}_{k}}^{-1}\langle \mu^{\annulus}_{k}, \delbarJ u^{\glue} \rangle  \mu^{\annulus}_{k} \in \transSubbundle_{\eigenBound}(\tree),\\
\sum_{0 < \mp \lambda_{k} < \eigenBound} \norm{ \mu^{\halfcyl}_{k}}^{-1}\langle \mu^{\halfcyl}_{k}, \delbarJ u^{\glue}\rangle  \mu^{\halfcyl}_{k} \in \transSubbundle_{\eigenBound} \subset \transSubbundle_{\eigenBound}(\tree)
\end{gathered}
\end{equation*}
along annuli and $\pm$ half-cylinders respectively. Here all inner products and norms are given by the $\Ltwo$ structure on $\fancyVB$ determined by the Riemannian metric $g$. Due to the domains of the supports of the $\BumpUpDownC$ and of the perturbations, the contributions along annuli are then
\begin{equation}\label{Eq:DelbarGluingContribAnnulus}
\begin{gathered}
\sum_{0 < \lambda_{k} < \eigenBound}  \norm{ \mu^{\annulus}_{k}}^{-1}\langle \mu^{\annulus}_{k}, \ThetaDown \rangle  \mu^{\annulus}_{k} = \ThetaDown_{0} + \sum \norm{ \mu^{\annulus}_{k}}^{-1}\langle \mu^{\annulus}_{k}, D^{\downarrow}\psi^{\downarrow}_{1} \rangle  \mu^{\annulus}_{k}\in \transSubbundle_{\eigenBound}(\tree),\\
\sum_{0 < -\lambda_{k} < \eigenBound}  \norm{ \mu^{\annulus}_{k}}^{-1}\langle \mu^{\annulus}_{k}, \ThetaUp \rangle  \mu^{\annulus}_{k} = \ThetaUp_{0} + \sum \norm{ \mu^{\annulus}_{k}}^{-1}\langle \mu^{\annulus}_{k}, D^{\uparrow}\psi^{\uparrow}_{1} \rangle  \mu^{\annulus}_{k}\in \transSubbundle_{\eigenBound}(\tree).\\
\end{gathered}
\end{equation}
To get from the left-hand side to the right hand side, observe that the supports of the $\ThetaUpDown_{\err}, D^{\updownarrow}_{\hot}, L^{\updownarrow}$ do not overlap with the supports of the $\mu^{\annulus}_{k}$. Likewise over a $\pm$ half-cylindrical end contained in $\Sigma^{\updownarrow}$, we'll get
\begin{equation}\label{Eq:DelbarGluingContribHalfCyl}
\sum_{0 < \mp \lambda_{k} < \eigenBound} \norm{ \mu^{\halfcyl}_{k}}^{-1}\langle \mu^{\halfcyl}_{k}, \delbarJ u^{\updownarrow} + D^{\updownarrow}\psi^{\updownarrow}_{1}\rangle  \mu^{\halfcyl}_{k} \in \transSubbundle_{\eigenBound} \subset \transSubbundle_{\eigenBound}(\tree)
\end{equation}
Observe that the leading terms in the estimates of Theorem \ref{Thm:Gluing} are exactly given by the $\ThetaUpDown_{0}$ along annuli and the $\delbarJ u^{\updownarrow}$ along half cylinders. So to complete the proof of Theorem \ref{Thm:Gluing} in the current setup, it suffices to bound the error terms which are given by the sums of the $\norm{\mu_{k}}^{-1}\langle \mu_{k}, D^{\updownarrow}\psi^{\updownarrow}\rangle$. Each such summand is bounded by $\norm{D^{\updownarrow}\psi^{\updownarrow}} \leq \norm{\psi^{\updownarrow}}_{\Sobolev^{1, 2}}$, so the estimate of Lemma \ref{Lemma:UniqueZeroSolution} suffices to complete the proof.

\begin{rmk}
Equation \eqref{Eq:DelbarGluingContribAnnulus} provides more detail than is strictly necessary to obtain the estimates of Theorem \ref{Thm:Gluing}. However analysis of this equation will be essential for enhancing Theorem \ref{Thm:Gluing} for $\R_{s} \times \Nhypersurface$ in Lemma \ref{Lemma:GluingCoeffs}.
\end{rmk}

\subsection{Estimates for gluings of general trees}\label{Sec:GluingBiggerTrees}

The analysis carried out so far applies to trees consisting of a two vertices having a single gluing edge. We outline how the gluing construction will be modified in the case of a more general tree $\tree$.

In this case we will have a $u_{i}$ for each vertex $\vertex_{i}$ of the tree and carry out the pregluing construction along each gluing edge. To make the shifts in the $s$-coordinate line up, we can choose the $u_{i}$ associated to the root vertex of $\tree$ to be a preferred translate and choose the $\R_{s}$ translations of the remaining $u_{i}$ inductively. The constants $\ell_{\pm}$ used in the pregluing along each edge depends only on $\eigenBound$, so we use the same $\ell_{\pm}$ for each gluing edge.

Once the initial pregluing has been carried out, we specify a test section $\psi_{0, \edgeGluing_{i}}$ as described in \S \ref{Sec:GluingTestSections} for each gluing edge $\edgeGluing_{i}$ which is supported on the annulus in the preglued curve associated to $\edgeGluing_{i}$. Each $\psi_{0, \edgeGluing_{i}}$ depends only on the holomorphic Fourier coefficients of the ends being gluing along $\edgeGluing_{i}$. This yields one copy of Equation \eqref{Eq:ThetaUpDownAdjusted} for each $\vertex_{j}$ of $\tree$. In particular we get pairs $\ThetaUpDown_{\vertex_{j}}, \ThetaUpDown_{0, \vertex_{j}}, \ThetaUpDown_{\err, \vertex_{j}}$ associated to each $\vertex_{j}$.

Moving on to \S \ref{Sec:GluingProblem}, we get a projection operator for each $\vertex_{i}$. Likewise the analogue of Equation \eqref{Eq:ZeroProblem} will have one row for each $\vertex_{i}$. In this more general setup, these rows will differ from those in Equation \eqref{Eq:ZeroProblem}. The zeroth order terms will be a sum over all $\Theta_{\err}$ associated to all gluing edges incident to the $\vertex_{i}$. Likewise the $L_{1}$ will be a sum of $\psi_{1, \vertex_{j}}$ for all $\vertex_{j}$ connected to $\vertex_{i}$ by a gluing edge. The $H_{1}$, $D_{\hot}$, and $H_{\hot}$ will also depend on $\psi_{\vertex_{i}}$ and the $\psi_{1, \vertex_{j}}$ for all $\vertex_{j}$ connected to $\vertex_{i}$ by a gluing edge.

The key difference with the case of a general $\tree$ now appears in the Lemma \ref{Lemma:NormBounds}: We get a bound on each $\norm{\Theta_{\err, \vertex_{i}}}_{\Ltwo}$ by $\max_{C_{j}}\left( C_{j}^{-\half}\const e^{-C_{j}\eigenBound} \right)$, where $C_{j}$ are the lengths of all gluing edges $\edgeGluing_{j}$ incident to $\vertex_{i}$. Therefore the analogue of the estimate of Lemma \ref{Lemma:UniqueZeroSolution} is
\begin{equation}\label{Eq:GeneralPsiBound}
\norm{\psi_{\vertex_{i}}}_{\Sobolev^{1, 2}} \leq \const \max_{C_{j}} \left( C_{j}^{-\half} e^{-C_{j}\eigenBound} \right)= \const C_{\min}^{-\half}e^{-C_{\min}\lambda}, \quad C_{\min} = \min_{\edgeGluing_{j}} C_{j}
\end{equation}
for all vertices $\vertex_{i}$ or tree where $C_{j}$ ranges over the lengths of all gluing edges.

To obtain estimates on the $\delbarJ$ over annuli and half-cylinders we then apply Equations \eqref{Eq:DelbarGluingContribAnnulus} and \eqref{Eq:DelbarGluingContribHalfCyl}. There is one copy of Equation \eqref{Eq:DelbarGluingContribAnnulus} for each gluing edge with the $\psi^{\updownarrow}_{1}$ associated to the initial and terminal vertices for the gluing edge. Equation \eqref{Eq:GeneralPsiBound} is then sufficient to obtain the bounds of Theorem \ref{Thm:Gluing} in this more general case.

\section{Transverse subbundles, thickened moduli spaces, and gluing for $\R_{s} \times \Nhypersurface$ }\label{Sec:TransverseSubbundleConstruction}

Here we describe the transverse subbundles and thickened moduli spaces of \S \ref{Sec:ThickModSpacesGeneral} associated to $\R_{s} \times \Nhypersurface$. The analysis of this section culminates in the gluing computation of Lemma \ref{Lemma:GluingCoeffs} in \S \ref{Sec:GluingMapImage} which will be essential for the computation of $\partialNH$, the $CH$ differential for $\Nhypersurface$.

\subsection{Splitting into tangent and normal directions}

Let $\orbit$ be a closed $R_{\epsilon}$ orbit of action $a$ in $\Nhypersurface$ determined by an orbit $\orbitDivSet$ in $\divSet$ with simple neighborhood $\check{\Norbit} \subset \divSet$. Over our simple enough neighborhood
\begin{equation*}
\Norbit = \R_{\tau} \times [-C_{\sigma}, C_{\sigma}]_{\sigma} \times \check{\Norbit}
\end{equation*}
of $\orbit$ we have a splitting
\begin{equation*}
\xi_{\JEpsilon} = \R \partial_{\tau} \oplus \R \partial_{\sigma} \oplus \check{\xi}_{\JDivSet}
\end{equation*}
where $\check{\xi}_{\JDivSet}$ is the subbundle of $T\divSet$ preserved by the $\alphaDivSet$-tame almost complex structure $\JDivSet$, given locally by the fibers of $\check{\Norbit} \subset \divSet$. Our asymptotic operator $\AsymptoticOp = \AsymptoticOp_{\orbit}$ for $\orbit$ then splits as
\begin{equation*}
\begin{gathered}
\AsymptoticOp = \AsymptoticOp^{\normal} \oplus \AsymptoticOp^{\tangent},\\ 
\AsymptoticOp^{\normal}: \Sobolev^{1, 2}(\aCircle, \R^{2}_{\partial_{\tau}, \partial_{\sigma}}) \rightarrow \Ltwo(\aCircle, \R^{2}_{\partial_{\tau}, \partial_{\sigma}}),\\
\AsymptoticOp^{\tangent}: \Sobolev^{1, 2}(\Sec_{\check{\Norbit}}) \rightarrow \Ltwo(\Sec_{\check{\Norbit}}).
\end{gathered}
\end{equation*}
We write the eigenfunction-eigenvalue pairs associated to $\AsymptoticOp^{\normal}$ and $\AsymptoticOp^{\tangent}$ respectively as
\begin{equation*}
(\zeta^{\normal}_{k}, \lambda^{\normal}_{k}), \quad (\zeta^{\tangent}_{k}, \lambda^{\tangent}_{k}), \quad k \in \Z_{\neq 0}.
\end{equation*}

\nom{$\AsymptoticOp^{\normal}, \AsymptoticOp^{\tangent}$}{Normal and tangent summands of the asymptotic operator $\AsymptoticOp$}

\begin{assump}\label{Assump:EpsilonSpecification}
We always work with $\epsilon_{\tau}, \epsilon_{\sigma}, \eigenBound$ such that
\begin{equation*}
0 \ll \epsilon_{\sigma} < \eigenBound < \epsilon_{\tau}.
\end{equation*}
According to Lemma \ref{Lemma:KernelAsymptoticSummary}, this implies that $\lambda_{-1}^{\normal} = -\epsilon_{\sigma}$ with associated eigenspace spanned by $\partial_{\sigma}$ is the only eigenvalue of $\AsymptoticOp^{\normal}$ having absolute value less than $\eigenBound$.
\end{assump}

Let $u:\Sigma \rightarrow \R_{s} \times \Nhypersurface$ be positively asymptotic to some $\orbit$ and negatively asymptotic to some $\orderedOrbitSet = (\orbit_{1}, \dots, \orbit_{\NnegativePunctures})$. As described in \S \ref{Sec:TransSubbundleGeneralDef}, each $\eigenBound > 0$ determines a vector space $\transSubbundle_{\eigenBound}|_{u}$ spanned by perturbations associated to the eigenspaces of $\AsymptoticOp$ with eigenvalues less than $\eigenBound$. Using the splitting of $\AsymptoticOp$ we can break up $\transSubbundle_{\eigenBound}$ into a sum of spaces
\begin{equation*}
\transSubbundle_{\eigenBound} = \transSubbundle^{\normal} \oplus \transSubbundle^{\tangent}_{\eigenBound}.
\end{equation*}
Here $\transSubbundle^{\normal}$ is spanned by the perturbations appearing in Equations \eqref{Eq:PerturbationPositiveGeneral} and \eqref{Eq:PerturbationNegativeGeneral} associated to eigenfunctions $\zeta^{\normal}_{-1} = \partial_{\sigma}$. We write
\begin{equation*}
\mu^{\halfcyl_{+}, \normal} = \mu^{\halfcyl_{+}, \normal}_{-1}, \quad \mu^{\halfcyl_{-}, \normal}_{i} = \mu^{\halfcyl_{-}, \normal}_{i, -1}
\end{equation*}
for these perturbations supported on positive and negative half-cylindrical ends of $\Sigma$, respectively, slightly modifying the notation of \S \ref{Sec:TransSubbundleGeneralDef}. The spaces $\eigenBound^{\tangent}$ are spanned by perturbations associated to eigenfunctions $\zeta^{\tangent}_{k}$ of $\AsymptoticOp^{\tangent}$ with eigenvalues $\lambda_{k}^{\tangent}$ satisfying $|\lambda_{k}^{\tangent}| < \eigenBound$ and we write
\begin{equation*}
\mu^{\halfcyl_{+}, \tangent}_{k}, \quad \mu^{\halfcyl_{-}, \tangent}_{i, k}
\end{equation*}
for the associated perturbations supported on half-cylindrical ends of $\Sigma$.

For a tree $\tree$, we likewise have a decomposition of $\transSubbundle_{\eigenBound}(\tree) = \transSubbundle_{\eigenBound} \oplus \transSubbundle_{\eigenBound}^{\annulus}$,
\begin{equation*}
\transSubbundle_{\eigenBound}(\tree) = \transSubbundle^{\normal}(\tree) \oplus \transSubbundle^{\tangent}_{\eigenBound}(\tree), \quad \transSubbundle^{\normal}(\tree) = \transSubbundle^{\normal} \oplus \transSubbundle^{\annulus, \normal}, \quad 
\transSubbundle_{\eigenBound}(\tree) = \transSubbundle^{\tangent}_{\eigenBound} \oplus \transSubbundle^{\annulus, \tangent}_{\eigenBound}
\end{equation*}
where the $\transSubbundle^{\annulus, \ast}_{\eigenBound}$ spaces are spanned by perturbations supported on annuli, following the notation of Equations \eqref{Eq:PerturbationsOverNeckGeneral} and \eqref{Eq:TransSubbundleTree}. Associated to a gluing edge $\edgeGluing_{i}$ of $\tree$, the perturbations supported on annuli associated to the $k$th eigenfunctions $\AsymptoticOp^{\normal}$ and $\AsymptoticOp^{\tangent}$ will be written
\begin{equation*}
\mu^{\annulus, \normal}_{i} = \mu^{\annulus, \normal}_{i, -1}, \quad \mu^{\annulus, \tangent}_{i, k}, \quad |\lambda^{\tangent}_{k}| < \eigenBound.
\end{equation*}
Along simple half cylinders and annuli associated to gluing edges of a $\tree$, we can explicitly write
\begin{equation}\label{Eq:dpPerturbations}
\begin{gathered}
\mu_{i}^{\halfcyl_{-}, \normal}(p, q) = \frac{\partial \Bump{-1}{0}}{\partial p}e^{-\epsilon_{\sigma}p}(\partial_{\sigma}\otimes dp)^{0,1} \in \transSubbundle^{\normal}_{\eigenBound}, \quad (p, q) \in \halfcyl_{a, -, i} \subset \Sigma \\
\mu_{i}^{\annulus, \normal}(p, q) = \frac{\partial \Bump{C/2-1}{C/2}}{\partial p}e^{-\epsilon_{\sigma}(p - C/2)}(\partial_{\sigma}\otimes dp)^{0,1} \in \transSubbundle^{\normal}_{\eigenBound}(\tree) \quad (p, q) \in \annulus_{C, a} \subset \Sigma
\end{gathered}
\end{equation}

\nom{$\transSubbundle^{\normal}, \transSubbundle_{\eigenBound}^{\tangent}$}{Normal and tangent direct summands of a transverse subbundle}

Associated to these bundles, we have thickened moduli spaces
\begin{equation*}
\begin{gathered}
\left(\ModSpaceThick^{\tangent} = \left\{ u\ :\ \delbarEpsilon u \in \transSubbundle^{\tangent}_{\eigenBound}\right\}\right) \subset \left(\ModSpaceThick = \left\{ u\ :\ \delbarEpsilon u \in \transSubbundle_{\eigenBound}\right\}\right) \subset \ModSpaceThick(\tree).
\end{gathered}
\end{equation*}

\nom{$\ModSpaceThick^{\tangent}$}{Thickened moduli spaces associated to $\transSubbundle^{\tangent}_{\eigenBound}$}

\begin{rmk}
The convention of Assumptions \ref{Assump:EpsilonSpecification} is chosen to simplify our already overwhelming notation. We could alternatively fix $\epsilon_{\tau}, \epsilon_{\sigma}$ and let $\eigenBound$ tend to infinity in order to acheive transversality for $\transSubbundle_{\eigenBound}$ in Lemmas \ref{Lemma:BundleTransversalityNonPlane} and \ref{Lemma:BundleTransversalityPlane} below. This would force us to consider $\transSubbundle^{\normal}_{\eigenBound}$ of large rank, with perturbations associated to eigenvalues of $\AsymptoticOp_{\orbit}^{\normal}$ having large absolute values.
\end{rmk}

\begin{notation}
With the constants $\epsilon = (\epsilon_{\sigma}, \epsilon_{\tau})$ determining $\alpha_{\epsilon}$, $L$ giving our action bound, and $\eigenBound$ fixed, we will henceforth omit $\eigenBound$ from the notation for bundles
\begin{equation*}
\transSubbundle = \transSubbundle_{\eigenBound}, \quad \transSubbundle^{\normal} = \transSubbundle^{\normal}_{\eigenBound}, \quad \transSubbundle^{\tangent} = \transSubbundle^{\tangent}_{\eigenBound}.
\end{equation*}
\end{notation}

\subsection{$\chi \leq 0$ curves}\label{Sec:ThickenedChiLtOne}

We establish that for $\epsilon_{\sigma} < \eigenBound$ sufficiently large, $\transSubbundle$ is a transverse subbundle for $\chi(\Sigma) < 1$ maps and describe the associated thickened moduli spaces $\ModSpaceThick^{\tangent}$ and $\ModSpaceThick$. We are not yet addressing ``tree spaces'' $\transSubbundle(\tree)$ and $\ModSpaceThick(\tree)$.

\begin{lemma}\label{Lemma:BundleTransversalityNonPlane}
Fix asymptotic data $\orbit, \orderedOrbitSet$ with $\orderedOrbitSet \neq \emptyset$ and let $\CompactSubset \subset \ModSpace_{\JEpsilon}(\orbit, \orderedOrbitSet)$ be an $\R_{s}$-invariant subset for which $\CompactSubset/\R_{s}$ is compact. Then there exists $\epsilon_{\sigma} > 0$ and a neighborhood $\NbhdCompactSubset \subset \Map(\orbit, \orderedOrbitSet)$ containing $\CompactSubset$ for which $\transSubbundle \rightarrow \NbhdCompactSubset$ is a transverse subbundle.
\end{lemma}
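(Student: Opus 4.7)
My plan is to exploit the direct sum decomposition $\Dlinearized_u = \Dlinearized^{\tangent}_u \oplus \Dlinearized^{\normal}_{u,s}$ of Equation \eqref{Eq:SplitOperatorSummary}, which is mirrored by the splitting $\transSubbundle = \transSubbundle^{\tangent} \oplus \transSubbundle^{\normal}$, and verify transversality summand by summand. First I reduce to curves in $\R_s \times \divSet$: by Corollary \ref{Cor:CurvesStuckInLeaves} every $u \in \CompactSubset$ lies in a leaf of $\foliationEpsilon$, and leaves diffeomorphic to $\posNegRegionComplete$ contain no closed $\ReebEpsilon$-orbits, so the hypothesis $\orderedOrbitSet \neq \emptyset$ forces $u(\Sigma) \subset \R_s \times \divSet$. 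At such a $u$ the splitting of $\Dlinearized_u$ is honestly direct, and it is enough to handle the two summands separately.

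For the tangent summand, $\Dlinearized^{\tangent}_u$ is the linearized Cauchy--Riemann operator for the underlying map $(s,v): \Sigma \to \R_s \times \divSet$, and $\transSubbundle^{\tangent}$ is generated by perturbations associated to eigenfunctions of $\AsymptoticOp^{\tangent}$ with eigenvalues of absolute value below $\eigenBound$. This is precisely the setup of \cite[Theorem 5.1.2]{BH:ContactDefinition}, which provides, for $\eigenBound$ sufficiently large relative to the data of $\CompactSubset$, a neighborhood of $\CompactSubset$ in the manifold of maps over which $\transSubbundle^{\tangent}$ is transverse to $\Dlinearized^{\tangent}$.

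For the normal summand, Assumption \ref{Assump:EpsilonSpecification} ensures that the only eigenvalue of $\AsymptoticOp^{\normal}$ of absolute value less than $\eigenBound$ is $\lambda^{\normal}_{-1} = -\epsilon_\sigma$, spanned by $\partial_\sigma$. Consequently $\transSubbundle^{\normal}|_u$ has rank equal to $\NnegativePunctures$, with basis $\{\mu_i^{\halfcyl_-,\normal}\}_{i=1}^{\NnegativePunctures}$, while Theorem \ref{Thm:NormalDualCoker} gives $\dim \coker \Dlinearized^{\normal}_{u,s} = \dim H^1(\SigmaInfty) = \max(\NnegativePunctures - 1, 0)$. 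Transversality thus reduces to non-degeneracy of the pairing
\begin{equation*}
\transSubbundle^{\normal}|_u \otimes \ker \NormalDual_u \to \R, \qquad \mu_i^{\halfcyl_-,\normal} \otimes \zeta^{0,1} \mapsto \langle \mu_i^{\halfcyl_-,\normal}, \zeta^{0,1}\rangle_{1,1}.
\end{equation*}
Using Equation \eqref{Eq:ZetaNearPminus} to expand $\zeta^{0,1}$ near the $i$th negative puncture and integrating against the compactly supported $\mu_i^{\halfcyl_-,\normal}$ of Equation \eqref{Eq:dpPerturbations}, the leading term evaluates (up to an overall positive constant involving $a_i = \action(\orbit_i)$) to the Fourier coefficient $\cokcoeff_{-,i}$, with a remainder controlled by $\zeta^{0,1}_{\hot}$. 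The cohomological interpretation from \S \ref{Sec:DefCohomologyMaps} identifies the tuple $(\cokcoeff_{-,i})_i$ with the periods of $\cohomCokerNormalDual(\zeta^{0,1}) \in H^1(\SigmaInfty)$ around the $\NnegativePunctures$ punctures of $\SigmaInfty$, and injectivity of $\cohomCokerNormalDual$ (Theorem \ref{Thm:NormalDualCoker}) then forces the functionals $\zeta^{0,1} \mapsto \cokcoeff_{-,i}$ to collectively separate points on $\ker \NormalDual_u$. This is exactly the claimed pointwise transversality.

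Finally, the transversality condition is open on $\Map(\orbit, \orderedOrbitSet)$, $\CompactSubset/\R_s$ is compact, and $\transSubbundle$ is $\R_s$-equivariant by Definition \ref{Def:TransSubbundle}, so a finite cover of $\CompactSubset/\R_s$ by open sets of transversality gives an $\R_s$-invariant neighborhood $\NbhdCompactSubset$ as required. The main obstacle I anticipate is the pairing estimate in the normal step: I must confirm that the contribution from $\zeta^{0,1}_{\hot}$ really is absorbed by the leading $\cokcoeff_{-,i}$ term, which hinges on the exponential-weight comparison $\epsilon_\sigma < \eigenBound$ dictated by Assumption \ref{Assump:EpsilonSpecification} together with the eigenvalue gaps of Lemma \ref{Lemma:KernelAsymptoticSummary}. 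The remaining bookkeeping -- checking compatibility of cutoffs, boundary contributions from $\frac{\partial \Bump{-1}{0}}{\partial p}$, and uniformity across $\CompactSubset$ -- should be routine.
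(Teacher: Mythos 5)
Your proposal follows the paper's proof in all essentials: reduce to curves in $\R_s \times \divSet$, split $\Dlinearized_u = \Dlinearized^\tangent \oplus \Dlinearized^\normal_s$ and $\transSubbundle = \transSubbundle^\tangent \oplus \transSubbundle^\normal$, quote \cite[Theorem 5.1.2]{BH:ContactDefinition} for the tangent summand, and for the normal summand compute the $\Ltwo$ pairing of $\mu_i^{\halfcyl_-,\normal}$ against $\zeta^{0,1} \in \ker\NormalDual$ to extract the leading Fourier coefficient $\cokcoeff_i$, concluding via injectivity of the Fourier-coefficient map (which is the content of Lemma \ref{Lemma:CokerCohomological}/Theorem \ref{Thm:NormalDualCoker}). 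The one point where you anticipate trouble is actually a non-issue: the contribution of $\zeta^{0,1}_{\hot}$ to the pairing vanishes \emph{exactly}, not just approximately, because $\zeta^{0,1}_{\hot}$ is a combination of eigenfunctions that are $\Ltwo$-orthogonal in $q$ to the constant eigenfunction $\partial_\sigma$ appearing in $\mu_i^{\halfcyl_-,\normal}$ --- no exponential-weight comparison is needed.
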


\begin{proof}
Because $\orderedOrbitSet \neq \emptyset$, the images of holomorphic curves in $\CompactSubset$ must be contained in $\R_{s} \times \divSet$. As transversality is an open condition and $\CompactSubset/\R_{s}$ is compact, it suffices to establish that there exists a $\epsilon_{\sigma} < \eigenBound$ such that for each $u: \Sigma \rightarrow \R_{s} \times \divSet, u \in \CompactSubset$, we have $\im \Dlinearized_{u} + \transSubbundle = \Omega^{0, 1}(u^{\ast}T(\R_{s}\times M))$. Splitting $\Dlinearized_{u}$ into the normal and tangent operators as in Equation \eqref{Eq:SplitOperatorSummary}, we can guarantee that
\begin{equation*}
\transSubbundle^{\tangent} + \im \Dlinearized^{\tangent} = \Omega^{0, 1}(u^{\ast}T(\R_{s} \times \divSet))
\end{equation*}
for $\eigenBound$ sufficiently large and so $\epsilon_{\sigma}$ sufficiently large. Therefore, it suffices to show that
\begin{equation*}
\transSubbundle^{\normal} + \im \Dlinearized^{\normal}_{s} = \Omega^{0, 1}(\R^{2}_{\partial \tau, \partial \sigma}).
\end{equation*}

The curves in $\CompactSubset$ take the form $u = (s, 0, 0, v): \Sigma \rightarrow \R_{s} \times \NdividingSet$. At such a curve we can split
\begin{equation*}
\Omega^{0, 1}(\R^{2}_{\partial \tau, \partial \sigma}) = \im \Dlinearized^{\normal}_{s} \oplus \ker \NormalDual.
\end{equation*}
by identifying the cokernel of $\Dlinearized^{\normal}_{s}$ with the kernel of its dual. Here $\oplus$ denotes a direct sum of $\Ltwo$ orthogonal subspaces using the inner product of Equation \eqref{Eq:LtwoProductDef}.

Using the last splitting, it suffices to show that the projection 
\begin{equation}\label{Eq:TransToNormalDual}
\transSubbundle^{\normal} \rightarrow \ker \NormalDual
\end{equation}
is surjective in order to establish the desired transversality result. Let $\zeta^{0, 1} \in \ker \NormalDual$ with leading negative asymptotics coefficients $\vec{\cokcoeff} = (\cokcoeff_{1}, \dots, \cokcoeff_{\NnegativePunctures})$ associated to the negative ends of $\Sigma$ as described in Lemma \ref{Lemma:CokAsymptoticSummary}. We calculate the inner product of $\zeta^{0, 1}$ and $\mu^{\normal}_{-1, i}$ by an integral over the $i$th negative half-cylinder $\halfcyl_{a_{i}, -}$ of $\Sigma$ asymptotic to an orbit of action $a_{i}$ as follows. Write $s_{i} = s|_{\partial \halfcyl_{a_{i}, -}} \in \R$ so that $s|_{\halfcyl_{a_{i}, -}} = p + s_{i}$. Then
\begin{equation*}
\begin{aligned}
\langle \zeta^{0, 1}, \mu_{i}^{\halfcyl_{-}, \normal} \rangle_{1, 1} &= e^{s_{i}}\int_{p = -1}^{0}\int_{q \in \Z/a_{i}\Z} \left(\cokcoeff_{i} e^{\epsilon_{\sigma}s}dp + \zeta_{\hot} \right)\circ \domainJ \wedge e^{-\epsilon_{\sigma}s} \frac{\partial \Bump{-1}{0}}{\partial p}dp \\
&= e^{s_{i}}\int_{p = -1}^{0}\int_{q \in \Z/a_{i}\Z} \left(\cokcoeff_{i} e^{\epsilon_{\sigma}s}dp\right)\circ \domainJ \wedge e^{-\epsilon_{\sigma}s} \frac{\partial \Bump{-1}{0}}{\partial p}dp \\
&= e^{s_{i}}\int_{p = -1}^{0}\int_{q \in \Z/a_{i}\Z} \cokcoeff_{i}\frac{\partial \Bump{-1}{0}}{\partial p}dp \wedge dq\\
&= e^{s_{i}}\int_{q \in \Z/a_{i}\Z}\left(\Bump{-1}{0}(0) - \Bump{-1}{0}(-1)\right)\cokcoeff_{i} dq = e^{s_{i}}a_{i}\cokcoeff_{i}.
\end{aligned}
\end{equation*}
Going from the first to the second line we have used that fact that $\zeta_{\hot}$ is a linear combination of eigenfunctions which are $\Ltwo$ orthogonal to the constant function. From the above calculation,
\begin{equation*}
\langle \zeta^{0, 1}, \sum \cokcoeff_{i}\mu_{i}^{\halfcyl_{-}, \normal}\rangle_{1, 1} = \sum e^{s_{i}}a_{i}\cokcoeff_{i}^{2}.
\end{equation*}
As $\zeta^{0, 1} = 0$ iff $\vec{\cokcoeff} = 0$, the projection $\transSubbundle^{\normal} \rightarrow \ker \Dlinearized^{\normal, \ast}_{s}$ is indeed surjective.
\end{proof}

The rank of $\transSubbundle^{\normal}$ is the number $\NnegativePunctures \geq 1$ of negative punctures of $\Sigma$ and
\begin{equation*}
\rank \ker \NormalDual = \dim H^{1}(\SigmaInfty) = \NnegativePunctures - 1.
\end{equation*}
So by the surjectivity of Equation \eqref{Eq:TransToNormalDual},
\begin{equation*}
\dim (\Dlinearized^{\normal}_{s})^{-1}\transSubbundle^{\normal} = \dim \ker\left(\transSubbundle^{\normal} \rightarrow \ker \Dlinearized^{\normal, \ast}_{s}\right) = 1.
\end{equation*}
We will explicitly construct a vector generating this linear space 

Let $\halfcyl_{a_{i}, -}, i=1, \dots, \NnegativePunctures$ be the simple negative ends of $\Sigma$. We organize boundary values of $s$ into a vector
\begin{equation}\label{Eq:NegativeBoundaryValueVector}
s|_{\partial \halfcyl_{a_{i}, -}} = s_{i} \in \R, \quad \vec{s} = (s_{1}, \dots, s_{\NnegativePunctures})
\end{equation}
which together with $s: \Sigma \rightarrow \R$ defines the simple ends of $\Sigma$. Define a cutoff function
\begin{equation}\label{Eq:BumpConstruction}
\Bump{\vec{s}}{} = \begin{cases}
\Bump{-1}{0}(p) & \text{along}\ \halfcyl_{a_{i}, -} \\
1 & \text{along}\ \Sigma \setminus \left( \cup_{i} \halfcyl_{a_{i}, -} \right)
\end{cases}
\end{equation}
which vanishes on the subsets $\{ p \leq -1\} \subset \halfcyl_{a_{i}, -}$. Given a map of the form 
\begin{equation*}
u^{\notplane} \in \transSubbundle^{\tangent} \implies u^{\notplane} = (s, 0, 0, v): \Sigma \rightarrow \R_{s}\times \NdividingSet, \quad \delbarJ u_{0} \in \transSubbundle^{\tangent}
\end{equation*}
we define for $|\cokcoeff^{\notplane}| < r$ sufficiently small
\begin{equation}\label{Eq:ThickeningNonPlane}
u^{\notplane}_{\cokcoeff^{\notplane}} = \left(s, 0, \cokcoeff^{\notplane}e^{-\epsilon_{\sigma}s}\Bump{\vec{s}}{}, v \right): \Sigma \rightarrow \R_{s} \times \NdividingSet \subset \R_{s} \times \Nhypersurface.
\end{equation}
With $r$ small enough, we can guarantee that $u_{\cokcoeff^{\notplane}}$ indeed maps into $\R_{s} \times \NdividingSet$ as $e^{-s}$ will be bounded in absolute value away from the negative ends of $\Sigma$ and converge to $0$ at the positive puncture of $\Sigma$. Our choice of notation $\cokcoeff^{\notplane}$ will be justified shortly.

Using the fact that along each simple negative half cylinder $s|_{\halfcyl_{a_{i}, i}} = p + s_{i}$, we calculate
\begin{equation}\label{Eq:UxDelbarChiLtOne}
\delbarEpsilon u^{\notplane}_{\cokcoeff^{\notplane}} = \cokcoeff^{\notplane}e^{-\epsilon_{\sigma}s}\partial_{\sigma}\otimes \left(d\Bump{\vec{s}}{}\right)^{0, 1} + \delbarTangent u^{\notplane} = \cokcoeff^{\notplane}\sum e^{-\epsilon_{\sigma}s_{i}}\mu_{i}^{\halfcyl_{-}, \normal} + \delbarTangent u^{\notplane} \in \transSubbundle.
\end{equation}
We see that the $u_{\cokcoeff^{\notplane}}$ live in the thickened moduli space $\ModSpaceThick$ associated to $\transSubbundle_{\eigenBound}$ and clearly
\begin{equation}\label{Eq:hNormalVector}
\R h^{\normal} = \im \Dlinearized_{u_{\cokcoeff^{\notplane}}^{\notplane}} \cap \transSubbundle^{\normal}, \quad h^{\normal} = \Dlinearized_{u^{\notplane}_{\cokcoeff^{\notplane}}}\frac{\partial u^{\notplane}_{\cokcoeff^{\notplane}}}{\partial \cokcoeff^{\notplane}}.
\end{equation}
Therefore the following result follows from dimension considerations and the explicit description of the $u^{\notplane}_{\cokcoeff^{\notplane}}$.

\begin{lemma}\label{Lemma:ThickenedSpaceNonPlane}
Let $\orderedOrbitSet$, $\orbit$, $\CompactSubset$, and $\eigenBound$ be as in Lemma \ref{Lemma:BundleTransversalityNonPlane}. Then for $\NbhdCompactSubset$ containing $\CompactSubset$ and $r > 0$ sufficiently small, every element of the thickened moduli space
\begin{equation*}
\ModSpaceThick = \left\{ u \in \delbarEpsilon^{-1}(\transSubbundle)\ :\ \norm{\delbarJ u} \leq r \right\}
\end{equation*}
has the form $u_{\cokcoeff^{\notplane}}^{\notplane}$ as described in Equation \eqref{Eq:ThickeningNonPlane} for some $\cokcoeff^{\notplane} \in \R$ of small absolute value and a
\begin{equation*}
u^{\notplane} = (s, 0, 0, v): \Sigma \rightarrow \R_{s} \times \NdividingSet, \quad u^{\notplane} \in \ModSpaceThick^{\tangent}
\end{equation*}
so that the normal part of $\delbarEpsilon u^{\notplane}_{\cokcoeff^{\notplane}}$ is
\begin{equation*}
\pi_{\transSubbundle^{\normal}}\delbarEpsilon u^{\notplane}_{\cokcoeff^{\notplane}} = \cokcoeff^{\notplane}\sum e^{-\epsilon_{\sigma}s_{i}}\mu^{\halfcyl_{-}, \normal}_{i}.
\end{equation*}
Moreover, on the complement of the supports of the $\cokcoeff^{\notplane}\mu^{\halfcyl_{-}, \normal}_{i}$, the image of $u^{\notplane}$ is tangent to $\foliationEpsilon$.
\end{lemma}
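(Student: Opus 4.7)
The plan is to exhibit $\Phi(u^{\notplane}, \cokcoeff^{\notplane}) = u^{\notplane}_{\cokcoeff^{\notplane}}$ as a diffeomorphism from $\ModSpaceThick^{\tangent} \times (-r, r)$ onto $\ModSpaceThick$ for $\NbhdCompactSubset$ tightened about $\CompactSubset$ and $r$ sufficiently small. That $\Phi$ lands in $\ModSpaceThick$ is exactly Equation \eqref{Eq:UxDelbarChiLtOne}, which moreover identifies the normal component of $\delbarEpsilon u^{\notplane}_{\cokcoeff^{\notplane}}$ as $\cokcoeff^{\notplane}\sum e^{-\epsilon_{\sigma}s_{i}}\mu_{i}^{\halfcyl_{-}, \normal}$, confirming the formula stated in the lemma.

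First I will match dimensions. The $\Dlinearized^{\tangent} \oplus \Dlinearized^{\normal}_{s}$ splitting of Equation \eqref{Eq:SplitOperatorSummary}, together with Lemma \ref{Lemma:NormalIndex} and Assumption \ref{Assump:EpsilonSpecification} (which forces $\rank \transSubbundle^{\normal} = \NnegativePunctures$), gives
\begin{equation*}
\dim \ModSpaceThick - \dim \ModSpaceThick^{\tangent} = \ind \Dlinearized^{\normal}_{s} + \rank \transSubbundle^{\normal} = \chi(\Sigma) + \NnegativePunctures = 1,
\end{equation*}
so the source of $\Phi$ has the correct dimension.

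Next I will verify that $d\Phi$ is injective at every $(u^{\notplane}, 0)$. Variations inside $\ModSpaceThick^{\tangent} \times \{0\}$ preserve $\tau = \sigma = 0$ and land in $T\ModSpaceThick^{\tangent}$. The extra direction $\partial \Phi/\partial \cokcoeff^{\notplane}|_{\cokcoeff^{\notplane}=0}$ equals $e^{-\epsilon_{\sigma}s}\Bump{\vec{s}}{}\partial_{\sigma}$, whose image under $\Dlinearized$ is the nonzero vector $h^{\normal} = \sum e^{-\epsilon_{\sigma}s_{i}}\mu_{i}^{\halfcyl_{-}, \normal}$ of Equation \eqref{Eq:hNormalVector}. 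Since $T\ModSpaceThick^{\tangent} \subset T\ModSpaceThick$ is cut out precisely by the vanishing of the $\transSubbundle^{\normal}$-component of $\Dlinearized v$, the $\cokcoeff^{\notplane}$-variation exits $T\ModSpaceThick^{\tangent}$ and so spans the one-dimensional quotient $T\ModSpaceThick/T\ModSpaceThick^{\tangent}$. The inverse function theorem now upgrades $\Phi$ to a local diffeomorphism onto an open neighborhood of $\ModSpaceThick^{\tangent}$ inside $\ModSpaceThick$.

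For global surjectivity I will shrink $r$ and $\NbhdCompactSubset$ so that all of $\ModSpaceThick$ sits inside this tubular neighborhood. Every $u \in \ModSpaceThick$ satisfies $\norm{\delbarEpsilon u} \leq r$, so standard elliptic bootstrapping combined with compactness of $\CompactSubset/\R_{s}$ forces $u$ to be $\COne_{\text{loc}}$-close to an element of $\CompactSubset \subset \ModSpaceThick^{\tangent}$ modulo $\R_{s}$-translation; once $r$ and $\NbhdCompactSubset$ are tight, the image of $\Phi$ exhausts $\ModSpaceThick$, and injectivity of $\Phi$ ensures that the pair $(u^{\notplane}, \cokcoeff^{\notplane})$ representing a given $u$ is uniquely determined. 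The ``moreover'' is then immediate: outside the supports of the $\mu_{i}^{\halfcyl_{-}, \normal}$ the cutoff $\Bump{\vec{s}}{}$ takes values in $\{0, 1\}$, so $u^{\notplane}_{\cokcoeff^{\notplane}}$ has the form $(s, 0, 0, v)$ or $(s, 0, \cokcoeff^{\notplane} e^{-\epsilon_{\sigma}s}, v)$, and the ODE analysis of \S \ref{Sec:FoliationNearDividingSet} based on Equation \eqref{Eq:FoliationLeavesODE} identifies these as precisely the parametrizations of the $\{\tau = 0\}$ leaves of $\foliationEpsilon$. The step I expect to be most delicate is making the confinement argument for global surjectivity rigorous while controlling the vertical boundary of $\ModSpaceThick$ and any residual $\R_{s}$-translation ambiguity.
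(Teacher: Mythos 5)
Your proposal is correct and follows essentially the same route as the paper: the paper's ``proof'' is the one-line remark that the result ``follows from dimension considerations and the explicit description of the $u^{\notplane}_{\cokcoeff^{\notplane}}$,'' preceded by the computation $\dim (\Dlinearized^{\normal}_{s})^{-1}\transSubbundle^{\normal} = 1$ and the construction of Equations \eqref{Eq:ThickeningNonPlane}--\eqref{Eq:hNormalVector}, which is exactly the dimension count and explicit one-parameter family you exhibit. You fill in the implicit steps — matching $\dim \ModSpaceThick - \dim \ModSpaceThick^{\tangent} = 1$, injectivity of $d\Phi$ via $h^{\normal}$ from Equation \eqref{Eq:hNormalVector}, the inverse function theorem, and the compactness/confinement argument for global surjectivity — which the paper leaves to the reader.
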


The lemma agrees with picture of stabilization of transverse subbundles described in \cite[Section 5.4]{BH:ContactDefinition}. In order to apply the obstruction bundle gluing calculations of Theorem \ref{Thm:Gluing} to elements of $\ModSpaceThick$ we need to understand the asymptotics of the $u_{\cokcoeff^{\notplane}}^{\notplane}$. Let $\halfcyl_{a, +}$ be the positive puncture of $\Sigma$ and continue to write $\halfcyl_{a_{i}, -} \subset \Sigma$ for the negative cylindrical ends with boundary values $s_{i}$.

if $u^{\notplane}$ is a preferred translate,
\begin{equation}\label{Eq:UxAsymptotics}
u^{\notplane}_{\cokcoeff^{\notplane}} = \begin{cases}
\left(p, 0, \cokcoeff^{\notplane}e^{-\epsilon_{\sigma}p}, v\right) & \text{along}\ \halfcyl_{a, +}, \\
\left(p + s_{i}, 0, \cokcoeff^{\notplane}\Bump{-1}{0}e^{-\epsilon_{\sigma}(p + s_{i})}, v\right) & \text{along}\ \halfcyl_{a_{i}, -}.
\end{cases}
\end{equation}
The holomorphic Fourier coefficient of $u_{\cokcoeff^{\notplane}}^{\notplane}$ associated to the eigenfunction $\partial_{\sigma}$ of $\AsymptoticOp^{\normal}$ is $\cokcoeff^{\notplane}$ at the positive puncture and $0$ at all negative punctures. At the negative negative punctures the perturbative Fourier coefficients are $\cokcoeff^{\notplane}e^{-s_{i}}$.

\subsection{Planes}

Now we carry out the analogous analysis for planes $u: \C \rightarrow \R_{s} \times \Nhypersurface$. In this case $\transSubbundle^{\normal} = 0$ as the perturbations spanning $\transSubbundle^{\normal}$ are supported on negative ends of punctured Riemann surfaces and $\C$ has none. It follows that for such maps, $\transSubbundle = \transSubbundle^{\tangent}$ so that $\ModSpaceThick = \ModSpaceThick^{\tangent}$.

\begin{lemma}\label{Lemma:BundleTransversalityPlane}
Let $\orbit$ be a closed $R_{\epsilon}$ orbit and let $\CompactSubset \subset \ModSpace(\orbit, \emptyset)$ be an $\R_{s}$-invariant subset for which $\CompactSubset/\R_{s}$ is compact. Then there exists $\epsilon_{\sigma} > 0$ and a neighborhood $\NbhdCompactSubset \subset \Map(\orbit, \emptyset)$ containing $\CompactSubset$ for which $\transSubbundle \rightarrow \NbhdCompactSubset$ is a transverse subbundle.
\end{lemma}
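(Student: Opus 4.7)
The plan is to leverage the globally defined $\JEpsilon$-invariant splitting $T(\R_{s}\times \Nhypersurface) = \leafTangentNormal \oplus \leafTangent$ of \S \ref{Sec:TangentSplitting}, so that the linearized operator $\Dlinearized_{u}$ at any plane $u \in \CompactSubset$ decomposes in the upper-triangular block form
\begin{equation*}
\Dlinearized = \begin{pmatrix} \Dlinearized^{\tangent} & \Dlinearized^{ur} \\ 0 & \Dlinearized^{\normal} \end{pmatrix}
\end{equation*}
of Lemma \ref{Lemma:PlaneTransversality}. Since $\Sigma \simeq \C$ has no negative punctures, the negative half-cylinder perturbations spanning $\transSubbundle^{\normal}$ are absent and Assumption \ref{Assump:EpsilonSpecification} ($\eigenBound < \epsilon_{\tau}$) together with Lemma \ref{Lemma:KernelAsymptoticSummary} rules out any positive-end normal perturbations, so $\transSubbundle = \transSubbundle^{\tangent}$. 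It therefore suffices to verify, for each $u \in \CompactSubset$, that $\Dlinearized^{\normal}$ is surjective onto $\Omega^{0,1}(u^{\ast}\leafTangentNormal)$ and that $\im \Dlinearized^{\tangent} + \transSubbundle^{\tangent}$ exhausts $\Omega^{0,1}(u^{\ast}\leafTangent)$; the upper-triangular structure then yields surjectivity of $\Dlinearized + \transSubbundle$.

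First I would dispatch the normal direction: because $\Sigma \simeq \C$, Theorem \ref{Thm:NormalDualCoker}(1) (equivalently, Lemma \ref{Lemma:AutoTransverse}, as the asymptotic return map associated to $\AsymptoticOp^{\normal}$ has $\CZ = 0$) gives that $\Dlinearized^{\normal}_{s}$ is automatically surjective, so no normal perturbations are needed. Next I would address the tangent direction. By Corollary \ref{Cor:CurvesStuckInLeaves} every $u \in \CompactSubset$ factors as $u = \Phi_{\Lie}\circ \check{u}$ through a leaf $\Lie$ of $\foliationEpsilon$ that, since $\orbit \subset \divSet$, is either a copy of $\R_{s}\times \divSet$ or one of the $\posNegRegionComplete$. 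Under this identification, $\Dlinearized^{\tangent}$ is the linearized Cauchy-Riemann operator of the plane $\check{u}$ in $\Lie$, and the positive-end perturbations $\mu_{k}^{\halfcyl_{+}, \tangent}$ spanning $\transSubbundle^{\tangent}$ are exactly the perturbations produced by the transverse subbundle construction of \cite[Theorem 5.1.2]{BH:ContactDefinition} applied to $\check{u}$ inside $\Lie$ (noting that in the Liouville case $\Lie \simeq \posNegRegionComplete$ the perturbations are supported in the cylindrical end where the standard symplectization construction applies verbatim).

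The remaining step is a standard compactness/openness argument. For each $u \in \CompactSubset$, \cite[Theorem 5.1.2]{BH:ContactDefinition} in the leaf $\Lie$ provides a finite $\eigenBound_{u}$ for which $\transSubbundle^{\tangent}_{\eigenBound_{u}} + \im \Dlinearized^{\tangent}$ exhausts the target. Since $\CompactSubset/\R_{s}$ is compact and $\R_{s}$-invariance of $\transSubbundle^{\tangent}$ makes $\eigenBound_{u}$ constant along $\R_{s}$-orbits, a single $\eigenBound$ -- hence, via Assumption \ref{Assump:EpsilonSpecification}, a single admissible $\epsilon_{\sigma}$ -- works uniformly over $\CompactSubset$. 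Openness of the surjectivity condition then produces a neighborhood $\NbhdCompactSubset \supset \CompactSubset$ in $\Map(\orbit, \emptyset)$ on which $\transSubbundle$ remains transverse, finishing the proof.

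I expect essentially no obstacles: the surjectivity in the normal direction is free from Theorem \ref{Thm:NormalDualCoker}, and the tangent direction reduces to the already-established BH transverse subbundle theorem applied inside a leaf. The only minor point to confirm is that the BH construction carries over to planes in the Liouville completion $\posNegRegionComplete$, but this is immediate since the perturbations are supported in the positive cylindrical end where $\Lie$ is a genuine symplectization.
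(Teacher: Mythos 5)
Your proposal is correct and follows essentially the same route as the paper: factor a plane $u \in \CompactSubset$ through a leaf $\Lie$ of $\foliationEpsilon$, use the $\leafTangentNormal \oplus \leafTangent$ block decomposition of $\Dlinearized_{u}$ from Lemma \ref{Lemma:PlaneTransversality}, obtain normal surjectivity for free from Lemma \ref{Lemma:AutoTransverse} (the paper cites this directly rather than Theorem \ref{Thm:NormalDualCoker}, but these are equivalent here), and reduce tangent transversality to \cite{BH:ContactDefinition} applied to $\check{u}$ in $\Lie$, with the final neighborhood $\NbhdCompactSubset$ supplied by compactness of $\CompactSubset/\R_{s}$ and openness of transversality. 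The only cosmetic deviation is your extra justification for $\transSubbundle = \transSubbundle^{\tangent}$ on planes; the paper disposes of this in one sentence by observing that normal perturbations are supported only on negative ends.
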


\begin{proof}
As in the proof of Lemma \ref{Lemma:BundleTransversalityNonPlane} it suffices to establish transversality along $\CompactSubset$. We work near fixed a holomorphic $u: \C \rightarrow \R_{s} \times \Nhypersurface$. Such a plane $u$ is mapped to a leaf $\Lie$ of $\foliationEpsilon$ which is either $\Lie = \R_{s} \times \divSet$ or isomorphic to one of the $\posNegRegionComplete$. In either case, we write $\check{u}:\C \rightarrow \Lie$ for the holomorphic map factoring through leaf and describe $\Dlinearized_{u}$ as a lower-triangular block matrix as in the proof of Lemma \ref{Lemma:PlaneTransversality} with diagonal terms $\Dlinearized_{\check{u}}$ and $\Dlinearized_{\leafTangentNormal}$.\footnote{In the case $\Lie = \R_{s} \times \divSet$, $\Dlinearized_{\check{u}} = \Dlinearized^{T}$, $\Dlinearized_{E} = \Dlinearized^{N}$ and the lower-left map $\Dlinearized_{ll}$ of the matrix is zero due to Equation \eqref{Eq:SplitOperatorSummary}.}

Recall that $\Dlinearized_{\leafTangentNormal}$ is automatically surjective by Lemma \ref{Lemma:AutoTransverse} and that $\transSubbundle^{\tangent} \subset \Omega^{0, 1}(\check{u}^{\ast}T\Lie)$ by construction. Therefore
\begin{equation*}
\transSubbundle + \im \Dlinearized_{u} = \Omega^{0, 1}(u^{\ast}T(\R_{s} \times M))\iff \transSubbundle^{\tangent} + \im \Dlinearized_{\check{u}} = \Omega^{0, 1}(\check{u}^{\ast}T\Lie).
\end{equation*}
The existence of $\epsilon_{\sigma} < \eigenBound$ for which this condition is true is guaranteed by \cite{BH:ContactDefinition}.
\end{proof}

Now we describe the local structure of thickened moduli spaces of planes as we have done for $\chi(\Sigma) < 1$ curves in Lemma \ref{Lemma:ThickenedSpaceNonPlane}. Say $\Lie$ is a leaf of $\foliationEpsilon$ with almost complex structure $J_{\Lie}$ and
\begin{equation*}
\Phi_{\Lie}: \Lie \rightarrow \R_{s} \times \Nhypersurface
\end{equation*}
is the inclusion. Near a given $\check{u}: \C \rightarrow \Lie$, the thickened moduli space of holomorphic maps $\C \rightarrow \Lie$ with $\delbar_{J_{\Lie}, \domainJ}\check{u} \in \transSubbundle^{\tangent}_{\eigenBound}$ will be a manifold of dimension $\dim = \ind(\check{u}) + \rank \transSubbundle^{\tangent}_{\eigenBound}$.

From a given
\begin{equation*}
u^{\plane} = \Phi_{\Lie} \circ \check{u}, \quad \delbarEpsilon u^{\plane} \in \transSubbundle^{\tangent}
\end{equation*}
we can explicitly produce a $1$-parameter family of maps near $u^{\notplane}$ in the thickened moduli space $\ModSpaceThick$. There are two cases to consider, depending on the topology of $\Lie$.

If $\Lie = \R_{s} \times \divSet$, we can express $u^{\C}$ as $u^{\plane} = (s, 0, 0, \check{u}): \C \rightarrow \R_{s} \times \NdividingSet$. Then for $|\kercoeff^{\plane}|$ sufficiently small, define maps $u_{\kercoeff^{\plane}}^{\plane}$
\begin{equation}\label{Eq:UxAsymptoticsPlane}
u^{\plane}_{\kercoeff^{\plane}} = (s, 0, \kercoeff^{\plane}e^{-\epsilon_{\sigma}s}, \check{u}), \quad \delbarEpsilon u^{\plane}_{\kercoeff^{\plane}} = \delbarTangent u^{\plane}_{\kercoeff^{\plane}} \in \transSubbundle
\end{equation}
each of which is contained in some leaf of $\foliationEpsilon$ by the construction of $\foliationEpsilon$ over $\R_{s} \times \NdividingSet$ in \S \ref{Sec:FoliationNearDividingSet}. Observe that for $\mp \kercoeff^{\plane} > 0$, $u^{\plane}_{\kercoeff^{\plane}}$ is contained in a leaf $\Lie$ of $\foliationEpsilon$ which is a copy of $\posNegRegionComplete$.

In the event that $\Lie$ is any leaf of $\foliationEpsilon$ other than $\R_{s} \times \divSet$, a non-zero $\R_{s}$ translation of $\Lie$ will be disjoint from $\Lie$. Therefore we can define $u_{\kercoeff^{\plane}}$ to be the translation of $u$ by $\kercoeff^{\plane}$ in the $\R_{s}$-direction. In this case $\Lie$ projects to a non-constant gradient flow line in Figure \ref{Fig:GradNearGamma} along its positive cylindrical end. Then $\Lie \cap \left(\R_{s} \times \NdividingSet \right)$ is (up to translation in the $\R_{s}$ direction) given by the image of
\begin{equation*}
[0, \infty) \times \divSet \rightarrow \R_{s}\times \NdividingSet,\quad (\check{s}, v) \mapsto (s, 0, \mp e^{-\epsilon_{\sigma}s}, v).
\end{equation*}
Fix a simple half-cylinder $\halfcyl_{a, +} \subset \C$, with $s\circ u|_{\partial \halfcyl_{a, +}} = 0$ so that the map $u$ is a preferred translate. Then there exists a sub-cylinder $\{ p \geq C\} \subset \halfcyl_{a, +}$ such that $u|_{\{ p \geq C\}}$ takes the form
\begin{equation}\label{Eq:PlaneAsymptotic}
\begin{gathered}
u = (p, 0, \kercoeff^{\plane} e^{-\epsilon_{\sigma}p}, v), \quad \mp \kercoeff^{\plane} > 0 \iff \Lie \simeq \posNegRegionComplete
\end{gathered}
\end{equation}
Note that $\kercoeff^{\plane}$ is uniquely determined by $u$ and the choice of $\halfcyl_{a, +} \subset \C$. Observe that this recovers Equation \eqref{Eq:UxAsymptoticsPlane} as $s = p + s_{+}$ for a constant $s_{+}$ when restricted to a positive half-cylinder.

The dimension of the thickened moduli space of maps into $\R_{s} \times \Nhypersurface$ with $\delbarEpsilon u \in \transSubbundle$ is
\begin{equation*}
\dim(\ModSpaceThick) = \ind(\check{u}) + \rank \transSubbundle^{\tangent} + 1.
\end{equation*}
Then the construction of the $u_{\kercoeff^{\plane}}$ above provides a proof of the following result by dimension counting.

\begin{lemma}\label{Lemma:ThickenedSpacePlane}
For $r$ sufficiently small, every $u \in \ModSpaceThick$ with domain $\C$ is contained in a leaf $\Lie$ of $\foliationEpsilon$.
\end{lemma}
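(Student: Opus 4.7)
The plan is to show that the explicit family $\{u^{\plane}_{\kercoeff^{\plane}}\}$ constructed in \eqref{Eq:UxAsymptoticsPlane}--\eqref{Eq:PlaneAsymptotic} exhausts $\ModSpaceThick$ for $r$ small. First I would assemble these maps into a smooth parametrization
\[
\Phi:\ \bigsqcup_{\Lie \in \foliationEpsilon} \ModSpaceThick_{\Lie} \times (-r', r') \longrightarrow \ModSpaceThick, \qquad (u^{\plane}, \kercoeff^{\plane}) \mapsto u^{\plane}_{\kercoeff^{\plane}},
\]
where $\ModSpaceThick_{\Lie}$ denotes the thickened moduli space of planes into the leaf $\Lie$ with respect to the induced $(J_{\Lie}, \transSubbundle^{\tangent})$, and where the central case $\kercoeff^{\plane} = 0$ corresponds to planes lying in $\R_{s} \times \divSet$. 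Since a plane has no negative punctures, the normal summand $\transSubbundle^{\normal}$ of $\transSubbundle$ is zero and hence $\transSubbundle = \transSubbundle^{\tangent}$; combining this with Lemma \ref{Lemma:PlaneTransversality} gives
\[
\dim \ModSpaceThick \;=\; \ind(u) + \rank \transSubbundle \;=\; \ind(\check u) + 1 + \rank \transSubbundle^{\tangent},
\]
matching exactly the dimension of the domain of $\Phi$.

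Next I would verify that $\Phi$ is an immersion by splitting tangent vectors at $u^{\plane}_{\kercoeff^{\plane}}$ along $T(\R_{s} \times \Nhypersurface) = \leafTangent \oplus \leafTangentNormal$. The in-leaf variations coming from $T\ModSpaceThick_{\Lie}$ cover all $\leafTangent$-valued deformations, while the vector $\partial u^{\plane}_{\kercoeff^{\plane}}/\partial \kercoeff^{\plane}$ spans the $\leafTangentNormal$-direction: it is a non-vanishing element of $\ker \Dlinearized^{\normal}_{s}$, and Theorem \ref{Thm:NormalDualCoker}(1) asserts that this kernel is one-dimensional, with the explicit generator $(0, e^{-\epsilon_{\sigma}s})$ computed in Lemma \ref{Lemma:KerDN}. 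Combined with the surjectivity half of Theorem \ref{Thm:NormalDualCoker}(1) and the transversality of $\transSubbundle^{\tangent}$ on $\ModSpaceThick_{\Lie}$, this yields that $d\Phi$ is an isomorphism, so $\Phi$ is a local diffeomorphism.

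The main obstacle will be showing that $\Phi$ is globally surjective for $r$ small. For this I would extract from any $u \in \ModSpaceThick$ its leading normal Fourier coefficient $\kercoeff^{\plane}(u)$ at the positive puncture via the half-cylindrical model of Lemma \ref{Lemma:HalfCylPerturbedFourier}, using that by Assumption \ref{Assump:EpsilonSpecification} the eigenvalue $\lambda^{\normal}_{-1} = -\epsilon_{\sigma}$ is the only sub-$\eigenBound$ eigenvalue of $\AsymptoticOp^{\normal}$, so this coefficient is well-defined and smoothly varying on $\ModSpaceThick$. The sign and magnitude of $\kercoeff^{\plane}(u)$ single out a unique candidate leaf $\Lie$, and an implicit function theorem argument applied to $\Phi$ (which is an isomorphism onto its image of the right dimension) produces the preimage $(u^{\plane}, \kercoeff^{\plane}(u)) \in \Phi^{-1}(u)$, forcing $u = u^{\plane}_{\kercoeff^{\plane}(u)} \in \Lie$. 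The delicate step here is controlling the implicit function iteration uniformly as $r \to 0$, where smallness of $r$ is essential; this control rests on the transversality of $\transSubbundle^{\tangent}$ established in Lemma \ref{Lemma:BundleTransversalityPlane} together with the surjectivity of $\Dlinearized^{\normal}_{s}$ on $\C$ from Theorem \ref{Thm:NormalDualCoker}(1).
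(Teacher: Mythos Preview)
Your argument is correct and follows exactly the paper's approach, which the paper compresses to the single sentence ``the construction of the $u_{\kercoeff^{\plane}}$ above provides a proof of the following result by dimension counting'' immediately preceding the lemma. The only wrinkle is that your domain $\bigsqcup_{\Lie} \ModSpaceThick_{\Lie} \times (-r', r')$ over-parameterizes (the $\kercoeff^{\plane}$ deformation already moves curves between leaves, so $\Phi$ is not globally injective), but this is harmless since the argument is local and the implicit-function step is unnecessary once you have a local diffeomorphism on each component whose image contains the $r=0$ locus.
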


\subsection{Domain and image of the gluing map}\label{Sec:GluingMapImage}

Let $(\tree, \orbitAssignment^{\tree})$ be a tree with an orbit assignment. Choosing compact sets $\CompactSubset(\vertex_{i})/\R_{s} \subset \ModSpace(\vertex_{i})/\R_{s}$ associated to each vertex $\vertex_{i}$ with neighborhoods $\NbhdCompactSubset(\vertex_{i})$ inside of manifolds of maps and constants $\eigenBound, C, r$, we have a thickened moduli space $\ModSpaceThick(\vertex_{i})$ associated to each $\vertex_{i}$. The products of these $\ModSpaceThick(\vertex_{i})$ with the space of neck lengths $[C, \infty)^{\#\edgeGluing_{i}}$ then gives the domain of a gluing map $\dom_{\glue}(\tree)$ as described in \S \ref{Sec:GluingDomain}.

Partition the vertices $\vertex_{i}$ of $\tree$ into subsets
\begin{equation*}
\left\{ \vertex_{i} \right\} = \left\{ \vertexPlane_{i} \right\} \sqcup \left\{ \vertexNotPlane_{i} \right\}
\end{equation*}
where the maps associated to the $\vertex_{i}^{\plane}$ have domain $\C$ and the maps associated to the $\vertex_{i}^{\notplane}$ having connected domains with non-positive Euler characteristics. We likewise partition the gluing edges of $\tree$ as
\begin{equation*}
\left\{ \edgeGluing_{j} \right\} = \left\{ \edgeGluingPlane_{j} \right\} \sqcup \left\{ \edgeGluingNotPlane_{j}\right\}
\end{equation*}
where $\edgeGluing_{j} \in \{ \edgeGluingPlane_{j} \}$ if the edge terminates at a $\vertexPlane_{i}$ and $\edgeGluing_{j} \in \{ \edgeGluingNotPlane_{j} \}$ if the edge terminates at a $\vertexNotPlane_{i}$. Then the domain of the gluing map is the subset of
\begin{equation*}
\left( \prod_{\vertexNotPlane_{i}} \ModSpaceThick(\vertexNotPlane_{i})/\R_{s} \right) \times \left( \prod_{\vertexPlane_{i}} \ModSpaceThick(\vertexPlane_{i})/\R_{s} \right) \times [C_{0}, \infty)^{\# \edgeGluingNotPlane_{j}} \times [\Cglue, \infty)^{\# \edgeGluingPlane_{j}}
\end{equation*}
the sum of whose $\norm{\delbarEpsilon u_{i}}$ for $u_{i} \in \ModSpaceThick(\vertex_{i})$ is less than $r$ for some large $\Cglue \in \R_{> 0}$. Write $u^{\plane}_{i} \in \ModSpaceThick(\vertexPlane_{i})$ and $u^{\notplane}_{i} \in \ModSpaceThick(\vertexNotPlane_{i})$ for the associated maps. Likewise, the length parameters associated to $\edgeGluingPlane_{j}$ and $\edgeGluingNotPlane_{j}$ will be written
\begin{equation*}
C^{\plane}_{i}, C^{\notplane}_{j} \geq \Cglue,
\end{equation*}
respectively with associated neck length parameters on the image of the gluing map
\begin{equation*}
\neckLength^{\plane}_{i}, \neckLength^{\notplane}_{i}
\end{equation*}

\nom{$C^{\plane}_{i}, C^{\notplane}_{i}, \neckLength^{\plane}_{i}, \neckLength^{\notplane}_{i}$}{Edge length and neck length parameters associated to gluing configurations}

Appealing to Lemma \ref{Lemma:ThickenedSpaceNonPlane} and the fact that $\ModSpaceThick(\vertexPlane_{i}) = \ModSpaceThick^{\tangent}(\vertexPlane_{i})$, we can express the domain of the gluing map as a codimension $0$ subset of
\begin{equation}\label{Eq:GluingDomain}
\disk^{\# \vertexNotPlane_{i}}_{\vec{\cokcoeff}^{\notplane}} \times \left( \prod_{\vertexNotPlane_{i}} \ModSpaceThick^{\tangent}(\vertexNotPlane_{i})/\R_{s} \right) \times \left( \prod_{\vertexPlane_{i}} \ModSpaceThick^{\tangent}(\vertexPlane_{i})/\R_{s} \right) \times [\Cglue, \infty)^{\# \edgeGluingNotPlane_{j}} \times [\Cglue, \infty)^{\# \edgeGluingPlane_{j}}.
\end{equation}
Here $\disk^{\# \vertexNotPlane_{i}}_{\vec{\cokcoeff}^{\notplane}}$ is the disk of radius $r$ parameterized by the coefficients $\vec{\cokcoeff}^{\notplane} = (\cokcoeff^{\notplane}_{i})_{\vertex^{\notplane}_{i}}$ associated to $u^{\notplane}_{i}$ described in Lemma \ref{Lemma:ThickenedSpaceNonPlane}. Using the above notation, we may write elements of $\dom_{\glue}(\tree)$ as
\begin{equation}\label{Eq:GFNotation}
\gluingConfig = \left(\vec{\cokcoeff}^{\notplane}, \vec{u}^{\notplane}_{i}, \vec{u}^{\plane}_{i}, \vec{C}^{\notplane}_{i}, \vec{C}^{\plane}_{i}\right) \in \dom_{\glue}(\tree).
\end{equation}
Here $\gluingConfig$ stands for ``gluing configuration''. Recall from the conventions of \S \ref{Sec:Trees} that for a vertex $\vertex_{i}$ we write $\edge_{i^{in}}$ for the associated incoming edge and $\edge_{i^{out}_{j}}$ for the outgoing vertices. We use superscripts $F, G$ to indicate that these edges are free or gluing edges. For each $\vertexPlane_{i}$ we have a function
\begin{equation*}
\kercoeff^{\plane}_{i}: \dom_{\glue}(\tree) \rightarrow \R
\end{equation*}
which depends only on $u^{\plane}_{i}$, recording the coefficient of Equation \eqref{Eq:UxAsymptoticsPlane}.

We want to understand maps $\delbarGluingNormal, \delbarGluingTangent$ defined
\begin{equation*}
\delbarGluing = \delbarGluingNormal + \delbarGluingTangent, \quad \delbarGluingNormal \in \transSubbundle_{\glue}^{\normal}(\tree), \quad \delbarGluingTangent \in \transSubbundle_{\glue}^{\tangent}(\tree)
\end{equation*}
where $\delbarGluingNormal$ is as in Equation \eqref{Eq:DelbarGluingDef}. Provided our understanding of the $\ModSpaceThick(\vertex_{i})$, we only need to apply the formulas of Theorem \ref{Thm:Gluing} to get a fairly accurate estimate of the $\delbarGluing, \delbarGluingNormal$. A key take-away for the following lemma is that the $\delbarGluingNormal$ can be exactly computed. We therefore do not need to concern ourselves with the magnitudes of the error bounds in Theorem \ref{Thm:Gluing} when restricting to the normal direction in subsequent analysis.

\nom{$\delbarGluingNormal, \delbarGluingTangent$}{The normal and tangent portions of $\delbarGluing$}

\begin{lemma}\label{Lemma:GluingCoeffs}
For any choice of input data used to define $\glue$, there is an automorphism $\Phi_{\glue, \tree}$ of $\dom_{\glue}(\tree)$ which restricts to the identity along $\left\{ \cokcoeff^{\notplane}_{i} = 0\ \forall \vertex^{\notplane}_{i}\right\}$, such that $\delbarGluingTangent \circ \Phi_{\glue, \tree}$ is independent of the $\cokcoeff^{\notplane}_{i}$, and satisfies
\begin{equation*}
\delbarGluingNormal\circ \Phi_{\glue, \tree}(\gluingConfig) = \sum_{\vertexNotPlane_{i}} \cokcoeff_{i}^{\notplane}\left( - e^{-\epsilon_{\sigma}\neckLength^{\notplane}_{i}}\mu^{\annulus, \normal}_{i^{in}} + \sum_{\edge_{i^{out}_{j}}} e^{-\epsilon_{\sigma}s_{i^{out}_{j}}(\vertex^{\notplane}_{i}, \tree)} \mu^{\normal}_{i^{out}_{j}} \right) - \sum_{\vertexPlane_{i}} \kercoeff^{\plane}_{i}e^{-\epsilon_{\sigma}\ell^{\plane}_{i}}\mu^{\annulus, \normal}_{i^{in}}
\end{equation*}
for some functions $\ell^{\plane}_{i}$ of the variables $C_{i}^{\plane}$ with $\ell^{\plane}_{i} -\neckLength^{\plane}_{i}$ and $\ell^{\plane}_{i} - C^{\plane}_{i}$ tending to $0$ as $\min C^{\plane}_{i} \rightarrow \infty$. Here and throughout we adopt the convention that $\mu^{\annulus, \normal}_{i^{in}} = 0$ if $\vertex_{i}$ is the root vertex of $\tree$. The $s_{i^{out}_{j}}(\vertex^{\notplane}_{i}, \tree)$ are as in Definition \ref{Def:Svars}.
\end{lemma}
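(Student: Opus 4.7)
The plan is to combine the explicit parameterizations of $\ModSpaceThick(\vertexNotPlane_i)$ and $\ModSpaceThick(\vertexPlane_i)$ from Lemmas \ref{Lemma:ThickenedSpaceNonPlane} and \ref{Lemma:ThickenedSpacePlane} with the gluing estimates of Theorem \ref{Thm:Gluing}, exploiting the observation that by Assumption \ref{Assump:EpsilonSpecification} the only eigenvalue of $\AsymptoticOp^{\normal}$ with absolute value less than $\eigenBound$ is $\lambda_{-1}^{\normal} = -\epsilon_\sigma$. First I would read off the normal Fourier coefficients from Equations \eqref{Eq:UxAsymptotics} and \eqref{Eq:PlaneAsymptotic}: for a non-plane map $u^{\notplane}_{i, \cokcoeff_i^{\notplane}}$ in preferred translate form, the unique nonzero normal holomorphic Fourier coefficient at the positive puncture is $\cokcoeff_i^{\notplane}$ in the $\zeta_{-1}^{\normal} = \partial_\sigma$ direction, and the unique nonzero normal perturbative coefficient at the $j$-th negative puncture is $\cokcoeff_i^{\notplane} e^{-\epsilon_\sigma s_{i^{out}_j}(\vertex_i^{\notplane}, \tree)}$, the exponent being the internal $s$-shift of $u_i^{\notplane}$, which by Definition \ref{Def:Svars} is intrinsic to $u_i^{\notplane}$ and so equals the corresponding function on the glued curve. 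For a plane map $u_i^{\plane}$ the positive-puncture coefficient is $\kercoeff_i^{\plane}$ and there are no negative punctures. All other normal Fourier modes vanish because the remaining eigenvalues of $\AsymptoticOp^{\normal}$ lie outside $(-\eigenBound, \eigenBound)$.

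Second, I would argue that $\delbarGluingNormal$ is computed \emph{exactly} by the leading-order $D_0$ and free-edge formulas of Theorem \ref{Thm:Gluing}, with no iterative correction. This is the normal-direction analogue of Observation \ref{Obs:GluingComplete}: because the error $\ThetaUpDown_{\err}$ of Equation \eqref{Eq:ThetaUpDownAdjusted} arises only from holomorphic Fourier modes with $|\lambda| > \eigenBound$, and no such normal modes appear for our maps, we have $\ThetaUpDown_{\err}^{\normal} \equiv 0$. Moreover, because $\JEpsilon$ is block-diagonal with respect to the splitting $T(\R_s \times \Nhypersurface) = \leafTangentNormal \oplus \leafTangent$ of \S \ref{Sec:TangentSplitting}, and because $\delbarEpsilon$ is linear in $(\tau, \sigma)$ on the simple neighborhoods where the gluing activity is concentrated (Equation \eqref{Eq:SimpleDelBar}), the operators $D_1^{\updownarrow}$, $L_1^{\updownarrow}$, and $D_{\hot}^{\updownarrow}$ of \S \ref{Sec:GluingProblem} preserve the normal/tangent decomposition; consequently the $\psi_1$ solution of Lemma \ref{Lemma:UniqueZeroSolution} has vanishing normal component. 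Substituting the Fourier coefficients from step one into Theorem \ref{Thm:Gluing}'s formulas yields, on each gluing edge $\edgeGluing_j$ from parent $\vertex_p$ to child $\vertex_c$, the coefficient $\cokcoeff_p^{\notplane} e^{-\epsilon_\sigma s_{j}(\vertex_p, \tree)} - e^{-\epsilon_\sigma C_j} \kercoeff_c$ of $\mu^{\annulus, \normal}_j$ with $\kercoeff_c \in \{\cokcoeff_c^{\notplane}, \kercoeff_c^{\plane}\}$, and on each outgoing free edge of a non-plane vertex the expected free-edge contribution to $\mu^{\normal}$.

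Third, I would construct $\Phi_{\glue, \tree}$. The internal-shift terms already match the claimed formula, so only the replacement $C_j \mapsto \neckLength_j$ needs to be effected in the gluing-edge contributions. Because $\neckLength_j - C_j = O(e^{-C_{\min}\eigenBound})$ by the $\norm{\psi_1}$ estimate of Lemma \ref{Lemma:UniqueZeroSolution}, the reparameterization $\widetilde{C}_j^{\notplane} = \neckLength_j^{\notplane}$ of the non-plane-child gluing-edge length variables is a smooth diffeomorphism for $\Cglue$ sufficiently large; the functions $\ell_i^{\plane}$ are then defined as the new length variables above each plane vertex and inherit the claimed asymptotics $\ell_i^{\plane} - C_i^{\plane}, \ell_i^{\plane} - \neckLength_i^{\plane} \to 0$. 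To further ensure that $\delbarGluingTangent \circ \Phi_{\glue, \tree}$ is independent of $\vec{\cokcoeff}^{\notplane}$, I would compose with a $\vec{\cokcoeff}^{\notplane}$-parameterized family of reparameterizations of the $\ModSpaceThick^{\tangent}(\vertex_i)$ factors, produced by the parameterized implicit function theorem using surjectivity of the tangent linearized operator onto the $\Ltwo$-complement of $\transSubbundle^{\tangent}_{\eigenBound}$ from Lemmas \ref{Lemma:BundleTransversalityNonPlane} and \ref{Lemma:BundleTransversalityPlane}; by construction this second factor restricts to the identity on $\{\vec{\cokcoeff}^{\notplane} = 0\}$. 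The main obstacle is checking that the combined automorphism is globally well-defined on $\dom_{\glue}(\tree)$ and that the tangent-direction reparameterization does not re-introduce $\vec{\cokcoeff}^{\notplane}$-dependence into $\delbarGluingNormal$ beyond what can be absorbed into the $\ell_i^{\plane}$ functions, which reduces to a chain-rule computation using the normal/tangent decoupling established in step two.
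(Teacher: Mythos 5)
The proposal takes a genuinely different route from the paper, and there are two substantive issues.

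First, on step 2: you correctly note that the error terms $\ThetaUpDown_{\err}$ vanish in the normal directions (this is exactly the thrust of Observation~\ref{Obs:GluingComplete}), but the leap ``consequently the $\psi_1$ solution of Lemma~\ref{Lemma:UniqueZeroSolution} has vanishing normal component'' is not adequately justified. The issue is that $D_{\hot}^{\updownarrow}$ is supported away from the simple neighborhoods (as the paper itself remarks), so the linearity-in-$(\tau,\sigma)$ argument is not available there, and the local $(\transSubbundle^{\normal},\transSubbundle^{\tangent})$ splitting does not extend globally. The splitting that \emph{is} globally preserved is the integrable leaf splitting $\leafTangentNormal \oplus \leafTangent$ of \S\ref{Sec:TangentSplitting}, and this is what the paper uses: it \emph{reformulates} the fixed-point problem so that the corrector $\psi_1$ lives in $\fancyVB(\vertex_i)^{\Lie}$ (tangent to the relevant leaf), with the plane-vertex maps being constrained to stay in their leaves $\Lie_i \ne \R_s \times \divSet$. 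Your argument and the paper's are morally the same, but you conflate the $\partial_\tau, \partial_\sigma$ decomposition with the leaf decomposition; these agree along $\R_s \times \divSet$ but not at the plane vertices, where the leaves are copies of $\posNegRegionComplete$ with nontrivial $\sigma$-component.

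Second and more importantly, step 3 is not closed. Applying Theorem~\ref{Thm:Gluing} yields, at each gluing edge, a coefficient involving the gluing parameter $C_j$ and the \emph{pre-glued} maps' internal $s$-shifts and Fourier data. The formula you are trying to establish is phrased in terms of the \emph{glued curve's} neck length $\neckLength_j$ and the glued-curve functions $s_{i^{out}_j}(\vertex_i^{\notplane}, \tree)$ of Definition~\ref{Def:Svars}, which are defined by restricting $s$ to boundaries of thin annuli of the glued domain; these differ from the pre-glued quantities by the pregluing and support-shifting corrections. Your claim that the internal $s$-shift of $u_i^{\notplane}$ ``is intrinsic to $u_i^{\notplane}$ and so equals the corresponding function on the glued curve'' is only approximately true. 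You propose to fix this by a reparameterization $C_j \mapsto \neckLength_j$ plus a $\vec{\cokcoeff}^{\notplane}$-parameterized reparameterization of the tangent factors, but you concede at the end that you have not verified that this combined automorphism is well-defined or that it does not re-introduce $\vec{\cokcoeff}^{\notplane}$-dependence. That acknowledged obstacle is the heart of the lemma, so as written the proposal has a gap. The paper avoids the issue entirely by a different construction: it first glues with $\vec{\cokcoeff}^{\notplane} = 0$ to obtain a curve $u_0$ mapping (near the necks) into $\R_s \times \divSet$, then produces a map $u_{\vec{\cokcoeff}^{\notplane}}$ by \emph{directly} perturbing the $\sigma$-coordinate of $u_0$ using a cutoff function $\Bump{\vertex_i}{}$ built on the glued domain so that the normal $\delbar$ is computed exactly by the elementary calculation of Equation~\eqref{Eq:UxDelbarChiLtOne}, with $\neckLength_{i^{in}}$ and $s_{i^{out}_j}(\vertex_i^{\notplane}, \tree)$ appearing on the nose; the automorphism $\Phi_{\glue,\tree}$ is then extracted indirectly from injectivity and a dimension count. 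That top-down construction is what makes the formula exact rather than approximate, and it is where your bottom-up approach would need to do significantly more work.
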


\begin{proof}
The proof requires that we analyze some of the details of \S \ref{Sec:GluingDetails} applied to this particular context. The gluing construction will be (again) slightly modified, resulting in the potentially non-trivial automorphism $\Phi_{\glue, \tree}$. To start, we'll analyze the pregluing and $\psi^{\updownarrow}_{0}$ modifications of \S \ref{Sec:GluingTestSections} applied to a single gluing edge.

Let $\edgeGluing_{j}$ be a gluing edge assigned to an orbit $\orbitAssignment(\edgeGluing_{j}) = \orbit$ which for simplicity we assume has action $1$ and let 
\begin{equation*}
\halfcyl^{\uparrow} = (-\infty, 0]_{p} \times (\Circle)_{q} \subset \Sigma\left(\vertex_{j^{in}}\right), \quad \halfcyl^{\downarrow} = [0, \infty)_{p} \times (\Circle)_{q} \subset \Sigma\left(\vertex_{j^{out}}\right)
\end{equation*}
be the half cylinders relevant to the pregluing construction with associated maps
\begin{equation*}
u^{\updownarrow}: \halfcyl^{\updownarrow} \rightarrow \R_{s} \times \R_{\tau} \times [-C_{\sigma}, C_{\sigma}] \times \Circle \times \disk^{2n-2}.
\end{equation*} 
Here $\Circle \times \disk^{2n-2}$ gives a standard neighborhood of $\orbitDivSet$ in $\divSet$ where $T\disk^{2n-2} = \xi_{\check{J}}$ is the subspace of $T\divSet$ preserved by $\check{J}$. Note that $\Sigma^{\uparrow}$ must have $\chi \leq 0$ as it has at least one negative end. Write $\transSubbundle^{\updownarrow}$ for the associated transverse subbundles, using additional decorations $\normal, \tangent$ to specify normal and tangent subbundles. We choose a metric satisfying the conditions of \S \ref{Sec:Metrics} and further require that it takes the form $ds^{\otimes 2} + d\tau^{\otimes 2} + d\sigma^{\otimes 2} + g_{\divSet}$ for a metric $g_{\divSet}$ on $\divSet$ in a neighborhood of $\R_{s} \times \divSet$ in $\R_{s} \times \Nhypersurface$.

For the time being we assume that $\vec{\cokcoeff}^{\notplane} = 0$. After applying $\R_{s}$ translations so that $s|_{\partial \halfcyl^{\updownarrow}} = 0$ we can write
\begin{equation*}
u^{\downarrow} = \left(p, q, \eta e^{-\epsilon_{\sigma}p}, z^{\tangent, \downarrow}\right), \quad u^{\uparrow} = \left(p, q, 0, z^{\tangent, \uparrow}\right).
\end{equation*}
This is Equation \eqref{Eq:GluingDetailsEndExpansions} applied to the present context with the $z^{\tangent, \updownarrow}$ giving Fourier expansions in the $\disk^{2n-2}$ directions. Here $\eta$ is a $\kercoeff^{\plane}$ is $\Sigma^{\downarrow} \simeq \C$ and is a $0$ otherwise by the $\vec{\cokcoeff}^{\notplane} = 0$ assumption. The descriptions of the $\tau$ and $\sigma$ coordinates on these half-cylinders follows from Equations \eqref{Eq:UxAsymptotics} and \eqref{Eq:UxAsymptoticsPlane}.

We apply the pregluing construction to these half-cylinders and then apply the modifications $\psi^{\updownarrow}_{0}$ of Equation \eqref{Eq:PsiNotDefinition}. This yields a version of Equation \eqref{Eq:ThetaUpDownAdjusted}, split into $\sigma$ and $\disk^{2n-2}\subset T\divSet$ directions:
\begin{equation}\label{Eq:ModGluingErrors}
\begin{gathered}
\ThetaDown(\psi^{\downarrow}_{0}, \psi^{\uparrow}_{0}) =  \ThetaDown_{0} + \ThetaDown_{\err}, \quad \ThetaUp(\psi^{\downarrow}_{0}, \psi^{\uparrow}_{0}) =  \ThetaUp_{0} + \ThetaUp_{\err},\\
\ThetaDown_{0} \in \transSubbundle^{\downarrow, \tangent}, \quad \ThetaUp_{0} = - \eta e^{-\epsilon_{\sigma}C_{j}}\mu^{\annulus, \normal} + (\ThetaUp_{0})^{\tangent}, \quad (\ThetaUp_{0})^{\tangent} \in \transSubbundle^{\uparrow, \tangent},\\
\ThetaDown_{\err} \in \Omega^{0, 1}\left(\xi_{\check{J}} \rightarrow \halfcyl^{\downarrow}\right), \quad \ThetaUp_{\err} \in \Omega^{0, 1}\left(\xi_{\check{J}} \rightarrow \halfcyl^{\uparrow}\right).
\end{gathered}
\end{equation}
Here $C_{j}$ is the length of the gluing edge $\edgeGluing_{j}$. The important feature here is that there is no error in the ``normal'' directions $\Omega^{0, 1}\left(\R^{2}_{\partial_{\tau}, \partial_{\sigma}} \rightarrow \halfcyl^{\updownarrow}\right)$, as was foreshadowed in Observation \ref{Obs:GluingComplete}. Observe that $\pi_{\R^{2}_{\partial_{\tau}, \partial_{\sigma}}}L^{\downarrow}_{0} = 0$ where the $L^{\updownarrow}_{0}$ are as described in Equation \eqref{Eq:DzeroLzero}. This follows from the fact that the the holomorphic Fourier coefficients for the normal $(\tau, \sigma)$ direction for $u^{\uparrow}$ along $\halfcyl^{\uparrow}$ are all zero. So if $\Sigma^{\downarrow}$ is a plane contained in some leaf $\Lie$ of $\foliationEpsilon$, then $\exp_{u^{\downarrow}}(\psi^{\downarrow}_{0})$ is contained in the same $\Lie$. Moreover, since $\Sigma^{\uparrow}$ has $\chi \leq 0$, then $\exp_{u^{\downarrow}}(\psi^{\downarrow}_{0})$. For later reference,
\begin{equation}\label{Eq:PlaneInLeafAfterWiggle}
\Sigma(\vertex_{i}) = \C, \quad u_{i}(\C) \subset \Lie_{i} \quad \implies\quad \exp_{u_{i}}(\psi_{0, i}) \subset \Lie_{i}.
\end{equation}
Note that when $\Sigma^{\downarrow} = \C$, the normal part of $\Theta^{\downarrow}_{0}$ is $\kercoeff^{\plane}e^{-\epsilon_{\sigma}C_{j}}$. This gives the $e^{-\epsilon_{\sigma}\ell^{\plane}_{i}}\mu^{\annulus, \normal}_{i^{in}}$ contributions to our equation with $\ell^{\plane}_{i} - C^{\plane}_{i}$ accounting for neck length adjustments as in \S \ref{Sec:ModifingSupports}.

Now we come to the formulation of Problem \ref{Prob:Gluing} which seeks to find sections $\psi_{1, \vertex_{i}}$ of $u_{i}^{\ast}(T(\R_{s} \times \Nhypersurface)$ for each vertex $\vertex_{i}$ of $\tree$. The problem is stated for a pair $u^{\downarrow}, u^{\uparrow}$ for which there is a single gluing edge. As explained in \S \ref{Sec:GluingBiggerTrees}, for a general tree the $\pi^{\complement}\Theta$ will be a matrix having one row for each $\vertex_{i}$ of the form
\begin{equation}\label{Eq:GluingRow}
(\pi^{\complement}\Theta)_{\vertex_{i}} =  \Theta_{\err, \vertex_{i}} + \pi^{\complement}\left( (D_{1} + D_{\hot})\psi_{i, \vertex_{i}} + \sum L_{1}(\psi_{1, \vertex_{k}}) \right)
\end{equation}
The sum is over all $\vertex_{k}$ which share a gluing edge connecting going from $\vertex_{k}$ to $\vertex_{i}$ or vice versa. Recall that $\pi^{\complement}$ denotes projection onto the $\Ltwo$ orthogonal complement of the $\transSubbundle(\vertex_{i})$. We want to find $\{\psi_{1, \vertex_{i}}\}$ so that the above expression vanishes for all $\vertex_{i}$.

Using the splitting of the tangent bundle $T(\R_{s} \times \Nhypersurface) = \leafTangentNormal \oplus \leafTangent$ described in \S \ref{Sec:TangentSplitting}, we get
\begin{equation*}
(\pi^{\complement}\Theta)_{\vertex_{i}}^{\normal} := \pi_{\leafTangentNormal}(\pi^{\complement}\Theta)_{\vertex_{i}}, \quad (\pi^{\complement}\Theta)_{\vertex_{i}}^{\Lie} := \pi_{\leafTangent}(\pi^{\complement}\Theta)_{\vertex_{i}}
\end{equation*}
as functions of the $\psi_{1, \vertex_{i}}$. We already have that $(\pi^{\complement}\theta)^{\normal}_{\vertex_{i}} = 0$, so we just need to solve for $(\pi^{\complement}\Theta)^{\Lie} =0$. For each $\vertex_{i}$ of $\tree$ consider $\fancyVB(\vertex_{i})^{\Lie}$ to be the Sobolev closure of
\be
\item $\exp_{u_{i}}(\psi_{0, \vertex_{i}})^{\ast}T(\R_{s} \times \divSet)$ when $\Sigma(\vertex_{i})\neq \C$ and
\item the tangent space to the manifold of maps into $\Lie_{i}$ when $\Sigma(\vertex_{i}) = \C$ for $\Lie_{i}$ as in Equation \eqref{Eq:PlaneInLeafAfterWiggle}.
\ee
Now we reformulate Problem \ref{Prob:Gluing} just to solve $(\pi^{\perp}\Theta)^{\Lie} =0$ with a collection of $\psi^{\Lie}_{1, \vertex_{i}}$ so that when $\Sigma(\vertex_{i}) = \C$ with $u_{i} \subset \Lie_{i}$, then $\exp_{u_{i}}(\psi_{0, \vertex_{i}} + \psi_{1, \vertex_{i}}^{\Lie}) \subset \Lie$ as well. Because the $\transSubbundle^{\tangent}$ are surjective by assumption, we can solve the problem uniquely using $\psi_{1, \vertex_{i}}^{\Lie} \in \fancyVB(\vertex_{i})^{\Lie}$ which are $\Ltwo$ orthogonal to $\Dlinearized^{-1}_{u_{i}}(\transSubbundle^{\tangent})$. The details of the construction of solutions using Lemma \ref{Lemma:FixedPointMethod} are identical for this problem up to modification in notation. So the gluing construction is complete in the case $\vec{\cokcoeff}^{\notplane} = 0$, up to shifting the supports of normal perturbations along the neck annuli.

We will now define gluings in the $\vec{\cokcoeff}^{\notplane} \neq 0$ case by perturbing the glued $\vec{\cokcoeff}^{\notplane} = 0$ using bump functions as in Equation \eqref{Eq:BumpConstruction}. Let $(\Sigma, [u_{0}]) \in \ModSpaceThick(\tree)/\R_{s}$ be the result of gluing some
\begin{equation*}
(0, \vec{u}_{i}, \vec{C}_{i}) \in \disk^{\# \vertex^{\notplane}_{i}} \times \prod \ModSpaceThick(\vertex_{i})/\R_{s} \times \prod [\Cglue, \infty) = \dom_{\glue}
\end{equation*}
with $u_{0} = (s, \pi_{\Nhypersurface}) \in \ModSpaceThick(\tree)$ the associated preferred translate. For each $\vertex^{\notplane}_{i}$, $\Sigma_{i} \subset \Sigma$ be the compact surface associated to $\vertex^{\notplane}_{i}$, let $\Sigma_{i^{in}} \subset \Sigma$ be the simple annulus or cylinder associated to the edge entering the vertex, and let $\Sigma_{i^{out}_{j}}$ be the annuli or cylinder associated to the $j$th outgoing edge of $\vertex_{i}^{\notplane}$. For the following, we will use the $s(\subtree, \tree)$ functions of Definition \ref{Def:Svars}. Let $\Bump{\vertex_{i}}{}: \Sigma \rightarrow [0, 1]$ be the function described as follows:
\be
\item $\Bump{\vertex_{i}^{\notplane}}{}$ is $1$ along $\Sigma_{i}$ and is $0$ outside of $\Sigma_{i} \cup \Sigma_{i^{in}} \cup \left( \cup \Sigma_{i^{out}_{j}} \right)$.
\item If $\Sigma_{i^{in}}$ is a half cylinder, set $\Sigma_{i^{in}}|_{\Sigma_{i^{in}}} = 1$. Otherwise $\Sigma_{i^{in}}$ is an annulus of the form $[-\neckLength/2, \neckLength]_{p} \times (\aCircle)_{q}$ for some $a, \neckLength$ having $\{ -\neckLength/2 \} \times \aCircle \subset \partial \Sigma_{i}$ with
\begin{equation*}
s|_{\{\neckLength/2\} \times \aCircle} = s_{i^{in}}(\tree) < 0.
\end{equation*}
In this case, define $\Bump{\vertex_{i}}{}|_{\Sigma_{i^{in}}} = \Bump{\neckLength/2}{\neckLength/2 - 1}(p)$.
\item If $\Sigma_{i^{out}_{j}}$ is a half cylinder $(-\infty, 0] \times \aCircle$ with $\{ 0 \} \times \aCircle \subset \partial \Sigma_{i}$, set $\Sigma_{i^{in}}|_{\Sigma_{i^{in}}} = \Bump{-1}{0}$. Otherwise $\Sigma_{i^{out}_{j}}$ is an annulus of the form $[-\neckLength/2, \neckLength] \times \aCircle$ with
\begin{equation*}
s|_{\{\neckLength/2\} \times \aCircle} = s_{i^{out}_{j}}(\tree) = s_{i^{out}_{j}}(\vertex_{i}^{\notplane}, \tree) - \neckLength_{i^{out}_{j}} - s_{i^{in}}(\tree)
\end{equation*}
and we define $\Bump{\vertex_{i}}{}|_{\Sigma_{i^{out}_{j}}} = \Bump{\neckLength/2-1}{\neckLength/2}(p)$.
\ee
Now using the fact that along the complement of all subsurfaces $\Sigma(\vertex_{i}^{\plane}) \subset \Sigma$ we can write
\begin{equation*}
u_{0} = (s, 0, 0, v) \rightarrow \R_{s} \times \NdividingSet = \R_{s} \times \R_{\tau} \times [-C_{\sigma}, C_{\sigma}] \times \Norbit
\end{equation*}
for $\Norbit \subset \divSet$ we modify locally for $\vec{\cokcoeff}^{\notplane} = (\cokcoeff^{\notplane}_{i})$ to obtain
\begin{equation*}
u_{\vec{\cokcoeff}^{\notplane}} = \left( s, 0, \sum_{\vertex_{i}^{\notplane}} \cokcoeff^{\notplane}_{i}e^{-\epsilon_{\sigma}s_{i^{in}}(\tree) + \neckLength_{i^{in}}}e^{-\epsilon_{\sigma}\Bump{\vertex_{i}}{}}, v\right).
\end{equation*}
Then the formula for the normal part of $\delbarEpsilon u_{\vec{\cokcoeff}^{\notplane}}$ is exactly as given in the left hand side of the formula appearing in the statement of this lemma. The calculation is exactly as in Equation \eqref{Eq:UxDelbarChiLtOne}. So we can define out modified gluing map to be $(\vec{\cokcoeff}^{\notplane}, \vec{u}_{i}, \vec{C}_{i}) \mapsto u_{\vec{\cokcoeff}^{\notplane}}$ where $u_{\vec{\cokcoeff}^{\notplane}}$ is obtained from $u_{0}$ as described as the gluing of $(0, \vec{u}_{i}, \vec{C}_{i})$.

This modified gluing map is an injection as it is an injection along $\{ \cokcoeff^{\notplane} = 0\}$ and is injective along the $\disk^{\# \vertexNotPlane_{i}}$ factor of $\dom_{\glue}(\tree)$. So the image must agree with that of $\glue$ by dimension considerations. Since our modified gluing map agrees with $\glue$ along the set $\{\cokcoeff_{i}^{\notplane} = 0\}$, then there must be an automorphism $\Phi_{\glue, \tree}$ of $\dom_{\glue}(\tree)$ as described in the statement of the lemma. So we can call our modified gluing $\glue \circ \Phi_{\glue, \tree}$.
\end{proof}

\begin{defn}
Using the above notation and with $\epsilon$ fixed, reset the following definitions and parameters:
\be
\item $\glue$ is redefined as $\glue \circ \Phi_{\glue, \tree}$.
\item The $\delbarGluing, \delbarGluingNormal, \delbarGluingTangent$ are redefined as $\delbarGluing\circ \Phi_{\glue, \tree}, \delbarGluingNormal \circ\Phi_{\glue, \tree}$, and $\delbarGluingTangent\circ \Phi_{\glue, \tree}$, respectively.
\ee
\end{defn}

Using the new notation, Equation \eqref{Eq:GluingDomain} is unchanged and Lemma \ref{Lemma:GluingCoeffs} is restated as follows:

\begin{lemma}\label{Lemma:GluingCoeffsClean}
The gluing map $\glue$ is such that $\delbarGluingTangent$ is independent of the $\cokcoeff^{\notplane}_{i}$ and
\begin{equation*}
\delbarGluingNormal(\gluingConfig) = \sum_{\vertexNotPlane_{i}} \cokcoeff_{i}^{\notplane}\left( - e^{-\epsilon_{\sigma}\neckLength^{\notplane}_{i}}\mu^{\annulus, \normal}_{i^{in}} + \sum_{\edge_{i^{out}_{j}}} e^{-\epsilon_{\sigma}s_{i^{out}_{j}}(\vertex^{\notplane}_{i}, \tree)} \mu^{\normal}_{i^{out}_{j}} \right) - \sum_{\vertexPlane_{i}} \kercoeff^{\plane}_{i}e^{-\epsilon_{\sigma}\ell^{\plane}_{i}}\mu^{\annulus, \normal}_{i^{in}}.
\end{equation*}
\end{lemma}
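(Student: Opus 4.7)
The plan is to observe that Lemma \ref{Lemma:GluingCoeffsClean} is a tautological consequence of Lemma \ref{Lemma:GluingCoeffs} once we apply the convention that $\glue$, $\delbarGluing$, $\delbarGluingNormal$, and $\delbarGluingTangent$ are redefined by precomposition with the automorphism $\Phi_{\glue, \tree}$. Concretely, under the new names, $\delbarGluing = (\widetilde{\glue}\circ T\Phi_{\glue,\tree})^{-1}\circ \delbarEpsilon \circ (\glue_{\mathrm{old}}\circ \Phi_{\glue,\tree})$, and likewise for the normal and tangent summands; so the formula we must verify is literally the formula displayed in Lemma \ref{Lemma:GluingCoeffs} applied to $\delbarGluingNormal\circ\Phi_{\glue,\tree}$, which is the content of that earlier result.

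The only thing to check is compatibility of the definition with the bundle splitting used here. First, because $\Phi_{\glue,\tree}$ was constructed to act trivially on the locus $\{\vec{\cokcoeff}^{\notplane} = 0\}$ and to affect the remaining coordinates only through the parameters $\cokcoeff^{\notplane}_{i}$ (which enter $\delbarGluing$ only linearly in the normal direction, by the explicit formula of Lemma \ref{Lemma:GluingCoeffs}), the statement that $\delbarGluingTangent$ is independent of the $\cokcoeff^{\notplane}_{i}$ is preserved by this renaming: it was true after applying $\Phi_{\glue,\tree}$ in Lemma \ref{Lemma:GluingCoeffs}, and now it is true by the new definition of $\delbarGluingTangent$. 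Second, since the pullback under $\widetilde{\glue}\circ T\Phi_{\glue,\tree}$ preserves the direct sum decomposition $\transSubbundle(\tree) = \transSubbundle^{\normal}(\tree)\oplus \transSubbundle^{\tangent}(\tree)$ (the bundle isomorphism of Equation \eqref{Eq:DelbarGluingDef} is block-diagonal with respect to this splitting, as it is built from shifts of the supports of the spanning sections $\mu^{\annulus,\normal}$ and $\mu^{\annulus,\tangent}_{k}$ separately), the splitting $\delbarGluing = \delbarGluingNormal + \delbarGluingTangent$ remains well-defined after the renaming.

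Substituting the explicit description from Lemma \ref{Lemma:GluingCoeffs} therefore yields exactly the displayed formula, with no residual automorphism. There is no obstacle in this argument; the real work (the pregluing, the support-modification trick of \S \ref{Sec:GluingTestSections}, the cancellation that makes $\pi_{\leafTangentNormal}L^{\updownarrow}_{0}=0$, and the Newton iteration in the $\leafTangent$-direction only) has already been carried out in the proof of Lemma \ref{Lemma:GluingCoeffs}. The present statement simply records the final formula in the notation that will be used in subsequent sections.
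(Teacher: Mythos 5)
Your proposal is correct and matches the paper exactly: the paper introduces the redefinition $\glue \mapsto \glue\circ\Phi_{\glue,\tree}$ (and correspondingly for $\delbarGluing$, $\delbarGluingNormal$, $\delbarGluingTangent$) immediately after Lemma \ref{Lemma:GluingCoeffs} and then states Lemma \ref{Lemma:GluingCoeffsClean} explicitly as ``Lemma \ref{Lemma:GluingCoeffs} restated'' with no further proof, so the tautological-renaming argument you give is precisely the intended one. (One cosmetic imprecision: the new $\delbarGluing$ is simply $\delbarGluing_{\mathrm{old}}\circ\Phi_{\glue,\tree}$; the bundle map over the gluing map does not pick up a factor of $T\Phi_{\glue,\tree}$, since the paper redefines $\delbarGluing$ directly by precomposition rather than rebuilding it from $\widetilde{\glue}$.)
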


Using the above explicit formula, we can extend the leaf-tangency results of Lemmas \ref{Lemma:ThickenedSpaceNonPlane} and \ref{Lemma:ThickenedSpacePlane} -- which relied on intersection positivity -- to general gluing configurations.

\begin{lemma}\label{Lemma:LeafTangencyAfterGluing}
Suppose that $\delbarGluingNormal(\gluingConfig) = 0$. Then the map $u = \glue(\gluingConfig)$ is tangent to some leaf $\Lie$ of $\foliationEpsilon$.
\end{lemma}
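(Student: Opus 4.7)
The plan is to combine the explicit description of $\delbarGluingNormal$ from Lemma \ref{Lemma:GluingCoeffsClean} with an intersection-positivity argument modeled on Corollary \ref{Cor:CurvesStuckInLeaves}. First I would observe that the perturbations $\mu^{\annulus, \normal}_{j}$ (indexed by gluing edges) and $\mu^{\normal}_{j}$ (indexed by free outgoing edges) appearing on the right-hand side of that formula have pairwise disjoint supports on the glued domain and are therefore linearly independent. Setting the linear combination equal to zero is equivalent to setting each coefficient equal to zero, producing a linear system in the variables $\{\cokcoeff_{i}^{\notplane}\}$ and $\{\kercoeff_{i}^{\plane}\}$: each free outgoing edge forces $\cokcoeff_{i}^{\notplane} = 0$ at its source vertex, and each gluing edge ties the coefficients at its endpoints by a ratio of strictly positive exponentials $e^{-\epsilon_{\sigma}(\cdot)}$.

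Next I would invoke the $\JEpsilon$-invariance of the splitting $T(\R_{s} \times \Nhypersurface) = \leafTangentNormal \oplus \leafTangent$ from Equation \eqref{Eq:GlobalTangentSplitting} to recast $\delbarGluingNormal = 0$ geometrically: identifying $\pi_{\leafTangentNormal}(\delbarEpsilon u) = \delbarGluingNormal$ and using that $\pi_{\leafTangentNormal}$ commutes with $\JEpsilon$, the vanishing of $\delbarGluingNormal$ is equivalent to the statement that $\pi_{\leafTangentNormal} \circ du$ is complex linear, i.e., $u$ is holomorphic in directions transverse to $\foliationEpsilon$. This is precisely the hypothesis required for algebraic intersections of $u$ with any leaf $\Lie$ of $\foliationEpsilon$ to be locally non-negative, the tangent-direction contribution $\delbarGluingTangent \in \transSubbundle^{\tangent}(\tree)$ being a perturbation along $\leafTangent$ that does not interact with intersections in the normal direction. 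I would then repeat the argument of Corollary \ref{Cor:CurvesStuckInLeaves} almost verbatim: translating any leaf $\Lie$ upward in $\R_{s}$ preserves its algebraic intersection number with $u$, and for sufficiently large translations $\im u$ is disjoint from $\Lie$, so $u$ is either disjoint from $\Lie$ or contained in it. A short case analysis on whether $u$ meets $\R_{s}\times\divSet$, a $\posRegionComplete$-translate, or a $\negRegionComplete$-translate then reduces matters to a single leaf, and the specific leaf is pinned down by the common sign of the coefficients from the linear system together with the matching relations on gluing edges, which reproduce the ODE $\partial_{\check{s}}\sigma = -\epsilon_{\sigma}\sigma$ governing $\foliationEpsilon$ near $\divSet$ in \S \ref{Sec:FoliationNearDividingSet}.

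The main technical obstacle will be upgrading intersection positivity to the setting of maps that are only $\delbarEpsilon$-holomorphic modulo a $\transSubbundle^{\tangent}(\tree)$-valued perturbation, since the classical Micallef--White statement assumes honest $J$-holomorphicity. I expect this to be routine but somewhat tedious: localizing near a candidate intersection point $z \in u^{-1}(\Lie)$ and using coordinates adapted to the splitting of Equation \eqref{Eq:GlobalTangentSplitting} in which $\JEpsilon$ preserves each summand, the components of $u$ transverse to $\Lie$ satisfy a genuine $\R$-linear Cauchy--Riemann equation—only the components along $\Lie$ are modified by $\delbarGluingTangent$—so the standard local positivity results apply to the transverse components, which is all that is needed for the global $\R_{s}$-translation argument to go through.
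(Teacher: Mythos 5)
Your argument is correct in its essentials, but it follows a genuinely different route from the paper's and carries noticeably more technical overhead. The paper's proof is a direct local check: away from the gluing neck annuli, the glued map $u$ coincides with the building blocks $u^{\notplane}_{\cokcoeff^{\notplane}}$ and $u^{\plane}_{\kercoeff^{\plane}}$, which are tangent to $\foliationEpsilon$ off the supports of the $\mu^{\normal}$ perturbations by Lemmas \ref{Lemma:ThickenedSpaceNonPlane} and \ref{Lemma:ThickenedSpacePlane} and the explicit construction in Lemma \ref{Lemma:GluingCoeffs}; over the neck annuli, $\delbarGluingNormal = 0$ together with Lemma \ref{Lemma:HalfCylPerturbedFourier} forces the $(s,\tau,\sigma)$-coordinates to take the model holomorphic leaf form $(s_0+p,\,0,\,c\,e^{-\epsilon_{\sigma}p})$. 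Tangency to $\foliationEpsilon$ is thus verified piecewise on the domain, and since the domain is connected the image lies in a single leaf; no intersection theory is needed. Your route --- recasting $\delbarGluingNormal = 0$ as normal-holomorphicity via the $\JEpsilon$-invariant splitting of Equation \eqref{Eq:GlobalTangentSplitting} and then rerunning Corollary \ref{Cor:CurvesStuckInLeaves} --- does buy generality, since it applies to any normal-holomorphic curve rather than only to glued ones, but it requires extending local positivity of intersections to perturbed curves. You correctly flag this, but one point is more delicate than you allow: \S \ref{Sec:TangentSplitting} notes explicitly that $\leafTangentNormal$ need not be integrable, so the ``coordinates adapted to the splitting'' in which the transverse component of $u$ would satisfy a literal Cauchy--Riemann equation do not exist. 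What rescues the argument is that, after choosing a local defining function $\nu$ for a leaf $\Lie$ that is $\JEpsilon$-complex along $\Lie$, the hypothesis $\pi_{\leafTangentNormal}\delbarEpsilon u = 0$ yields a Cauchy--Riemann \emph{inequality} $|\delbar(\nu\circ u)| \leq C\,|\nu\circ u|$ (both the $\transSubbundle^{\tangent}$-valued perturbation and the non-integrability of $\leafTangentNormal$ contribute only at order $O(|\nu\circ u|)$), after which the similarity principle gives isolated positive zeros and the $\R_s$-translation argument goes through. Finally, the linear-system observation in your first paragraph, while correct, is never actually used by your subsequent intersection-positivity argument and could be dropped; the paper's proof does not go via it either.
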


\begin{proof}
The proof is already available from the details of the proof of Lemma \ref{Lemma:GluingCoeffs}: The $u'_{i}$ are tangent to $\foliationEpsilon$ on the complements of the supports of $\mu^{\normal}$ perturbations, a collection of annuli. Since the pregluing map leaves the complements of half-cylindrical ends of the domain of $u_{i}$ unchanged, we conclude that $u$ is tangent to $\foliationEpsilon$ on the complement of gluing neck annuli.

If $\delbarGluingNormal = 0$, then the normal portion of $\delbarEpsilon$ vanishes along these annuli, considered as living in the domain of the glued curve. Along these annuli using coordinates $p, q$, apply Lemma \ref{Lemma:HalfCylPerturbedFourier} to conclude that the $(s, \tau, \sigma)$ coordinates of the glued map are $(s_{0} + p, 0, e^{-\epsilon_{\sigma}p})$ so that $u$ is tangent to $\foliationEpsilon$ along the annuli as well. Therefore $u$ is everywhere tangent to $\foliationEpsilon$.
\end{proof}

\section{Multisections determining ``buildings of buildings''}\label{Sec:MultisecPrelim}

In this section we describe properties of the multisections we'll use to compute contact homology for $\Nhypersurface$. We'll blend together a review of the general properties that our multisections will need to satisfy along with some specific properties that will help us work with perturbation schemes for curves in $\R_{s} \times \divSet$, the $\posNegRegionComplete$, and $\R_{s} \times \Nhypersurface$ simultaneously.

Here is the big issue: If the moduli spaces for $\R_{s} \times \divSet$ and the $\posNegRegionComplete$ are transversely cut out, then to compute $\partialNH$ for $\Nhypersurface$ we will end up counting holomorphic buildings composed of \emph{holomorphic curves of non-negative index} in $\R_{s} \times \divSet$ and in the $\posNegRegionComplete$ satisfying $\delbarGluing \in \Multisec^{\normal}$ for some multisection $\Multisec^{\normal}$ of the $\transSubbundle^{\normal}$. This transversality cannot be assumed in general.

To have a sensible expression for the $\partialNH$ of $\Nhypersurface$, we want to count holomorphic buildings composed of \emph{holomorphic buildings of non-negative index} in $\R_{s} \times \divSet$ and in the $\posNegRegionComplete$. When these latter holomorphic buildings are rigid, they will correspond to contributions to the contact homology differential $\partialDivSet$ for $\divSet$ and to the augmentations $\aug^{\pm}$ of its contact homology algebra $\chainDivSet = CC(\divSet)$ defined by the Liouville fillings of $\divSet$ by the $\posNegRegion$.

To resolve this issue, our strategy is to
\be
\item provide a simplified construction of \emph{melded multisections} as in \cite{BH:ContactDefinition} which is applicable to general contact manifolds and depends on a \emph{melding constant} $\Cmeld$,
\item give a modified construction so that multisections of $\transSubbundle^{\normal}$ and $\transSubbundle^{\tangent}$ are controlled by two melding constants $C^{\normal}_{\meld} \gg C^{\tangent}_{\meld}$, and
\item show that the curves contributing to the $CH$ differential $\partialNH$ for $\Nhypersurface$ are indeed gluings of holomorphic buildings in the $\R_{s} \times \divSet$ and the $\posNegRegionComplete$ when $C^{\normal}_{\meld}$ is suffiently larger then $C^{\tangent}_{\meld}$.
\ee

\subsection{Generalities on multisections}

For background on multisections and orbifolds relevant to the present context, see \cite[\S 2]{BH:ContactDefinition}. To compute $\partialNH$ we count perturbed holomorphic curves $u$ satisfying
\begin{equation}\label{Eq:PerturbedSolutionGeneral}
\delbarJ u = \multisec, \quad u \in \ModSpaceThick(\tree), \quad \multisec(\tree) \in \Multisec(\tree)
\end{equation}
with $\Multisec(\tree)$ a multisection $\transSubbundle(\tree) \rightarrow \ModSpaceThick(\tree)$ vanishing on the vertical boundary $\partial^{v}\ModSpaceThick(\tree)$. Here and throughout, the notation $\multisec \in \Multisec$ indicates that $\multisec$ is a branch of $\Multisec$ and we'll often write $\Multisec = \{ \multisec \}$ to describe $\Multisec$ as the union of its branches. The $\tree$ will vary and we'll want the $\Multisec(\tree)$ to be compatible along the overlaps discussed in \S \ref{Sec:Overlaps}. This will ensure that the solution spaces patch together along overlaps to give a weighted, branched manifold.

\nom{$\Multisec = \{ \multisec \}$}{Multisection of a transverse subbundle}

We continue to work with parameterized maps and take a sum of counts of the above solutions weighted by orders of isotropy groups of the following symmetries.

\begin{defn}\label{Def:DomainSymmetries}
The \emph{symmetries} of $(\Sigma, u) \in \ModSpaceThick$, denoted $G(\Sigma, u)$ is the group generated by actions of
\be
\item rotations of asymptotic markers at punctures assigned to orbits which are multiple covered.
\item automorphisms $\phi: \Sigma \rightarrow \Sigma$ of the domain with $\phi \neq \Id$ which preserve the each collection of removable and non-removable punctures set-wise, preserving orbit assignments of negative punctures, and for which $u = u\circ \phi$.
\ee
\end{defn}

As the $\Sigma$ with all punctures removed are stable, each $G(\Sigma, u)$ is finite.

For a tree $\tree$ we can view our multisection $\Multisec(\tree)$ as a finite collection of sections
\begin{equation*}
\Multisec(\tree) = \left\{ \multisec(\tree) \right\}, \quad \multisec(\tree) \in \Sec(\transSubbundle(\tree))
\end{equation*}
which is invariant under all such symmetries. That is, we are assuming that all multisections are \emph{lifted} \cite[Definition 2.2.2]{BH:ContactDefinition}. The point of using multisections is that symmetry-equivariant transversality cannot generally be achieved for solutions to $\delbarJ = \multisec(\tree)$, so the individual $\multisec(\tree) \in \Multisec(\tree)$ are not necessarily invariant under symmetries. For simplicity, we will often drop $\tree$ from the notation for a $\multisec$ or $\Multisec$.

\subsection{Normal and tangent splitting of multisections}

For two multisections $\Multisec = \{ \multisec \}, \Multisec' = \{ \multisec' \}$, the sum is defined
\begin{equation*}
\Multisec + \Multisec' = \{ \multisec + \multisec' \}.
\end{equation*}
The definition ensures if $\Multisec$ and $\Multisec'$ are preserved by some symmetry, then the composition is as well. Using the splitting $\transSubbundle = \transSubbundle^{\normal} \oplus \transSubbundle^{\tangent}$, we always use multisections of $\transSubbundle_{\eigenBound}$ which are sums
\begin{equation*}
\Multisec = \Multisec^{\normal} + \Multisec^{\tangent}, \quad \Multisec^{\normal} = \left\{ \multisec^{\normal} \in \Sec(\transSubbundle^{\normal})\right\}, \quad \Multisec^{\tangent} = \left\{ \multisec^{\normal} \in \Sec(\transSubbundle^{\tangent})\right\}.
\end{equation*}

\nom{$\Multisec^{\normal}, \Multisec^{\tangent}$}{Normal and tangent multisections}

Then solving Equation \eqref{Eq:PerturbedSolutionGeneral} amounts to solving two equations simultaneously,
\begin{equation}\label{Eq:NormalTangentEquations}
\delbarGluingNormal = \multisec^{\normal}, \quad \delbarGluingTangent = \multisec^{\tangent}.
\end{equation}
The results of the previous sections provide us some control over $\delbarGluingNormal$ while we have little control of $\delbarGluingTangent$ (since it is not explicitly described). We therefore think of attempting to solve the first equation over the space of solutions to the second.

Observe that while the bundles $\transSubbundle^{\normal} \rightarrow \ModSpaceThick$ of \S \ref{Sec:TransverseSubbundleConstruction} are trivialized by an ordering of negative punctures, these bundles will not necessarily descend to trivializeable bundles over the orbifolds $\ModSpaceThick/(\text{symmetries})$. Any section of $\transSubbundle^{\normal}$ is invariant under marker rotation (as the perturbations $\mu^{\normal}$ no not depend on choices of markers), but not necessarily reordering of negative punctures.

\subsection{Reducing multisections over $\ModSpaceThick/\R_{s}$ to multisections over $\ModSpaceThick^{\tangent}/\R_{s}$}\label{Sec:MultisecSingleVertex}

Recall that our thickened moduli spaces $\ModSpaceThick(\tree\git)/\R_{s}$ of curves with topology $\Sigma$ are bundles over $\ModSpaceThick^{\tangent}(\tree\git)/\R_{s}$. When $\Sigma = \C$ the fiber is a point. Otherwise, $\ModSpaceThick(\tree\git)/\R_{s} = \disk^{1} \times \ModSpaceThick^{\tangent}(\tree\git)/\R_{s}$ with the $\disk^{1}$ fibers parameterized by the constants $\cokcoeff^{\notplane}$ described in Lemma \ref{Lemma:ThickenedSpaceNonPlane} where we identify $(\cokcoeff^{\notplane}, u^{
\notplane})$ with the $u^{\notplane}_{\cokcoeff^{\notplane}}$ of Equation \eqref{Eq:UxDelbarChiLtOne}.

Provided a section
\begin{equation*}
\left(\check{\multisec} = \check{\multisec}^{\normal} + \check{\multisec}^{\tangent}\right): \ModSpaceThick^{\tangent}/\R_{s} \rightarrow \left(\transSubbundle = \transSubbundle^{\normal} \oplus \transSubbundle^{\tangent}\right), \quad \check{\multisec}^{\tangent}|_{\partial^{v}\ModSpaceThick^{\tangent}} = 0,
\end{equation*}
we can upgrade $\check{\multisec}$ to a section
\begin{equation*}
\multisec: \ModSpaceThick/\R_{s} \rightarrow \transSubbundle, \quad \multisec|_{\partial^{v}\ModSpaceThick} = 0
\end{equation*}
as we'll now describe, following \cite[\S 5.4]{BH:ContactDefinition}: Recall from \S \ref{Sec:BumpFunctions} that $\Bump{r/2}{}: \R \rightarrow [0, 1]$ is a bump function agreeing with the constant $1$ inside of $[-r/2, r/2]$ and vanishing outside of $(-r, r)$. We also use the parameter $r$ to control the sizes of the $\ModSpaceThick$. In the case $\chi(\Sigma) \leq 0$, we define $\multisec$ from $\check{\multisec}$ via the formula
\begin{equation*}
\multisec|_{(\cokcoeff^{\notplane}, u^{\tangent})} = \Bump{r/2}{}(\norm{\delbarGluingNormal})\Bump{r/2}{}(\norm{\delbarGluingTangent})\check{\multisec}.
\end{equation*}
In the case $\Sigma = \C$, $\multisec = \check{\multisec} = \check{\multisec}^{\tangent}$ since $\transSubbundle^{\normal} = 0$.

\begin{assump}\label{Assump:FiberwiseConstMultisec}
We assume that every section $\multisec$ over a $\ModSpaceThick/\R_{s}$ is determined by a $\check{\multisec}$ over the associated $\ModSpaceThick^{\tangent}$ via the above construction.
\end{assump}

Of course the assumption is vacuously true for $\Sigma = \C$ curves.

\begin{lemma}
In the above notation with $\chi(\Sigma) \leq 0$, if $\norm{\multisec} < \frac{r}{2}$, then every solution to $\delbarEpsilon u = \multisec$ with $u = (\cokcoeff^{\notplane}, u^{\tangent})$ has the form
\begin{equation*}
\delbarTangent u^{\tangent} = \check{\multisec}^{\tangent}, \quad  \cokcoeff^{\notplane}\sum \mu^{\halfcyl, \normal}_{i} = \check{\multisec}^{\normal}.
\end{equation*}
\end{lemma}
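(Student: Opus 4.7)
The plan is to exploit the fact that the cutoff functions in the definition of $\multisec$ are identically $1$ in a neighborhood of zero, so the equation $\delbarEpsilon u = \multisec$ reduces to $\delbarEpsilon u = \check{\multisec}$ precisely for those $u$ with $\delbarEpsilon u$ small. First I would use the direct sum decomposition $\transSubbundle = \transSubbundle^{\normal}\oplus\transSubbundle^{\tangent}$, which is $\Ltwo$-orthogonal because the generators $\mu_i^{\halfcyl,\normal}$ take values in $\R\partial_{\sigma}$ while the generators of $\transSubbundle^{\tangent}$ take values in $\xi_{\JDivSet}\subset T\divSet$. Projecting $\delbarEpsilon u = \multisec$ onto the two summands yields
\begin{equation*}
\delbarGluingNormal = \multisec^{\normal}, \qquad \delbarGluingTangent = \multisec^{\tangent},
\end{equation*}
together with the bounds $\norm{\delbarGluingNormal},\norm{\delbarGluingTangent} \leq \norm{\multisec} < r/2$.

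Next I would invoke the defining property of the cutoff: since $\Bump{r/2}{}$ equals $1$ on $[-r/2,r/2]$, both bump factors in the formula
\begin{equation*}
\multisec|_{(\cokcoeff^{\notplane},u^{\tangent})} = \Bump{r/2}{}(\norm{\delbarGluingNormal})\,\Bump{r/2}{}(\norm{\delbarGluingTangent})\,\check{\multisec}
\end{equation*}
evaluate to $1$ at the solution $u$, giving the pointwise identity $\multisec|_u = \check{\multisec}|_u$. Splitting this identity into normal and tangent pieces turns the two equations above into $\delbarGluingNormal = \check{\multisec}^{\normal}$ and $\delbarGluingTangent = \check{\multisec}^{\tangent}$.

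Finally I would substitute the explicit parameterization of $\delbarEpsilon u$ provided by Lemma \ref{Lemma:ThickenedSpaceNonPlane} and Equation \eqref{Eq:UxDelbarChiLtOne}: namely $\delbarGluingNormal = \cokcoeff^{\notplane}\sum e^{-\epsilon_{\sigma}s_i}\mu_i^{\halfcyl_-,\normal}$ and $\delbarGluingTangent = \delbarTangent u^{\tangent}$. This delivers the claimed pair of equations. The bulk of the content is the thickened moduli space description already established; the only mildly subtle input is the orthogonality of the normal and tangent summands of $\transSubbundle$, which I would emphasize is not an obstacle but rather a design feature of the transverse subbundle construction in \S \ref{Sec:TransverseSubbundleConstruction}.
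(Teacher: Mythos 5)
Your argument is correct and follows the same approach as the paper, whose proof simply cites Equation \eqref{Eq:UxDelbarChiLtOne} and the fact that $\Bump{r/2}{}$ is identically $1$ along the graph of $\multisec$. You have filled in the implicit step — the $\Ltwo$-orthogonality of $\transSubbundle^{\normal}$ and $\transSubbundle^{\tangent}$, which ensures both $\norm{\delbarGluingNormal}$ and $\norm{\delbarGluingTangent}$ are individually controlled by $\norm{\multisec} < r/2$ — that makes the bump factors evaluate to $1$ at a solution.
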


\begin{proof}
This follows from Equation \eqref{Eq:UxDelbarChiLtOne} and the fact that $\Bump{r/2}{} = 1$ along $\multisec$, viewed as a submanifold of the total space $\totalspace(\transSubbundle_{\eigenBound})$.
\end{proof}

\begin{assump}\label{Assump:FiberConstantSec}
With $r$ fixed, we always assume that $\norm{\multisec} \leq \frac{r}{2}$ so the above lemma holds true.
\end{assump}

With the above assumption in place, the preceding lemmas assert that can forget the $\check{\multisec}$. By an abuse of notation, we simply work with $\multisec$, assuming that they are constant in the $\cokcoeff^{\notplane}$-parameterized fibers of the $\ModSpaceThick$.

\begin{defn}
We say that $\multisec$ and $\Multisec$ associated to single vertex trees as described in this section are \emph{interior sections} and \emph{interior multisections}, respectively.
\end{defn}

\subsection{Multisections over the $\transSubbundle(\tree)$ and melding}\label{Sec:MultisecGeneralTree}

Now let $\tree$ be a tree with an orbit assignment for which we have chosen compact sets $\CompactSubset(\vertex_{i})/\R$ and neighborhoods $\NbhdCompactSubset(\vertex_{i})$ associated to each vertex $\vertex_{i}$ and a large minimum neck length parameter $\Cglue \gg 0$ as in Theorem \ref{Thm:Gluing}. Associated to this data we have a $\dom_{\glue, \Cglue}(\tree)$ as described in Equation \eqref{Eq:GluingDomain} and a gluing map $\glue$ whose image is as described in Lemma \ref{Lemma:GluingCoeffs}. Since the gluing map of Lemma \ref{Lemma:GluingCoeffsClean} is a diffeomorphism onto its image, we will count solutions to Equation \eqref{Eq:PerturbedSolutionGeneral} over $\dom_{\glue, \Cglue}(\tree)$ rather than $\ModSpaceThick(\tree)/\R_{s}$ as this is notationally simpler, viewing $\transSubbundle(\tree)$ as a bundle over $\dom_{\glue, \Cglue}(\tree)$ with $\delbarGluing = \delbarGluingNormal + \delbarGluingTangent$ a section.

The following is \cite[Definition 8.3.6]{BH:ContactDefinition}, providing sufficient constraints on the $\Multisec(\tree)$ so that contact homology is well defined by counting solutions to Equation \eqref{Eq:PerturbedSolutionGeneral}.

\begin{properties}\label{Properties:MultisectionCompatibility}
We require that the $\Multisec(\tree)$ satisfy the following properties:
\be
\item For each good subforest $\subtree \subset \tree$, the multisections $\Multisec(\tree\git\subtree)$ and $\Multisec(\tree)$ agree along the overlaps $\ModSpaceThick(\tree\git\subtree, \tree)$ via the bundle injections of Equation \eqref{Eq:OverlappingCharts}.
\item Let $\{\edgeGluing_{\longi}\} \subset \{\edgeGluing_{i}\}$ be a collection of gluing edges with associated length parameters $C_{\longi}$ and suppose that $\{ \subtree_{j} \}$ is the collection of subtrees of $\tree$ obtained by splitting $\tree$ along the $\edgeGluing_{\longi}$. Then there is a $C \gg \Cglue$ such that
\begin{equation*}
\multisec(\tree) - \sum \multisec(\subtree_{j}) \in \transSubbundle_{\glue}(\tree)
\end{equation*}
is $\COne$ close to $0$ along $\dom_{\glue, C}(\tree) \subset \dom_{\glue, \Cglue}$, and tends to zero as $\min C_{\longi} \rightarrow \infty$. Here we view the $\multisec(\subtree_{j}) \in \transSubbundle_{\glue}(\subtree_{j})$ as elements of $\transSubbundle_{\glue}(\tree)$ via the inclusions of Equation \eqref{Eq:SubtreeSubbundle}.
\item For single vertex trees $\tree = \tree\git$, $\Multisec(\tree)$ is invariant under the symmetries of Definition \ref{Def:DomainSymmetries}.
\ee
\end{properties}

\subsubsection{Melding construction}

According to the melding construction of \cite[\S 8.7]{BH:ContactDefinition}, multisections $\Multisec(\tree) = \{ \multisec(\tree) \}$ of $\transSubbundle(\tree) \rightarrow \ModSpaceThick(\tree)$ are constructed by ``melding'' together multisections $\Multisec(\subtree\git): \ModSpaceThick(\subtree\git)/\R_{s} \rightarrow \transSubbundle(\subtree\git)$ defined on single vertex trees $\subtree\git$ where the $\subtree$ are subtrees of $\tree$ and then adding slight perturbations to acheive transversality. The melding is defined using a partition of unity constructed from cutoff functions depending only on the length parameters $C_{j}$ associated to gluing edges $\edgeGluing_{j}$. For a $\delta > 0$, the condition ``$\norm{\multisec(\subtree\git)} < \delta$'' then translates to ``the restriction of $\multisec(\tree)$ to each $\Sigma(\vertex_{i})$ has norm $\leq \delta$''.

We describe a simplified melding construction which is generally applicable. Assume that the $\Multisec(\tree\git)$ have been specified and satisfy symmetry invariance. Fix large constants $\Cmeld, \Cneck$ satisfying
\begin{equation*}
\Cmeld \gg \Cneck \gg \Cglue
\end{equation*}
and replace $\ModSpaceThick(\tree)$ with $\{ \neckLength_{i} \geq \Cneck \}$ which we assume to be covered by the image of the gluing map, defined using some $\Cglue$ is as in Theorem \ref{Thm:Gluing} and Lemma \ref{Lemma:GluingCoeffsClean}.

\nom{$\Cneck$}{Minimum neck length used to redefine the $\ModSpaceThick(\tree)$}

For each $\partition = (\partition_{1}, \dots, \partition_{\# \edgeGluing})\in \{0, 1\}^{\# \edgeGluing}$ and $\Cneck < \thiccC_{0}, \thiccC_{1}$ define subsets
\begin{equation}\label{Eq:PartitionSubsetDef}
\begin{gathered}
\ModSpaceThick(\partition, \thiccC_{0}, \thiccC_{1}) \subset \ModSpaceThick(\tree),\\
\ModSpaceThick(\partition, \thiccC_{0}, \thiccC_{1}) = \begin{cases}
\neckLength_{i} \leq \thiccC_{0} & \partition_{i} = 0 \\
\neckLength_{i} \geq \thiccC_{1} & \partition_{i} = 1
\end{cases}\\
\begin{aligned}
\ModSpaceThick(\tree, \partition, \thiccC) &= \ModSpaceThick(\partition, \thiccC) = \ModSpaceThick\left(\partition, \thiccC - \half, \thiccC + \half\right),\\
\ModSpaceThick^{\short}(\tree, \partition, \thiccC) &= \ModSpaceThick^{\short}(\partition, \thiccC) = \ModSpaceThick\left(\tree, \partition, \thiccC - \half, \thiccC - \half\right),\\
\ModSpaceThick^{\en}(\tree, \partition, \thiccC) &= \ModSpaceThick^{\en}(\partition, \thiccC) = \ModSpaceThick\left(\tree, \partition, \thiccC + \half, \thiccC - \half\right),
\end{aligned}\\
\ModSpaceThick(\tree, \partition, \thiccC) \subset \ModSpaceThick^{\short}(\tree, \partition, \thiccC) \subset \ModSpaceThick^{\en}(\tree, \partition, \thiccC).
\end{gathered}
\end{equation}
We'll say that $\edgeGluing_{i}$ are \emph{$\partition$-short} if $\partition_{i} = 0$ and \emph{$\partition$-long} if $\partition_{i} = 1$. We say that $\partition$ is a \emph{partition} and can be viewed as providing instructions for contracting $\tree$ along the $\partition$-short edges. For a pair $\partition = (\partition_{i}), \partition'= (\partition_{i}')$ of partitions, write
\begin{equation}\label{Eq:PartitionLeq}
\begin{gathered}
\partition' \leq \partition \iff \partition'_{i} \leq \partition_{i}\ \forall i,\\
\partition' < \partition \iff \partition'_{i} < \partition_{i}\ \forall i.
\end{gathered}
\end{equation}
Observe that the $\ModSpaceThick^{\en}(\partition, \thiccC)$ cover $\ModSpaceThick(\tree)$ with $2^{\# \edgeGluing}$ open subsets.

\nom{$\partition$}{Partition of gluing edges of a tree}

For each pair $\partition, \thiccC$ define bump function
\begin{equation*}
\begin{gathered}
\Bump{\partition, \thiccC}{}(\neckLength_{1}, \cdots, \neckLength_{\# \edgeGluing}) = \prod \Bump{\partition_{i}, \thiccC}{}(\neckLength_{i}),\\
\Bump{0, C}{} = \Bump{\thiccC+\half}{\thiccC- \half}, \quad \Bump{1, C}{}=
\Bump{\thiccC-\half}{\thiccC+\half}.
\end{gathered}
\end{equation*}
So as a function on $\ModSpaceThick(\tree)$ depending only on the $\neckLength_{i}$, each $\Bump{\partition, \thiccC}{}$
\be
\item is $1$ inside of $\ModSpaceThick(\partition, \thiccC)$,
\item is $0$ in the complement of $\ModSpaceThick^{\en}(\partition, \thiccC)$
\ee
With $\thiccC$ fixed and $\partition$ varying, we can use the $\Bump{\partition, \thiccC}{}$ to build a partition of unity on $\ModSpaceThick(\tree)$ subordinate to the cover by the $\ModSpaceThick^{\en}(\partition, \thiccC)$.

For each $\partition$ we have a collection of subtrees
\begin{equation*}
\subtree_{j}^{\partition} \subset \tree, \quad \cup_{j}\subtree_{j}^{\partition} = \tree
\end{equation*}
defined as the connected trees obtained by cutting $\tree$ at the $\partition$-long gluing edges. Each pair $\subtree^{\partition}_{j}, \subtree^{\partition}_{j'}$ overlap in $\tree$ along at most one gluing edge and each gluing edge inside of a $\subtree^{\partition}_{j}$ is $\partition$-short. So we can think of $\partition$ as inducing a least partition $\partitionLeast$ on each of the $\subtree^{\partition}_{j}$.

\nom{$\subtree^{\partition}_{j}$}{Subtrees of $\tree$ defined by a partition $\partition$}

The inclusion of each $\subtree_{j}^{\partition} \subset \tree$ induces a sequence of inclusions
\begin{equation}\label{Eq:SubtreeBundleInclusion}
\transSubbundle(\subtree^{\partition}_{j}\git) \rightarrow \transSubbundle(\subtree^{\partition}_{j}) \rightarrow \transSubbundle(\tree).
\end{equation}
Provided choices of $\Multisec(\subtree^{\partition}_{j}\git) = \left\{ \multisec(\subtree^{\partition}_{j}\git)\right\}$ define
\begin{equation}\label{Eq:ProductMultisection}
\Multisec_{\prod}^{\partition}(\tree) = \left\{\multisec_{\prod}^{\partition}(\tree)\right\} = \sum_{\subtree^{\partition}_{j}} \Multisec(\subtree^{\partition}_{j}\git) = \left\{ \sum_{\subtree^{\partition}_{j}} \multisec(\subtree^{\partition}_{j}\git)\ :\ \multisec(\subtree^{\partition}_{j}\git) \in \Multisec(\subtree^{\partition}_{j}\git)\right\}.
\end{equation}
Here each summand is viewed as a section of $\transSubbundle(\tree)$ via the inclusion of Equation \eqref{Eq:SubtreeBundleInclusion}. We say that the $\Multisec_{\prod}^{\partition}$ are \emph{product multisections} whose elements $\multisec_{\prod}^{\partition}(\tree)$ are \emph{product sections}. The last equality above follows from the definition of the sum of multisections.

Given any collection of multisections$\{ \Multisec^{\partition} \}_{\partition}$ indexed by the partitions with each $\Multisec^{\partition}$ defined over $\ModSpaceThick^{\en}(\tree, \partition, \thiccC)$ and a \emph{melding constant} $\Cmeld \gg \Cglue$ define their \emph{melding} as
\begin{equation}\label{Eq:MeldingFirstDef}
\meld_{\Cmeld} \{ \Multisec^{\partition} \} = \sum_{\partition} \Bump{\partition, \Cmeld}{}\Multisec^{\partition}.
\end{equation}
Combining the above definitions, we have \emph{melded product multisections},
\begin{equation*}
\Multisec_{\meld, \prod}(\tree) = \meld_{\Cmeld}\{ \Multisec^{\partition}_{\prod}(\tree) \}.
\end{equation*}

\begin{properties}\label{Properties:ProductMultisec}
The following properties follow immediate from he definition of our bump functions:
\be
\item Within each $\ModSpaceThick(\partitionThick, \Cmeld)$ defined by a $\partitionThick \in \{\partition\}$,
\begin{equation*}
\meld_{\Cmeld} \{ \Multisec^{\partition} \} = \Multisec^{\partitionThick}.
\end{equation*}
\item Within each $\ModSpaceThick^{\en}(\partitionThick, \Cmeld)$, each $\multisec_{\meld} \in \meld_{\Cmeld} \{ \Multisec^{\partition} \}$ is a linear combination of the
\begin{equation*}
\Multisec^{\partition}, \quad \partition \leq \partitionThick.
\end{equation*}
\ee
\end{properties}

The $\Multisec_{\meld, \prod}$ clearly satisfy Properties \ref{Properties:MultisectionCompatibility}. However, solutions to $\delbarJ \in \Multisec_{\meld}(\tree)$ will not necessarily be transversely cut out. We therefore work with multisections which are $\COne$-close to the $\Multisec_{\meld, \prod}$, requiring that Properties \ref{Properties:MultisectionCompatibility} are satisfied. We can appeal to \cite[Lemma 2.2.8]{BH:ContactDefinition} to acheive transversality by such a modification, assuming that transversality holds for the $\Multisec(\tree\git)$.

\subsubsection{Modified melding construction}

We make a modification to the melding construction to more easily deal with normal and tangent perturbations separately. Choose $\Cmeld^{\normal}, \Cmeld^{\tangent}$ for which
\begin{equation}\label{Eq:MeldingConstants}
\Cmeld^{\normal} \gg \Cmeld^{\tangent} \gg \Cneck.
\end{equation}
When building multisections of $\transSubbundle(\tree)$ we always proceed as follows:
\be
\item For each partition $\partition$ of $\tree$ and each $\subtree^{\partition}_{j} \subset \tree$, we assume that the $ \multisec^{\tangent}(\subtree^{\partition}_{j}\git) \in \Sec(\transSubbundle^{\tangent}(\subtree^{\partition}_{j}\git))$ have been chosen. We will provide an explicit description of the $ \multisec^{\normal}(\subtree^{\partition}_{j}\git) \in \Sec(\transSubbundle^{\normal}(\subtree^{\partition}_{j}\git))$ in \S \ref{Sec:MultisecSingleVertex}. With these specified, we set $\Multisec(\subtree\git) = \Multisec^{\normal}(\subtree\git) + \Multisec^{\tangent}(\subtree\git)$.
\item Provided choices of $\multisec^{\tangent}(\subtree^{\partition}_{j}\git)$, we have product multisections $\Multisec_{\prod}^{\tangent, \partition}(\tree)$ and the
\begin{equation*}
\Multisec_{\meld, \prod}^{\tangent}(\tree) = \meld_{\Cmeld^{\tangent}} \{\Multisec_{\prod}^{\tangent, \partition}(\tree)\}
\end{equation*}
are constructed using the melding constant $C^{\tangent}_{\meld}$.
\item From the explicitly described $\multisec^{\normal}(\subtree^{\partition}_{j}\git)$ we have product multisections $\Multisec_{\prod}^{\normal, \partition}(\tree)$ and the
\begin{equation*}
\Multisec_{\meld, \prod}^{\normal}(\tree) = \meld_{\Cmeld^{\tangent}} \{\Multisec_{\prod}^{\normal, \partition}(\tree)\}
\end{equation*}
are constructed using the melding constant $C^{\normal}_{\meld}$.
\item We define our melded product multisections as
\begin{equation}\label{Eq:MeldingDef}
\Multisec_{\meld, \prod}(\tree) = \Multisec^{\normal}_{\meld, \prod}(\tree) + \Multisec^{\tangent}_{\meld, \prod}(\tree).
\end{equation}
\ee
Clearly the $\Multisec(\tree)$ of Equation \eqref{Eq:MeldingDef} is homotopic to the one described in Equation \eqref{Eq:MeldingFirstDef} and can by made to satisfy Properties \ref{Properties:MultisectionCompatibility} so long as the $\hot$ sections are chosen appropriately. We'll see in Lemma \ref{Lemma:ProdMultisecTransversality} that the $\hot$ sections are unneeded to obtain transversality for the $\Multisec_{\meld, \prod}^{\normal}$.

\subsection{Neck length and index constraints on zero set solutions}\label{Sec:NeckLengthConstraints}

Now assume that our
\begin{equation*}
\Multisec(\tree) = \Multisec^{\normal}_{\prod, \meld}(\tree) + \Multisec^{\tangent}(\tree)
\end{equation*}
have been specified for all $\tree$ whose incoming orbit is less than some action bound $\actionBound$. We'll write $\Multisec$ for a multisection which is not necessarily of this form.

\begin{defn}
For a multisection $\Multisec$ of $\transSubbundle(\tree)$ over $\ModSpaceThick(\tree)/\R_{s}$, write
\begin{equation*}
\ZeroSet_{\Multisec}(\tree)/\R_{s} = \left\{ \delbarEpsilon \in \Multisec \right\} \subset \ModSpaceThick(\tree)/\R_{s}.
\end{equation*}
We say that $\ZeroSet_{\Multisec}$ is \emph{transversely cut out} if $\delbarGluing - \multisec \in \Sec(\transSubbundle)$ is transverse to the zero section for each branch $\multisec \in \Multisec$. We write $\ZeroSet_{\Multisec}(\tree)$ for the associated subset of $\ModSpaceThick(\tree)$ and say that $\ZeroSet_{\multisec}(\tree)$ and $\ZeroSet_{\multisec}(\tree)/\R_{s}$ are the \emph{zero set} and \emph{reduced zero set} of $\multisec$, respectively.
\end{defn}

\nom{$\ZeroSet_{\Multisec}(\tree)$}{Zero set of a multisection over a $\ModSpaceThick(\tree)$}

Provided a $\partition$, we can organize the factors of $\dom_{\glue}(\tree)$ as
\begin{equation*}
\dom_{\glue}(\tree) = \prod_{\subtree^{\partition}_{j}} \left(\ModSpaceThick(\subtree^{\partition}_{j})/\R_{s}\right) \times [\Cglue, \infty)^{\# \longi}
\end{equation*}
so that each $u \in \ModSpaceThick(\tree)/\R_{s}$ is a gluing of $u^{\partition}_{j} \in \ModSpaceThick(\subtree^{\partition}_{j})/\R_{s}$, $u^{\partition}_{j}: \Sigma_{j} \rightarrow \R_{s} \times \Nhypersurface$. For each $u \in \ModSpaceThick(\tree)/\R_{s}$ with $u = \glue(u^{\partition}_{j}, C_{\longi})$ we have indices
\begin{equation*}
\ind(u^{\partition}_{j}) = \ind^{\tangent}(u^{\partition}_{j}) + \chi(\Sigma(\subtree^{\partition}_{j}))
\end{equation*}
where $\ind^{\tangent}(u^{\partition}_{j})$ is the SFT index of the linearization $\Dlinearized^{\tangent}$ of the tangent part of $\delbarEpsilon$. We use decorations $\notplane$ and $\plane$ to break up our collection of subtrees and maps
\begin{equation*}
\begin{gathered}
\subtree^{\partition}_{j} = \left\{ \subtree^{\partitionThick, \notplane}_{j} \right\} \sqcup \left\{ \subtree^{\partitionThick, \plane}_{j}\right\}\\
\chi(\Sigma(\subtree^{\partitionThick, \notplane}_{j})) \leq 0, \quad \Sigma(\subtree^{\partitionThick, \plane}_{j}) \simeq \C,\\
u^{\partition, \notplane}_{j} \in \ModSpaceThick(\subtree^{\partition, \notplane}_{j}), \quad u^{\partition, \plane}_{j} \in \ModSpaceThick(\subtree^{\partition, \plane}_{j}).
\end{gathered}
\end{equation*}

Fixing a $\tree$ and a $\Cmeld^{\normal}$, define subsets $\ModSpaceThick(\partition)$, $\ModSpaceThick^{\en}(\partition)$, and $\ModSpaceThick^{\short}(\partition)$ of $\ModSpaceThick(\tree)$ as in Equation \eqref{Eq:PartitionSubsetDef},
\begin{equation*}
\ModSpaceThick(\partition) = \ModSpaceThick(\tree, \partition, \Cmeld^{\normal}) \subset \ModSpaceThick^{\short}(\partition) = \ModSpaceThick^{\short}(\tree, \partition, \Cmeld^{\normal}) \subset \ModSpaceThick^{\en}(\partition) = \ModSpaceThick^{\en}(\tree, \partition, \Cmeld).
\end{equation*}
Again, the $\ModSpaceThick^{\en}(\partition)$ cover $\ModSpaceThick(\tree)$ and within each $\ModSpaceThick(\partition)$, the $\Multisec_{\meld, \prod}^{\normal}$ is a product multisection.

\nom{$\ModSpaceThick(\partition), \ModSpaceThick^{\en}(\partition), \ModSpaceThick^{\short}(\partition)$}{Subsets of $\ModSpaceThick(\tree)$ defined by neck length inequalities}

\begin{lemma}\label{Lemma:ZeroSetNeckLength}
For a fixed $\actionBound$ and $\Multisec^{\tangent}$, we can choose $\Cmeld^{\normal}$ to be sufficiently large so that there is a constant $\thicc{r} = \thicc{r}(\Cmeld^{\normal})$ such that if $\norm{\Multisec^{\normal}_{\prod, \meld}} \leq \thicc{r}$, then all $\tree$ with incoming orbit having action $\leq \actionBound$ satisfy
\begin{equation}\label{Eq:ZeroSetIndexBound}
u \in \ZeroSet_{\Multisec(\tree)}/\R_{s} \cap \ModSpaceThick^{\en}(\partition)/\R_{s} \implies \forall j,\  \ind^{\tangent}(u^{\partition, \notplane}_{j}) \geq 1, \  \ind^{\tangent}(u^{\partition, \plane}_{j}) \geq 0.
\end{equation}

Moreover, $\Cmeld^{\normal}$ and $\thicc{r}$ can be chosen so that
\begin{equation}\label{Eq:ZeroSetNeckLengthBound}
u \in \ZeroSet_{\Multisec(\tree)}/\R_{s} \cap \ModSpaceThick^{\en}(\partition)/\R_{s}, \ \forall j\  \ind^{\tangent}(u^{\partition, \notplane}_{j}) = 1, \ \ind^{\tangent}(u^{\partition, \plane}_{j}) = 0 \implies u \in \ModSpaceThick^{\short}(\partition),
\end{equation}
In other words, for each $\partition$-short $\edgeGluing_{\shorti}$, we must have 
\begin{equation*}
\neckLength_{\shorti}(u) < \Cmeld^{\normal} - \half.
\end{equation*}
Hence at each such $u$ we must have that for each $\Multisec^{\normal}_{\prod, \meld}$ is zero along each $\partition$-short gluing annulus in the domain $\Sigma(u)$ of $u$. Hence each $u^{\partition}_{j}$ is
\be
\item tangent to a $\posNegRegionComplete$ leaf of $\foliationEpsilon$ if it is a $u^{\partition, \plane}_{j}$ and so is determined by a $\check{u}^{\plane}: \C \rightarrow \posNegRegionComplete$ of $\ind(\check{u}) = 0$,
\item of the form $u^{\notplane}_{\cokcoeff^{\notplane}}$ for a map $\check{u}^{\notplane}$ into $\R_{s} \times \divSet$ as described in Equation \eqref{Eq:ThickeningNonPlane} if it is a $u^{\partition, \notplane}_{j}$.
\ee
\end{lemma}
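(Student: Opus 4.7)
The overall strategy is to combine the explicit formula for $\delbarGluingNormal$ from Lemma \ref{Lemma:GluingCoeffsClean} with the structure of the melded product multisection $\Multisec^{\normal}_{\prod, \meld}(\tree)$ in order to decompose the normal equation $\delbarGluingNormal = \multisec^{\normal}$ into subtree-level pieces, and then to perform a dimension count on each piece. I fix a partition $\partition$ and work inside $\ModSpaceThick^{\en}(\partition)/\R_{s}$ where, by Properties \ref{Properties:ProductMultisec}(2), every branch of $\Multisec^{\normal}_{\prod,\meld}(\tree)$ is a convex combination of product sections $\multisec^{\normal, \partition'}_{\prod}(\tree)$ with $\partition' \leq \partition$. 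Each such product section is, by definition, a sum over subtree-level sections $\multisec^{\normal}(\subtree^{\partition'}_{j}\git)$, each supported away from the annuli corresponding to the $\partition$-long edges of $\tree$.

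First I would show that the terms of $\delbarGluingNormal$ supported on annuli associated to $\partition$-long gluing edges, as read off the formula in Lemma \ref{Lemma:GluingCoeffsClean}, are bounded in $\Ltwo$ by $\const \cdot e^{-\epsilon_{\sigma}(\Cmeld^{\normal} - \frac{1}{2})}$ through the exponential factors $e^{-\epsilon_{\sigma}\neckLength^{\notplane}_{i}}$ and $e^{-\epsilon_{\sigma}\ell^{\plane}_{i}}$ and the boundedness of $\vec{\cokcoeff}^{\notplane}$ and $\kercoeff^{\plane}_{i}$ on compact sets. Picking $\Cmeld^{\normal}$ so large that these terms are smaller than the permissible norm $\thicc{r}$ of the multisection, the normal equation then forces the contributions of $\delbarGluingNormal$ along each $\partition$-long annulus and along each half-cylinder internal to a given $\subtree^{\partition}_{j}$ to match the subtree-level piece $\multisec^{\normal}(\subtree^{\partition}_{j}\git)$, up to a correction controlled by the same exponential. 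By the implicit function theorem applied to the exponentially decaying correction, solutions are cut out by the unperturbed equations on each $\subtree^{\partition}_{j}$ in a manifold of the expected dimension.

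Next I would count dimensions. For each $\subtree^{\partition}_{j}$, the inductive transversality of $\Multisec(\subtree^{\partition}_{j}\git)$, together with the explicit structure of $\Multisec^{\normal}(\subtree^{\partition}_{j}\git)$ coming from Section \ref{Sec:MultisecSingleVertex} and the formula in Lemma \ref{Lemma:ThickenedSpaceNonPlane}, yields a reduced zero set of dimension $\ind^{\tangent}(u^{\partition,\notplane}_{j})$ for non-plane subtrees and $\ind^{\tangent}(u^{\partition,\plane}_{j})$ for plane subtrees (the $+1$ from the $\cokcoeff^{\notplane}_{j}$ parameter being absorbed by the single normal constraint). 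Adding one dimension per $\partition$-long gluing edge (its neck length) and subtracting one for the $\R_{s}$-quotient, the dimension of the $\ZeroSet_{\Multisec(\tree)}/\R_{s}$ component in $\ModSpaceThick^{\en}(\partition)/\R_{s}$ comes out to $\sum_{j} \ind^{\tangent}(u^{\partition}_{j}) + \#\{\partition\text{-long edges}\} - 1$. Nonemptiness of this zero set and the connectedness of $\tree$ then force each $\ind^{\tangent}(u^{\partition,\notplane}_{j}) \geq 1$ and each $\ind^{\tangent}(u^{\partition,\plane}_{j}) \geq 0$, which is Equation \eqref{Eq:ZeroSetIndexBound}.

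The main obstacle is the second claim (Equation \eqref{Eq:ZeroSetNeckLengthBound}), namely ruling out long $\partition$-short necks when all index inequalities are tight. I plan to argue by contradiction: suppose a $\partition$-short edge $\edgeGluing_{i_{0}}$ has $\neckLength_{i_{0}} > \Cmeld^{\normal} - \frac{1}{2}$, and let $\partition^{*}$ be the partition obtained from $\partition$ by flipping $\partition_{i_{0}}$ from $0$ to $1$. Then $u$ also lies in $\ModSpaceThick^{\en}(\partition^{*})$, the subtree $\subtree^{\partition}_{j_{0}}$ containing $\edgeGluing_{i_{0}}$ splits into two subtrees $\subtree^{\partition^{*}}_{\pm}$ of $\tree$, and the already-established index bound applies to the $\partition^{*}$-decomposition as well. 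Additivity of the SFT index under splitting a gluing edge and the index bounds on each $\subtree^{\partition^{*}}_{\pm}$ force $\ind^{\tangent}(u^{\partition}_{j_{0}}) \geq 2$ (or the analogous strict inequality in the plane case), contradicting tightness. Once $u \in \ModSpaceThick^{\short}(\partition)$ is established, the melded cutoff functions reduce $\Multisec^{\normal}_{\prod,\meld}$ on the $\partition$-short annuli to the corresponding subtree-level sections, which by construction are supported only on half-cylinders, so $\delbarGluingNormal$ vanishes along these annuli; applying Lemma \ref{Lemma:LeafTangencyAfterGluing} together with the descriptions of $\ModSpaceThick(\vertex)$ in Lemmas \ref{Lemma:ThickenedSpaceNonPlane} and \ref{Lemma:ThickenedSpacePlane} then yields the final structural conclusions.
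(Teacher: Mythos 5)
Your overall framing — that within $\ModSpaceThick^{\en}(\partition)$ the melded multisection is close to a product of subtree-level multisections, and that one should therefore decompose $u$ into pieces $u^{\partition}_{j}$ over the $\subtree^{\partition}_{j}$ and argue per piece — is the correct starting point and matches the paper's. However, the central step is missing in your argument, which causes both halves of the lemma to break.

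The gap is the role of the smallness parameter $\thicc{r}$ in ruling out \emph{plane vertices inside the non-plane subtrees} being tangent to $\posNegRegionComplete$ leaves. Concretely: a vertex $\vertex_{i}$ of a $\subtree^{\partition, \notplane}_{j}$ with $\Sigma(\vertex_{i}) = \C$ can either be a normal perturbation of a plane in $\R_{s} \times \divSet$ or it can be tangent to a $\posNegRegionComplete$ leaf. In the latter case, Lemma \ref{Lemma:GluingCoeffsClean} produces a contribution $\kercoeff^{\plane}(u_{i})e^{-\epsilon_{\sigma}\ell^{\plane}_{\shorti}}\mu^{\normal}_{\shorti}$ to $\delbarGluingNormal$ in which $\kercoeff^{\plane}(u_{i})$ is bounded away from zero by compactness and the exponential factor is bounded below because the $\partition$-short edge lengths are $\leq \Cmeld^{\normal} + \half$. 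For $\thicc{r}$ small this cannot be matched by $\multisec^{\normal}$, so such plane vertices are excluded; only then does $u^{\partition, \notplane}_{j}$ become a normal variation of a map into $\R_{s} \times \divSet$, for which the tangent transversality hypothesis gives $\ind^{\tangent}(u^{\partition, \notplane}_{j}) \geq 1$ rather than merely $\geq 0$. Your dimension count gives a constraint on the sum of indices plus the number of long edges, but does not yield the individual bound $\ind^{\tangent}(u^{\partition, \notplane}_{j}) \geq 1$, and your appeal to ``connectedness of $\tree$'' does not do the work this step requires.

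The same gap resurfaces in your contradiction argument for \eqref{Eq:ZeroSetNeckLengthBound}. After flipping a $\partition$-short edge to obtain $\partition^{*}$ and splitting $\subtree^{\partition}_{j_{0}}$ into $\subtree^{\partition^{*}, \uparrow}$ and $\subtree^{\partition^{*}, \downarrow}$, your index additivity only forces $\ind^{\tangent}(u^{\partition}_{j_{0}}) \geq 2$ when both pieces are non-planes. The remaining and unavoidable case is $\Sigma(\subtree^{\partition^{*}, \downarrow}) \simeq \C$ with $\ind^{\tangent}(u^{\partition^{*}, \downarrow}) = 0$, which gives no contradiction from index counting alone. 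The paper closes this case by reapplying the same $\thicc{r}$ argument: the $\partition^{*}$-plane piece $u^{\partition^{*}, \downarrow}$ cannot be tangent to a $\posNegRegionComplete$ leaf, so it is a normal variation of a plane in $\R_{s}\times \divSet$ and has $\ind^{\tangent} \geq 1$. Without this, your ``analogous strict inequality in the plane case'' is unjustified. Your proposal for the final structural conclusions, conditional on having established $u \in \ModSpaceThick^{\short}(\partition)$, is essentially correct, but the two preceding steps need the $\thicc{r}$/$\kercoeff^{\plane}$ exclusion argument to go through.
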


\begin{proof}
We can arrange that $\Multisec^{\tangent}$ is as $\COne$-close to $\sum_{\subtree^{\partition}_{j}} \Multisec^{\tangent}(\subtree^{\partition}_{j})$ as we like within $\ModSpaceThick^{\en}(\partition)$ by making $\Cmeld^{\normal}$ large. This is because of the constraint $\neckLength_{\longi} \geq \Cmeld^{\normal} - \half$ in the definition of $\ModSpaceThick^{\en}(\partition)$ and Properties \ref{Properties:ProductMultisec}. Then the assumption of transversality for $\delbarEpsilon^{\tangent}u^{\partition}_{j} \in \Multisec^{\tangent}(\subtree^{\partition}_{j})$, there is a large $\thiccC$ depending on $\Multisec^{\tangent}$ such that every $u$ with $\min \neckLength_{\longi} > \thiccC$ is then a gluing of some collection of $u^{\partition}_{j}$ over the $\subtree^{\partition}_{j}$ for which $\delbarEpsilon^{\tangent}u^{\partition}_{j} \in \Multisec^{\tangent}(\subtree^{\partition}_{j})$. Again by our tangent transversality assumption, $\ind^{\tangent}(u^{\partition}_{j}) \geq 0$ for all $j$. Since there are a finite number of $\tree, \partition$ to consider within the action bound constraint, we can choose a finite $\thiccC$ which works for all $\tree, \partition$ satisfying the action bound and set $\Cmeld^{\normal} > \thiccC + 1$.

Now we seek to show that with $\Cmeld^{\normal}$ fixed as above, we can find a $\thicc{r}$ sufficiently small so that $\norm{\Multisec^{\normal}_{\prod, \meld}} \leq \thicc{r}$ implies that $\ind^{\tangent}(u^{\notplane}_{j}) \geq 1$ for all $j$, improving the established $\ind^{\tangent}(u^{\notplane}_{j}) \geq 0$ bound. By gluing,
\begin{equation*}
u^{\notplane}_{j} \in \ModSpaceThick(\subtree^{\partition, \notplane}_{j})/\R_{s} \simeq \dom_{\glue}(\subtree^{\partition, \notplane}_{j}) = \prod_{\vertex_{i} \in \subtree^{\partition, \notplane}_{j}} \ModSpaceThick(\vertex_{i})/\R_{s} \times [\Cglue, \infty)^{\#\{ \edgeGluing \in \subtree^{\partition, \notplane}_{j}\}}
\end{equation*}
where the $\ModSpaceThick(\vertex_{i})/\R_{s}$ are compact sets, each of one of the following flavors: If $\chi(\Sigma(\vertex_{i})) \leq 0$ then $\CompactSubset_{i}$ is of the form $[-\delta ,\delta] \times \ModSpaceThick^{\tangent}(\vertex_{i})$ as described in Lemma \ref{Lemma:ThickenedSpaceNonPlane}. Otherwise $\Simga(\vertex_{i}) \simeq \C$ and there are two possibilities: Either $\ModSpaceThick(\vertex_{i})/\R_{s}$
\be
\item  consists of maps $u_{\kercoeff^{\plane}}$ as described by Equation \eqref{Eq:UxAsymptoticsPlane} for some $u: \C \rightarrow \R_{s} \times \divSet$ with such $u$ varying in a compact set or
\item it agrees with a compact subset $\ModSpaceThick^{\tangent}$ of maps into a fixed leaf $\Lie \simeq \posNegRegionComplete$ of $\foliationEpsilon$.
\ee

We claim that for $\thicc{r}$ sufficiently small, there can be no $\ModSpaceThick(\vertex_{i})/\R_{s}$ of the last type provided that $u \in \ZeroSet_{\Multisec(\tree)}/\R_{s} \cap \ModSpaceThick^{\en}(\partition)$. For maps $u_{i}$ of this type the contribution to $\delbarGluingNormal$ is $\kercoeff^{\plane}(u_{i})e^{-\epsilon_{\sigma}\ell^{\plane}_{\shorti}}\mu^{\normal}_{\shorti}$ as described in Lemma \ref{Lemma:GluingCoeffsClean}. Here $\ell^{\plane}_{i}$ continuously depends on our neck length parameters $\neckLength_{\shorti}$ for the edge $\edgeGluing_{\shorti}$ ending on $\vertex_{i}$ and $\kercoeff^{\plane}(u_{i})$ is bounded below in absolute value by compactness of $\ModSpaceThick(\vertex_{i})/\R_{s}$. When we glue, our neck length $\neckLength_{\shorti}$ will depend on the $\ell_{\shorti}$. So since $\neckLength_{\shorti} \leq \Cmeld^{\normal} + \half$ (by the condition $u \in \ModSpaceThick^{\en}(\tree)$), we'll have that $e^{-\epsilon_{\sigma}\ell^{\plane}_{\longi}}$ is bounded from below. Hence  $\kercoeff^{\plane}(u_{i})e^{-\epsilon_{\sigma}\ell^{\plane}_{\longi}}$ is bounded from below in absolute value over the collection of $u^{\partition, \notplane}_{j} \in \ModSpaceThick(\subtree^{\partition, \notplane}_{j})$ which can be used to construct our $u \in \ModSpaceThick^{\en}(\partition)$. Returning to the formula for $\delbarGluingNormal$ in Lemma \ref{Lemma:GluingCoeffsClean}, all other contributions to the $\mu^{\normal}_{\shorti}$ portion of $\delbarGluingNormal - \multisec^{\normal}$ can be assumed arbitrarily close to zero by setting $\thicc{r}$ to be very small. So for $\thicc{r}$ small, we cannot make the $\mu^{\normal}_{\shorti}$ portion of $\delbarGluingNormal - \multisec^{\normal}$ equal to zero. Our claim is established.

So each of the $\ModSpaceThick(\vertex_{i})/\R_{s}$ with $\vertex_{i} \in \subtree^{\partition, \notplane}_{j}$ we must have that $u_{i} \in \ModSpaceThick(\vertex_{i})$ are normal variations of maps into $\R_{s} \times \divSet$. Therefore the gluing $u^{\partition, \notplane}_{j} \in \ModSpaceThick(\subtree^{\partition, \notplane}_{j})$ of the $u_{i}$ will also be contained in a small neighborhood of $\R_{s} \times \divSet$. So the tangent portion $\delbarEpsilon^{\tangent}u^{\partition, \notplane}_{j}$ of $\delbarEpsilon u^{\partition, \notplane}_{j}$ will agree with the $\delbar$ equation associated to the map $\check{u}^{\partition, \notplane}_{j} \rightarrow \R_{s} \times \divSet$ obtained by projecting the normal bundle of $\R_{s} \times \disk_{\tau, \sigma} \times \divSet$ onto $\R_{s} \times \divSet$. Since we've assumed transversality for $\Dlinearized^{\tangent}$, we must then have that $\ind^{\tangent}(u^{\partition, \notplane}_{j}) \geq 1$ as claimed and the proof of Equation \eqref{Eq:ZeroSetIndexBound} is complete.

Now we suppose that $\ind^{\tangent}(u^{\notplane}_{j}) = 1$ and $\ind^{\tangent}(u^{\plane}_{j}) = 0$ for all $j$. Choose a $u^{\partition}_{j}$ and a $\edgeGluing_{\shorti} \in \subtree^{\partition}_{j}$. As in the beginning of the proof, we can assume that $u^{\partition}_{j}$ satisfies $\delbarEpsilon^{\tangent} \in \Multisec^{\tangent}(\subtree^{\partition}_{j})$ and there is a $\thiccC > \Cmeld^{\tangent}$ such that if $\neckLength_{\shorti} > \thiccC$ then we can realize $u^{\partition}_{j}$ as a gluing of some $u^{\partition, \uparrow}_{j}$ and $u^{\partition, \downarrow}_{j}$ for some subtrees $\subtree^{\partition, \updownarrow}_{j}$ obtained by splitting $\subtree^{\partition}_{j}$ along $\edgeGluing_{\shorti}$ with $\delbarEpsilon^{\tangent}u^{\partition, \updownarrow}_{j} \in \Multisec^{\tangent}(\subtree^{\partition, \updownarrow}_{j})$ and the incoming edge of $\subtree^{\partition, \downarrow}_{j}$ being an outgoing edge of $\subtree^{\partition, \uparrow}_{j}$. We've already assumed that $\Cmeld^{\normal} > \thiccC + 1$ in the proof of Equation \eqref{Eq:ZeroSetIndexBound}, and let's assume that $\neckLength_{\shorti} > \Cmeld^{\normal} - \half$ so that this applies so that $u \in \ModSpaceThick^{\en}(\partition) \setminus \ModSpaceThick^{\short}(\partition)$. Using the presumption of transversality for $\Multisec^{\tangent}$, we again have that $\ind^{\tangent}(u^{\partition, \updownarrow}_{j})$ is greater than $0$ if the domain is $\C$ and greater than $1$ otherwise. By our assumptions on the index and topology of the domain of $u_{j}$ and the fact that $\ind^{\tangent}(u^{\partition}_{j}) = \ind^{\tangent}(u^{\partition, \uparrow}_{j}) + \ind^{\tangent}(u^{\partition, \downarrow}_{j})$, the only possibility is that $\chi(\Sigma(\subtree^{\partition, \uparrow}_{j})) < 0$ and $\Sigma(\subtree^{\partition, \downarrow}_{j}) \simeq \C$ with $\ind^{\tangent}(u^{\partition, \uparrow}_{j}) = 1$ and $\ind^{\tangent}(u^{\partition, \downarrow}_{j}) = 0$. However, arguing as above, we can guarantee that for $\thicc{r}$ sufficiently small, $u^{\partition, \downarrow}_{j}$ is a normal variation of a map $\check{u}^{\partition, \downarrow}_{j}: \C \rightarrow \R_{s} \times \divSet$ and by our presumption of transversality in the tangent direction, $\ind^{\tangent}(u^{\partition, \downarrow}_{j}) = \ind(\check{u}^{\partition, \downarrow}_{j})$ must be at least $1$. So Equation \eqref{Eq:ZeroSetNeckLengthBound} is established.

As for the final remarks in the statement of the lemma: For each $\partition$-short gluing edge $\edgeGluing_{\shorti}$, the bump function $\Bump{\Cmeld^{\normal} -\half}{\Cmeld^{\normal}+\half}(\neckLength_{\shorti})$ vanishes over $\ModSpaceThick^{\short}(\partition)$. Hence for every $\partition'$ having $\edgeGluing_{\shorti}$ as a $\partition'$-long gluing edge, we'll have $\Bump{\partition', \Cmeld^{\normal}}{}\Multisec^{\normal, \partition'}_{\prod}$ vanishes along $\ModSpaceThick^{\short}(\partition)$. So $\Multisec^{\normal}_{\prod, \meld}$ vanishes over the annulus in the domain associated to $\edgeGluing_{\shorti}$ within $\ModSpaceThick^{\short}(\partition)$ as claimed. Therefore each $u \in \ModSpaceThick^{\short}(\partition)/\R_{s}$ is a gluing of $u^{\partition}_{j} \in \ModSpaceThick(\subtree^{\partition}_{j})/\R_{s}$ for which $\delbarEpsilon^{\normal}u^{\partition}_{j} \in \transSubbundle^{\normal}(\subtree^{\partition}_{j}\git)$. So a $u^{\partition, \notplane}_{j}$ must then have $\delbarEpsilon^{\normal} = 0$ and be tangent to a leaf $\Lie$ of $\foliationEpsilon$. As in the previous paragraph the condition $\ind^{\tangent}(u^{\partition, \notplane}_{j}) = 0$ and the presumption of transversality for $\Dlinearized^{\tangent}$ implies that $\Lie$ cannot be $\R_{s} \times \divSet$ and so must be one of the $\posNegRegionComplete$ leaves. If $u^{\partition}_{j}$ is a $u^{\partition, \notplane}_{j}$ then the desired result follows from Lemma \ref{Lemma:ThickenedSpaceNonPlane}.
\end{proof}

Now we use the results of Lemma \ref{Lemma:ZeroSetNeckLength} to constrain the combinatorics of gluing configurations appearing in our zero sets.

\begin{lemma}\label{Lemma:NleqOneRigidity}
Suppose that $L$, $\Multisec^{\tangent}$, $\thicc{r}$ and $\tree$ are as in Lemma \ref{Lemma:ZeroSetNeckLength}. Let $u \in \ZeroSet_{\Multisec(\tree)}/\R_{s}\cap \ModSpaceThick^{\en}(\partition)/\R_{s}$ having $\ind(u) = 1$ with $\NnegativePunctures$ negative punctures. Then
\begin{equation*}
\# \left\{ \subtree_{j}^{\partitionThick, \notplane}\right\} \leq \NnegativePunctures
\end{equation*}
with equality iff each $\widecheck{u_{j}}$ is \emph{rigid}, meaning that for all $j$,
\begin{equation*}
\ind^{\tangent}(u^{\notplane}_{j}) = 1, \quad \ind^{\tangent}(u^{\plane}_{j}) = 0.
\end{equation*}
\end{lemma}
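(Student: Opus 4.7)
The plan is a direct calculation combining additivity of the SFT index under gluing with the inequalities supplied by Lemma~\ref{Lemma:ZeroSetNeckLength}. All essential ingredients are already in place, so the only task is bookkeeping.

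First I would verify that the indices add across the partitioned tree: $\ind(u) = \sum_j \ind(u^{\partition}_j)$. This is the standard SFT index cancellation---for each $\partition$-long gluing edge, the Conley--Zehnder term contributed by the connecting orbit appears once as a negative puncture of the subtree above and once as a positive puncture of the subtree below, with opposite signs, and therefore cancels. Correspondingly, the connecting cylinders have Euler characteristic zero, so
\begin{equation*}
\sum_j \chi(\Sigma(\subtree^{\partition}_j)) = \chi(\Sigma) = 1 - \NnegativePunctures,
\end{equation*}
using that $\Sigma$ is a rational surface with one positive and $\NnegativePunctures$ negative punctures.

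Combining these two identities with the decomposition $\ind(u^{\partition}_j) = \ind^{\tangent}(u^{\partition}_j) + \chi(\Sigma(\subtree^{\partition}_j))$ recorded just above Lemma~\ref{Lemma:ZeroSetNeckLength}, and using the hypothesis $\ind(u) = 1$, I obtain
\begin{equation*}
\NnegativePunctures = \sum_j \ind^{\tangent}(u^{\partition}_j) = \sum_{\subtree^{\partition, \notplane}_j} \ind^{\tangent}(u^{\partition, \notplane}_j) + \sum_{\subtree^{\partition, \plane}_j} \ind^{\tangent}(u^{\partition, \plane}_j).
\end{equation*}
Equation~\eqref{Eq:ZeroSetIndexBound} then gives $\ind^{\tangent}(u^{\partition, \notplane}_j) \geq 1$ and $\ind^{\tangent}(u^{\partition, \plane}_j) \geq 0$, which immediately yields $\#\{\subtree^{\partition, \notplane}_j\} \leq \NnegativePunctures$. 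Equality forces each $\ind^{\tangent}(u^{\partition, \notplane}_j) = 1$ and each $\ind^{\tangent}(u^{\partition, \plane}_j) = 0$, which is precisely the rigidity condition.

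There is no substantive obstacle here; the one step requiring care is the Conley--Zehnder cancellation across each $\partition$-long gluing edge, and this is a routine cancellation that mirrors the standard SFT glueing formalism. In particular, no new estimates on $\Multisec^{\normal}_{\prod, \meld}$ or on neck lengths beyond those provided by Lemma~\ref{Lemma:ZeroSetNeckLength} are required.
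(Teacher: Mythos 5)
Your proof is correct and follows essentially the same route as the paper: index additivity across the $\partition$-long gluing edges, the decomposition $\ind = \ind^{\tangent} + \chi$, and the positivity bounds of Equation~\eqref{Eq:ZeroSetIndexBound} from Lemma~\ref{Lemma:ZeroSetNeckLength}. The only difference is that you spell out the Conley--Zehnder cancellation argument behind index additivity, which the paper takes for granted.
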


\begin{proof}
Suppose that $u$ is a gluing of some $u_{j}$ indexed by the $\subtree^{\partition}_{j}$ and has domain $\Sigma$ with $\chi(\Sigma) = 1 - \NnegativePunctures$. By index additivity
\begin{equation*}
\begin{gathered}
\ind(u_{j}) = \chi\left(\Sigma(\subtree^{\partitionThick}_{j})\right) + \ind^{\tangent}(u_{j}),\\
\ind(u) = \sum_{\subtree^{\partitionThick}_{j}} \ind(u_{j}) = \chi(\Sigma) + \sum \ind^{\tangent}(u_{j}) = 1 - \NnegativePunctures + \sum \ind^{\tangent}(u_{j}).
\end{gathered}
\end{equation*}
Therefore $\ind(u) = 1 \iff \NnegativePunctures = \sum \ind^{\tangent}(u_{j})$. So by the $\ind^{\tangent}$ positivity Equation \eqref{Eq:ZeroSetIndexBound},
\begin{equation*}
\NnegativePunctures \geq \sum \ind^{\tangent}(u_{j}^{\notplane}) \geq \#\left\{ \subtree^{\partition, \notplane}_{j} \right\}.
\end{equation*}
If we have equality, then we must have that $\sum \ind^{\tangent}(u_{j}^{\plane}) = 0$ so that $ \ind^{\tangent}(u_{j}^{\plane}) = 0$ for all $\subtree^{\partition, \plane}_{j}$.
\end{proof}

\section{Planes and cylinders}\label{Sec:PlanesAndCylinders}

Here we reveal what holomorphic buildings contribute to the first two terms in the contact homology differential, having $\NnegativePunctures = 0,1$ negative puncture in Lemma \ref{Lemma:PlaneCount}. We then specify what normal multisections we'll use for single vertex trees, enumerating solutions with one negative puncture. The case $\NnegativePunctures \geq 2$ will be analyzed in \S \ref{Sec:MultipleNegPuncture}. Signs of contributions will be dealt with in \S \ref{Sec:Orientations}.

\subsection{Index calculations}

We continue to use the notation of \S \ref{Sec:NeckLengthConstraints} and search for solutions $u$ with domain $\Sigma$ to $\delbarEpsilon u \in \Multisec(\tree)$ over a $\ModSpace^{\en}(\partition)$ for a fixed pair $\tree, \partition$ having $\NnegativePunctures$ negative punctures. We assume that $\Multisec^{\tangent}$, $\Cmeld$, and $\thicc{r}$ are chosen so that all of the conclusions of Lemma \ref{Lemma:ZeroSetNeckLength} hold and that $u \in \ZeroSet_{\Multisec(\tree)}/\R_{s}\cap \ModSpaceThick^{\en}(\partition)/\R_{s}$ is a gluing of some $u^{\partition}_{j}$.

In the case $\ind(u) = 1$, $\Sigma \simeq \C$, then there can be no $\subtree^{\partition, \notplane}_{j}$ by Lemma \ref{Lemma:NleqOneRigidity}. So there can then be only a single $u^{\partition, \plane}_{j}$ by topological considerations. Therefore, by the closing remarks of Lemma \ref{Lemma:ZeroSetNeckLength}, $u$ must then be determined by a $\ind^{\tangent}(\check{u}^{\plane}) = 0$ with $\check{u}$ a map into a $\posNegRegionComplete$ leaf of $\foliationEpsilon$. So we've proved the following result.

\begin{lemma}\label{Lemma:PlaneCount}
For our choices of contact form and perturbation data, each $\NnegativePunctures = 0$ contribution to the contact homology differential for $\Nhypersurface$ is determined by a rigid $\check{u}^{\plane}: \C \rightarrow \posNegRegionComplete$.
\end{lemma}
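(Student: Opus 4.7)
The plan is to combine Lemmas \ref{Lemma:NleqOneRigidity} and \ref{Lemma:ZeroSetNeckLength} with a short combinatorial argument about the tree $\tree$. I would fix a tree $\tree$ whose incoming orbit has action at most $\actionBound$, a partition $\partition$, and a rigid representative
\begin{equation*}
u \in \ZeroSet_{\Multisec(\tree)}/\R_{s} \cap \ModSpaceThick^{\en}(\partition)/\R_{s}
\end{equation*}
with $\ind(u) = 1$ contributing to the $\NnegativePunctures = 0$ part of $\partialNH$, so in particular $\Sigma(u) \simeq \C$ and $\chi(\Sigma(u)) = 1$. Write $u$ as a gluing of components $u^{\partition}_{j}$ indexed by the subtrees $\subtree^{\partition}_{j} \subset \tree$ obtained by cutting $\tree$ along its $\partition$-long gluing edges.

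The first main step is to invoke Lemma \ref{Lemma:NleqOneRigidity} with $\NnegativePunctures = 0$, which gives $\#\{\subtree^{\partition, \notplane}_{j}\} \leq 0$ and forces equality $\#\{\subtree^{\partition, \notplane}_{j}\} = \NnegativePunctures$; hence every $\subtree^{\partition}_{j}$ is of plane type, and the rigidity clause of that lemma yields $\ind^{\tangent}(u^{\partition, \plane}_{j}) = 0$ for all $j$.

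The second step is a short combinatorial observation. Any $\vertexPlane_{i}$ has domain $\C$, hence no negative punctures and therefore no outgoing edges in $\tree$; consequently each plane vertex is a leaf of $\tree$, and a subtree consisting only of plane vertices must contain exactly one vertex. Since a plane vertex admits no outgoing gluing edges, the root subtree cannot be attached to any other subtree, so $\tree$ itself consists of a single plane vertex with only the incoming free edge, and no gluing edges at all. In particular $u = u^{\partition, \plane}_{1}$ and there is nothing to glue.

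The third step is to translate this rigid plane component into the asserted geometric picture by the closing remarks of Lemma \ref{Lemma:ZeroSetNeckLength}: the unique plane component, having $\ind^{\tangent} = 0$ and lying in $\ZeroSet_{\Multisec(\tree)}$, is tangent to a leaf of $\foliationEpsilon$ of the form $\posNegRegionComplete$ (it cannot lie in $\R_{s} \times \divSet$ by tangent transversality and the $\ind^{\tangent} = 0$ constraint) and is determined by a rigid map $\check{u}^{\plane} : \C \to \posNegRegionComplete$ via the inclusion $\Phi_{\Lie}$ of Lemma \ref{Lemma:PlaneTransversality}. Conversely, every such rigid $\check{u}^{\plane}$ produces, via the one-vertex tree and the trivial gluing, a contribution to the $\NnegativePunctures = 0$ part of $\partialNH$, giving the claimed description. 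The main subtlety is purely combinatorial --- namely the observation that plane vertices are leaves of $\tree$ with no outgoing edges --- and this is immediate from the definitions in \S \ref{Sec:Trees}; the analytic content has already been absorbed into Lemmas \ref{Lemma:NleqOneRigidity} and \ref{Lemma:ZeroSetNeckLength}.
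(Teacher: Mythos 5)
Your proof follows the same route as the paper: apply Lemma \ref{Lemma:NleqOneRigidity} with $\NnegativePunctures = 0$ to eliminate all $\subtree^{\partition, \notplane}_j$, conclude that there is a single plane subtree, and then invoke the closing remarks of Lemma \ref{Lemma:ZeroSetNeckLength} to identify the contribution with a rigid $\check{u}^{\plane}: \C \to \posNegRegionComplete$. The conclusion is correct.

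However, the combinatorics of your second step conflates two notions and consequently overreaches. A subtree $\subtree^{\partition, \plane}_j$ is characterized by $\Sigma(\subtree^{\partition, \plane}_j) \simeq \C$ (equivalently, by having zero outgoing edges), not by all of its vertices being $\vertexPlane$ vertices: for instance, a subtree with a $\notplane$ pair-of-pants vertex and two $\vertexPlane$ vertices joined along $\partition$-short gluing edges also has $\Sigma \simeq \C$. So the inference ``a subtree consisting only of plane vertices must contain exactly one vertex'' is applied to a hypothesis you haven't established, and the assertion that $\tree$ itself must consist of a single plane vertex with no gluing edges at all is false in general. The correct combinatorial observation is at the subtree level and is simpler: once Lemma \ref{Lemma:NleqOneRigidity} rules out all $\subtree^{\partition, \notplane}_j$, every $\subtree^{\partition}_j$ is of plane type and therefore has no outgoing edges whatsoever. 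Since $\tree$ is connected and any $\partition$-long gluing edge joining two subtrees is an outgoing edge of one of them, there can be only one subtree, namely $\tree$ itself; it may nonetheless contain internal $\notplane$ vertices and $\partition$-short gluing edges. This slip is harmless here, since the closing remarks of Lemma \ref{Lemma:ZeroSetNeckLength} apply to the glued-up $u^{\partition, \plane}_j$ as a whole (the normal multisection vanishes along all $\partition$-short gluing annuli by those remarks), identifying it with a rigid $\check{u}^{\plane}$ regardless of the subtree's internal vertex structure — which is exactly the content the paper extracts.
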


Now consider the case $\ind(u) = 1$ and $\chi(\Sigma) = 0$ implying that $\sum \ind^{\tangent}(u^{\partition}_{j}) = 1$. By Lemma \ref{Lemma:NleqOneRigidity} there must be a single $u^{\partition, \notplane}$ of the form $u_{\kercoeff^{\notplane}}^{\notplane}$ for some $\check{u}^{\notplane}: \Sigma(\subtree^{\notplane}) \rightarrow \R_{s} \times \divSet$ having $\ind^{\tangent}(u^{\partition, \notplane}) = \ind(\check{u}^{\notplane}) = 1$. All of the $u^{\partition, \plane}_{j}$ are then determined by rigid maps $\check{u}^{\plane}_{j}$ into the $\posNegRegionComplete$.

\begin{figure}[h]
\begin{overpic}[scale=.25]{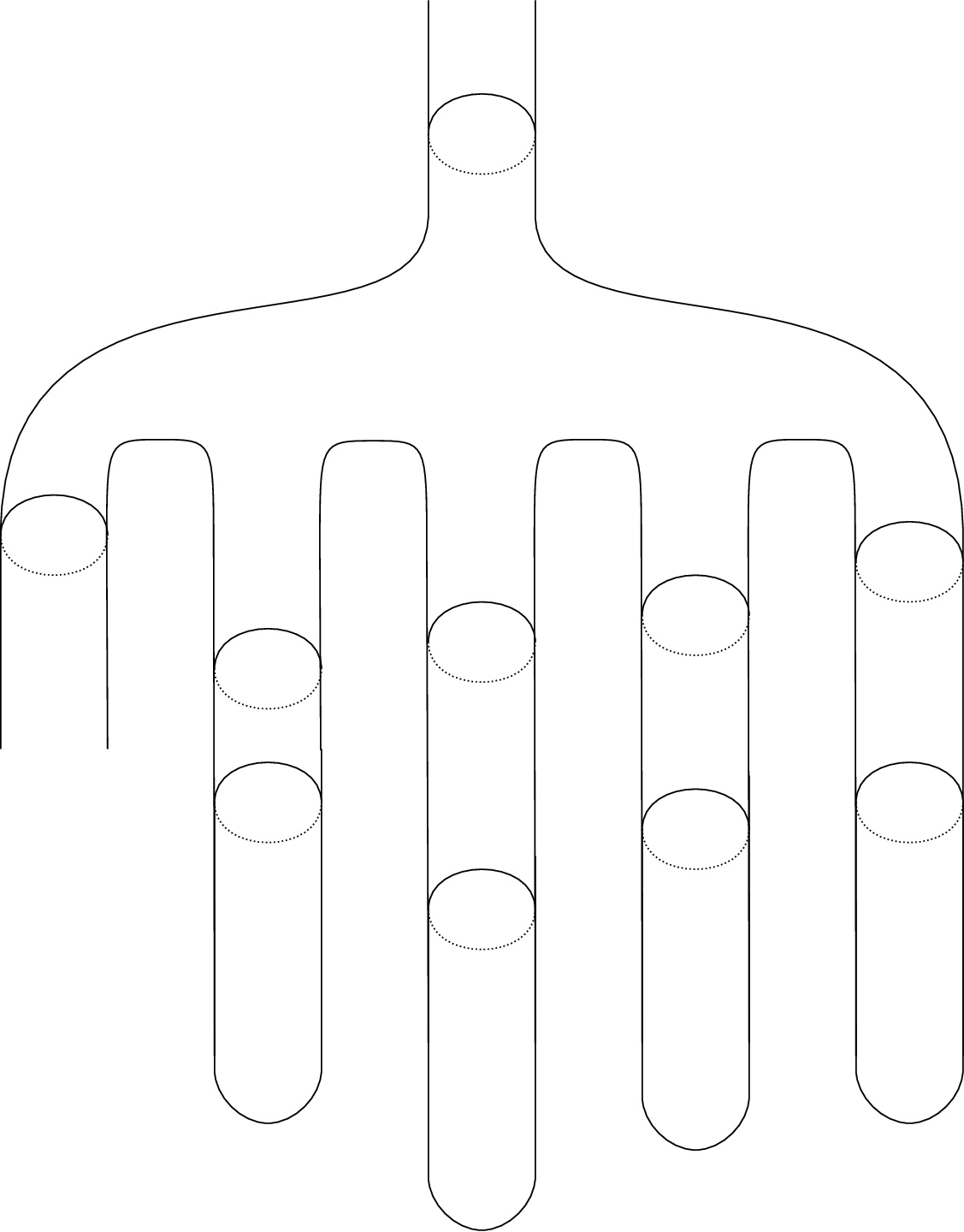}
\put(36, 70){$u^{\notplane}$}
\put(18, 15){$u^{\plane}_{1}$}
\put(70, 15){$u^{\plane}_{4}$}
\end{overpic}
\caption{A $\NnegativePunctures=1, m^{\partition, \plane} = 4$ gluing configuration as described in Lemma \ref{Lemma:CylCount}.}
\label{Fig:MNotPlaneOne}
\end{figure}

\begin{lemma}\label{Lemma:CylCount}
For our choices of contact form and perturbation data, each $\NnegativePunctures = 1$ contribution to the contact homology differential for $\Nhypersurface$ is a gluing of a single $u_{\kercoeff^{\notplane}}^{\notplane}$ determined by a $\check{u}^{\notplane}: \Sigma(\subtree^{\notplane}) \rightarrow \R_{s} \times \divSet$ with $\NnegativePunctures^{\notplane}$ negative ends together with a collection of $\check{u}^{\plane}_{j}: \C \rightarrow \posNegRegionComplete$ $j = 1, \dots, \NnegativePunctures^{\notplane} - 1$. See Figure \ref{Fig:MNotPlaneOne}.
\end{lemma}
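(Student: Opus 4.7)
The plan is to exploit the previous two lemmas to reduce the problem to a finite combinatorial count over trees. First I apply Lemma \ref{Lemma:NleqOneRigidity} with $\NnegativePunctures = 1$: it gives $\#\{\subtree^{\partition, \notplane}_j\} \leq 1$, and since any $\subtree^{\partition, \plane}_j$ has glued surface $\simeq \C$ and hence no outgoing edges at all, the unique outgoing free edge of $\tree$ (corresponding to the negative puncture of $\Sigma$) must emanate from $\subtree^{\notplane}$. This forces $\#\{\subtree^{\partition, \notplane}_j\} = 1$, places us in the equality case of Lemma \ref{Lemma:NleqOneRigidity}, and yields rigidity for every piece: $\ind^{\tangent}(u^{\partition, \notplane}) = 1$ and $\ind^{\tangent}(u^{\partition, \plane}_j) = 0$.

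Next I invoke the rigidity implication of Lemma \ref{Lemma:ZeroSetNeckLength} (Equation \eqref{Eq:ZeroSetNeckLengthBound} together with the closing remarks). This places $u$ in $\ModSpaceThick^{\short}(\partition)$, realizes the unique non-plane piece as $u^{\partition, \notplane} = u^{\notplane}_{\cokcoeff^{\notplane}}$ for some $\check{u}^{\notplane}: \Sigma(\subtree^{\notplane}) \to \R_s \times \divSet$ of index $1$, and presents each plane piece $u^{\partition, \plane}_j$ as the inclusion into a $\posNegRegionComplete$-leaf of $\foliationEpsilon$ of a rigid $\check{u}^{\plane}_j: \C \to \posNegRegionComplete$. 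No additional analytic input is required at this stage; the hard analytic content has already been absorbed into Lemma \ref{Lemma:ZeroSetNeckLength}.

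The last step is a bookkeeping count. Because every plane subtree has no outgoing edges in $\tree$, all outgoing edges emanating from the subtree decomposition start in $\subtree^{\notplane}$, and these outgoing edges of $\subtree^{\notplane}$ are in bijection with the negative punctures of $\check{u}^{\notplane}$, giving $\NnegativePunctures^{\notplane}$ of them. They split as one outgoing free edge (the negative puncture of $\Sigma$) plus $\NnegativePunctures^{\notplane} - 1$ gluing edges each terminating on a distinct $\subtree^{\partition, \plane}_j$; each such plane subtree contributes one $\check{u}^{\plane}_j$, yielding exactly $\NnegativePunctures^{\notplane} - 1$ capping planes as claimed. The only point demanding care is this final combinatorial match -- in particular, the observation that a plane subtree, regardless of its internal structure, always presents a single incoming edge and no outgoing edges -- and this is immediate from $\chi(\C) = 1$.
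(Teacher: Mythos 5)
Your proof is correct and takes essentially the same route as the paper's: it derives rigidity from the equality case of Lemma \ref{Lemma:NleqOneRigidity} applied with $\NnegativePunctures = 1$ and then reads the structure of each piece off the closing remarks of Lemma \ref{Lemma:ZeroSetNeckLength}, just as the paper does. You usefully make explicit why $\#\{\subtree^{\partition, \notplane}_j\} \geq 1$ (the paper leaves this implicit), though note a small logical reversal at the end -- a $\subtree^{\partition, \plane}_j$ has no outgoing edges \emph{by definition} (namely $\NnegativePunctures^{\partition}_j = 0$), and $\Sigma(\subtree^{\partition, \plane}_j) \simeq \C$ is the consequence of that for rational domains, not the other way around.
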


\subsection{Change of basis for free edges of trees without planes}\label{Sec:FreeEdgeBasisChange}

Here we'll describe a change of basis for the $\transSubbundle^{\normal}(\tree\git)$. We work in greater generality than is necessary to continue our analysis of $\ind = 1, \NnegativePunctures = 1$ contributions to the $CH$ differential for $\Nhypersurface$, as this will help to set up the analysis of \S \ref{Sec:MultipleNegPuncture}.

Suppose that $\tree^{\notplane}$ is a tree each of whose vertices has $\NnegativePunctures^{\notplane} = \NnegativePunctures(\tree^{\notplane}) \geq 1$ outgoing edges. The subspace
\begin{equation*}
\transSubbundle^{\normal}(\tree^{\notplane}\git) \subset \transSubbundle^{\normal}(\tree^{\notplane})
\end{equation*}
is spanned by perturbations associated to the outgoing edges of $\tree^{\notplane}$. Let $\edgeGluing_{i}$ be a gluing edge of $\tree^{\notplane}$. Then
\begin{equation*}
\ModSpaceThick(\tree^{\notplane})/\R_{s} \simeq \dom_{\glue}(\tree^{\notplane}) = \disk^{\# \vertex_{i}}_{\vec{\cokcoeff}^{\notplane}} \times \left( \prod \ModSpaceThick^{\tangent}(\vertex_{i})/\R_{s} \right) \times [\Cglue, \infty)^{\# \edgeGluing}
\end{equation*}
as described in Equation \eqref{Eq:GluingDomain}. In Lemma \ref{Lemma:GluingCoeffs}, which builds upon Lemma \ref{Lemma:ThickenedSpaceNonPlane}, we calculated the normal part $\delbarGluingNormal$ of $\delbarGluing = \delbarEpsilon \circ \glue$ in terms of the $\vec{\cokcoeff}^{\notplane}$. As in the proof of Lemma \ref{Lemma:ThickenedSpaceNonPlane}, we can describe a cutoff function as in Equation \eqref{Eq:BumpConstruction} to construct an element $\vec{\cokcoeff}^{\notplane}(\tree^{\notplane}) \in \R^{\# \vertex_{i}}$ so that for $\delta$ small enough to satisfy $\delta\vec{\cokcoeff}^{\notplane}(\tree^{\notplane}) \in \disk^{\# \vertex_{i}}$ we'll have
\begin{equation}\label{Eq:NormalTreeVectorDelbar}
\delbarGluingNormal(\delta\vec{\cokcoeff}^{\notplane}(\tree^{\notplane}), \vec{u}^{\notplane}_{i}, \vec{C}_{j}) = \delta\sum e^{-\epsilon_{\sigma}s_{i}(\tree^{\notplane})}\mu^{\normal}_{i} \in \transSubbundle^{\normal}(\tree^{\notplane}\git).
\end{equation}
Here the sum runs over the outgoing edges of $\tree^{\notplane}$. Alternatively, $\vec{\cokcoeff}^{\notplane}(\tree^{\notplane})$ can be constructed directly from Lemma \ref{Lemma:GluingCoeffsClean}.  We recall that for a $(\Sigma, u = (s, \pi_{\Nhypersurface}u)) \in \ModSpaceThick/\R_{s}$ the $s_{i} = s_{i}(\tree^{\notplane}) < 0$ measure the difference of the value of $s$ restricted to the boundary of the positive half cylindrical end of $\Sigma$ and the $i$th negative half cylindrical ends of $\Sigma$ and are functions of $\ModSpaceThick(\tree^{\notplane})/\R_{s}$.

Instead of using the basis $\mu^{\normal}_{i}$ for $\transSubbundle^{\normal}(\tree^{\notplane}\git)$, we could use the basis
\begin{equation}\label{Eq:NotPlaneNormalBasis}
\widetilde{\mu}^{\normal}_{i}(\tree^{\notplane}) = e^{-\epsilon_{\sigma} s_{i}(\tree^{\notplane})}\mu^{\normal}_{i}.
\end{equation}
With respect to this new basis, Equation \eqref{Eq:NormalTreeVectorDelbar} can be expressed as
\begin{equation}\label{Eq:DeltaNneg}
\begin{gathered}
\delbarGluingNormal(\delta\vec{\cokcoeff}^{\notplane}(\tree^{\notplane}), \vec{u}_{i}^{\notplane}, \vec{C}_{j}) = \delta \Delta_{\NnegativePunctures^{\notplane}},\\
\Delta_{\NnegativePunctures^{\notplane}} = \sum \widetilde{\mu}^{\normal}_{i} = (1, \dots, 1) \in \R^{\NnegativePunctures^{\notplane}}
\end{gathered}
\end{equation}

\nom{$\widetilde{\mu}^{\normal}_{i}(\tree^{\notplane})$}{Basis of $\transSubbundle^{\normal}(\tree^{\notplane})$ defined by rescaling the standard basis by $s_{i}(\tree^{\notplane})$ function}
\nom{$\Delta_{\NnegativePuncturesThick}$}{Diagonal vector in $\transSubbundle^{\normal}(\subtree^{\partitionThick, \notplane}\git)$}

\subsection{Selection of normal multisections for single vertex trees}\label{Sec:SingleVertexMultisecDef}

Now we choose our normal multisection $\Multisec^{\normal}(\tree^{\notplane}\git)$ for single vertex trees $\tree^{\notplane}\git$ with $\NnegativePunctures^{\notplane} = \NnegativePunctures(\tree^{\notplane}) \geq 1$ outgoing edges. Let
\begin{equation}\label{Eq:OrderedCoeffs}
\multisecCoeff_{1}(\tree^{\notplane}\git) < \cdots < \multisecCoeff_{\NnegativePunctures^{\notplane}}(\tree^{\notplane}\git) \in \R_{>0}
\end{equation}
be a collection of constants. Using the basis $\widetilde{\mu}^{\normal}_{i}(\tree^{\notplane}\git)$ of $\transSubbundle^{\normal}(\tree^{\notplane}\git)$ and writing $\Sym_{\NnegativePunctures^{\notplane}}$ for the symmetric group on $\NnegativePunctures$ letters whose elements will be denoted $g$, define
\begin{equation*}
\begin{gathered}
\Multisec^{\normal}(\tree^{\notplane}\git) = \left\{ g\multisec^{\normal}(\tree^{\notplane}\git) \right\}_{g \in \Sym_{\NnegativePunctures^{\notplane}}},\\
g\multisec^{\normal}(\tree^{\notplane}\git) = \sum_{i=1}^{\NnegativePunctures^{\notplane}} \multisecCoeff_{g(i)}(\subtree\git)\widetilde{\mu}^{\normal}_{i}(\tree^{\notplane}\git) = \sum_{i=1}^{\NnegativePunctures^{\notplane}} \multisecCoeff_{g(i)}(\tree^{\notplane}\git)e^{-\epsilon_{\sigma}s_{i}(\tree^{\notplane})}\mu^{\normal}_{i}.
\end{gathered}
\end{equation*}
Then $\Multisec^{\normal}(\tree^{\notplane}\git)$ has $\NnegativePunctures^{\notplane}!$ elements with the $\multisec^{\normal}(\tree^{\notplane}\git) \in \Multisec^{\normal}(\tree^{\notplane}\git)$ in one-to-one correspondence with orderings of the outgoing edges of $\tree^{\notplane}$.

\begin{lemma}
The $\Multisec(\tree^{\notplane}\git)$ are invariant under the symmetries of Definition \ref{Def:DomainSymmetries}.
\end{lemma}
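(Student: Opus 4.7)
The plan is to check invariance separately under the two types of symmetries in Definition \ref{Def:DomainSymmetries}: rotations of asymptotic markers, and non-trivial domain automorphisms $\phi$ satisfying $u = u\circ \phi$.

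First I would dispose of marker rotations. Each generator $\widetilde{\mu}^{\normal}_{i}(\tree^{\notplane})$ of $\transSubbundle^{\normal}(\tree^{\notplane}\git)$ is, by Equation \eqref{Eq:NotPlaneNormalBasis} and Equation \eqref{Eq:dpPerturbations}, a scalar multiple of the perturbation built from the eigenfunction $\zeta^{\normal}_{-1} = \partial_{\sigma}$ of $\AsymptoticOp^{\normal}$. Under Assumptions \ref{Assump:EpsilonSpecification}, this eigenfunction is constant on $\aCircle$, so $\mu^{\normal}_{i}$ is independent of the choice of marker at the corresponding puncture. The rescaling factor $e^{-\epsilon_{\sigma}s_{i}(\tree^{\notplane})}$ depends only on the $s$-coordinate of $u$ restricted to the boundary of a simple half-cylinder (Definition \ref{Def:Svars}), which is also marker-independent. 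The coefficients $\multisecCoeff_{g(i)}(\tree^{\notplane}\git)$ of Equation \eqref{Eq:OrderedCoeffs} are constants. Hence each branch $g\multisec^{\normal}(\tree^{\notplane}\git)$ is fixed pointwise by any marker rotation.

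Next I would handle a domain automorphism $\phi$ with $u = u\circ \phi$. Such a $\phi$ permutes the outgoing free edges of $\tree^{\notplane}$ via some $\tau \in \Sym_{\NnegativePunctures^{\notplane}}$, and the condition $u = u\circ \phi$, combined with preservation of orbit assignments, forces $s_{i}(\tree^{\notplane}) = s_{\tau(i)}(\tree^{\notplane})$ and $\phi^{\ast}\widetilde{\mu}^{\normal}_{\tau(i)}(\tree^{\notplane}) = \widetilde{\mu}^{\normal}_{i}(\tree^{\notplane})$; the latter identity reflects the fact that $\phi$ simply relabels the simple half-cylinders at the negative punctures while leaving the intrinsic perturbation sections unchanged. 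Applying $\phi^{\ast}$ to a branch therefore yields
\begin{equation*}
\phi^{\ast}\bigl(g\multisec^{\normal}(\tree^{\notplane}\git)\bigr) = \sum_{i=1}^{\NnegativePunctures^{\notplane}} \multisecCoeff_{g(i)}(\tree^{\notplane}\git)\,\widetilde{\mu}^{\normal}_{\tau^{-1}(i)}(\tree^{\notplane}) = \sum_{j=1}^{\NnegativePunctures^{\notplane}} \multisecCoeff_{(g\circ\tau)(j)}(\tree^{\notplane}\git)\,\widetilde{\mu}^{\normal}_{j}(\tree^{\notplane}) = (g\circ\tau)\multisec^{\normal}(\tree^{\notplane}\git),
\end{equation*}
another branch of $\Multisec^{\normal}(\tree^{\notplane}\git)$. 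Since $g\mapsto g\circ\tau$ is a bijection of $\Sym_{\NnegativePunctures^{\notplane}}$, the set of branches is preserved.

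The main (and only) thing to be careful about is the bookkeeping of how $\phi$ acts on the labeled perturbations $\widetilde{\mu}^{\normal}_{i}$: one must verify that the relabeling induced by $\phi$ on the outgoing edges of $\tree^{\notplane}$ is exactly the same as the relabeling it induces on the simple half-cylinders at the negative punctures of $\Sigma$, so that the permutation $\tau$ is well-defined. This is immediate from the fact that the outgoing edges of $\tree^{\notplane}$ are in canonical bijection with the negative punctures of $\Sigma$ via the thick-thin decomposition used to define $\Map_{\delta}$. Combining the two invariance statements shows that $\Multisec^{\normal}(\tree^{\notplane}\git)$, as a multiset of sections, is $G(\Sigma,u)$-invariant, as required.
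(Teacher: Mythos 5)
Your proof is correct and follows the same strategy as the paper's: marker invariance is immediate from the explicit formula for the $\mu^{\normal}_{i}$, and for a domain automorphism $\phi$ with $u = u\circ\phi$ the key observation is that $s_{i}(\tree^{\notplane}) = s_{\tau(i)}(\tree^{\notplane})$ for the induced permutation $\tau$, from which invariance of the set of branches follows. You've simply carried out the general bookkeeping (showing $\phi^{\ast}(g\multisec^{\normal}) = (g\circ\tau)\multisec^{\normal}$) that the paper leaves implicit after treating the adjacent-transposition case.
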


\begin{proof}
The $\mu^{\normal}_{i}$ are invariant under marker rotation and so the $g\multisec^{\normal}(\tree^{\notplane}\git)$ are as well. Now suppose that $\phi$ is an automorphism of $\Sigma$ as in Definition \ref{Def:DomainSymmetries}. For simplicity, let's suppose that $\phi$ interchanges the punctures $z_{i}, z_{i+1} \in \orderedPunctureSet \subset \C$ and preserves all other punctures. Then by the condition $u \circ \phi = u$, we must have $s_{i}(\tree^{\notplane}) = s_{i+1}(\tree^{\notplane})$, making it clear that $\Multisec^{\normal}(\tree^{\notplane}\git)$ is invariant under the action of $\phi$.
\end{proof}

The $\multisec^{\normal}(\tree^{\notplane}\git) \in \Multisec(\tree^{\notplane}\git)$ are constant with respect to the basis $\widetilde{\mu}(\tree^{\notplane}\git)$. With respect to the basis $\mu^{\normal}_{i}$, we see that $e^{-\epsilon_{\sigma}s_{i}(\tree^{\notplane})}$ factors in the above equation make the $g\multisec^{\normal}(\tree^{\notplane}\git)$ appear very large as the $s_{i}$ tend to $-\infty$. As the $\ModSpaceThick/\R_{s}$ over which the $\Multisec^{\normal}(\tree^{\notplane}\git)$ are defined are compact, the $\Multisec^{\normal}(\tree^{\notplane}\git)$ are $\mathcal{C}^{0}$ bounded.

Our multisections are chosen so that the following lemma holds true.

\begin{lemma}\label{Lemma:EasyVanishing}
In the above notation, if $\NnegativePunctures^{\notplane} > 1$ then $\delbarGluingNormal \in \Multisec^{\normal}(\tree^{\notplane}\git)$ has no solutions.
\end{lemma}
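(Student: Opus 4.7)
The plan is to apply Lemma \ref{Lemma:GluingCoeffsClean} in the special case where $\tree = \tree^{\notplane}\git$ is a single vertex tree, at which point the formula for $\delbarGluingNormal$ collapses to something essentially trivial. In this setting there are no gluing edges (so the sum over $\vec{C}^{\notplane}_i$ and $\vec{C}^{\plane}_i$ is empty, the incoming edge at the unique vertex $\vertex = \vertexNotPlane$ is free so the $\mu^{\annulus,\normal}_{i^{in}}$-contribution is zero by convention, and the second sum is over the $\vertexPlane_i$, of which there are none). Thus $\dom_{\glue}(\tree^{\notplane}\git) = \disk^{1}_{\cokcoeff^{\notplane}} \times \ModSpaceThick^{\tangent}(\vertex)/\R_{s}$ and Lemma \ref{Lemma:GluingCoeffsClean} reduces to
\begin{equation*}
\delbarGluingNormal = \cokcoeff^{\notplane}\sum_{j=1}^{\NnegativePunctures^{\notplane}} e^{-\epsilon_{\sigma}s_{j}(\tree^{\notplane})}\mu^{\normal}_{j} = \cokcoeff^{\notplane}\sum_{j=1}^{\NnegativePunctures^{\notplane}} \widetilde{\mu}^{\normal}_{j}(\tree^{\notplane}) = \cokcoeff^{\notplane}\,\Delta_{\NnegativePunctures^{\notplane}},
\end{equation*}
where the penultimate equality uses the definition of the rescaled basis in Equation \eqref{Eq:NotPlaneNormalBasis} and the last equality uses Equation \eqref{Eq:DeltaNneg}.

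Next I would compare this with the explicit formula for the branches of $\Multisec^{\normal}(\tree^{\notplane}\git)$. By construction, each branch expressed in the basis $\{\widetilde{\mu}^{\normal}_{j}(\tree^{\notplane})\}$ has coordinate vector $(\multisecCoeff_{g(1)}(\tree^{\notplane}\git),\ldots,\multisecCoeff_{g(\NnegativePunctures^{\notplane})}(\tree^{\notplane}\git))$ for some $g \in \Sym_{\NnegativePunctures^{\notplane}}$. Hence the equation $\delbarGluingNormal \in \Multisec^{\normal}(\tree^{\notplane}\git)$ amounts to requiring
\begin{equation*}
\cokcoeff^{\notplane} = \multisecCoeff_{g(1)}(\tree^{\notplane}\git) = \multisecCoeff_{g(2)}(\tree^{\notplane}\git) = \cdots = \multisecCoeff_{g(\NnegativePunctures^{\notplane})}(\tree^{\notplane}\git)
\end{equation*}
for some permutation $g$.

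Finally, I would invoke the strict ordering $\multisecCoeff_{1}(\tree^{\notplane}\git) < \cdots < \multisecCoeff_{\NnegativePunctures^{\notplane}}(\tree^{\notplane}\git)$ imposed in Equation \eqref{Eq:OrderedCoeffs}, which makes such simultaneous equalities impossible whenever $\NnegativePunctures^{\notplane} > 1$. This contradiction completes the proof. There is no real obstacle here: once Lemma \ref{Lemma:GluingCoeffsClean} and the change of basis in \S\ref{Sec:FreeEdgeBasisChange} are in place, the argument is a one-line linear-algebra comparison, and indeed this is precisely why the multisections $\Multisec^{\normal}(\tree^{\notplane}\git)$ were constructed using distinct ordered constants in \S\ref{Sec:SingleVertexMultisecDef}.
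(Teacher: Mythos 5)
Your proposal is correct and follows the paper's own argument: the paper's proof cites Equation \eqref{Eq:DeltaNneg} to say that the image of $\delbarGluingNormal$ is the diagonal line $\R\Delta_{\NnegativePunctures^{\notplane}}$, and observes that the multisection branches avoid the diagonal by the strict ordering of the $\multisecCoeff_{i}$. You reach the same two facts, simply re-deriving the diagonality of $\delbarGluingNormal$ directly from Lemma \ref{Lemma:GluingCoeffsClean} specialized to a single-vertex tree rather than citing Equation \eqref{Eq:DeltaNneg}, so the substance is the same.
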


\begin{proof}
This is immediate from Equation \eqref{Eq:DeltaNneg}. Our choice of multisections ensure that the $\multisec^{\normal}(\tree^{\notplane}\git) \in \Multisec(\tree^{\notplane}\git)$ do not touch the diagonal subspace of $\R^{\NnegativePunctures^{\notplane}}$, which is exactly the image of $\delbarGluingNormal$.
\end{proof}

\subsection{The case $\ind = \NnegativePunctures = 1$}\label{Sec:CylCount}

Now we enumerate the $\NnegativePunctures = 1$ contributions to the contact homology differential for $\Nhypersurface$ using the $\Multisec^{\normal}$ described in the previous subsection.

We build upon the notation of Lemma \ref{Lemma:CylCount}. Choose a $\multisec^{\normal}(\subtree^{\notplane}) \in \Multisec^{\normal}(\subtree^{\notplane})$ and order the negative punctures so that the coefficients $\multisecCoeff_{i}^{\normal} = \multisecCoeff_{i}^{\normal}(\subtree^{\notplane})$ are ordered as in Equation \eqref{Eq:OrderedCoeffs}. Index the $u^{\plane}_{i}$ so that it is the plane attached to the $i$th outgoing edge of $\subtree^{\notplane}$ with associated maps $\check{u}_{i}^{\plane} \rightarrow \posNegRegionComplete$. Let
\begin{equation*}
I^{\pm} \subset \{ 1, \dots, \NnegativePunctures^{\notplane} \}, \quad I^{+} \cap I^{-} = \emptyset, \quad \#(I^{+} \cup I^{-}) = \NnegativePunctures^{\notplane} - 1
\end{equation*}
be the collections of indices so that if $\check{u}^{\plane}_{i}$ maps into $\posNegRegionComplete$, then $i \in I^{\pm}$. Associated to each such plane, we have a constant $\kercoeff^{\plane}_{i}$ with
\begin{equation*}
i\in I^{\pm} \implies \mp \kercoeff^{\plane}_{i} > 0
\end{equation*}
by Equation \eqref{Eq:PlaneAsymptotic}. We write $k$ for the index of the outgoing edge of $\tree$ so that
\begin{equation*}
\{ 1, \dots, \NnegativePunctures^{\notplane} \} \setminus (I^{+} \cup I^{-}) = \{ k \}.
\end{equation*}

We search for solutions to $\delbarGluingNormal = \multisec^{\normal}(\subtree^{\notplane})$ over
\begin{equation*}
[-r, r]_{\cokcoeff^{\notplane}} \times [\Cmeld^{\normal} + \half, \infty)^{\NnegativePunctures^{\notplane} - 1}_{\ell_{i}} \subset \dom_{\glue}(\tree)
\end{equation*}
which is a connected component of the space of possible gluings. The first parameter controls normal variations of the map $\check{u}^{\notplane}$ and the remaining factors control neck lengths along which we glue the planes. Since the maps $u^{\notplane}$ and $u^{\plane}_{j}$ are rigid, they are omitted from notation and the $s_{i}(\subtree^{\notplane})$ and $\kercoeff^{\plane}_{i}$ are constant over this space. Then we have
\begin{equation*}
\begin{aligned}
\delbarGluingNormal(\cokcoeff^{\notplane}, \vec{C}^{\notplane}) &= \cokcoeff^{\notplane}\Delta_{\NnegativePunctures^{\notplane}} - \sum \kercoeff^{\plane}_{i}e^{-\epsilon_{\sigma}(\ell_{i} - s_{i}(\subtree^{\notplane}))}\widetilde{\mu}^{\normal}_{i}(\subtree^{\notplane})\\
&= \cokcoeff^{\notplane}\Delta_{\NnegativePunctures^{\notplane}} - \sum \widetilde{\kercoeff}^{\plane}_{i}e^{-\epsilon_{\sigma}\ell_{i} }\widetilde{\mu}^{\normal}_{i}(\subtree^{\notplane}) \in \transSubbundle^{\normal}(\subtree^{\notplane}\git)
\end{aligned}
\end{equation*}
where we have abbreviated $\widetilde{\kercoeff}^{\plane}_{i} = \kercoeff^{\plane}_{i}e^{\epsilon_{\sigma}s_{i}(\subtree^{\notplane})}$. The $k$th component of this vector is
\begin{equation*}
(\delbarGluingNormal - \multisec^{\normal})_{k} = \cokcoeff^{\notplane} - \multisecCoeff^{\normal}_{k}.
\end{equation*}
Therefore, at any solution, we must have $\cokcoeff^{\notplane} = \multisecCoeff^{\normal}_{k}$ and for $i \neq k$, the $i$th component of $\delbarGluingNormal - \multisec^{\normal}$ will be
\begin{equation*}
(\delbarGluingNormal - \multisec^{\normal})_{i} = \cokcoeff^{\notplane} - \widetilde{\kercoeff}^{\plane}_{i}e^{-\epsilon_{\sigma}\ell_{i}} - \multisecCoeff^{\normal}_{i} =  \multisecCoeff^{\normal}_{k} - \multisecCoeff^{\normal}_{i} - \widetilde{\kercoeff}^{\plane}_{i}e^{-\epsilon_{\sigma}\ell_{i}}. 
\end{equation*}
Clearly the $(\delbarGluingNormal - \multisec^{\normal})_{i}$ are independent of one another with each depending only on the $i$th neck length. So
\begin{equation*}
(\delbarGluingNormal - \multisec^{\normal})_{i} = 0 \iff \widetilde{\kercoeff}^{\plane}_{i}e^{-\epsilon_{\sigma}\ell_{i}} = \multisecCoeff^{\normal}_{k} - \multisecCoeff^{\normal}_{i}.
\end{equation*}
There is a unique solution at
\begin{equation}
\ell_{i} = -\epsilon_{\sigma}^{-1}\log \left( e^{-\epsilon_{\sigma}s_{i}}\left(\kercoeff^{\plane}_{i}\right)^{-1}(\multisecCoeff^{\normal}_{k} - \multisecCoeff^{\normal}_{i}) \right).
\end{equation}
if we have
\begin{equation*}
\sgn(\multisecCoeff^{\normal}_{k} - \multisecCoeff^{\normal}_{i}) = \sgn(\kercoeff^{\plane}_{i}).
\end{equation*}
Otherwise there is no solution. By our ordering condition, $\sgn(k - i) = \sgn(\multisecCoeff^{\normal}_{k} - \multisecCoeff^{\normal}_{i})$. In conclusion we have the following:

\begin{lemma}\label{Lemma:CylCountDetail}
In the above notation, there is a unique solution to $\delbarGluingNormal = \multisec^{\normal}(\subtree^{\notplane})$ if for each $i \neq k$,
\begin{equation*}
i < k\ \forall i \in I^{-}, \quad i > k\ \forall i \in I^{+}.
\end{equation*}
Otherwise there is no solution.
\end{lemma}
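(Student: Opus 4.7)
The plan is to reduce the equation $\delbarGluingNormal = \multisec^{\normal}(\subtree^{\notplane})$ to an explicit finite system and then use the chosen ordering of the multisection coefficients combined with the sign convention for plane asymptotics to determine when a solution exists. First I would apply Lemma \ref{Lemma:GluingCoeffsClean} to expand $\delbarGluingNormal$ as a linear combination of the $\mu^{\normal}_{i}$ indexed by the negative ends of $\subtree^{\notplane}$. Converting to the rescaled basis $\widetilde{\mu}^{\normal}_{i}(\subtree^{\notplane}) = e^{-\epsilon_{\sigma}s_{i}(\subtree^{\notplane})}\mu^{\normal}_{i}$ introduced in \S \ref{Sec:FreeEdgeBasisChange}, the contribution of the single $u^{\notplane}$ becomes $\cokcoeff^{\notplane}\Delta_{\NnegativePunctures^{\notplane}}$ (as in Equation \eqref{Eq:DeltaNneg}), each plane $u^{\plane}_{i}$ contributes a single term along $\widetilde{\mu}^{\normal}_{i}(\subtree^{\notplane})$ of the form $-\widetilde{\kercoeff}^{\plane}_{i}e^{-\epsilon_{\sigma}\ell_{i}}$ with $\widetilde{\kercoeff}^{\plane}_{i} = \kercoeff^{\plane}_{i}e^{\epsilon_{\sigma}s_{i}(\subtree^{\notplane})}$, and in that same basis $\multisec^{\normal}(\subtree^{\notplane})$ has constant components $\multisecCoeff^{\normal}_{i}$.

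The next step is to read off the system coordinate-by-coordinate in this basis. The $k$-th component (where no plane is attached) depends only on $\cokcoeff^{\notplane}$ and forces $\cokcoeff^{\notplane} = \multisecCoeff^{\normal}_{k}$. Substituting this back, each remaining equation ($i \neq k$) involves only the single variable $\ell_{i}$ and takes the form
\begin{equation*}
\widetilde{\kercoeff}^{\plane}_{i}e^{-\epsilon_{\sigma}\ell_{i}} = \multisecCoeff^{\normal}_{k} - \multisecCoeff^{\normal}_{i}.
\end{equation*}
Since $e^{-\epsilon_{\sigma}\ell_{i}} > 0$, solvability is equivalent to $\sgn(\widetilde{\kercoeff}^{\plane}_{i}) = \sgn(\multisecCoeff^{\normal}_{k} - \multisecCoeff^{\normal}_{i})$, in which case $\ell_{i}$ is uniquely determined by taking $-\epsilon_{\sigma}^{-1}\log$ of the ratio.

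Now I would unwind the signs. By Equation \eqref{Eq:PlaneAsymptotic}, $i \in I^{-}$ means $\kercoeff^{\plane}_{i} > 0$ and $i \in I^{+}$ means $\kercoeff^{\plane}_{i} < 0$, and these signs are preserved under multiplication by the positive factor $e^{\epsilon_{\sigma}s_{i}(\subtree^{\notplane})}$. By the ordering \eqref{Eq:OrderedCoeffs}, $\sgn(\multisecCoeff^{\normal}_{k} - \multisecCoeff^{\normal}_{i}) = \sgn(k - i)$. Matching these yields the condition stated in the lemma: $i \in I^{-}$ requires $i < k$ and $i \in I^{+}$ requires $i > k$; when either condition fails for some $i$, the corresponding equation has no real solution and the system is inconsistent.

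The one step requiring care, and the closest thing here to an obstacle, is ensuring that the unique solution $(\cokcoeff^{\notplane}, \ell_{1}, \dots, \widehat{\ell_{k}}, \dots)$ actually lies inside the domain $[-r, r] \times [\Cmeld^{\normal} + \tfrac{1}{2}, \infty)^{\NnegativePunctures^{\notplane} - 1}$ of the relevant connected component of $\dom_{\glue}(\tree)$. Both $s_{i}(\subtree^{\notplane})$ and $\kercoeff^{\plane}_{i}$ are bounded above and below in absolute value over the compact moduli spaces of rigid $\check{u}^{\notplane}$ and $\check{u}^{\plane}_{j}$, so the $\ell_{i}$ are pushed into $[\Cmeld^{\normal} + \tfrac{1}{2}, \infty)$ provided the differences $\multisecCoeff^{\normal}_{k} - \multisecCoeff^{\normal}_{i}$ are sufficiently small; since $\Multisec^{\normal}$ must anyway satisfy $\norm{\Multisec^{\normal}_{\prod, \meld}} \leq \thicc{r}$ (Lemma \ref{Lemma:ZeroSetNeckLength}), shrinking the $\multisecCoeff_{i}$ uniformly while preserving their ordering realizes this. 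Finally, the Jacobian of $\delbarGluingNormal - \multisec^{\normal}$ in the variables $(\cokcoeff^{\notplane}, \ell_{i})_{i \neq k}$ is upper triangular with nonzero diagonal entries $1$ and $\epsilon_{\sigma}\widetilde{\kercoeff}^{\plane}_{i}e^{-\epsilon_{\sigma}\ell_{i}} \neq 0$, so the solution is transversely cut out, which combined with the previously established tangent-direction transversality completes the enumeration.
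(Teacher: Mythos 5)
Your proof is correct and follows essentially the same route as the paper: convert to the rescaled basis $\widetilde{\mu}^{\normal}_{i}(\subtree^{\notplane})$, solve for $\cokcoeff^{\notplane}$ from the $k$-th coordinate, decouple the remaining equations in the single variables $\ell_{i}$, and match the sign of $\kercoeff^{\plane}_{i}$ (via Equation \eqref{Eq:PlaneAsymptotic}) against $\sgn(\multisecCoeff^{\normal}_{k}-\multisecCoeff^{\normal}_{i})=\sgn(k-i)$. The two extra checks you include — that the $\ell_{i}$ land in $[\Cmeld^{\normal}+\tfrac{1}{2},\infty)$ for $\norm{\Multisec^{\normal}}$ small, and that the Jacobian is upper-triangular with nonvanishing diagonal — are handled by the paper implicitly (via the constraint $\norm{\Multisec^{\normal}}\leq\thicc{r}$ from Lemma \ref{Lemma:ZeroSetNeckLength}) and explicitly just after the lemma, respectively, so folding them in is harmless and in fact a useful clarification.
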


At each solution, can compute the linearization of $\delbarGluing^{\normal}_{\ZeroSet} - \multisec^{\normal}(\subtree^{\notplane})$ as
\begin{equation*}
\begin{gathered}
\Dlinearized - \grad \multisec^{\normal}(\subtree^{\notplane}\git) = \grad\left(\delbarGluing^{\normal} - \multisec^{\normal}(\subtree^{\notplane}\git)\right): T\dom_{\glue} \rightarrow \transSubbundle^{\normal}(\tree) = \transSubbundle^{\normal}(\subtree^{\notplane}\git),\\
\Dlinearized - \grad \multisec^{\normal}(\tree^{\partitionThick}\git) = \Dlinearized = \Delta_{\NnegativePunctures^{\notplane}}\otimes d\cokcoeff^{\notplane} - \sum \epsilon_{\sigma}\kercoeff^{\plane}_{i}e^{-\epsilon_{\sigma}(\ell_{i} - s_{i})}\widetilde{\mu}^{\normal}_{i}\otimes d\ell_{i}
\end{gathered}
\end{equation*}
using the fact that $\multisec^{\normal}(\subtree^{\notplane})$ and the $\kercoeff^{\plane}_{i}$ are constant over $\dom_{\glue}$. It follows that the linearization is an isomorphism. Hence with our choices of multisections and modified melding construction, all perturbed holomorphic curves with $\NnegativePunctures = \ind = 1$ are transversely cut out.

\section{Gluing configurations with multiple negative ends}\label{Sec:MultipleNegPuncture}

The content of this section will be used to show that the $\NnegativePunctures \geq 2$ contributions to $\partialNH$ algebraically cancel by appealing to a symmetry argument using properties of the SFT orientation scheme in \S \ref{Sec:Orientations}. Our strategy is similar to the counting of holomorphic curves as zeros of a section $\multisec$ of an obstruction bundle in \cite{HT:GluingII}. There $\multisec$ is replaced with an approximation $\multisec_{0}$ whose zeros can be more easily counted. In contrast with \cite{HT:GluingII}, our algebraic counts of curves will be zero.

The content of this section up to Lemma \ref{Lemma:ZeroSetInRedModSpace} reduces the search for solutions to $\delbarEpsilon \in \Multisec = \Multisec^{\tangent} + \Multisec^{\normal}$ over $\ModSpaceThick(\tree)$ to a search for $\ind =1$ solutions over ``reduced'' moduli spaces $\widecheck{\dom}_{\glue}(\tree)$. In \S \ref{Sec:MultisecSymmetryGroup} through \S \ref{Sec:BadSubtree} we define and study symmetry groups of normal multisections $\Multisec^{\normal}$. Such $\Multisec^{\normal}$ defined as meldings of the $\Multisec^{\normal, \partition}_{\prod}(\tree)$ will in general not have enough symmetries for our orientation argument, so we will want to replace them with multisections $\Multisec^{\normal, \partition}_{\Sym}(\tree)$ which have symmetry groups of order $\geq 2$ by Lemma \ref{Lemma:EnlargedSymmetries}.

In \S \ref{Sec:DelbarTransNormEstimate} we establish that our multisections determined by application of the melding construction to generic choices made in \S \ref{Sec:SingleVertexMultisecDef} are transversely cut out and satisfy a norm estimate. The norm estimate allows us to say that meldings of the $\Multisec^{\normal, \partition}_{\prod}(\tree)$ and the meldings of the $\Multisec^{\normal, \partition}_{\Sym}(\tree)$ yield the same counts of solutions to $\delbarEpsilon \in \Multisec$ over the $\widecheck{\dom}_{\glue}(\tree)$ in Lemma \ref{Lemma:EquivalentZeros}. This means that we can count solutions $\delbarGluingNormal = \Multisec^{\normal, \partition}_{\prod, \meld}(\tree)$ to compute the $\NnegativePunctures \geq 2$ contributions to the $CH$ differential. We will then algebraically count these solutions in \S \ref{Sec:OrientationNgeq2}.

\subsection{Eliminating $\delbarGluingNormal - \multisec^{\normal}$ along gluing necks and the diagonal subspace}\label{Sec:ReducingDelbar}

If we want to solve for $\delbarGluing^{\normal} = \multisec^{\normal}$ over some $\dom_{\glue}(\tree) \simeq \ModSpaceThick(\tree)/\R_{s}$, then obviously $\delbarGluingNormal - \multisec^{\normal}$ will
\be
\item have vanishing projection to the diagonal subspace $\R\Delta_{\NnegativePunctures} \in \transSubbundle^{\normal}(\tree\git)$ and
\item vanish along gluing neck annuli
\ee
Here we will show that for any $\multisec^{\normal}$ and $\widecheck{\gluingConfig} = (\vec{u}, \vec{C})$, we can find a $\vec{\cokcoeff}^{\notplane}$ for which $\gluingConfig = (\vec{\cokcoeff}^{\notplane}, \vec{u}, \vec{C}) \in \dom_{\glue}(\tree)$ satisfies these vanishing conditions. 

We'll start by consider trees of the form $\tree^{\notplane}$ as in \S \ref{Sec:FreeEdgeBasisChange} and consider more general trees $\tree$ later by working with $\tree^{\notplane} \subset \tree$. For the time being we do not require that the $\multisec^{\normal}$ has any particular form -- eg. that it is the result of the melding construction.

\subsubsection{Basis change for subtrees}

As a first step, we'll need to understand how the bases $\widetilde{\mu}^{\normal}_{i}$ of Equation \eqref{Eq:NotPlaneNormalBasis} talk to each other. Let $\edgeGluing_{i}$ be some gluing edge of $\tree^{\notplane}$. By cutting $\tree^{\notplane}$ at $\edgeGluing_{i}$ we obtain a pair of trees
\be
\item $\subtree^{\notplane, \uparrow}_{i}$ containing the root vertex of $\tree^{\notplane}$ and
\item $\subtree^{\notplane, \downarrow}_{i}$ whose outgoing free edges are contained in the set of outgoing free edges of $\tree^{\notplane}$.
\ee
See Figure \ref{Fig:NotplaneSubtreeSplitting}. We then have a subspace inclusion
\begin{equation}\label{Eq:SubtreeInclusionMapping}
\transSubbundle^{\normal}(\subtree_{i}^{\notplane, \downarrow}\git) \subset \transSubbundle^{\normal}(\tree^{\notplane}\git).
\end{equation}

\begin{figure}[h]
\begin{overpic}[scale=.2]{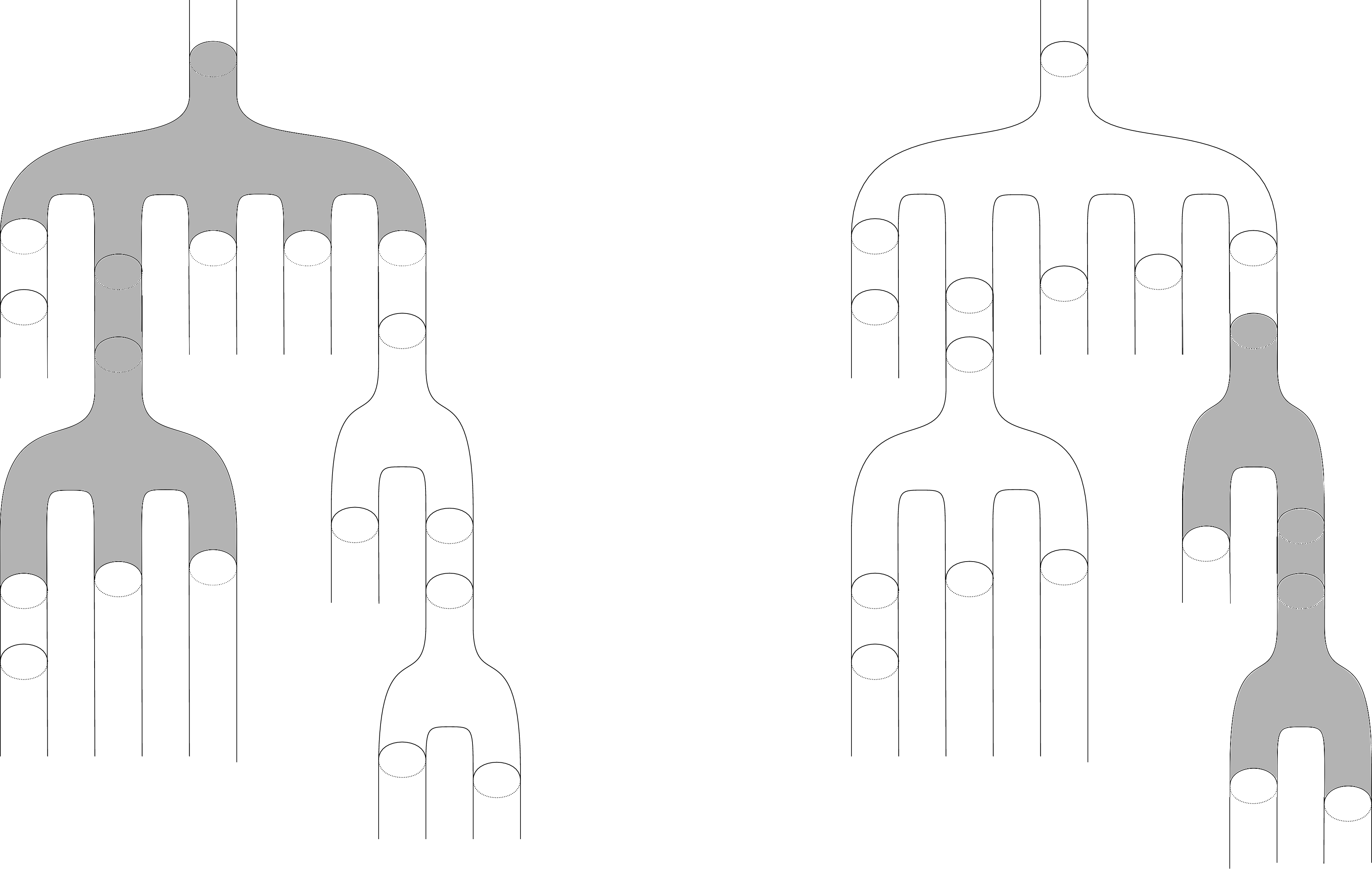}
\put(33, 41){$\edgeGluing_{i}$}
\end{overpic}
\caption{On the left the surface $\Sigma(\subtree^{\notplane, \uparrow}_{i})$ corresponding to $\subtree^{\notplane, \uparrow}_{i}$ with its half-cylindrical ends removed shaded. On the right is the analogous picture for $\Sigma(\subtree^{\notplane, \downarrow}_{i})$.}
\label{Fig:NotplaneSubtreeSplitting}
\end{figure}

Let $\mu^{\normal}_{j}, i=1, \dots, \NnegativePunctures(\subtree_{i}^{\notplane, \downarrow})$ be the perturbations spanning $\transSubbundle^{\normal}(\subtree_{i}^{\notplane, \downarrow}\git)$ associated to free edges $\edgeFree_{j}$ of $\subtree^{\notplane, \downarrow}_{i}$. We want to express the above inclusion using the $\widetilde{\mu}^{\normal}_{j}(\subtree_{i}^{\notplane, \downarrow}\git)$ as our input basis and the $\widetilde{\mu}^{\normal}_{j}(\tree^{\notplane}\git)$ as our output basis. As in the statement of Definition \ref{Def:Svars}, $s_{j}(\tree^{\notplane}) = s_{i}(\tree^{\notplane})  - \neckLength_{i} + s_{j}(\subtree^{\notplane, \downarrow}_{i})$, which when applied to the definition of the $\widetilde{\mu}^{\normal}_{j}$ yields the following result.

\begin{lemma}\label{Lemma:FreeEdgeRescale}
In the above notation, the inclusion map of Equation \eqref{Eq:SubtreeInclusionMapping} is
\begin{equation*}
\widetilde{\mu}^{\normal}_{j}(\subtree^{\notplane, \downarrow}_{i}) \mapsto e^{\epsilon_{\sigma}(s_{j}(\tree^{\notplane}) - s_{j}(\subtree^{\notplane, \downarrow}_{i}))}\widetilde{\mu}^{\normal}_{j}(\tree^{\notplane}) = e^{\epsilon_{\sigma}(s_{i}(\tree^{\notplane}) - \neckLength_{i})}\widetilde{\mu}^{\normal}_{j}(\tree^{\notplane}).
\end{equation*}
\end{lemma}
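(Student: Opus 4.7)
The plan is to unwind the definitions. Both bases for the subspace described in \eqref{Eq:SubtreeInclusionMapping} are obtained by rescaling the same underlying sections $\mu^{\normal}_{j}$ (the perturbations supported on the negative half-cylindrical ends of $\Sigma$ corresponding to the free edges $\edgeFree_{j}$ of $\subtree^{\notplane,\downarrow}_{i}$, which are also free edges of $\tree^{\notplane}$). So the inclusion map acts on these $\mu^{\normal}_{j}$ as the identity, and the question is purely one of comparing the two rescalings.

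First I would simply write down each basis element: by \eqref{Eq:NotPlaneNormalBasis},
\begin{equation*}
\widetilde{\mu}^{\normal}_{j}(\subtree^{\notplane,\downarrow}_{i}) = e^{-\epsilon_{\sigma} s_{j}(\subtree^{\notplane,\downarrow}_{i})}\mu^{\normal}_{j}, \qquad \widetilde{\mu}^{\normal}_{j}(\tree^{\notplane}) = e^{-\epsilon_{\sigma} s_{j}(\tree^{\notplane})}\mu^{\normal}_{j}.
\end{equation*}
Solving the second equation for $\mu^{\normal}_{j}$ and substituting into the first gives
\begin{equation*}
\widetilde{\mu}^{\normal}_{j}(\subtree^{\notplane,\downarrow}_{i}) = e^{\epsilon_{\sigma}\bigl(s_{j}(\tree^{\notplane}) - s_{j}(\subtree^{\notplane,\downarrow}_{i})\bigr)}\widetilde{\mu}^{\normal}_{j}(\tree^{\notplane}),
\end{equation*}
which is the first equality claimed in the lemma.

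To identify the exponent with $s_{i}(\tree^{\notplane}) - \neckLength_{i}$ I would invoke Definition~\ref{Def:Svars}. Since $\subtree^{\notplane,\downarrow}_{i}$ does not contain the root of $\tree^{\notplane}$, and its incoming edge is precisely the gluing edge $\edgeGluing_{i}$, the definition of $s_{j}$ relative to a subtree gives
\begin{equation*}
s_{j}(\subtree^{\notplane,\downarrow}_{i}) = s_{j}(\tree^{\notplane}) - s_{i}(\tree^{\notplane}) + \neckLength_{i},
\end{equation*}
so that $s_{j}(\tree^{\notplane}) - s_{j}(\subtree^{\notplane,\downarrow}_{i}) = s_{i}(\tree^{\notplane}) - \neckLength_{i}$, completing the proof.

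There is no real obstacle: the lemma is a bookkeeping statement that encodes how the $s$-coordinate shift implicit in the preferred-translate conventions of Properties~\ref{Properties:PreferedTranslate} propagates through the subtree decomposition. The only thing to be careful about is the sign convention built into Definition~\ref{Def:Svars} and the fact that $\subtree^{\notplane,\downarrow}_{i}$ is the subtree \emph{not} containing the root, so the correction $-s_{i^{in}}(\tree^{\notplane}) + \neckLength_{i^{in}}$ is genuinely active (rather than the trivial case $s_{j}(\subtree)=s_{j}(\tree)$ that would occur if the subtree contained the root).
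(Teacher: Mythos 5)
Your proof is correct and follows exactly the same route as the paper: unwind the definitions of $\widetilde{\mu}^{\normal}_{j}(\subtree^{\notplane,\downarrow}_{i})$ and $\widetilde{\mu}^{\normal}_{j}(\tree^{\notplane})$ as rescalings of the common $\mu^{\normal}_{j}$, then apply Definition~\ref{Def:Svars} (with $\subtree^{\notplane,\downarrow}_{i}$ not containing the root and incoming gluing edge $\edgeGluing_{i}$) to identify the exponent. The paper merely asserts this in the sentence preceding the lemma; you have spelled out the same computation with more care about the sign conventions, which is fine.
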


The takeaway here -- which will be important for understanding product multisections -- is that as the neck length $\neckLength_{i}$ increases, the size of the $\widetilde{\mu}^{\normal}_{j}(\subtree^{\notplane, \downarrow}_{i})$ shrink from the perspective of the $\widetilde{\mu}^{\normal}_{j}(\tree^{\notplane})$.

\subsubsection{Subtree vectors}

Associated to $\subtree^{\notplane, \downarrow}_{i}$ we can find a unique vector $\vec{\cokcoeff}^{\notplane}(\subtree^{\notplane, \downarrow}_{i}) \in \R^{\# \vertex(\tree^{\notplane})}$ such that for $x \in \R$ small enough to that $x\vec{\cokcoeff}^{\notplane}(\subtree^{\notplane, \downarrow}_{i}) \in \disk^{\# \vertex(\tree^{\notplane})}$ we have
\begin{equation*}
\begin{aligned}
\delbarGluingNormal(\vec{\cokcoeff}^{\notplane}(\subtree^{\notplane, \downarrow}_{i}), \vec{u}_{i}, \vec{C}_{i}) &= x\left( - e^{-\epsilon_{\sigma}\neckLength^{\notplane}_{i}}\mu^{\normal}_{i} + \sum e^{-\epsilon_{\sigma}s_{i^{out}_{j}}(\subtree^{\notplane, \downarrow}_{i})} \mu^{\normal}_{i^{out}_{j}}\right)\\
&= x\left( - e^{-\epsilon_{\sigma}\neckLength^{\notplane}_{i}}\mu^{\normal}_{i} + \sum \widetilde{\mu}^{\normal}_{i^{out}_{j}}(\subtree^{\notplane, \downarrow}_{i})\right)\\
&= x\left( - e^{-\epsilon_{\sigma}\neckLength^{\notplane}_{i^{in}}}\mu^{\normal}_{i} + e^{\epsilon_{\sigma}(s_{i}(\tree^{\notplane}) - \neckLength_{i})}\sum  \widetilde{\mu}^{ \normal}_{i^{out}_{j}}(\tree^{\notplane})\right)\\
&= xe^{-\epsilon_{\sigma}\neckLength_{i}}\left( - \mu^{\normal}_{i} + e^{\epsilon_{\sigma}s_{i}(\tree^{\notplane})}\sum  \widetilde{\mu}^{ \normal}_{i^{out}_{j}}(\tree^{\notplane})\right).
\end{aligned}
\end{equation*}
here each sum ranges over the outgoing edges $\edge_{i^{out}_{j}}$ of $\subtree^{\notplane, \downarrow}_{i}$. In the third line above, we apply the basis change of the previous subsection. To construct $\vec{\cokcoeff}^{\notplane}(\subtree^{\notplane, \downarrow}_{i})$ we can either
\be
\item build bump function by hand as in the proof of Lemma \ref{Lemma:GluingCoeffs} or
\item appeal directly to the formula of Lemma \ref{Lemma:GluingCoeffsClean}.
\ee

\subsubsection{Application of subtree vectors}

Now suppose that we have a section $\multisec^{\normal}$ of $\transSubbundle^{\normal}(\tree)$ broken up as
\begin{equation*}
\multisec^{\normal} = \multisec^{\halfcyl, \normal} + \multisec^{\annulus, \normal} \in \transSubbundle^{\normal}(\tree^{\notplane}\git) \oplus \transSubbundle^{\annulus, \normal}(\tree^{\notplane}), \quad \multisec^{\annulus, \normal} = \sum_{\edgeGluing_{i}} \multisecCoeff_{i}\mu^{\annulus, \normal}_{i}
\end{equation*}
for some $\multisecCoeff_{i} \in \R$ so that $\multisec^{\annulus, \normal}$ is supported on the gluing neck annuli of $\Sigma = \Sigma(\tree^{\notplane})$. Assuming that $\multisec^{\annulus, \normal}$ is sufficiently small, we can associate to it a vector
\begin{equation*}
\begin{aligned}
\vec{\cokcoeff}_{\tree^{\notplane}}^{\notplane}(\multisec^{\annulus, \normal}) &= -\sum_{\edgeGluing_{i} \in \tree^{\notplane}} e^{\epsilon_{\sigma}\neckLength^{\notplane}_{i}} \multisecCoeff_{i}\vec{\cokcoeff}^{\notplane}(\subtree^{\notplane, \downarrow}_{i}) \in \disk^{\# \vertex}\\
\delbarGluingNormal(\vec{\cokcoeff}_{\tree^{\notplane}}^{\notplane}(\multisec^{\annulus, \normal}), \vec{u}^{\notplane}, \vec{C}^{\notplane}) &=  \sum_{\edgeGluing_{i} \in \tree^{\notplane}}\multisecCoeff_{i}\left( \mu^{\annulus, \normal}_{i} - e^{\epsilon_{\sigma}s_{i}(\tree^{\notplane})}\sum_{\edge_{i^{out}_{j}} \in \subtree^{\notplane, \downarrow}_{i}} \widetilde{\mu}^{\normal}_{i^{out}_{j}}(\tree^{\notplane})\right) .
\end{aligned}
\end{equation*}
By looking at the $\mu^{\annulus, \normal}_{i}$ terms in the above equation, we see that
\begin{equation*}
(\delbarGluingNormal  - \multisec^{\normal})\left(\vec{\cokcoeff}_{\tree^{\notplane}}^{\notplane}(\multisec^{\annulus, \normal}), \vec{u}^{\notplane}_{i}, \vec{C}^{\notplane}_{i}\right) = - \multisec^{\halfcyl, \normal} -\sum_{\edgeGluing_{i}} e^{\epsilon_{\sigma}s_{i}(\tree^{\notplane})}\multisecCoeff_{i} \sum_{{\edge_{i^{out}_{j}} \in \subtree^{\notplane, \downarrow}_{i}}}\widetilde{\mu}^{\halfcyl, \normal}_{i^{out}_{j}}(\tree^{\notplane}) \in \transSubbundle^{\normal}(\tree^{\notplane}\git).
\end{equation*}
The important feature here is that $(\delbarGluingNormal  - \multisec^{\normal})$ vanishes along gluing neck annuli. In the following lemma we'll further modify this vector in $\disk^{\# \vertex(\tree^{\notplane})}$ so that $\delbarGluingNormal - \multisec^{\normal}$ is orthogonal to the diagonal subspace $\R\Delta_{\NnegativePunctures^{\notplane}}$ which is spanned by $\cokcoeff^{\notplane}(\tree^{\notplane})$, where orthogonality is defined with respect to the basis $\widetilde{\mu}(\tree^{\notplane})$. Write
\begin{equation*}
\Delta^{\complement}(\tree^{\notplane}) = \left\{ \sum \eta_{i}\widetilde{\mu}_{i}(\tree^{\notplane})\ :\ \sum \eta_{i} = 0\right\} \subset \transSubbundle^{\normal}(\tree^{\notplane}\git), \quad \Delta^{\complement}(\tree^{\notplane}) \oplus \R \Delta_{\NnegativePunctures^{\notplane}} = \transSubbundle^{\normal}(\tree^{\notplane}\git)
\end{equation*}
for this orthogonal complement.

\begin{lemma}\label{Lemma:PushPertToFreeEdge}
Let $\multisec^{\normal} = \multisec^{\halfcyl, \normal} + \multisec^{\annulus, \normal}$ be a section of $\transSubbundle^{\normal}(\tree^{\notplane})$ which is constant in the $\disk^{\# \vertex^{\notplane}}_{\vec{\cokcoeff}^{\notplane}}$ factor of $\dom_{\glue}(\tree^{\notplane})$. Then for each $\vec{u}^{\notplane}, \vec{C}^{\notplane}$ all solutions to
\begin{equation*}
(\delbarGluingNormal - \multisec^{\normal})(\vec{\cokcoeff}^{\notplane}, \vec{u}^{\notplane}, \vec{C}^{\notplane}) \in \Delta^{\complement}(\tree^{\notplane})
\end{equation*}
have $\cokcoeff^{\notplane} = \vec{\cokcoeff}^{\notplane}_{\multisec^{\normal}}$, defined
\begin{equation*}
\vec{\cokcoeff}^{\notplane}_{\multisec^{\normal}} = -x \vec{\cokcoeff}^{\notplane}(\tree^{\notplane}) + \vec{\cokcoeff}_{\tree^{\notplane}}^{\notplane}(\multisec^{\annulus, \normal}).
\end{equation*}
Here $x$ is the average of the coefficients of $(\delbarGluingNormal - \multisec^{\normal})(\vec{\cokcoeff}_{\tree^{\notplane}}^{\notplane}(\multisec^{\annulus, \normal}), \vec{u}^{\notplane}, \vec{C}^{\notplane}) \in \transSubbundle^{\normal}(\tree^{\notplane}\git)$ in the basis $\widetilde{\mu}^{\normal}_{i}(\tree^{\notplane})$. For such $\vec{\cokcoeff}^{\notplane}$ we have
\begin{equation}\label{Eq:DelbarPushToFreeEnds}
(\multisec^{\normal} - \delbarGluingNormal )(\vec{\cokcoeff}^{\notplane}_{\multisec^{\normal}}, \vec{u}^{\notplane}, \vec{C}^{\notplane}) = x \Delta_{\NnegativePunctures^{\notplane}}  + \multisec^{\halfcyl, \normal} +\sum_{\edgeGluing_{i}} e^{\epsilon_{\sigma}s_{i}(\tree^{\notplane})} \multisecCoeff_{i} \sum_{{\edge_{i^{out}_{j}} \in \subtree^{\notplane, \downarrow}_{i}}}\widetilde{\mu}^{\normal}_{i^{out}_{j}}(\tree^{\notplane}).
\end{equation}
\end{lemma}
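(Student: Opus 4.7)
The plan is to verify the formula directly, exploiting linearity of $\delbarGluingNormal$ in $\vec{\cokcoeff}^{\notplane}$ (manifest in Lemma~\ref{Lemma:GluingCoeffsClean} since $\tree^{\notplane}$ has no plane vertices, so the last sum there vanishes) together with two model computations already in hand: Equation~\eqref{Eq:DeltaNneg}, which gives $\delbarGluingNormal(\vec{\cokcoeff}^{\notplane}(\tree^{\notplane})) = \Delta_{\NnegativePunctures^{\notplane}}$, and the formula displayed just before the lemma statement, which gives
\begin{equation*}
\delbarGluingNormal(\vec{\cokcoeff}_{\tree^{\notplane}}^{\notplane}(\multisec^{\annulus, \normal})) = \sum_{\edgeGluing_{i}}\multisecCoeff_{i}\left(\mu^{\annulus, \normal}_{i} - e^{\epsilon_{\sigma}s_{i}(\tree^{\notplane})}\!\!\sum_{\edge_{i^{out}_{j}} \in \subtree^{\notplane, \downarrow}_{i}}\!\!\widetilde{\mu}^{\normal}_{i^{out}_{j}}(\tree^{\notplane})\right).
\end{equation*}
First I would plug $\vec{\cokcoeff}^{\notplane}_{\multisec^{\normal}} = -x\vec{\cokcoeff}^{\notplane}(\tree^{\notplane}) + \vec{\cokcoeff}_{\tree^{\notplane}}^{\notplane}(\multisec^{\annulus, \normal})$ into $\delbarGluingNormal - \multisec^{\normal}$. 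By linearity the two model formulas combine, and the annulus-supported terms in $\delbarGluingNormal(\vec{\cokcoeff}_{\tree^{\notplane}}^{\notplane}(\multisec^{\annulus, \normal}))$ cancel against $\multisec^{\annulus, \normal} = \sum \multisecCoeff_{i}\mu^{\annulus, \normal}_{i}$ by the very definition of $\vec{\cokcoeff}_{\tree^{\notplane}}^{\notplane}(\multisec^{\annulus, \normal})$. What remains is exactly Equation~\eqref{Eq:DelbarPushToFreeEnds}. The defining relation for $x$ as the $\widetilde{\mu}^{\normal}_{i}(\tree^{\notplane})$-average of $(\delbarGluingNormal - \multisec^{\normal})(\vec{\cokcoeff}_{\tree^{\notplane}}^{\notplane}(\multisec^{\annulus, \normal}))$ then forces this remainder to have vanishing average in the $\widetilde{\mu}^{\normal}_{i}(\tree^{\notplane})$ basis, i.e.\ to lie in $\Delta^{\complement}(\tree^{\notplane})$.

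Next I would settle uniqueness by a dimension count. The map
\begin{equation*}
\Phi : \vec{\cokcoeff}^{\notplane} \longmapsto \pi_{\transSubbundle^{\annulus, \normal}(\tree^{\notplane})}\delbarGluingNormal(\vec{\cokcoeff}^{\notplane}) \,\oplus\, \pi_{\R\Delta_{\NnegativePunctures^{\notplane}}}\delbarGluingNormal(\vec{\cokcoeff}^{\notplane})
\end{equation*}
goes between spaces of the same dimension $\#\{\vertex_{i} \in \tree^{\notplane}\}$, so it suffices to prove injectivity. Using the formula from Lemma~\ref{Lemma:GluingCoeffsClean}, the coefficient of $\mu^{\annulus, \normal}_{i}$ in $\delbarGluingNormal(\vec{\cokcoeff}^{\notplane})$ for a gluing edge $\edgeGluing_{i}$ between an upper vertex $\vertex_{i^{\uparrow}}$ and lower vertex $\vertex_{i^{\downarrow}}$ is a nondegenerate linear combination of $\cokcoeff^{\notplane}_{i^{\uparrow}}$ and $\cokcoeff^{\notplane}_{i^{\downarrow}}$. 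Walking from the root down the tree, these annulus-vanishing conditions form a triangular system that forces the kernel of $\pi_{\transSubbundle^{\annulus, \normal}(\tree^{\notplane})}\circ \delbarGluingNormal$ to be the one-dimensional line $\R\vec{\cokcoeff}^{\notplane}(\tree^{\notplane})$. On this line, Equation~\eqref{Eq:DeltaNneg} shows that the $\R\Delta_{\NnegativePunctures^{\notplane}}$ projection of $\delbarGluingNormal$ is a nonzero multiple of the line parameter, so $\ker \Phi = 0$ and uniqueness follows.

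The main obstacle is purely bookkeeping: one must carefully track the rescalings $e^{\epsilon_{\sigma}s_{i}(\tree^{\notplane})}$ and $e^{\epsilon_{\sigma}\neckLength^{\notplane}_{i}}$ introduced by the basis change of Lemma~\ref{Lemma:FreeEdgeRescale} and by the weighting of the subtree vectors $\vec{\cokcoeff}^{\notplane}(\subtree^{\notplane,\downarrow}_{i})$ inside $\vec{\cokcoeff}_{\tree^{\notplane}}^{\notplane}(\multisec^{\annulus, \normal})$. These factors are designed precisely so that all annulus-supported contributions cancel in $(\delbarGluingNormal - \multisec^{\normal})(\vec{\cokcoeff}^{\notplane}_{\multisec^{\normal}})$; once this is set up correctly the computation is a single line and the uniqueness argument is an observation about triangularity along the directed tree.
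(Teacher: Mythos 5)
Your proof is correct and takes essentially the same route as the paper: first verify Equation~\eqref{Eq:DelbarPushToFreeEnds} by linearity of $\delbarGluingNormal$ in $\vec{\cokcoeff}^{\notplane}$ (combining Equation~\eqref{Eq:DeltaNneg} with the subtree-vector formula and cancelling the $\mu^{\annulus,\normal}$ terms against $\multisec^{\annulus,\normal}$), then settle uniqueness by a dimension count. The paper's dimension count is phrased by observing that the map $\multisec^{\annulus,\normal}\mapsto\vec{\cokcoeff}^{\notplane}_{\tree^{\notplane}}(\multisec^{\annulus,\normal})$ is injective with one-dimensional cokernel generated by $\vec{\cokcoeff}^{\notplane}(\tree^{\notplane})$, whereas you phrase it dually by showing the map $\Phi$ onto the $(\transSubbundle^{\annulus,\normal}\oplus\R\Delta)$-components has trivial kernel via a triangular system along the directed tree; these are two lenses on the same linear-algebra fact, and your triangularity observation usefully fills in the reasoning that the paper compresses into the word ``clearly.''
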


\begin{proof}
Equation \eqref{Eq:DelbarPushToFreeEnds} is a combination of the computations of this subsection with Equation \eqref{Eq:DeltaNneg}. The map $\vec{\cokcoeff}^{\notplane}: \transSubbundle^{\annulus, \normal}(\tree^{\notplane}) \rightarrow \disk^{\# \vertex}$ is clearly injective and so has rank $1$ cokernel. Meanwhile $\vec{\cokcoeff}^{\notplane}(\tree^{\notplane})$ is not in the image, so that it generates the cokernel. So the fact that every solution to $(\delbarGluingNormal - \multisec^{\normal}) \in \Delta^{\complement}(\tree^{\notplane})$ has the described form follows from a dimension count.
\end{proof}

\subsection{Setup for general trees}

Now we describe notation for general trees $\tree$ and provide generalizations of our results from the previous subsection.

Let $\tree$ be a tree and $\partition$ be a partition. We use hat and check decorations on indices to indicate which gluing edges are $\partition$-long and $\partition$-short, respectively, writing
\begin{equation*}
\left\{ \edgeGluing_{\longi} \right\} = \left\{ \edgeGluing_{i}\ :\ \partition_{i} = 1\right\}, \quad \left\{ \edgeGluing_{\shorti} \right\} = \left\{ \edgeGluing_{i}\ :\ \partition_{i} = 0\right\}.
\end{equation*}
The $\partition$-long and $\partition$-short edge lengths are written $C_{\longi}$ and $C_{\shorti}$, respectively.

\nom{$\longi, \shorti$}{Indices of long and short gluing edges associated to a $\partitionThick$}

As in Properties \ref{Properties:MultisectionCompatibility}, write $\subtree^{\partition}_{j} \subset \tree$ for the subtrees obtained by splitting $\tree$ along the $\edgeGluing_{\longi}$, each having
\begin{equation*}
\NnegativePunctures_{j}^{\partition} = \NnegativePunctures(\subtree^{\partition}_{j})
\end{equation*}
outgoing edges. We'll apply additional decorations to the $\subtree^{\partition}_{j}$ write
\begin{equation*}
\subtree^{\partition}_{j} = \begin{cases}
\subtree^{\partition, \plane}_{j}, & \NnegativePunctures_{j}^{\partition} = 0,\\
\subtree^{\partition, \notplane}_{j}, & \NnegativePunctures_{j}^{\partition} > 0.
\end{cases}.
\end{equation*}
Likewise partition $\{ \edgeGluing_{\longi}\}$ as $\{\edgeGluingNotPlane_{\longi}\} \sqcup \{\edgeGluingPlane_{\longi}\}$ where
\be
\item the $\edgeGluingNotPlane_{\longi}$ are the $\partition$-long gluing edges ending on vertices in the $\subtree^{\partition, \notplane}_{j}$ and
\item the $\edgeGluingPlane_{\longi}$ are the $\partition$-long gluing edges ending on vertices in the $\subtree^{\partition, \plane}_{j}$.
\ee
Then define
\begin{equation*}
\tree^{\partition, \notplane} = \left( \cup \edgeGluingNotPlane_{\longi} \right) \cup \left( \cup \subtree^{\partition, \notplane}_{j} \right)
\end{equation*}
which contains $\cup \subtree^{\partition, \notplane}_{j}$ as a good subforest. Then $\tree^{\partition, \notplane}\git \left( \cup \subtree^{\partition, \notplane}_{j}\right)$ has the properties of the $\tree^{\notplane}$ of the preceding subsections. However, now some outgoing edges of $\tree^{\partition, \notplane}$ may be attached to the $\subtree^{\partition, \plane}_{j}$.

\begin{figure}[h]
\begin{overpic}[scale=.2]{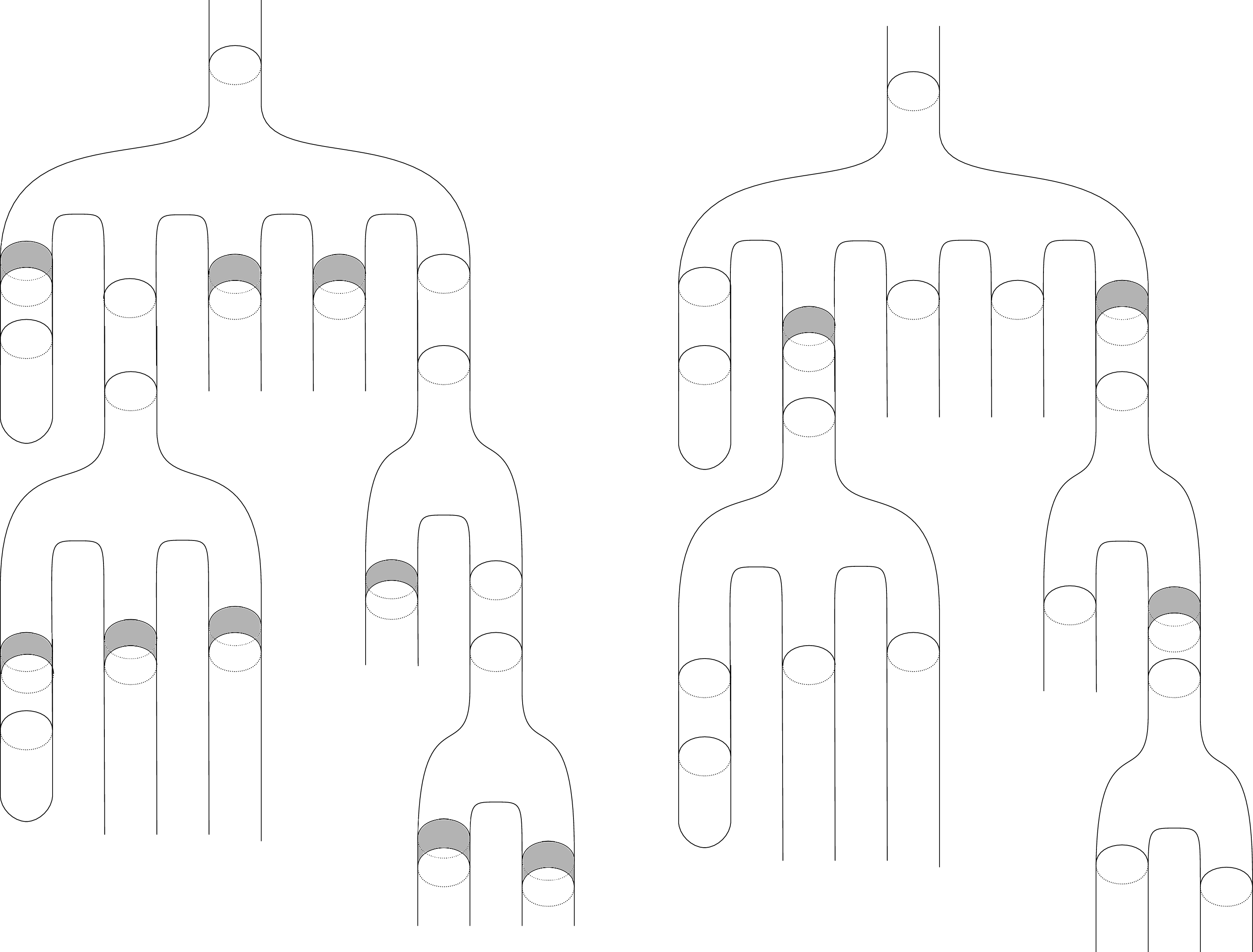}
\end{overpic}
\caption{On the left, the supports of perturbations spanning $\transSubbundle^{\normal}(\tree^{\partition, \notplane}\git)$ are shaded. On the right the supports of perturbations associated to the $\edgeGluingNotPlane_{\longi}$ are shaded.}
\label{Fig:SubtreeSupports}
\end{figure}

We have a subbundle $\transSubbundle^{\normal}(\tree^{\notplane}\git) \subset \transSubbundle^{\normal}(\tree)$ spanned by perturbations supported on annuli associated to the outgoing edges of $\tree^{\partition, \notplane} \subset \tree$. It contains a vector
\begin{equation*}
\Delta_{\partition, \notplane} = \sum e^{-s_{i}(\tree^{\notplane})}\mu_{i}^{\normal} = \sum \widetilde{\mu}_{i}^{\normal}(\tree^{\partition, \notplane}) \in \transSubbundle^{\normal}(\tree^{\partition, \notplane}\git)
\end{equation*}
where the sum runs over the outgoing edges of $\tree^{\partition, \notplane}$. We also have a diagonal vector
\begin{equation*}
\Delta_{\NnegativePunctures} = \sum_{\edgeFree_{i} \in \tree} \widetilde{\mu}_{i}^{\normal}(\tree^{\partition, \notplane}) \in \transSubbundle^{\normal}(\tree\git) \subset \transSubbundle^{\normal}(\tree^{\partition, \notplane}\git)
\end{equation*}
associated to the outgoing edges of $\tree$, which are a subset of the outgoing free edges of $\tree^{\partition, \notplane}$. With respect to the basis $\widetilde{\mu}^{\normal}_{i}$, its orthogonal complement in $\transSubbundle^{\normal}(\tree^{\partition, \notplane})$ is
\begin{equation*}
\Delta_{\NnegativePunctures}^{\complement} = \left\{ \sum \eta_{i}\widetilde{\mu}_{i}^{\normal}(\tree^{\partition, \notplane})\ : \ \sum_{\edgeFree_{i}} \eta_{i} = 0 \right\} \subset \transSubbundle^{\normal}(\tree^{\partition, \notplane}\git).
\end{equation*}

\nom{$\Delta_{\NnegativePunctures}^{\complement}$}{Orthogonal complement to $\Delta_{\NnegativePunctures}$ with respect to the basis $\widetilde{\mu}^{\normal}(\tree)$}

Associated to $\tree^{\partition, \notplane}$ we have a subtree vectors whose $\delbarGluingNormal$ lives in $\R\Delta_{\partition, \notplane}$. When we compute $\delbarGluingNormal$ of a gluing configuration as in Lemma \ref{Lemma:GluingCoeffsClean}, then we have summands $\kercoeff^{\plane}_{i}e^{-\epsilon_{\sigma}\ell^{\C}_{i}}\mu_{i^{in}}^{\normal}$ coming from the planar subtrees $\subtree^{\partition, \plane}_{i}$ and living in the subspace $\transSubbundle^{\normal}(\tree^{\partition, \notplane}\git)$. Using this new notation, we generalize Lemma \ref{Lemma:PushPertToFreeEdge} as follows:

\begin{lemma}\label{Lemma:PushPertToEdge}
Let $\multisec^{\normal} = \multisec^{\normal, \annulus} + \multisec^{\normal, \git}$ be a section of $\transSubbundle^{\normal}(\tree^{\partition, \notplane}) \subset \transSubbundle^{\normal}(\tree)$ which is constant in the $\disk^{\# \vertex^{\notplane}(\tree)}_{\vec{\cokcoeff}^{\notplane}}$ factor of $\dom_{\glue}(\tree)$. Here $\multisec^{\normal, \git}$ a section of $\transSubbundle^{\normal}(\tree^{\partition, \notplane}\git)$ and $\multisec^{\normal, \annulus}$ is a section of $\transSubbundle^{\normal, \annulus}(\tree^{\partition, \notplane}) \subset \transSubbundle^{\normal}(\tree^{\partition, \notplane})$, the subspace spanned by perturbations supported on the annuli associated to the $\edgeGluingNotPlane_{\longi}$. Then for each $\vec{u}, \vec{C}$ for which
\begin{equation*}
\delbarGluingNormal(0, \vec{u}, \vec{C}) = -\sum_{\subtree^{\partition, \plane}_{i}}\kercoeff^{\plane}_{i}e^{-\epsilon_{\sigma}\ell^{\C}_{i}}\mu_{i^{in}}^{\normal}
\end{equation*}
all solutions to
\begin{equation*}
(\delbarGluingNormal - \multisec^{\normal})(\vec{\cokcoeff}^{\notplane}, \vec{u}, \vec{C}) \in \Delta_{\NnegativePunctures}^{\complement}
\end{equation*}
are of the form
\begin{equation}\label{Eq:CokcoeffVector}
\vec{\cokcoeff}^{\notplane}_{\multisec^{\normal}} = -x \vec{\cokcoeff}^{\notplane}(\tree^{\notplane}) + \vec{\cokcoeff}_{\tree^{\notplane}}^{\notplane}(\multisec^{\annulus, \normal})
\end{equation}
where $x$ is the average of the coefficients of $(\delbarGluingNormal - \multisec^{\normal})(\vec{\cokcoeff}_{\tree^{\notplane}}^{\notplane}(\multisec^{\annulus, \normal}), \vec{u}^{\notplane}, \vec{C}^{\notplane})$ associated to the free edges of $\tree$. For such $\vec{\cokcoeff}^{\notplane}$ we have
\begin{equation}\label{Eq:DelbarPushToEnds}
\begin{aligned}
( \multisec^{\normal} - \delbarGluingNormal)(\vec{\cokcoeff}^{\notplane}_{\multisec^{\normal}}, \vec{u}, \vec{C}) &= x \Delta_{\partition, \notplane}  + \sum_{\edgeGluingNotPlane_{i}}\multisecCoeff_{i}\left( e^{\epsilon_{\sigma}s_{i}(\tree^{\notplane})}\sum_{\edge_{j} \in \subtree^{\notplane, \downarrow}_{i}} \widetilde{\mu}^{\normal}_{j}(\tree^{\notplane})\right)\\
&+ \multisec^{\normal, \git} +\sum_{\subtree^{\partition, \plane}_{i}}\kercoeff^{\plane}_{i}e^{\epsilon_{\sigma}(s_{i^{in}}(\tree^{\notplane}) - \ell^{\C}_{i})}\widetilde{\mu}_{i^{in}}^{\normal}(\tree^{\notplane}).
\end{aligned}
\end{equation}
\end{lemma}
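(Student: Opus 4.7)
My plan is to mimic the proof of Lemma \ref{Lemma:PushPertToFreeEdge} essentially verbatim, using the linearity of $\delbarGluingNormal$ in the $\vec{\cokcoeff}^{\notplane}$ variables established in Lemma \ref{Lemma:GluingCoeffsClean}, the basis change recorded in Lemma \ref{Lemma:FreeEdgeRescale}, and a dimension count. The new ingredients are merely bookkeeping: contributions from $\partial_{\tau}$-tangent variations of planar subtrees appear via the hypothesis
\begin{equation*}
\delbarGluingNormal(0, \vec{u}, \vec{C}) = -\sum_{\subtree^{\partition, \plane}_{i}}\kercoeff^{\plane}_{i}e^{-\epsilon_{\sigma}\ell^{\C}_{i}}\mu_{i^{in}}^{\normal},
\end{equation*}
and these land inside the subspace $\transSubbundle^{\normal}(\tree^{\partition,\notplane}\git)$ because each $\edgeGluing_{\longi}^{\plane}$ is an outgoing edge of $\tree^{\partition,\notplane}$.

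First I would expand $\delbarGluingNormal(\vec{\cokcoeff}^{\notplane}_{\multisec^{\normal}},\vec{u},\vec{C})$ using the candidate vector $\vec{\cokcoeff}^{\notplane}_{\multisec^{\normal}} = -x\,\vec{\cokcoeff}^{\notplane}(\tree^{\partition,\notplane}) + \vec{\cokcoeff}_{\tree^{\partition,\notplane}}^{\notplane}(\multisec^{\normal,\annulus})$. Linearity in $\vec{\cokcoeff}^{\notplane}$ splits this into three summands: the planar contribution (from evaluation at $\vec{\cokcoeff}^{\notplane}=0$), an $x\,\Delta_{\partition,\notplane}$ term coming from the diagonal subtree vector, and the cancellation of the $\mu^{\annulus,\normal}_{\longi}$ terms in $\multisec^{\normal,\annulus}$ together with the associated $\widetilde{\mu}^{\normal}_{j}(\tree^{\partition,\notplane})$-weighted terms delivered by Lemma \ref{Lemma:FreeEdgeRescale} applied along each $\edgeGluingNotPlane_{\longi}$. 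Subtracting $\multisec^{\normal}=\multisec^{\normal,\annulus}+\multisec^{\normal,\git}$ kills all annular perturbations, leaving exactly the right-hand side of Equation \eqref{Eq:DelbarPushToEnds}.

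Second I would check that this lies in $\Delta_{\NnegativePunctures}^{\complement}$ for a unique scalar $x$. The summands in Equation \eqref{Eq:DelbarPushToEnds} coming from the $\edgeGluingNotPlane_{\longi}$, from $\multisec^{\normal,\git}$, and from the planar subtrees are all fixed by the hypotheses and independent of $x$; only the $x\,\Delta_{\partition,\notplane}$ term varies with $x$. Since $\Delta_{\partition,\notplane}$ has total coefficient equal to $\#\{\edgeFree_{i}\in\tree\}$ when projected to the $\widetilde{\mu}^{\normal}_{j}(\tree^{\partition,\notplane})$ basis indexed by free edges of $\tree$ (because free edges of $\tree$ are among the outgoing edges of $\tree^{\partition,\notplane}$ summed to form $\Delta_{\partition,\notplane}$), the defining equation $\sum_{\edgeFree_{i}\in\tree}\eta_{i}=0$ uniquely determines $x$ as the average of the free-edge coefficients of the $x$-independent part, which is precisely the definition given in the statement.

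Third, for uniqueness of $\vec{\cokcoeff}^{\notplane}$, I would argue by dimension: the linear map $\vec{\cokcoeff}^{\notplane}\mapsto \pi_{\Delta_{\NnegativePunctures}^{\complement}}\circ(\delbarGluingNormal-\multisec^{\normal})$ has source $\disk^{\#\vertex^{\notplane}}\simeq\R^{\#\vertex^{\notplane}}$ and target $\Delta_{\NnegativePunctures}^{\complement}$ of dimension $\#\{\edgeFree_{i}\in\tree\}-1$. By Lemma \ref{Lemma:GluingCoeffsClean} the injection $\vec{\cokcoeff}^{\notplane}_{\tree^{\partition,\notplane}}:\transSubbundle^{\normal,\annulus}(\tree^{\partition,\notplane})\hookrightarrow\R^{\#\vertex^{\notplane}}$ has image of codimension $1$, transverse to $\R\vec{\cokcoeff}^{\notplane}(\tree^{\partition,\notplane})$, so the map is injective on the complement of $\R\vec{\cokcoeff}^{\notplane}(\tree^{\partition,\notplane})$; combined with Step~2 this yields uniqueness. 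The main obstacle, as in Lemma \ref{Lemma:PushPertToFreeEdge}, is purely notational: keeping track of which outgoing edges of $\tree^{\partition,\notplane}$ are genuine free edges of $\tree$ versus incoming edges of the planar subtrees, and making sure the $e^{\epsilon_{\sigma}s_{i}(\tree^{\partition,\notplane})}$ rescalings from Lemma \ref{Lemma:FreeEdgeRescale} are applied on the correct side when converting between the $\mu^{\normal}$ and $\widetilde{\mu}^{\normal}$ bases.
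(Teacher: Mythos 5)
Your strategy matches the paper's exactly: the paper gives Lemma~\ref{Lemma:PushPertToEdge} no separate proof, offering only the one-line remark on the basis change $\kercoeff^{\plane}_{i}e^{-\epsilon_{\sigma} \ell^{\C}_{i}}\mu_{i^{in}}^{\normal} = \kercoeff^{\plane}_{i}e^{\epsilon_{\sigma}(s_{i^{in}}(\tree^{\notplane}) - \ell^{\C}_{i})}\widetilde{\mu}_{i^{in}}^{\normal}(\tree^{\notplane})$ and leaving the rest as a reprise of the proof of Lemma~\ref{Lemma:PushPertToFreeEdge} (linearity of $\delbarGluingNormal$ from Lemma~\ref{Lemma:GluingCoeffsClean}, the rescaling of Lemma~\ref{Lemma:FreeEdgeRescale}, and the injectivity/cokernel dimension count). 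Your three steps reproduce exactly this, with the hypothesis on $\delbarGluingNormal(0, \vec{u}, \vec{C})$ correctly isolating the new planar contributions. Two small slips worth flagging, though neither affects the substance of the argument: first, $\Delta_{\NnegativePunctures}^{\complement}$ is a codimension-one subspace of $\transSubbundle^{\normal}(\tree^{\partition,\notplane}\git)$, whose rank is $\NnegativePunctures + \#\{\edgeGluingPlane_{\longi}\}$ (the outgoing edges of $\tree^{\partition,\notplane}$ include the incoming edges of the planar subtrees), so $\dim\Delta_{\NnegativePunctures}^{\complement} = \NnegativePunctures + \#\{\edgeGluingPlane_{\longi}\} - 1$ rather than $\NnegativePunctures - 1$; second, the map whose injectivity you need to invoke in the uniqueness step is $\vec{\cokcoeff}^{\notplane}\mapsto (\Id - \pi_{\Delta_{\NnegativePunctures}^{\complement}})(\delbarGluingNormal - \multisec^{\normal})$ (the component that must be killed to satisfy the constraint), not the projection \emph{onto} $\Delta_{\NnegativePunctures}^{\complement}$. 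With those corrections the dimension count squares up: the constraint has codimension $\#\{\edgeGluingNotPlane_{\longi}\} + 1$, which equals the number of non-planar subtrees $\subtree^{\partition,\notplane}_{j}$, and the subtree vectors $\vec{\cokcoeff}^{\notplane}(\subtree^{\notplane,\downarrow}_{i})$ together with $\vec{\cokcoeff}^{\notplane}(\tree^{\partition,\notplane})$ provide the inverse, just as in the paper's proof of Lemma~\ref{Lemma:PushPertToFreeEdge}.
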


In the last line above we use $\kercoeff^{\plane}_{i}e^{-\epsilon_{\sigma} \ell^{\C}_{i}}\mu_{i^{in}}^{\normal} = \kercoeff^{\plane}_{i}e^{\epsilon_{\sigma}(s_{i^{in}}(\tree^{\notplane}) - \ell^{\C}_{i})}\widetilde{\mu}_{i^{in}}^{\normal}(\tree^{\notplane})$.

\subsection{The space $\widecheck{\dom}_{\glue}(\tree)$}

We define our ``reduced'' moduli space as
\begin{equation*}
\begin{aligned}
\widecheck{\dom}_{\glue}(\tree) &= \left(\prod \ModSpaceThick(\vertex_{i})/\R_{s}\right) \times [\Cglue, \infty)^{\# \edgeGluing} = \left\{ \vec{\cokcoeff}^{\notplane} = 0 \right\} \\
&\subset \dom_{\glue}(\tree) = \disk^{\# \vertexNotPlane}\times \left(\prod \ModSpaceThick(\vertex_{i})/\R_{s}\right) \times [\Cglue, \infty)^{\# \edgeGluing}\\
\end{aligned}
\end{equation*}
Then for each $\tree$  and $\multisec^{\normal}$ as in the previous lemma define
\begin{equation*}
I_{\multisec^{\normal}}: \widecheck{\dom}_{\glue}(\tree) \rightarrow \dom_{\glue}(\tree), \quad I_{\multisec^{\normal}}(\vec{u}, \vec{C}) = (\vec{\cokcoeff}^{\notplane}_{\multisec^{\normal}}, \vec{u}, \vec{C})
\end{equation*}
where $\vec{\cokcoeff}^{\notplane}_{\multisec^{\normal}}$ is determined by $\multisec^{\normal}$ as in Equation \eqref{Eq:CokcoeffVector}. Then
\begin{equation*}
\delbarEpsilon^{\normal}\glue I_{\multisec^{\normal}}(\vec{u}, \vec{C}) = (\delbarGluingNormal - \multisec^{\normal})(\vec{\cokcoeff}^{\notplane}_{\multisec^{\normal}}, \vec{u}, \vec{C})
\end{equation*}
is as in Equation \eqref{Eq:DelbarPushToEnds}. The above lemma can then be restated as follows.

\begin{lemma}\label{Lemma:ZeroSetInRedModSpace}
Each element of $u \in \ModSpaceThick(\tree)/\R_{s}$ for which $\delbarEpsilon^{\normal}u \in \Delta_{\NnegativePunctures}^{\complement}$ must then be in the image of $\glue I_{\multisec^{\normal}}$. Therefore all solutions to $\delbarGluing \in \Multisec$ must be in the image of $\glue I_{\multisec^{\normal}}$ for $\multisec^{\normal} \in \Multisec^{\normal}$.
\end{lemma}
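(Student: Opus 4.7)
The plan is to deduce this lemma as a direct corollary of Lemma \ref{Lemma:PushPertToEdge}, reinterpreted through the map $I_{\multisec^{\normal}}$. The role of the map $I_{\multisec^{\normal}}$ is precisely to record the unique fiber coordinate $\vec{\cokcoeff}^{\notplane}_{\multisec^{\normal}}$ produced by that lemma, so once the hypotheses are verified, both statements should drop out by a brief unwinding of definitions.

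For the first claim, I would take $u = \glue(\gluingConfig)$ with $\gluingConfig = (\vec{\cokcoeff}^{\notplane}, \vec{u}, \vec{C}) \in \dom_{\glue}(\tree)$ and observe that the condition $\delbarEpsilon^{\normal} u \in \Delta_{\NnegativePunctures}^{\complement}$ translates, via the commuting square of Theorem \ref{Thm:Gluing}, into $\delbarGluingNormal(\vec{\cokcoeff}^{\notplane}, \vec{u}, \vec{C}) \in \Delta_{\NnegativePunctures}^{\complement}$, i.e.\ the hypothesis of Lemma \ref{Lemma:PushPertToEdge} with $\multisec^{\normal} = 0$. To apply that lemma one must also check that the form of $\delbarGluingNormal(0, \vec{u}, \vec{C})$ matches the required expression involving the planar summands $\kercoeff^{\plane}_{i} e^{-\epsilon_{\sigma}\ell^{\C}_{i}} \mu^{\normal}_{i^{in}}$, but this is immediate from the explicit formula of Lemma \ref{Lemma:GluingCoeffsClean} upon setting $\vec{\cokcoeff}^{\notplane} = 0$. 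The lemma then forces $\vec{\cokcoeff}^{\notplane} = \vec{\cokcoeff}^{\notplane}_{0}$, which is exactly the condition $\gluingConfig = I_{0}(\vec{u}, \vec{C})$, so $u \in \im(\glue I_{0})$.

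For the second claim, let $u$ solve $\delbarGluing u \in \Multisec$. Decomposing along $\transSubbundle = \transSubbundle^{\normal}\oplus \transSubbundle^{\tangent}$ and using $\Multisec = \Multisec^{\normal} + \Multisec^{\tangent}$, I get a branch $\multisec^{\normal} \in \Multisec^{\normal}$ with $\delbarGluingNormal u = \multisec^{\normal}$; in particular $(\delbarGluingNormal - \multisec^{\normal})u = 0 \in \Delta_{\NnegativePunctures}^{\complement}$ trivially. Before applying Lemma \ref{Lemma:PushPertToEdge} I need to confirm that $\multisec^{\normal}$ satisfies the hypothesis of being constant in the $\disk^{\#\vertexNotPlane}_{\vec{\cokcoeff}^{\notplane}}$ direction: this is exactly Assumption \ref{Assump:FiberwiseConstMultisec} together with Assumption \ref{Assump:FiberConstantSec}, which were built into our construction of multisections in \S \ref{Sec:MultisecSingleVertex} and propagated to general trees via the melding recipe of \S \ref{Sec:MultisecGeneralTree}. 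With this in hand, Lemma \ref{Lemma:PushPertToEdge} forces the fiber coordinate at $u$ to be $\vec{\cokcoeff}^{\notplane}_{\multisec^{\normal}}$, which by definition is $I_{\multisec^{\normal}}(\vec{u}, \vec{C})$, so $u \in \im(\glue I_{\multisec^{\normal}})$.

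I do not anticipate a genuine obstacle here, since no new analysis is required: the content is entirely packaged in Lemma \ref{Lemma:PushPertToEdge} and Lemma \ref{Lemma:GluingCoeffsClean}. The only thing to be careful about is bookkeeping — namely that the inputs $\multisec^{\normal}$ arising from branches of $\Multisec^{\normal}$ actually have the structural form (constant in $\vec{\cokcoeff}^{\notplane}$, decomposable as $\multisec^{\halfcyl, \normal} + \multisec^{\annulus, \normal}$) required by Lemma \ref{Lemma:PushPertToEdge}. Both conditions are built in by our conventions, so the proof consists of assembling these references and unwinding the definition of $I_{\multisec^{\normal}}$.
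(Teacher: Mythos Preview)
Your proposal is correct and matches the paper's approach exactly: the paper introduces this lemma with the single sentence ``The above lemma can then be restated as follows,'' treating it as an immediate repackaging of Lemma~\ref{Lemma:PushPertToEdge} through the definition of $I_{\multisec^{\normal}}$. Your unpacking of that restatement --- invoking Lemma~\ref{Lemma:GluingCoeffsClean} to verify the hypothesis on $\delbarGluingNormal(0,\vec{u},\vec{C})$ and Assumptions~\ref{Assump:FiberwiseConstMultisec}--\ref{Assump:FiberConstantSec} for the fiberwise constancy of $\multisec^{\normal}$ --- is exactly the bookkeeping the paper leaves implicit.
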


So a search for solutions to $\delbarEpsilon^{\normal} \in \Multisec^{\normal}$ over $\ModSpaceThick(\tree)/\R_{s}$ is reduced to a search for zeros of
\begin{equation*}
(\multisec^{\normal} - \delbarGluingNormal)(\vec{\cokcoeff}^{\notplane}_{\multisec^{\normal}}, \vec{u}, \vec{C}) \in \Delta_{\NnegativePunctures}^{\complement}, \quad (\vec{u}, \vec{C}) \in \widecheck{\dom}_{\glue}(\tree)
\end{equation*}
as mentioned at the outset of this section.

\subsection{Symmetry groups of normal multisections}\label{Sec:MultisecSymmetryGroup}

The space $\transSubbundle^{\normal}(\tree\git)$ is spanned by normal perturbations associated to the free edges of $\tree$. It is a rank $\NnegativePunctures$ subbundle of such that for each $\partition$,
\begin{equation*}
\transSubbundle^{\normal}(\tree\git)|_{\ModSpaceThick(\tree)} \subset \transSubbundle^{\normal}(\tree^{\partition, \notplane}\git) \subset \transSubbundle^{\normal}(\tree).
\end{equation*}
We choosing a basis $(\widetilde{\mu}_{i}^{\normal}(\tree))_{\edgeFree_{i} \in \tree}$ have a linear action
\begin{equation*}
\Sym_{\NnegativePunctures} \times \transSubbundle^{\normal}(\tree\git) \rightarrow \transSubbundle^{\normal}(\tree\git), \quad (g, \widetilde{\mu}_{i}^{\normal}(\tree)) \mapsto \widetilde{\mu}_{g(i)}^{\normal}(\tree).
\end{equation*}
For each $\partition$, the action extends to an action
\begin{equation}\label{Eq:GAction}
\Sym_{\NnegativePunctures} \times \transSubbundle^{\normal}(\tree^{\partition, \notplane}\git) \rightarrow \transSubbundle^{\normal}(\tree^{\partition, \notplane}\git)
\end{equation}
with each $g$ fixing perturbations not in $\transSubbundle^{\normal}(\tree\git)$. This action clearly acts trivially on the diagonal vector $\Delta_{\NnegativePunctures}$ and preserves the subspace $\Delta_{\NnegativePunctures}^{\complement}$. We then have induced $\Sym_{\NnegativePunctures}$ actions on sections of $\transSubbundle^{\normal}(\tree^{\partition, \notplane}\git)$ over both $\ModSpaceThick(\tree)/\R_{s}$ and $\widecheck{\dom}_{\glue}(\tree)$.

\begin{defn}\label{Def:MultisecSymmetryGroup}
For a multisection $\Multisec^{\normal} = \{ \multisec^{\normal}\}$ of $\transSubbundle^{\normal}(\tree) \rightarrow \ModSpaceThick(\tree)/\R_{s}$ the maximal subgroup of $\Sym_{\NnegativePunctures}$ which preserves the multisection
\begin{equation*}
\widecheck{\Multisec}^{\normal} = \left\{ \widecheck{\multisec}^{\normal}(\vec{u}, \vec{C}) = (\multisec^{\normal} - \delbarGluingNormal)(\vec{\cokcoeff}^{\notplane}_{\multisec^{\normal}}, \vec{u}, \vec{C}) \right\}_{\multisec^{\normal} \in \Multisec^{\normal}}
\end{equation*}
of $\transSubbundle^{\normal}(\tree^{\partition, \notplane}) \rightarrow \widecheck{\dom}_{\glue}(\tree)$ is the \emph{symmetry group of $\Multisec^{\normal}$}, denoted $\Sym(\Multisec^{\normal})$.
\end{defn}

\nom{$\widecheck{\Multisec}^{\normal}$}{Normal multisection over the reduced space $\widecheck{\dom}_{\glue}(\tree)$}

\nom{$\Multisec^{\normal}$}{Symmetry group of a normal multisection}

Note that while the inclusion $\Sym(\Multisec^{\normal}) \subset \Sym_{\NnegativePunctures}$ depends on the choice of basis $(\widetilde{\mu}_{i}^{\normal}(\tree))$, the conjugacy class in $\Sym_{\NnegativePunctures}$ is independent of this choice. The following observation trivially follows from our definitions and will later help to ensure that some symmetry groups are nontrivial.

\begin{lemma}\label{Lemma:PatchingSymmetries}
For a collection $\Multisec^{\normal, \partition}(\tree)$ of multisections indexed by partitions $\partition$, we have
\begin{equation*}
\cap_{\partition} \Sym\left(\Multisec^{\normal, \partition}\right) \subset \Sym\left(\meld_{\Cmeld^{\normal}}\left\{ \Multisec^{\normal, \partition}\right\}\right).
\end{equation*}
\end{lemma}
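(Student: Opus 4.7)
The plan is to verify that the $\Sym_{\NnegativePunctures}$ action of Equation \eqref{Eq:GAction} commutes with both the melding and the reduction operations, so that preservation at the $\partition$-by-$\partition$ level forces preservation of the melded multisection. The argument rests on three observations, none of which is difficult.

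First, the bump functions $\Bump{\partition, \Cmeld^{\normal}}{}$ are functions of the neck-length coordinates $\neckLength_{i}$ alone. The action permutes only the basis elements $\widetilde{\mu}^{\normal}_{i}(\tree^{\partition, \notplane})$ attached to the free edges $\edgeFree_{i} \in \tree$, fixing everything else, and in particular does not move the base-space coordinates $\neckLength_{i}$. Hence each $\Bump{\partition, \Cmeld^{\normal}}{}$ is $\Sym_{\NnegativePunctures}$-invariant as a function on $\widecheck{\dom}_{\glue}(\tree)$. Second, I will observe that the reduction $\multisec^{\normal} \mapsto \widecheck{\multisec}^{\normal}$ is affine-linear in $\multisec^{\normal}$: by Equation \eqref{Eq:DelbarPushToEnds} it has the shape $\widecheck{\multisec}^{\normal} = A(\multisec^{\normal}) + b$, with $A$ a linear operator depending only on $(\vec{u}, \vec{C})$ (through the $s_{i}(\tree^{\notplane})$ functions and the linear functionals defining $x$ and $\vec{\cokcoeff}^{\notplane}_{\multisec^{\normal}}$), and $b$ the $\multisec^{\normal}$-independent contribution coming from the planar asymptotic coefficients $\kercoeff^{\plane}_{i}$. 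Since $\sum_{\partition} \Bump{\partition, \Cmeld^{\normal}}{} \equiv 1$ is a partition of unity, affine-linearity yields
\[
A\Big(\sum_\partition \Bump{\partition, \Cmeld^{\normal}}{}\, \multisec^{\normal, \partition}\Big) + b \;=\; \sum_\partition \Bump{\partition, \Cmeld^{\normal}}{}\, \widecheck{\multisec}^{\normal, \partition},
\]
so that reducing a branch of $\meld_{\Cmeld^{\normal}}\{\Multisec^{\normal, \partition}\}$ produces the bump-weighted sum of the individual reductions.

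Third, the $\Sym_{\NnegativePunctures}$ action on sections of $\transSubbundle^{\normal}(\tree^{\partition, \notplane}\git)$ is $\R$-linear. Combining these facts: given $g \in \cap_\partition \Sym(\Multisec^{\normal, \partition})$ and any branch $\sum_\partition \Bump{\partition, \Cmeld^{\normal}}{}\multisec^{\normal, \partition}$ of the melded multisection, its reduction is $\sum_\partition \Bump{\partition, \Cmeld^{\normal}}{}\widecheck{\multisec}^{\normal, \partition}$; applying $g$ leaves the bump coefficients untouched, and by hypothesis sends each $\widecheck{\multisec}^{\normal, \partition}$ to another branch of $\widecheck{\Multisec}^{\normal, \partition}$. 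The image is therefore another branch of the reduced melded multisection, so $g$ belongs to $\Sym(\meld_{\Cmeld^{\normal}}\{\Multisec^{\normal, \partition}\})$, as claimed. The only mild subtlety worth flagging is that reduction is affine rather than strictly linear, which is precisely why one needs the partition-of-unity identity $\sum \Bump{\partition, \Cmeld^{\normal}}{} \equiv 1$ to move the constant $b$ through the melded sum.
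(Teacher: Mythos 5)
The paper offers no written proof here, declaring the lemma trivial, so there is no official argument to match against. Your three observations are exactly the right ones: the bump functions $\Bump{\partition, \Cmeld^{\normal}}{}$ depend only on the neck length coordinates $\neckLength_{i}$ of the base $\widecheck{\dom}_{\glue}(\tree)$, which the fiberwise $\Sym_{\NnegativePunctures}$-action of Equation \eqref{Eq:GAction} does not touch; the reduction $\multisec^{\normal}\mapsto\widecheck{\multisec}^{\normal}$ is affine in the section argument (this follows from Equations \eqref{Eq:CokcoeffVector} and \eqref{Eq:DelbarPushToEnds}, since $\delbarGluingNormal$ is affine in $\vec{\cokcoeff}^{\notplane}$ by Lemma \ref{Lemma:GluingCoeffsClean} and $\vec{\cokcoeff}^{\notplane}_{\multisec^{\normal}}$ is affine in $\multisec^{\normal}$); and the action is $\R$-linear. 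Combined, these give the commutation of the reduction with both the group action and the melding, which is the whole point.

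The one place where I'd ask you to be more careful is the partition-of-unity identity. You invoke $\sum_{\partition}\Bump{\partition, \Cmeld^{\normal}}{}\equiv 1$, but the paper's definition of $\Bump{0}{1}$ in \S\ref{Sec:BumpFunctions} does not require $\Bump{0}{1}(t)+\Bump{0}{1}(1-t)=1$, which is exactly what this identity needs (the sum factorizes as $\prod_{i}\bigl(\Bump{0,\Cmeld^{\normal}}{}(\neckLength_{i})+\Bump{1,\Cmeld^{\normal}}{}(\neckLength_{i})\bigr)$). The phrase in the paper about using the $\Bump{\partition,\thiccC}{}$ to ``build a partition of unity'' can be read as licensing this, but if you do not want to assume it, the argument closes anyway: the affine constant $b$ in $\widecheck{\multisec}^{\normal}=A[\multisec^{\normal}]+b$ is itself $\Sym_{\NnegativePunctures}$-invariant, being a combination of the planar contributions $\kercoeff^{\plane}_{i}e^{\epsilon_{\sigma}(\cdots)}\widetilde{\mu}^{\normal}_{i^{in}}(\tree^{\notplane})$ attached to gluing edges, which the action fixes by definition. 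Then $g\bigl(\widecheck{\multisec}^{\normal}_{\meld}\bigr)=\sum_{\partition}\Bump{\partition,\Cmeld^{\normal}}{}\,g\bigl(A[\multisec^{\normal,\partition}]\bigr)+g(b)=\sum_{\partition}\Bump{\partition,\Cmeld^{\normal}}{}\,A[\multisec'^{\normal,\partition}]+b$ for branches $\multisec'^{\normal,\partition}$ supplied by the hypothesis, which is a branch of $\widecheck{\Multisec}^{\normal}_{\meld}$ with no normalization hypothesis on the bumps needed.
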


\subsection{Product multisections over multi-level buildings}\label{Sec:ProdMultisecComputation}

In order to study the symmetry groups of the $\Multisec_{\meld, \prod}^{\normal}$, we'll want to apply Equation \eqref{Eq:DelbarPushToEnds} to normal product multisections $\multisec^{\normal}_{\prod}$ determined by the $\multisec^{\normal}(\subtree)$ defined in Section \ref{Sec:SingleVertexMultisecDef}. The computations are easy to express using some basic graph-theoretical terminology which we now lay out.

Fix a pair $\tree, \partition$. Write $\subtree^{\partition}_{\rt}$ for the $\subtree^{\partition}_{i}$ which contains the root vertex $\vertex_{\rt}$ of $\tree$. For a $\subtree^{\partition}_{i}$ other than $\subtree^{\partition}_{\rt}$ having $\edgeGluing_{\longi^{in}}$ as its incoming vertex, its \emph{parent} is the unique subtree $\parent(\subtree^{\partition}_{i}) \in \{ \subtree^{\partition}_{j}\}$ having $\edgeGluing_{\longi^{in}}$ as an outgoing vertex. Note that parents are necessarily among the $\subtree^{\partition, \notplane}_{j}$. For a $\subtree^{\partition}_{i}$ the set consisting of its parents, the parent of its parents, and so on are its \emph{ancestry}, denoted $\ancestor(\subtree^{\partition}_{i})$. Note that $\subtree^{\partition}_{\rt} \in \ancestor(\subtree^{\partition}_{i})$ for any $\subtree^{\partition}_{i}$. For any edge $\edge_{i}$ which is free edge or a $\partition$-long gluing edge, and is therefore an outgoing edge of some $\subtree^{\partition, \notplane}_{i}$, its \emph{ancestry} $\ancestor(\edge_{i})$ is the collection of $\partition$-long gluing edges connecting subtrees in $\ancestor(\subtree^{\partition, \notplane}_{i})$. So the $\ancestor(\edge_{i})$ give a path in $\tree \git (\cup \subtree^{\partition}_{i})$ which includes the incoming edge and ends at a vertex which has $\edge_{i}$ as an outgoing edge. The following lemma is clear from the definition of $\ancestor(\edge_{k})$.

\nom{$\parent, \ancestor$}{Parents and ancestors subtrees $\subtree^{\partition}_{i} \subset \tree$ and $\partition$-long edges}

\begin{lemma}
For a pair of edges $\edge_{i}, \edge_{j}$, we have $\ancestor(\edge_{i}) = \ancestor(\edge_{j})$ iff there is a $\subtree^{\partition, \notplane}_{k}$ having both $\edge_{i}$ and $\edge_{j}$ as outgoing edges.
\end{lemma}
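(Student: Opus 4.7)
The plan is to verify both directions by a careful unwinding of the definitions of $\ancestor(\subtree^{\partition, \notplane}_k)$ and $\ancestor(\edge_i)$, combined with the elementary fact that in a tree there is a unique simple path between any two vertices.

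For the forward direction, I would start by observing that the definition of $\ancestor(\edge_i)$ depends on $\edge_i$ only through the subtree $\subtree^{\partition, \notplane}_k$ that has $\edge_i$ as an outgoing edge: explicitly, $\ancestor(\edge_i)$ is the collection of $\partition$-long gluing edges joining pairs of subtrees that both lie in $\ancestor(\subtree^{\partition, \notplane}_k) \cup \{\subtree^{\partition, \notplane}_k\}$ (equivalently, the edges along the path in $\tree\git(\cup \subtree^{\partition}_i)$ from the root vertex down to the vertex corresponding to $\subtree^{\partition, \notplane}_k$). Therefore if both $\edge_i$ and $\edge_j$ are outgoing edges of the same $\subtree^{\partition, \notplane}_k$, the two ancestries are literally computed from the same $\subtree^{\partition, \notplane}_k$ and hence coincide.

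For the reverse direction, suppose $\ancestor(\edge_i) = \ancestor(\edge_j)$, and let $\subtree^{\partition, \notplane}_k$, $\subtree^{\partition, \notplane}_\ell$ be the (uniquely determined) subtrees of which $\edge_i$, $\edge_j$ are outgoing edges. The key step is to use the description of $\ancestor(\edge_i)$ as a path in the contracted tree $\tree\git(\cup \subtree^{\partition}_i)$ from the root vertex to the vertex represented by $\subtree^{\partition, \notplane}_k$. Since $\tree\git(\cup \subtree^{\partition}_i)$ is itself a tree (contractions of trees are trees, as spelled out in \S\ref{Sec:TreeContraction}), simple paths starting at the root are uniquely determined by their edge sets. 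The equality $\ancestor(\edge_i) = \ancestor(\edge_j)$ then forces these two paths to coincide, which means their endpoints agree, i.e.\ $\subtree^{\partition, \notplane}_k = \subtree^{\partition, \notplane}_\ell$. Thus $\edge_i$ and $\edge_j$ are both outgoing edges of this common subtree.

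There is essentially no obstacle beyond bookkeeping: the tricky part is purely linguistic — one has to take care to distinguish ancestry \emph{of subtrees} (a set of subtrees) from ancestry \emph{of edges} (a set of $\partition$-long gluing edges), and to note the ``shift'' that $\ancestor(\edge_i)$ records the path going \emph{down to} the vertex owning $\edge_i$ rather than stopping one step short. Once the path interpretation in $\tree\git(\cup \subtree^{\partition}_i)$ is made explicit, the lemma is just the uniqueness of simple paths in a tree.
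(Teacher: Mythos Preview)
Your proposal is correct and matches the paper's approach: the paper actually gives no proof at all beyond the sentence ``clear from the definition of $\ancestor(\edge_{k})$,'' so your unwinding of the definitions and appeal to uniqueness of paths in the contracted tree $\tree\git(\cup \subtree^{\partition}_{i})$ is exactly the argument the author has in mind, just spelled out in full.
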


Now we apply the above terminology to rewrite Equation \eqref{Eq:DelbarPushToEnds}. For a section $\multisec^{\normal} = \sum \multisecCoeff_{j}^{\normal}\mu^{\normal}_{j}$ and an outgoing edge $\edge_{k}$ of $\tree^{\notplane}$ the coefficient of $\widetilde{\mu}^{\normal}(\tree^{\notplane})$ in $\widecheck{\multisec}^{\normal}(\vec{u}, \vec{C}) = (\multisec^{\normal}-\delbarGluingNormal)(\vec{\cokcoeff}^{\notplane}_{\multisec^{\normal}}, \vec{u}, \vec{C})$ is
\begin{equation}\label{Eq:DelbarPushAncestry}
\begin{gathered}
\widecheck{\multisec}^{\normal}(\vec{u}, \vec{C})_{k} = \begin{cases}x + e^{\epsilon_{\sigma}s_{k}(\tree^{\notplane})}\multisecCoeff^{\normal}_{k} + (\multisec^{\normal})_{k, \ancestor}, & \edge_{k} \in \{\edgeFree_{i}\}\\
x + e^{\epsilon_{\sigma}s_{k}(\tree^{\notplane})}\multisecCoeff^{\normal}_{k} + (\multisec^{\normal})_{k, \ancestor} + \kercoeff^{\plane}_{k}e^{\epsilon_{\sigma}(s_{k}(\tree^{\notplane}) + \ell^{\C}_{k})}, & \edge_{k} \in \{\edgeGluingPlane_{i}\}.
\end{cases}\\
(\multisec^{\normal})_{k, \ancestor} = \sum_{\edge_{j} \in \ancestor(\edge_{k})} e^{\epsilon_{\sigma}s_{j}(\tree^{\notplane})}\multisecCoeff^{\normal}_{j}
\end{gathered}
\end{equation}
Here the $x$ accounts for the $x\Delta_{\partition, \notplane}$ contribution in Equation \eqref{Eq:DelbarPushToEnds}. The $\multisecCoeff^{\normal}_{k}$ term accounts for the $\subtree^{\normal, \git}$ contribution from that equation. The $(\multisec^{\normal})_{k, \ancestor}$ accounts for the $\sum_{\edgeGluingNotPlane_{i}}(\cdots)$ contributions. In the second case here, the last summand gives the contribution of a $\subtree^{\partition, \plane}_{i}$ with incoming edge $\edge_{k}$.

Now we apply Equation \eqref{Eq:DelbarPushAncestry} to help study their symmetry groups. Each $\multisec^{\normal}(\subtree^{\partition, \notplane}_{i}) \in \Multisec^{\normal}(\subtree^{\partition, \notplane}_{i})$ associated to $\subtree^{\partition, \notplane}_{i}$ is of the form
\begin{equation*}
\multisec^{\normal}(\subtree^{\partition, \notplane}_{i}\git) = \sum_{j} \multisecCoeff^{\normal}_{i^{out}_{j}}(\subtree^{\partition, \notplane}_{i}\git)\widetilde{\mu}^{\normal}_{i^{out}_{j}}(\subtree^{\partition, \notplane}_{i}) = \sum_{j} \multisecCoeff^{\normal}_{i^{out}_{j}}(\subtree^{\partition, \notplane}_{i}\git)e^{-\epsilon_{\sigma}s_{i^{out}_{j}}(\subtree^{\partition, \notplane}_{i})}\mu^{\normal}_{i^{out}_{j}}
\end{equation*}
as described in \S \ref{Sec:SingleVertexMultisecDef} for some constants $\multisecCoeff^{\normal}_{i^{out}_{j}}$ with the $i^{out}_{j}$ indexing the edges leaving $\subtree^{\partition, \notplane}_{i}$. Such edges must be outgoing free edges of $\tree$ or among the $\edgeGluingNotPlane_{\longi}$. So each normal product section
\begin{equation*}
\multisec_{\prod}^{\partition, \normal} = \sum_{\subtree^{\partition}_{i}} \multisec^{\normal}(\subtree^{\partition}_{j}\git) \in \Multisec_{\prod}^{\partition, \normal}(\tree), \quad \multisec^{\normal}(\subtree^{\partition}_{i}\git) \in \Multisec^{\normal}(\subtree^{\partition, \notplane}_{i})
\end{equation*}
can be expressed
\begin{equation*}
\multisec_{\prod}^{\partition, \normal} = \sum_{\subtree^{\partition, \notplane}_{i}}\sum_{j} \multisecCoeff^{\normal}_{i^{out}_{j}}(\subtree^{\partition, \notplane}_{i}\git)e^{-\epsilon_{\sigma}s_{i^{out}_{j}}(\subtree^{\partition, \notplane}_{i})}\mu^{\normal}_{i^{out}_{j}}.
\end{equation*}
Applying Equation \eqref{Eq:DelbarPushToEnds} to this formula yields for a free edge $\edge_{k} \in \{ \edgeFree_{i}\}$,
\begin{equation}\label{Eq:DelbarPushAncestor}
\begin{aligned}
\widecheck{\multisec}^{\normal, \partition}_{\prod}(\vec{u}, \vec{C}) &= x + e^{\epsilon_{\sigma}(s_{k}(\tree^{\notplane}) + s_{k}(\subtree^{\partition, \notplane}_{i}))}\multisecCoeff^{\normal}_{k}(\subtree^{\partition, \notplane}_{i}) + \left( \multisec_{\prod}^{\partition, \normal}(\tree) \right)_{k, \ancestor}\\
&= x + e^{\epsilon_{\sigma}(s_{i^{in}}(\tree^{\notplane})-\neckLength_{i^{in}})}\multisecCoeff^{\normal}_{k}(\subtree^{\partition, \notplane}_{i}) + \left( \multisec_{\prod}^{\partition, \normal}(\tree) \right)_{k, \ancestor}\\
\left( \multisec_{\prod}^{\partition, \normal}(\tree) \right)_{k, \ancestor} &= \sum_{\edge_{j} \in \ancestor(\edge_{k})} e^{\epsilon_{\sigma}(s_{j}(\tree^{\notplane}) - s_{j}(\subtree^{\partition, \notplane}_{i}))}\multisecCoeff^{\normal}_{j}(\subtree^{\partition, \notplane}_{i}).
\end{aligned}
\end{equation}
Here $\widecheck{\multisec}^{\normal, \partition}_{\prod}(\vec{u}, \vec{C}) = (\multisec^{\normal, \partition}_{\prod} - \delbarGluingNormal)(\vec{\cokcoeff}^{\notplane}_{\multisec^{\normal}}, \vec{u}, \vec{C})_{k}$ is the section of $\transSubbundle^{\normal}(\tree\git)$ associated to $\multisec^{\normal, \partition}_{\prod}$ and $i^{in}$ indexes the incoming edge for $\subtree^{\partition, \notplane}_{i}$. For the $\edge_{k} \in \{ \edgeGluingPlane_{i} \}$ we add a $\kercoeff^{\plane}_{k}(\cdots)$ term. In the first line above it's assumed that $\subtree^{\partition, \notplane}_{i}$ has $\edge_{k}$ as an outgoing edge and in the second line it's assumed that each $\subtree^{\partition, \notplane}_{i}$ has $\edge_{j} \in \ancestor(\edge_{k})$ as an outgoing edge.

For each $\subtree^{\partition, \notplane}_{i}$ let $\{\edge^{F, \partition}_{i, j}\}$ be its outgoing edges which are free edges of $\tree$. For $i$ fixed all of the $\edge^{F, \partition}_{i, j}$ have the same ancestry. Let
\begin{equation*}
\Sym^{\partition}_{i} \simeq \Sym_{\# \edge^{F, \partition}_{i, j}}
\end{equation*}
be the subgroup of $\Sym_{\NnegativePunctures}$ consisting of the $g \in \Sym_{\NnegativePunctures}$ which swaps the indices $j$ of the $\edge^{F, \partition}_{i, j}$. Define an action of $\Sym^{\partition}_{i}$ on $\Multisec^{\normal}(\subtree^{\partition, \notplane}_{i}\git)$ by permuting the coefficients $\multisecCoeff^{\normal}_{i, j}(\subtree^{\partition, \notplane}_{i})$ of $\multisec^{\normal}(\subtree^{\partition, \notplane}_{i})$. We then get an action of $\Sym^{\partition}_{i}$ on $\Multisec_{\prod}^{\partition} = \sum \Multisec(\subtree^{\partition, \notplane}_{k}\git)$ by the action on the $\Multisec(\subtree^{\partition, \notplane}_{i})$ summand. For $g \in \Sym^{\partition}_{i}$ we see that
\begin{equation}\label{Eq:GActionCommonAncestor}
\begin{aligned}
g(\multisec^{\normal} - \delbarGluingNormal)(\vec{\cokcoeff}^{\notplane}_{\multisec^{\normal}}, \vec{u}, \vec{C})_{k} &= x + e^{\epsilon_{\sigma}(s_{i^{in}}(\tree^{\notplane})-\neckLength_{i^{in}})}\multisecCoeff^{\normal}_{i, g(j)}(\subtree^{\partition, \notplane}_{i}) + \left( \multisec_{\prod}^{\partition, \normal}(\tree) \right)_{i,j, \ancestor}\\
&= (g\multisec^{\normal} - \delbarGluingNormal)(\vec{\cokcoeff}^{\notplane}_{g\multisec^{\normal}}, \vec{u}, \vec{C})_{gk}.
\end{aligned}
\end{equation}
Therefore $g \in \Sym\left(\Multisec^{\partition, \normal}_{\prod}\right)$ and so
\begin{equation*}
\Sym^{\partition}_{i} \subset \Sym\left(\Multisec^{\partition, \normal}_{\prod}\right)
\end{equation*}
for all $i$. For $i, i'$ distinct, its clear that $g_{i} \in \Sym^{\partition}_{i}, g_{i'} \in \Sym^{\partition}_{i'}$ -- viewed as elements of $\Sym_{\NnegativePunctures}$ -- commute. Therefore the product over $\subtree^{\partition, \notplane}_{i}$ includes into $\Sym\left(\Multisec^{\partition, \normal}_{\prod}\right)$, and we define a group $\Sym^{\partition}$ by
\begin{equation*}
\Sym^{\partition} = \prod_{i} \Sym^{\partition}_{i} \subset \Sym\left(\Multisec^{\partition, \normal}_{\prod}\right) \subset \Sym_{\NnegativePunctures}.
\end{equation*}

\subsection{Bad subtrees and enlarged symmetry groups}\label{Sec:BadSubtree}

To carry out the symmetry argument described at the beginning of this section, we wish that symmetry groups of the normal multisections used to compute $\partialNH$ contributions with $\NnegativePunctures \geq 2$ to have at least $2$ elements. This is not always the case due to the appearance of \emph{bad subtrees} as indicated in Figure \ref{Fig:BadSubtrees}. For a pair $(\tree, \partition)$ with $\tree$ having $\NnegativePunctures \geq 2$ outgoing free edges, say that a subtree $\subtree^{\partition, \bad} \subset \tree$ is bad if
\be
\item it has exactly one outgoing edge which is a free edge of $\tree$ and
\item it consists of exactly one $\subtree^{\partition, \notplane}_{i}$ and any number of $\subtree^{\partition, \plane}_{i}$.
\ee
As $\NnegativePunctures \geq 2$ and a $\subtree^{\partition, \bad}$ has one output edge, it follows that the incoming edge to each $\subtree^{\partition, \bad}$ is a $\edgeGluingNotPlane$.

\begin{figure}[h]
\begin{overpic}[scale=.2]{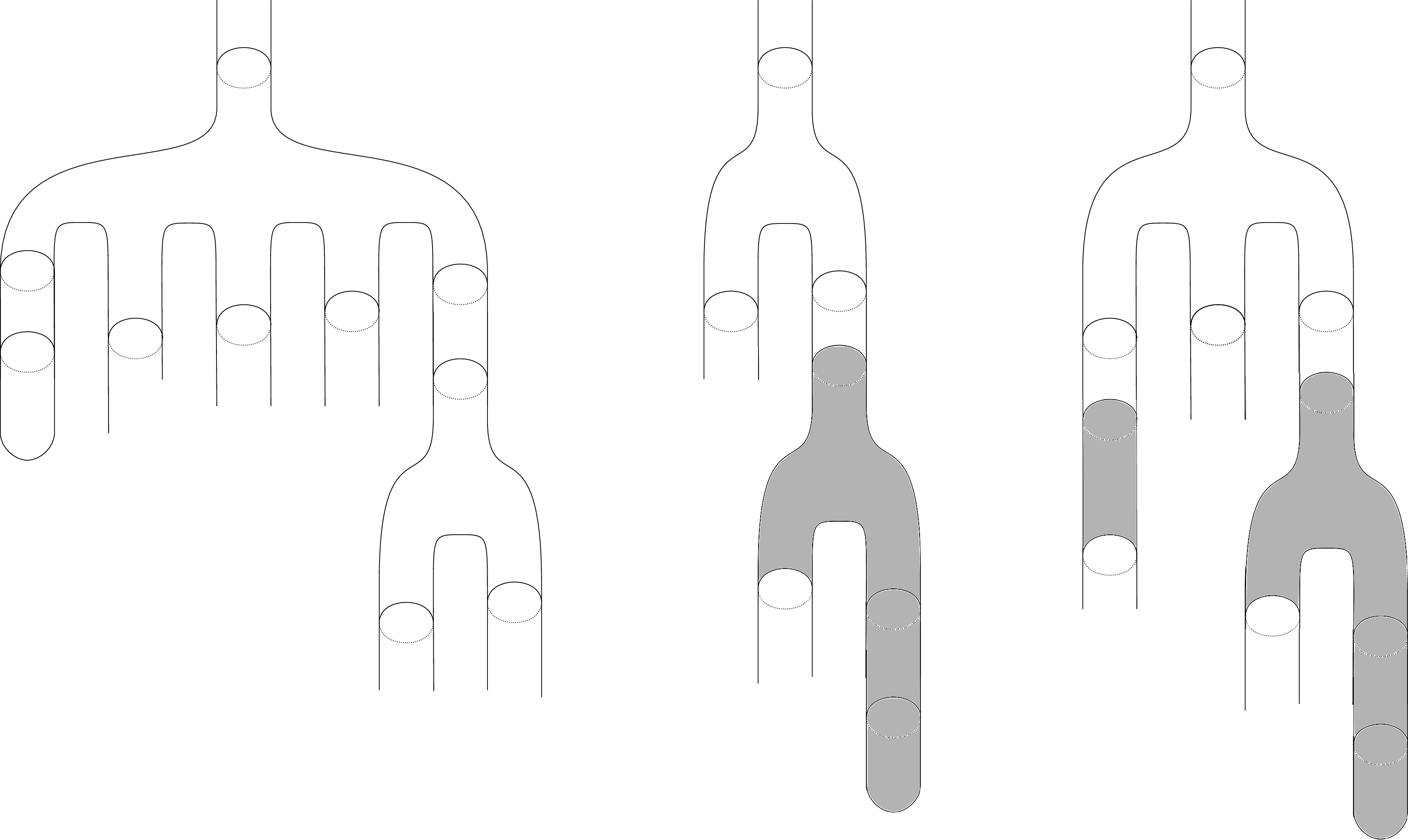}
\end{overpic}
\caption{On the left, we have a $\partition$ of a $\NnegativePunctures = 5$ tree for which $\Sym^{\partition} \simeq \Sym_{3} \times \Sym_{2}$. In the center and right subfigures $\Sym^{\partition}=\{ \Id \}$ due to the appearance of bad subtrees, which are shaded.}
\label{Fig:BadSubtrees}
\end{figure}

Let $\subtree^{\partition, \en}_{i}$ be the collection of subtrees of $\tree$ obtained by splitting along all $\partition$-long gluing edges which are not the incoming edges of the $\subtree^{\partition, \bad}$. Say that the $\subtree^{\partition, \en}_{i}$ are \emph{enlarged subtrees} and observe that none of the $\subtree^{\partition, \en}_{i}$ can be bad.

\nom{$\subtree^{\partition, \bad}, \subtree^{\partition, \en}_{i}$}{Bad and enlarged subtrees}

\begin{figure}[h]
\begin{overpic}[scale=.2]{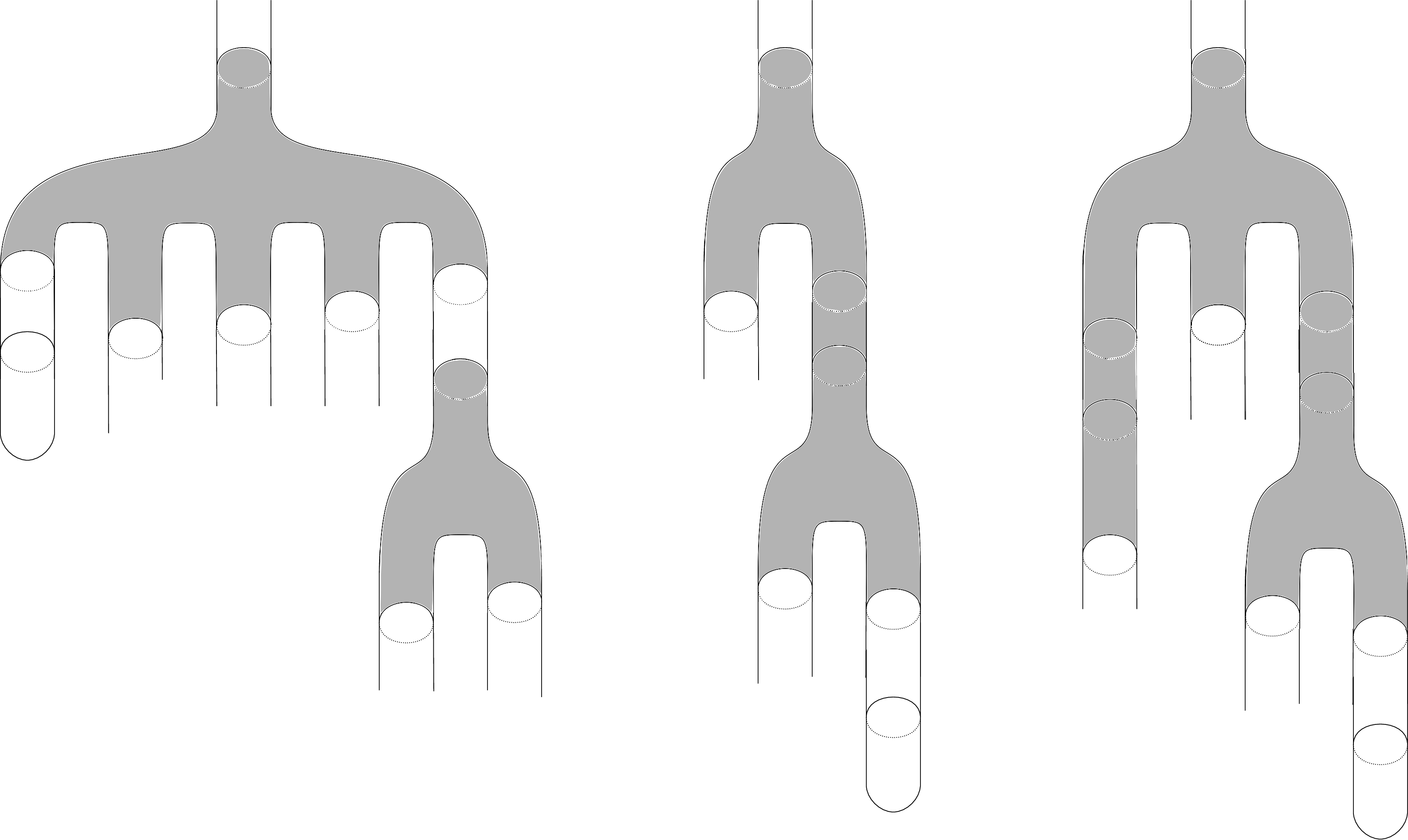}
\put(47, 29){$\edgeFree_{k}$}
\put(51, 7){$\edgeFree_{k'}$}
\put(63, 29){$\edgeGluing_{k''}$}
\end{overpic}
\caption{For each $(\tree, \partition)$ in Figure \ref{Fig:BadSubtrees}, we shade the enlarged subtrees which are non-planar. On the left, where there are no bad subtrees, we get exactly the $\subtree^{\partition, \notplane}_{i}$. Labels of edges for the center subfigure appear in the proof of Lemma \ref{Lemma:EnlargedSymmetries}.}
\label{Fig:EnlargedSubtrees}
\end{figure}

Using the enlarged subtrees we define multisections $\Multisec^{\normal, \partition}_{\Sym}$ of $\transSubbundle^{\normal}(\tree) \rightarrow \ModSpaceThick(\tree)/\R_{s}$ as follows. For each $(\tree, \partition)$ and each $\subtree^{\partition, \notplane}_{i}$ which is not contained in some $\subtree^{\partition, \bad}$ let $\Multisec^{\normal, \partition}_{\Sym}(\subtree^{\partition, \notplane}_{i}\git) = \Multisec^{\normal}(\subtree^{\partition, \notplane}_{i}\git)$. Now suppose that $\subtree^{\partition, \notplane}_{i}$ is contained in a $\subtree^{\partition, \bad}$ and let $\edge_{k}$ be the unique outgoing edge of $\subtree^{\partition, \bad}$ which is a free edge of $\tree$. Then provided
\begin{equation*}
\multisec^{\normal}(\subtree^{\partition, \notplane}_{i}\git) = \sum_{\edge_{i^{out}_{j}}}\multisecCoeff^{\normal}(\subtree^{\partition, \notplane}_{i}\git)\widetilde{\mu}^{\normal}(\subtree^{\partition, \notplane}_{i}\git) \in \Multisec^{\normal}(\subtree^{\partition, \notplane}_{i}\git)
\end{equation*}
we define
\begin{equation}\label{Eq:MultisecSymDef}
\Multisec^{\normal, \partition}_{\Sym}(\subtree^{\partition, \notplane}_{i}\git) = \left\{ \multisec^{\normal, \partition}_{\Sym}(\subtree^{\partition, \notplane}_{i}\git) \right\}, \quad \multisec^{\normal, \partition}_{\Sym}(\subtree^{\partition, \notplane}_{i}\git) = \sum_{\edge_{i^{out}_{j}} \neq \edge_{k}}\multisecCoeff^{\normal}(\subtree^{\partition, \notplane}_{i}\git)\widetilde{\mu}^{\normal}(\subtree^{\partition, \notplane}_{i}\git).
\end{equation}
So in words, the $\multisec^{\normal, \partition}_{\Sym}(\subtree^{\partition, \notplane}_{i}\git)$ are the $\multisec^{\normal}(\subtree^{\partition, \notplane}_{i}\git)$ with coefficients set to zero at outgoing edges which are outgoing edges of $\partition$-bad subtrees. Finally, define
\begin{equation*}
\Multisec^{\normal, \partition}_{\Sym}(\tree) = \sum_{\subtree^{\partition, \notplane}_{i}} \Multisec^{\normal, \partition}_{\Sym}(\subtree^{\partition, \notplane}_{i}\git), \quad \Multisec^{\normal}_{\meld, \Sym}(\tree) = \meld_{\Cmeld^{\normal}} \left\{ \Multisec^{\normal, \partition}_{\Sym}(\tree) \right\}.
\end{equation*}

The $\Multisec^{\normal, \partition}_{\Sym}$ are designed to have large symmetry groups. For each $\subtree^{\partition, \en}_{i}$ let $\{ \edge^{F, \partition, \en}_{i, j}\}$ be its outgoing edges which are free edges of $\tree$. So $i$ is fixed and $j$ is varying within each such set. Then, as in the previous subsection, we define $\Sym^{\partition, \en}_{i} \subset \Sym_{\NnegativePunctures}$ to be the subgroup which permutes the indices $j$ of the $ \edge^{F, \partition, \en}_{i, j}$ and
\begin{equation*}
\Sym^{\partition, \en} = \prod_{\subtree^{\partition, \en}_{i}} \Sym^{\partition, \en}_{i} \subset \Sym_{\NnegativePunctures}
\end{equation*}
It is clear from the construction that $\Sym^{\partition} \subset \Sym^{\partition, \en}$, the inclusion being strict if there is at least one $\partition$-bad subtree.

\begin{lemma}\label{Lemma:EnlargedSymmetries}
The $\Multisec^{\normal, \partition}_{\Sym}(\tree)$ and $\Multisec^{\normal}_{\meld, \Sym}(\tree)$ have the following properties:
\be
\item For each $\partition$, $\Sym^{\partition, \en} \subset \Sym(\Multisec^{\normal, \partition}_{\Sym})$.
\item For partitions $\partition' < \partition$, we have $\Sym^{\partition, \en} \subset \Sym^{\partition', \en}$.
\item For each $\tree$, $\Sym(\Multisec^{\normal}_{\Sym, \meld})$ has at least two elements whenever $\NnegativePunctures \geq 2$ for $\ind = 1$ gluing configurations.
\ee
\end{lemma}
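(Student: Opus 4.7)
The plan is to verify (1), (2), (3) in order; (1) contains the main content and obstacle.

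For (1), I would expand the $k$-th coordinate of $\widecheck{\multisec}^{\normal, \partition}_{\Sym}(\vec{u},\vec{C})$ at each free edge $\edge_k$ of $\tree$ using Equation \eqref{Eq:DelbarPushAncestor}: a self coefficient with prefactor $e^{\epsilon_{\sigma}(s_{i^{in}}(\tree^{\notplane}) - \neckLength_{i^{in}})}$ times $\multisecCoeff^{\normal}_{k}(\subtree^{\partition, \notplane}_i)$, plus an ancestry sum along $\ancestor(\edge_k)$. Given a swap $g \in \Sym^{\partition, \en}_i$ of free edges $\edge_k, \edge_{k'}$ in the same enlarged subtree, the informative case is when $\edge_{k'}$ emerges from a bad sub-piece $\subtree^{\partition, \bad}_B$ with non-bad parent $\subtree^{\partition, \notplane}_A$ and bad-incoming edge $\edge_b$: the self coefficient $\multisecCoeff^{\normal}_{k'}(\subtree^{\partition, \notplane}_B)$ vanishes by construction of $\Multisec^{\normal, \partition}_{\Sym}$ but is exactly replaced in the ancestry sum by an extra contribution from $\edge_b$ whose prefactor $e^{\epsilon_{\sigma}(s_{b}(\tree^{\notplane}) - s_{b}(\subtree^{\partition, \notplane}_A))}$ equals $e^{\epsilon_{\sigma}(s_{A^{in}}(\tree^{\notplane}) - \neckLength_{A^{in}})}$ by Definition \ref{Def:Svars}. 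The swap of coordinates $k \leftrightarrow k'$ is then realized by permuting $\multisecCoeff^{\normal}_k(\subtree^{\partition, \notplane}_A) \leftrightarrow \multisecCoeff^{\normal}_b(\subtree^{\partition, \notplane}_A)$ inside the $\Sym$-orbit defining $\Multisec^{\normal}(\subtree^{\partition, \notplane}_A\git)$, together with a simultaneous compensating permutation of coefficients inside $\subtree^{\partition, \notplane}_B$ needed to preserve the $\edgeGluingPlane$ coordinates of $\widecheck{\multisec}^{\normal, \partition}_{\Sym}$ (on which $g$ acts trivially) whose ancestries pass through $\edge_b$. The main obstacle is verifying that this compensating permutation always lies within the $\Sym$-orbit defining $\Multisec^{\normal, \partition}_{\Sym}(\subtree^{\partition, \notplane}_B\git)$ and iterating the analysis along chains of bad subtrees.

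For (2), $\partition' \leq \partition$ means every $\partition'$-long edge is $\partition$-long, so each non-planar $\subtree^{\partition, \en}_i$ is contained in some $\subtree^{\partition', \en}_{i'}$; permutations of free edges of $\tree$ preserving the finer decomposition thus also preserve the coarser one, giving the inclusion $\Sym^{\partition, \en}_i \subset \Sym^{\partition', \en}_{i'}$ and hence $\Sym^{\partition, \en} \subset \Sym^{\partition', \en}$.

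For (3), combining (1), (2) and Lemma \ref{Lemma:PatchingSymmetries} with the maximal partition $\partition_{\max} = (1, \dots, 1)$ yields
\[
\Sym^{\partition_{\max}, \en} \subset \bigcap_{\partition} \Sym^{\partition, \en} \subset \bigcap_{\partition}\Sym(\Multisec^{\normal, \partition}_{\Sym}) \subset \Sym(\Multisec^{\normal}_{\Sym, \meld}),
\]
so it suffices to show $|\Sym^{\partition_{\max}, \en}| \geq 2$. Under $\partition_{\max}$ each $\subtree^{\partition_{\max}}$ is a single vertex. Suppose for contradiction that every non-planar enlarged subtree has at most one outgoing free edge of $\tree$; since the $\NnegativePunctures \geq 2$ free edges emanate only from non-planar enlarged subtrees, there must be at least $\NnegativePunctures$ of them, each containing at least one non-planar vertex, so $\#\vertex^{\notplane} \geq \NnegativePunctures$. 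Combined with the $\ind = 1$ bound $\#\vertex^{\notplane} \leq \NnegativePunctures$ from Lemma \ref{Lemma:NleqOneRigidity}, equality is forced: each enlarged subtree contains exactly one non-planar vertex, so no bad-subtree merging has taken place and there are no bad subtrees. But the absence of bad subtrees means every non-planar vertex has $\neq 1$ outgoing free edge of $\tree$, which together with the ``at most one'' hypothesis forces every non-planar vertex to contribute zero free edges, contradicting $\NnegativePunctures \geq 2$.
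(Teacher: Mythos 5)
Your approach to (2) is identical to the paper's and is correct. Your approach to (1) is also essentially the paper's: you isolate the case where one free edge belongs to a bad subtree, use Equation \eqref{Eq:DelbarPushAncestor} together with Definition \ref{Def:Svars} to show the self-coefficient of $\edge_k$ and the ancestry contribution at $\edge_{k'}$ carry the same exponential prefactor, and realize the transposition $g$ by the replacement $\multisec^{\normal}(\subtree^{\partition,\notplane}_A\git) \mapsto g'\multisec^{\normal}(\subtree^{\partition,\notplane}_A\git)$. The point you flag as the ``main obstacle'' --- whether the coordinates of $\widecheck{\multisec}^{\normal,\partition}_{\Sym}$ at the $\edgeGluingPlane$ edges inside the bad subtree $\subtree^{\partition,\notplane}_B$ remain unchanged --- is genuinely the delicate step, since those coordinates receive an ancestry contribution from $\edge_{k''}$ and that contribution changes under $g'$. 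The paper's own proof performs only the $g'$ replacement on the $A$-summand and asserts the displayed equality of vectors without a compensating adjustment in $\subtree_B$, so your concern is well-placed rather than a deviation from the paper; but as written you do not resolve it, you only note it.

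For (3) there is a genuine gap in your final step. Your counting argument correctly reduces, via (2) and Lemma \ref{Lemma:PatchingSymmetries}, to showing $|\Sym^{\partition,\en}|\ge 2$, and the use of Lemma \ref{Lemma:NleqOneRigidity} to force exactly one non-planar vertex per enlarged subtree is right. But your closing claim ``the absence of bad subtrees means every non-planar vertex has $\neq 1$ outgoing free edge of $\tree$'' is false. By the definition of a bad subtree, a non-planar vertex with exactly one outgoing free edge \emph{and} at least one outgoing $\edgeGluingNotPlane$ edge is not bad (condition (1) on bad subtrees requires the unique outgoing edge to be a free edge, which forbids $\edgeGluingNotPlane$ children). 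So the absence of bad subtrees does not globally exclude vertices with a single free edge, and your deduction that every non-planar vertex contributes zero free edges does not follow. The paper's proof instead places a partial order on the $\subtree^{\partition,\en}_i$ and looks at a \emph{least} (leaf) non-planar element, for which no $\edgeGluingNotPlane$ child can exist; only there does ``one free edge'' force badness. Your argument can be repaired by restricting the claim to minimal non-planar vertices in the tree's partial order, but as stated the step would fail.
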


\begin{proof}
For (1) we need to show that for each $\multisec^{\normal, \partition}_{\Sym} \in \Multisec^{\normal, \partition}_{\Sym}$ and transposition $g \in \Sym_{\NnegativePunctures}$ interchanging two outgoing free edges of some $\subtree^{\partition, \en}$ that $g \multisec^{\normal, \partition}_{\Sym} \in \Multisec^{\normal, \partition}_{\Sym}$ where the $g$ action is defined as in Equation \eqref{Eq:GAction}. Let's say that $g$ interchanges outgoing free edges with indices $k$ and $k'$. If neither $\edgeFree_{k}$ nor $\edgeFree_{k'}$ is an outgoing edge of a bad subtree, then they are both outgoing free edges of a common $\subtree^{\partition, \notplane}_{j}$ and we see that $g \in \Sym(\Multisec^{\normal, \partition}_{\Sym})$ as in Equation \eqref{Eq:GActionCommonAncestor}.

Now suppose that $\edgeFree_{k}$ is an outgoing free edge of $\subtree^{\partition}_{i} \subset \subtree^{\partition, \en}$ with $\NnegativePunctures(\subtree^{\partition}_{j}) \geq 2$ and that $\edgeFree_{k'}$ is an outgoing free edge of a bad subtree $\subtree^{\partition, \bad} \subset \subtree^{\partition, \en}$. Let let $\edge_{i^{in}}$ be the coming edge of $\subtree^{\partition, \notplane}_{i}$ and let $\edgeGluing_{k''}$ be the incoming edge of the bad subtree which is a gluing edge contained in $\subtree^{\partition, \en}$. The situation is as depicted in the center center of Figure \ref{Fig:EnlargedSubtrees}. Now we look to Equation \eqref{Eq:DelbarPushAncestor} to see what the
\begin{equation*}
\widecheck{\multisec}^{\normal, \partition}_{\Sym}(\vec{u}, \vec{C}) = (\multisec^{\normal, \partition}_{\Sym}(\tree) - \delbarGluingNormal)(\vec{\cokcoeff}^{\notplane}_{\multisec^{\normal}}, \vec{u}, \vec{C})
\end{equation*}
look like at outgoing free edges with indices $k$ and $k'$. Since $\ancestor(\edgeFree_{k'}) = \ancestor(\edgeFree_{k}) \sqcup \{ \edgeGluing\}$, we have
\begin{equation*}
\begin{aligned}
\widecheck{\multisec}^{\normal, \partition}_{\Sym}(\vec{u}, \vec{C})_{k} &= x + e^{\epsilon_{\sigma}(s_{i^{in}}(\tree^{\notplane})-\neckLength_{i^{in}})}\multisecCoeff^{\normal}_{k}(\subtree^{\partition, \notplane}_{i}) + \left( \multisec_{\prod}^{\partition, \normal}(\tree) \right)_{k, \ancestor}\\
\widecheck{\multisec}^{\normal, \partition}_{\Sym}(\vec{u}, \vec{C})_{k'} &= x +  \left( \multisec_{\prod}^{\partition, \normal}(\tree) \right)_{k', \ancestor}\\
&= x +  e^{\epsilon_{\sigma}(s_{i^{in}}(\tree^{\notplane})-\neckLength_{i^{in}})}\multisecCoeff^{\normal}_{k''}(\subtree^{\partition, \notplane}_{i}) + \left( \multisec_{\prod}^{\partition, \normal}(\tree) \right)_{k, \ancestor}.
\end{aligned}
\end{equation*}
Let $g' \in \Sym_{\NnegativePunctures(\subtree^{\partition}_{i})}$ be defined by interchanging the $k$th and $k''$th outgoing edges of $\subtree^{\partition}_{i}$. From the above formula it follows that acting by $g$ on $(\multisec^{\normal, \partition}_{\Sym}(\tree) - \delbarGluingNormal)(\vec{\cokcoeff}^{\notplane}_{\multisec^{\normal}}, \vec{u}, \vec{C})_{k}$ is equivalent to replacing the $\multisec^{\normal}(\subtree^{\partition, \notplane}_{i})$ summand in $\multisec^{\partition}_{\Sym}(\tree)$ with a $g'\multisec^{\normal}(\subtree^{\partition, \notplane}_{i})$ summand,
\begin{equation*}
g(\multisec^{\normal, \partition}_{\Sym}(\tree) - \delbarGluingNormal)(\vec{\cokcoeff}^{\notplane}_{\multisec^{\normal}_{\Sym}}, \vec{u}, \vec{C}) = (g'\multisec^{\normal, \partition}_{\Sym}(\tree) - \delbarGluingNormal)(\vec{\cokcoeff}^{\notplane}_{g'\multisec^{\normal}_{\Sym}}, \vec{u}, \vec{C}).
\end{equation*}
Since $g'\multisec^{\normal, \partition}_{\Sym}(\tree) \in \Multisec^{\partition}_{\Sym}(\tree)$ we have worked out a proof of (1) in this case. The case when both $\edgeFree_{k}$ and $\edgeFree_{k'}$ are outgoing edges of bad subtrees follows from nearly identical reasoning and is only more notationally complicated.

Now we address (2) assuming we have a pair of partitions $\partition' < \partition$. It suffices to show that if $g\in \Sym^{\partition, \en}$ is a transposition interchanging edges $\edgeFree_{k}$ and $\edgeFree_{k'}$ which are outgoing edges of some $\subtree^{\partition, \en}$ and keeping all other free edges fixed, then there is a $\subtree^{\partition', \en}$ having both $\edgeFree_{k}$ and $\edgeFree_{k'}$ as outgoing free edges. For this it suffices to show that $\subtree^{\partition, \en}$ is contained in some $\subtree^{\partition', \en}$. This is clear in the case when there is a single $\partition$-long edge which is $\partition'$-short, which implies the general case.

Now we address (3). From (1), (2), and Lemma \ref{Lemma:PatchingSymmetries} it suffices to show that each $\Sym^{\partition, \en}$ has at least two elements for $\NnegativePunctures \geq 2, \ind = 1$ gluing configurations. From the definition of the $\Sym^{\partition, \en}$ is suffices to show that for each $(\tree, \partition)$, there is at least one $\subtree^{\partition, \en}$ having two or more outgoing edges which are free edges of $\tree$. Suppose not. Then each $\subtree^{\partition, \en}$ has either one such outgoing edge or none and so there must be at least $\NnegativePunctures$ enlarged subtrees. Since each enlarged subtree contains at least one $\subtree^{\partition, \notplane}_{j}$, then by Lemma \ref{Lemma:NleqOneRigidity}, there are exactly $\NnegativePunctures$ enlarged subtrees, each containing exactly one $\subtree^{\partition, \notplane}_{j}$. Place a partial order on the $\subtree^{\partition, \en}$ by saying that $\subtree^{\partition, \en}_{i} > \subtree^{\partition, \en}_{i'}$ if the incoming edge of $\subtree^{\partition, \en}_{i'}$ is an outgoing edge of $\subtree^{\partition, \en}_{i}$. Then a least $\subtree^{\partition, \en}_{i}$ will have one outgoing edge and contain a single $\subtree^{\partition, \notplane}_{j}$ and so will be a bad subtree by definition. But enlarged subtrees are not bad, so we have a contradiction. So the claim is established and the proof is complete.
\end{proof}

\subsection{Transversality and norm estimates for normal multisections}\label{Sec:DelbarTransNormEstimate}

Now we establish that the melded product multisections build from the $\Multisec(\tree^{\notplane}\git)$ do not need to be further perturbed in order to acheive transversality for our zero sets. Then we'll establish that along the ``boundary'' of $\widecheck{\dom}_{\glue}(\tree)$ that we have a lower bound on the magnitudes of the $\widecheck{\multisec}^{\normal}_{\prod, \meld}$.

\begin{lemma}\label{Lemma:ProdMultisecTransversality}
For an action bound $\actionBound$ there is a $C_{0} \gg 0$ such that for all $\Cmeld^{\normal} > C_{0}$, the $\ind= 1$ zero sets associated to
\begin{equation*}
\Multisec = \Multisec_{\meld, \prod}^{\normal} + \Multisec^{\tangent}\quad \text{and}\quad \Multisec_{\Sym} = \Multisec_{\meld, \Sym}^{\normal} + \Multisec^{\tangent}
\end{equation*}
with action bounded by $\actionBound$ are transversely cut out for generic choices of $\multisecCoeff^{\normal}_{i}(\tree^{\notplane}\git)$ satisfying Equation \eqref{Eq:OrderedCoeffs}.
\end{lemma}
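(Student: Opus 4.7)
The plan is to localize the transversality problem using the cover $\{\ModSpaceThick^{\en}(\partition)\}_{\partition}$ of $\ModSpaceThick(\tree)$ and reduce it, via Lemma \ref{Lemma:ZeroSetInRedModSpace}, to studying zeros of the section $\widecheck{\multisec}^{\normal} = (\multisec^{\normal} - \delbarGluingNormal)(\vec{\cokcoeff}^{\notplane}_{\multisec^{\normal}}, \cdot, \cdot)$ of $\Delta_{\NnegativePunctures}^{\complement}$ over $\widecheck{\dom}_{\glue}(\tree)$. Once in this setting, the explicit formula of Equation \eqref{Eq:DelbarPushAncestor} allows us to diagonalize the linearization with respect to the neck-length variables by induction along the tree, and transversality then follows from Sard--Smale applied to the finite-dimensional parameter space of $\multisecCoeff^{\normal}_i$. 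Since only finitely many $(\tree, \partition)$ arise under the action bound $\actionBound$, a single generic choice will work simultaneously for all of them.

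First I would work in the interior of a stratum $\ModSpaceThick(\partition)$, where $\Multisec^{\normal}_{\meld, \prod} = \Multisec^{\normal, \partition}_{\prod}$ by Properties \ref{Properties:ProductMultisec}. For an $\ind=1$ configuration, Lemma \ref{Lemma:NleqOneRigidity} forces the equality case, so there are exactly $\NnegativePunctures$ subtrees $\subtree^{\partition, \notplane}_j$, each with $\ind^{\tangent}(u^{\notplane}_j) = 1$, each $u^{\plane}_j$ rigid, and a dimension count will show $\dim \widecheck{\dom}_{\glue}(\tree) = \rank \Delta_{\NnegativePunctures}^{\complement} = \NnegativePunctures - 1$. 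At a zero, differentiating Equation \eqref{Eq:DelbarPushAncestor} with respect to a neck length $\neckLength_{\longi}$ of an edge $\edgeGluingNotPlane_{\longi}$ produces a vector in $\transSubbundle^{\normal}(\tree\git)$ whose nonzero entries are supported precisely on the free edges of $\tree$ lying below $\edgeGluingNotPlane_{\longi}$ in the tree, weighted by $-\epsilon_{\sigma}e^{\epsilon_{\sigma}(s_{i^{in}}(\tree^{\notplane}) - \neckLength_{\longi})}\multisecCoeff^{\normal}_{i}$; differentiating with respect to the remaining one-dimensional tangent modulus (the normal-variation coefficient on an appropriately chosen $\subtree^{\partition, \notplane}_j$) contributes a further vector. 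Ordering the $\partition$-long edges and the nonplanar subtrees by distance from the root and partitioning outgoing free edges accordingly, this assembles into a lower-triangular system on $\Delta_{\NnegativePunctures}^{\complement}$ whose diagonal entries are linear combinations of the $\multisecCoeff^{\normal}_k$; generic choice subject to \eqref{Eq:OrderedCoeffs} makes these nonzero and yields surjectivity.

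The extension to the overlaps $\ModSpaceThick^{\en}(\partition) \setminus \ModSpaceThick(\partition)$ uses that the melding is a bump-weighted sum of $\Multisec^{\normal, \partition'}_{\prod}$ with $\partition' \leq \partition$, and by Lemma \ref{Lemma:ZeroSetNeckLength}, Equation \eqref{Eq:ZeroSetNeckLengthBound}, any $\ind=1$ zero in $\ModSpaceThick^{\en}(\partition)$ must actually lie in $\ModSpaceThick^{\short}(\partition)$. The extra contributions from $\partition' < \partition$ carry exponential factors $e^{-\epsilon_{\sigma}\neckLength_{\longi}}$ attached to edges that are $\partition'$-short but $\partition$-long and are thus $\bigO(e^{-\epsilon_{\sigma}\Cmeld^{\normal}})$ throughout $\ModSpaceThick^{\en}(\partition)$. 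Taking $\Cmeld^{\normal} > C_0$ sufficiently large guarantees these corrections are dominated by the triangular diagonal established in the interior, so transversality persists on the entire cover. Running Sard--Smale over the finite-dimensional parameter space of $\multisecCoeff^{\normal}_i(\tree^{\notplane}\git)$ satisfying \eqref{Eq:OrderedCoeffs} gives a residual set of valid choices. The identical argument applies to $\Multisec_{\Sym}$: by Equation \eqref{Eq:MultisecSymDef}, $\multisec^{\normal, \partition}_{\Sym}$ differs from $\multisec^{\normal, \partition}_{\prod}$ only by zeroing the coefficients $\multisecCoeff^{\normal}_k$ attached to outgoing free edges of $\partition$-bad subtrees, and this modification preserves the lower-triangular structure (these coefficients are forced by the tree-ordering to lie strictly below the diagonal, not on it).

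The main obstacle I anticipate is verifying that the ordering of free edges of $\tree$ by tree-distance from the root genuinely yields lower-triangular dependence of the linearization under the explicit substitution \eqref{Eq:DelbarPushAncestor}, since the ancestral sum $\left(\multisec^{\partition, \normal}_{\prod}(\tree)\right)_{k, \ancestor}$ couples free edges along common paths in $\tree$ and the variable $x$ coming from the projection onto $\Delta^{\complement}_{\NnegativePunctures}$ depends globally on all data. A careful book-keeping of which $\multisecCoeff^{\normal}_i$ appears in which row, combined with the genericity of \eqref{Eq:OrderedCoeffs} and the exponential dominance of the deepest-ancestor contributions under $\Cmeld^{\normal} \gg 0$, is where the bulk of the technical work will live.
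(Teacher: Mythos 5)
Your overall strategy — localize in the cover by $\ModSpaceThick^{\en}(\partition)$, reduce to the reduced moduli space via Lemma \ref{Lemma:ZeroSetInRedModSpace}, then run a parametric Sard--Smale over the coefficients — is close to the paper's, but there are two substantive gaps.

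First, you rely on Equation \eqref{Eq:ZeroSetNeckLengthBound} (Lemma \ref{Lemma:ZeroSetNeckLength}) to confine zeros to $\ModSpaceThick^{\short}(\partition)$. That bound controls the $\partition$-\emph{short} neck lengths $\neckLength_{\shorti}$, not the $\partition$-\emph{long} ones $\neckLength_{\longi}$. Within $\ModSpaceThick^{\en}(\partition)$ the latter range over $[\Cmeld^{\normal}-\half,\infty)$, and your exponential-decay argument, while correct about the size of corrections, does not preclude a zero escaping to $\neckLength_{\longi}\to\infty$ — i.e.\ a sequence of $\ind=1$ solutions degenerating to a breaking along a $\partition$-long edge. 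The paper rules this out by an induction over complexity: if $\neckLength_{\longi}$ is very large the configuration is a gluing of strictly lower-complexity pieces, and by the inductive hypothesis those are already transversely cut out, so one of them would be forced to have $\ind\le 0$, a contradiction. This is what lets the paper conclude that all zeros are concentrated in the interior of $\ModSpaceThick^{\en}(\tree,\partition_0,\Cmeld^{\normal})$ with $\partition_0$ declaring every $\edgeGluingNotPlane$ short. You need an analogous mechanism.

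Second, the explicit triangularization of the linearization by ordering free edges by tree-distance is much heavier than required and, as you note yourself, is where your proof's real difficulty lies. The paper avoids it entirely: since we're in the region where $\Bump{\partition_0,\Cmeld^{\normal}}{}\neq 0$, the summand $\Bump{\partition_0,\Cmeld^{\normal}}{}\multisec^{\normal}(\tree^{\notplane}\git)$ of $\multisec^{\normal}_{\prod,\meld}$ contributes only to the $\multisec^{\normal,\git}$ portion, and (by Lemma \ref{Lemma:PushPertToEdge}) the derivative of $\delbarGluingNormal-\multisec^{\normal}_{\prod,\meld}$ in the direction of the parameters $\multisecCoeff^{\normal}_{i}(\tree^{\notplane}\git)\in\R^{\NnegativePunctures(\tree^{\notplane})}$ is simply $\Bump{\partition_0,\Cmeld^{\normal}}{}$ times the identity on $\transSubbundle^{\normal}(\tree^{\notplane}\git)$. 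That already makes the parametric map $\ModSpaceThick(\tree)\times\R^{\NnegativePunctures(\tree^{\notplane})}\to\transSubbundle^{\normal}(\tree^{\notplane})$ a submersion — no diagonalization, no tracking of ancestral sums or the global projection $x$ needed. Then the parametric transversality theorem gives an open dense set of $\multisecCoeff^{\normal}$, and that openness is what lets you respect the finitely many open constraints \eqref{Eq:OrderedCoeffs} simultaneously across all $(\tree,\partition)$ under the action bound. Your description also blurs whether you want the intrinsic linearization over $\widecheck{\dom}_{\glue}(\tree)$ to be surjective (in which case no genericity argument is required) or the parametrized linearization (in which case the tree-distance triangularization is superfluous once you observe the identity block). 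Using the paper's observation resolves both issues and also applies verbatim to $\Multisec_{\Sym}$, where you would otherwise need to recheck that zeroing the coefficients on outgoing edges of bad subtrees preserves your triangular structure.
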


\begin{proof}
The results of Lemmas \ref{Lemma:PlaneCount} and \S \ref{Sec:CylCount} establish the result in the $\NnegativePunctures = 0, 1$ cases independent of the choices of $\multisecCoeff^{\normal}_{i}(\tree^{\notplane})$, so long as they satisfy Equation \eqref{Eq:OrderedCoeffs}. Note that the $\Multisec$ and $\Multisec_{\Sym}$ agree in these cases. The proofs of transversality in the $\ind = 1, \NnegativePunctures \geq 2$ cases for $\Multisec$ and $\Multisec_{\Sym}$ are similar and we work out the details for the $\Multisec$ case.

To study $\NnegativePunctures \geq 2$ configurations we induct over the \emph{complexities} of maps as defined in \cite[\S 8.6.1]{BH:ContactDefinition}. The essential features of complexity that it induces an ordering on the $\Map(\orbit, \orderedOrbitSet)$ such that if some $u$ can be obtained from a gluing of some $u^{\updownarrow}$, then the $u^{\updownarrow}$ have complexity strictly less than that of $u$.

In this case Lemma \ref{Lemma:EasyVanishing} establishes that any zeros must occur in some $\ModSpaceThick^{\en}(\tree, \partition, \Cmeld^{\normal})$ for which there is more than one $\subtree^{\partition, \notplane}$, or equivalently there is at least one $\partition$-long $\edgeGluingNotPlane_{\longi}$. This covers the minimal complexity case. Let $\neckLength_{\longi}$ be the neck length of this edge.

Within the set $\neckLength_{i} \geq \Cmeld^{\normal}$, we have $\Multisec_{\meld, \prod}(\tree) = \Multisec_{\meld, \prod}(\subtree^{\uparrow}_{\longi}) + \Multisec_{\meld, \prod}(\subtree^{\downarrow}_{\longi})$, where the $\subtree^{\updownarrow}$ are obtained from splitting $\tree$ at the edge $\edgeGluingNotPlane_{\longi}$. Then there is a $C_{\tree, \partition, i}$ such that each solution $u$ to $\delbar \in \Multisec(\tree)$ with $\neckLength_{\longi}$ is a gluing of $u^{\updownarrow}$ which are solutions to the $\delbar \in \Multisec(\subtree^{\updownarrow}_{\longi})$. As $\ind(u) = 1$, at least one of the $u^{\updownarrow}$ must have $\ind \leq 0$, which is impossible by the inductive hypothesis that the $u^{\updownarrow}$ are transversely cut out, since they have lower complexity.

Now we take $C_{0}$ to be the maximum over all $C_{\tree, \partition, i}$ with $\tree$ having its positive orbit with action $\leq \actionBound$. There are a finite collection of such $C_{\tree, \partition, i}$ so $C_{0}$ is finite and we assume $\Cmeld^{\normal} > C_{0} + \half$. By the preceding analysis any solutions to $\delbar \in \Multisec$ must have $\neckLength_{\longi} < \Cmeld^{\normal} + \half$ for each $\edgeGluingNotPlane_{\longi}$. This entails that every solution must be contained in the the interior of the set $\ModSpaceThick^{\en}(\tree, \partition_{0}, \Cmeld^{\normal})$ where $\partition_{0}$ is the partition for which each $\edgeGluingNotPlane$ is $\partition_{0}$-short. This is exactly the support of $\Bump{\partition_{0},\Cmeld^{\normal}}{}$ within which we can write
\begin{equation*}
\multisec^{\normal}_{\meld, \prod} = \Bump{\partition_{0}, \Cmeld^{\normal}}{}\multisec^{\normal}(\tree^{\notplane}\git) + \sum_{\partition \neq \partition_{0}}\Bump{\partition_{0},\Cmeld^{\normal}}{}\multisec^{\normal}(\subtree^{\notplane, \partition}\git) \in \Multisec_{\meld, \prod}^{\normal}, \quad \Bump{\partition_{0},\Cmeld^{\normal}}{} \neq 0.
\end{equation*}

In the notation of Lemma \ref{Lemma:PushPertToEdge}, the first summand $ \Bump{\partition_{0}, \Cmeld^{\normal}}{}\multisec^{\normal}(\tree^{\notplane}\git)$ above contributes only to the $\multisec^{\normal, \git}$ portion of $\multisec^{\normal}_{\prod, \meld}$. The lemma tells us that a solution must as in Equation \eqref{Eq:DelbarPushToEnds}, at which points the $\Bump{\partition_{0}, \Cmeld^{\normal}}{}\multisec^{\normal}(\tree^{\notplane}\git)$ contributes only to the $\multisec^{\normal, \git}$ term in the right-hand side, and derivative of $(\delbarGluingNormal - \multisec^{\normal})(\vec{\cokcoeff}^{\notplane}, \vec{u}, \vec{C})$ in the direction of $\multisec^{\normal}(\tree^{\notplane}\git)$ is $\Bump{\partition_{0}, \Cmeld^{\normal}}{}$ times the identity map on $\transSubbundle^{\normal}(\tree^{\notplane}\git)$. Hence 
\begin{equation*}
\ModSpaceThick(\tree) \times \R^{\NnegativePunctures(\tree^{\notplane})}_{\multisecCoeff_{i}^{\normal}(\tree^{\notplane})} \rightarrow \transSubbundle^{\normal}(\tree^{\notplane})
\end{equation*}
which assigns gluing configurations and choices of coefficients of the $\multisec^{\normal}(\tree^{\notplane})$ to $\delbarGluingNormal - \multisec_{\meld, \prod}$ is transverse to $0$. So by the (parametric) transversality theorem \cite[p. 68]{GP:DiffTop}, there is an open dense set of choices for the $\multisec^{\normal}(\tree^{\notplane})$ for which solutions to $\delbarGluingNormal - \multisec^{\normal}_{\prod, \meld}$ are transversely cut out. Such choices can be assumed to satisfy the ordering condition of Equation \eqref{Eq:OrderedCoeffs} since it is also an open condition. Furthermore, such choices can be assumed to guarantee transversality for solutions to $\delbarGluingNormal \in \Multisec^{\normal}_{\prod, \meld}$ as there are a finite number of sections in $\Multisec^{\normal}(\tree^{\notplane})$. This completes the induction step and so the proof of our lemma.
\end{proof}

For the following, we'll need to analyze the magnitudes of normal sections defined with respect to the basis $\widetilde{\mu}_{i}(\tree)$. We write
\begin{equation*}
\norm{\cdot}_{\mu}, \quad \norm{\cdot}_{\widetilde{\mu}(\tree)}
\end{equation*}
for the Euclidean norms on $\transSubbundle^{\normal}(\tree^{\notplane}\git)$ defined by the bases $\mu_{i}$ and $\widetilde{\mu}_{i}(\tree)$. From the definition in Equation \eqref{Eq:NotPlaneNormalBasis} and the fact that the functions $s_{i}(\tree^{\notplane})$ are always negative, it follows that
\begin{equation*}
\norm{\cdot}_{\widetilde{\mu}(\tree)} > \norm{\cdot}_{\mu}.
\end{equation*}
Moreover suppose that $\subtree \subset \tree$ is a subtree with incoming gluing edge $i$ each of whose outgoing edges are outgoing edges of $\tree$ so that $\transSubbundle^{\normal}(\subtree\git) \subset \transSubbundle^{\normal}(\tree\git)$. Then the norm of each section $\multisec^{\normal}$ of $\transSubbundle^{\normal}(\subtree\git)$ can be measured with respect to both $\norm{\cdot}_{\widetilde{\mu}(\tree)}$ and $\norm{\cdot}_{\widetilde{\mu}(\subtree)}$ and from Lemma \ref{Lemma:FreeEdgeRescale} we see
\begin{equation}\label{Eq:NormRescale}
\norm{\multisec^{\normal}}_{\widetilde{\mu}(\subtree)} = e^{\epsilon_{\sigma}(s_{i}(\tree) - \neckLength_{i})}\norm{\multisec^{\normal}}_{\widetilde{\mu}(\tree)}
\end{equation}

\begin{lemma}\label{Lemma:BoundaryEstimate}
Assume that we are working with $\ind=1, \NnegativePunctures \geq 2$ maps and $\actionBound, \Cmeld^{\normal}$, and $\Multisec$ satisfy the conclusions of Lemma \ref{Lemma:ProdMultisecTransversality}. Then there exist constants $C_{\min \norm{}}, C_{\min\neckLength}$ such that for each $\tree$ and each $\multisec^{\normal}_{\prod, \meld} \in \Multisec^{\normal}_{\prod, \meld}$ we have
\begin{equation*}
\norm{\delbarEpsilon^{\normal} - \multisec^{\normal}_{\prod, \meld}}_{\widetilde{\mu}(\tree)} > C_{\min \norm{}}
\end{equation*}
over the subset
\begin{equation*}
\left\{ \delbarEpsilon^{\tangent} \in \Multisec^{\tangent},\ \min \neckLength_{i} > C_{\min \neckLength} \right\} \subset \glue I_{\multisec^{\normal}}\widecheck{\dom_{\glue}}(\tree) \subset \ModSpaceThick(\tree)/\R_{s}.
\end{equation*}
\end{lemma}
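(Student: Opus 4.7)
The plan is to first localize to the region where the normal melded product multisection reduces to its all-long summand, and then to exploit the ordering of the coefficients $\multisecCoeff_{i}^{\normal}$ of Equation \eqref{Eq:OrderedCoeffs} together with Lemma \ref{Lemma:EasyVanishing} applied vertex-by-vertex to produce the desired lower bound.

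First I would choose $C_{\min\neckLength}>\Cmeld^{\normal}+\tfrac{1}{2}$. By the definitions of $\ModSpaceThick^{\en}(\partition,\Cmeld^{\normal})$ in Equation \eqref{Eq:PartitionSubsetDef} and of the bump functions $\Bump{\partition,\Cmeld^{\normal}}{}$ appearing in the melding construction, on the subset $\{\min\neckLength_{i}>C_{\min\neckLength}\}$ only the all-long partition $\partitionThick=(1,\ldots,1)$ has $\Bump{\partitionThick,\Cmeld^{\normal}}{}\neq 0$, and in fact $\Bump{\partitionThick,\Cmeld^{\normal}}{}\equiv 1$. Consequently, in this region we have the decomposition
\[
\multisec^{\normal}_{\prod,\meld}=\multisec^{\normal,\partitionThick}_{\prod}=\sum_{\vertexNotPlane_{j}}\multisec^{\normal}(\vertex_{j}\git),
\]
where each summand is a single-vertex multisection constructed in \S \ref{Sec:SingleVertexMultisecDef}. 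I would then apply Lemma \ref{Lemma:PushPertToEdge} in the telescoped form of Equation \eqref{Eq:DelbarPushAncestor} to compute each coefficient $a_{k}=(\widecheck{\multisec}^{\normal}_{\prod,\meld})_{k}$ of $\widetilde{\mu}^{\normal}_{k}(\tree^{\partitionThick,\notplane})$ explicitly. The result expresses $a_{k}$ as a sum over the ancestor chain of $\edgeFree_{k}$: the root vertex $\vertex_{\rt}$ contributes the unweighted term $\multisecCoeff^{\normal}_{e^{\rt}_{k}}(\vertex_{\rt})$, while each strictly deeper ancestor $\vertex_{j}$ contributes a term with weight $e^{\epsilon_{\sigma}(s_{j^{in}}(\tree)-\neckLength_{j^{in}})}\leq e^{-\epsilon_{\sigma}C_{\min\neckLength}}$, and the scalar $x$ is fixed by the orthogonality $\sum_{\edgeFree_{k}}a_{k}=0$ to $\Delta_{\NnegativePunctures}$.

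Since $\NnegativePunctures\geq 2$, the tree $\tree$ contains a unique deepest vertex $\vertex^{\ast}$ along the path from the root at which the outgoing edges partition the free edges of $\tree$ into at least two nonempty groups. Picking free edges $\edgeFree_{k},\edgeFree_{k'}$ descending through distinct outgoing edges $e^{\ast}_{k}\neq e^{\ast}_{k'}$ of $\vertex^{\ast}$, the contributions from all vertices strictly above $\vertex^{\ast}$ (including the root) are identical for the two edges and cancel in $a_{k}-a_{k'}$. What remains is dominated by
\[
a_{k}-a_{k'}=e^{\epsilon_{\sigma}(s_{\ast^{in}}(\tree)-\neckLength_{\ast^{in}})}\bigl(\multisecCoeff^{\normal}_{e^{\ast}_{k}}(\vertex^{\ast})-\multisecCoeff^{\normal}_{e^{\ast}_{k'}}(\vertex^{\ast})\bigr)+O\bigl(e^{-2\epsilon_{\sigma}C_{\min\neckLength}}\bigr),
\]
in which the coefficient difference is strictly nonzero, with magnitude bounded below uniformly over the relevant compact moduli spaces, by the ordering condition of Equation \eqref{Eq:OrderedCoeffs}.

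The main obstacle is promoting this difference into a uniform lower bound in the norm $\norm{\cdot}_{\widetilde{\mu}(\tree)}$, since the explicit weight $e^{\epsilon_{\sigma}(s_{\ast^{in}}(\tree)-\neckLength_{\ast^{in}})}$ decays as the necks above $\vertex^{\ast}$ grow. The resolution is Equation \eqref{Eq:NormRescale}: measuring the difference in the $\widetilde{\mu}(\subtree^{\ast})$ basis, where $\subtree^{\ast}\subset\tree$ is the subtree rooted at $\vertex^{\ast}$, exactly cancels this exponential weight and leaves a positive constant determined by $|\multisecCoeff^{\normal}_{e^{\ast}_{k}}(\vertex^{\ast})-\multisecCoeff^{\normal}_{e^{\ast}_{k'}}(\vertex^{\ast})|$. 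The comparison Equation \eqref{Eq:NormRescale} then converts this $\widetilde{\mu}(\subtree^{\ast})$-bound back into a $\widetilde{\mu}(\tree)$-bound. Since there are only finitely many trees $\tree$ with incoming orbit of action $\leq \actionBound$, finitely many possible locations for $\vertex^{\ast}$ within each such tree, and finitely many branches $\multisec^{\normal}_{\prod,\meld}\in\Multisec^{\normal}_{\prod,\meld}$, taking the minimum of the bounds obtained across all cases furnishes the uniform constants $C_{\min\norm{}}$ and $C_{\min\neckLength}$.
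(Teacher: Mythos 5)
Your proof takes a genuinely different route from the paper's. The paper argues by contradiction: if a sequence $\widecheck{\gluingConfig}_n$ has $\norm{\delbar^{\normal}_\epsilon - \multisec^{\normal}_{\prod,\meld}}_{\widetilde{\mu}(\tree)}\to 0$ with all necks growing, it extracts a partition into $\partitionThick$-long (growing) and $\partitionThick$-short (bounded) necks, projects onto the pieces $\subtree^{\partitionThick}_j$, and uses the compactness of the $\widecheck{\dom}^{C_{\max\neckLength}}_{\glue}(\subtree^{\partitionThick}_j)$ together with the fixed-point lemma to produce, for large $n$, solutions of $\delbar^{\normal}_\epsilon - \multisec^{\normal}(\subtree^{\partitionThick}_j) = 0$ on each piece; since $\ind u = 1$ but the pieces add up, at least one piece has nonpositive index, contradicting the transversality of $\Multisec^{\tangent}$. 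The engine of the paper's proof is thus the index additivity of SFT buildings, not a computation of the free-edge coefficients of $\multisec^{\normal}_{\prod,\meld}$. You instead compute those coefficients explicitly via Equation \eqref{Eq:DelbarPushAncestor} and try to produce a lower bound purely combinatorially from the ordering condition of Equation \eqref{Eq:OrderedCoeffs}. Your first two paragraphs are sound as far as they go: the observation that only $\Bump{\partitionThick,\Cmeld^{\normal}}{}$ with $\partitionThick=(1,\ldots,1)$ survives for $\min\neckLength_i>\Cmeld^{\normal}+\tfrac12$ is correct, the telescoped formula for the coefficients is correct, and the identification of the most recent common branching vertex $\vertex^{\ast}$ and the cancellation of shared ancestral contributions is the right idea (though calling it the ``deepest'' vertex along the path from the root is confusing terminology -- it is the shallowest nontrivial branching, i.e.\ the MRCA).

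However, there is a real gap in your last step, and it is the step you yourself flagged as the ``main obstacle''. When $\vertex^{\ast}$ is not the root vertex, you compute $|a_{k}-a_{k'}|\sim e^{\epsilon_{\sigma}(s_{\ast^{in}}(\tree)-\neckLength_{\ast^{in}})}\cdot|\multisecCoeff^{\normal}_{e^{\ast}_{k}}-\multisecCoeff^{\normal}_{e^{\ast}_{k'}}|$, which tends to zero as the necks above $\vertex^{\ast}$ grow; since all $a_k$ for the free edges then share a common leading-order value cancelled by the $x\Delta_{\NnegativePunctures}$ term, the entire coefficient vector $(a_k)$ and hence $\norm{\cdot}_{\widetilde{\mu}(\tree)}$ decays. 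You propose to rescue this by converting to the $\widetilde{\mu}(\subtree^{\ast})$ norm, in which the exponential weight cancels, and then using Equation \eqref{Eq:NormRescale}. But Lemma \ref{Lemma:FreeEdgeRescale} shows that the inclusion $\transSubbundle^{\normal}(\subtree^{\ast}\git)\hookrightarrow\transSubbundle^{\normal}(\tree\git)$ sends $\widetilde{\mu}_j(\subtree^{\ast})$ to $e^{\epsilon_{\sigma}(s_{\ast^{in}}(\tree)-\neckLength_{\ast^{in}})}\widetilde{\mu}_j(\tree)$, a \emph{small} multiple, so the coefficients of a fixed vector in the $\widetilde{\mu}(\subtree^{\ast})$ basis are \emph{larger} than in the $\widetilde{\mu}(\tree)$ basis: a lower bound on $\norm{\cdot}_{\widetilde{\mu}(\subtree^{\ast})}$ does \emph{not} descend to a lower bound on $\norm{\cdot}_{\widetilde{\mu}(\tree)}$. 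Your conversion goes the wrong direction, and the lower bound you obtain in the $\widetilde{\mu}(\subtree^{\ast})$ norm is exactly the quantity that fails to survive in the norm the lemma actually asks for. (As stated, the paper's Equation \eqref{Eq:NormRescale} and the proposed usage in the paper's own proof share this sign ambiguity -- but the paper's proof does not depend on the combinatorial cancellation being detectable in the $\widetilde{\mu}(\tree)$ norm, because its conclusion rests on the index contradiction.) To close this gap you would need to either replace the free-edge coefficient argument by the paper's transversality/index argument, or explain why a sequence of configurations with a deep $\vertex^{\ast}$ and all necks growing is incompatible with $\delbar^{\tangent}_{\epsilon}\in\Multisec^{\tangent}$ and $\ind(u)=1$ -- which is precisely what the paper does.
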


\begin{proof}
To simplify notation, write $I = I_{\multisec^{\normal}_{\prod, \meld}}$. Contrary to our desired conclusion, suppose that we have a sequence 
\begin{equation*}
\widecheck{\gluingConfig}_{n} = \left(\vec{u}_{n}, \vec{C}_{n}\right), \quad n\in \N, \quad \lim_{n\rightarrow \infty} \norm{\left(\delbarEpsilon^{\normal} - \multisec^{\normal}_{\prod, \meld}\right)I \widecheck{\gluingConfig}_{n}}_{\widetilde{\mu}(\tree)} = 0
\end{equation*}
for which the minimum of the neck lengths of the $\glue I \widecheck{\gluingConfig}_{n}$ tend to $\infty$. Possibly after passing to a subsequence, there is a constant $C_{\max \neckLength}$ and a partition $\partition$ of the gluing edges $\{\edgeGluing\} = \{ \edgeGluing_{\longi} \} \sqcup \{ \edgeGluing_{\shorti}\}$ of the gluing edges into long and short gluing edges such that $C_{\longi, n}$ all tend to $\infty$ and the $C_{\shorti, n} \leq C_{\max \neckLength}$ for all $n$. Alternatively the $\longi$ neck lengths of $\glue I\widecheck{\gluingConfig}_{n}$ all tend to $\infty$ and we have a uniform bound on the $\shorti$ neck lengths. Choosing another subsequence, we can assume that each $\longi$ neck length is greater than $\Cmeld + 1$.

Then the sequence $\glue I \widecheck{\gluingConfig}_{n}$ lives in the subspace
\begin{equation*}
\begin{gathered}
\glue I \widecheck{\dom}^{\partition, C_{\max \neckLength}}_{\glue} \subset \ModSpaceThick(\tree)/\R_{s}\\
\widecheck{\dom}^{\partition, C_{\max \neckLength}}_{\glue} = \left(\prod \widecheck{\dom}^{C_{\max \neckLength}}_{\glue}(\subtree^{\partition}_{j})\right) \times [\Cmeld^{\normal} + 1, \infty)^{\# \longi}\\
\widecheck{\dom}^{C_{\max \neckLength}}_{\glue}(\subtree^{\partition}_{j}) = \left(\prod_{\vertex_{i} \in \subtree^{\partition}_{j}} \ModSpaceThick(\vertex_{i})/\R_{s} \right) \times \left( \prod_{\edgeGluing \in \subtree^{\partition}_{j}} [\Cglue, C_{\max \neckLength}]\right).
\end{gathered}
\end{equation*}
Over this subspace, $\multisec^{\normal}_{\prod, \meld}(\tree)$ is the sum of $\multisec^{\normal}(\subtree^{\partition}_{j})$ (over the indexing variable $j$) and each $\multisec^{\tangent}(\tree) \in \Multisec^{\normal}(\tree)$ is $\COne$ close to the sum of the $\multisec^{\normal}(\subtree^{\partition}_{j})$. We can write
\begin{equation*}
\widecheck{\gluingConfig}_{n} = \left(\vec{u}_{n}, \vec{C}_{n}\right) = \left( (u_{j, n}), \vec{C}_{n}\right)
\end{equation*}
with each $u_{j,n} \in \widecheck{\dom}^{C_{\max \neckLength}}_{\glue}(\subtree^{\partition}_{j})$.

The $\widecheck{\dom}^{C_{\max \neckLength}}_{\glue}(\subtree^{\partition}_{j})$ are compact and so by the assumption of transversality for $\delbarEpsilon - \Multisec$, the operator norms of the linearizations of the $\delbarEpsilon - \multisec(\subtree^{\partition}_{j})$ are bounded from below by a positive constant over each such space using both $\norm{}_{\mu}$ and $\norm{}_{\widetilde{\mu}}$ norms on the normal summands. So if $\delbarEpsilon^{\normal} - \multisec^{\normal}_{\prod, \meld}$ tends to $0$ with respect to $\norm{\cdot}_{\widetilde{\mu}(\tree)}$ as $n \rightarrow \infty$ so will each projection $\pi_{j}(\delbarEpsilon^{\normal} - \multisec^{\normal}_{\prod, \meld})$ onto the $\transSubbundle(\subtree^{\partition}_{j})$ summand of $\transSubbundle$. Then we'll have
\begin{equation*}
\begin{aligned}
\norm{\left(\delbarEpsilon^{\normal} - \multisec^{\normal}_{\prod, \meld}(\subtree^{\partition}_{j})\right)(u_{j, n})}_{\widetilde{\mu}(\subtree^{\partition}_{j})} &< \const\norm{\pi_{j}\left(\delbarEpsilon^{\normal} - \multisec^{\normal}_{\prod, \meld}\right) }_{\widetilde{\mu}(\subtree^{\partition}_{j})} \\
&< \const\norm{\pi_{j}\left(\delbarEpsilon^{\normal} - \multisec^{\normal}_{\prod, \meld}\right)}_{\widetilde{\mu}(\tree)} \rightarrow 0
\end{aligned}
\end{equation*}
as $n \rightarrow \infty$ for each $\subtree^{\partition}_{j}$. The first inequality follows from the estimates of Theorem \ref{Thm:Gluing} the second follows from Equation \eqref{Eq:NormRescale}.

Therefore there will be some large $n_{0}$ such that for $n > n_{0}$ we can solve for 
\begin{equation*}
\left(\delbarEpsilon^{\normal} - \multisec^{\normal}_{\prod, \meld}(\subtree^{\partition}_{j})\right)(u_{j, n}) = 0
\end{equation*}
with $u_{j, n} \in \widecheck{\dom}^{C_{\max \neckLength}}_{\glue}(\subtree^{\partition}_{j})/\R_{s}$. The solvability of the above equation follows from the fixed point result of Lemma \ref{Lemma:FixedPointMethod}. But $\ind u_{n} = 1$ and since there is more than one $u_{j, n}$ (for fixed $n$), then at least one of the $u_{j, n}$ must have non-positive index, contradicting transversality of the $\multisec(\subtree^{\partition}_{j})$. Therefore the sequence $\widecheck{\gluingConfig}_{n}$ cannot exist.
\end{proof}

\subsection{Counting $\partialNH$ contributions using the $\multisec^{\normal}_{\Sym, \meld}$}

Now we'll use the above lemma to show that $\NnegativePunctures \neq 2$ contributions to $\partialNH$ can be counted using the $\multisec^{\normal}_{\Sym, \meld}$ rather than the $\multisec^{\normal}_{\prod, \meld}$. With Lemma \ref{Lemma:BoundaryEstimate} in mind, we seek to bound the magnitude of the difference
\begin{equation*}
\multisec^{\normal, \partition}_{\prod} - \multisec^{\normal, \partition}_{\Sym} \in \transSubbundle^{\normal}(\tree\git)
\end{equation*}
using the norm $\norm{\cdot}_{\widetilde{\mu}(\tree)}$.

\begin{lemma}\label{Lemma:BoundaryCloseEnough}
We follow the notation of Lemma \ref{Lemma:BoundaryEstimate} and assume (without loss of generality) that the conclusions hold with $C_{\min \neckLength} > \Cmeld^{\normal} + 1$. Then over the subset $\left\{ \delbarEpsilon^{\tangent} \in \Multisec^{\tangent},\ \min \neckLength_{i} > C_{\min \neckLength} \right\}$, there is a constant $\const$ for which
\begin{equation*}
\norm{\multisec^{\normal, \partition}_{\prod} - \multisec^{\normal, \partition}_{\Sym}}_{\widetilde{\mu}(\tree)} \leq \const e^{-\epsilon_{\sigma}C_{\min \neckLength}}.
\end{equation*}
\end{lemma}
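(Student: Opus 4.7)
The plan is to exploit the fact that $\multisec^{\normal, \partition}_{\prod} - \multisec^{\normal, \partition}_{\Sym}$ has an entirely explicit expression as a sum indexed by the $\partition$-bad subtrees of $\tree$, and to express that sum in the $\widetilde{\mu}(\tree)$ basis.

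First, I would unpack Equation \eqref{Eq:MultisecSymDef}. For each $\subtree^{\partition, \notplane}_{i}$ contained in a bad subtree $\subtree^{\partition, \bad}$ with unique outgoing free edge $\edge_{k}$ of $\tree$, the modification from $\multisec^{\normal}(\subtree^{\partition, \notplane}_{i}\git)$ to $\multisec^{\normal, \partition}_{\Sym}(\subtree^{\partition, \notplane}_{i}\git)$ consists precisely of dropping the single summand $\multisecCoeff^{\normal}_{k}(\subtree^{\partition, \notplane}_{i}\git)\widetilde{\mu}^{\normal}_{k}(\subtree^{\partition, \notplane}_{i}\git)$, while for $\subtree^{\partition, \notplane}_{i}$ not contained in any bad subtree the two sections agree. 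Summing over bad subtrees therefore gives
\begin{equation*}
\multisec^{\normal, \partition}_{\prod} - \multisec^{\normal, \partition}_{\Sym} = \sum_{\subtree^{\partition, \bad}} \multisecCoeff^{\normal}_{k}(\subtree^{\partition, \notplane}_{i}\git)\widetilde{\mu}^{\normal}_{k}(\subtree^{\partition, \notplane}_{i}\git),
\end{equation*}
with $\subtree^{\partition, \notplane}_{i}$ and $\edge_{k}$ determined by $\subtree^{\partition, \bad}$ as above.

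Second, I would rewrite each $\widetilde{\mu}^{\normal}_{k}(\subtree^{\partition, \notplane}_{i}\git)$ in the basis $\widetilde{\mu}^{\normal}_{k}(\tree)$. In the $\NnegativePunctures \geq 2$ regime no bad subtree can coincide with $\tree$ (its root is necessarily a non-planar vertex), so the incoming edge of $\subtree^{\partition, \notplane}_{i}$ is a $\partition$-long edge which I will write $\edgeGluingNotPlane_{i^{in}}$. Definition \ref{Def:Svars} then yields $s_{k}(\tree) - s_{k}(\subtree^{\partition, \notplane}_{i}) = s_{i^{in}}(\tree) - \neckLength_{i^{in}}$, so by the definition of the rescaled basis and Lemma \ref{Lemma:FreeEdgeRescale}
\begin{equation*}
\widetilde{\mu}^{\normal}_{k}(\subtree^{\partition, \notplane}_{i}\git) = e^{\epsilon_{\sigma}(s_{i^{in}}(\tree) - \neckLength_{i^{in}})}\widetilde{\mu}^{\normal}_{k}(\tree).
\end{equation*}
Since $s_{i^{in}}(\tree) < 0$ always and the hypothesis $\min \neckLength_{i} > C_{\min \neckLength}$ applies in particular to $\neckLength_{i^{in}}$, the exponential prefactor is bounded above by $e^{-\epsilon_{\sigma} C_{\min \neckLength}}$.

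To conclude I would invoke the triangle inequality: the coefficients $\multisecCoeff^{\normal}_{k}(\subtree^{\partition, \notplane}_{i}\git)$ are fixed by the choices made in Section \ref{Sec:SingleVertexMultisecDef} and hence uniformly bounded in absolute value, and for a fixed action bound $\actionBound$ there are only finitely many admissible triples $(\tree, \partition, \subtree^{\partition, \bad})$, so $\const := \sup \sum_{\subtree^{\partition, \bad}} |\multisecCoeff^{\normal}_{k}(\subtree^{\partition, \notplane}_{i}\git)|$ is finite and independent of the gluing configuration. The only real obstacle worth flagging is the combinatorial verification that the incoming edge of $\subtree^{\partition, \notplane}_{i} \subset \subtree^{\partition, \bad}$ is indeed a $\partition$-long $\edgeGluingNotPlane$ — this uses that a $\subtree^{\partition, \plane}$ has no outgoing edges and hence cannot be the root of $\subtree^{\partition, \bad}$ — after which the estimate is just a direct translation of the definitions.
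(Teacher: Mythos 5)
Your proof is correct and follows essentially the same route as the paper: unpack Equation \eqref{Eq:MultisecSymDef} to write the difference as a finite sum over bad subtrees of dropped terms, change basis via Lemma \ref{Lemma:FreeEdgeRescale} to expose the $e^{\epsilon_{\sigma}(s_{i^{in}}(\tree) - \neckLength_{i^{in}})}$ prefactor, and conclude using $s_{i^{in}}(\tree) < 0$ and the lower bound on neck lengths. The only cosmetic divergence is that the paper's Equation \eqref{Eq:MultisecDiff} carries an additional bump-function factor $\Bump{\Cmeld^{\normal}-\half}{\Cmeld^{\normal} + \half}(\neckLength_{i^{in}})$ which you omit; since this factor is valued in $[0,1]$ and is identically $1$ on the set $\{\min\neckLength_i > C_{\min\neckLength}\}$, your cleaner formula gives the same bound. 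Your closing remark spelling out why the incoming edge of a bad subtree is a $\partition$-long $\edgeGluingNotPlane$ is a valid observation and in fact fills in a justification the paper disposes of in a single sentence just before the lemma.
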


It follows from the definitions of our sections and Lemma \ref{Lemma:FreeEdgeRescale} that
\begin{equation}\label{Eq:MultisecDiff}
\begin{aligned}
\multisec^{\normal, \partition}_{\prod} - \multisec^{\normal, \partition}_{\Sym} &= \sum_{k} \Bump{\Cmeld^{\normal}-\half}{\Cmeld^{\normal} + \half}(\neckLength_{i^{in}})\multisecCoeff^{\normal}_{k}(\subtree^{\partition, \notplane}_{i}\git)\widetilde{\mu}^{\normal}_{k}(\subtree^{\partition, \notplane}_{i}\git)\\
&= \sum_{k} \Bump{\Cmeld^{\normal}-\half}{\Cmeld^{\normal} + \half}(\neckLength_{i^{in}})e^{\epsilon_{\sigma}(s_{i^{in}}(\tree^{\notplane}) - \neckLength_{i^{in}})}\multisec_{k}(\subtree^{\partition, \notplane}_{i})\widetilde{\mu}_{k}(\tree)
\end{aligned}
\end{equation}
where $k$ index the outgoing edges of $\tree$ which are outgoing edges of the bad subtrees of $\tree$ with each $\subtree^{\partition, \notplane}_{i}$ in the above summands being contained in some $\subtree^{\partition, \bad}_{i}$ and having incoming gluing edge $\edgeGluing_{i^{in}}$. From Equation \eqref{Eq:MultisecDiff} we see that for each $\multisec^{\normal, \partition}_{\prod} \in \Multisec^{\normal, \partition}_{\prod}$ the difference $\multisec^{\normal, \partition}_{\prod} - \multisec^{\normal, \partition}_{\Sym}$ is bounded over $\dom_{\glue}(\tree)$ with respect to both $\norm{}_{\mu}$ and $\norm{}_{\widetilde{\mu}}$. However with respect to $\norm{}_{\widetilde{\mu}(\tree)}$ we see that $\multisec^{\normal, \partition}_{\prod} - \multisec^{\normal, \partition}_{\Sym} \rightarrow 0$ as the neck lengths of the incoming gluing edges of bad subtrees go to $\infty$ due to the $e^{-\epsilon_{\sigma}\neckLength_{i^{in}}}$ scaling factors in Equation \eqref{Eq:MultisecDiff}. Over the subset $\left\{ \delbarEpsilon^{\tangent} \in \Multisec^{\tangent},\ \min \neckLength_{i} > C_{\min \neckLength} \right\}$, the neck length of the gluing edge ending on each bad subtree is $> C_{\min \neckLength}$. Therefore the estimate of the lemma is established with $\const$ determined by the coefficients $\multisecCoeff_{k}(\subtree^{\partition, \bad}_{i})$ associated to bad subtrees.

\begin{lemma}\label{Lemma:EquivalentZeros}
Over each homotopy class of $\ind = 1, \NnegativePunctures \geq 2$ maps $\Map(\orbit, \orderedOrbitSet)$, the algebraic counts of points in the zero sets associated to $\Multisec^{\normal}_{\prod, \meld}$ and $\Multisec^{\normal}_{\Sym, \meld}$ are equivalent.
\end{lemma}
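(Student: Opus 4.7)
The plan is to construct a parametric cobordism between the two zero sets by linear interpolation of the normal multisections, with the crucial compactness coming from the two estimates already in place. Specifically, I would form the path
\begin{equation*}
\Multisec^{\normal}_{t} = (1-t)\Multisec^{\normal}_{\Sym, \meld} + t\Multisec^{\normal}_{\prod, \meld}, \quad t \in [0,1],
\end{equation*}
whose branches are convex combinations of corresponding branches at the endpoints. Combined with the fixed $\Multisec^{\tangent}$ this yields a path of multisections of $\transSubbundle(\tree)$. By Lemma \ref{Lemma:ZeroSetInRedModSpace}, every solution to $\delbarEpsilon \in \Multisec_{t}$ sits inside $\glue\circ I_{\multisec^{\normal}_{t}}(\widecheck{\dom}_{\glue}(\tree))$, so one works over the reduced space $\widecheck{\dom}_{\glue}(\tree)$.

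The next step is to show that the total zero locus $\{(t, u) : \delbarEpsilon u \in \Multisec_{t}\}$ is compact. The only possible escape route is a neck length $\neckLength_{i}$ tending to infinity. Here the two recent lemmas combine cleanly: I choose $C_{\min \neckLength}$ large enough both to apply Lemma \ref{Lemma:BoundaryEstimate} and to satisfy $\const\, e^{-\epsilon_{\sigma}C_{\min \neckLength}} < \tfrac{1}{2} C_{\min \norm{}}$, with $\const$ the constant of Lemma \ref{Lemma:BoundaryCloseEnough}. Then on the neck-large subset of $\widecheck{\dom}_{\glue}(\tree)$, a triangle inequality gives
\begin{equation*}
\norm{\delbarEpsilon^{\normal} - \multisec^{\normal}_{t}}_{\widetilde{\mu}(\tree)} \geq C_{\min \norm{}} - (1-t)\const\, e^{-\epsilon_{\sigma}C_{\min \neckLength}} > \tfrac{1}{2}C_{\min \norm{}}
\end{equation*}
for every branch of $\Multisec^{\normal}_{t}$ and every $t \in [0,1]$. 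Hence no zero occurs in this region at any time, and all zeros of the family lie in the compact subset $\{\max \neckLength_{i} \leq C_{\min \neckLength}\}$.

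Having secured compactness, I would then invoke the parametric multisection transversality machinery reviewed in \S\ref{Sec:MultisecPrelim} (following \cite{BH:ContactDefinition} and \cite[Lemma 2.2.8]{BH:ContactDefinition}) to $\COne$-perturb the path $\Multisec^{\normal}_{t}$, keeping the endpoints $t=0, 1$ fixed and the perturbations small enough not to violate the triangle-inequality estimate above, so that the total parametric zero locus becomes a weighted branched $1$-manifold with boundary living entirely over $\{0,1\}$. Standard oriented cobordism then identifies the signed counts at the two endpoints, which is exactly the lemma. The analogue of transversality at $\ind = 1$ used in Lemma \ref{Lemma:ProdMultisecTransversality} for $\Multisec^{\normal}_{\prod, \meld}$ applies equally to $\Multisec^{\normal}_{\Sym, \meld}$, giving a well-defined algebraic count at each endpoint.

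The main obstacle is obtaining the clean separation of scales in the compactness estimate: Lemma \ref{Lemma:BoundaryEstimate} supplies a positive lower bound $C_{\min \norm{}}$ on $\norm{\delbarEpsilon^{\normal} - \multisec^{\normal}_{\prod, \meld}}_{\widetilde{\mu}(\tree)}$ that is independent of $C_{\min \neckLength}$, while Lemma \ref{Lemma:BoundaryCloseEnough} supplies an upper bound on $\norm{\multisec^{\normal}_{\prod, \meld} - \multisec^{\normal}_{\Sym, \meld}}_{\widetilde{\mu}(\tree)}$ that decays exponentially in $C_{\min \neckLength}$. The choice of the $\widetilde{\mu}(\tree)$ norm (rather than the $\mu$-norm) is essential for making these two bounds compatible, because the rescaling factor $e^{\epsilon_{\sigma}(s_{i} - \neckLength_{i})}$ between the two norms is exactly what shrinks the multisection difference along bad subtrees. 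Once that separation is in hand, the homotopy argument is essentially mechanical; a secondary concern, namely that the $\COne$-perturbation of the path need not respect the symmetries $\Sym(\Multisec^{\normal}_{\meld, \Sym})$ away from the endpoints, is harmless since we only require invariance of the algebraic count, not equivariance throughout the homotopy.
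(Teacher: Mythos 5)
Your proof is correct and follows essentially the same route as the paper: interpolate linearly between the two normal multisections, use Lemma \ref{Lemma:ZeroSetInRedModSpace} to pass to the reduced space, combine Lemmas \ref{Lemma:BoundaryEstimate} and \ref{Lemma:BoundaryCloseEnough} to show the parametric zero set avoids the neck-large region (hence is compact), and finish with a generic $\COne$-small perturbation of the path to obtain a transversely cut out weighted branched cobordism. Your version fills in the triangle-inequality calculation and the norm-choice discussion more explicitly than the paper does, but there is no substantive difference in approach.
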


\begin{proof}
Take $\Multisec_{T}, T\in [0, 1]$ to be $\COne$ close to the multisection
\begin{equation*}
\Multisec^{\tangent} + \Multisec^{\normal}_{T}, \quad \Multisec^{\normal}_{T} = \left\{ \multisec^{\normal}_{T} = (1-T)\multisec^{\normal}_{\prod, \meld} + T\multisec^{\normal}_{\Sym, \meld} \right\}
\end{equation*}
Generically we can assume that the zero set of $\Multisec_{T}$  in is $[0, 1] \times \Map(\orbit, \orderedOrbitSet)$ is transversely cut out. For each $\tree$, the zero set in $[0, 1] \times \ModSpaceThick(\tree)/\R_{s}$ is in unions of the the images of the maps
\begin{equation*}
[0, 1] \times \widecheck{\dom}_{\glue}(\tree) \rightarrow [0, 1] \times \ModSpaceThick(\tree)/\R_{s}, \quad (T, \vec{u}, \vec{C}) \mapsto (T, \glue I_{\multisec^{\normal}_{T}}(\vec{u}, \vec{C})).
\end{equation*}
For $C_{\min \neckLength}$ sufficiently large the estimates of Lemmas \ref{Lemma:BoundaryEstimate} and \ref{Lemma:BoundaryCloseEnough} tell us that this cobordism is disjoint from the $[0, 1] \times \{ \min_{i} \neckLength_{i} > C_{\min \neckLength} \}$. Therefore the zero set is contained in the subspaces $[0, 1] \times \{ \min_{i} \neckLength_{i} \leq C_{\min \neckLength} \}$ which are compact. Moreover, the union of these subspaces over all $T$ is compact. So the zero set for $\Multisec_{T}$ is an oriented, branched, weighed $1$-dimensional cobordism between the zero-dimensional zero sets for $\Multisec$ and $\Multisec_{\Sym}$ in $[0, 1] \times \Map(\orbit, \orderedOrbitSet)$.
\end{proof}

\section{Orientations}\label{Sec:Orientations}

In this section we determine orientations for the $\R_{s} \times \Nhypersurface$ moduli spaces from the orientations of $\R_{s} \times \divSet$ and $\posNegRegionComplete$ moduli spaces. We then compute signs associated to the $\NnegativePunctures = 0, 1$ contributions to our contact homology differential in Lemmas \ref{Lemma:PlaneSigns} and \ref{Lemma:CylinderSigns}, respectively. In \S \ref{Sec:OrientationNgeq2} we show that there are no $\NnegativePunctures \geq 2$ contributions to the contact homology differential. In \S \ref{Sec:MainComputationComplete} we show that these computations complete the proof of Theorem \ref{Thm:MainComputation}.

\subsection{Review of the SFT orientation scheme}

Let's review some preliminary notions. For a finite dimensional vector space $V$, we write $\det(V) = \wedge^{\dim(V)}V$ for the determinant line whose non-zero elements will be written $\orientation(V)$. An orientation of $V$ is a choice of $\orientation(V)$ up to positive scalar multiplication. A Fredholm map $F: \Banach \rightarrow \fancyVB$ between Banach manifolds with tangent map $\Dlinearized = TF$ determines a determinant line bundle
\begin{equation*}
\R \rightarrow \Big(\det(\Dlinearized) = \det(\ker \Dlinearized) \otimes \det(\coker \Dlinearized)^{\ast}\Big) \rightarrow \Banach.
\end{equation*}
See, eg. \cite[\S A.2]{MS:Curves}. An orientation $\orientation(\Dlinearized)$ of $\Dlinearized$ is a continuous choice of orientation of $\det(\Dlinearized)$, ie. a homotopy class of global trivialization $\totalspace(\det(\Dlinearized)) \simeq \R \times \Banach$.

Suppose that our SFT counting problem concerns the symplectization $\R\times M$ of some $(2n+1)$-dimensional $\Mxi$ or (a completion of) a Liouville domain $(W, \beta)$ bound by $\Mxi$. For the time being we take $\Mxi$ to be an arbitrary contact manifold. In \cite{BM:Orientations} an algorithm is described which orients all determinant line bundles associated to holomorphic curve moduli spaces whence $\Banach$ is a manifold of maps from some $(\Sigma, \domainJ)$ into $\R_{s} \times M$ (respectively, $W$), $\fancyVB$ is a bundle over $\Banach$ whose fiber at $u$ is a Sobolev completion of $\Omega^{0, 1}(u^{\ast}T(\R_{s} \times M))$ (respectively, $\Omega^{0, 1}(u^{\ast}TW)$), with $F = \delbarJ$ and $\Dlinearized$ being the linearized operator. It is important that we use parameterized maps with fixed assignments of orderings to negative punctures of the domain. In order to make the operators described in the choices below Fredholm -- eg. for Equation \eqref{Eq:OrientationsAsymptoticSplitting} -- some care must be taken in choosing weights near punctures \cite[\S 2]{BM:Orientations}, but this will not be important for our calculations and Sobolev completions will be ignored for notational simplicity. For the purposes of computing orientations, we can also omit the data of Teichm\"{u}ller spaces (for $\chi < 0 $ curves) and spaces of domain automorphisms (for $\chi \geq 0$ curves) as both spaces are complex manifolds in the present context.

\begin{choices}\label{Choices:BMAlgo}
The orientation algorithm requires its user to make the following choices:
\begin{enumerate}[label=\textbf{O\arabic*}]
\item \label{Algo:OrientTrivialization} Choose a symplectic trivialization $\framing$ of $(\xi, d\alpha)$ over each closed Reeb orbit $\orbit$. For each $\orbit$ of action $a$, a loop of matrices $S_{\orbit}: \aCircle \rightarrow \End(\R^{2n})$ is determined by $\framing$.
\item \label{Algo:OrientPlane} For each $\orbit$, choose an orientation for the line bundle $\det(\Dlinearized)$ over the space $\Fred_{\orbit}$ of Fredholm operators on the plane
\begin{equation*}
\Dlinearized: \Omega^{0}(\R^{2} \oplus \R^{2n} \rightarrow \C) \rightarrow \Omega^{0, 1}(\R^{2} \oplus \R^{2n} \rightarrow \C)
\end{equation*}
whose elements take the following form on a cylindrical end $\halfcyl_{a, +}$ of $\C$:
\begin{equation}\label{Eq:OrientationsAsymptoticSplitting}
\Dlinearized\eta = \delbar\eta + S|_{(p, q)}(\eta)\otimes dp^{0, 1}, \quad \lim_{p\rightarrow \pm \infty} S|_{(p, q)} = 0_{\R^{2}} \oplus S_{\orbit}|_{q}.
\end{equation}
Write $\orientation(\orbit)$ for this choice of orientation.
\end{enumerate}
\end{choices}

We say that the operators on $\C$ determined by $\framing, \orbit$ above are \emph{capping operators}.

\begin{rmk}\label{Rmk:FredContractible}
As $\Fred_{\orbit}$ is contractible, $\orientation(\orbit)$ is uniquely determined by a choice of orientation of the determinant line for a single $\Dlinearized \in \Fred_{\orbit}$.
\end{rmk}

The orientation scheme depends on some non-canonical choices, and different choices are used in \cite{HT:GluingII} which can also be used to orient all SFT moduli spaces. See \cite{Bao:Orientations} for a comparison. In \cite{BH:ContactDefinition} it is shown that the algorithm of \cite{BM:Orientations} is applicable to orient zero sets $\ZeroSet_{\multisec}$ associated to perturbations $\multisec$ as in \cite[\S 8]{FOOO}. To do this, we see that the linearizations $\Dlinearized$ of $\delbarJ$ and $\Dlinearized - \grad\multisec$ of $\delbarJ - \multisec$ are homotopic as Fredholm operators. So if we have some transversely cut out $\ZeroSet_{\multisec}$, then we obtain an orientation of $T\ZeroSet$ via the identifications
\begin{equation*}
\orientation(T\ZeroSet_{\multisec}) = \orientation(\ker(\Dlinearized - \grad\multisec )) = \orientation(\Dlinearized - \grad\multisec ) \simeq \orientation(\Dlinearized).
\end{equation*}

If are working with transversely cut out maps $u \in \ZeroSet_{\multisec}$ with target a symplectization $\R_{s} \times M$ and $\ind(u) = 1$, the $\R_{s}$-translates $u_{s_{0}} = (s_{0} + s, \pi_{M}u)$ of $u = (s, \pi_{M}u)$ constitute a connected component of $\ZeroSet_{\multisec}$ with $T_{u}\ZeroSet_{\multisec} = \R \partial_{s}$. In this case, we can assign a sign to $u$ via
\begin{equation}\label{Eq:SignFromModuliTangent}
\sgn(u)\in \{\pm 1\}, \quad \orientation(T_{u}\ZeroSet_{\multisec}) = \sgn(u)\R \partial_{s}.
\end{equation}

\subsection{Orientation choices for $\JEpsilon$-Fredholm problems}\label{Sec:OrientationChoices}

To deal with the thickened moduli spaces and multisections for $\R_{s} \times \Nhypersurface$ we combine choices of orientation data for perturbed holomorphic curves in $\R_{s} \times \divSet$ and the $\posNegRegionComplete$.

\begin{choices}
We apply Choices \ref{Choices:BMAlgo} to $\alphaEpsilon$ and $\JEpsilon$ when $\epsilon_{\tau} > 0$ as follows:
\begin{enumerate}
\item[\textbf{O1}] Apply Choices \ref{Choices:BMAlgo}.\ref{Algo:OrientTrivialization} to the closed $\ReebDivSet$ orbits $\orbitDivSet$ of $(\divSet, \alphaDivSet)$. Denote by $\check{\framing}$ choices of trivialization of $\check{\xi}$ associated to each $\orbitDivSet$ of action $a$ and $S_{\orbitDivSet}$ the associated loop $\aCircle \rightarrow \End(\R^{2n - 2})$.

For each $\orbit$ in $\Nhypersurface$ associated to a $\orbitDivSet$ of action $a > 0$ in $\divSet$, our choice of trivialization of $\xi$ and the loop of symmetric matrices determined by this choice are
\begin{equation*}
\framing = \check{\framing} \oplus \R\partial_{\tau} \oplus \R \partial_{\sigma}, \quad S_{\orbit} = S_{\orbitDivSet} \oplus \Diag(-\epsilon_{\tau}, \epsilon_{\sigma}): \aCircle \rightarrow \End(\oplus \R^{2n - 2} \oplus \R^{2})
 \end{equation*}
as described in Equation \eqref{Eq:NormalDnearMcheck}.
\item[\textbf{O2}] Apply Choices \ref{Choices:BMAlgo}.\ref{Algo:OrientPlane} to each operator 
\begin{equation*}
\Dlinearized^{\tangent}: \Omega^{0}(\R^{2} \oplus \R^{2n-2} \rightarrow \C) \rightarrow \Omega^{0, 1}(\R^{2n} \rightarrow \C)
\end{equation*}
associated to orbits $\orbitDivSet$ of $\ReebDivSet$. We write $\orientation^{\tangent}(\orbitDivSet)$ for the orientation determined on the space $\Fred_{\orbitDivSet}$ of Fredholm operators. Applying \cite{BM:Orientations}, this simultaneously yields orientations $\orientation^{\tangent}$ for contact homology problems with targets $\R_{s} \times \divSet$, $\negRegionComplete$, and $\posRegionComplete$.

Let $s$ be any real, smooth function on $\C$ which takes the form $s = p$ for on a positive half-cylinder $\halfcyl_{a, +}$ about the unique puncture at $\infty$ in $\C$. Then the operator
\begin{equation*}
\Dlinearized^{\normal}_{s} = \delbar + \Diag(-\epsilon_{\tau}, \epsilon_{\sigma})\otimes ds^{0, 1}: \Omega^{0}(\R^{2}_{\partial_{\tau}, \partial_{\sigma}} \rightarrow \C) \rightarrow \Omega^{0, 1}(\R^{2} \rightarrow \C)
\end{equation*}
is surjective by Lemma \ref{Lemma:AutoTransverse} with $1$-dimensional kernel,
\begin{equation*}
\det(\Dlinearized^{\normal}_{s}) = \det(\ker \Dlinearized^{\normal}_{s}) = \ker\Dlinearized^{\normal}_{s} = \R\eta^{\normal}_{\plane}, \quad \eta^{\normal}_{\plane} = (0, e^{-\epsilon_{\sigma}s}).
\end{equation*}
For each closed orbit $\orbitDivSet$ of $\ReebDivSet$ we declare that the determinant line of the operator
\begin{equation*}
\Dlinearized^{\tangent}\oplus \Dlinearized^{\normal}_{s}: \Omega^{0}(\R^{2} \oplus \R^{2n - 2} \oplus \R^{2} \rightarrow \C) \rightarrow \Omega^{0, 1}(\R^{2} \oplus \R^{2n - 2} \oplus \R^{2} \rightarrow \C)
\end{equation*}
having kernel $\ker(\Dlinearized^{\tangent})\oplus \ker(\Dlinearized^{\normal}_{s})$ and cokernel $\coker(\Dlinearized^{\tangent})$ is oriented as
\begin{equation}\label{Eq:NormalOrientationChoice}
\orientation(\Dlinearized^{\tangent}\oplus \Dlinearized^{\normal}_{s}) = \orientation^{\tangent}(\ker(\Dlinearized^{\tangent})) \wedge \eta^{\normal}_{\C} \otimes \orientation^{\tangent}(\coker(\Dlinearized^{\tangent}))^{\ast}.
\end{equation}
\end{enumerate}
\end{choices}

Note that the order of the direct summands appearing eg. in Equation \eqref{Eq:NormalDnearMcheck} are reversed in the above choices, so that Equation \eqref{Eq:OrientationsAsymptoticSplitting} is satisfied.

\subsection{Signs for rigid planes}

We now have enough information to compute signs of $\ind = 1$ holomorphic planes in $\R_{s} \times \Nhypersurface$ determined by $\ind = 0$ planes in the the $\posNegRegionComplete$ via Lemma \ref{Lemma:PlaneCount}.

\begin{notation}
For the remainder of this section, write $D = \Dlinearized -\grad\multisec $ for the linearization of $\delbarEpsilon - \multisec$.
\end{notation}

Consider planes $\check{u}_{\pm}: \C \rightarrow \posNegRegionComplete$ positively asymptotic to some closed $\ReebDivSet_{\epsilon}$ orbit $\orbitDivSet$ with $\ind(\check{u}_{\pm}) = 0$. Each $\check{u}_{\pm}$ determines an $\R_{s}$ family of $u^{\pm}: \C \rightarrow \R_{s} \times \Nhypersurface$ whose positive asymptotics are as described in Equation \eqref{Eq:PlaneAsymptotic}. We assume that $\check{u}^{\pm}$ is a transversely cut out having an associated sign $\sgn(\check{u}^{\pm}) \in \{ \pm 1\}$ determined by $\orientation^{\tangent}$.

\begin{lemma}\label{Lemma:PlaneSigns}
The signs associated to $\check{u}_{\pm}$ and $u_{\pm}$ are related by
\begin{equation*}
\sgn(u_{\pm}) = \pm \sgn(\check{u}_{\pm}).
\end{equation*}
\end{lemma}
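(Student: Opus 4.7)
The plan is to exploit the block-diagonal decomposition $D_{u_\pm} = \Dlinearized^{\tangent} \oplus \Dlinearized^{\normal}_s$ established in the proof of Lemma \ref{Lemma:PlaneTransversality} (using the global splitting $T(\R_s \times \Nhypersurface) = \leafTangent \oplus \leafTangentNormal$), and then feed this into the orientation choice of Equation \eqref{Eq:NormalOrientationChoice}. Since $\check{u}_\pm$ is rigid and transverse, both $\ker \Dlinearized^{\tangent}$ and $\coker \Dlinearized^{\tangent}$ vanish; the orientation $\orientation^{\tangent}$ on the (empty) determinant line of $\Dlinearized^{\tangent}$ then coincides with $\sgn(\check{u}_\pm)\cdot 1$, where $1$ is the canonical generator of $\det(0) = \R$. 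Meanwhile Lemma \ref{Lemma:KerDN} gives $\ker \Dlinearized^{\normal}_s = \R\,\eta^{\normal}_\C$ with $\eta^{\normal}_\C = (0, e^{-\epsilon_\sigma s})$, and Lemma \ref{Lemma:AutoTransverse} kills the cokernel. Substituting into \eqref{Eq:NormalOrientationChoice} yields
\begin{equation*}
\orientation\bigl(\ker D_{u_\pm}\bigr) \;=\; \sgn(\check{u}_\pm)\,\eta^{\normal}_\C.
\end{equation*}

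Next I would convert the abstract oriented generator $\eta^{\normal}_\C$ into the geometric $R_s$-translation direction $\partial_s$ so as to apply Equation \eqref{Eq:SignFromModuliTangent}. Using the normal form of Equation \eqref{Eq:UxAsymptoticsPlane}, differentiating $u^{\plane}_{\kercoeff^\plane}$ with respect to $\kercoeff^\plane$ produces exactly $\eta^{\normal}_\C$, while a direct calculation shows that $R_s$-translation of $u_\pm$ by $s_0$ rescales the asymptotic coefficient by $e^{\epsilon_\sigma s_0}$, so that as tangent vectors to the moduli space $\partial_s = \epsilon_\sigma \kercoeff^\plane\,\eta^{\normal}_\C$. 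Hence the oriented line $\R\partial_s$ agrees with $\R\eta^{\normal}_\C$ up to the sign of $\kercoeff^\plane$, and combining with the previous display gives
\begin{equation*}
\sgn(u_\pm) \;=\; \sgn(\check{u}_\pm)\cdot \sgn(\kercoeff^\plane).
\end{equation*}

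Finally I would invoke Equation \eqref{Eq:PlaneAsymptotic}, which pins down $\mp\kercoeff^\plane > 0$ according to whether $\Lie \simeq \posNegRegionComplete$, and this converts the product $\sgn(\check{u}_\pm)\cdot\sgn(\kercoeff^\plane)$ into the desired $\pm \sgn(\check{u}_\pm)$. The main delicacy of the argument lies not in the global Fredholm analysis (which is already encoded in Lemma \ref{Lemma:PlaneTransversality}) but in keeping sign conventions consistent: one must be careful that $\orientation^{\tangent}(\ker\Dlinearized^{\tangent})$ really does reduce to $\sgn(\check{u}_\pm)$ under the trivial-kernel trivial-cokernel identification, and that the conversion between the $\partial_{\kercoeff^\plane}$-parameterization and $\partial_s$-parameterization of the moduli space is performed with the same orientation convention as in \S \ref{Sec:OrientationChoices}. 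Once these conventions are fixed, the rest of the computation is a mechanical application of Equation \eqref{Eq:NormalOrientationChoice}.
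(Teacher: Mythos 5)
Your structural approach matches the paper's: split $D_{u_\pm}$ into leaf-tangent and leaf-normal pieces, reduce the problem to comparing the kernel generator of the reduced normal operator against the reference vector $\eta^{\normal}_\C$ from Equation \eqref{Eq:NormalOrientationChoice}, and then read the sign from the position of $\kercoeff^\plane$ in Equation \eqref{Eq:PlaneAsymptotic}. There are, however, two concrete issues.

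First, the decomposition of $D_{u_\pm}$ coming from Lemma \ref{Lemma:PlaneTransversality} is upper-triangular, $D = \begin{pmatrix} D^{\tangent} & D^{ur} \\ 0 & D^{\normal} \end{pmatrix}$, not block-diagonal. The paper handles this by interpolating through a one-parameter family $D_t$ that scales the off-diagonal term $D^{ur}$ to zero and tracking the one-dimensional kernel through the homotopy; the kernel generator $\partial_s$ at $t=1$ flows to its leaf-normal component $\eta^{\normal}$ at $t=0$. Your write-up silently assumes the off-diagonal is absent, which means $\partial_s$ itself never gets replaced by its normal projection, and this is precisely the step you would need to justify that $\eta^{\normal}_\C$ is the right vector to compare against.

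Second, and more seriously, the step converting $\eta^{\normal}_\C$ into a tangent vector to the moduli space is circular. You invoke Equation \eqref{Eq:UxAsymptoticsPlane} to assert $\partial_{\kercoeff^\plane} u^\plane_{\kercoeff^\plane} = \eta^{\normal}_\C$, but that equation is the parameterization of the thickened moduli space for a fixed $\check{u}: \C \to \R_s\times\divSet$ — it produces a $\kercoeff^\plane$-family with $s$ and $\check{u}$ held constant precisely because the base plane lies in the leaf $\Lie = \R_s\times\divSet$. For the rigid planes $u_\pm$ with image in a $\posNegRegionComplete$ leaf, the paper explicitly states (in the discussion preceding Lemma \ref{Lemma:ThickenedSpacePlane}) that the $\kercoeff^\plane$-parameterization is nothing other than $\R_s$-translation, so $\partial_{\kercoeff^\plane}$ is a nonzero multiple of $\partial_s$, not of $\eta^{\normal}_\C$. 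Trying to compare $\partial_s$ with $\eta^{\normal}_\C$ via this parameterization therefore compares $\partial_s$ with itself and does not locate the sign. What the paper actually does is decompose the constant section $\partial_s$ pointwise along the cylindrical end using the splitting $T(\R_s\times\Nhypersurface) = \leafTangentNormal \oplus \leafTangent$ and read off the leaf-normal component as in Equation \eqref{Eq:PartialSSplitting} — that pointwise computation, fed into the homotopy $D_t$, is what produces the $\pm$ factor. Relatedly, your concluding step asserts that $\sgn(\check{u}_\pm)\cdot\sgn(\kercoeff^\plane) = \pm\sgn(\check{u}_\pm)$, but $\mp\kercoeff^\plane > 0$ gives $\sgn(\kercoeff^\plane) = \mp$, so the product would come out as $\mp\sgn(\check{u}_\pm)$; the argument as you've stated it does not land on the sign required by the lemma, which is another indication that the comparison vector was not set up correctly.
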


\begin{proof}
We recall that $\leafTangent \subset T(\R \times \Nhypersurface)$ is the union of the tangent spaces of the leaves of $\foliationEpsilon$ and $\leafTangentNormal$ is a complement which agrees with $\partial_{\tau} \oplus \partial_{\sigma}$ along $\R_{s}$ times a neighborhood of the dividing set. The Fredholm setup for this situation is as described in the proof of Lemma \ref{Lemma:PlaneTransversality}. Consider the $1$-parameter family of Fredholm operators
\begin{equation*}
D_{t} = \begin{pmatrix}
D^{\tangent} & tD^{ur}\\
0 & D^{\normal}
\end{pmatrix}, \quad t \in [0, 1].
\end{equation*}
Here the operators are
\be
\item $D^{\tangent}$ having $\ind = \dim\ker = 0$ by the index and transversality assumptions on $\check{u}_{\pm}$,
\item $D^{\normal}$ with $\ind = \dim\ker = 1$ and having the same asymptotics as the standard $D^{\normal}_{s}$ over the positive cylindrical end of the domain $\C$,
\item $D^{ur}: \Omega^{0}(\leafTangentNormal) \rightarrow \Omega^{0, 1}(\leafTangent)$ vanishing on the positive cylindrical end of $\C$ is as in the proof of Lemma \ref{Lemma:PlaneTransversality}.
\ee
So the asymptotics of the $D_{t}$ are independent of $t$ and $D_{t}$ gives us a path in the space of Fredholm operators $\Fred_{\orbit}$ with fixed asymptotics.

Decompose $\partial_{s} = \eta^{\normal} + \eta^{\tangent}$ with $\eta^{\normal}$ a section of $\leafTangentNormal$ and $\eta^{\tangent}$ a section of $\leafTangent$. Along the set $\{\tau = 0, \mp \sigma > 0\} \subset \NdividingSet$ which contains the image of the positive cylindrical end of $\C$ under the map $u_{\pm}$,
\begin{equation}\label{Eq:PartialSSplitting}
\eta^{\normal} = \pm e^{-\epsilon_{\sigma}s}\partial_{\sigma}, \quad \eta^{\tangent} =  \partial_{s} \mp e^{-\epsilon_{\sigma}\sigma}\partial_{\sigma}.
\end{equation}
Since $\partial_{s} \in \ker D$ it follows that $D^{\tangent}\eta^{\tangent} = D^{ur}\eta^{\normal}$ and more generally
\begin{equation*}
\eta^{\normal} + t\eta^{\tangent} \in \ker D_{t}.
\end{equation*}

By the assumption that $\check{u}_{\pm}$ is rigid and transversely cut out, the linearization $D^{\tangent}$ is an isomorphism. The zero set in which is lives is a point oriented with a sign $\sgn(\check{u}_{\pm})$. Since $D^{\tangent}$ and $D^{\normal}$ are surjective by assumption, $D_{t}$ is surjective for all $t$ with $\ker(D_{t}) = \R$. Computing orientations at $t = 0$, the sign associated to $u_{\pm}$ is the sign of $\check{u}_{\pm}$ multiplied by some $\delta$ where $\delta \R\eta^{\normal}$ is $\ker D^{\normal}$ as an oriented line. We can homotop the pair $(D^{\normal}, \eta^{\normal})$ to the pair $(D^{\normal}_{s}, \delta\eta^{\normal}_{\plane})$ through pairs of Fredholm operators and non-zero kernel elements by the automatic transversality of operators sharing their asymptotics. Here $D^{\normal}_{s}$ and $\eta^{\normal}_{\plane}$ are defined as in \S \ref{Sec:OrientationChoices}. The $\pm$ sign for $\eta^{\normal}$ in Equation \eqref{Eq:PartialSSplitting} tells us that $\delta = \pm 1$ by comparison with Equation \eqref{Eq:NormalOrientationChoice}, completing the proof of our sign calculation.
\end{proof}

\subsection{Signs for rigid cylindrical curves}

Here we compute signs associated to rigid, $\NnegativePunctures = 1$ solutions to $\delbarGluing = \multisec$ using the $\multisec^{\normal}$ described in Lemmas \ref{Lemma:CylCount} and \ref{Lemma:CylCountDetail}. For the proofs below, we'll assume familiarity with how gluings of capping operators to negative ends of curves are used to determine orientations in \cite{BM:Orientations}.

We'll start with a warm up problem, studying some abstract Fredholm operators over a cylinder, $\Sigma = \R \times \Circle$. Suppose that we have a $D$ operator over $\Sigma$ taking the form $D = D^{\tangent} \oplus D^{\normal}$ where
\be
\item $D^{\tangent}$ acting on sections of a $2n$-plane bundle over $\Sigma$ having asymptotics near punctures described by some $S_{\orbitDivSet}$ associated to orbits $\orbitDivSet$ in $\divSet$ and
\item $D^{\normal}$ acts on a trivial $\R^{2}$ bundle having the same asymptotic behavior as is described by the constant loop of symmetric matrices $\Diag(-\epsilon_{\tau}, \epsilon_{\sigma})$.
\ee
Let's assume that $D^{\tangent}$ is surjective. The operator $D^{\normal}$ is has $\ind = 0$ and we'll assume that it is an isomorphism as is the case with the standard $D^{\normal}_{s}$ over the cylinder as described in Theorem \ref{Thm:NormalDualCoker}. Then $\ker D = \ker D^{\tangent}$. We therefore have two orientations on $\ker D^{\tangent}$ determined by the $\orientation$ and $\orientation^{\tangent}$ via
\begin{equation*}
\orientation(D) = \orientation(\ker D) = \orientation(\ker D^{\tangent}), \quad \orientation^{\tangent}(D^{\tangent}) = \orientation^{\tangent}(D^{\tangent})
\end{equation*}

\begin{lemma}\label{Lemma:CylindricalWarmup}
In the above notation, $\orientation(D^{\tangent}) = \orientation^{\tangent}(D^{\tangent})$.
\end{lemma}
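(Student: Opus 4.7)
The plan is to trace both orientations through the Bourgeois--Mohnke (BM) algorithm and observe that the choice in Equation~\eqref{Eq:NormalOrientationChoice} is precisely what forces them to coincide. Recall that $\orientation(D)$ for a cylindrical Fredholm problem is constructed by gluing a positive capping operator at the negative puncture, obtaining a plane operator in $\Fred_{\orbit^+}$, and using the linear gluing isomorphism
\[
\det(D)\otimes \det(D^{+}_{\orbit^-}) \cong \det(D^{+}_{\orbit^+})
\]
together with the preselected orientations $\orientation(\orbit^\pm)$ on the capping problems. The same construction applied to purely tangent data defines $\orientation^{\tangent}(D^{\tangent})$.

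First, I would choose the capping operator to be split with respect to the $\R^{2n-2}_{\tangent}\oplus \R^{2}_{\partial_\tau,\partial_\sigma}$ decomposition of the asymptotic bundle, and in the normal factor I would take the standard operator $D^{\normal}_{s}$ of Choice~O2. The glued plane operator then also splits as $D_{\plane}^{\tangent}\oplus D^{\normal}_{s}$, and because the splitting is preserved throughout the Fredholm deformation used to construct the gluing isomorphism, the isomorphism itself factors as the tensor product of a tangent gluing isomorphism and a normal gluing isomorphism. The tangent factor is exactly the one defining $\orientation^{\tangent}(D^{\tangent})$ from $\orientation^{\tangent}(\orbitDivSet^{\pm})$.

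Next, I would analyze the normal factor. By hypothesis $D^{\normal}$ on the cylinder is an isomorphism, so its determinant line is canonically trivialized with positive orientation. The capping normal operator $D^{\normal}_{s}$ on $\C$ is surjective with $\ker = \R\,\eta^{\normal}_{\C}$, and the plane operator after gluing has $\ker = \R\,\eta^{\normal}_{\plane}$ by Theorem~\ref{Thm:NormalDualCoker}(1). A short computation, using that $\eta^{\normal}_{\C}=(0,e^{-\epsilon_{\sigma}s})$ is the unique kernel element and that gluing an isomorphism to a surjection produces the obvious surjection with kernel the image of the cap's kernel, shows the normal gluing isomorphism carries $1\otimes \eta^{\normal}_{\C}$ to $\eta^{\normal}_{\plane}$. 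Thus the normal orientations on the two sides of Choice~O2 cancel each other in the gluing identity.

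Finally, inserting the orientation choice $\orientation(\orbit^{\pm}) = \orientation^{\tangent}(\orbitDivSet^{\pm})\wedge \eta^{\normal}_{\C}$ (from Equation~\eqref{Eq:NormalOrientationChoice}) into the gluing identity for $\orientation(D)$ and factoring out the canceling normal contribution recovers the gluing identity defining $\orientation^{\tangent}(D^{\tangent})$, proving the claim. The main obstacle will be the Koszul/sign bookkeeping when reorganizing $\det(D^{\tangent}\oplus D^{\normal})\otimes \det(D^{+,\tangent}\oplus D^{\normal}_{s})$ into tangent and normal factors: the various kernels and cokernels have differing parities, so I must verify that the ordering convention implicit in Equation~\eqref{Eq:NormalOrientationChoice} (tangent kernel, then $\eta^{\normal}_{\C}$, then tangent cokernel dual) is compatible with the direct-sum identification used in the BM gluing isomorphism, so that no unwanted sign is introduced.
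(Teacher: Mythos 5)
Your proposal follows essentially the same strategy as the paper's proof: glue split capping operators (tangent $\oplus$ normal) at the negative end, note that the normal cylinder operator is an isomorphism while the normal capping and glued plane operators are surjective with one-dimensional kernels, and use the ordering in Equation~\eqref{Eq:NormalOrientationChoice} to factor out the normal contribution and force the tangent orientations to agree. The paper carries out the final step as an explicit computation of the discrepancy factor $\delta$ rather than a conceptual ``cancellation of normal factors,'' but the content is the same; in particular, your concern about Koszul signs is resolved in the paper by the observation that in the displayed chain of equalities the rank-one normal kernel $\eta^{\normal}_{\plane}$ always sits in the rightmost slot, so no commutation past tangent factors is ever needed.
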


\begin{proof}
To compute orientations as in \cite{BM:Orientations}, we glue capping operators onto the bottom of our cylinder. We will do this for both the $D^{\tangent}$ and $D$ operators.

Let $D^{\tangent}_{\downarrow}$ be a capping operator on a rank $2n$ vector bundle over $\C$ for $D^{\tangent}$ at the negative end of the cylinder. Possibly after making the relative Chern number of this bundle non-zero, we can assume that $D^{\tangent}_{\downarrow}$ has positive index and so can be presumed surjective by Sard-Smale. Likewise we can take $D^{\downarrow}  = D^{\tangent}_{\downarrow} \oplus D^{\normal}_{\downarrow}$ to use as our capping operator for $D$ where $D^{\normal}_{\downarrow}$ is the $D^{\normal}_{s}$ for a trivial $\R^{2}$ bundle over $\C$ as described in \S \ref{Sec:OrientationChoices}. We write $D^{\tangent}_{C}$ for the gluing of the operators $D^{\tangent}$ and $D^{\tangent}_{\downarrow}$ with some large neck length parameter $C$. Likewise write $D^{\normal}_{C}$ for the gluing of $D^{\normal}$ and $D^{\normal}_{\downarrow}$ with neck length $C$ and $D_{C} = D^{\tangent}_{C} \oplus D^{\normal}_{C}$. Of course $D_{C}^{\normal}$ will be surjective with $1$-dimensional kernel, say $\eta^{\normal}_{C}$ for all $C$, with $\eta^{\normal}_{C}$ having the same asymptotics as the $\eta^{\normal}_{\C}$ of \S \ref{Sec:OrientationChoices}.

Appealing to surjectivity and applying gluing for large $C$, we have isomorphisms $\ker(D_{C}^{\tangent}) \simeq \ker(D^{\tangent}) \oplus \ker(D^{\tangent}_{\downarrow})$ yielding
\begin{equation*}
\orientation^{\tangent}(D^{\tangent}_{C}) = \orientation^{\tangent}(\ker D^{\tangent})\wedge\orientation^{\tangent}(\ker D^{\tangent}_{\downarrow})
\end{equation*}
We compute that for $\delta = \pm 1$ satisfying $\orientation(D^{\tangent}) = \delta\orientation^{\tangent}(D^{\tangent})$,
\begin{equation*}
\begin{aligned}
\orientation(D_{C}) &= \orientation(\ker D) \wedge \orientation(\ker D_{\downarrow}) & \text{($D_{C}$ gluing isomorphism)}\\
&= \orientation(\ker D^{\tangent}) \wedge \orientation^{\tangent}(\ker D^{\tangent}_{\downarrow}) \wedge \eta^{\normal}_{\plane}\\
&= \delta \orientation^{\tangent}(\ker D^{\tangent}) \wedge \orientation^{\tangent}(\ker D^{\tangent}_{\downarrow}) \wedge \eta^{\normal}_{\plane} & \text{(Equation \eqref{Eq:NormalOrientationChoice})}\\
& = \delta \orientation^{\tangent}(D^{\tangent}_{C}) \wedge \eta^{\normal}_{\C} & \text{($D^{\tangent}_{C}$ gluing isomorphism).}
\end{aligned}
\end{equation*}
But $\orientation(D_{C}) = \orientation^{\tangent}(D^{\tangent}_{C}) \wedge \eta^{\normal}_{\plane}$ again by Equation \eqref{Eq:NormalOrientationChoice}. So $\delta = 1$, completing the proof.
\end{proof}

Now we seek to compute the sign associated to the perturbed holomorphic buildings of \S \ref{Sec:CylCount} built from a rigid $\check{u}^{\notplane}: \Sigma^{\notplane} \rightarrow \R_{s} \times \divSet$ having $\NnegativePuncturesThick \geq 2$ negative punctures and $\NnegativePuncturesThick - 1$ rigid planes $\check{u}^{\plane}_{i}$ in the $\posNegRegionComplete$ using some neck length parameters $C^{\notplane}_{i}$. See Figure \ref{Fig:MNotPlaneOne}. We assume that the $\check{u}$ are transversely cut out. Let $u$ be a solution to $\delbarGluingNormal = \multisec^{\normal}$ as described in \S \ref{Sec:CylCount}, whose notation will be used freely.

\begin{lemma}\label{Lemma:CylinderSigns}
In the above notation,
\begin{equation*}
\sgn(u) = \sgn(\check{u})\prod_{2}^{\NnegativePunctures - 1}\sgn(\check{u}_{i}).
\end{equation*}
Moreover, if we change the gluing configuration underlying $u$ by modifying the indices of the negative punctures of $\Sigma^{\notplane}$, then the sign is unchanged.
\end{lemma}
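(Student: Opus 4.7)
The plan is to proceed in the spirit of Lemma \ref{Lemma:CylindricalWarmup}, decomposing the linearization $D = \Dlinearized - \grad \multisec^{\normal}$ at the zero $u$ according to the splitting $T(\R_{s} \times \Nhypersurface) = \leafTangent \oplus \leafTangentNormal$ of \S \ref{Sec:TangentSplitting}. By the integrability of $\leafTangent$, the operator $D$ is block upper-triangular with diagonal blocks $D^{\tangent}$ and $D^{\normal} - \grad \multisec^{\normal}$; as in the proof of Lemma \ref{Lemma:PlaneTransversality}, a homotopy through Fredholm operators with fixed asymptotics eliminates the upper-right block without affecting orientations. So $\orientation(D) = \orientation(D^{\tangent}) \wedge \orientation(D^{\normal} - \grad \multisec^{\normal})$ after this reduction.

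For the tangent block, $D^{\tangent}$ on the glued surface $\Sigma$ is precisely obtained by gluing the tangent linearization of $\check{u}^{\notplane}: \Sigma^{\notplane} \to \R_{s} \times \divSet$ to the tangent linearizations of each plane $\check{u}^{\plane}_{i}: \C \to \posNegRegionComplete$ along the short gluing necks. The BM orientation scheme is compatible with such gluings: the oriented isomorphism $\ker D^{\tangent} \simeq \ker D^{\tangent}(\check{u}^{\notplane}) \oplus \bigoplus_{i \neq k} \ker D^{\tangent}(\check{u}^{\plane}_{i})$ produced by gluing identifies $\orientation^{\tangent}(D^{\tangent})$ with $\orientation^{\tangent}(\check{u}^{\notplane}) \wedge \bigwedge_{i \neq k} \orientation^{\tangent}(\check{u}^{\plane}_{i})$, i.e.\ with $\sgn(\check{u}) \prod_{i \neq k} \sgn(\check{u}_{i})$ times the canonical generator $\partial_{s}$ of $\ker D^{\tangent}$. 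Applying the argument of Lemma \ref{Lemma:CylindricalWarmup} (with $\Sigma$ now a cylinder thanks to Theorem \ref{Thm:NormalDualCoker}, since $\Sigma \simeq \C^{\ast}$), converts this tangent orientation to $\orientation(D)$: here the surjective normal block contributes no further sign because its kernel generator, after homotopy to $\Dlinearized^{\normal}_{s}$ along the positive end, matches $\eta^{\normal}_{\plane}$ up to a strictly positive rescaling.

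It remains to check that the perturbation term $-\grad\multisec^{\normal}$ does not insert an unwanted sign. From the explicit computation at the end of \S \ref{Sec:CylCount}, the restriction of $D^{\normal} - \grad \multisec^{\normal}$ to the finite-dimensional $(\cokcoeff^{\notplane},\ell_{i})$-directions is given by the block matrix $\Delta_{\NnegativePunctures^{\notplane}} \otimes d\cokcoeff^{\notplane} - \sum \epsilon_{\sigma} \kercoeff^{\plane}_{i} e^{-\epsilon_{\sigma}(\ell_{i} - s_{i})} \widetilde{\mu}^{\normal}_{i} \otimes d\ell_{i}$, whose determinant (computed by expanding along the row indexed by the free end $k$) equals a positive constant times $\prod_{i \neq k}(\multisecCoeff^{\normal}_{k} - \multisecCoeff^{\normal}_{i}) \cdot \prod_{i \in I^{-}}(-1)$. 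Under the ordering condition $\multisecCoeff^{\normal}_{1} < \cdots < \multisecCoeff^{\normal}_{\NnegativePunctures^{\notplane}}$ together with the constraint $i < k$ for $i \in I^{-}$ and $i > k$ for $i \in I^{+}$ from Lemma \ref{Lemma:CylCountDetail}, the two sign contributions cancel, so this determinant is positive and contributes no sign; combining with the previous paragraphs yields the claimed formula $\sgn(u) = \sgn(\check{u}) \prod_{2}^{\NnegativePunctures - 1} \sgn(\check{u}_{i})$.

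The reordering invariance will be the most delicate step and should be handled last. The plan is to consider an adjacent transposition swapping indices $i, i+1$ among the negative punctures of $\Sigma^{\notplane}$. Under BM, $\sgn(\check{u})$ changes by the Koszul sign $(-1)^{|\orbitDivSet_{i}| \cdot |\orbitDivSet_{i+1}|}$, and the relabeling of planes permutes the product $\prod_{i \neq k} \sgn(\check{u}_{i})$ accordingly; when neither $i$ nor $i+1$ equals the free index $k$, the indices $k$ and the partition $I^{\pm}$ are preserved and the two Koszul signs cancel. When the transposition involves $k$, the solvability criterion of Lemma \ref{Lemma:CylCountDetail} together with the ordering of the $\multisecCoeff^{\normal}_{i}$ forces $k$ to be reassigned to the other index, and the same Koszul-sign bookkeeping (now combined with the reordering of the planes and a recount of the $I^{-}$-cardinality in the determinant above) shows that the total sign is preserved. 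The hardest part of the argument will be confirming that these Koszul signs precisely cancel against the changes in the determinantal sign of the perturbation block, which requires that the gradings $|\orbitDivSet_{i}|$ enter the BM orientation of $D^{\tangent}$ and the parity of $\#I^{-}$ in exactly compatible ways; the computation of \S \ref{Sec:OrientationChoices} guarantees this compatibility.
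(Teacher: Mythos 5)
Your first paragraph is essentially the paper's opening move, but the argument goes astray from there in two places, one conceptual and one concrete.

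First, paragraph 3 is unnecessary and unjustified. The sign of $u$ is defined via $\orientation(T_u\ZeroSet_{\multisec})$, and this is determined from $\orientation(\Dlinearized)$ by the straight-line Fredholm homotopy $\Dlinearized - t\grad\multisec$; the multisection makes no further appearance in the sign. You instead try to factor the sign as (BM orientation of the tangent block) $\times$ (determinant of the finite-dimensional normal perturbation block), but you never establish why this factorization is compatible with the BM orientation scheme. It cannot be, in any naive sense: the kernel direction $\partial_{s}$ is not contained in either $\leafTangent$ or $\leafTangentNormal$ — by Equation \eqref{Eq:PartialSSplitting} it splits as $\eta^{\normal}+\eta^{\tangent}$ with both pieces nonzero — so the determinant lines do not split as you implicitly assume. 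The paper sidesteps this entirely: once $D^{ur}$ is homotoped away and $D^{\normal}$ homotoped to $\Dlinearized^{\normal}_{s}$, one appeals to Lemma \ref{Lemma:CylindricalWarmup}, which is precisely the statement (proved via capping-operator gluing, not a determinant calculation) that $\orientation(D^{\tangent}\oplus \Dlinearized^{\normal}_{s}) = \orientation^{\tangent}(D^{\tangent})$. Relatedly, your paragraph 2 invokes the plane kernel generator $\eta^{\normal}_{\plane}$, but over the glued cylinder $\Sigma\simeq\C^{\ast}$ the operator $\Dlinearized^{\normal}_{s}$ is an \emph{isomorphism} (Theorem \ref{Thm:NormalDualCoker}), so there is no such kernel generator; this step is simply wrong as written.

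Second, and more seriously, your reordering argument is incomplete and misses the mechanism the paper actually uses. You attempt a Koszul-sign bookkeeping that you yourself flag as unfinished ("The hardest part of the argument will be confirming..."), and you try to play it off against a determinant-sign recount that, per the above, was never justified in the first place. The paper's argument is much shorter and entirely different: because the planes $\check{u}^{\plane}_{i}$ are \emph{rigid}, the asymptotic orbits $\orbitDivSet_{i}$ at the capped punctures all have $|\orbitDivSet_{i}| = 0$, in particular even CH grading. Then by \cite[Theorem 2]{BM:Orientations}, transposing two negative punctures when at least one is labeled by an even-graded orbit does not change $\orientation(D^{\tangent})$, and hence not $\sgn(u)$. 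Without the observation that rigidity forces evenness of the relevant gradings, your cancellation is not going to close.
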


\begin{proof}
As in Lemmas \ref{Lemma:PlaneTransversality} and \ref{Lemma:PlaneSigns} we can appeal to the integrability of $\leafTangent$ write $D$ as an upper triangular block matrix with component $D^{\tangent}, D^{\normal}$, and $D^{ur}$. Since $D^{\normal}$ has the same asymptotics as the standard $\Dlinearized^{\normal}_{s}$ and $\leafTangentNormal$ is a trivial $\R^{2}$ bundle, $D^{\normal}$ is homotopic through operators sharing the same asymptotics to $\Dlinearized^{\normal}_{s}$, which is an isomorphism by Theorem \ref{Thm:NormalDualCoker}. Likewise, we can eliminate the $D^{ur}$ term via homotopy as in the proof of Lemma \ref{Lemma:PlaneSigns}.

Therefore it suffices to orient the determinant line of the operator $D^{\tangent} \oplus \Dlinearized^{\normal}_{s}$ over our glued curve, which is a cylinder. So we are in the situation of Lemma \ref{Lemma:CylindricalWarmup}. Here the operator $D^{\tangent}$ is a gluing of the $D^{\tangent}$ operator over $\Sigma^{\notplane}$ associated to $\check{u}^{\notplane}$ to the $D^{\tangent}$ operators of the $\check{u}^{\plane}_{i}$, which we'll call $D^{\tangent}_{i}$. If we equip the $D^{\tangent}_{i}$ with orientations $\sgn(\check{u}^{\plane}_{i})\orientation(D^{\tangent}_{i})$ then they will be capping operators for the $\orbitDivSet_{i}$ to which they are positively asymptotic. This follows from Lemma \ref{Lemma:PlaneSigns}. Applying the determinant line gluing isomorphism to these tangent linearized operators, our sign for $D^{\tangent}$ is $\sgn(\check{u})\prod_{1}^{\NnegativePunctures - 1}\sgn(\check{u}_{i})$. Then by Lemma \ref{Lemma:CylindricalWarmup}, the sign for $D^{\tangent} \oplus \Dlinearized^{\normal}_{s}$ is the same. Hence the sign associated to $u$ is the same, yielding the desired sign formula.

Regarding the reordering of punctures: If we swap some punctures of index $i$, $i+1$, then we can compute the sign change for $u$ using the sign change for $\orientation(D^{\tangent})$. Since the $\check{u}^{\plane}_{i}$ are rigid, all the $|\orbitDivSet_{i}|$ are even for $i=2,\dots, \NnegativePuncturesThick$. Each time we swap punctures, at least one of them will be associated to an orbit $\orbitDivSet$ with even CH grading. So there will no sign change for $\orientation(D)$ by \cite[Theorem 2]{BM:Orientations}.
\end{proof}

\subsection{Orientations for $\ind=1$ curves with multiple negative punctures}\label{Sec:OrientationNgeq2}

Finally, we show that $\NnegativePunctures \geq 2$ contributions to $\partialNH$ all algebraically cancel.

\begin{lemma}\label{Lemma:NGeqTwoZeros}
Over each homotopy class of $\ind = 1, \NnegativePunctures \geq 2$ maps $\Map(\orbit, \orderedOrbitSet)$, the algebraic counts of contributions to $\partialNH$ defined using the $\Multisec^{\normal}_{\prod, \meld}$ is zero.
\end{lemma}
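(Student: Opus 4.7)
The plan is to exploit the enlarged symmetry of $\Multisec^{\normal}_{\Sym, \meld}$ provided by Lemma \ref{Lemma:EnlargedSymmetries} to produce a fixed-point-free, orientation-reversing involution on the disjoint union of zero sets over all orderings of the negative asymptotic orbits. By Lemma \ref{Lemma:EquivalentZeros}, the algebraic count using $\Multisec^{\normal}_{\prod, \meld}$ agrees with that using $\Multisec^{\normal}_{\Sym, \meld}$, so I work with the latter. For each tree $\tree$ contributing $\ind = 1$, $\NnegativePunctures \geq 2$ solutions, Lemma \ref{Lemma:EnlargedSymmetries}(3) provides a nontrivial $g \in \Sym(\Multisec^{\normal}_{\Sym, \meld}(\tree))$. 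Inspecting the proofs of parts (1)--(3) of that lemma together with Lemma \ref{Lemma:PatchingSymmetries}, $g$ can be taken to be a transposition of two free edges $\edgeFree_k, \edgeFree_{k'}$ of $\tree$ which are outgoing edges of a common enlarged subtree for the partition $\partition$ in which every gluing edge of $\tree$ is $\partition$-long.

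Next I interpret $g$ simultaneously as (i) the permutation of branches of $\Multisec^{\normal}_{\Sym, \meld}$ induced by its linear action permuting the basis $\widetilde{\mu}^{\normal}_{i}(\tree)$ of $\transSubbundle^{\normal}(\tree\git)$, and (ii) the relabeling of the $k$-th and $k'$-th negative punctures of a parameterized map, which sends $\Map(\orbit, \orderedOrbitSet)$ to $\Map(\orbit, g\orderedOrbitSet)$. The invariance $g\Multisec^{\normal}_{\Sym, \meld} = \Multisec^{\normal}_{\Sym, \meld}$ from the definition of $\Sym$ together with $\Sym_{\NnegativePunctures}$-equivariance of $\delbarEpsilon$ under relabeling make $(u, \multisec) \mapsto (g \cdot u, g\multisec)$ a well-defined involution on the disjoint union of zero sets indexed by orderings of the negative asymptotics. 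For generic choices of the $\multisecCoeff^{\normal}_{i}(\tree^{\notplane}\git)$ in Equation \eqref{Eq:OrderedCoeffs}, $g\multisec \neq \multisec$ for every branch, so the involution is free. In the graded commutative algebra $\chainNH$, the monomials produced by the orderings $\orderedOrbitSet$ and $g\orderedOrbitSet$ are identified up to the Koszul sign $(-1)^{|\orbit_k|\cdot |\orbit_{k'}|}$.

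The main obstacle is the orientation analysis: I must show that the change in $\orientation(\Dlinearized - \grad\multisec)$ induced by $g$ is exactly $-(-1)^{|\orbit_k|\cdot|\orbit_{k'}|}$, so that the Koszul sign is reversed and the two paired contributions cancel. My approach will mirror the sign calculations of Lemmas \ref{Lemma:PlaneSigns} and \ref{Lemma:CylinderSigns}: decompose $\Dlinearized$ into its tangent and normal parts using the splitting of Section \ref{Sec:CurvesNearGamma}, use the gluing compatibility of the Bourgeois--Mohnke orientation scheme with the capping operator conventions of Section \ref{Sec:OrientationChoices}, and reduce the comparison to the effect of a transposition of two negative punctures on the SFT orientation of an $\R_{s} \times \divSet$ moduli space. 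Because $\edgeFree_k$ and $\edgeFree_{k'}$ share an enlarged subtree, the reduction localizes cleanly to this single symplectization factor, sidestepping the complications of planes glued through bad subtrees. The desired sign change is then governed by Theorem 2 of \cite{BM:Orientations}, delivering the extra $-1$ needed for cancellation.
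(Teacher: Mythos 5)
Your initial reduction---switching from $\Multisec^{\normal}_{\prod, \meld}$ to $\Multisec^{\normal}_{\Sym, \meld}$ via Lemma~\ref{Lemma:EquivalentZeros}, passing to the reduced moduli space via Lemma~\ref{Lemma:ZeroSetInRedModSpace}, and extracting a transposition $g \in \Sym(\Multisec^{\normal}_{\Sym,\meld})$ from Lemma~\ref{Lemma:EnlargedSymmetries}(3)---matches the paper exactly. The cancellation mechanism you propose, however, is genuinely different and contains a gap.

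You pair $(u, \multisec) \mapsto (g\cdot u, g\multisec)$, relabeling the underlying parameterized curve, which forces you into an analysis of how the full SFT orientation, the $\widetilde{\mu}^{\normal}_{i}(\tree)$ basis, and the Koszul sign in the graded-commutative algebra all transform under $g$. The paper's argument avoids all of this by staying at a fixed point: because $g$ acts linearly on the fiber of $\transSubbundle^{\normal}(\tree^{\notplane}\git)$ and fixes $0$, the \emph{same} point $(\vec u, \vec C) \in \widecheck{\dom}_{\glue}(\tree)$ lies in the zero loci of both $\multisec$ and $g\multisec$. At that one point the SFT orientation of $\Dlinearized$ and the middle term of the FOOO short exact sequence are literally identical for the two branches; the \emph{only} difference is that $\grad\widecheck{g\multisec} = g\circ\grad\widecheck{\multisec}$ has its $k$th and $k'$th rows swapped, so the two determinants are opposite and the weighted branch contributions at $(\vec u, \vec C)$ cancel. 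No capping operators, no Bourgeois--Mohnke relabeling sign, no Koszul sign enter.

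The concrete gap in your version is the source of the ``extra $-1$.'' You want the orientation change under $g$ to be $-(-1)^{|\orbit_k||\orbit_{k'}|}$ and say it is ``governed by Theorem~2 of \cite{BM:Orientations}.'' But that theorem supplies precisely the Koszul sign $(-1)^{|\orbit_k||\orbit_{k'}|}$ and nothing more---this is exactly why, in the proof of Lemma~\ref{Lemma:CylinderSigns}, a swap involving an even-graded orbit produces \emph{no} sign change. So BM's relabeling sign and the Koszul sign from graded commutativity multiply to $+1$, and your pairing as stated would double the contribution rather than kill it. The missing $-1$ cannot come from the SFT orientation; it must come from the effect of $g$ on the output basis of $\grad\widecheck{\multisec}$, which is exactly the row-swap the paper isolates. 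If you trace that through carefully you will recover the correct sign, but at that point your relabeled curve $g\cdot u$ and all the BM/Koszul bookkeeping have become dead weight, since the row swap already happens between the two branches $\multisec$ and $g\multisec$ at the unrelabeled point. I recommend replacing your fixed-point-free-involution-on-relabeled-curves argument with the paper's observation that the involution acts on \emph{branches at a fixed zero}.
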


\begin{proof}
According to Lemma \ref{Lemma:EquivalentZeros} we can count $\NnegativePunctures \geq 2$ contributions to $\partialNH$ using the normal multisection $\Multisec^{\normal}_{\Sym, \meld}$ of Equation \eqref{Eq:MultisecSymDef} whose zero set we denote $\ZeroSet$. According to Lemma \ref{Lemma:ZeroSetInRedModSpace}, such contributions can be counted as zeros of the multisection $\{ \widecheck{\multisec}^{\normal}_{\Sym, \meld} \}$ over $\widecheck{\dom}_{\glue}(\tree)$ over the various $\tree$ associated to a connected component of $\Map(\orbit, \orderedOrbitSet)$ where the $\widecheck{\multisec}^{\normal}_{\Sym, \meld}$ are defined in Definition \ref{Def:MultisecSymmetryGroup}. At each such zero we have a short exact sequence
\begin{equation*}
0 \rightarrow T(\ZeroSet/\R_{s}) \rightarrow \left(\left(\widecheck{\multisec}^{\normal}_{\Sym, \meld}\right)^{-1}\transSubbundle^{\normal}(\tree^{\notplane}\git) \subset \widecheck{\dom}_{\glue}(\tree)\right) \xrightarrow{\grad \widecheck{\multisec}^{\normal}_{\Sym, \meld}} \transSubbundle^{\normal}(\tree^{\notplane}\git) \rightarrow 0.
\end{equation*}
as in \cite[Equation (8.2.2)]{FOOO}. Following the reference each such zero is orientated as the determinant of $\grad \widecheck{\multisec}^{\normal}_{\Sym, \meld}$ expressed as a matrix with output basis $\widetilde{\mu}(\tree)_{i}$. For each such zero $(\vec{u}, \vec{C})$ associated to a $\multisec^{\normal}_{\Sym, \meld} \in \Multisec^{\normal}_{\Sym, \meld}$ and each $g \in \Sym(\Multisec^{\normal}_{\Sym, \meld})$, $(\vec{u}, \vec{C})$ is also a zero of $g \multisec^{\normal}_{\Sym, \meld}$. With $\tree$ fixed we can choose $g = (k, k')$ to be a transposition swapping the $k$ and $k'$th negative ends of our perturbed holomorphic curve by Lemma \ref{Lemma:EnlargedSymmetries}(3). Then the matrix associated to $\grad\widecheck{g \multisec}^{\normal}_{\Sym, \meld}$ is the matrix associated to $\grad\widecheck{\multisec}^{\normal}_{\Sym, \meld}$ with the $k$ and $k'$th rows interchanged. Therefore the determinants have opposite signs so that the sum of the contributions is zero.
\end{proof}

\subsection{Completion of Theorem \ref{Thm:MainComputation}}\label{Sec:MainComputationComplete}

The sign calculations of the previous section and the explicit descriptions of the bilinearized homology differential $\partial^{\vec{\aug}}_{1}$ and fundamental class $\vec{\aug}_{\ast} = \partial^{\vec{\aug}}_{0}$ in Equation \eqref{Eq:BilinDef} of the introduction complete the proof of Theorem \ref{Thm:MainComputation}.

Indeed, given a good $\ReebEpsilon$ orbit $\orbit$ with corresponding $\ReebDivSet$ orbit $\orbitDivSet$, Lemma \ref{Lemma:PlaneSigns} says that the $\NnegativePunctures = 0$ contributions to $\partialNH \gamma$ is $(\aug^{+} - \aug^{-})\orbitDivSet$. The $\widecheck{u}^{\notplane}$ of Lemma \ref{Lemma:CylinderSigns} are exactly $\NnegativePunctures = 1$ contributions to $\partialDivSet \orbitDivSet$ with the appearance of $\sgn(\widecheck{u}^{\notplane})$ in the formula saying that the signs agree. The $\widecheck{u}^{\plane}_{i}$ in the lemma count exactly augmentations for all but one negative puncture at each such contribution and the signs $\sgn(\widecheck{u}^{\plane}_{i})$ tell us that the signs are as expected by Lemma \ref{Lemma:CylinderSigns}. The multisections $\multisec^{\normal}(\subtree^{\notplane})$ are in one-to-one correspondence with orderings of the negative punctures of $\widecheck{u}^{\notplane}$. With such an ordering fixed we count only those for which we cap off with $\negRegionComplete$ planes for the first $i-1$ negative punctures, leave the $i$th negative puncture uncapped, and cap off the last punctures with $\posRegionComplete$ planes by Lemma \ref{Lemma:CylCountDetail}. Because there are $k!$ such orderings when there are $k$ negative punctures for $\widecheck{u}^{\notplane}$, each section has weight $(k!)^{-1}$ and the $\NnegativePunctures = 1$ contributions to $\partialNH\gamma$ are exactly as described in Equation \eqref{Eq:BilinDef}.

Finally, there are no $\NnegativePunctures \geq 2$ contributions to $\partialNH$ by Lemma \ref{Lemma:NGeqTwoZeros}. This completes the proof of part (3) of Theorem \ref{Thm:MainComputation}.

\section{Bilinearization and the Algebraic Giroux Criterion}\label{Sec:Algebra}

Theorem \ref{Thm:MainComputation} states that the contact homology differential for $\Nhypersurface$ can be defined in terms of a bilinearized contact homology differential and a ``fundamental class'' morphism, both of which are described on the chain level in Equation \eqref{Eq:BilinDef} of the introduction. However, we have not yet worked out the basics of these bilinearized objects. Here we describe bilinearization of free cDGAs and work out foundational results in an abstract algebraic setting.

Here is an outline of the section: In \S \ref{Sec:FreeDGA} we establish notation. In \S \ref{Sec:Cylinders} we describe cylinder objects which are then used to construct bililnearized algebras and modules in \S \ref{Sec:Cylinders}. In \ref{Sec:ExplicitDifferentials} we work out explicit formulas for the differentials of these objects which are prerequisite for the analysis of their homological and homotopical properties described in \S \ref{Sec:DGHomotopy}. In \S \ref{Sec:AlgGiroux} we apply these abstract results to the computation of Theorem \ref{Thm:MainComputation} to complete the proof of Theorem \ref{Thm:AlgGiroux}.

\subsection{Free DGAs and cDGAs}\label{Sec:FreeDGA}

Let $d$ be an even, non-negative integer and let $\orbitVS$ be a $\Z/d\Z$-graded $\Q$ vector space. Let $\tensorAlg(\orbitVS) = \bigoplus_{0}^{\infty}\orbitVS^{\otimes k}$ be the tensor algebra of $\orbitVS$ over $\Q$ with grading $|xy| = |x| + |y|$. Write $\tensorAlgGraded(\orbitVS) = \bigoplus_{0}^{\infty}\orbitVS^{\otimes k}/\sim$ for the graded-commutative tensor algebra with $xy \sim (-1)^{|x|\cdot |y|}xy$. Suppose that $\tensorAlg(\orbitVS)$ is equipped with a $\deg = -1$ differential, satisfying the usual relations
\begin{equation*}
\partial: \tensorAlg(\orbitVS) \rightarrow \tensorAlg(\orbitVS), \quad \partial^{2} = 0,\quad \partial 1 = 0, \quad \partial (xy) = (\partial x)y + (-1)^{|x|}x(\partial y)
\end{equation*}
so that $\partial$ is entirely determined by its restriction to $\orbitVS \subset \tensorAlg(\orbitVS)$. We say that $\Algebra = (\tensorAlg(\orbitVS), \partial)$ is a \emph{free differential graded algebra}, or simply, a \emph{free DGA}. If instead of $\tensorAlg(\orbitVS)$, we use $\tensorAlgGraded(\orbitVS)$, then we say that $\Algebra = (\tensorAlgGraded(\orbitVS), \partial)$ is a \emph{free, commutative differential graded algebra}, or simply, a \emph{free cDGA}. For a free DGA (cDGA) $\Algebra$ define a free DGA (cDGA, respectively) by
\begin{equation*}
\underline{\Algebra} = (\tensorAlgGraded(\orbitVS), \underline{\partial}), \quad \underline{\partial}|_{\orbitVS} = -\partial.
\end{equation*}
Then $-\Id_{\orbitVS}$ induces an isomorphism $\Algebra \simeq \underline{\Algebra}$. We say that a DGA or cDGA is \emph{(finitely) action filtered} if $\orbitVS$ admits a filtration by (finite dimensional) vector spaces $\orbitVS^{\actionBound} \subset \orbitVS, \actionBound \in \R_{\geq 0}$, with $\partial$ preserving the filtration by $\tensorAlgGraded(\orbitVS^{\actionBound}) \subset \tensorAlgGraded(\orbitVS)$. 

For symplectic topologists, the canonical examples of free DGAs and free cDGAs are Chekanov-Eliashberg algebras and chain-level contact homology algebras, respectively. In the $CH$ case the integer $d$ can be any even integer dividing the divisibility of $2c_{1}(\xi) \in H^{2}(M)$. These are all finitely action filtered with $\actionBound$ being the lengths of Reeb chords and closed Reeb orbits, respectively.

We have $\Q$-linear maps
\begin{equation*}
\begin{gathered}
\tensorAlgGraded(\orbitVS) \xrightarrow{I_{\tensorAlg}} \tensorAlg(\orbitVS) \xrightarrow{\pi_{\tensorAlgGraded}} \tensorAlgGraded(\orbitVS), \quad \pi_{\tensorAlgGraded}I_{\tensorAlg} = \Id\\
I_{\tensorAlg}(\vec{x}) = (k!)^{-1}\sum_{g \in \Sym_{k}} \sgn(g, x)g(\vec{x})\\
\vec{x} = x_{1}\cdots x_{k}, \quad g(\vec{x}) = x_{g(1)}\cdots x_{g(k)}, \quad \sgn(g, x) \in \{ \pm 1\}.
\end{gathered}
\end{equation*}
We'll frequently use these operators to construct maps with domain $\tensorAlgGraded(\orbitVS)$ from maps with domain $\tensorAlg(\orbitVS)$. Loosely speaking, every proof of a theorem for free DGAs can be translated to a corresponding proof for free cDGAs by composing operators defined on $\tensorAlg(\orbitVS)$ with $I_{\tensorAlg}$ and $\pi_{\tensorAlgGraded}$ so long as we're working over $\Q$.

\subsection{Cylinder objects associated to free cDGAs}\label{Sec:Cylinders}

Provided a free DGA, $\Algebra$, an associated \emph{Baues-Lemaire cylinder} $\Algebra^{\Cyl}$ is defined in \cite[\S 1.1]{BL:MinimalModel} to be used in homotopical applications. We carry out an analogous construction of $\Algebra^{\Cyl}$ in the case of a free cDGA, $\Algebra = (\tensorAlgGraded(\orbitVS), \partial)$. While we were unable to find a graded-commutative version of the Baues-Lemaire cylinder in the literature (having exactly the form we desire for the applications of this paper) we note that a slight modified version appears in \cite[\S 2.2]{FOT}. Let
\begin{equation*}
\orbitVS^{\Cyl} = \orbitVS^{l} \oplus \widehat{\orbitVS} \oplus \orbitVS^{r},
\end{equation*}
where the $\orbitVS^{l}$ and $\orbitVS^{r}$ are identical to $\orbitVS$ and $\widehat{\orbitVS} = \orbitVS[1]$. So for an $x \in \orbitVS$, we have
\begin{equation*}
|x^{l}| = |x^{r}| = |x|, \quad |\widehat{x}| = |x| + 1.
\end{equation*}
The algebra underlying $\Algebra^{\Cyl}$ is $\tensorAlgGraded(\orbitVS^{\Cyl})$. Define $\deg=1$ $\Q$-linear maps
\begin{equation*}
\begin{gathered}
\stab_{\tensorAlg}: \tensorAlg(\orbitVS) \rightarrow \tensorAlg(\orbitVS^{\Cyl}), \quad \stab = \pi_{\tensorAlgGraded}\stab_{\tensorAlg}I_{\tensorAlg}: \tensorAlgGraded(\orbitVS) \rightarrow \tensorAlgGraded(\orbitVS^{\Cyl}),\\
\stab_{\tensorAlg}(1) = 0, \quad \stab_{\tensorAlg}(x_{1}\cdots x_{k}) = \sum_{j = 1}^{k} (-1)^{|x_{1}\cdots x_{i-1}|}x_{1}^{l}\cdots x^{l}_{j-1}\widehat{x_{j}}x_{j+1}^{r}\cdots x_{k}^{r},
\end{gathered}
\end{equation*}
for $x_{i} \in \orbitVS$. Alternatively $\stab_{\tensorAlg}$ can be defined by the property that for any non-constant monomials $x$ and $y$,
\begin{equation*}
\stab_{\tensorAlg}(xy) = \stab_{\tensorAlg}(x)y^{r} + (-1)^{|x|}x^{l}\stab_{\tensorAlg}(y).
\end{equation*}
Then define a degree $-1$ map $\partialCyl$ on $\tensorAlgGraded(\orbitVS^{\Cyl})$ by the rule that for each $x \in \orbitVS$
\begin{equation*}
\partialCyl x^{l} = -(\partial x)^{l}, \quad \partialCyl x^{r} = -(\partial x)^{r}, \quad \partialCyl \widehat{x} = \stab(\partial x) + x^{l} - x^{r}.
\end{equation*}
Extend $\partialCyl$ to all of $\tensorAlgGraded(\orbitVS^{\Cyl})$ by the Leibniz rule and set $\Algebra^{\Cyl} = (\tensorAlgGraded(\orbitVS^{\Cyl}), \partialCyl)$. The reader is warned that our $\partial^{\Cyl}$ has the opposite sign of the cylindrical differential in \cite{BL:MinimalModel}. Our sign convention is chosen to match our contact homology computations.

\nom{$\Algebra^{\Cyl}$}{Baues-Lemaire cylinder of a free cDGA}

\begin{lemma}
$(\partialCyl)^{2} = 0$.
\end{lemma}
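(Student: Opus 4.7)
The plan is to exploit the fact that, since $\partialCyl$ is an odd derivation of $\tensorAlgGraded(\orbitVS^{\Cyl})$, its square $(\partialCyl)^{2}$ is again a derivation. It therefore suffices to check that $(\partialCyl)^{2}$ vanishes on a set of algebra generators, namely on $x^{l}, x^{r}, \widehat{x}$ for $x \in \orbitVS$.

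First I would introduce the algebra homomorphisms $\iota_{l}, \iota_{r} : \tensorAlgGraded(\orbitVS) \to \tensorAlgGraded(\orbitVS^{\Cyl})$ extending $x \mapsto x^{l}$ and $x \mapsto x^{r}$. A direct computation on single generators, combined with the Leibniz rule, gives the intertwining relations
\begin{equation*}
\partialCyl \circ \iota_{\alpha} \;=\; -\,\iota_{\alpha}\circ \partial, \qquad \alpha \in \{l,r\}.
\end{equation*}
From these, the generators of ``$l$'' and ``$r$'' type are easy: $(\partialCyl)^{2} x^{l} = -\partialCyl\iota_{l}(\partial x) = \iota_{l}(\partial^{2} x) = 0$, and similarly for $x^{r}$.

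The essential step is the generator $\widehat{x}$. For this I would establish, by induction on word length, the identity
\begin{equation*}
\partialCyl\,\stab(y) \;=\; \stab(\partial y) \,+\, \iota_{l}(y) \,-\, \iota_{r}(y) \qquad \text{for every } y \in \tensorAlgGraded(\orbitVS). \tag{$\ast$}
\end{equation*}
The base case $y = x$ is literally the definition of $\partialCyl\widehat{x}$. For the inductive step, the explicit formula for $\stab$ yields the derivation property
\begin{equation*}
\stab(yz) \;=\; \stab(y)\,\iota_{r}(z) \,+\, (-1)^{|y|}\iota_{l}(y)\,\stab(z);
\end{equation*}
applying $\partialCyl$ to the right-hand side and invoking the Leibniz rule, the relation $\partialCyl\iota_{\alpha} = -\iota_{\alpha}\partial$, and the inductive hypothesis for $y$ and $z$, the cross terms $\pm\iota_{l}(y)\iota_{r}(z)$ cancel, while the remaining pieces assemble into $\stab(\partial(yz)) + \iota_{l}(yz) - \iota_{r}(yz)$, which is $(\ast)$ for $yz$. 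Specialising $(\ast)$ to $y = \partial x$, so that $\partial y = 0$, gives
\begin{equation*}
(\partialCyl)^{2}\widehat{x} \;=\; \partialCyl\stab(\partial x) + \partialCyl x^{l} - \partialCyl x^{r} \;=\; \bigl(\iota_{l}(\partial x) - \iota_{r}(\partial x)\bigr) - (\partial x)^{l} + (\partial x)^{r} \;=\; 0.
\end{equation*}

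The main obstacle is the bookkeeping of Koszul signs in verifying $(\ast)$ on products: the sign $(-1)^{|x_{1}\cdots x_{j-1}|}$ built into the definition of $\stab$ must match the $(-1)^{|y|}$ produced when $\partialCyl$ commutes past $\iota_{l}(y)$ in the Leibniz expansion. This compatibility is precisely what is encoded in the symmetrisation $\stab = \pi_{\tensorAlgGraded}\stab_{\tensorAlg}I_{\tensorAlg}$ (working over $\Q$), so once the signs are pinned down on two-letter words the general inductive step is mechanical.
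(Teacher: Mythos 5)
Your overall strategy---reduce $(\partialCyl)^{2}=0$ to the intertwining identity $(\ast)$ and then conclude from $\partial^{2}=0$ and $\partialCyl\iota_{\alpha}=-\iota_{\alpha}\partial$---is cleaner and more conceptual than the paper's proof, which decomposes $\partial=\sum\partial_{\NnegativePunctures}$ by output length and checks each arity piece $((\partialCyl)^{2})_{\NnegativePunctures}=0$ separately. The identity $(\ast)$ is indeed true, and your final step (specialise to $y=\partial x$) is correct.

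However, the inductive step is where your proof has a genuine gap: the claimed derivation property
\begin{equation*}
\stab(yz) \;=\; \stab(y)\,\iota_{r}(z) \,+\, (-1)^{|y|}\iota_{l}(y)\,\stab(z)
\end{equation*}
is \emph{false} for $\stab$ on $\tensorAlgGraded(\orbitVS)$. It holds for $\stab_{\tensorAlg}$ on the free algebra $\tensorAlg(\orbitVS)$---that is the paper's alternative characterisation of $\stab_{\tensorAlg}$---but it does not survive the symmetrisation $\stab=\pi_{\tensorAlgGraded}\stab_{\tensorAlg}I_{\tensorAlg}$. The reason is that the right-hand side is not compatible with graded commutativity: a $(\iota_{l},\iota_{r})$-twisted derivation out of a graded-commutative algebra must satisfy $D(yz)=(-1)^{|y||z|}D(zy)$ automatically, and the formula above fails this since $\iota_{l}\ne\iota_{r}$. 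Concretely, for $x,y\in\orbitVS$ one computes
\begin{equation*}
\stab(xy) \;=\; \tfrac{1}{2}\Bigl(\widehat{x}\,y^{r}+(-1)^{|x|}x^{l}\widehat{y}\Bigr) \,+\, \tfrac{1}{2}(-1)^{|x||y|}\Bigl(\widehat{y}\,x^{r}+(-1)^{|y|}y^{l}\widehat{x}\Bigr),
\end{equation*}
whereas the right-hand side of your derivation formula is $\widehat{x}\,y^{r}+(-1)^{|x|}x^{l}\widehat{y}$; these differ already for $|x|=|y|=1$. Your closing remark that the Koszul-sign bookkeeping is ``encoded in the symmetrisation'' has it exactly backwards: the symmetrisation is what \emph{destroys} the derivation property, not what rescues it. To salvage your approach you would need either to prove $(\ast)$ by a direct computation on symmetrised monomials (without the derivation shortcut), or to carry out the induction on $\tensorAlg(\orbitVS)$ with a lifted differential and then descend---but descending is itself delicate, since the lifted differential need not square to zero and $\pi_{\tensorAlgGraded}\stab_{\tensorAlg}$ does not kill the graded-commutator ideal. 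As it stands the inductive step needs to be redone.
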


The following proof does not rely on the graded commutativity of $\tensorAlgGraded(\orbitVS)$, and establishes that $\partialCyl$ squares to zero in the free DGA case as well. We will work through the details as they are left to the reader in \cite{BL:MinimalModel} where the free cDGA case is not explicitly mentioned.

\begin{proof}
By the Leibniz rule it suffices to prove $(\partialCyl)^{2}x = 0$ for $x \in \orbitVS^{\Cyl}$ and this clearly holds for an $x^{l}$ or an $x^{r}$. Suppose that $\partial = \sum \partial_{\NnegativePunctures}$ with $\partial_{\NnegativePunctures}: \orbitVS \rightarrow \orbitVS^{\otimes \NnegativePunctures}/\sim$. Then $\partial^{2} = \sum_{\NnegativePunctures} (\partial^{2})_{\NnegativePunctures}$ with
\begin{equation*}
(\partial^{2})_{\NnegativePunctures} = \sum_{i=0}^{\NnegativePunctures} \partial_{i}\partial_{\NnegativePunctures+1-i}: V \rightarrow V^{\otimes \NnegativePunctures}/\sim.
\end{equation*}
So $\partial^{2} = 0$ is equivalent to $(\partial^{2})_{\NnegativePunctures} = 0$ for all $\NnegativePunctures \geq 0$ and we seek to prove that $((\partialCyl)^{2})_{\NnegativePunctures} = 0$ for all $\NnegativePunctures$. For a $\widehat{x}$ we have
\begin{equation}\label{Eq:StabGeqTwo}
\partialCyl_{0}\widehat{x} = 0, \quad \partialCyl_{1}\widehat{x} = \widehat{\partial_{1}x} + x^{l} - x^{r}, \quad \partialCyl_{\NnegativePunctures}\widehat{x} = \stab(\partial_{\NnegativePunctures}x),\ k \geq 2.
\end{equation}
Clearly $\partialCyl_{0}\partialCyl_{1}\widehat{x} = 0$ so the zeroth relation is established. The $\NnegativePunctures = 1$ case is also a straightforward computation. For $\NnegativePunctures \geq 2$,
\begin{equation*}
\begin{aligned}
((\partialCyl)^{2})_{\NnegativePunctures} &= \left(\sum_{i=0}^{\NnegativePunctures-1}\partialCyl_{i}\partialCyl_{\NnegativePunctures+1-i} + \partialCyl_{\NnegativePunctures}\partialCyl_{1}\right)\widehat{x}\\
&= \sum_{i=0}^{\NnegativePunctures-1}\partialCyl_{i}\stab(\partial_{\NnegativePunctures+1-i}x) + \partialCyl_{k}\left(\widehat{\partial_{1}x} + x^{l} - x^{r} \right)\\
&= \sum_{i=0}^{\NnegativePunctures-1}\partialCyl_{i}\stab(\partial_{\NnegativePunctures+1-i}x) + (\partial_{\NnegativePunctures}x)^{l} + (\partial_{\NnegativePunctures}x)^{r} - ((\partial_{\NnegativePunctures}x)^{r} - (\partial_{\NnegativePunctures}x)^{l} + \stab(\partial_{\NnegativePunctures}\partial_{1}x))\\
&= \sum_{i=0}^{\NnegativePunctures-1}\partialCyl_{i}\stab(\partial_{\NnegativePunctures+1-i}x) + \stab(\partial_{\NnegativePunctures}\partial_{1}x)\\
&= \sum_{i=0}^{\NnegativePunctures-1}\left(\partialCyl_{i}\stab(\partial_{\NnegativePunctures+1-i}x) - \stab(\partial_{i}\partial_{\NnegativePunctures+1-i}x)\right).
\end{aligned}
\end{equation*}
We've used $\partial_{\NnegativePunctures}\partial_{1} = -\sum_{i=0}^{\NnegativePunctures-1}\partial_{i}\partial_{k+1-i}$ to obtain the last line. Therefore it suffices to establish that $\partialCyl_{i}\stab(\partial_{j}x) = \stab(\partial_{i}\partial_{j}x)$ for all $i$ and $j \geq 2$. This follows from the more general fact -- observed in the last item of Equation \eqref{Eq:StabGeqTwo} -- that $\stab(\partial_{i}\vec{x}) = \partialCyl_{i}\stab(\vec{x})$ for any $\vec{x} = x_{1}\cdots x_{k}, x_{i} \in \orbitVS$ for all $i$ and for all $k \geq 2$. Hence $((\partialCyl)^{2})_{\NnegativePunctures} = 0$ for all $\NnegativePunctures$ and the proof is complete.
\end{proof}

Observe that $\Algebra^{\Cyl}$ is a $\underline{\Algebra}$-DG bimodule with left and right multiplication defined
\begin{equation*}
\begin{aligned}
\tensorAlgGraded(V)\otimes \Algebra^{\Cyl} &\rightarrow \Algebra^{\Cyl},\quad  x \otimes v \mapsto x^{l}v\\
\Algebra^{\Cyl} \otimes \tensorAlgGraded(V) &\rightarrow \widehat{\orbitVS}, \quad  v\otimes x \mapsto v x^{r}.
\end{aligned}
\end{equation*}
The use of $\underline{\Algebra}$ (rather than $\Algebra$) is necessitated by our choices of signs in Equation \eqref{Eq:StabGeqTwo}. The following is also immediate from \cite{BL:MinimalModel}:

\begin{lemma}
The inclusions $I^{r}, I^{l}: \Algebra \rightarrow \Algebra^{\Cyl}$ defined $I^{l}(x) = -x^{r}, I^{l}(x) = -x^{l}$ for $x \in \orbitVS$ are quasi-isomorphisms.
\end{lemma}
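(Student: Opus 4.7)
The plan is to prove the result for $I^{l}$; the argument for $I^{r}$ is symmetric. The strategy is to construct a retraction $P^{l}: \Algebra^{\Cyl} \to \Algebra$ (using the isomorphism $\Algebra \cong \underline{\Algebra}$ induced by $-\Id_{\orbitVS}$ to absorb signs) satisfying $P^{l} \circ I^{l} = \pm \Id_{\Algebra}$, and then to show that $P^{l}$ is itself a quasi-isomorphism. These two facts together with the two-out-of-three property for quasi-isomorphisms imply $I^{l}$ is a quasi-isomorphism.

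The retraction $P^{l}$ is the graded-commutative algebra morphism determined on generators by $x^{l} \mapsto x$, $x^{r} \mapsto x$, and $\widehat{x} \mapsto 0$. To verify it is a chain map, the cases $x^{l}$ and $x^{r}$ are immediate from $\partialCyl x^{l} = -(\partial x)^{l}$ and $\partialCyl x^{r} = -(\partial x)^{r}$. For $\widehat{x}$, every monomial in $\stab(\partial x)$ contains exactly one hatted generator by construction of $\stab$, so $P^{l}(\stab(\partial x)) = 0$; combined with $P^{l}(x^{l} - x^{r}) = 0$, this gives $P^{l}(\partialCyl \widehat{x}) = 0 = \partial P^{l}(\widehat{x})$. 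The identity $P^{l} \circ I^{l} = \pm \Id$ is visible on generators.

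The harder step is to show $P^{l}$ is a quasi-isomorphism, equivalently, that $\ker P^{l}$ is acyclic. My plan is to follow Baues-Lemaire \cite{BL:MinimalModel}, who prove the analogous statement for free DGAs, and transfer the argument to the free cDGA setting via the symmetrization operators $I_{\tensorAlg}$ and $\pi_{\tensorAlgGraded}$. Concretely, filter $\Algebra^{\Cyl}$ by the number of factors from $\widehat{\orbitVS} \cup \orbitVS^{r}$ appearing in each monomial: the differential $\partialCyl$ preserves this filtration since its only filtration-decreasing contribution is the $x^{l}$ term of $\partialCyl \widehat{x}$, which drops the filtration by one. The $E_{0}$-differential on the associated graded pairs each hatted generator with its $r$-partner via $\widehat{x} \mapsto -x^{r}$, making the subcomplex of monomials with at least one ``non-left'' generator a tensor product of $\tensorAlgGraded(\orbitVS^{l})$ with the standard Koszul complex on $\widehat{\orbitVS} \oplus \orbitVS^{r}$, which is acyclic.

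The main obstacle is handling the $\stab(\partial x)$ contributions in $\partialCyl \widehat{x}$ correctly, since $\stab$ also produces hats and therefore contributes to the $E_{0}$-differential alongside the $-x^{r}$ term. This is resolved by introducing a second, auxiliary filtration by total word length (or by action, available in the finitely action-filtered setting that covers all contact homology applications) and running an iterated spectral sequence whose $E_{1}$-page is precisely the Koszul complex above. Convergence is immediate when $\Algebra$ is finitely action-filtered; in the general case one argues by induction on word length, relying on the same combinatorial identities for $\stab$ used in the earlier verification that $(\partialCyl)^{2} = 0$.
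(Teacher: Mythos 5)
The paper does not prove this lemma; it cites Baues--Lemaire, so your attempt constitutes a genuine proof. The retraction half of your plan is fine: $P^{l}$ (with $x^{l},x^{r}\mapsto x$, $\widehat{x}\mapsto 0$) is a chain map $\Algebra^{\Cyl}\to\underline{\Algebra}$ — the check on $\widehat{x}$ using that every monomial of $\stab(\partial x)$ carries exactly one hat is exactly right — and composing with the isomorphism $\Algebra\simeq\underline{\Algebra}$ and with $I^{l}$ gives an automorphism of $\Algebra$. So by two-out-of-three the whole lemma reduces to acyclicity of $\ker P^{l}$, as you say.

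The gap is in the filtration. Set $\nu(m)=\#\{\text{factors of }m\text{ drawn from }\widehat{\orbitVS}\cup\orbitVS^{r}\}$ and $F_{p}=\{\nu\leq p\}$. You claim $\partialCyl F_{p}\subseteq F_{p}$, with the $x^{l}$ term of $\partialCyl\widehat{x}$ the only place $\nu$ changes. This is false, and the failure is in the \emph{opposite} direction from the one you flag. Applying $\partialCyl$ to a factor $x^{r}$ replaces it by $-(\partial x)^{r}$: a monomial $x_{I,1}\cdots x_{I,k}$ of $\partial_{k}x$ becomes $x_{I,1}^{r}\cdots x_{I,k}^{r}$, which has $\nu=k$, so for $k\geq 2$ the filtration degree goes up. Likewise, the term $x_{I,1}^{l}\cdots x_{I,j-1}^{l}\widehat{x_{I,j}}x_{I,j+1}^{r}\cdots x_{I,k}^{r}$ of $\stab(\partial_{k}x)$ has $\nu=k-j+1$, which exceeds $\nu(\widehat{x})=1$ whenever $j<k$. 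So $\partialCyl$ strictly \emph{raises} $\nu$ on many monomials, and $F_{\bullet}$ is not a filtration of the complex. Consequently there is no associated-graded complex, no $E_{0}$-differential $\widehat{x}\mapsto -x^{r}$, and no Koszul computation to perform; the spectral sequence you describe does not exist. Your closing paragraph correctly senses that $\stab(\partial x)$ is the source of trouble, but misdiagnoses it as a contribution to $E_{0}$ rather than as a violation of the filtration axiom, and the proposed remedy — a second filtration by total word length — has exactly the same defect ($\partialCyl x^{l}=-(\partial x)^{l}$ changes word length $1$ to $k$ with $k$ unbounded, and can also drop it to $0$ if $\partial_{0}\neq 0$).

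What actually makes the argument work, and is the substance of the Baues--Lemaire lemma being cited, is an exhaustive filtration of the \emph{generating space} $\orbitVS=\bigcup_{n}\orbitVS^{(n)}$ with $\partial\,\orbitVS^{(n)}\subseteq\tensorAlgGraded(\orbitVS^{(n-1)})$ — a Sullivan/KS-type or ``triangular'' condition, which in the contact-homology setting is furnished by strict decrease of action and in general must be assumed or produced. With such a filtration in hand, an induction on $n$ (or the associated spectral sequence) reduces the acyclicity of $\ker P^{l}$ to the case $\partial=0$, where $\ker P^{l}$ really is a Koszul complex on $\widehat{\orbitVS}\oplus\orbitVS^{r}$ tensored over $\tensorAlgGraded(\orbitVS^{l})$ and is visibly acyclic. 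Your outline gets to that Koszul picture, but skips the step — the triangularity of $\partial$ — that licenses it. Without it, counting $\widehat{\,\cdot\,}$'s and $(\cdot)^{r}$'s does not define a filtered complex, and the argument does not close.
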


\subsection{Bilinearized algebras and bimodules}\label{Sec:Bilin}

Suppose that we have a pair of augmentations
\begin{equation*}
\vec{\aug} = (\aug^{l}, \aug^{r}), \quad \aug^{l}, \aug^{r}: \Algebra \rightarrow (\Q, \partial_{\Q}=0).
\end{equation*}
Define a surjective cDGA morphism
\begin{equation*}
\pi^{\vec{\aug}}: \tensorAlgGraded(\orbitVS^{\Cyl}) \rightarrow \tensorAlgGraded(\widehat{\orbitVS}), \quad \pi_{\vec{\aug}}(x^{l}) = \aug^{l}(x)\quad \pi^{\vec{\aug}}(x^{r}) = \aug^{r}(x), \quad \pi_{\vec{\aug}}(\widehat{v}) = \widehat{v}.
\end{equation*}
Viewing $\tensorAlgGraded(\widehat{\orbitVS})$ as a unital subalgebra of $\tensorAlgGraded(\orbitVS^{\Cyl})$ we define
\begin{equation*}
\partial^{\vec{\aug}} = \pi^{\vec{\aug}}\partial^{\Cyl}: \tensorAlgGraded(\widehat{\orbitVS}) \rightarrow \tensorAlgGraded(\widehat{\orbitVS}).
\end{equation*}
Noting that $(\partial^{\vec{\aug}})^{2}\widehat{x} = \pi_{\vec{\aug}}\stab(\partial^{2}x) = 0$, we have $(\partial^{\vec{\aug}})^{2} = 0$ in general and 
\begin{equation*}
\Algebra^{\vec{\aug}} = (\tensorAlgGraded(\widehat{V}), \partial^{\vec{\aug}})
\end{equation*}
is a free cDGA. Observe that with left and right multiplication
\begin{equation*}
x \otimes v \mapsto \aug^{l}(x)v, \quad v\otimes x \mapsto v \aug^{r}(x),
\end{equation*}
$\Algebra^{\vec{\aug}}$ is a $\underline{\Algebra}$-DG bimodule and
\begin{equation*}
\pi^{\vec{\aug}}: \Algebra^{\Cyl} \rightarrow \Algebra^{\vec{\aug}}
\end{equation*}
is a morphism of $\underline{\Algebra}$-DG bimodules.

\begin{defn}
We say that $\Algebra^{\vec{\aug}}$ is the \emph{bilinearized cDGA} associated to the pair $\vec{\aug} = (\aug^{l}, \aug^{r})$ of augmentations of the cDGA $\Algebra$ and that $H(\Algebra^{\vec{\aug}})$ is the \emph{bilinearized homology algebra}.
\end{defn}

\nom{$\Algebra^{\vec{\aug}}$}{Bilinearized free cDGA associated to a pair $\vec{\aug}=(\aug^{l}, \aug^{r})$ of augmentations}

Again we decompose
\begin{equation*}
\partial^{\vec{\aug}} = \sum_{0}^{\infty} \partial^{\vec{\aug}}_{\NnegativePunctures}, \quad \partial^{\vec{\aug}}_{\NnegativePunctures}: \widehat{\orbitVS} \rightarrow \widehat{\orbitVS}^{\otimes \NnegativePunctures}/\sim.
\end{equation*}
Since for $\NnegativePunctures \geq 1$, each $\partialCyl_{\NnegativePunctures} \widehat{x}$ will have a single factor with a hat in each of its summands, the $\partial^{\vec{\aug}}_{\NnegativePunctures}$ are all zero for $k \geq 2$, so
\begin{equation*}
\partial^{\vec{\aug}} = \partial^{\vec{\aug}}_{0} + \partial^{\vec{\aug}}_{1}.
\end{equation*}
Therefore $(\partial^{\aug})^{2} = 0$ gives us relations
\begin{equation*}
\partial^{\vec{\aug}}_{0}\partial^{\vec{\aug}}_{1} = 0, \quad (\partial^{\vec{\aug}}_{1})^{2} = 0.
\end{equation*}
Since $\partial^{\vec{\aug}}_{1}$ maps $\widehat{V}$ to itself, it follows that $\partial^{\vec{\aug}}_{1}$ is a differential on $\widehat{\orbitVS}$ and that $\partial^{\vec{\aug}}_{0}$ defines a $\deg = -1$ chain map 
\begin{equation*}
\partial^{\vec{\aug}}_{0}: (\widehat{\orbitVS}, \partial^{\vec{\aug}}_{1}) \rightarrow (\Q, \partial_{\Q}=0)
\end{equation*}
and so descends to a map on homology.

We define left and right multiplication actions on $\widehat{\orbitVS}$ by
\begin{equation*}
\begin{aligned}
\tensorAlgGraded(V)\otimes \widehat{\orbitVS} &\rightarrow \widehat{\orbitVS},\quad  x \otimes \widehat{v} \mapsto \aug^{l}(x)\widehat{v}\\
\widehat{\orbitVS}\otimes \tensorAlgGraded(V) &\rightarrow \widehat{\orbitVS}, \quad  \widehat{v}\otimes x \mapsto \widehat{v}\aug^{r}(x)
\end{aligned}
\end{equation*}
making $\widehat{\orbitVS}$ into a $\tensorAlgGraded(\orbitVS)$-bimodule. The fact that $\aug^{l}\partial = \aug^{r}\partial = 0$ implies that $(\widehat{\orbitVS}, \partial^{\aug}_{1})$ is a $\Algebra$-DG bimodule. Consequently, the homology
\begin{equation*}
H^{\vec{\aug}} = H(\widehat{\orbitVS}, \partial^{\vec{\aug}}_{1})
\end{equation*}
is a $H(\Algebra)$ bimodule.

\begin{defn}
We say that $(\widehat{\orbitVS}, \partial^{\vec{\aug}}_{1})$ and $H^{\vec{\aug}}$ are the \emph{bilinearized DG bimodule} and \emph{bilinearized homology module} associated to $\aug^{l}, \aug^{r}$, and $\Algebra$, respectively. The $\deg=-1$ map
\begin{equation*}
\vec{\aug}_{\ast}: H^{\vec{\aug}} \rightarrow \Q
\end{equation*}
induced by $\partial^{\aug}_{0}$ is the \emph{fundamental class}.
\end{defn}

We use the term ``fundamental class'' as the corresponding map in bilinearized Legendrian contact homology of an $n$-dimensional Legendrian $\Lambda$ is related to the fundamental class $[\Lambda]\in H_{n}(\Lambda)$ by Sabloff duality \cite{EES:Duality, Sabloff:Duality}. See \cite{BG:Geography, Strakos} for analysis and applications of the fundamental class in the Legendrian context.

\nom{$(\widehat{\orbitVS}, \partial^{\aug}_{1}), \partial^{\aug}_{0}$}{Bilinearized module associated to a pair of augmentations and its chain-level fundamental class}

\subsection{Explicit formulas for $\partial^{\vec{\aug}}$}\label{Sec:ExplicitDifferentials}

Now we work out explicit descriptions of the $\partial^{\vec{\aug}}_{\NnegativePunctures}$. From the expression
\begin{equation*}
\begin{aligned}
\partial^{\vec{\aug}}\widehat{x} = \pi^{\vec{\aug}}\left( x^{l} - x^{r} + \sum_{k \geq 1}\stab(\partial_{k}x)\right)
\end{aligned}
\end{equation*}
we see that
\begin{equation}\label{Eq:AugLRFundamentalClass}
\partial^{\vec{\aug}}_{0}\widehat{x} = (\aug^{l} - \aug^{r})x.
\end{equation}
Now suppose that for $\NnegativePunctures\geq 1$ we write each
\begin{equation*}
\partial_{\NnegativePunctures}x = \sum_{I}c_{I}\vec{x}_{I} = \sum_{I} c_{I}x_{I, 1}\cdots x_{I, \NnegativePunctures} = (\NnegativePunctures!)^{-1}\sum_{g \in \Sym_{k}, I} c_{I}\sgn(g, \vec{x})x_{I, g(1)}\cdots x_{I, g(\NnegativePunctures)}.
\end{equation*}
Here $I$ runs over an indexing set, the $c_{I} \in \Q$, and in the last equation we have applied $\pi_{\tensorAlgGraded}I_{\tensorAlg} = \Id$. Then
\begin{equation}\label{Eq:PartialAugOne}
\begin{aligned}
\pi^{\vec{\aug}}\stab(\partial_{\NnegativePunctures}x) &= (\NnegativePunctures!)^{-1}\pi^{\vec{\aug}}\sum_{g, I}c_{I}\sum_{j=1}^{\NnegativePunctures}\sgn(g, \vec{x}, j)x_{I, g(1)}^{l}\cdots x_{I, g(j-1)}^{l}\widehat{x}_{I, g(j)}x_{I, g(j-1)}^{r} \cdots x_{I, g(\NnegativePunctures)}^{r}\\
&=(\NnegativePunctures!)^{-1}\sum_{g, I} c_{I} \sum_{j=1}^{\NnegativePunctures}\aug^{l}(x_{I, g(1)}\cdots x_{I, g(j-1)})\widehat{x}_{I, g(j)} \aug^{r}(x_{I, g(j-1)} \cdots x_{I, g(\NnegativePunctures)}),\\
\sgn(g, \vec{x}, j) &= \sgn(g, \vec{x})(-1)^{|x_{I, g(1)}\cdots x_{I, g(j-1)}|}
\end{aligned}
\end{equation}
In the second line we may throw out the $\sgn(g, \vec{x}, j)$ because if $\aug^{l}(x_{I, i})$ or $\aug^{r}(x_{I, i})$ is non-zero, then $|x_{I, i}|$ must be even. From the above formulas we obtain chain level identifications
\begin{equation*}
(\widehat{V}, \partial^{\aug^{l}, \aug^{r}}_{1}) = (\widehat{V}, \partial^{\aug^{r}, \aug^{l}}_{1}), \quad \partial^{\aug^{l}, \aug^{r}}_{0} = - \partial^{\aug^{r}, \aug^{l}}_{0}
\end{equation*}
from which the following result is clear.

\begin{lemma}
$H^{\aug^{l}, \aug^{r}} = H^{\aug^{r}, \aug^{l}}$ and the fundamental classes for $(\aug^{l}, \aug^{r})$ and $(\aug^{r}, \aug^{l})$ differ by a minus sign.
\end{lemma}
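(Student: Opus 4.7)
The statements about homology and fundamental classes at the level of $H^{\vec\aug}$ are clearly immediate consequences of the chain-level identifications
\begin{equation*}
(\widehat{V}, \partial^{\aug^{l}, \aug^{r}}_{1}) = (\widehat{V}, \partial^{\aug^{r}, \aug^{l}}_{1}), \qquad \partial^{\aug^{l}, \aug^{r}}_{0} = -\partial^{\aug^{r}, \aug^{l}}_{0}
\end{equation*}
displayed just before the lemma. So the plan is simply to verify these two identifications as chain maps on $\widehat{V}$; homology and the statement about fundamental classes then follow by passing to $H$.

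The $\partial_{0}$ identification is immediate from \eqref{Eq:AugLRFundamentalClass}: one has
\begin{equation*}
\partial^{\aug^{l}, \aug^{r}}_{0}\widehat{x} = (\aug^{l} - \aug^{r})(x) = -(\aug^{r} - \aug^{l})(x) = -\partial^{\aug^{r}, \aug^{l}}_{0}\widehat{x}.
\end{equation*}

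The $\partial_{1}$ identification is the main (but still mild) obstacle, and follows from the symmetry of the sum in \eqref{Eq:PartialAugOne} under reversal of the ordering of the negative punctures. Concretely, on the indexing set $\Sym_{\NnegativePunctures}\times\{1,\dots,\NnegativePunctures\}$ for the double sum in \eqref{Eq:PartialAugOne}, I would make the bijective substitution $(g,j)\mapsto(g\circ\sigma,\NnegativePunctures-j+1)$, where $\sigma\in\Sym_{\NnegativePunctures}$ is the order-reversing permutation $i\mapsto\NnegativePunctures-i+1$. After this substitution, the factor $\aug^{l}(x_{I,g(1)}\cdots x_{I,g(j-1)})$ in $\partial^{\aug^{l},\aug^{r}}_{1}\widehat{x}$ becomes $\aug^{l}(x_{I,g(\NnegativePunctures)}\cdots x_{I,g(j+1)})$, appearing now to the right of the hatted letter $\widehat{x}_{I,g(j)}$; symmetrically for the other factor. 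This is exactly $\partial^{\aug^{r},\aug^{l}}_{1}\widehat{x}$ up to a possible reordering of variables inside the arguments of $\aug^{l}$ and $\aug^{r}$. The possible reordering does not affect the value because the augmentations vanish on elements of odd degree (as they have image in $\Q$), so any monomial on which $\aug^{l}$ or $\aug^{r}$ is nonzero consists entirely of even-degree letters, which graded-commute without sign. The same evenness observation shows that the Koszul sign $\sgn(g,\vec{x},j)$ which was already dropped from \eqref{Eq:PartialAugOne} continues to be trivial after the substitution, so no hidden sign appears.

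Having verified the two chain-level identifications, passing to homology gives $H^{\aug^{l},\aug^{r}}=H^{\aug^{r},\aug^{l}}$ and, via the observation that $\partial^{\vec\aug}_{0}$ is a chain map $(\widehat V,\partial^{\vec\aug}_{1})\to(\Q,0)$, the fundamental classes $\vec\aug_{\ast}$ for $(\aug^{l},\aug^{r})$ and $(\aug^{r},\aug^{l})$ differ by a minus sign. The only genuinely delicate point in the whole argument is the bookkeeping with the Koszul signs in the $\partial_{1}$ calculation, and the evenness remark already made in the paper immediately after \eqref{Eq:PartialAugOne} disposes of this.
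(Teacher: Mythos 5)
Your proof is correct and follows the paper's own approach: the paper simply asserts the two chain-level identifications $(\widehat{V}, \partial^{\aug^{l}, \aug^{r}}_{1}) = (\widehat{V}, \partial^{\aug^{r}, \aug^{l}}_{1})$ and $\partial^{\aug^{l}, \aug^{r}}_{0} = -\partial^{\aug^{r}, \aug^{l}}_{0}$ and notes the lemma is "clear" from them, whereas you actually carry out the change-of-variables $(g,j)\mapsto(g\circ\sigma,\NnegativePunctures-j+1)$ in Equation \eqref{Eq:PartialAugOne} and invoke the evenness of the support of the augmentations to close the sign bookkeeping. This is exactly the verification the paper leaves to the reader, done carefully, so the argument is the same, only more explicit.
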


This contrasts with the non-commutative case \cite{BC:Bilinearized}, for which the bilinearized homologies for $(\aug^{l}, \aug^{r})$ and $(\aug^{r}, \aug^{l})$ will typically be non-isomorphic. See \cite[Proposition 3.3]{BG:Geography} for relevant computations.

Now we consider the case when we have a single augmentation $\aug^{l} = \aug^{r} = \aug$. Then $\partial^{\vec{\aug}}_{0} = 0$ and Equation \eqref{Eq:PartialAugOne} becomes
\begin{equation*}
\begin{aligned}
\pi^{\vec{\aug}}\stab(\partial_{\NnegativePunctures}x) &=(\NnegativePunctures!)^{-1}\sum_{I}c_{I}\sum_{g}\sum_{j=1}^{\NnegativePunctures} \aug(x_{I, g(1)}\cdots x_{I, g(j-1)})\widehat{x}_{I, g(j)} \aug(x_{I, g(j-1)} \cdots x_{I, g(\NnegativePunctures)})\\
&= (\NnegativePunctures!)^{-1}\sum_{I}c_{I}\sum_{i=1}^{\NnegativePunctures}\sum_{j=1}^{\NnegativePunctures}\sum_{g(j)=i} \aug(x_{I, g(1)}\cdots x_{I, g(j-1)})\widehat{x}_{I, i} \aug(x_{I, g(j-1)} \cdots x_{I, g(\NnegativePunctures)})\\
&= (\NnegativePunctures!)^{-1}\sum_{I}c_{I}\sum_{i=1}^{\NnegativePunctures}\sum_{j=1}^{\NnegativePunctures}\sum_{g(j)=i} \aug(\vec{x}_{I, i})\widehat{x}_{I, i} \\
&= (\NnegativePunctures!)^{-1}\sum_{I}c_{I}\sum_{i=1}^{\NnegativePunctures}\NnegativePunctures\cdot (\NnegativePunctures-1)! \aug(\vec{x}_{I, i})\widehat{x}_{I, i} \\
&= \sum_{I}c_{I}\sum_{i=1}^{\NnegativePunctures}\aug(\vec{x}_{I, i})\widehat{x}_{I, i}\\
\vec{x}_{I, i} &= x_{I, 1}\dots x_{I, i-1}x_{I, i+1}\cdots x_{I, \NnegativePunctures}.
\end{aligned}
\end{equation*}
Therefore $\partial^{\aug, \aug}= \partial^{\aug, \aug}_{1}$ is the usual linearized homology differential.

\begin{lemma}
$H^{\aug, \aug} = H^{\aug}$.
\end{lemma}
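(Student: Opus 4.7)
The plan is to observe that the computation preceding the lemma already does essentially all the work: it remains only to identify the resulting differential with the standard linearized contact homology differential. Concretely, recall that the linearized differential $\partial^{\aug}$ on $V[1]$ is defined by the formula
\[
\partial^{\aug}\widehat{x} \;=\; \sum_{\NnegativePunctures \geq 1}\sum_{I} c_{I} \sum_{i=1}^{\NnegativePunctures} \aug(\vec{x}_{I,i})\,\widehat{x}_{I,i},
\]
with $\vec{x}_{I,i}$ denoting the monomial $x_{I,1}\cdots x_{I,\NnegativePunctures}$ with the $i$th factor removed, exactly as in the computation just completed for $\pi^{\vec{\aug}}\stab(\partial_{\NnegativePunctures}x)$ in the $\aug^{l}=\aug^{r}=\aug$ case. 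Under the canonical identification $\widehat{V} = V[1]$ sending $\widehat{x}$ to the corresponding generator, the two chain complexes $(\widehat{V}, \partial_{1}^{\aug,\aug})$ and $(V[1], \partial^{\aug})$ therefore coincide on the nose.

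So the plan has three short steps. First, I would invoke the vanishing $\partial_{0}^{\vec{\aug}} = (\aug^{l}-\aug^{r})x = 0$ already established via Equation \eqref{Eq:AugLRFundamentalClass} to conclude $\partial^{\aug,\aug} = \partial_{1}^{\aug,\aug}$. Second, I would cite the displayed computation immediately above the lemma statement, which expresses $\partial_{1}^{\aug,\aug}\widehat{x}$ as $\sum_{I}c_{I}\sum_{i}\aug(\vec{x}_{I,i})\widehat{x}_{I,i}$ after the combinatorial simplification (the factor $(\NnegativePunctures!)^{-1}\cdot \NnegativePunctures \cdot (\NnegativePunctures-1)! = 1$ absorbs the symmetrization). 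Third, I would note that this expression is, by definition, the linearized contact homology differential $\partial^{\aug}$ on $V[1]$. Taking homology of both sides yields $H^{\aug,\aug} = H^{\aug}$.

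Since the pre-lemma calculation is exactly the content of the identification, no further obstacle arises; the only subtlety worth flagging is a sanity check on signs. Because $\aug$ vanishes on odd-degree generators, every sign $\sgn(g,\vec{x},j)$ that would otherwise appear from graded commutativity of $\tensorAlgGraded(V)$ acts trivially on the summands that survive after applying $\pi^{\vec{\aug}}$. This matches the convention used to define $\partial^{\aug}$ in the standard references and in the chain complex $(V[1], \partial^{\aug})$ used throughout the paper, so no extra sign correction is needed.
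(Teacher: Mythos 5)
Your proposal is correct and follows essentially the same route as the paper: the paper states this lemma without a separate proof precisely because the displayed computation immediately preceding it (showing $\partial^{\vec{\aug}}_{0}=0$ and that $\partial^{\aug,\aug}_{1}\widehat{x} = \sum_{I}c_{I}\sum_{i}\aug(\vec{x}_{I,i})\widehat{x}_{I,i}$ is the usual linearized differential) constitutes the argument, and your three steps plus the combinatorial simplification $(\NnegativePunctures!)^{-1}\cdot\NnegativePunctures\cdot(\NnegativePunctures-1)!=1$ and the sign remark reproduce that content faithfully. Your sign sanity check is also exactly the justification the paper gives a few lines earlier for discarding $\sgn(g,\vec{x},j)$, namely that $\aug$ vanishes on odd-degree generators.
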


\subsection{Homological and homotopical properties of bilinearized objects}\label{Sec:DGHomotopy}

Now we seek to relate homotopical properties of the $\aug^{l}, \aug^{r}$ to homological properties if $H(\Algebra^{\vec{\aug}})$ and $H^{\vec{\aug}}$. Here we adapt \cite[\S 3]{BG:Geography} and \cite[\S 5.3.2]{AugsAreSheaves} to case of free cDGAs. Take $\Algebra = (\tensorAlgGraded(\orbitVS), \partial)$ to be a free cDGA with a pair $\vec{\aug}=(\aug^{l}, \aug^{r})$ of augmentations. Say that a linear map $K: \Algebra \rightarrow \Q$ is a \emph{$\vec{\aug}$-derivation} if
\begin{equation}\label{Eq:AugDerivationDef}
K(x_{1}\cdots x_{k}) = (k!)^{-1}\sum_{g \in \Sym_{k}}\sum_{j=1}^{k} \aug^{l}(x_{g(1)}\cdots x_{g(j-1)})K(x_{g(j)}) \aug^{r}(x_{g(j-1)} \cdots x_{g(k)})
\end{equation}
for all $x_{i} \in \orbitVS$ and $k \geq 1$. It follows that $K$ is uniquely determined by $K|_{\orbitVS}$.

\begin{defn}\label{Def:DGHomotopy}
We say that $\aug^{l}, \aug^{r}$ are \emph{DG homotopic} if there is a $\deg=1$ $\vec{\aug}$-derivation
\begin{equation*}
K:\tensorAlgGraded(\orbitVS) \rightarrow \Q, \quad \aug^{l} - \aug^{r} = K \circ \partial.
\end{equation*}
\end{defn}

\begin{lemma}\label{Lemma:AugmentationHomotopy}
The following statements are equivalent:
\be
\item The augmentations $\aug^{l}$ and $\aug^{r}$ are DG homotopic.
\item $\aug^{l} - \aug^{r}: (\widehat{\orbitVS}, \partial^{\vec{\aug}}_{1}) \rightarrow \Q$ is null homotopic
\item $\Algebra^{\vec{\aug}}$ admits an augmentation.
\ee
\end{lemma}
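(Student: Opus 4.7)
The plan is to reduce all three conditions to the same linear datum: a single degree-$0$ map $H: \widehat{\orbitVS} \to \Q$. The correspondence is $H(\widehat{x}) = K(x)$ for a $\vec{\aug}$-derivation $K$ and its associated $H$, with $K$ extended to $\orbitVS^{\otimes k}$, $k \geq 2$, via \eqref{Eq:AugDerivationDef}. The well-definedness of this extension on the graded commutative algebra uses the symmetrization $(k!)^{-1}\sum_g$ together with the observation that $\aug^{l,r}$ vanish on odd-degree elements and $K$ has degree one, which forces nonzero summands to have all but one factor in even degree --- so the Koszul signs that could obstruct invariance under the relation $xy \sim (-1)^{|x||y|}yx$ disappear in the expressions that actually matter.

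For $(1)\Leftrightarrow(2)$, the strategy is direct comparison of the explicit formulas from \S\ref{Sec:ExplicitDifferentials}. Splitting $\partial = \sum_{k\geq 0}\partial_k$ and using $K(1)=0$, the condition $(\aug^l - \aug^r)(x) = K(\partial x)$ on $\orbitVS$ reads $(\aug^l - \aug^r)(x) = \sum_{k\geq 1}K(\partial_k x)$. Applying \eqref{Eq:AugDerivationDef} to each summand and using $K(x_{I,g(j)}) = H(\widehat{x}_{I,g(j)})$, the right-hand side becomes $H(\partial^{\vec{\aug}}_1 \widehat{x})$ via \eqref{Eq:PartialAugOne}, while the left-hand side is $\partial^{\vec{\aug}}_0 \widehat{x}$ by \eqref{Eq:AugLRFundamentalClass}. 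Hence the DG-homotopy condition on $\orbitVS$ is exactly the chain-level null-homotopy of $\partial^{\vec{\aug}}_0$ by $H$. The extension of $K\partial = \aug^l - \aug^r$ from $\orbitVS$ to all of $\Algebra$ is handled by a standard Leibniz check: both sides satisfy an $(\aug^l,\aug^r)$-bi-derivation identity --- the right-hand side trivially from multiplicativity of the augmentations, and the left-hand side by combining the Leibniz rule for $\partial$ with the identity $K(uv) = K(u)\aug^r(v) + (-1)^{|u|}\aug^l(u)K(v)$ that \eqref{Eq:AugDerivationDef} implies --- so agreement on generators propagates to products.

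For $(2)\Leftrightarrow(3)$, the plan is to identify an augmentation $\tilde{\aug}: \Algebra^{\vec{\aug}} \to \Q$ with its restriction $H := \tilde{\aug}|_{\widehat{\orbitVS}}$, which freely determines $\tilde{\aug}$ as a graded algebra map with $\tilde{\aug}(1) = 1$. Because $\partial^{\vec{\aug}}$ is a graded derivation and $\tilde{\aug}$ is multiplicative, $\tilde{\aug}\circ\partial^{\vec{\aug}}$ is a $(\tilde{\aug},\tilde{\aug})$-biderivation with values in $\Q$, so it vanishes on all of $\tensorAlgGraded(\widehat{\orbitVS})$ as soon as it vanishes on the generators $\widehat{\orbitVS}$. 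On a generator $\widehat{x}$ it evaluates to $\partial^{\vec{\aug}}_0\widehat{x} + H(\partial^{\vec{\aug}}_1 \widehat{x})$, so the augmentation condition is $\partial^{\vec{\aug}}_0 = -H\circ\partial^{\vec{\aug}}_1$, which is exactly the null-homotopy of (2) after the relabeling $H \leftrightarrow -H$.

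The main obstacle will be the sign and well-definedness bookkeeping: verifying that \eqref{Eq:AugDerivationDef} descends from the tensor algebra to its graded-commutative quotient, and that the symmetrizations in \eqref{Eq:AugDerivationDef} and \eqref{Eq:PartialAugOne} agree term-by-term after the substitution $K \leftrightarrow H$. Both checks reduce to the same parity analysis of the degrees $|x_i|$, so once one is completed the other is automatic. With these compatibilities in place, the lemma becomes the statement that each of (1), (2), (3) is a different linguistic repackaging --- as a derivation of $\Algebra$, as a chain homotopy on $\widehat{\orbitVS}$, and as a graded algebra map on $\Algebra^{\vec{\aug}}$ --- of the single condition $\partial^{\vec{\aug}}_0 = \pm H \circ \partial^{\vec{\aug}}_1$ on a linear map $H: \widehat{\orbitVS}\to\Q$.
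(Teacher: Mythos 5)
Your overall structure matches the paper's: identify a single linear datum $H(\widehat{x}) = K(x)$ and show that each of (1), (2), (3) is a reformulation of the condition $\partial^{\vec{\aug}}_0 = \pm H \circ \partial^{\vec{\aug}}_1$ together with the explicit formulas of \S\ref{Sec:ExplicitDifferentials}. The $(2)\Leftrightarrow(3)$ step is fine: $\tilde{\aug}\circ\partial^{\vec{\aug}}$ really is a $(\tilde{\aug},\tilde{\aug})$-biderivation because the same algebra map $\tilde{\aug}$ sits on both sides of the product rule, so vanishing on generators propagates.

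The issue is the claimed identity $K(uv) = K(u)\aug^r(v) + (-1)^{|u|}\aug^l(u)K(v)$, which you say ``\eqref{Eq:AugDerivationDef} implies.'' It does not, and in fact cannot: that asymmetric formula does not descend to $\tensorAlgGraded(\orbitVS)$ when $\aug^{l} \neq \aug^{r}$. Take $|x_1|$ odd and $|x_2|$ even; then $x_1 x_2 = x_2 x_1$ in the graded-commutative algebra, but your formula returns $K(x_1)\aug^{r}(x_2)$ on one side and $\aug^{l}(x_2)K(x_1)$ on the other, which differ. What \eqref{Eq:AugDerivationDef} actually gives in this case is the symmetrized average
\[
K(x_1 x_2) = \tfrac{1}{2}\,K(x_1)\bigl(\aug^{l}(x_2) + \aug^{r}(x_2)\bigr),
\]
which is well-defined precisely because it is symmetric in the two augmentations. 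So you cannot conclude that $K\partial$ and $\aug^{l}-\aug^{r}$ agree on products by appealing to the usual $(\aug^{l},\aug^{r})$-biderivation Leibniz rule; that rule is not what the paper's $\vec{\aug}$-derivations satisfy.

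The conclusion you want is nevertheless true, and can be obtained by showing directly that both $\aug^{l}-\aug^{r}$ and $K\partial$ satisfy the symmetrized identity \eqref{Eq:AugDerivationDef}: for $\aug^{l}-\aug^{r}$ this is a telescoping sum in $j$ once the Koszul signs are killed by parity, and for $K\partial$ one traces through the symmetrization and finds that the ``extra'' mixed terms cancel in pairs (a two-variable check already shows the mechanism: $\tfrac{1}{2}(a_1-b_1)(a_2+b_2) + \tfrac{1}{2}(a_2-b_2)(a_1+b_1) = a_1 a_2 - b_1 b_2$). Since a $\vec{\aug}$-derivation is determined by its restriction to $\orbitVS$, equality on generators then forces equality everywhere. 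The paper asserts this extension without detail; you were right to try to justify it, but the biderivation route you chose is the wrong tool.
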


\begin{proof}
If the $\aug^{l}$ and $\aug^{r}$ are homotopic via some $K$, define a map $\widehat{K}$,
\begin{equation*}
\widehat{K}: \widehat{\orbitVS} \rightarrow \Q, \quad \widehat{K}(\widehat{x}) = K(x).
\end{equation*}
It then follows from Equation \eqref{Eq:PartialAugOne} and the definition of $\vec{\aug}$-derivations that
\begin{equation*}
\widehat{K}(\partial^{\vec{\aug}}_{1}\widehat{x}) = K(\partial x) = (\aug^{l} - \aug^{r})x.
\end{equation*}
So $\widehat{K}$ provides a null-homotopy of $\aug^{l}-\aug^{r}$. If we have a null-homotopy $\widehat{K}$ of $\aug^{l}-\aug^{r}$, then we obtain a $K: \orbitVS \rightarrow \Q$ which then uniquely extends to a $\vec{\aug}$-derivation $K: \tensorAlgGraded(\orbitVS) \rightarrow \Q$ by Equation \eqref{Eq:AugDerivationDef} providing a homotopy between $\aug^{l}$ and $\aug^{r}$. Thus (1) iff (2).

Now suppose that we have a null homotopy $\widehat{K}$ of $\aug^{l} - \aug^{r}$, so that 
\begin{equation*}
\widehat{K}(\partial^{\vec{\aug}}_{1}\widehat{x}) = (\aug^{l} - \aug^{r})x = \partial^{\vec{\aug}}_{0}\widehat{x}.
\end{equation*}
Then we obtain an augmentation
\begin{equation*}
\begin{gathered}
\widehat{\aug}: \tensorAlgGraded(\widehat{V}) \rightarrow \Q, \quad \widehat{\aug}(\widehat{x}) = -\widehat{K}(\widehat{x}),\\ \widehat{\aug}(\partial^{\aug}\widehat{x}) = -\widehat{K}(\partial^{\vec{\aug}}_{1}\widehat{x}) + \partial^{\vec{\aug}}_{0}\widehat{x} = - \widehat{K}(\partial^{\vec{\aug}}_{1}\widehat{x}) + (\aug^{l} - \aug^{r})x= 0.
\end{gathered}
\end{equation*}
by the computation of $\partial^{\aug}_{0}$ in Equation \eqref{Eq:AugLRFundamentalClass}. Likewise, if we have an augmentation $\widehat{\aug}$, then $K(x) = -\widehat{\aug}\widehat{x}$ will give a null-homotopy of $\aug^{l} - \aug^{r}$ as in (2). Thus we've proved (2) iff (3).
\end{proof}

We remark that $\Algebra^{\vec{\aug}}$ admitting an augmentation is the correct notion of the $\aug^{l}$ and $\aug^{r}$ being homotopic from the point of view of \cite{BL:MinimalModel}. From that perspective we would define $\aug^{l}$ and $\aug^{r}$ to be homotopic if there exists an augmentation $\widehat{\aug}$ fitting into the diagram
\begin{equation*}
\begin{tikzcd}
\Algebra \arrow[r, "I^{l}"] \arrow[dr, "\aug^{l}"] & \Algebra^{\Cyl} \arrow[d, dashed, "\widehat{\aug}"] & \Algebra \arrow[l, "I^{r}"] \arrow[dl, "\aug^{r}"] \\
 & \Q. &
\end{tikzcd}
\end{equation*}
It's easy to check that any such $\widehat{\aug}$ must factor through $\Algebra^{\aug}$ via $\pi^{\vec{\aug}}: \Algebra^{\Cyl} \rightarrow \Algebra^{\aug}$. So by the preceding lemma, this notion of homotopy is equivalent to the one we've described using $(\aug^{l}, \aug^{r})$ derivations. The analogous equivalence of notions of homotopy applies in the non-commutative case, using the derivation notation of homotopy from \cite{BG:Geography, AugsAreSheaves} and existence of $\widehat{\aug}$ as above applied to non-commutative Baues-Lemaire cylinders.

Now we address homotopy invariance of $H^{\vec{\aug}}$. The proof below can also be adapted to the non-commutative setting to give a short proof of \cite[Theorem 1.4]{BC:Bilinearized}.

\begin{lemma}\label{Lemma:BilinHomotopyInvariance}
Suppose that $\vec{\aug} = (\aug^{r}, \aug^{l})$ and $\vec{\aug}_{1} = (\aug^{l}_{1}, \aug^{r}_{1})$ are pairs of augmentations for which $\aug^{l}$ is homotopic to $\aug^{l}_{1}$ and $\aug^{r}$ is homotopic to $\aug^{r}_{1}$. Then $\Algebra^{\vec{\aug}}$ is isomorphic to $\Algebra^{\vec{\aug}_{1}}$. Hence the triples $(\orbitVS, \partial^{\vec{\aug}}_{1}, \partial^{\vec{\aug}}_{0})$ and $(\orbitVS, \partial^{\vec{\aug}_{1}}_{1}, \partial^{\vec{\aug}_{1}}_{0})$ are isomorphic and the pairs $(H^{\vec{\aug}}, \vec{\aug}_{\ast})$ and $(H^{\vec{\aug}_{1}}, (\vec{\aug}_{1})_{\ast})$ are isomorphic. In particular, the linearized homology $H^{\aug}$ associated to a single $\aug$ depends only on the DG homotopy class of $\aug$.
\end{lemma}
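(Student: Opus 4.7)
By composing two applications of the result, it suffices to treat the case where only one augmentation changes. Without loss of generality, assume $\aug^{r} = \aug^{r}_{1}$ and $\aug^{l}$ is DG homotopic to $\aug^{l}_{1}$, so by Definition \ref{Def:DGHomotopy} applied to the pair $(\aug^{l}, \aug^{l}_{1})$, there is a $\deg=1$ $(\aug^{l}, \aug^{l}_{1})$-derivation $K : \Algebra \to \Q$ with $\aug^{l} - \aug^{l}_{1} = K \circ \partial$.

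The plan is to build an explicit cDGA isomorphism $\Phi: \Algebra^{\vec{\aug}} \to \Algebra^{\vec{\aug}_{1}}$. Since both algebras have underlying graded algebra $\tensorAlgGraded(\widehat{\orbitVS})$, the map $\Phi$ is determined as an algebra morphism by its values on generators $\widehat{x}$. Guided by the symmetrized formula \eqref{Eq:PartialAugOne} for $\partial^{\vec{\aug}}_{1}$ and by the fact that $K$ controls the defect $\aug^{l}-\aug^{l}_{1}$, I will set
\begin{equation*}
\Phi(\widehat{x}) = \widehat{x} + \sum_{\NnegativePunctures \geq 2} (\NnegativePunctures!)^{-1} \sum_{I, g, j} c_{I}\, K(x_{I,g(1)} \cdots x_{I,g(j-1)})\, \widehat{x}_{I,g(j)}\, \aug^{r}(x_{I,g(j+1)} \cdots x_{I,g(\NnegativePunctures)}),
\end{equation*}
the indexing matching \eqref{Eq:DivSetPartial}. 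An alternative, more invariant description is to use the Baues-Lemaire cylinder: the homotopy $K$ promotes $(\aug^{l}, \aug^{l}_{1})$ to a single augmentation of $\Algebra^{\Cyl}$, whose associated bilinearization with $(\aug^{r}, \aug^{r})$ provides a common refinement mapping to both $\Algebra^{\vec{\aug}}$ and $\Algebra^{\vec{\aug}_{1}}$.

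The verification $\Phi \circ \partial^{\vec{\aug}} = \partial^{\vec{\aug}_{1}} \circ \Phi$ on generators reduces, after substituting the derivation identity \eqref{Eq:AugDerivationDef} for $K$ and invoking $\partial^{2}=0$, to a combinatorial identity among permutation sums which collapses via $\aug^{l} - \aug^{l}_{1} = K\circ \partial$. Compatibility of $\Phi$ with the fundamental classes $\partial^{\vec{\aug}}_{0}, \partial^{\vec{\aug}_{1}}_{0}$ is then automatic, since the correction terms in $\Phi(\widehat{x})$ all involve at least two generators and so contribute $0$ to $\partial^{\vec{\aug}_{1}}_{0}$. Invertibility of $\Phi$ follows because, under the action filtration, $\Phi - \Id$ strictly decreases action (the correction terms involve $K$ applied to strictly lower-action generators), so $\Phi$ is a unipotent perturbation of the identity and hence an isomorphism; the induced iso on homology transports $(H^{\vec{\aug}}, \vec{\aug}_{\ast})$ to $(H^{\vec{\aug}_{1}}, (\vec{\aug}_{1})_{\ast})$.

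The main obstacle I expect is the sign and symmetrization bookkeeping: unlike the non-commutative setting of \cite{BC:Bilinearized}, here every occurrence of $K$ must be handled symmetrically over $\Sym_{\NnegativePunctures}$, and verifying that the symmetrized version of the chain-map identity holds requires carefully tracking the interplay between the Koszul signs of $\stab$ from \S\ref{Sec:Cylinders} and the derivation rule for $K$. Once this is set up correctly, the rest of the argument is essentially a formal manipulation in the free cDGA.
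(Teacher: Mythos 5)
Your construction is, up to unwinding notation, the same as the paper's. The paper reduces to changing one augmentation at a time exactly as you propose, defines a degree-$1$ operator
\[
\widetilde{K}\colon \Algebra \to \orbitVS,\qquad \widetilde{K}(x_{1}\cdots x_{k}) = (k!)^{-1}\sum_{g\in\Sym_{k}}\sum_{j}K(x_{g(1)}\cdots x_{g(j-1)})\,x_{g(j)}\,\aug^{r}(x_{g(j+1)}\cdots x_{g(k)}),
\]
sets $\widehat{K}(\widehat{v}) = \stab\widetilde{K}\partial v$, and takes the isomorphism to be the algebra map generated by $1 + \widehat{K}$ on $\widehat{\orbitVS}$. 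Since $\widetilde{K}$ annihilates $\Q$ and $\orbitVS$, $\widetilde{K}\partial v = \sum_{k\geq 2}\widetilde{K}(\partial_{k}v)$, and applying $\stab$ recovers exactly your formula for $\Phi(\widehat{x})$. The chain-map verification you defer is the computation the paper carries out explicitly via $K\partial = \aug^{l}_{1}-\aug^{l}$. The one place you genuinely diverge is invertibility: the paper asserts $(1-\widehat{K})$ is the inverse of $(1+\widehat{K})$, implicitly claiming $\widehat{K}^{2}=0$ on the strength of $\widetilde{K}^{2}=0$ — a step worth scrutinizing, since $\widehat{K}^{2}\widehat{v} = \stab\widetilde{K}\partial(\widetilde{K}\partial v)$ is not obviously zero when $\partial$ has quadratic or higher terms. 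Your unipotence argument via the action filtration sidesteps this and is arguably the safer route, but note that it quietly uses the stronger hypothesis that $\partial$ strictly decreases action on generators (true in the geometric/Reeb setting, but not part of the bare ``action filtered'' axiom of §\ref{Sec:FreeDGA}, which only asks that $\partial$ preserve the filtration). So: same isomorphism, same verification, a small but real difference in how invertibility is justified — yours is cleaner modulo that hypothesis.
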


\begin{proof}
For simplicity we only work out the case $\aug^{r} = \aug^{r}_{1}$. The case $\aug^{l} = \aug^{l}_{1}$ is similar. Combining the cases,
\begin{equation*}
H^{\aug^{l}, \aug^{r}} \simeq H^{\aug^{l}_{1}, \aug^{r}} \simeq H^{\aug^{l}_{1}, \aug^{r}_{1}},
\end{equation*}
completing the proof in full generality.

Suppose we have $K: \Algebra \rightarrow \Q$ determined by $K: \orbitVS \rightarrow \Q$ of degree $1$ with $K(\partial v) = \epsilon^{l}_{1} - \epsilon^{l}$. Define a degree $1$ operator
\begin{equation*}
\begin{gathered}
\widetilde{K}: \Algebra \rightarrow \orbitVS, \quad \widetilde{K}|_{\Q} = \widetilde{K}|_{\orbitVS} = 0\\
\widetilde{K}(x_{1}\cdots x_{k}) = (k!)^{-1}\sum_{g\in \Sym_{k}}\sum_{j=1}^{k}K(x_{g(1)}\cdots k_{g(j-1)})x_{g(j)}\aug^{r}(x_{g(j+1)}\cdots k_{g(k)}), \quad k \geq 0
\end{gathered}
\end{equation*}
So $\widetilde{K}^{2} = 0$. We compute
\begin{equation*}
\begin{aligned}
\partial\widetilde{K}(x_{1}\cdots x_{k}) &= (k!)^{-1}\sum_{g}\sum_{j}K(x_{g(1)}\cdots x_{,g(j-1)})\partial x_{g(j)}\aug^{r}(x_{g(j+1)}\cdots x_{g(k)})\\
&=\widetilde{K}\partial(x_{1}\cdots x_{k}) + (k!)^{-1}\sum_{g}\sum_{j}K\partial( x_{g(1)}\cdots x_{,g(j-1)})x_{g(j)}\aug^{r}(x_{g(j+1)}\cdots x_{g(k)})\\
&= \widetilde{K}\partial(x_{1}\cdots x_{k}) + (k!)^{-1}\sum_{g}\sum_{j}(\aug^{l}_{1}-\aug^{l})( x_{g(1)}\cdots x_{,g(j-1)})x_{g(j)}\aug^{r}(x_{g(j+1)}\cdots x_{g(k)})\\\end{aligned}
\end{equation*}

Define a degree $0$ endomorphism $\widehat{K}$ of $\widehat{\orbitVS}$ by $\widehat{K}(\widehat{v}) = \stab \widetilde{K} \partial v$. Note that $(1 + \widehat{K})$ is a linear isomorphism with inverse $(1 - \widehat{K})$. As a consequence of the preceding calculation,
\begin{equation*}
\partial^{\vec{\aug}}\widehat{K} - \widehat{K}\partial^{\vec{\aug}_{1}} = \partial^{\vec{\aug}_{1}} - \partial^{\vec{\aug}} \iff \partial^{\vec{\aug}}(1 + \widehat{K}) = (1 + \widehat{K})\partial^{\vec{\aug}_{1}}.
\end{equation*}
Therefore $(1 + \widehat{K})$ gives determines an isomorphism $\Algebra^{\vec{\aug}_{1}} \rightarrow \Algebra^{\vec{\aug}}$.
\end{proof}

\begin{lemma}\label{Lemma:FunClassZero}
The following are equivalent:
\be
\item The fundamental class $\vec{\aug}_{\ast}$ is zero.
\item The bilinearized homology algebra $H(\Algebra^{\vec{\aug}})$ is non-zero.
\ee
Moreover, if $H(\Algebra^{\vec{\aug}}) \neq 0$, then it is isomorphic to the graded-commutative tensor algebra,
\begin{equation*}
H(\Algebra^{\vec{\aug}}) = \tensorAlgGraded(H^{\vec{\aug}}).
\end{equation*}
So by Lemma \ref{Lemma:BilinHomotopyInvariance}, if $\aug^{l}$ is homotopic to $\aug^{r}$ then $\vec{\aug}_{\ast} = 0$ and $H(\Algebra^{\vec{\aug}})$ is isomorphic to the graded symmetric tensor algebra of the linearized homology,
\begin{equation*}
H(\Algebra^{\vec{\aug}}) \simeq \tensorAlgGraded(H^{\aug^{l}}) \simeq \tensorAlgGraded(H^{\aug^{r}}).
\end{equation*}
\end{lemma}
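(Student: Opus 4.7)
The plan is to exploit the fact that $\Algebra^{\vec{\aug}}$ is a free graded-commutative algebra with differential split as $\partial^{\vec{\aug}} = \partial^{\vec{\aug}}_{0} + \partial^{\vec{\aug}}_{1}$, where $\partial^{\vec{\aug}}_{0}$ lands in the scalars $\Q \subset \tensorAlgGraded(\widehat{\orbitVS})$ and $\partial^{\vec{\aug}}_{1}$ preserves the tensor grading. For $(2) \Rightarrow (1)$, I argue contrapositively: if $\vec{\aug}_{\ast} \neq 0$, choose a $\partial^{\vec{\aug}}_{1}$-cycle $\widehat{z} \in \widehat{\orbitVS}$ with $c := \partial^{\vec{\aug}}_{0}\widehat{z} \in \Q$ nonzero. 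Then $\partial^{\vec{\aug}}(c^{-1}\widehat{z}) = 1$, so the unit is a boundary, and for any $\partial^{\vec{\aug}}$-cycle $a$ the Leibniz rule gives $\partial^{\vec{\aug}}(a \cdot c^{-1}\widehat{z}) = \pm a$, forcing $H(\Algebra^{\vec{\aug}}) = 0$. The reverse direction is even simpler: if $\vec{\aug}_{\ast} = 0$, Lemma \ref{Lemma:AugmentationHomotopy} produces an augmentation $\widehat{\aug}$ of $\Algebra^{\vec{\aug}}$, and then $\widehat{\aug}[1] = 1$ obstructs $[1] = 0$ in homology.

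For the Moreover clause, assume $H(\Algebra^{\vec{\aug}}) \neq 0$ and fix the augmentation $\widehat{\aug}$ of $\Algebra^{\vec{\aug}}$ supplied by the equivalence just proved. Define a cDGA automorphism $\Phi$ of $\tensorAlgGraded(\widehat{\orbitVS})$ by $\Phi(\widehat{v}) = \widehat{v} + \widehat{\aug}(\widehat{v})$ on generators, extended multiplicatively. Since $\widehat{\aug}\partial^{\vec{\aug}} = 0$ and $\partial^{\vec{\aug}}_{0}\widehat{v} \in \Q$, we have $\widehat{\aug}(\partial^{\vec{\aug}}_{1}\widehat{v}) = -\partial^{\vec{\aug}}_{0}\widehat{v}$, and a short calculation then gives
\begin{equation*}
\Phi\, \partial^{\vec{\aug}}\, \Phi^{-1}(\widehat{v}) = \partial^{\vec{\aug}}_{0}\widehat{v} + \partial^{\vec{\aug}}_{1}\widehat{v} + \widehat{\aug}(\partial^{\vec{\aug}}_{1}\widehat{v}) = \partial^{\vec{\aug}}_{1}\widehat{v}.
\end{equation*}
Both $\Phi\partial^{\vec{\aug}}\Phi^{-1}$ and $\partial^{\vec{\aug}}_{1}$ are derivations agreeing on generators, so they agree on all of $\tensorAlgGraded(\widehat{\orbitVS})$, giving a cDGA isomorphism $\Algebra^{\vec{\aug}} \simeq (\tensorAlgGraded(\widehat{\orbitVS}), \partial^{\vec{\aug}}_{1})$.

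To finish, I invoke the standard Künneth-type identity that for any $\Q$-chain complex $(W, d|_{W})$, the free graded-commutative DGA $(\tensorAlgGraded(W), d)$ obtained by extending $d|_{W}$ as a derivation satisfies $H(\tensorAlgGraded(W), d) \simeq \tensorAlgGraded(H(W, d|_{W}))$; this follows by decomposing $(W, d|_{W})$ over $\Q$ as a direct sum of acyclic two-term complexes and copies of $\Q$ concentrated in a single degree, then using compatibility of $\tensorAlgGraded$ with tensor products over $\Q$. Applying this to $W = \widehat{\orbitVS}$ with $d|_{W} = \partial^{\vec{\aug}}_{1}$ yields $H(\Algebra^{\vec{\aug}}) \simeq \tensorAlgGraded(H^{\vec{\aug}})$. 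The final assertion about a single homotopy class of augmentations follows directly by combining this with Lemma \ref{Lemma:BilinHomotopyInvariance}. The main technical obstacle is the Künneth decomposition for free cDGAs: it is standard in rational homotopy theory (cf.\ Sullivan's minimal models), but the algebraic section may warrant a self-contained proof.
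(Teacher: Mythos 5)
Your approach is genuinely different from the paper's. The paper proves both the equivalence and the \emph{Moreover} clause in one pass via a spectral sequence for the tensor-length filtration: the $E^1$ page is $\tensorAlgGraded(H^{\vec{\aug}})$ with differential $\vec{\aug}_\ast$, so either the unit dies or $E^2 = E^1$. You instead argue (2)$\Rightarrow$(1) by explicitly exhibiting a primitive of the unit, then handle the \emph{Moreover} clause by the coordinate change $\Phi(\widehat{v}) = \widehat{v} + \widehat{\aug}(\widehat{v})$, which kills $\partial^{\vec{\aug}}_0$ and reduces to a K\"{u}nneth identity for free cDGAs. The $\Phi$ computation is correct (I checked that $\Phi \partial^{\vec{\aug}} \Phi^{-1} = \partial^{\vec{\aug}}_1$ follows from $\widehat{\aug}(\partial^{\vec{\aug}}_1\widehat{v}) = -\partial^{\vec{\aug}}_0\widehat{v}$), and it buys a concrete chain-level isomorphism where the paper only has spectral sequence convergence. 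The trade-off is that you then owe a proof of $H(\tensorAlgGraded(W), d) \simeq \tensorAlgGraded(H(W))$, which you acknowledge; the paper instead invokes universal coefficients term by term inside the spectral sequence, so the two proofs carry roughly the same $\Q$-coefficient content.

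One citation needs repair. In (1)$\Rightarrow$(2) you write that $\vec{\aug}_\ast = 0$ and Lemma~\ref{Lemma:AugmentationHomotopy} together produce an augmentation $\widehat{\aug}$, but that lemma's hypothesis (2) is that $\aug^l - \aug^r$ is nullhomotopic as a chain map $(\widehat{\orbitVS}, \partial^{\vec{\aug}}_1) \to \Q$, not merely that it induces zero on homology. The implication ``zero on homology $\Rightarrow$ nullhomotopic'' is exactly what the paper establishes separately in Lemma~\ref{Lemma:BilinCompDirectLimit} (by splitting $\widehat{\orbitVS}_0$ over $\Q$ and extending a right inverse of $\partial^{\vec{\aug}}_1$). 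It does hold over a field and the argument is short, but it genuinely fails over more general rings, so it cannot be taken for free; you should either spell it out or flag it. The same gap is load-bearing in your \emph{Moreover} argument, since the automorphism $\Phi$ only exists once $\widehat{\aug}$ does.

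Everything else is sound: the contrapositive $(2)\Rightarrow(1)$ argument is clean and fully correct, and the final appeal to Lemma~\ref{Lemma:BilinHomotopyInvariance} for the $H^{\aug^l} \simeq H^{\aug^r}$ identification matches the paper.
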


\begin{proof}
Consider the tensor length filtration
\begin{equation*}
\tensorAlgGraded^{\leq m} = \bigoplus_{0}^{m} \left(\widehat{\orbitVS}^{\otimes k}/\sim\right) \subset \tensorAlgGraded(\widehat{\orbitVS}).
\end{equation*}
As $\partial^{\vec{\aug}}_{\NnegativePunctures} =0$ for $\NnegativePunctures\geq 2$, $\partial^{\vec{\aug}}$ preserves this filtration. Hence we can compute the homology as the result of a spectral sequence of algebras which will converge at $E^{2}$.

The $E^{0}$ object is the direct sum of the associated graded pieces $\widehat{\orbitVS}^{\otimes k}/\sim$, which is $\tensorAlgGraded(\widehat{\orbitVS})$ itself with differentials given by the $\partial^{\vec{\aug}}_{1}$. Since $\Q$ has no torsion, the universal coefficients theorem tells us that the homology of each $(\widehat{\orbitVS}^{\otimes k}/\sim, \partial^{\vec{\aug}}_{1})$ is $(H^{\vec{\aug}})^{\otimes k}/\sim$. So the $E^{1}$ object is $\tensorAlgGraded(H^{\vec{\aug}})$. The differential on the $E^{1}$ object is $\partial^{\vec{\aug}}_{0} = \vec{\aug}_{\ast}$ so the unit is exact in $E^{2}$ iff $\vec{\aug}_{\ast} = 0$. If it is zero, the differential on $E^{1}$ vanishes and $E^{2} = E^{1} = \tensorAlgGraded(H^{\vec{\aug}})$.
\end{proof}

\begin{lemma}\label{Lemma:BilinCompDirectLimit}
Suppose that $\Algebra = (\tensorAlgGraded(\orbitVS), \partial)$ is finitely action filtered. Then for a pair of augmentations $\vec{\aug} = (\aug^{l}, \aug^{r})$ of $\Algebra$, the following are equivalent:
\be
\item The augmentations $\aug^{l}$ and $\aug^{r}$ are DG homotopic.
\item $\aug^{l} - \aug^{r}: (\widehat{\orbitVS}, \partial^{\vec{\aug}}_{1}) \rightarrow \Q$ is null homotopic
\item $\Algebra^{\vec{\aug}}$ admits an augmentation.
\item The fundamental class $\vec{\aug}_{\ast}$ is zero.
\item The bilinearized homology algebra $H(\Algebra^{\vec{\aug}})$ is non-zero.
\ee
\end{lemma}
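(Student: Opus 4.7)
The plan is to reduce everything to the preceding lemmas and then establish a single missing implication. Lemma \ref{Lemma:AugmentationHomotopy} already gives (1) $\iff$ (2) $\iff$ (3) for any free cDGA, and Lemma \ref{Lemma:FunClassZero} gives (4) $\iff$ (5) together with (1) $\Rightarrow$ (4); the latter follows from Lemma \ref{Lemma:BilinHomotopyInvariance} combined with the computation in \S \ref{Sec:ExplicitDifferentials} that $\partial^{\aug,\aug}_0 \equiv 0$, since a DG homotopy between $\aug^l$ and $\aug^r$ identifies the pair $(H^{\vec{\aug}}, \vec{\aug}_\ast)$ with $(H^{\aug^l}, 0)$. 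Thus the only new content of the present lemma is the implication (4) $\Rightarrow$ (2), which is where the finite action filtration enters.

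Assuming (4), the fundamental class $\vec{\aug}_\ast$ vanishes on $H^{\vec{\aug}}$, so the chain map $\partial^{\vec{\aug}}_0 = \aug^l - \aug^r$ annihilates every $\partial^{\vec{\aug}}_1$-cycle. I will construct a degree-zero $\Q$-linear null-homotopy $\widehat{K} \colon \widehat{\orbitVS} \to \Q$ satisfying $\widehat{K} \circ \partial^{\vec{\aug}}_1 = \partial^{\vec{\aug}}_0$, which is exactly condition (2). On the subspace $B := \im \partial^{\vec{\aug}}_1 \subset \widehat{\orbitVS}$, the prescription $\widehat{K}(\partial^{\vec{\aug}}_1 \widehat{x}) := \partial^{\vec{\aug}}_0 \widehat{x}$ is unambiguous precisely because two preimages of the same boundary differ by a cycle, on which (4) forces $\partial^{\vec{\aug}}_0$ to vanish.

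The only remaining step is to extend $\widehat{K}$ from $B$ to all of $\widehat{\orbitVS}$. Here the finite action filtration makes the extension concrete: since $\partial$ preserves $\orbitVS^{\actionBound}$, so do $\partial^{\vec{\aug}}_0$ and $\partial^{\vec{\aug}}_1$, and each $\widehat{\orbitVS}^{\actionBound}$ is finite-dimensional. Inducting on a sequence $\actionBound_1 < \actionBound_2 < \cdots$ exhausting $\widehat{\orbitVS}$, at each step I pick a $\Q$-linear complement of $(B \cap \widehat{\orbitVS}^{\actionBound_n}) + \widehat{\orbitVS}^{\actionBound_{n-1}}$ inside $\widehat{\orbitVS}^{\actionBound_n}$ and extend $\widehat{K}$ by zero on it; taking the union yields the global $\widehat{K}$. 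I do not anticipate any genuine obstacle here: it is a standard splitting argument over the field $\Q$, and the action filtration merely organizes it as a countable sequence of finite-dimensional linear algebra problems. The conceptual heart of the equivalence therefore remains the spectral sequence argument of Lemma \ref{Lemma:FunClassZero}.
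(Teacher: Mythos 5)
Your proof is correct and the core construction — defining $\widehat{K}$ on $B=\im\partial^{\vec{\aug}}_1$ by $\widehat{K}(\partial^{\vec{\aug}}_1\widehat{x})=\partial^{\vec{\aug}}_0\widehat{x}$, well-defined by (4), then extending linearly — is essentially the same linear-algebra move the paper makes, transported through the equivalence $(2)\iff(3)$ of Lemma \ref{Lemma:AugmentationHomotopy} (your $\widehat{K}$ is $-\widehat{\aug}$ in the paper's notation, with the paper producing $\widehat{\aug}$ from a right inverse $K$ of $\partial^{\vec{\aug}}_1$). Where you diverge is in the global organization: the paper first proves $(5)\iff(3)$ for each finite action truncation $\Algebra^{\actionBound,\vec{\aug}}$ and then passes back to $\Algebra^{\vec{\aug}}$ by a contrapositive inverse-limit step — if $\Algebra^{\vec{\aug}}$ admits no augmentation then some $\Algebra^{\actionBound,\vec{\aug}}$ admits none — which tacitly rests on a Mittag--Leffler-type stabilization for the descending inverse system of affine augmentation spaces at each level, and it is really there that the finite action filtration earns its keep in the paper. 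Your direct construction of the null-homotopy bypasses that inverse-limit step entirely. You are also right to be mildly suspicious of the extension step: over the field $\Q$ the complement of $B$ in $\widehat{\orbitVS}$ exists unconditionally, so the filtration in your version is purely cosmetic (it replaces Zorn's lemma with countable induction), and your argument in fact proves the lemma without the finitely-action-filtered hypothesis at all. That is a minor but genuine strengthening; you could drop the filtration discussion from the extension step rather than present it as load-bearing.
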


\begin{proof}
Combining Lemmas \ref{Lemma:AugmentationHomotopy} and  \ref{Lemma:FunClassZero}, it suffices to show that $H(\Algebra^{\vec{\aug}}) \neq 0$ iff $\Algebra^{\vec{\aug}}$ admits an augmentation when $\Algebra$ is finitely action filtered.

We have $\Algebra = \lim_{\actionBound \rightarrow \infty} \Algebra^{\actionBound}$ where $\Algebra^{\actionBound} = (\tensorAlgGraded(\orbitVS^{\actionBound}), \partial)$. So we may view $\vec{\aug}$ as a pair of augmentations of a given $\Algebra^{\actionBound}$ with associated objects $\Algebra^{\actionBound, \vec{\aug}} = (\tensorAlgGraded(\widehat{\orbitVS}^{\actionBound}), \partial^{\vec{\aug}})$, $H(\Algebra^{\actionBound, \vec{\aug}})$, and $H^{\actionBound, \vec{\aug}} = H(\widehat{V}^{\actionBound}, \partial^{\vec{\aug}})$.

We claim that $\Algebra^{\actionBound,\vec{\aug}}$ admits an augmentation $\widehat{\aug}$ iff $H(\Algebra^{\actionBound, \vec{\aug}}) \neq 0$. The $\implies$ direction is clear, so suppose that $H(\Algebra^{\actionBound, \vec{\aug}}) \neq 0$, implying that $\vec{\aug}_{\ast}: H^{\actionBound, \vec{\aug}} \rightarrow \Field$ is zero. We can write $\orbitVS^{\actionBound}_{0} = \im \partial^{\vec{\aug}}_{1}|_{\orbitVS^{\actionBound}_{1}} \oplus \coker \partial^{\vec{\aug}}_{1}|_{\orbitVS^{\actionBound}_{1}}$, define a right inverse $K: \im \partial^{\vec{\aug}}_{1}|_{\orbitVS^{\actionBound}_{1}} \rightarrow V^{\actionBound}_{1}$ of $\partial^{\vec{\aug}}_{1}|_{\orbitVS^{\actionBound}_{1}}$, and extend $K$ to all of $\orbitVS^{\actionBound}_{0}$ by setting it to be $0$ on $\coker \partial^{\vec{\aug}}_{1}|_{\orbitVS^{\actionBound}_{0}}$. Then define $\widehat{\aug}$ by $\widehat{\aug}|_{\orbitVS^{\actionBound}_{0}} = -\partial_{0}^{\vec{\aug}}\circ K$. For $v \in \orbitVS^{\actionBound}_{1}$ we have
\begin{equation*}
\widehat{\aug}\partial^{\vec{\aug}} v = \widehat{\aug}\left(\partial^{\vec{\aug}}_{0} + \partial^{\vec{\aug}}_{1}\right)v = \partial^{\vec{\aug}}_{0}v -\partial_{0}^{\vec{\aug}}\circ K \partial^{\vec{\aug}}_{1}v = 0.
\end{equation*}
So $\widehat{\aug}$ is an augmentation of $\Algebra^{\actionBound,\vec{\aug}}$ and the claim is established.

Now we return to working with the direct limit objects, which may be infinitely generated. Clearly if $\Algebra^{\vec{\aug}}$ admits an augmentation, then the bilinearized homology is non-zero. If $\Algebra^{\vec{\aug}}$ does not admit an augmentation then there is some $\actionBound$ for which $\Algebra^{\actionBound, \vec{\aug}}$ doesn't admit an augmentation implying that $H(\Algebra^{\actionBound, \vec{\aug}})$ is zero for some $\actionBound$ by the preceding paragraph, implying that $H(\Algebra^{\vec{\aug}})$ is zero, completing the proof.
\end{proof}

\subsection{Applications to $CH(\Nhypersurface)$}\label{Sec:AlgGiroux}

Now we apply the apply the algebraic constructions of the previous subsections to the $\partialNH$ computation of Theorem \ref{Thm:MainComputation} to prove Theorems \ref{Thm:AlgGiroux}. We recall that the chain-level contact homology algebras for $\MxiDivSet$ and $(\Nhypersurface, \xi)$ are denoted $(\chainDivSet, \partialDivSet)$ and $(\chainNH, \partialNH)$, respectively.

We can take grading for $\chainDivSet$ in $\Z/d\Z$ where $d$ is an even integer dividing the divisibility $d(2c_{1}(\xi))$ of $2c_{1}(\xi) \in H^{2}(\Nhypersurface)$. The divisibility of $2c_{1}(\xi)$ divides the divisibility $d(2c_{1}(\xiDivSet))$ of $2c_{1}(\xiDivSet) \in H^{2}(\divSet)$ since $\xi|_{\divSet} \simeq \C \oplus \xiDivSet$. Therefore the algebra $(\chainDivSet, \partialDivSet)$ with $\Z/d(2c_{1}(\xi))\Z$ grading can be obtained from the same algebra with $\Z/d(2c_{1}(\xiDivSet))\Z$ grading by reducing degrees modulo $d$. We see from Theorem \ref{Thm:MainComputation} that
\be
\item the vector space of good $\ReebEpsilon$ orbits is $\orbitVS[1]$ where $\orbitVS$ is the vector space of good $\ReebDivSet$ orbits,
\item the contact homology differential for $\Nhypersurface$ is exactly the bilinearized differential associated to the pair $\vec{\aug} = (\aug^{+}, \aug^{-})$ of augmentations for $(\widecheck{\divSet} = \tensorAlgGraded(\orbitVS), \partialDivSet)$, the contact homology chain algebra for $\MxiDivSet$.
\ee
Because $\orbitVS$ is the direct limit of the $\orbitVS^{\actionBound}$ (the vector spaces of good orbits of action $\leq \actionBound$) and each $\orbitVS^{\actionBound}$ has finite dimension (by the presumed non-degeneracy of $\ReebDivSet$) Lemma \ref{Lemma:BilinCompDirectLimit} is applicable. The lemma exactly computes the contact homology algebra for $\Nhypersurface$, proving Theorem \ref{Thm:AlgGiroux}.

\section{Basic computations}\label{Sec:Computations}

\subsection{The case $\dim(\hypersurface) = 2$}

Here we address the case $\dim(\hypersurface) = 2$ which is the subject of Giroux's Criterion, Theorem \ref{Thm:Giroux}. In this case $\divSet$ is a disjoint union of circles. Write $\divSet_{i}$ for the connected components and define formal variables $\orbit_{i}^{\cm}, \cm \in \Z_{\geq 1}$ with grading $|\orbit_{i}^{\cm}| = 1$.

\begin{thm}\label{Thm:DimTwoComputation}
If $\Nhypersurface$ is overtwisted then $CH(\Nhypersurface, \xi) = 0$. Otherwise, with $\Z/2\Z$ grading the contact homology is the exterior algebra
\begin{equation*}
|\orbit_{i}^{\cm}| = 1, \quad CH(\Nhypersurface, \xi) = \bigwedge \Big( \bigoplus_{\divSet_{i}, \cm>0} \Field \orbit_{i}^{\cm} \Big).
\end{equation*}
\end{thm}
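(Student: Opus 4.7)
The first assertion follows directly from the vanishing cited in the introduction (combining \cite{BvK:Stabilization} and \cite{CMP:OT}), so the remainder of the plan treats the tight case. I would apply Theorems \ref{Thm:AlgGiroux} and \ref{Thm:MainComputation}, reducing the problem to describing $(\chainDivSet, \partialDivSet)$ together with the pair $\vec{\aug} = (\aug^{+}, \aug^{-})$ of augmentations induced by the Liouville fillings $\posNegRegionComplete$. The chain algebra of the dividing set is immediate: since $\dim \divSet = 1$, each component $\divSet_i \simeq S^1$ has $\xiDivSet$ equal to the zero bundle, so the iterates $\orbitDivSet_i^{\cm}$ of the embedded Reeb orbit are automatically non-degenerate (their return maps act on a zero-dimensional symplectic space), good, and of grading $|\orbitDivSet_i^{\cm}| = \CZ + 1 - 3 = -2$ in the paper's convention. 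The SFT Fredholm index of a punctured genus-zero curve in the two-real-dimensional symplectization $\R_{s} \times \divSet$ reduces to $-3\chi(\Sigma)$, which is never $1$, so no rigid curves can contribute to the differential and $\partialDivSet = 0$; hence $\chainDivSet = \tensorAlgGraded(\orbitVS)$ with $\orbitVS = \bigoplus_{i, \cm} \Q \orbitDivSet_i^{\cm}$.

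Next I would analyze the augmentations. Each $\aug^{\pm}$ counts rigid holomorphic planes in the completed two-dimensional Liouville surface $\posNegRegionComplete$ asymptotic to the generators $\orbitDivSet_i^{\cm}$. By uniformization, a connected component $W_0$ of $\posNegRegionComplete$ admits a non-constant holomorphic plane with cylindrical-end asymptotic only if the universal cover of $W_0$ is $\C$, i.e., iff the corresponding component of $\posNegRegion$ is a disc, in which case the unique disc filling and its $\cm$-fold branched covers give the contributions. The two subcases of Giroux's criterion (Theorem \ref{Thm:Giroux}) then split. If $\hypersurface \not\simeq S^2$, tightness forbids any disc component of $\posRegion \sqcup \negRegion$, so $\aug^{+} = \aug^{-} = 0$ and the pair is trivially DG homotopic. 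If $\hypersurface \simeq S^2$, tightness forces $\divSet$ to be a single circle with both $\posNegRegion$ equal to standard Liouville discs; here $(\Nhypersurface, \xi)$ embeds as a codimension-zero contact submanifold of $(S^3, \xi_{std})$, and the functoriality of $CH$ under codimension-zero embeddings into Liouville-fillable manifolds noted in the introduction yields $CH(\Nhypersurface, \xi) \neq 0$. The equivalence of conditions (1) and (2) in Theorem \ref{Thm:AlgGiroux} then provides the required DG homotopy without any explicit sign computation.

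With $\partialDivSet = 0$ and $\vec{\aug}$ DG homotopic, the bilinearized chain complex $(\orbitVS[1], \partialDivSet^{\vec{\aug}}_{1})$ has trivial differential (each $(\partialDivSet_{\NnegativePunctures})^{\vec{\aug}}_{1}$ is built from $\partialDivSet_{\NnegativePunctures}$, which vanishes), so $CH^{\vec{\aug}}(\divSet, \xiDivSet) = \orbitVS[1]$ as a graded vector space. Theorem \ref{Thm:AlgGiroux} then yields $CH(\Nhypersurface, \xi) \simeq \tensorAlgGraded(\orbitVS[1])$, and the grading shift places each generator $\orbit_i^{\cm}$ in degree $|\orbit_i^{\cm}| = -1 \equiv 1 \pmod{2}$. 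Since the graded-commutative tensor algebra on a vector space concentrated in odd degree is exactly the exterior algebra, the claimed identification follows. The principal technical obstacle in this plan is the verification of DG homotopy in the sphere subcase; the route through functoriality and the embedding $(\Nhypersurface, \xi) \hookrightarrow (S^3, \xi_{std})$ cleanly sidesteps the orientation bookkeeping that a direct count of covers of Liouville discs would otherwise demand.
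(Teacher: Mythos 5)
Your argument is correct and its architecture parallels the paper's — show $\partialDivSet = 0$, analyze $\aug^{\pm}$ via the topology of $\posNegRegion$, apply Theorem \ref{Thm:AlgGiroux} — but the tight-sphere subcase is handled by a genuinely different route. The paper identifies the two augmentations on the chain level (both $\posNegRegionComplete$ are copies of $\C$, so the holomorphic-plane counts coincide once compatible almost complex structures and orientations are fixed, as in the symmetric-double setup of \S\ref{Sec:Computations}) and then observes $\partialNH_{k\geq 1} = 0$ since $\partialDivSet = 0$. You instead embed a compact piece of $\Nhypersurface$ in $(S^3, \xi_{std})$, apply functoriality of sutured $CH$ to conclude $CH(\Nhypersurface, \xi) \neq 0$, and pull the required DG homotopy out of the equivalence of conditions (1) and (2) in Theorem \ref{Thm:AlgGiroux}; this cleanly sidesteps the sign bookkeeping, at the mild cost of invoking nonvanishing of $CH(S^3, \xi_{std})$. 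Two minor corrections: for $\dim\divSet = 2n-1$ with $n = 1$ the SFT index is $(n-3)\chi(\Sigma) = -2\chi(\Sigma)$, not $-3\chi(\Sigma)$ — since $-2\chi(\Sigma) = 2\NnegativePunctures - 2$ is always even and hence never $1$, the conclusion $\partialDivSet = 0$ is unaffected, but the coefficient should be fixed — and that dimension count must be read as applying to the perturbed moduli space, since the branched covers of trivial cylinders it rules out are not transversely cut out; the paper's citation of \cite{Fabert:Pants} for this step addresses exactly that point.
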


Our computations are similar to computations of symplectic cohomologies of Riemann surfaces appearing in \cite[\S 8.1]{BO:ExactSequence} and the computations of cylindrical contact homologies in \cite{Golovko:Cylindrical, Vaugon:Bypass}.

Since each $\divSet_{i}$ is a circle, the contact forms $\alphaEpsilon, \epsilon > 0$ constructed in \S \ref{Sec:AlphaConstruction} give use one exactly embedded $R_{\epsilon}$ orbit $\orbit_{i}$ for each $\divSet_{i}$. Using the framing $\framing$ given by $\partial_{\sigma}, \partial_{\tau}$ on each orbit, the index computations of \S \ref{Sec:AlphaConstruction} provide $\CZ_{\framing}(\orbit_{i}^{\cm}) = 0 \bmod_{2}$ for each $\cm$-fold cover $\orbit_{i}^{\cm}, \cm \geq 0$ of $\orbit_{i}$. 

Hence the contact homology gradings are $|\orbit_{i}^{\cm}|_{\framing} = 1$. By \cite{Fabert:Pants}, we can assume that the contact homology differential for a disjoint union of circle vanishes. Hence the bilinearized differential associated to any pair of fillings of a circle must also vanish.

For each $\divSet_{i} \subset \divSet$, write $\posNegRegion_{i}$ for the connected component of $\posNegRegion$ which has $\divSet_{i}$ as one of its boundary components. The associations $i \mapsto \posRegion_{i}$ and $i \mapsto \negRegion_{i}$ will typically be many-to-one.

Let's suppose that $\Nhypersurface$ is overtwisted. We know that $CH = 0$, but will carry out the computation for fun since we know what all of the relevant holomorphic curves like. Orienting $S$ appropriately, overtwistedness is equivalent to the existence of a component $\divSet_{i}$ of $\divSet$ for which $\posRegion_{i} \simeq \disk$ and $\chi(\negRegion_{i}) < 1$. The orbit $\orbit_{i}$ has least action among orbits contained in the $1$-dimensional contact manifold $\divSet_{i}$ and so $\partialDivSet\orbitDivSet_{i} = 0$. The analysis of \S \ref{Sec:HoloFoliations} tells us that $\posRegion_{i}$ determines a unique, transversely cut out holomorphic plane positively asymptotic to $\orbit_{i}$, implying $\aug^{+}(\orbit_{i}) = \pm 1$. For topological reasons, it is impossible that any planes can contribute to the count $\aug^{-}(\orbit_{i})$, so that $\aug(\orbit_{i}) = \pm 1$ and the unit in $CH(\Nhypersurface, \xi)$ is exact so $CH = 0$.

Now suppose that $\Nhypersurface$ is tight and that $\chi(\Sigma) < 1$. Then every $\orbit_{i}$ is homotopically non-trivial so that there are no holomorphic planes contributing to $\partialNH$. The bilinearized contact homology differential is zero since the contact homology differential for $\divSet$ is zero. Moreover, since are no planes the contact homology differential for $\Nhypersurface$ is zero. Therefore $CH$ is the graded-symmetric tensor algebra on over $\bigoplus \Field \orbit_{i}^{\cm}$, which is the same as the exterior algebra as all generators have odd degree.

If $\hypersurface$ is a tight two-sphere then $\divSet$ is connected with a single embedded orbit $\orbit$ with covers $\orbit^{\cm}, \pm\geq 1$. We see that $\partialNH_{0} = 0$ since the augmentations associated to the $\posNegRegionComplete$ -- each equivalent to $\C$ -- are equivalent. Moreover $\partialNH_{k \geq 1} = 0$ again by  \cite{Fabert:Pants}. So $\partialNH = 0$ and again we have that $CH(\Nhypersurface, \xi)$ is an exterior algebra on the $\orbit^{\cm}$, completing the proof of the theorem.

\subsection{The symmetric case}

Let $(\posNegRegion, \beta^{+})$ be a Liouville domain with completion $(\posRegionComplete, \overline{\beta}^{+})$ and consider the contactization
\begin{equation*}
\R_{t} \times \posRegionComplete, \quad \xi = \ker(dt + \overline{\beta}^{+}).
\end{equation*}
After rounding the corners of the product, the boundary of $[-1, 1] \times \posRegionComplete \subset \R_{t} \times \posRegionComplete$ is a convex hypersurface $\hypersurface$ whose positive and negative regions are copies of $\posRegionComplete$ with their boundaries identified using the identity map on $\divSet = \partial \posRegionComplete$, cf. \cite{Avdek:Liouville}. Then $(\Nhypersurface, \xi)$ is a boundary of a tubular neighborhood of $[-1, 1] \times \posRegionComplete$. We say that such a $\hypersurface$ is a \emph{symmetric double}.

To compute contact homology of $(\Nhypersurface, \xi)$ we can use the same almost complex structures and perturbation data for the positive and negative regions $\posNegRegionComplete \simeq \posRegionComplete$ to obtain $\aug^{+} = \aug^{-}$ on the chain level. So in the symmetric case
\begin{equation*}
CH(\Nhypersurface, \xi) = \tensorAlgGraded(H^{\aug})
\end{equation*}
where $H^{\aug}$ is the linearized contact homology of $\posRegionComplete$.

\subsection{Cotangent bundles} 

Here are some interesting special cases of symmetric doubles: Let $Y$ be a closed, smooth $n$-dimensional manifold and consider the unit cotangent bundle $\sphere^{\ast}(\R_{\tau} \times Y)$ of $\R_{\tau} \times Y$ equipped with its canonical contact form and contact structure inherited from $T^{\ast}(\R \times Y)$. If we use a Riemannian metric of the form $d\tau^{\otimes 2} + g_{Y}$ on $\R_{\tau} \times Y$ where $g_{Y}$ is a metric on $Y$ to determine the fibers of $\sphere^{\ast}(\R_{\tau} \times Y) \rightarrow \R_{\tau} \times Y$, then $\partial_{\tau}$ preserves $\xi$ so that
\begin{equation*}
\hypersurface = \{ \tau^{-1}(0) \} \subset \sphere^{\ast}(\R_{\tau} \times Y)
\end{equation*}
is a convex hypersurface with $\sphere^{\ast}(\R_{\tau} \times Y)$ a standard neighborhood $\Nhypersurface$ of $\hypersurface$.

The contact form in the case can be written $p d\tau + \lambda_{Y}$ where $(p, \tau)$ are standard coordinates on $T^{\ast}\R_{\tau}$ and $\lambda_{Y}$ is the canonical Liouville form on $T^{\ast}Y$. Then $\divSet \subset \hypersurface$ is the subset $p = \tau = 0$ which is exactly $\sphere^{\ast}Y$ (geometrically determined by $g_{Y}$) with the positive and negative regions given by $\posNegRegionComplete = \{ \pm p > 0\}$. To see that $\hypersurface$ is symmetric consider the automorphism $(p, \tau) \mapsto (-p, -\tau)$ which interchanges the $\posNegRegionComplete$ and fixes $\divSet$ pointwise. So the positive and negative regions are Liouville isomorphic by a map restricting to the identity along $\divSet$. Therefore when $Y$ is spin
\begin{equation*}
H^{\aug}_{i}=H_{i + 4 - n}(\LoopSpace/Y, Y) \implies CH(\sphere^{\ast}(\R_{q} \times Y), \xi) = \tensorAlgGraded(H_{\ast}(\LoopSpace/Y, Y)[4 - n]).
\end{equation*}
Here $\LoopSpace$ is the free loop space of $Y$ and $Y \subset \LoopSpace$ is the subspace of constant loops. The above $H^{\aug}$ computations is due to Cieliebak and Latschev \cite{CL:SFTStringTop} who use a different grading convention. The spin condition on $Y$ informs us $\R_{q} \times Y$ is spin as well and that $c_{1}(\xi) = 0$. Therefore contact homology is $\Z$-graded.

\printnomenclature[10em]

\textsc{Universit\'{e} Paris-Saclay, Laboratoire de Math\'{e}matiques d'Orsay, Orsay, France}\par\nopagebreak
\textit{Email:} \href{mailto:russell.avdek@universite-paris-saclay.fr}{russell.avdek@universite-paris-saclay.fr}\par\nopagebreak
\textit{URL:} \href{https://www.russellavdek.com/}{russellavdek.com}


\begin{thebibliography}{}

\bibitem[A21]{Avdek:Liouville}
R. Avdek, \textit{Liouville hypersurfaces and connect sum cobordisms}, J. Symp. Geom., Vol. 19, 2021.

\bibitem[B22]{Bao:Orientations}
E. Bao, \textit{Coherent orientations in symplectic field theory revisited}, arXiv:2206.08443, 2022.

\bibitem[BH18]{BH:Cylindrical}
E. Bao and K. Honda, \textit{Definition of cylindrical contact homology in dimension three}, J. Topology, Vol. 11, No. 4, p.1002-1053, 2018.

\bibitem[BH23]{BH:ContactDefinition}
E. Bao and K. Honda, \textit{Semi-global Kuranishi charts and the definition of contact homology}, Adv. Math., 2023.

\bibitem[BL77]{BL:MinimalModel}
H. J. Baues and J. M. Lemaire, \textit{Minimal models in homotopy theory}, Math. Ann., Vol. 225, p. 219–242, 1977

\bibitem[BEM15]{BEM:OT}
M. S. Borman, Y. Eliashberg, and E. Murphy, \textit{Existence and classification of overtwisted contact structures in all dimensions}, Acta Math. 215, p.281–361, 2015.

\bibitem[BC14]{BC:Bilinearized}
F. Bourgeois and B. Chantraine, \textit{Bilinearized Legendrian contact homology and the augmentation category}, J. Symp. Geom., Volume 12, p.553–583, 2014.

\bibitem[BG19]{BG:Geography}
F. Bourgeois and D. Galant, \textit{Geography of bilinearized Legendrian contact homology}, arXiv:1905.12037, 2019.

\bibitem[BM03]{BM:Orientations}
F. Bourgeois and K. Mohnke, \textit{Coherent orientations in symplectic field theory}, Math. Z. 248, p.123-146, 2003.

\bibitem[BEHW03]{SFTCompactness}
F. Bourgeois, Y. Eliashberg, H. Hofer, and K. Wysocki, \textit{Compactness results in symplectic field theory}, Geom. Topol., Volume 7, Number 2, p.799-888, 2003.

\bibitem[BO09b]{BO:ExactSequence}
F. Bourgeois and A. Oancea, \textit{An exact sequence for contact- and symplectic homology}, Invent. Math., Vol. 175, p. 611–680, 2009.

\bibitem[BvK10]{BvK:Stabilization}
F. Bourgeois and O. van Koert, \textit{Contact homology of left-handed stabilizations and plumbing of open books}, Comm. Contemporary Math., Vol. 12, p.223-263, 2010.

\bibitem[BGM22]{BGM:Bourgeois}
J. Bowden, F. Gironellla, and A. Moreno, \textit{Bourgeois contact structures: tightness, fillability and applications}, Invent. Math., Vol. 230, p.713–765, 2022.

\bibitem[B21]{Breen:Convex}
J. Breen, \textit{Morse-Smale characteristic foliations and convexity in contact manifolds}, Proc. Amer. Math. Soc., Vol. 149, p.3977-3989, 2021.

\bibitem[BHH23]{BHH:GirouxCorrespondence}
J. Breen, K. Honda, and Y. Huang, \textit{The Giroux correspondence in arbitrary dimensions}, arXiv:2307.02317, 2023.

\bibitem[CMP19]{CMP:OT}
R. Casals, E. Murphy, and F. Presas, \textit{Geometric criteria for overtwistedness}, J. Amer. Math. Soc., Vol. 32, p.563–604, 2019.

\bibitem[CN21]{CN:OT}
R. Chiang and K. Niederkr\"{u}ger-Eid, \textit{An overtwisted convex hypersurface in higher dimensions}, arXiv:2105.02826, 2021.

\bibitem[CL07]{CL:SFTStringTop}
K. Cieliebak and J. Latschev, \textit{The role of string topology in symplectic field theory},  New perspectives and challenges in symplectic field theory, CRM Proc. Lecture Notes, Vol. 49, Amer. Math. Soc., p. 113–146, 2009.

\bibitem[CM07]{CM:GW}
K. Cieliebak and K. Mohnke, \textit{Symplectic hypersurfaces and transversality in Gromov-Witten theory}, J. Symplectic Geom. 5, p.281-356, 2007.

\bibitem[CGHH10]{CGHH:Sutures}
V. Colin, P. Ghiggini, K. Honda, and M. Hutchings, \textit{Sutures and contact homology I}, Geom. Topol., Vol. 15, p.1749-1842, 2011.

\bibitem[CFC20]{CFC:RelativeContactHomology}
L. C\^{o}t\'{e}, F.S. Fauteux-Chapleau, \textit{Homological invariants of codimension 2 contact submanifolds}, 	arXiv:2009.06738, 2020.

\bibitem[DG04]{DG:Surgery}
F. Ding and H. Geiges, \textit{A Legendrian surgery presentation of contact 3-manifolds}, Math. Proc. Cambridge Philos. Soc.,
Vol. 136, p.583-598, 2004.

\bibitem[DG12]{DG:CircleBundles}
F. Ding and H. Geiges, \textit{Contact structures on principal circle bundles}, Bull. London Math. Soc.,Vol. 44, p. 1189-1202, 2012.

\bibitem[Ek16]{Ekholm:Nonloose}
T. Ekholm, \textit{Non-loose Legendrian spheres with trivial contact homology DGA}, J. Topology, Vol. 9, p.826–848, 2016.

\bibitem[Ek19]{Ekholm:SurgeryCurves}
T. Ekholm, \textit{Holomorphic curves for Legendrian surgery}, arXiv:1906.07228, 2019.

\bibitem[EES09]{EES:Duality}
T. Ekholm, J. Etnyre, and J. Sabloff, \textit{A duality exact sequence for Legendrian contact homology}, Duke Math. J., Vol 150, p.1–75, 2009.

\bibitem[El89]{Eliash:OTClassification}
Y. Eliashberg, \textit{Classification of overtwisted contact structures on 3-manifolds}, Invent. Math., Vol. 98, p.623-637, 1989.

\bibitem[EGH00]{EGH:SFTIntro}
Y. Eliashberg, A Givental, H. Hofer, \textit{Introduction to symplectic field theory}, Geom. Funct. Anal., Special Volume, Part II, p.560-673, 2000.

\bibitem[EP22]{EliashPanch:Convex}
Y. Eliashberg and D. Pancholi, \textit{Honda-Huang's work on contact convexity revisited}, arXiv:2207.07185, 2022.

\bibitem[EH01]{EtnyreHonda:Knots}
J. Etnyre and K. Honda, \textit{Knots and Contact Geometry I: Torus Knots and the Figure Eight Knot}, J. Symplectic Geom., Volume 1, Number 1, p.63-120, 2001.

\bibitem[Fa11]{Fabert:Pants}
O. Fabert, \textit{Obstruction bundles over moduli spaces with boundary and the action filtration in symplectic field theory}, Math. Z., Vol. 269, p.325–372, 2011.

\bibitem[FOT08]{FOT}
Y. F\'{e}lix, J. Oprea, and D. Tanr\'{e}, \textit{Algebraic Models in Geometry},Vol. 17 of Oxford graduate texts in mathematics, Oxford University Press, 2008.

\bibitem[Fl88]{Floer:Morse}
A. Floer, \textit{Morse theory for Lagrangian intersections}, J. Diff. Geom., p.513-547, 1988.

\bibitem[FO\textsuperscript{3}09]{FOOO}
K. Fukaya, Y. Oh, H. Ohta and K. Ono, \textit{Lagrangian intersection Floer theory: anomaly
and obstruction}, AMS/IP Studies in Advanced Mathematics, 46.2, 2009.

\bibitem[Gi01]{Giroux:Criterion}
E. Giroux, \textit{Structures de contact sur les vari\'{e}t\'{e}s fibr\'{e}es en cercles audessus d’une surface}, Comment. Math. Helv., Vol. 76, 218–262, 2001.

\bibitem[Gi02]{Giroux:ContactOB}
E. Giroux, \textit{G\'{e}om\'{e}trie de contact: de la dimension trois vers les dimensions sup\'{e}rieures}, Proceedings of the International Congress of Mathematicians, Vol. II, Higher Ed. Press, Beijing, p.405-414, 2002.

\bibitem[Gi17]{Giroux:IdealLiouville}
E. Giroux, \textit{Ideal Liouville Domains - a cool gadget}, J. Symplectic Geom., Volume 18, p.769–790, 2017.

\bibitem[Go15]{Golovko:Cylindrical}
R. Golovko, \textit{The cylindrical contact homology of universally tight sutured contact solid tori}, Pacific J. Math., Vol. 274, p.73-96, 2015.

\bibitem[GP74]{GP:DiffTop}
V. Guillemin and A. Pollack, \textit{Differential Topology}, Prentice Hall, 1974.

\bibitem[H00]{Honda:Tight1}
K. Honda, \textit{On the classification of tight contact structures I}, Geom. Topol., Volume 4, Number 1, p.309-368, 2000.

\bibitem[HH18]{HH:Bypass}
K. Honda and H. Huang, \textit{Bypass attachments in higher-dimensional contact topology}, preprint, arXiv:1803.09142, 2018.

\bibitem[HH19]{HH:Convex}
K. Honda and H. Huang, \textit{Convex hypersurface theory in contact topology}, preprint, arXiv:1907.06025, 2019.

\bibitem[H97]{Hummel}
C. Hummel, \textit{Gromov’s compactness theorem for pseudo-holomorphic curves}, Progress in Mathematics, 151, Birkh\"{a}user Verlag, Basel, 1997.

\bibitem[HT09]{HT:GluingII}
M. Hutchings and C. Taubes, \textit{Gluing pseudoholomorphic curves along branched covered cylinders II}, J. Symp. Geom. 7, p.29-133, 2009.

\bibitem[LW11]{LW:Torsion}
J. Latschev and C. Wendl, \textit{Algebraic torsion in contact manifolds (with an appendix by Michael Hutchings)}, Geom. Funct. Anal. 21, no. 5, p.1144-1195, 2011.

\bibitem[LMN19]{LMN:Bourgeois}
S. Lisi, A. Marinkovi\'{c}, and Klaus Niederkr\"{u}ger, \textit{On properties of Bourgeois contact structures}, Alg. \& Geom. Topol., Vol. 19, p.3409–3451, 2019.

\bibitem[MNW13]{MNW13}
P. Massot, K. Niederkr\"{u}ger, and C. Wendl, \textit{Weak and strong fillability of higher dimensional contact manifolds}, Invent. math. (2013), 192-287.

\bibitem[MS99]{MS:SymplecticIntro}
D. McDuff and D. Salamon, \textit{Introduction to symplectic topology}, Second edition, Oxford Mathematical Monographs,Clarendon Press, Oxford University Press, New York, 1998.

\bibitem[MS04]{MS:Curves}
D. McDuff and D. Salamon, \textit{J-holomorphic curves and symplectic topology}, American Mathematical Society, Providence, RI, 2004.

\bibitem[Mu12]{Murphy:Loose}
E. Murphy, Loose Legendrian embeddings in high dimensional contact manifolds, PhD thesis, Stanford, arXiv:1201.2245, 2012.

\bibitem[NRSSZ20]{AugsAreSheaves}
L. Ng, D. Rutherford, V. Shende, S. Sivek, and E. Zaslow, \textit{Augmentations are sheaves}, Geom. Topol., Vol. 24, p. 2149-2286, 2020.

\bibitem[N06]{N:Plastik}
K. Niederkr\"{u}ger, \textit{The plastikstufe -- a generalization of the overtwisted disk to higher dimensions}, Algebr. Geom. Topol. 6, p.2473–2508, 2006.

\bibitem[NW11]{NW:WeakFillings}
K. Niederkr\"{u}ger and C. Wendl, \textit{Weak symplectic fillings and holomorphic curves}, Ann. Sci. École Norm. Sup., Serie 4, Vol. 44, p.801-853, 2011.

\bibitem[P19]{Pardon:Contact}
J. Pardon, \textit{Contact homology and virtual fundamental cycles}, J. Amer. Math. Soc. 32, no. 3, p.825-919, 2019.

\bibitem[S06]{Sabloff:Duality}
J. Sablof, \textit{Duality for Legendrian contact homology}, Geom. Topol., Vol. 10, p.2351–2381, 2006.

\bibitem[Sc93]{Schwarz:Morse}
M. Schwarz, \textit{Morse homology}, Progress in Mathematics, vol. 111, Birkh{\"a}user, Basel, 1993

\bibitem[St22]{Strakos}
F. Strako\v{s}, \textit{A note on geography of bilinearized Legendrian contact homology for disconnected Legendrian submanifolds}, arXiv:2209.13728, 2022.

\bibitem[V15]{Vaugon:Bypass}
A. Vaugon, \textit{Reeb periodic orbits after a bypass attachment}, Ergod. Th. Dynam. Sys., Vol. 35, p. 615-672, 2015.

\bibitem[V99]{Viterbo:SH}
C. Viterbo, \textit{Functors and computations in Floer homology with applications, Part I}, Geom.
Funct. Anal. 9, p.985–1033, 1999.

\bibitem[Wa83]{Warner}
F. W. Warner, \textit{Foundations of differentiable manifolds and Lie groups}, Graduate Texts in Mathematics 94, Springer-Verlag, 1983.

\bibitem[We10a]{Wendl:Automatic}
C. Wendl, \textit{Automatic transversality and orbifolds of punctured holomorphic curves in dimension four}, Comment. Math. Helv. 85, no. 2, p.347-407, 2010.

\bibitem[We10b]{Wendl:OB}
C. Wendl, \textit{Open book decompositions and stable Hamiltonian structures}, Expo. Math., Vol. 28, p.187-199, 2010.

\bibitem[Wi82]{Witten:Morse}
E. Witten, \textit{Supersymmetry and Morse theory}, J. Diff. Geom. 17, p.661-692, 1982.

\end{thebibliography}
\end{document}